\newcommand\linkcolor{BrickRed}
\let\crefinner=\cref
\newcommand\mycref[1]{{\color{\linkcolor}\crefinner{#1}}}
\def\cref{\mycref}
\let\refinner=\ref
\newcommand\myref[1]{{\color{\linkcolor}\refinner{#1}}}
\def\ref{\myref}
\DeclareMathAlphabet{\matheuler}{U}{eus}{m}{n}
\setlist[enumerate]{align=right, label=\llap{\bfseries{(\arabic*)}}}
\newenvironment{alphabetize}{\begin{enumerate}[label=\llap{\bfseries{(\alph*)}}]}{\end{enumerate}}
\pgfplotsset{width=7cm,compat=1.9}
\definecolor{codegreen}{rgb}{0,0.6,0}
\definecolor{codegray}{rgb}{0.5,0.5,0.5}
\definecolor{codepurple}{rgb}{0.58,0,0.82}
\definecolor{backcolour}{rgb}{0.95,0.95,0.92}
\lstdefinestyle{mystyle}{
	backgroundcolor=\color{backcolour},   
	commentstyle=\color{codegreen},
	keywordstyle=\color{magenta},
	numberstyle=\tiny\color{codegray},
	stringstyle=\color{codepurple},
	basicstyle=\footnotesize,
	breakatwhitespace=false,         
	breaklines=true,                 
	captionpos=b,                    
	keepspaces=true,                 
	numbers=left,                    
	numbersep=5pt,                  
	showspaces=false,                
	showstringspaces=false,
	showtabs=false,                  
	tabsize=2
}
\newcommand{\MAT}[7]{\begin{pmatrix}{#1}&{#2}&{#3}&{#4}\\{#5}&{#6}&{#7}&\MATHelper}
\newcommand{\MATHelper}[9]{{#1}\\{#2}&{#3}&{#4}&{#5}\\{#6}&{#7}&{#8}&{#9}\end{pmatrix}}
\newcommand{\Matrix}[7]{\begin{pmatrix}{#1}&{#2}&{#3}&{#4}&{#5}\\{#6}&{#7}&\MatrixHelper}
\newcommand{\MatrixHelper}[9]{{#1}&{#2}&{#3}\\{#4}&{#5}&{#6}&{#7}&{#8}\\{#9}&\MatrixHelperii}
\newcommand{\MatrixHelperii}[9]{{#1}&{#2}&{#3}&{#4}\\{#5}&{#6}&{#7}&{#8}&{#9}\end{pmatrix}}
\newcommand{\dual}[1]{#1^\vee}
\newcommand{\pdual}[1]{\p{#1}^\vee}
\newcommand{\op}[0]{^\text{op}}
\newcommand{\units}[1]{#1^{\times}}
\newcommand{\inv}[2][1]{#2^{-#1}}
\newcommand{\ab}[1]{#1^{\mathrm{ab}}}
\newcommand{\ctsHom}{\Hom_{\mrm{cts}}}
\DeclareMathOperator{\Hom}{Hom}
\DeclareMathOperator{\Isom}{Isom}
\DeclareMathOperator{\sHom}{\mathscr{H\mkern-7mu}\textit{om}}
\DeclareMathOperator{\sExt}{\mathscr{E\mkern-4mu}\textit{xt}}
\DeclareMathOperator{\sEnd}{\mathscr{E\mkern-3mu}\textit{nd}}
\DeclareMathOperator{\End}{End}
\DeclareMathOperator{\qz}{\mathbb Q/\mathbb Z}
\DeclareMathOperator{\coker}{coker}
\DeclareMathOperator{\Ext}{Ext}
\renewcommand{\hom}{\operatorname H}
\newcommand{\homR}{\mathrm R}
\DeclareMathOperator{\Stab}{Stab}
\DeclareMathOperator{\im}{im}
\newcommand{\zmod}[1]{\mathbb Z/#1\mathbb Z}
\newcommand{\zloc}[1]{\Z_{(#1)}}
\newcommand{\Zmod}[1]{\frac{\Z}{#1\Z}}
\DeclareMathOperator{\Aut}{Aut}
\DeclareMathOperator{\Gal}{Gal}
\DeclareMathOperator{\Br}{Br}
\DeclareMathOperator{\Char}{char}
\DeclareMathOperator{\Frac}{Frac}
\DeclareMathOperator{\Pic}{Pic}
\DeclareMathOperator{\lcm}{lcm}
\DeclareFontFamily{U}{wncy}{}
\DeclareFontShape{U}{wncy}{m}{n}{<->wncyr10}{}
\DeclareSymbolFont{mcy}{U}{wncy}{m}{n}
\DeclareMathSymbol{\Sha}{\mathord}{mcy}{"58} 
\newcommand{\bigperp}{%
  \mathop{\mathpalette\bigp@rp\relax}%
  \displaylimits
}
\newcommand{\bigp@rp}[2]{%
  \vcenter{
    \m@th\hbox{\scalebox{\ifx#1\displaystyle2.1\else1.5\fi}{$#1\perp$}}
  }%
}
\newcommand{\openset}{\overset{\text{open}}\subset}
\DeclareMathOperator{\Cl}{Cl}
\renewcommand{\d}{\mathrm d} 
\newcommand{\Set}{\mrm{Set}}
\newcommand{\QCoh}{\mrm{QCoh}}
\newcommand{\der}[1]{#1^{\mrm{der}}}
\newcommand{\red}[1]{#1_{\mrm{red}}}
\newcommand{\tet}{\text{\'et}} 
\newcommand{\et}[1]{#1_{\text{\'et}}}
\newcommand{\liset}[1]{#1_{\opname{lis-\acute et}}} 
\newcommand{\lehomR}{\homR_{\opname{lis-\acute et}}}
\newcommand{\lisethomR}{\lehomR}
\newcommand{\fppf}[1]{#1_{\mathrm{fppf}}} 
\newcommand{\fhom}{\operatorname H_{\mathrm{fppf}}}
\newcommand{\fhomR}{\homR_{\mathrm{fppf}}}
\DeclareMathOperator{\Spec}{\mbf{Spec}}
\DeclareMathOperator{\spec}{Spec}
\DeclareMathOperator{\res}{res}
\DeclareMathOperator{\codim}{codim}
\DeclareMathOperator{\Sch}{Sch}
\newcommand{\abs}[1]{\left|#1\right|}
\let\templim\lim
\renewcommand{\lim}{\templim\limits}
\newcommand{\commsquare}[8]{
	\begin{tikzcd}[ampersand replacement=\&]
	{\displaystyle #1}\ar[r, "{#2}"]\ar[d, "{#4}" left]\&{\displaystyle #3}\ar[d, "{#5}" right]\\
	{\displaystyle #6}\ar[r, "{#7}" above]\&{\displaystyle #8}
	\end{tikzcd}
}
\newcommand{\mapover}[6]{
    \begin{tikzcd}[ampersand replacement=\&]
        {\displaystyle #1}\ar[rr, "{#2}" above]\ar[dr, "{#4}"']\&\&{#3}\ar[dl, "{#5}"]\\
        \&{\displaystyle #6}
    \end{tikzcd}
}
\newcommand{\compdiag}[6]{ 
    \begin{tikzcd}[ampersand replacement=\&]
        {\displaystyle #1}\ar[r, "{#2}" below]\ar[rr, bend left, "{#6}" above]\&{\displaystyle #3}\ar[r, "{#4}" below]\&{\displaystyle #5}
    \end{tikzcd}
}
\newcommand\homses[4]{
    \begin{tikzcd}[ampersand replacement=\&]
        0\ar[r]\&{\displaystyle #1}\ar[r, "{#2}"]\ar[r]\&{\displaystyle #3}\ar[r, "{#4}"]\&
        \homsesHELPER
}
\newcommand\homsesHELPER[9]{
        {\displaystyle #1}\ar[r]\& 0\\
        0\ar[r]\&{\displaystyle #5}\ar[r, "{#6}"]\&{\displaystyle #7}\ar[r, "{#8}"]\&{\displaystyle #9}\ar[r]\& 0
        \ar[from=lllu, to=lll, "{#2}"]\ar[from=llu, to=ll, "{#3}"]\ar[from=lu, to=l, "{#4}"]
    \end{tikzcd}
}
\newcommand\frontthreesimplex[1]{
    \begin{tikzcd}[every arrow/.append style={dash}, ampersand replacement=\&, row sep = large]
        \&{\displaystyle#1}
        \frontthreesimplexHELPER
}
\newcommand\frontthreesimplexHELPER[9]{
        \ar[ddl, "{#1}"']\ar[d, "{#2}" description]\ar[ddr, "{#3}"]\\
        \&{\displaystyle#4}\ar[dl, "{#5}"]\ar[dr, "{#6}"']\\
        {\displaystyle#7}\ar[rr, "{#8}"']\&\&{\displaystyle#9}
    \end{tikzcd}
}
\renewcommand{\phi}{\varphi}
\newcommand{\mbf}{\mathbf}
\newcommand{\msO}{\mathscr O}
\newcommand{\msI}{\mathscr I}
\newcommand{\msA}{\mathscr A}
\newcommand{\msB}{\mathscr B}
\newcommand{\msC}{\mathscr C}
\newcommand{\msD}{\mathscr D}
\newcommand{\msF}{\ms F}
\newcommand{\msL}{\ms L}
\newcommand{\msM}{\ms M}
\newcommand{\ms}{\mathscr}
\newcommand{\mc}{\mathcal} 
\newcommand{\mcA}{\mc A}
\newcommand{\mrm}{\mathrm}
\newcommand{\opname}{\operatorname}
\newcommand{\F}{\mathbb F}
\newcommand{\Q}{\mathbb Q}
\newcommand{\Z}{\mathbb Z}
\newcommand{\R}{\mathbb R}
\newcommand{\N}{\mathbb N}
\newcommand{\A}{\mathbb A}
\newcommand{\G}{\mathbb G}
\newcommand{\eps}{\varepsilon}
\renewcommand{\tau}{\uptau}
\renewcommand{\P}{\mathbb P}
\newcommand{\msE}{\ms E}
\newcommand{\me}{\matheuler}
\newcommand{\meD}{\me D}
\newcommand{\meE}{\me E}
\newcommand{\meG}{\me G}
\newcommand{\meH}{\me H}
\newcommand{\meM}{\me M}
\newcommand{\meT}{\me T}
\newcommand{\meU}{\me U}
\newcommand{\meX}{\me X}
\newcommand{\meY}{\me Y}
\newcommand{\meZ}{\me Z}
\newcommand{\p}[1]{\!\left(#1\right)} 
\newcommand{\sqbracks}[1]{\!\left[#1\right]} 
\newcommand{\sq}{\sqbracks}
\newcommand{\dparens}[1]{\!\left(\!\left(#1\right)\!\right)}
\newcommand\dps\dparens 
\newcommand{\from}{\leftarrow}
\newcommand{\xto}{\xrightarrow}
\newcommand{\xfrom}{\xleftarrow}
\newcommand{\too}{\longrightarrow}
\newcommand{\xtoo}{\xlongrightarrow}
\newcommand{\iso}{\xto\sim}
\newcommand{\isoo}{\xtoo\sim}
\newcommand{\xiso}[1]{\xto[#1]\sim}
\newcommand{\fiso}{\xfrom\sim}
\newcommand{\into}{\hookrightarrow}
\newcommand{\xinto}[1]{\overset{#1}\into}
\newcommand{\onto}{\twoheadrightarrow}
\newcommand*{\da@rightarrow}{\mathchar"0\hexnumber@\symAMSa 4B }
\newcommand*{\da@leftarrow}{\mathchar"0\hexnumber@\symAMSa 4C }
\newcommand*{\xdashrightarrow}[2][]{%
  \mathrel{%
    \mathpalette{\da@xarrow{#1}{#2}{}\da@rightarrow{\,}{}}{}%
  }%
}
\newcommand{\xdashleftarrow}[2][]{%
  \mathrel{%
    \mathpalette{\da@xarrow{#1}{#2}\da@leftarrow{}{}{\,}}{}%
  }%
}
\newcommand*{\da@xarrow}[7]{%
  \sbox0{$\ifx#7\scriptstyle\scriptscriptstyle\else\scriptstyle\fi#5#1#6\m@th$}%
  \sbox2{$\ifx#7\scriptstyle\scriptscriptstyle\else\scriptstyle\fi#5#2#6\m@th$}%
  \sbox4{$#7\dabar@\m@th$}%
  \dimen@=\wd0 %
  \ifdim\wd2 >\dimen@
    \dimen@=\wd2 %
  \fi
  \count@=2 %
  \def\da@bars{\dabar@\dabar@}%
  \@whiledim\count@\wd4<\dimen@\do{%
    \advance\count@\@ne
    \expandafter\def\expandafter\da@bars\expandafter{%
      \da@bars
      \dabar@ 
    }%
  }%
  \mathrel{#3}%
  \mathrel{%
    \mathop{\da@bars}\limits
    \ifx\\#1\\%
    \else
      _{\copy0}%
    \fi
    \ifx\\#2\\%
    \else
      ^{\copy2}%
    \fi
  }%
  \mathrel{#4}%
}
\newcommand\xdashto\xdashrightarrow
\newcommand\xdashfrom\xdashleftarrow
\newcommand\dashtoo{\xdashto{\phantom\too}} 
\providecommand{\leftsquigarrow}{%
  \mathrel{\mathpalette\reflect@squig\relax}%
}
\newcommand{\reflect@squig}[2]{%
  \reflectbox{$\m@th#1\rightsquigarrow$}%
}
\newcommand{\actson}{\curvearrowright}
\newcommand*{\rom}[1]{\textup{\uppercase\expandafter{\romannumeral#1}}}
\newcommand{\tbf}{\textbf}
\newcommand{\bp}[1]{\tbf{(#1)}} 
\newcommand{\sm}{\setminus}
\renewcommand{\l}{\ell}
\newcommand{\wh}{\widehat}
\newcommand{\ul}{\underline}
\renewcommand{\ast}[1]{#1^*}
\newcommand{\twocases}[4][.]{
	\begin{cases}
		\hfill\displaystyle #2\hfill&\text{if }\displaystyle #3\\
		\hfill\displaystyle #4\hfill&\text{otherwise#1}
	\end{cases}
}
\newcommand{\Twocases}[4]{
	\begin{cases}
		\hfill\displaystyle #1\hfill&\text{if }\displaystyle #2\\
		\hfill\displaystyle #3\hfill&\text{if }\displaystyle #4
	\end{cases}
}
\newcommand{\push}[1]{#1_*}
\newcommand{\ppush}[1]{\p{#1}_*}
\newcommand{\pull}[1]{#1^*}
\newcommand{\ppull}[1]{\p{#1}^*}
\newcommand{\by}{\times}
\newcommand{\mapdesc}[5]{
	\begin{matrix}
		\ifblank{#1}{}{\displaystyle#1:}&\displaystyle#2&\longrightarrow&\displaystyle#3\\
		&\displaystyle#4&\longmapsto&\displaystyle #5
	\end{matrix}
}
\renewcommand{\bar}{\overline}
\DeclareMathOperator{\id}{id}
\DeclareMathOperator{\pr}{pr}
\newcommand\colonequals\coloneqq
\newcommand\equalscolon\eqqcolon
\newcommand{\tand}{\,\text{ and }\,} 
\newcommand{\twhere}{\,\text{ where }\,}
\newcommand{\twith}{\,\text{ with }\,}
\newcommand{\twhile}{\,\text{ while }\,}
\newcommand{\tfor}{\,\text{ for }\,}
\newcommand{\tcomma}{\text{, }\,\,}
\newcommand{\tforall}{\,\text{ for all }\,}
\newcommand{\tforany}{\,\text{ for any }\,}
\newcommand{\tforsome}{\,\text{ for some }\,}
\renewcommand\t\text 
\newcommand\ttt\texttt
\newcommand\emphasize[1]{{\color{violet}{\texttt{#1}}}} 
\newcommand{\important}[1]{\emph{#1}}
\newcommand\noteworthy\important
\newcommand{\define}[1]{\emphasize{#1}\index{#1}}
\tikzset{%
	symbol/.style={%
		,draw=none
		,every to/.append style={%
			edge node={node [sloped, allow upside down, auto=false]{$#1$}}}
	}
}
\def\renewtheorem#1{%
	\expandafter\let\csname#1\endcsname\relax
	\expandafter\let\csname c@#1\endcsname\relax
	\gdef\renewtheorem@envname{#1}
	\renewtheorem@secpar
}
\def\renewtheorem@secpar{\@ifnextchar[{\renewtheorem@numberedlike}{\renewtheorem@nonumberedlike}}
\def\renewtheorem@numberedlike[#1]#2{\newtheorem{\renewtheorem@envname}[#1]{#2}}
\def\renewtheorem@nonumberedlike#1{  
	\def\renewtheorem@caption{#1}
	\edef\renewtheorem@nowithin{\noexpand\newtheorem{\renewtheorem@envname}{\renewtheorem@caption}}
	\renewtheorem@thirdpar
}
\def\renewtheorem@thirdpar{\@ifnextchar[{\renewtheorem@within}{\renewtheorem@nowithin}}
\def\renewtheorem@within[#1]{\renewtheorem@nowithin[#1]}
\hfill\rlap{%
		\bgroup\color{#4}%
		\hskip-\dimexpr#1-#3\relax\rule{#1}{#2}%
		\hskip-\dimexpr#1/#5\relax\rule[-\dimexpr#1-\dimexpr#1/#5\relax]{#2}{#1}%
		\egroup
	}%
\bgroup\color{#4}%
\providecommand{\customgenericname}{}
\newcommand{\newcustomtheorem}[2]{%
	\newenvironment{#1}[1]
	{%
		\renewcommand\customgenericname{#2}%
		\renewcommand\theinnercustomgeneric{##1}%
		\innercustomgeneric
	}
	{\endinnercustomgeneric}
}
    \newcommand{\subjclass}[2][1991]{%
      \let\@oldtitle\@title%
      \gdef\@title{\@oldtitle\footnotetext{#1 \emph{Mathematics subject classification.} #2}}%
    }
    \newcommand{\keywords}[1]{%
      \let\@@oldtitle\@title%
      \gdef\@title{\@@oldtitle\footnotetext{\emph{Key words and phrases.} #1.}}%
    }
\theoremstyle{plain}
\newtheorem{thm}{Theorem}
\newtheorem{thma}{Theorem}
\newtheorem{lemma}[thm]{Lemma}
\newtheorem{cor}[thm]{Corollary}
\newtheorem{prop}[thm]{Proposition}
\newtheorem{qn}[thm]{Question}
\theoremstyle{definition}
\newtheorem{defninner}[thm]{Definition}
\newtheorem{notn}[thm]{Notation}
\newtheorem{recinner}[thm]{Recall}
\newtheorem{setinner}[thm]{Setup}
\newtheorem{exinner}[thm]{Example}
\newtheorem{reminner}[thm]{Remark}
\newtheorem*{exampleinner}{Example}
\newtheorem*{assump}{Assumption}
\newtheorem*{nonexinner}{Non-example}
\newtheorem{warninner}[thm]{Warning}
\newtheorem*{ansinner}{Answer}
\newtheorem*{histinner}{History}
\theoremstyle{remark}
\newtheorem{constructinner}[thm]{Construction}
\crefname{thm}{Theorem}{Theorems}
\crefname{prop}{Proposition}{Propositions}
\crefname{lemma}{Lemma}{Lemmas}
\crefname{cor}{Corollary}{Corollaries}
\crefname{prob}{Problem}{Problems}
\crefname{exinner}{Example}{Examples}
\crefname{reminner}{Remark}{Remarks}
\crefname{ansinner}{Answer}{Answers}
\crefname{warninner}{Warning}{Warnings}
\crefname{nonexinner}{Non-example}{Non-examples}
\crefname{recinner}{Recall}{Recall}
\crefname{defninner}{Definition}{Definitions}
\crefname{histinner}{History}{History}
\crefname{constructinner}{Construction}{Constructions}
\newcommand\exsymbol{$\triangle$}
\newcommand\remsymbol{$\circ$}
\newcommand\anssymbol{$\star$}
\newcommand\warnsymbol{$\bullet$}
\newcommand\proofsymbol{$\blacksquare$}
\newcommand\nonexsymbol{$\triangledown$}
\newcommand\recsymbol{$\odot$}
\newcommand\defnsymbol{$\diamond$}
\newcommand\histsymbol{$\ominus$}
\newcommand\constructsymbol{$\octagon$}
\newcommand\setupsymbol{$\circledcirc$}
\newenvironment{ex}[1][]{\begin{exinner}[#1]\pushQED{\qed}\renewcommand\qedsymbol\exsymbol}{\popQED\end{exinner}}
\newenvironment{rem}[1][]{\begin{reminner}[#1]\pushQED{\qed}\renewcommand\qedsymbol\remsymbol}{\popQED\end{reminner}}
\newenvironment{warn}[1][]{\begin{warninner}[#1]\pushQED{\qed}\renewcommand\qedsymbol\warnsymbol}{\popQED\end{warninner}}
\newenvironment{rec}[1][]{\begin{recinner}[#1]\pushQED{\qed}\renewcommand\qedsymbol\recsymbol}{\popQED\end{recinner}}
\newenvironment{defn}[1][]{\begin{defninner}[#1]\pushQED{\qed}\renewcommand\qedsymbol\defnsymbol}{\popQED\end{defninner}}
\newenvironment{set}[1][]{\begin{setinner}[#1]\pushQED{\qed}\renewcommand\qedsymbol\setupsymbol}{\popQED\end{setinner}}
\let\proofinner=\proof
\newcommand\myproof[1][Proof]{\proofinner[#1]\renewcommand\qedsymbol\proofsymbol}
\def\proof{\myproof}
\numberwithin{thm}{section}
\numberwithin{equation}{section}
\theoremstyle{definition}
\newtheorem{defnpropinner}[thm]{Definition/Proposition}
\newenvironment{defnprop}[1][]{\begin{defnpropinner}[{#1}]\pushQED{\qed}\renewcommand\qedsymbol\defnsymbol}{\popQED\end{defnpropinner}}
\crefname{thma}{Theorem}{Theorems}
\newcommand{\niven}[1]{{\color{magenta} \textsf{$\spadesuit\spadesuit\spadesuit$ Niven: [#1]}}}
\newcommand*\centermathcell[1]{\omit\hfil$\displaystyle#1$\hfil\ignorespaces}
\newcommand{\thickslash}{\mathbin{\!\!\pmb{\fatslash}}}
\DeclareMathOperator\Cor{Cor}
\author{Niven Achenjang \orcidlink{0000-0001-9551-5821}}
\address{Niven T. Achenjang \\
	Department of Mathematics \\
	Harvard University \\
	Cambridge, MA 02138 }
\email{achenjang@math.harvard.edu}
\title{On Brauer Groups of Tame Stacks}
\keywords{Brauer Groups, Stacky curves, Modular curves, Algebraic stacks, DM stacks}
\subjclass[2020]{14F22 (14D23, 14A20)}
\newcommand\ghom{\hom_{0}}
\DeclareMathOperator\Nat{Nat}
\DeclareMathOperator\GrpSch{GrpSch}
\DeclareMathOperator\Ideg{Ideg}
\date{\today}
\begin{document}

\begin{abstract}
    We develop some general tools for computing the Brauer group of a tame algebraic stack $\meX$ by studying the difference between it and the Brauer group of the coarse space $X$ of $\meX$. It is our hope that these tools will be used to simplify future computations of Brauer groups of stacks. Informally, we show that $\Br\meX$ is often built from $\Br X$ and ``information about the Picard groups of the fibers of $\meX\to X$''. Along these lines, we compute, for example, the Brauer group of the moduli stack $\meY(1)_S$ of elliptic curves, over any regular noetherian $\Z[1/2]$-scheme $S$ as well as the Brauer groups of (many) stacky curves (allowing generic stabilizers) over algebraically closed fields.
\end{abstract}
\maketitle
\vspace{-3em}

\noindent\hrulefill
\tableofcontents
\vspace{-3em}
\noindent\hrulefill
\hypersetup{linkcolor=\linkcolor}

\section{\bf Introduction}

Brauer groups of fields were classically studied objects whose definition was generalized to rings in work of Azumaya, Auslander, and Goldman \cite{azumaya,auslander-goldman}, and then later to schemes in work of Grothendieck \cite{Gr1,Gr2,Gr3}. They have been cemented as important cohomological invariants for their applications to class field theory, to understanding $\l$-adic cohomology (especially of curves), and to obstructions to rational points on varieties. In recent times, there has been growing interest in extending our understanding of Brauer groups from schemes to stacks. There is, for example, the breakthrough work of Antieau and Meier \cite{AM} on the Brauer group of the moduli stack $\meY(1)$ of elliptic curves (which inspired papers such as \cite{shin,meier-cs,achenjang2024brauer}), work of Lieblich \cite{lieblich2011period} who used the Brauer group of $B\mu_n$ to study the period-index problem, work of Shin computing the Brauer groups of a variety of stacks \cite{shin-Gm,shin-wps,Shin-Br=Br'}, work of Di Lorenzo and Pirisi computing Brauer groups via their theory of cohomological invariants \cite{dilorenzo2022cohomological,DiLorenzo-Pirisi-hyp}, and work of Santens \cite{santens2023brauermanin} who studied Brauer-Manin obstructions on (generically schemey) stacky curves. There is also recent work of Kresch--Tschinkel \cite{kresch2024unramifiedbrauergroupquotient} and Pirutka--Zhang \cite{pirutka2024computingequivariantbrauergroup} on computing Brauer groups of quotients of nice varieties by generically free finite group actions.

That said, especially in the context of stacky curves, many of these papers are largely devoted to computing the Brauer groups of very specific stacks (often even just a single stack viewed over various bases). For example, between the papers \cite{AM,shin,meier-cs,dilorenzo2022cohomological,achenjang2024brauer} which compute Brauer groups of modular curves, the only two such curves considered have been $\meY(1)$ and the moduli stack $\meY_0(2)$ of elliptic curves equipped with an \'etale subgroup of order $2$. It is the goal of the present paper to develop general enough tools for computing Brauer groups of (tame) stacks (especially of stacky curves; see \cref{sect:conventions} for a precise definition) so that, informally speaking, such computations can be carried out in only a handful of pages instead of a whole paper. We work out some examples in \cref{sect:examples} to help the reader evaluate how well we achieve this goal.

Let $\meX$ be a tame algebraic stack (in the sense of \cite{AOV}; see \cref{defn:tame}), with coarse space $c\colon\meX\to X$. We aim to study $\Br'(\meX)\coloneqq\hom^2(\meX,\G_m)_{\t{tors}}$ (or, rather, the cokernel of $\pull c\colon\Br' X\to\Br'\meX$) via the Leray spectral sequence. Our primary tool for making this study feasible is the notion of $\meX$ being `locally Brauerless', introduced in \cref{sect:vanishing-result}. This is a condition on the geometric automorphism groups of $\meX$ (i.e. those automorphism groups associated to points defined over separably closed fields) which, roughly speaking, ensures that, Brauer classes on $\meX$ vanish \'etale-locally over $X$. This is made precise in our main technical result.
\begin{thma}\label{thma:main}
    If $\meX$ is `locally Brauerless' in the sense of \cref{defn:locally-Brauerless}, then $\homR^2\push c\G_m=0$. Consequently, there is an exact sequence
    \begin{equation}\label{ses:main-theorem}
        \dots\too\hom^2(X,\G_m)\xtoo{\pull c}\hom^2(\meX,\G_m)\too\hom^1(X,\homR^1\push c\G_m)\too\dots
    \end{equation}
\end{thma}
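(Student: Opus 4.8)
The plan is to prove the vanishing $\homR^2\push c\G_m=0$ --- the substantive assertion --- and then obtain \eqref{ses:main-theorem} formally. For the formal part, feed the vanishing into the Leray spectral sequence $E_2^{p,q}=\hom^p(X,\homR^q\push c\G_m)\Rightarrow\hom^{p+q}(\meX,\G_m)$. Two ingredients are needed. First, $\push c\G_m=\G_m$: since $c$ is a coarse moduli morphism one has $\push c\msO_\meX=\msO_X$, hence on units $\Gamma(\meX_U,\G_m)=\Gamma(\meX_U,\msO)^\times=\Gamma(U,\msO_U)^\times=\Gamma(U,\G_m)$ for every \'etale $U\to X$. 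Second, $\homR^2\push c\G_m=0$ forces $E_2^{0,2}=0$, so that the term $\ker\bigl(\hom^2(\meX,\G_m)\to E_2^{0,2}\bigr)$ in the standard seven-term exact sequence of the spectral sequence is all of $\hom^2(\meX,\G_m)$. That seven-term sequence is then exactly \eqref{ses:main-theorem}: the two maps concealed in the ``$\dots$'' are the transgressions $d_2\colon\hom^0(X,\homR^1\push c\G_m)\to\hom^2(X,\G_m)$ and $d_2\colon\hom^1(X,\homR^1\push c\G_m)\to\hom^3(X,\G_m)$, and $\hom^2(X,\G_m)\to\hom^2(\meX,\G_m)$ is the edge map $\pull c$.

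So everything reduces to proving $\homR^2\push c\G_m=0$. As this is a sheaf on the small \'etale site of $X$, it suffices to show its stalks vanish. Fix a geometric point $\bar x\to X$, let $R=\msO_{X,\bar x}^{\mathrm{sh}}$ be the strictly henselian local ring with (separably closed) residue field $\kappa$ of characteristic $p$, and set $\meX_R:=\meX\times_X\Spec R$. Formation of coarse spaces of tame stacks commutes with flat base change, so $\meX_R$ is tame with coarse space $\Spec R$; and \'etale cohomology commutes with the cofiltered limit of (quasi-compact, quasi-separated) \'etale neighborhoods of $\bar x$, so $(\homR^2\push c\G_m)_{\bar x}\cong\hom^2(\meX_R,\G_m)$. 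Since the geometric automorphism groups of $\meX_R$ are among those of $\meX$, the stack $\meX_R$ is still locally Brauerless, and it remains to prove: a locally Brauerless tame stack with strictly henselian local coarse space $\Spec R$ has $\hom^2(-,\G_m)=0$.

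Here I would invoke the local structure theorem for tame stacks \cite{AOV}: since $\Spec R$ is strictly henselian local (so every \'etale cover of it splits), it gives a \emph{global} presentation $\meX_R\cong[\Spec A/G]$ with $G$ a finite flat linearly reductive group scheme over $R$ acting on $\Spec A$ finite over $\Spec R$. Because $R$ contains all roots of unity of order prime to $p$, both the diagonalizable and the \'etale constituents of $G$ are constant; thus $G$ is a \emph{constant} finite group of order invertible in $R$. Decomposing $\Spec A$ into $G$-orbits of its finitely many (strictly henselian local) connected components reduces us to $\meX_R=[\Spec B/H]$ with $B$ strictly henselian local, residue field $\kappa'$ separably closed, and $H\le G$ the stabilizer of the corresponding component --- which is one of the geometric automorphism groups of $\meX$ (namely $\Aut$ at the closed point of that component). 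Now $\Spec B\to[\Spec B/H]$ is a finite \'etale $H$-torsor, classified by a morphism $[\Spec B/H]\to BH_R$; its $\G_m$-cohomology Leray spectral sequence degenerates, since the higher direct images vanish (the global-sections functor on a strictly henselian local scheme is exact, so $\hom^{\ge1}(\Spec B,\G_m)=0$), and, as $R$ is strictly henselian and $H$ is constant, $\hom^q([\Spec B/H],\G_m)\cong\hom^q(BH_R,\underline{\units B})\cong\hom^q_{\mathrm{grp}}(H,\units B)$. Finally, $\units B$ splits as an $H$-module as $\units{\kappa'}\oplus(1+\mfm_B)$: the subgroup $1+\mfm_B$ is uniquely divisible by $\abs H$ (invertible in $R$, and $B$ henselian), hence $H$-cohomologically trivial in positive degrees, while $\units{\kappa'}$ is divisible by every prime $\ne p$, so only its torsion $\mu(\kappa')\cong(\Q/\Z)'$ contributes in positive degrees. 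Therefore $\hom^2([\Spec B/H],\G_m)\cong\hom^2_{\mathrm{grp}}\bigl(H,(\Q/\Z)'\bigr)\cong\hom^3_{\mathrm{grp}}(H,\Z)\cong H_2(H,\Z)$, the Schur multiplier of $H$ (a finite group whose order is prime to $p$). The condition in \cref{defn:locally-Brauerless} is precisely what forces all of these Schur multipliers --- equivalently, the degree-$2$ cohomology with $\G_m$-coefficients of the classifying stacks of all geometric automorphism groups --- to vanish; granting this, $\hom^2(\meX_R,\G_m)=0$, and $\homR^2\push c\G_m=0$ follows.

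The main obstacle is the computation in the last paragraph: identifying the stalk $(\homR^2\push c\G_m)_{\bar x}$ with a product of Schur multipliers of the geometric automorphism groups of $\meX$. Two points want care: (i) the $H$-equivariant splitting $\units B\cong\units{\kappa'}\oplus(1+\mfm_B)$ and the cohomological triviality of $1+\mfm_B$, which is where tameness --- the invertibility of $\abs H$ --- is used essentially; and (ii) verifying that the obstruction produced this way depends only on the group $H$ and not on the incidental scheme $\Spec B$, so that it is genuinely the invariant packaged in \cref{defn:locally-Brauerless}. Everything else --- the reduction to stalks, flat base change for coarse spaces, continuity of \'etale cohomology, the structure theorem, and the spectral-sequence computations --- should be routine given the cited results on tame stacks.
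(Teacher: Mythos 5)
Your reduction to stalks, the appeal to the local structure theorem to write the pullback to $\spec\msO_{X,\bar x}$ as a quotient $[\spec A/G]$ by a finite linearly reductive group, and the final spectral-sequence bookkeeping all match the paper (\cref{cor:tame-stack-sh}, \cref{cor:R2Gm-vanishes-sequence}). The gap is the sentence ``both the diagonalizable and the \'etale constituents of $G$ are constant; thus $G$ is a \emph{constant} finite group of order invertible in $R$.'' This is false precisely in the situation the theorem is designed to cover. A finite linearly reductive group scheme over a strictly henselian local ring of residue characteristic $p>0$ is an extension of a tame constant group $\ul Q$ by a \emph{connected diagonalizable} group $\Delta$ (\cref{prop:lin-red-classification}), and $\Delta$ is a product of groups $\mu_{p^{n}}$: these are infinitesimal, not constant, and their order is a power of $p$, hence not invertible in $R$. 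Tameness forces only the \'etale quotient $\ul Q$ to have invertible order. Consequently your argument --- splitting $\units B$ as an $H$-module into $\units{\kappa'}\oplus(1+\mfm_B)$ and using unique divisibility of $1+\mfm_B$ by $\abs H$ --- establishes only the DM case, which is Meier's \cite[Theorem 6]{meier-cs} (cf.\ \cref{lem:abs-grp-loc-Gm-coh} and the remark following \cref{thm:R2Gm-vanishes}). Already for $\meX=B\mu_{p,k}$ over a separably closed field $k$ of characteristic $p$, which is tame and locally Brauerless, your reduction does not apply.

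Handling the connected diagonalizable part is where the real work of the paper lies. One must show $\hom^2([\spec B/\Delta],\G_m)=0$ for $\Delta$ connected diagonalizable, which the paper does in \cref{prop:diag-quot-Br-vanish} by a genuinely different route: comparison with the special fiber via proper base change applied to the torsion complex $[\G_m\xto{p^n}\G_m]$, reduction to $B\Delta_{k'}$, and then the observation (\cref{lem:BG-Gm-stalk}) that every central extension of $\Delta$ by $\G_m$ is commutative, so that $\ghom^2(\Delta,\G_m)\cong\Ext^1(\Delta,\G_m)\cong\fhom^1(R,\dual\Delta)$, which vanishes because $\dual\Delta$ is \'etale and $R$ is strictly henselian. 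Only after this is the \'etale quotient $\ul Q$ handled by the Hochschild--Serre argument you describe (\cref{prop:tame-loc-Gm-coh}), and it is only $\ul Q$ --- not all of $G$ --- whose degree-two cohomology the locally Brauerless hypothesis kills. To repair your proof you would need to insert this diagonalizable step; the constant-group portion of your argument could then be kept essentially as written. (Two smaller points to tighten in that portion: the claim that only the torsion of $\units{\kappa'}$ contributes in positive degrees needs the structure theorem for divisible groups, as in \cref{lem:brauerless-Gm-vanish}; and the orbit decomposition of $\spec A$ is delicate for non-constant group schemes, which is why \cref{cor:tame-stack-sh} instead proves directly that $\spec A$ is connected.)
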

\begin{proof}
    This is \cref{thm:R2Gm-vanishes}, and a longer version of \cref{ses:main-theorem} appears in \cref{cor:R2Gm-vanishes-sequence}. 
\end{proof}
\begin{rem}
    \cref{thma:main} is a generalization of \cite[Theorem 6]{meier-cs}, which essentially proves the DM case.\footnote{The word `tame' does not appear in \cite{meier-cs}, but realizing that this notion was lurking in the background of his work was one piece of motivation for starting this project.}
\end{rem}
\begin{rem}\label{rem:main-theorem-explanation}
    In words, \cref{thma:main} says that understanding the Brauer group $\hom^2(\meX,\G_m)$ of $\meX$ can be reduced to understanding the Brauer group of $X$ along with understanding the Picard groups of the fibers of $c$ (this is what $\homR^1\push c\G_m$ keeps track of); we include many examples showing these can often be reasonably well understood.
\end{rem}
The strategy alluded to in \cref{rem:main-theorem-explanation} is used in \cref{sect:examples} to carry out the following example computations.
\begin{thma}\label{thma:mod-curve}
    Let $\meY(1)$ (resp. $\meX(1)$, resp. $\meY_0(2)$) denote the moduli stack of elliptic curves (resp. generalized elliptic curves, resp. elliptic curves equipped with a subgroup of order $2$), and let $S$ be a noetherian $\Z[1/2]$-scheme. Then,
    \begin{enumerate}
        \item $\Br'\meX(1)_S\simeq\Br'\P^1_S\simeq\Br' S$.
        \item If $S$ is regular, there is an explicit isomorphism
        \[\Br'\A^1_S\oplus\hom^1(S,\zmod{12})\isoo\Br'\meY(1)_S.\]
        \item If $S$ is regular, there is an explicit isomorphism
        \[\Br'(\A^1_S\sm\{0\})\oplus\hom^1(S,\zmod4)\oplus\hom^0(S,\zmod2)\isoo\Br'\meY_0(2)_S.\]
    \end{enumerate}
\end{thma}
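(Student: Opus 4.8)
The plan is to apply \cref{thma:main} to the three coarse space maps $c\colon\meX(1)_S\to\P^1_S$, $c\colon\meY(1)_S\to\A^1_S$, and $c\colon\meY_0(2)_S\to Y_0(2)_S\cong\A^1_S\sm\{0\}$, and then to work out the terms of (the long version of) \cref{ses:main-theorem}. First I would check the hypotheses: over $\Z[1/2]$ every geometric automorphism group that occurs is cyclic --- $\mu_2$ generically (the $[-1]$-involution), $\mu_4$ over $\bar\jmath=1728$, $\mu_6$ over $\bar\jmath=0$, $\mu_2$ at the cusp of $\meX(1)$, and for $\meY_0(2)$ also a $\mu_4$ over the point above $\bar\jmath=1728$ at which $Y_0(2)\to Y(1)$ ramifies --- so \cref{defn:locally-Brauerless} is satisfied; tameness is automatic except for $\meY(1)_S$ in residue characteristic $3$, where the supersingular point has non-linearly-reductive automorphisms and so must be dealt with by a separate local argument (excision at that point and gluing, in the spirit of the existing literature). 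For parts (2) and (3), where $S$ is regular, $\pull c$ is moreover injective, since each $c$ admits a rational section (a Weierstrass model of the universal (generalized) elliptic curve over $\{\bar\jmath\neq 0,1728,\infty\}$) and $\Br'$ of a regular scheme injects into that of a dense open. What remains is to compute $\homR^1\push c\G_m$, its cohomology in degrees $0$ and $1$, and the Leray differentials $d_2^{0,1}\colon\hom^0(X,\homR^1\push c\G_m)\to\hom^2(X,\G_m)$ and $d_2^{1,1}\colon\hom^1(X,\homR^1\push c\G_m)\to\hom^3(X,\G_m)$.

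The heart of the argument is the sheaf $\homR^1\push c\G_m$, whose stalk at a geometric point $\bar x$ of $X$ is $\Pic$ of the fibre $\meX\times_X\spec\msO^{\mrm{sh}}_{X,\bar x}$. Away from $\bar\jmath=0,1728$, and also near the cusp of $\meX(1)$ --- where étale-locally $\meX(1)\cong B\mu_2\times(\text{base})$ because $[-1]$ acts trivially on the Tate parameter --- this fibre is a trivial $\mu_2$-gerbe, so $\homR^1\push c\G_m=\underline{\mu_2^\vee}=\underline{\Z/2}$ there. Near $\bar\jmath=1728$ the stack is, étale-locally on $X$, a $\mu_2$-gerbe over the order-$2$ root stack of $X$ along $\{\bar\jmath=1728\}$; presenting the fibre over a strictly henselian base as $[\spec R'/\mu_4]$ --- with $R'$ the ramified quadratic extension and $\mu_4$ acting through $\mu_4\twoheadrightarrow\mu_2$ --- a short group-cohomology computation identifies its Picard group as \emph{cyclic of order $4$}, with cospecialization to the generic stalk $\Z/2$ equal to reduction mod $2$. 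Thus, near $\bar\jmath=1728$, $\homR^1\push c\G_m$ is \emph{not} a direct sum of $\underline{\Z/2}$ with a skyscraper, and this ``$\Z/4$ rather than $\Z/2\times\Z/2$'' is precisely the feature that forces the cyclic summands $\Z/4$ (and not $\Z/2\times\Z/2$) in parts (2) and (3). The analogous computation near $\bar\jmath=0$ gives stalk $\mu_6^\vee\cong\Z/2\times\Z/3$, whose $\Z/3$-factor does split off a skyscraper, while for $\meY_0(2)$ the only special point has the ``$\Z/4$'' local picture just described. I would record the outcome as a short exact sequence of sheaves relating $\homR^1\push c\G_m$, $\underline{\Z/2}_X$, and a sum of skyscrapers supported on the special locus.

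From such a sequence, together with the étale cohomology of the coarse spaces --- $\hom^\ast(\A^1_S,\underline A)=\hom^\ast(S,A)$, $\hom^\ast(\P^1_S,\underline A)=\hom^\ast(S,A)\oplus\hom^{\ast-2}(S,A(-1))$, $\hom^\ast(\G_{m,S},\underline A)=\hom^\ast(S,A)\oplus\hom^{\ast-1}(S,A(-1))$ for $A$ torsion prime to the residue characteristics, plus the variants for $\G_m$-coefficients in which the higher direct images of $\G_m$ along $X\to S$ enter --- I would compute $\hom^0$ and $\hom^1$ of $\homR^1\push c\G_m$. The bookkeeping of the connecting maps is governed by the reduction-mod-$2$ cospecializations, hence by the Bockstein for $0\to\Z/2\to\Z/4\to\Z/2\to 0$ (which is exactly why the answer involves $\hom^\ast(S,\Z/4)$ rather than $\hom^\ast(S,\Z/2)^2$), together with, for $\meY_0(2)$, the ``winding'' class on $\G_m$; the upshot is $\hom^1(X,\homR^1\push c\G_m)\cong\hom^1(S,\Z/12)$ for $\meX(1)$ and $\meY(1)$, and $\cong\hom^1(S,\Z/4)\oplus\hom^0(S,\Z/2)$ for $\meY_0(2)$. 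Finally I would examine the differentials. One has $d_2^{0,1}=0$, because the sections of $\homR^1\push c\G_m$ are realised by honest line bundles on $\meX$ (powers of the Hodge bundle). The decisive point is $d_2^{1,1}$: for $X=\A^1_S$ and $X=\G_{m,S}$ the geometric fibres over $S$ have trivial Picard group, so $\hom^3(X,\G_m)$ equals $\hom^3(S,\G_m)$ (resp.\ $\hom^3(S,\G_m)\oplus\hom^3(S,\Z)$), which contains no summand in cohomological degree $\le 1$ over $S$ that could receive the extra classes, and one checks $d_2^{1,1}=0$ by writing down explicit cyclic algebras, so the sequences split as claimed; whereas for $X=\P^1_S$, $\Pic(\P^1_{\bar k})=\Z$ makes the relative Picard sheaf of $\P^1_S/S$ equal to $\underline\Z$, contributing an $\hom^2(S,\Z)\cong\hom^1(S,\Q/\Z)$ summand to $\hom^3(\P^1_S,\G_m)$ into which $d_2^{1,1}$ embeds all of $\hom^1(S,\Z/12)$ (concretely: the extra Brauer classes on $\meY(1)$ are ramified along the cusp), so the ``extra'' part vanishes and $\pull c$ identifies $\Br'\meX(1)_S$ with $\Br'\P^1_S\cong\Br'S$.

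The main obstacle is this cluster of middle steps: determining $\homR^1\push c\G_m$ precisely enough to see the non-split, cyclic-of-order-$4$ local structure at $\bar\jmath=1728$ (the source of ``$\Z/4$ vs.\ $\Z/2\times\Z/2$'', and hence of the literal statement of the theorem), and then carrying out the connecting-map and differential bookkeeping --- in particular pinning down $d_2^{1,1}$ well enough to see why the extra classes survive over the affine bases $\A^1_S,\G_{m,S}$ but are obstructed over the projective base $\P^1_S$. Producing the explicit splitting classes (needed both for the word ``explicit'' and to force $d_2^{1,1}=0$ in the affine cases) is the other substantial input, and the residue-characteristic-$3$ fibre of $\meY(1)_S$ --- lying outside the tame hypothesis of \cref{thma:main} --- requires an independent local computation.
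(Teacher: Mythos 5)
Your route is genuinely different from the paper's. You propose to run the Leray spectral sequence of the coarse space map $c\colon\meX\to X$ over the \emph{global} coarse space and to compute $\hom^i(X,\homR^1\push c\G_m)$ directly, identifying the connecting maps with Bocksteins coming from the non-split, cyclic-of-order-$4$ local structure of $\homR^1\push c\G_m$ at $j=1728$. The paper instead uses \cref{cor:R2Gm-vanishes-sequence} only over strictly henselian local rings (to compute stalks), and does the global assembly with the Leray spectral sequence of the structure map $f\colon\meX\to S$, where $\homR^1\push f\G_m$ is already known to be $\ul\Z$, $\ul{\zmod{12}}$, $\ul{\zmod4}$ by \cite{FO} and \cite[Theorem A.3]{achenjang2024brauer}; this sidesteps the sheaf-extension bookkeeping over $X$ entirely (and the paper explicitly warns, right after \cref{cor:R2Gm-vanishes-sequence}, that your route is the harder one). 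Your identification of the key phenomena is nonetheless accurate: the $\zmod4$ stalk at $1728$ is indeed what forces $\zmod4$ rather than $\zmod2\by\zmod2$, the splitting classes are cup products with torsors of roots of trivializations of powers of the Hodge bundle (compare \cref{prop:Y(1)-splitting,prop:Y0(2)-splitting}), and the residue characteristic $3$ fibre of $\meX(1)$ does require a separate hands-on computation (\cref{prop:char-3-j0-stalk}).

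The genuine gap is in part \bp1. You assert that $d_2^{1,1}$ embeds all of $\hom^1(S,\zmod{12})$ into $\hom^3(\P^1_S,\G_m)$, which (with $d_2^{0,1}=0$) would give $\hom^2(\P^1_S,\G_m)\iso\hom^2(\meX(1)_S,\G_m)$. This is false for general noetherian $S$ --- and part \bp1 assumes neither regularity nor normality. Indeed, $\pull c\msO(1)\cong\lambda^{12}$, so the map $\Pic_{\P^1_S/S}=\ul\Z\to\Pic_{\meX(1)_S/S}=\ul\Z$ is multiplication by $12$, and comparing the two Leray sequences over $S$ shows $\coker\p{\pull c\colon\hom^2(\P^1_S,\G_m)\to\hom^2(\meX(1)_S,\G_m)}$ contains $\hom^1(S,\ul\Z)/12$, which is nonzero for, say, a nodal curve $S$. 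Hence $\ker d_2^{1,1}\neq0$ for such $S$. The theorem survives only because those kernel classes lift to \emph{non-torsion} elements of $\hom^2(\meX(1)_S,\G_m)$, so $\Br'$ is unaffected; but extracting this from your spectral sequence requires an additional argument identifying which elements of $E_\infty^{1,1}$ are reached by torsion classes. The paper's $f$-Leray makes this automatic, since there the relevant $E_2^{1,1}=\hom^1(S,\ul\Z)$ is torsion-free by \cite[Lemma A.3]{shin-Gm}. (A smaller inaccuracy: $\hom^3(\A^1_S,\G_m)=\hom^3(S,\G_m)$ is unjustified, as $\homR^2\push g\G_m$ need not vanish; but your actual mechanism for $d_2^{1,1}=0$ in the affine cases --- explicit cyclic algebras --- is correct and is what the paper uses.)
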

\begin{proof}
    Part \bp1 is proved in \cref{cor:BrX(1)-general} (note that $\Br'\P^1_S\simeq\Br'S$ by \cite[Chapter II, Theorem 2]{gabber:thesis}), part \bp2 is proved in \cref{thm:BrY(1),prop:Y(1)-splitting}, and part \bp3 is proved in \cref{thm:BrY0(2)-gen-1,prop:Y0(2)-splitting}.
\end{proof}
\begin{rem}
    Part \bp2 of \cref{thma:mod-curve} extends results of \cite{AM,meier-cs,dilorenzo2022cohomological}, at least away from characteristic $2$, and part \bp3 extends \cite[Theorem 1.1]{achenjang2024brauer}.
\end{rem}
\begin{rem}
    One potential future application of results like \cref{thma:mod-curve} may be to aid in computing integral Brauer--Manin or \'etale--Brauer obstructions to points on modular curves. In particular, the author suspects that the integral \'etale--Brauer obstruction set $\meY(1)(\R\by\wh\Z)^{\t{\'et-Br}}$ should be empty and would be excited to see such a computation carried out; this would give another proof of Tate's theorem (appearing in \cite{Ogg-EC-2power}) that there is no elliptic curve $E/\Q$ with everywhere good reduction.
\end{rem}
\begin{rem}
    This paper focuses mainly on stacks which are tame (or at least generically tame, e.g. $\meY(1)$ in characteristic $3$). However, it would be interesting to also understand how one can effectively compute Brauer groups of certain classes of wild stacks. For example, Shin \cite{shin} and Di Lorenzo--Pirisi \cite{dilorenzo2022cohomological} were able to compute $\Br'\meY(1)_k$ for characteristic two fields $k$; they show it is always an extension $0\to\Br'\A^1_k\oplus\hom^1(k,\zmod{12})\to\Br'\meY_0(2)_k\to\zmod2\to0$. In light of \cref{thma:mod-curve}\bp2, we view this $\zmod2$ as a `wild piece of the Brauer group' and think it would be interesting to know how to predict/compute such `wild pieces' more generally.
\end{rem}
The generality of \cref{thma:main} allows it to readily apply not just to specific examples like those considered above, but also to more general classes of stacks. To state the next result, we need the notion of a `multiply rooted stack'. By this, we simply mean the result of rooting a scheme along multiple (disjoint) divisors, each to potentially different degrees.
\begin{thma}\label{thma:stacky-tsen}
    Let $k$ be an algebraically closed field, let $\meY$ be a multiply rooted stack (i.e. of the form \cref{eqn:iterated-root-stack}) over a smooth $k$-curve $X$, let $G/k$ be a finite commutative linearly reductive group, and let $\meX\to\meY$ be a $G$-gerbe. If $\meX$ is locally Brauerless (in the sense of \cref{defn:locally-Brauerless}), then $\hom^2(\meX,\G_m)\simeq\hom^1(X,\dual G)$.
\end{thma}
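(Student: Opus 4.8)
The plan is to apply \cref{thma:main} to the coarse space morphism $c\colon\meX\to X$ and then compute the single surviving term $\hom^1(X,\homR^1\push c\G_m)$. First I would set up the coarse space: neither passing to a $G$-gerbe nor rooting along divisors changes the coarse space, so the coarse space of $\meX$ is $X$, and $c$ factors as $\meX\xto{g}\meY\xto{p}X$, where $g$ is the $G$-gerbe and $p$ is the coarse space of the multiply rooted stack. The hypothesis that $\meX$ is locally Brauerless is exactly what \cref{thma:main} requires, so $\homR^2\push c\G_m=0$. Plugging this into the Leray spectral sequence $\hom^a(X,\homR^b\push c\G_m)\Rightarrow\hom^{a+b}(\meX,\G_m)$, the term $\hom^2(\meX,\G_m)$ is assembled from $E_\infty^{2,0}$ (a subquotient of $\hom^2(X,\G_m)$), $E_\infty^{1,1}=\ker\!\big(d_2\colon\hom^1(X,\homR^1\push c\G_m)\to\hom^3(X,\G_m)\big)$, and $E_\infty^{0,2}$ (a subquotient of $\hom^0(X,\homR^2\push c\G_m)=0$).

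Next I would kill the two outer contributions. Since $X$ is a smooth curve over the algebraically closed field $k$, Tsen's theorem gives $\hom^2(X,\G_m)=\Br X\into\Br k(X)=0$, so $E_\infty^{2,0}=0$. Moreover $\hom^3(X,\G_m)$ is torsion-free---via the Kummer sequence together with the (fppf) cohomological dimension bound for curves over algebraically closed fields---whereas $\homR^1\push c\G_m$ is a torsion sheaf, since its stalks are the Picard groups of the geometric fibers of $c$, which are gerbes banded by finite group schemes over residual gerbes of the shape $B\mu_n$, hence finite. So $\hom^1(X,\homR^1\push c\G_m)$ is torsion, the map $d_2$ vanishes, and we get $\hom^2(\meX,\G_m)\cong\hom^1(X,\homR^1\push c\G_m)$.

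The remaining task is to identify $\homR^1\push c\G_m$, which I would do via the Grothendieck spectral sequence for $\push c=\push p\circ\push g$. For the $G$-gerbe $g$ one has $\push g\G_m=\G_m$ and $\homR^1\push g\G_m\cong\dual G$, and because $G$ is commutative this is canonically the pullback of the group scheme $\dual G$ over $k$ (the gerbe class only affects $\homR^2$, so there is no twisting). For the root-stack coarse map $p$, $\push p\G_m=\G_m$, the relative Picard sheaf $\homR^1\push p\G_m$ is the skyscraper $\bigoplus_i\iota_{i*}(\zmod{n_i})$ supported on the finitely many (necessarily $k$-rational) rooting points with rooting orders $n_i$, and $\homR^2\push p\G_m=0$. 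This last vanishing is a local computation over a strictly henselian DVR $A$: there the fiber of $p$ is $[\Spec Y/\mu_n]$ with $Y/A$ totally ramified of degree $n$, and the group-cohomology spectral sequence reduces the claim to surjectivity of the norm $\norm_{Y/A}\colon\units Y\to\units A$, which holds because $\units k$ is $n$-divisible. Feeding these in, the composite-functor spectral sequence degenerates to a short exact sequence of sheaves on $X$
\[0\too\homR^1\push p\G_m\too\homR^1\push c\G_m\too\push p\dual G\too0,\]
and $\push p\dual G=\dual G$ since $p$ is proper with geometrically connected fibers. Because $\homR^1\push p\G_m$ is supported on a finite set of $k$-points, $\hom^j(X,\homR^1\push p\G_m)=0$ for $j\ge1$, so the long exact cohomology sequence yields $\hom^1(X,\homR^1\push c\G_m)\cong\hom^1(X,\dual G)$. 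Combined with the previous paragraph this gives $\hom^2(\meX,\G_m)\simeq\hom^1(X,\dual G)$.

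The main obstacle is the identification of $\homR^1\push c\G_m$: one must check that the gerbe contributes exactly the \emph{constant} sheaf $\dual G$ (this is where commutativity of $G$ is essential, ruling out twisted forms), that the rooting divisors contribute only skyscrapers and hence nothing in cohomological degree $\ge1$, and that the two connecting maps really vanish---the boundary $d_2$ into the torsion-free group $\hom^3(X,\G_m)$, and the boundary $\push p\dual G\to\homR^2\push p\G_m$, the latter via the norm-surjectivity computation over strictly henselian bases. That norm computation, and the relative Picard-group computations feeding into the torsion and skyscraper claims, are also where one must separately accommodate non-\'etale $\mu_{p^k}$-factors (in both $G$ and the rooting orders) when $\Char k=p>0$, working throughout with fppf rather than \'etale cohomology.
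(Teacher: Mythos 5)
Your outline is, in substance, the paper's own proof (\cref{prop:stacky-tsen} together with \cref{lem:two-step-push-es,lem:kbar-stacky-curve-push}): run the Leray spectral sequence for $c\colon\meX\to X$, use the locally Brauerless hypothesis via \cref{thm:R2Gm-vanishes} to kill $\homR^2\push c\G_m$, use Tsen-type vanishing of $\hom^i(X,\G_m)$ for $i\ge 2$ on a curve over an algebraically closed field to kill $E_2^{2,0}$ and the target of $d_2^{1,1}$ (the paper just cites $\hom^3(X,\G_m)=0$ rather than your torsion/torsion-free argument, but both work), and then identify $\homR^1\push c\G_m$ via the Grothendieck spectral sequence for $c=p\circ g$, with the root-stack contribution a skyscraper and hence acyclic. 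One small mischaracterization: commutativity of $G$ is not what makes $\homR^1\push g\G_m\simeq\dual G$ work — \cref{lem:gerbe-pushforwards} holds for arbitrary finite linearly reductive $G$, since $\dual G=\ul\Hom(G,\G_m)$ is commutative regardless; indeed \cref{prop:stacky-tsen} does not assume $G$ commutative.

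The one place you genuinely deviate is the proof that $\homR^2\push p\G_m=0$, and as written it has a hole. Your local computation over a strictly henselian DVR $A$ — reduce to $\ghom^2(\mu_n,\G_{m,B/A})=\units A/\Nm_{B/A}(\units B)$ and use $(\units A)^n=\units A$ — is correct precisely when $n$ is invertible in $k$, so that $\mu_n$ is constant and Hochschild cohomology is ordinary cyclic group cohomology. When $\Char k=p$ divides some rooting order $e_i$, the factor $\mu_{p^k}$ is connected, $\ghom^2(\mu_{p^k},\G_{m,B/A})$ is not a norm quotient, and "$\units k$ is $n$-divisible" does not help; the paper's actual proof of this vanishing is \cref{prop:diag-quot-Br-vanish}, a substantially harder argument via proper base change applied to the complex $[\G_m\xto{p^n}\G_m]$. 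You flag this case but do not supply an argument, so the proof is incomplete exactly when $\Char k\mid e_i$ for some $i$. The clean repair — and what the paper does — is to avoid any new local computation: $\meY$ is itself locally Brauerless, since its geometric stabilizers are the groups $\mu_{e_i}$, which are Brauerless by \cref{ex:mun-bruaerless}, so \cref{thm:R2Gm-vanishes} applies directly to $\rho\colon\meY\to X$ (this is \cref{lem:root-R2}) and gives $\homR^2\push\rho\G_m=0$ in all characteristics.
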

\begin{proof}
    This is a special case of \cref{prop:stacky-tsen}.
\end{proof}
\begin{rem}
    \hfill\begin{itemize}
        \item In words, \cref{thma:stacky-tsen} says that if $\meX$ is a suitable stacky curve over $k=\bar k$ with coarse space $X$ and `generic stabilizer' $G/k$, then $\hom^2(\meX,\G_m)\simeq\hom^1(X,\dual G)$.
        \item Furthermore, it is well known that any tame DM stacky curve over a field is a gerbe over a multiply rooted stack (see \cite[Propositions 1.5 and 2.19]{lopez2023picard}).
        \qedhere
    \end{itemize}
\end{rem}
\begin{rem}
    In \cite[Remark 1.3]{achenjang2024brauer}, my co-authors and I remarked that Tsen's theorem (that Brauer groups of curves over algebraically closed fields vanish) continues to hold for some tame stacky curves, but fails for others (e.g. $\Br\meY(1)_{\bar k}=0$ but $\Br\meY_0(2)_{\bar k}=\zmod2$ by \cref{thma:mod-curve}\bp{2,3}). However, at the time, we were unable to determine what governed this. \cref{thma:stacky-tsen} helps clarify the situation.
\end{rem}
\begin{rem}\label{rem:prop-stacky-tsen}
    \cref{thma:stacky-tsen} can be strengthened if one assumes that $\meX$ is proper. We informally state two such extensions here and point the reader towards \cref{sect:stacky-curve-prop} for precise details.
    \begin{enumerate}
        \item If $\meX$ is a suitable proper stacky curve over a \important{separably closed} field $k$ with coarse space $X$ and `generic stabilizer' $G/k$, then $\hom^2(\meX,\G_m)\simeq\hom^1(X,\dual G)$; see \cref{cor:Br-prop-reg-local}.
        \item More generally, if $\meX$ is a suitable proper stacky curve over a regular, noetherian base scheme $S$ then $\hom^2(\meX,\G_m)$ admits a filtration whose graded pieces are
        \[\hom^2(X,\G_m)\tcomma\coker\p{\hom^1(S,\Pic_{X/S})\to\hom^1(S,\Pic_{\meX/S})},\,\,\t{and a subgroup of }\hom^0(S,\homR^1\push g\dual G),\]
        where $g\colon X\to S$ is the structure map of the coarse space.
        This is proved in \cref{thm:Br-prop-best statement I could muster}. Furthermore, while most of the examples in \cref{thma:mod-curve} are not proper, one can still see the above three pieces showing up in their Brauer groups.
        \qedhere
    \end{enumerate}
\end{rem}

The results quoted thus far ultimately come about from computing various spectral sequences. However, when computing Brauer groups of schemes, one often times makes use of residue maps in addition to (or in lieu of) spectral sequences. Along these lines, separate from \cref{thma:main}, we study Brauer residue maps on stacks in \cref{sect:res-exact}. Of note, this study allows us to compute \important{Picard groups} of stacky curves using their coarse space and generic residual gerbe, and it allows us to obtain a stacky version of the usual Faddeev exact sequence used for computing Brauer groups of (not necessarily proper) rational curves.
\begin{thma}[= \cref{cor:Pic-Cl}]\label{thma:Pic-Cl}
    Let $\meX$ be a regular, integral, noetherian stacky curve with coarse moduli space $c\colon\meX\to X$. Let $j\colon\meG\into\meX$ denote the residual gerbe of the generic point of $\meX$. Then, there is a short exact sequence
    \[0\too\Cl\meX\too\Pic\meX\xtoo{\pull j}\Pic\meG\too0,\]
    where $\Cl\meX$ is the (Weil) divisor class group of $\meX$ (see \cref{cor:Pic-Cl} for a precise definition).
\end{thma}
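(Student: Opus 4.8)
The plan is to mimic the classical excision sequence relating the Picard group of a regular scheme to the divisor class group, adapting it to the stacky setting. First I would set up the formalism of Weil divisors on a regular integral noetherian stacky curve $\meX$. Since $\meX$ is a Deligne–Mumford stack which is regular, integral, and one-dimensional, its points of codimension one are exactly the closed points of the dimension-one part, and for each such point $x$ the local ring $\msO_{\meX,x}$ (taken in the smooth-\'etale or, better, in the sense of the local ring at a geometric point, then taking the appropriate slice) is a discrete valuation ring, because regularity in dimension one forces this even after passing to a presentation. Hence one gets a well-defined order-of-vanishing homomorphism $\ord_x\colon \units{K(\meX)}\to\Z$ for each codimension-one point, where $K(\meX)$ is the function field of the coarse space $X$ (equivalently of $\meX$, since the generic residual gerbe $\meG\into\meX$ sits over the generic point of $X$ and has the same residue field up to the gerbe structure, which does not affect the fraction field). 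Define $\Cl\meX \coloneqq \coker\p{\units{K(\meX)}\to \bigoplus_x \Z\cdot[x]}$ exactly as for schemes. The key point here is that $K(\meX)=K(X)$, so the group of principal divisors on $\meX$ coincides with that on $X$; the difference between $\Cl\meX$ and $\Cl X$ will come purely from the group of \emph{all} divisors, i.e. from how ramification/rooting enlarges the free group on codimension-one points.

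Next I would produce the three-term sequence. The map $\Pic\meX\xto{\pull j}\Pic\meG$ is pullback along the generic residual gerbe; surjectivity should follow because a line bundle on $\meG$ (equivalently, a character of the generic stabilizer, or a $1$-cocycle for the residual gerbe) can always be spread out to a line bundle on a dense open substack of $\meX$, using that $\meX$ is covered by quotient stacks $[U/\Gamma]$ and that invertible sheaves extend from dense opens on a normal (here regular) stack up to adjusting by a divisor supported on the complement — and any such divisor is effaceable into $\Pic\meX$ itself, so we lose nothing on the $\meG$ side. Exactness in the middle: a class in $\Pic\meX$ dies in $\Pic\meG$ iff it is (represented by) a line bundle trivial on the generic residual gerbe, i.e. trivial on a dense open substack $\meU\subseteq\meX$; choosing a trivializing rational section then exhibits the class as coming from a divisor supported on $\meX\smallsetminus\meU$, i.e. from $\bigoplus_x\Z\cdot[x]$. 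Injectivity of $\Cl\meX\to\Pic\meX$: here I use regularity of $\meX$ in an essential way — on a regular stack every Weil divisor is Cartier (this can be checked on a smooth presentation $V\to\meX$ with $V$ a regular scheme, pulling divisors back and using the scheme-theoretic statement together with descent along the smooth groupoid), so the divisor-to-line-bundle map $\bigoplus_x\Z\cdot[x]\to\Pic\meX$ is defined and has kernel exactly the principal divisors $\units{K(\meX)}/\units{\msO(\meX)}$, whence $\Cl\meX\into\Pic\meX$. Stringing these together gives
\[
0\too\Cl\meX\too\Pic\meX\xtoo{\pull j}\Pic\meG\too0.
\]
I would probably phrase the whole thing as the long exact "localization/excision" sequence $\Pic\meX\to\Pic\meG\to \Cl\meX \to \Pic\meX\to\Pic\meG\to 0$ coming from the open–closed decomposition of $\meX$ into $\meG$ and the codimension-one points, then observe the connecting-map portion splits off as the claimed short exact sequence because the relevant $\hom^0$ of units surjects appropriately.

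The main obstacle I anticipate is the foundational bookkeeping for divisors on a stacky curve: making sure that "codimension-one point" is the right notion (residual gerbes of generic points of the boundary), that each contributes a $\Z$ via a genuine DVR after taking an \'etale slice, and — most delicately — that \emph{every} Weil divisor on the regular stack $\meX$ is Cartier, which is what makes $\Cl\meX\to\Pic\meX$ well-defined and injective rather than just a surjection from $\Pic$. On a scheme this is the standard fact that a regular (hence locally factorial) local ring has trivial class group; on a stack one wants to reduce to a regular presentation and descend, and one must check this descent is compatible with the groupoid relations, i.e. that a $\Gamma$-equivariant Cartier divisor on $V$ descends to a Cartier divisor on $[V/\Gamma]$ — this should be fine since $\Pic[V/\Gamma]=\Pic^\Gamma V$, but it is the step that needs care. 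Everything else (exactness at $\Pic\meX$, surjectivity onto $\Pic\meG$) is a routine "trivialize over a dense open and read off the boundary divisor" argument, identical in spirit to the scheme case. I would also double-check that the residual gerbe $\meG$ of the generic point is itself a nice enough stack (a gerbe over $\Spec K(X)$ banded by the generic stabilizer group) so that $\Pic\meG$ makes sense and the pullback $\pull j$ is the natural restriction map.
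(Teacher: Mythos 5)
Your argument is essentially correct, but it takes a genuinely different route from the paper's. You run a direct, divisor-theoretic localization argument: define $\Div\meX=\bigoplus_{x\in\abs\meX_1}\Z$, show every Weil divisor on the regular stack is Cartier so that $\Div\meX\to\Pic\meX$ is defined with kernel the principal divisors, and then prove exactness at $\Pic\meX$ and surjectivity onto $\Pic\meG$ by the classical ``trivialize over a dense open, read off the boundary divisor'' chase. The paper instead never manipulates individual divisors: it writes down the short exact sequence of lisse-\'etale sheaves $0\to\G_{m,\meX}\to\push j\G_{m,\meG}\to\bigoplus_x i_{x,*}\ul\Z\to0$ (characterized by restricting to the usual divisor sequence on any smooth presentation), takes its cohomology sequence, and reduces everything to two sheaf computations: $\hom^1(\meX,i_{x,*}\ul\Z)=0$ (no nontrivial $\Z$-torsors on the residual gerbes, using finiteness of inertia) and $\homR^1\push j\G_m=0$ (line bundles on the generic fiber of a regular presentation extend, so $\hom^1(\meX,\push j\G_m)\simeq\Pic\meG$). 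The underlying inputs overlap — your ``extend a line bundle from a dense open using regularity'' is exactly what the paper's $\homR^1\push j\G_m=0$ encodes, and your ``every Weil divisor is Cartier after descent from a presentation'' is what the sheaf sequence packages once and for all — but the sheaf-theoretic formulation buys more: the same sequence in degree $2$ immediately produces the Brauer residue maps and the stacky Faddeev sequence, which your approach would not give without redoing the work. Two small cautions on your write-up: the five-term sequence you propose at the end, $\Pic\meX\to\Pic\meG\to\Cl\meX\to\Pic\meX\to\Pic\meG\to0$, is garbled as written (there is no natural map $\Pic\meG\to\Cl\meX$; the correct localization sequence is $K^\times\to\bigoplus_x\Z\to\Pic\meX\to\Pic\meG\to0$), though this aside is not load-bearing; and your step ``trivial on $\meG$ implies trivial on a dense open'' silently uses $\Pic\meG=\varinjlim_{\meU}\Pic\meU$ over dense opens, which requires a (routine but nonvacuous) limit argument for stacks with affine transition maps.
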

\begin{thma}[Stacky Faddeev]\label{thma:stacky-faddeev}
    Let $k$ be a perfect field, let $\P^1=\P^1_k$, and let $x_1,\dots,x_r\in\P^1(k)$ be distinct points. Choose pairwise coprime integers $e_1,\dots,e_r>1$, and let
    \[\meX\coloneqq\sqrt[e_1]{x_1/\P^1}\by_{\P^1}\dots\by_{\P^1}\sqrt[e_r]{x_r/\P^1},\]
    be $\P^1$ rooted at the $x_1,\dots,x_r$ by degrees $e_1,\dots,e_r$. Then,
    \begin{itemize}
        \item $\Br\meX\simeq\Br\P^1\simeq\Br k$.
        \item For any closed point $x\in\P^1$, set $e_x=1$ if $x\not\in\{x_1,\dots,x_r\}$ and set $e_x=e_i$ if $x=x_i$. Let $N\coloneqq\prod_{i=1}^re_i=\prod_xe_x$. There is an exact sequence
        \begin{equation}\label{es:stacky-faddeev}
            0\to\Br k\too\Br k(\P^1)\xto{\bigoplus_xe_x\cdot\res_x}\bigoplus_x\hom^1(\kappa(x),\Q/\Z)\xto{\Cor_\meX}\hom^1(k,\Q/\Z)\too0,
        \end{equation}
        where $\res_x\colon\Br k(\P^1)\to\hom^1(\kappa(x),\Q/\Z)$ is the usual Brauer residue map and $\Cor_\meX=\sum_x\frac N{e_x}\Cor_{\kappa(x)/k}$.
    \end{itemize}
\end{thma}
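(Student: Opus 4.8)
The plan is to run the classical Faddeev argument for $\P^1$ through the coarse space map $c\colon\meX\to\P^1$, using the residue exact sequences for stacky curves developed in \cref{sect:res-exact} together with a local computation at the rooted points. Each factor $\sqrt[e_i]{x_i/\P^1}$ is tame (the diagonalizable group scheme $\mu_{e_i}$ is linearly reductive) and $\meX$ is regular, integral and noetherian, so that theory embeds $\Br\meX\into\Br k(\meX)$ with image the subgroup of classes unramified at every codimension-one point of $\meX$. Now $k(\meX)=k(\P^1)$, since the generic point of a nontrivial root stack is representable, and $c$ restricts to a bijection between the codimension-one points of $\meX$ and the closed points of $\P^1$, being an isomorphism over every $x\notin\{x_1,\dots,x_r\}$; over such $x$ the residue of $\meX$ agrees with the usual residue $\res_x$, so all of the new content is concentrated over $x_1,\dots,x_r$.

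The crux, and the step I expect to be the main obstacle, is to understand unramifiedness of $\meX$ at a rooted point $x_i$. Étale-locally around $x_i$ with uniformizer $\pi$, the stack $\meX$ is the quotient $[\Spec k[[u]]/\mu_{e_i}]$ with $u^{e_i}=\pi$, whose smooth atlas is the regular scheme $\Spec k[[u]]$; a class $\alpha\in\Br k(\P^1)$ is unramified on $\meX$ at $x_i$ exactly when its pullback to $\Spec k[[u]]$ extends over that scheme. The nontrivial implication here --- that an extension over the atlas descends to the quotient stack --- uses that $\mu_{e_i}$ is linearly reductive; the non-DM case $\Char k\mid e_i$ (which by coprimality can occur for at most one $i$) forces one to argue this fppf-locally rather than \'etale-locally. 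Since $\pi=u^{e_i}$, the atlas map scales valuations by $e_i$, so the standard functoriality of the Brauer residue under a ramified extension of complete discretely valued fields --- if $\res_{x_i}\colon\Br k((\pi))\to\hom^1(k,\Q/\Z)$ is the residue of the coarse space, the residue at the closed point of the atlas is $e_i\res_{x_i}$ --- shows that this condition is precisely $e_i\res_{x_i}(\alpha)=0$. Together with the first paragraph, this identifies $\Br\meX$ with the kernel of $\bigoplus_x e_x\res_x\colon\Br k(\P^1)\to\bigoplus_x\hom^1(\kappa(x),\Q/\Z)$.

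The remainder is arithmetic of the pairwise-coprime $e_i$. If $\alpha\in\ker(\bigoplus_x e_x\res_x)$ then $\res_x(\alpha)=0$ for $x\notin\{x_i\}$ while $\res_{x_i}(\alpha)\in\hom^1(k,\Q/\Z)[e_i]$; the Faddeev relation $\sum_x\Cor_{\kappa(x)/k}\res_x(\alpha)=0$ gives $\sum_i\res_{x_i}(\alpha)=0$, and since $\hom^1(k,\Q/\Z)[N]=\bigoplus_i\hom^1(k,\Q/\Z)[e_i]$ (coprimality), each $\res_{x_i}(\alpha)=0$, so $\alpha$ is unramified everywhere on $\P^1$, i.e.\ $\alpha\in\Br\P^1=\Br k$. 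This gives the first bullet $\Br\meX\cong\Br\P^1\cong\Br k$ and exactness of \cref{es:stacky-faddeev} at $\Br k(\P^1)$; exactness at $\Br k$ is the standard injectivity of $\Br$ of a regular scheme into $\Br$ of its function field. For exactness at $\bigoplus_x\hom^1(\kappa(x),\Q/\Z)$, classical Faddeev identifies the image of $\bigoplus_x e_x\res_x$ with the tuples $(e_x\gamma_x)_x$ satisfying $\sum_x\Cor_{\kappa(x)/k}(\gamma_x)=0$, and a short B\'ezout argument --- using $e_i\mid\tfrac{N}{e_x}$ for $x\neq x_i$ and $\gcd(\tfrac{N}{e_i},e_i)=1$ --- shows this set equals $\ker\Cor_\meX$. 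Finally $\Cor_\meX$ is surjective: choosing integers $a_i$ with $\sum_i a_i\tfrac{N}{e_i}=1$ (possible as $\gcd_i\tfrac{N}{e_i}=1$), the tuple supported on $\{x_1,\dots,x_r\}$ with value $a_i\tau$ at $x_i$ maps to any prescribed $\tau\in\hom^1(k,\Q/\Z)$. Throughout, $\Br=\Br'$ on the regular stacks at hand.
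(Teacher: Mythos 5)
Your proposal is correct and follows essentially the same route as the paper: both reduce to the identity $\res_{x/\meX}=e_x\cdot\res_{x/\P^1}$ at each rooted point, splice this into the classical Faddeev sequence for $\P^1$, and finish with the same B\'ezout/coprimality arithmetic to get exactness at the last two terms. The one divergence is how the scaling factor $e_i$ is computed: you use the finite flat atlas $\spec k[[u]]\to[\spec k[[u]]/\mu_{e_i}]$, which is not smooth when $\Char k\mid e_i$ (a point you flag but leave unresolved), whereas the paper avoids this by evaluating the divisor map $\Delta_\meX$ on the smooth $\G_m$-torsor cover coming from the defining Cartesian square of the root stack.
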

\begin{proof}
    This is a special case of \cref{prop:stacky-Faddeev}.
\end{proof}
There are more general results on residues maps of stacky curves (and their relation to residue maps on the coarse space) in \cref{sect:res-exact}. For example, when $\meX$ is a (generically schemey) stacky $\P^1$, \cref{sect:faddeev} contains a description of $\Br\meX$ even if the orders of its stacky points are \important{not} pairwise coprime.

\subsubsection*{\bf Computations beyond the tame, locally Brauerless setting}
As evidenced by our main \cref{thma:main}, this paper mainly focuses on stacks which are tame and locally Brauerless (see \cref{defn:locally-Brauerless}). However, there are many interesting stacks which fail to have one or both of these properties. For such stacks, the techniques and ideas of this paper can, nevertheless, still be used to significantly aid in the computations of their Brauer groups. While we stay in the tame, locally Brauerless setting in the body of this paper -- to keep its scope contained -- we collect here a few ideas which can be used on stacks $\meX$ which are not tame or not locally Brauerless. Write $c\colon\meX\to X$ for $X$'s coarse space map, assuming one exists.

\begin{enumerate}
    \item $\meX$ may be generically tame and locally Brauerless.

    Suppose there exists a dense open $U\subset X$ above which $\meX$ \important{is} tame and locally Brauerless, i.e. so that $\meU\coloneqq\inv c(U)$ is tame and locally Brauerless. Then, \cref{thma:main} still applies to show that $\homR^2\push c\G_m\vert_U=0$ and so a computation of $\meX$'s Brauer group can begin by checking, by hand, whether the stalks of $\homR^2\push c\G_m$ also vanish over $X-U$. If so, then the philosophy of \cref{rem:main-theorem-explanation} still applies and one can carry out the rest of their computation in the style of this paper.
    \begin{ex}[$\meX(1)_{\F_3}$]
        This idea is used in \cref{sect:BrX(1)/Z[1/2]} to compute the Brauer group of $\meX\coloneqq\meX(1)_{\F_3}$, for example. Note that $\meX$ is not tame above points with $j$-invariant $0$. However, the open subscheme $\meU\subset\meX$ of curves with nonzero (possibly infinite) $j$-invariant is tame and locally Brauerless, so \cref{thma:main} shows that $\homR^2\push c\G_m$ is supported on the point $j=0$. In \cref{prop:char-3-j0-stalk}, we show that vanishes above this point as well, so $\homR^2\push c\G_m=0$ despite \cref{thma:main} not directly applying to all of $\meX$. From here, we are able to finish the computation, ultimately proving \cref{thma:mod-curve}\bp1. This same by-hand computation (i.e. \cref{prop:char-3-j0-stalk}) feeds into the proof of \cref{thma:mod-curve}\bp2 as well.
    \end{ex}

    \item One can work with ``locally Brauerless morphisms''

    For conceptual simplicity, in this paper, we defined `locally Brauerless' to be an absolute property of a $\meX$. However, just like the notion of `tame algebraic stack' can be relativized to a property of morphisms of stacks (see \cite[Definition 3.3]{AOV-maps}), one can also similarly define what it means for a morphism $f\colon\meX\to\meY$ of algebraic stacks to be `locally Brauerless'.\footnote{For example, one can say $f$ is \important{locally Brauerless} if, for every every faithfully flat map $U\to\meY$, from an algebraic space $U$, the stack $\meX\by_\meY U$ is locally Brauerless in the sense of \cref{defn:locally-Brauerless}.} \cref{thma:main} readily generalizes to `tame, locally Brauerless morphisms' and so, if $\meX$ is not locally Brauerless, one may still hope to compute its Brauer group by factoring $c\colon\meX\to X$ as a sequence of locally Brauerless morphisms.
    \begin{ex}[$B\mu_n\by\mu_n$]
        As an example of this idea, here we sketch a computation of the Brauer group of $B(\mu_n\by\mu_n)$, the classifying stack for $(\mu_n\by\mu_n)$-torsors, once one has defined a relative notion of `locally Brauerless'. Fix any $n\ge1$ and let $F$ be a field; set $\meX\coloneqq B(\mu_n\by\mu_n)_F$. It follows from \cref{rem:comm-Brauerless=cyclic} that $\meX$ is \important{not} locally Brauerless, though it is tame, so \cref{thma:main} (and, more directly, \cref{prop:BG-Gm-coh}) does \important{not} applies to it. However, the coarse space map $c\colon\meX\to\spec F$ factors as
        \[\meX=B(\mu_n\by\mu_n)_F=B\mu_{n,F}\by_FB\mu_{n,F}\xtoo fB\mu_{n,F}\xtoo g\spec F,\]
        with $f,g$ both tame and locally Brauerless. Thus, two applications of \cref{prop:BG-Gm-coh} shows that
        \begin{alignat*}{5}
            \hom^2(B(\mu_n\by\mu_n)_F,\G_m) &\simeq \hom^2(B\mu_{n,F},\G_m) &&\oplus\,\,&&\hom^1(B\mu_{n,F}, \zmod n) \quad &&\tand\\
            \hom^2(B\mu_{n,F},\G_m) &\simeq \hom^2(F,\G_m) &&\oplus &&\hom^1(F,\zmod n)
            .
        \end{alignat*}
        It is not hard to show that $\hom^1(B\mu_{n,F},\zmod n)\simeq\hom^1(F,\zmod n)\oplus\Hom_F(\mu_n,\zmod n)$ (morphisms as $F$-group schemes); in brief, a $\zmod n$-torsor over $B\mu_{n,F}$ is a $\zmod n$-torsor $T$ over $\spec F$ equipped with an action of $\mu_n$ by torsor maps (i.e. a morphism $\mu_n\to\ul\Aut(T)=\zmod n$). Thus, we easily compute that
        \begin{align*}
            \hom^2(B(\mu_n\by\mu_n)_F,\G_m)
            &\simeq\hom^2(F,\G_m)\oplus\hom^1(F,\zmod n)\oplus\hom^1(F,\zmod n)\oplus\Hom_F(\mu_n,\zmod n)\\
            &\simeq\hom^2(F,\G_m)\oplus\hom^1(F,\zmod n\by\zmod n)\oplus\Hom_F(\mu_n,\zmod n)
            ,
        \end{align*}
        despite $B(\mu_n\by\mu_n)_F$ not being locally Brauerless. Note that this example generalizes the computation in \cite[Proposition 4.3.2]{lieblich2011period}; in this citation, the author assumes $\mu_{n,F}\simeq\ul{\zmod n}_F$ and writes down an expression which is more canonically identified with $\hom^2(B(\zmod n\by\zmod n)_F,\G_m)$ (after killing the constant classes).
    \end{ex}
\end{enumerate}

\subsubsection*{\bf Related papers appearing after this one} Since the posting of the first version of this paper, a couple other works studying Brauer groups of stacks which independently prove a few of the results appearing in this one have appeared. Since these may also be of interest to the reader, we mention them here.

In \cite{loughran2025mallesconjecturebrauergroups}, the authors study Brauer groups of stacks as they relate to Malle's conjecture on counting number fields. This, in particular, leads them to studying Brauer groups of classifying stacks $BG$, with the connection to Malle coming from the fact that, roughly speaking, a $G$-Galois extension of $\Q$ corresponds to a $\Q$-point on $BG$. In \cite[Section 5]{loughran2025mallesconjecturebrauergroups}, they prove versions of some of our results on classifying stacks, root stacks, and residue maps (see \cite[Remark 5.31]{loughran2025mallesconjecturebrauergroups}), along with new results on unramified Brauer groups of stacks (a concept not touched upon here).

In \cite{bishop2025brauergroupstamestacky}, the author studies Brauer groups of $\mu_r$-gerbes over tame (generically schemey, DM) stacky curves. As mentioned in the introduction to \cite{bishop2025brauergroupstamestacky}, neither it nor this paper is strictly more general than the other. In \cite{bishop2025brauergroupstamestacky}, the author works with ($\mu_r$-gerbes over generically schemey) tame DM stacky curves without further restriction on the stabilizer groups appearing; in particular, there is no required analogue of the `locally Brauerless' condition which is central to this paper. This allows the author to prove new results in cases not handled by this paper. Because their paper was written independently of this one, they end up reproving some of our results along the way; for example, some of the results of \cref{sect:gerbes} appear in \cite[Section 4]{bishop2025brauergroupstamestacky}.

\subsubsection*{\bf Paper organization} We begin in \cref{sect:background} by briefly introducing the background material used throughout the paper. In particular, it is here that we include the definitions of tameness and of Brauer groups. Following this, we begin our march towards \cref{thma:main} in earnest in \cref{sect:Blocally-diagonalizable}. As we will later mention, the automorphism groups of tame stacks are (locally) extensions of tame \'etale group schemes by diagonalizable group schemes. Related to this, before tackling \cref{thma:main} directly, we find it helpful to first compute the Brauer groups of classifying stacks of certain locally diagonalizable groups schemes in \cref{sect:Blocally-diagonalizable}. In \cref{sect:vanishing-result}, we prove \cref{thma:main}. Recall that the goal of this theorem is to show vanishing of some $\homR^2\push c\G_m$. Because such vanishing can be checked on stalks, after appealing to the local structure of tame stacks (\cref{cor:tame-stack-sh}), this amounts to computing cohomology groups of the form $\hom^2([X/G],\G_m)$ with $G$ an extension of a tame constant group by a diagonalizable group. We ultimately compute such groups by combining earlier work of Meier on quotients by tame constant groups \cite{meier-cs} with (a slight generalization of) the work of \cref{sect:Blocally-diagonalizable} on quotients by diagonalizable groups. Once \cref{thma:main} is proven, we work out its consequences for gerbes and root stacks in \cref{sect:gerbes,sect:root-stacks}. Recalling that a stacky curve can often be factored as a gerbe over a root a stack, in \cref{sect:stacky-curve}, we use the results of the previous two sections in order to study Brauer groups of stacky curves. In particular, by leveraging Tsen's theorem that $\Br(X)=0$ if $X$ is a curve over an algebraically closed field, we prove \cref{thma:stacky-tsen} and state some consequences for Brauer groups of stacky curves over non-perfect fields (see e.g. \cref{cor:curve-num-field}). In \cref{sect:stacky-curve-prop}, we study Brauer groups of \important{proper} stacky curves. Because Grothendieck showed that $\Br(X)=0$ if $X$ is a proper curve over a \important{separably closed} field, we can obtain results for bases more general than non-perfect fields in this case. We ultimately prove nontrivial statements about the structure of Brauer groups of proper stacky curves over regular, noetherian base schemes (see \cref{rem:prop-stacky-tsen} and \cref{thm:Br-prop-best statement I could muster}). With this completed, we then turn to the study of residue maps in \cref{sect:res-exact}; it is here that we prove \cref{thma:Pic-Cl,thma:stacky-faddeev}. Finally, in \cref{sect:examples}, we aim to show the utility of our work by carrying out a few example computations; namely, we prove \cref{thma:mod-curve}.

\subsubsection*{\bf Acknowledgements} First and foremost, I thank Deewang Bhamidipati, Aashraya Jha, Caleb Ji, and Rose Lopez, my coauthors on the previous paper \cite{achenjang2024brauer}. It was working on that project that got me interested in Brauer groups of stacks, so this paper would not have been possible without them. I would also like to thank my advisor Bjorn Poonen for many helpful conversations. I thank Lennart Meier for answering some questions I had about his notes \cite{meier-cs}, and I thank Bianca Viray for suggesting to me the possibility of obtaining a `stacky Faddeev' sequence as in \cref{thma:stacky-faddeev}. I have also benefited from helpful conversations with David Zureick-Brown who, among other things, helped me work through some confusions regarding Picard groups of stacky curves. I would also like to ask Tim Santens for answering a question asked in an earlier version of this paper, about residue maps on $\meY(1)$; see \cref{rem:Faddeev-Y(1)}. Finally, during this project, I have been supported by National Science Foundation grant DGE-2141064.

\subsection{Conventions}\label{sect:conventions} We end the introduction by very quickly establishing some of the conventions, many of which are standard, that we use throughout this paper.
\begin{itemize}
    \item By default, given an scheme/algebraic space $X$ (resp. algebraic stack $\meX$), we work in the small \'etale site over $X$ (resp. lisse-\'etale site over $\meX$). As such, all unadorned cohomology groups should be interpreted as \'etale cohomology and the phrase `\define{\'etale sheaf}' should be interpreted as a sheaf on the lisse-\'etale (resp. small \'etale) site of an algebraic stack $\meX$ (resp. algebraic space $X$).
    \begin{itemize}
        \item On occasion, we will make use of the big fppf site on $X$ (resp. flat-fppf site on $\meX$). In such circumstances, we will make use of the subscript $\fppf{}$, writing e.g. $\fhom$ for fppf cohomology.
        \item We will often use, without reference, the fact \cite[Theorem 11.7]{Gr3} that \'etale cohomology and fppf cohomology agree with coefficients in a smooth group scheme.
    \end{itemize}
    \item Given an algebraic stack $\meX$, we write $\Sch_\meX$ to denote the 1-category of schemes over $\meX$ (this is equivalent to the underlying category of $\meX$).
    \item Given an algebraic stack $\meX$ and an object $x\in\meX(S)$ over some scheme $S$, we write $\ul\Aut(\meX,x)$ (or $\ul\Aut_\meX(x)$) for the automorphism group functor of $x$. When $\meX$ is clear from context, we will sometimes denote this simply as $\ul\Aut(x)$.
    \item A `\define{geometric point}' of a scheme $S$ is a map $\bar s\colon\spec\Omega\to S$ from the spectrum of a \important{separably} closed field $\Omega$. We will often simply write $\bar s\to S$ and use $\kappa(\bar s)$ to denote the implicitly chosen field $\Omega$.
    \item By a `\define{gerbe}' over an algebraic stack $\meX$, we mean an `fppf gerbe', i.e. a morphism $\meY\to\meX$ of algebraic stacks realizing $\meY$ as a gerbe over $\Sch_\meX$ equipped with the flat-fppf topology.
    \item By a `\define{group}' we generally mean a `flat, finitely presented group scheme'. When we want to emphasize that we mean a `group in the classical sense of group theory', we will call such a thing an `\define{abstract group}'.
    \begin{itemize}
        \item By a `\define{finite group}' we generally mean a `finite, flat, finitely presented group scheme'.
        \item If $G$ is a finite group over a base scheme $S$, then $G\simeq\Spec_S\msE$ for some locally free $\msO_S$-algebra $\msE$. The rank of this algebra is denoted $\#G$ and is called the order, or cardinality, of $G$.
    \end{itemize}
    \item Given a group $G$ and an integer $N$, we write $G[N]$ for its $N$-torsion subgroup.
    \item Given an abstract group $G$ and a prime $p$, we write $G\{p\}$ for its $p$-primary torsion subgroup.
    \item Given a complex $\msC^\bullet$ with differentials $\d^n\colon\msC^n\to\msC^{n+1}$, we set
    \[\pi_n(\msC^\bullet)\coloneqq\frac{\ker\d^n}{\im\d^{n-1}}\]
    and call these the `\define{homotopy groups}' of the complex. This notation/terminology is chosen to distinguish them from the (hyper)cohomology groups of $\msC^\bullet$ which are computed as the homotopy groups of the relevant derived functor applied to $\msC^\bullet$.
    \item We write $\abs\meX$ for the underlying topological space of an algebraic stack, and we write $\abs\meX_1$ for its set of codimension $1$ points.
    \item We call a scheme $S$ `\define{local}' if $S\cong\spec R$ for a local ring $R$, and we call it (and also $R$) `\define{strictly local}' if furthermore $R$ is strictly henselian.
    \item For us, a `\define{stacky curve}' is a separated, noetherian algebraic stack which is of pure dimension $1$ and has finite inertia.
    \item Let $k$ be a field. We say a $k$-scheme $X$ is \define{nice} if it is smooth, projective, and geometrically connected.
\end{itemize}

\section{\bf Background \& preliminaries}\label{sect:background}
\subsection{Hochschild cohomology}

We will make use of Hochschild cohomology throughout this work, so here we collect some of its basic definitions and properties. See \cite[Chapter 15]{milne-alg-grps} for more information.
\begin{defn}\label{defn:hochschild-cohomology}
    Let $S$ be a scheme (or algebraic stack), let $G$ be a group-valued functor on $\Sch_S\op$, and let $M$ be a $G$-module, i.e. a commutative group functor on which $G$ acts by group homomorphisms. We define a complex $C^\bullet(G,M)$ with $C^n(G,M)\coloneqq\Nat(G^n,M)$, the set of natural transformations (of \important{set-valued} functors) from $G^n$ to $M$ and whose differential $\d^n\colon C^n(G,M)\to C^{n+1}(G,M)$ is defined as usual (see \cite[Section 15.b]{milne-alg-grps}). The \define{Hochschild cohomology} of $G\actson M$ is defined to be the homotopy of this complex:
    \[\ghom^n(G,M)\coloneqq\pi_n\p{C^\bullet(G,M)}\coloneqq\frac{\ker\d^n}{\im\d^{n-1}}.\qedhere\]
\end{defn}
\begin{rem}
    In low degrees, Hochschild cohomology has interpretations analogous to those used for classical group cohomology. Let $G,M,S$ be as in \cref{defn:hochschild-cohomology}.
    \begin{itemize}
        \item $\ghom^0(G,M)=M^G(S)=M(S)^G$ computes $G$-invariants.
        \item $\ghom^1(G,M)=\{$crossed homomorphisms $G\to M\}/\{$principal crossed homomorphisms$\}$, with (principal) crossed homomorphisms defined completely analogously as they are in the classical group cohomology setting.
        \begin{ex}
            If $G\actson M$ trivially, then $\ghom^1(G,M)=\Hom(G,M)$ is the group of homomorphisms from $G$ to $M$.
        \end{ex}
        \item $\ghom^2(G,M)$ classifies isomorphisms classes of \define{Hochschild extensions}, i.e. exact sequences
        \[0\too M\too E\too G\too0\]
        of \important{group functors/presheaves} such that there exists a map $s\colon G\to E$ of \important{set-valued} functors splitting the sequence on the right and such that the conjugation action of $G\actson M$ is the given action of $G\actson M$.
        \qedhere
    \end{itemize}
\end{rem}
\begin{rem}
    If $Q$ is an abstract group with associated constant sheaf $\ul Q$ over a scheme (or algebraic stack) $S$, then, for any $\ul Q$-module $M$, $\ghom^n(\ul Q,M)$ is the classical group cohomology of the abstract group $Q$ acting on $M(S)$. Indeed, the complex $C^\bullet(\ul Q,M)$ of \cref{defn:hochschild-cohomology} is simply the bar complex of $Q\actson M(S)$. For this reason, we will continue to use $\ghom$ to denote classical group cohomology, writing e.g. $\ghom^n(\ul Q,M)\simeq\ghom^n(Q,M(S))$.
\end{rem}
For us, Hochschild cohomology will most commonly come up in the context of computing the (\'etale) cohomology of quotient stacks, so the modules $M$ we encounter will usually be of the following form.
\begin{notn}
    Let $S$ be a scheme, let $X/S$ be an $S$-scheme equipped with a sheaf $\msF$. For any $j\ge0$, we define the functor
    \[\mapdesc{\ul\hom^j(X/S,\msF)}{\Sch_S\op}\Set{T/S}{\hom^j(X_T,\msF_T),}\]
    where $X_T\coloneqq X\by_ST$ and $\msF_T\coloneqq\msF\vert_{X_T}$. We also similarly define $\ul{\fhom^j}(X/S,\msF)\colon\Sch\op_S\to\Set$.
\end{notn}
\begin{lemma}[Descent spectral sequence]\label{lem:descent-ss}
    Let $S$ be a scheme, and let $G/S$ be an $S$-group scheme, so $G$ is flat and finitely presented over $S$ by convention. Suppose that $G$ acts on some $S$-scheme $X$, and set $\meX\coloneqq[X/G]$. Then, for any flat sheaf $\msF$ on $\meX$, there is a spectral sequence
    \[E_2^{ij}=\ghom^i(G,\ul{\fhom^j}(X/S,\msF_X))\implies\fhom^{i+j}(\meX,\msF).\]
\end{lemma}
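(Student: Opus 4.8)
The plan is to obtain this spectral sequence as a Grothendieck (composite functor) spectral sequence. Recall that $\meX = [X/G]$ can be presented by the simplicial scheme (the bar construction) $X_\bullet$ with $X_n = G^n \times_S X$, and a sheaf on the flat-fppf site of $\meX$ is the same data as a sheaf on the (fppf site of the) simplicial scheme $X_\bullet$ — i.e. a compatible system of sheaves $\msF_n$ on $X_n$ together with descent data. Under this dictionary, fppf cohomology of $\meX$ is computed by a first-quadrant spectral sequence $E_1^{pq} = \fhom^q(X_p, \msF_p) \implies \fhom^{p+q}(\meX, \msF)$, whose $E_1$-row differentials are the usual alternating sums of pullbacks along the face maps of $X_\bullet$. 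This is the descent spectral sequence for the fppf cover $X \to \meX$; I would cite the standard construction (e.g. via the $\v Cech$-to-derived-functor or cohomological-descent machinery for simplicial schemes) rather than rebuild it.

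Next I would identify the $E_2$-page. The key point is that the face maps of the bar construction $X_\bullet$ are built from the multiplication/projection maps of $G$, so the cochain complex $p \mapsto \fhom^q(X_p, \msF_p) = \fhom^q(G^p \times_S X, \msF_X)$ — with its alternating-sum differential — is precisely the Hochschild cochain complex $C^\bullet\!\left(G, \ul{\fhom^q}(X/S, \msF_X)\right)$ evaluated at $S$. Indeed, $C^p\!\left(G, \ul{\fhom^q}(X/S,\msF_X)\right) = \Nat\!\left(G^p, \ul{\fhom^q}(X/S,\msF_X)\right)$, and by Yoneda a natural transformation out of $G^p$ is the same as a section of $\ul{\fhom^q}(X/S,\msF_X)$ over $G^p$, i.e. an element of $\fhom^q(G^p \times_S X, \msF_X)$; one then checks the Hochschild differential $\d^\bullet$ of \cref{defn:hochschild-cohomology} matches the alternating sum of face-map pullbacks (the $i$-th face map inserts/multiplies group coordinates exactly as in the bar resolution). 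Taking homotopy groups of this complex gives $E_2^{pq} = \ghom^p\!\left(G, \ul{\fhom^q}(X/S, \msF_X)\right)$, which is the claimed $E_2$-page, converging to $\fhom^{p+q}(\meX, \msF)$ by the convergence of the descent spectral sequence (first quadrant, so convergence is automatic).

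The main obstacle — or at least the point requiring the most care — is matching the differentials: one must verify that the cosimplicial structure on $p \mapsto \fhom^q(G^p \times_S X, \msF_X)$ coming from the bar construction's face and degeneracy maps coincides with the cosimplicial structure underlying the Hochschild complex, so that the two complexes (and hence their cohomologies) genuinely agree, not just abstractly but with the maps the reader expects. This is essentially bookkeeping about the bar resolution, but it is where sign/indexing conventions bite. A secondary subtlety is the passage between "sheaf on $\meX$" and "sheaf on $X_\bullet$ with descent data"; since the lemma hypothesizes $\msF$ is a flat sheaf on $\meX$ (and we are working fppf-locally, where $X \to \meX$ is a genuine cover), this equivalence is standard, and flatness of $\msF$ is what lets us freely pull back along the (flat) structure maps of $X_\bullet$ without derived corrections. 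I would remark that the \'etale-coefficients version (the form actually used later, e.g. in \cref{lem:descent-ss}'s applications with $\msF = \G_m$) follows by combining this with the cited comparison \cite[Theorem 11.7]{Gr3} between \'etale and fppf cohomology for smooth group schemes.
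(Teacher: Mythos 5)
Your proof is correct and follows essentially the same route as the paper: both invoke the descent (\v{C}ech-to-derived-functor) spectral sequence of the cover $X\to\meX$, identify the fiber products $X\by_\meX\cdots\by_\meX X$ with $G^n\by_SX$, and use Yoneda to recognize the $E_1$-page as the Hochschild complex $C^\bullet(G,\ul{\fhom^j}(X/S,\msF_X))$. One small terminological point: ``flat sheaf'' here means a sheaf on the flat-fppf site of $\meX$, not a sheaf that is flat as an $\msO_\meX$-module, so the hypothesis is about which site one works on rather than about avoiding derived pullbacks.
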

\begin{proof}
    This is a special case of the `spectral sequence of the covering' $X\to\meX$; see \cite[2.4.26, 9.2.4, and/or 11.6.3]{olsson}. Writing $X_n\coloneqq X\by_\meX\dots\by_\meX X$ ($n+1$ factors), this is the spectral sequence
    \[E_1^{ij}=\fhom^j(X_i,\msF_i)\implies\fhom^{i+j}(\meX,\msF),\]
    where $\msF_j$ is simply the pullback of $\msF$ to $X_j$. Note that, for any $n\ge0$, $X_n\simeq G\by_S\by\dots\by_SG\by_SX$ (with $n$ copies of $G$), so the $E_1$-page of our spectral sequence is
    \[E_1^{ij}=\fhom^j(X_i,\msF_i)=\fhom^j(G^{\by i}_S\by_SX,\msF_i)=\Nat\p{G^{\by i}_S,\ul\hom^j(X/S,\msF_X)}\eqqcolon C^i(G,\ul\hom^j(X/S,\msF_X)).\]
    The differentials on this page match those used to define Hochschild cohomology, so $E_2^{ij}=\pi_i\p{E_1^{\bullet j}}=\ghom^i(G,\ul{\fhom^j}(X/S,\msF_X))$.
\end{proof}
\begin{rem}
    In the context of \cref{lem:descent-ss}, if $\msF$ is representable by a smooth group scheme (e.g. $\msF=\G_m$), then the descent spectral sequence equivalently takes place entirely within \'etale cohomology:
    \[E_2^{ij}=\ghom^i(G,\ul\hom^j(X/S,\msF_X))\implies\hom^{i+j}(\meX,\msF).\]
    We will make implicit use of this fact throughout the paper. An exact sequence of the above form (using \'etale cohomology) also holds if $G$ is smooth and $\msF$ is any \'etale sheaf.
\end{rem}
\begin{lemma}\label{lem:bar-complex-triv}
    Let $R$ be a strictly Henselian local ring, let $X$ be a finite $R$-scheme, and let $\msF$ be an \'etale sheaf on $X$. Then, for any finite $R$-scheme $Y$,
    \[\Nat(Y^{\by i}_R,\ul\hom^j(X/R,\msF))=0\]
    for any $i\ge0$ and $j\ge1$.
\end{lemma}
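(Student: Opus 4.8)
The plan is to use the Yoneda lemma to rewrite the group of natural transformations as an \'etale cohomology group of a single finite $R$-scheme, and then to observe that, $R$ being strictly Henselian local, every finite $R$-scheme is a finite disjoint union of spectra of strictly Henselian local rings — over which positive-degree \'etale cohomology vanishes for arbitrary abelian sheaves.

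In detail: regarded as a set-valued functor on $\Sch_R\op$, the functor $Y^{\by i}_R$ is simply the functor of points of the $R$-scheme $Y\by_R\dots\by_RY$ ($i$ factors; this is $\spec R$ when $i=0$). Hence the Yoneda lemma identifies
\[\Nat\p{Y^{\by i}_R,\,\ul\hom^j(X/R,\msF)}=\ul\hom^j(X/R,\msF)\p{Y^{\by i}_R}=\hom^j\p{X\by_RY^{\by i}_R,\,\msF'},\]
where $\msF'$ is the pullback of $\msF$ along the projection $X\by_RY^{\by i}_R\to X$. Since finite morphisms are stable under base change and composition, and $X$ and $Y$ are both finite over $R$, the scheme $Z\coloneqq X\by_RY^{\by i}_R$ is finite over $R$; write $Z=\spec C$ with $C$ a finite $R$-algebra. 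Because $R$ is Henselian local, $C$ splits as a finite product $\prod_kC_k$ of local rings, each $C_k$ finite over $R$ and hence again Henselian; and the residue field of each $C_k$ is a finite extension of $\kappa(R)\coloneqq R/\mfm_R$, which, $\kappa(R)$ being separably closed, is purely inseparable and therefore again separably closed. So each $C_k$ is in fact strictly Henselian local, and $Z$ is a finite disjoint union of spectra of strictly Henselian local rings.

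The final ingredient is the standard fact that, for a strictly Henselian local ring $A$, the global-sections functor on the small \'etale site of $\spec A$ is exact, so $\hom^j(\spec A,\msG)=0$ for every $j\ge1$ and every abelian \'etale sheaf $\msG$. Applying this to each factor gives $\hom^j(Z,\msF')=\prod_k\hom^j(\spec C_k,\msF'\vert_{\spec C_k})=0$ for $j\ge1$, which is exactly the claimed vanishing. I do not expect a genuine obstacle here: the proof is a matter of stringing together these standard facts, and the only points that deserve a moment's care are the bookkeeping in the Yoneda identification (checking that the functor appearing in the Hochschild complex is represented by the scheme $Y^{\by i}_R$) and the elementary observation that any finite extension of a separably closed field is purely inseparable.
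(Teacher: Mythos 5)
Your proposal is correct and follows essentially the same route as the paper: Yoneda identifies the group of natural transformations with $\hom^j(X\by_RY^{\by i}_R,\msF)$, the fiber product is finite over $R$ and hence a finite disjoint union of spectra of strictly Henselian local rings, over which positive-degree \'etale cohomology vanishes. The only difference is cosmetic — the paper cites \cite[\href{https://stacks.math.columbia.edu/tag/03QJ}{Tag 03QJ}]{stacks-project} for the decomposition step where you spell out the (correct) argument that the residue fields remain separably closed.
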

\begin{proof}
    By Yoneda, $\Nat(Y^{\by i}_R,\ul\hom^j(X/R,\msF))=\ul\hom^j(X/R,\msF)(Y^{\by i}_R)=\hom^j(X\by_R Y^{\by i}_R,\msF)$. Note that $X\by_RY^{\by i}_R$ is still finite over $R$ (by definition, $Y^{\by0}_R\coloneqq\spec R$) and so \cite[\href{https://stacks.math.columbia.edu/tag/03QJ}{Tag 03QJ}]{stacks-project} tells us that $X\by_RY^{\by i}_R$ is a finite disjoint union of $\spec$'s of strictly henselian local rings, and so $\hom^j(X\by_RY^{\by i}_R,\msF)=0$.
\end{proof}
\begin{cor}\label{cor:local-coh}
    Let $R$ be a strictly henselian local ring, let $G/R$ be a finite group scheme, and suppose $G$ acts on a finite $R$-scheme $X$. Then,
    \[\hom^n([X/G],\G_m)\iso\ghom^n(G,\G_{m,X/R})\tforany n\ge0.\]
\end{cor}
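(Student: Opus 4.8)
The plan is to read the isomorphism off the descent spectral sequence of \cref{lem:descent-ss} after observing that all of its positive rows vanish, the vanishing being exactly what \cref{lem:bar-complex-triv} provides.

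\textbf{Step 1 (set up the spectral sequence).} I would apply \cref{lem:descent-ss} with base $S=\spec R$ to the quotient stack $[X/G]$ and the sheaf $\msF=\G_m$. Since $\G_m$ is representable by a smooth group scheme, the remark following \cref{lem:descent-ss} lets the spectral sequence be taken entirely in \'etale cohomology, so
\[E_2^{ij}=\ghom^i\p{G,\ul\hom^j(X/R,\G_{m,X})}\implies\hom^{i+j}([X/G],\G_m).\]

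\textbf{Step 2 (kill the higher rows).} By \cref{defn:hochschild-cohomology}, $\ghom^i(G,M)$ is the $i$-th homotopy group of the complex with terms $C^i(G,M)=\Nat(G^{\by i}_R,M)$. Since $G$ is a finite group scheme over $R$, each $G^{\by i}_R$ is again a finite $R$-scheme, and $\G_{m,X}$ is an \'etale sheaf on the finite $R$-scheme $X$; hence \cref{lem:bar-complex-triv} (applied with $Y=G$) gives $C^i\p{G,\ul\hom^j(X/R,\G_{m,X})}=0$ for every $i\ge0$ whenever $j\ge1$. Thus the complex computing $\ghom^\bullet\p{G,\ul\hom^j(X/R,\G_{m,X})}$ is the zero complex, i.e.\ $E_2^{ij}=0$ for all $i$ and all $j\ge1$.

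\textbf{Step 3 (degenerate and identify).} Being a first-quadrant spectral sequence concentrated in the single row $j=0$, it collapses at $E_2$ (no nonzero differential can enter or leave row $0$), so the abutment in degree $n$ equals $E_2^{n0}$, giving
\[\hom^n([X/G],\G_m)\iso E_2^{n0}=\ghom^n\p{G,\ul\hom^0(X/R,\G_{m,X})}.\]
It then only remains to unwind notation: $\ul\hom^0(X/R,\G_{m,X})$ is the functor $T\mapsto\hom^0(X_T,\G_m)=\Gamma(X_T,\msO_{X_T}^\times)$, which is precisely $\G_{m,X/R}$ as in the statement, yielding $\hom^n([X/G],\G_m)\iso\ghom^n(G,\G_{m,X/R})$ for all $n\ge0$.

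\textbf{Where the difficulty (isn't).} There is essentially no obstacle, since all the substance is already packaged into \cref{lem:descent-ss} and \cref{lem:bar-complex-triv}; this corollary is a formal consequence. The only points requiring a moment's care are (i) that we may use the \'etale rather than fppf form of the descent spectral sequence, which is legitimate because $\G_m$ is smooth, and (ii) that the hypotheses of \cref{lem:bar-complex-triv} genuinely hold, namely $R$ strictly henselian local and $G^{\by i}_R$ finite over $R$ for all $i\ge0$ — both immediate from the assumptions. If one preferred to avoid naming the spectral sequence, one could argue directly with the complex of the covering $X\to[X/G]$: its $i$-th term is $\hom^\bullet\p{G^{\by i}_R\by_S X,\G_m}$, and \cref{lem:bar-complex-triv} collapses each of these to its degree-zero part, leaving exactly the Hochschild complex $C^\bullet(G,\G_{m,X/R})$.
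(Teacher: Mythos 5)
Your argument is correct and is essentially identical to the paper's proof: both invoke the descent spectral sequence of \cref{lem:descent-ss} for the cover $X\to[X/G]$ and then use \cref{lem:bar-complex-triv} (with $Y=G$) to show every row $j\ge1$ vanishes, so the sequence collapses onto the $j=0$ row. Your extra remarks on the legitimacy of working \'etale rather than fppf and on unwinding $\ul\hom^0(X/R,\G_{m,X})=\G_{m,X/R}$ are correct but just make explicit what the paper leaves implicit.
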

\begin{proof}
    Set $\meX\coloneqq[X/G]$. Consider the cohomological descent spectral sequence of \cref{lem:descent-ss}:
    \[E_2^{ij}=\ghom^i(G,\ul\hom^j(X/R,\G_m))\implies\hom^{i+j}(\meX,\G_m).\]
    It follows from \cref{lem:bar-complex-triv} that $E_2^{ij}=0$ whenever $j>0$. Thus, the spectral sequence is concentrated in the $j=0$ row, from which the corollary follows.
\end{proof}

\subsection{Tame stacks}

We review here the basics of tame stacks in the sense of \cite{AOV}. 
\begin{defn}
    Let $S$ be a scheme. A group scheme $G/S$ is \define{linear reductive} if it is flat and finitely presented over $S$, and the functor $\QCoh^G(S)\to\QCoh(S)$, $F\mapsto F^G$, from $G$-equivariant quasi-coherent sheaves to all quasi-coherent sheaves, is exact.
\end{defn}
\begin{defnprop}[{\cite[Definition 3.1 and Theorem 3.2]{AOV}}]\label{defn:tame}
    Let $S$ be a scheme. An algebraic stack $\meX/S$ is \define{tame} if it is locally of finite presentation and has finite inertia over $S$ -- so it admits a coarse moduli space $c\colon\meX\to X$ \cite{keel-mori,conrad-km} -- and either of the following equivalent conditions hold:
    \begin{itemize}
        \item The functor $\push c\colon\QCoh(\meX)\to\QCoh(X)$ is exact.
        \item For any object $\xi\in\meX(k)$ over an algebraically closed field $k$, the $k$-group scheme $\ul\Aut(\meX,\xi)$ is linearly reductive.
        \qedhere
    \end{itemize}
\end{defnprop}
Recall that a finite locally free commutative group scheme $G/S$ is \define{diagonalizable} if its Cartier dual $\dual G\coloneqq\ul\Hom(G,\G_m)$ is constant. In \cite[Section 2]{AOV}, the authors classify finite linearly reductive group schemes (i.e. the automorphism/stabilizer groups of tame stacks). We will implicitly make use of their classification throughout this paper:
\begin{prop}[{\cite[Theorem 2.19]{AOV}}]\label{prop:lin-red-classification}
    Let $S$ be a scheme, and let $G/S$ be a finite flat group scheme of finite presentation. Then, the following are equivalent
    \begin{alphabetize}
        \item $G$ is linearly reductive.
        \item There exists an an fpqc cover $\{S_i\to S\}_{i\in I}$ such that, for all $i$, the group scheme $G\by_SS_i$ is a semidirect product $\Delta\rtimes\ul Q$, where $\Delta$ is diagonalizable and $\ul Q$ is a tame constant group (i.e. $\#Q\in\Gamma(S_i,\msO_{S_i})^\by$).
        \item The fibers of $G\to S$ are linearly reductive.
    \end{alphabetize}
\end{prop}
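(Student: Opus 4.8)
The plan is to prove the cycle of implications $\bp{b}\Rightarrow\bp{a}\Rightarrow\bp{c}\Rightarrow\bp{b}$, with the last implication carrying essentially all of the content. Two basic facts about linear reductivity are used throughout, and I would establish them first: it is preserved under arbitrary base change on $S$ (so that one may restrict along $\spec\kappa(s)\to S$), and it is fpqc-local on $S$. The latter holds because, for a finite flat $G$, the formation of $G$-invariants of a $G$-equivariant quasi-coherent sheaf commutes with flat base change, while an $\msO_S$-linear map that becomes surjective after a faithfully flat base change is already surjective; hence the exactness demanded in \cref{defn:tame} descends along any fpqc cover $\{S_i\to S\}$. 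Granting these, $\bp{b}\Rightarrow\bp{a}$ is formal: a diagonalizable group is linearly reductive since its equivariant quasi-coherent sheaves are graded by the character group and passing to a graded piece is exact; a constant group $\ul Q$ with $\#Q$ invertible is linearly reductive via the Reynolds operator $\frac1{\#Q}\sum_{g\in Q}g$; linear reductivity is closed under group extensions, since for $1\to G'\to G\to G''\to1$ the functor of $G$-invariants is the composite of the (exact) functors of $G''$-invariants and of $G'$-invariants, and this handles $\Delta\rtimes\ul Q$; finally one descends from the cover $\{S_i\to S\}$. For $\bp{a}\Rightarrow\bp{c}$ one simply restricts linear reductivity of $G/S$ along $\spec\kappa(s)\to S$ for each point $s$.

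It remains to prove $\bp{c}\Rightarrow\bp{b}$. Since conclusion $\bp{b}$ is itself fpqc-local on $S$ (composites of fpqc covers are fpqc covers), I would reduce — by standard limit arguments, then by passing to a strict henselization, then to a completion, all of which are faithfully flat — to the case $S=\spec R$ with $R$ a complete noetherian strictly henselian local ring with algebraically closed residue field $k$. Over the closed point I would invoke the classification of linearly reductive finite group schemes over a field: $G_k\cong G_k^0\rtimes\pi_0(G_k)$ with $G_k^0$ of multiplicative type — hence diagonalizable, as $k=\bar k$ — and with $\#\pi_0(G_k)$ prime to $\Char k$. This is a genuine input, resting on the structure theory of finite group schemes over a perfect field together with Nagata-type results on linearly reductive groups.

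The work is then to spread this structure out over $\spec R$. Over the strictly henselian $R$ the connected--étale sequence $1\to G^0\to G\to\pi_0(G)\to 1$ exists, with $G^0$ finite flat connected and $\pi_0(G)$ finite étale, hence constant, say $\pi_0(G)=\ul Q$; the rank $\#Q$ is locally constant, equals the closed-fibre value, which is prime to the residue characteristic, and is therefore a unit in the local ring $R$, so $\ul Q$ is tame. That $G^0$ is commutative and diagonalizable, and that the displayed extension admits a semidirect-product section, I would obtain by deformation theory: working over the Artinian quotients $R/\mathfrak m^n$ and passing to the limit using completeness, the obstructions to deforming the closed-fibre decomposition $G_k\cong\Delta\rtimes\ul Q$ lie in cohomology groups of $\ul Q$ with coefficients in modules built from $G^0$ (its Lie algebra, and $G^0$ itself), which are annihilated by $\#Q\in R^\times$ and hence vanish; similarly the relevant automorphism groups vanish, so the decomposition deforms uniquely. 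This yields $G\cong\Delta\rtimes\ul Q$ over $R$, completing $\bp{c}\Rightarrow\bp{b}$ and the cycle.

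The main obstacle is precisely this last spreading-out step inside $\bp{c}\Rightarrow\bp{b}$: one must (i) know the exact structure of a linearly reductive finite group scheme over an algebraically closed field, and (ii) propagate it off the closed fibre — which, as the commutator map of $G^0$ already shows, does not follow from the closed-fibre statement by a naive Nakayama argument, but genuinely requires a deformation-theoretic analysis whose obstruction groups are killed by the (invertible) order of the étale part. By contrast, the remaining implications are essentially formal once linear reductivity is known to be stable under base change and fpqc-local on $S$.
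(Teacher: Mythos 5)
First, a point of orientation: the paper does not actually prove \cref{prop:lin-red-classification} — it is imported verbatim from \cite[Theorem 2.19]{AOV} and used as a black box — so the only proof to measure yours against is the one in that reference. Your architecture does match it: the cycle $\bp b\Rightarrow\bp a\Rightarrow\bp c\Rightarrow\bp b$, with $\bp b\Rightarrow\bp a$ assembled from linear reductivity of diagonalizable groups (grading), of tame constant groups (Reynolds operator), closure under extensions, and fpqc descent; $\bp a\Rightarrow\bp c$ by base change (itself a small but genuine lemma in \cite{AOV} for non-flat base change); and the hard implication $\bp c\Rightarrow\bp b$ by reduction to a complete noetherian strictly henselian local ring $R$, the field classification on the closed fibre, the connected--\'etale sequence, and a lifting argument. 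All of that is sound as sketched.

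The gap is in the lifting step of $\bp c\Rightarrow\bp b$. You attribute the entire spreading-out to obstruction groups of the form $\ghom^i(Q,M)$ with $M$ built from $G^0$, killed by $\#Q\in R^\times$. That mechanism genuinely handles the splitting of the connected--\'etale sequence $1\to G^0\to G\to\ul Q\to1$ and the descent of the $Q$-action, but it says nothing about the structure of $G^0$ itself. The assertion that the finite flat connected group scheme $G^0/R$ is commutative and diagonalizable, given only that its closed fibre is, is the real content of the step, and the relevant obstruction has nothing to do with $\#Q$: take $Q$ trivial and $G=G^0$ a flat infinitesimal deformation of $\mu_{p^n}$ in residue characteristic $p$, and every $Q$-cohomology group vanishes for free while the question of diagonalizability remains entirely open. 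What is needed is a rigidity statement for diagonalizable group schemes under infinitesimal deformation — that a finite flat group scheme over an Artinian local ring whose special fibre is diagonalizable is itself diagonalizable — and one cannot even invoke Cartier duality or the rigidity of multiplicative-type groups until commutativity of $G^0$ is known, which (as you yourself observe via the commutator map) is part of what must be proved. This is exactly where the bulk of \cite[Section 2]{AOV} is spent, using the linear reductivity of $G^0$ itself rather than any property of $\ul Q$; your sketch replaces that work with a torsion-vanishing argument that does not apply to it. Everything else in the proposal is correct in outline.
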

\begin{cor}\label{cor:G-dual-etale}
    Let $S$ be a scheme, and let $G/S$ be a finite linearly reductive group. Then, $\dual G\coloneqq\ul\Hom_S(G,\G_m)$ is \'etale over $S$.
\end{cor}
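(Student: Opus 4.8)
The plan is to reduce to the explicit structure of $G$ afforded by \cref{prop:lin-red-classification} and then to compute $\dual G$ one piece at a time. Since being \'etale over $S$ is an fpqc-local property on $S$, and since the formation of $\dual G=\ul\Hom_S(G,\G_m)$ commutes with arbitrary base change, we may pass to an fpqc cover of $S$ and thereby assume $G\cong\Delta\rtimes\ul Q$ with $\Delta/S$ diagonalizable and $\ul Q$ a tame constant group --- in particular with $\#Q$ invertible in $\Gamma(S,\msO_S)$. (The argument below exhibits $\dual G$ as a finite \'etale group scheme fpqc-locally, so by descent it is automatically representable by a finite \'etale $S$-scheme.)

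Next I would identify $\dual G$. Applying the left-exact contravariant functor $\ul\Hom_S(-,\G_m)$ to the split exact sequence $1\to\Delta\to G\to\ul Q\to 1$, and using that $\G_m$ is commutative so that every character of $G$ factors through $\ab G$, one obtains a \emph{split} exact sequence of commutative group schemes
\[1\too\dual{\ul Q}\too\dual G\too(\dual\Delta)^{\ul Q}\too 1,\]
where $\dual\Delta$ carries the conjugation action of $\ul Q$ and $(\dual\Delta)^{\ul Q}$ is its fixed subgroup; concretely, a character of $G$ is a pair $(\chi,\psi)$ consisting of a $\ul Q$-invariant character $\chi\colon\Delta\to\G_m$ and an arbitrary character $\psi\colon\ul Q\to\G_m$, assembled via $(\delta,q)\mapsto\chi(\delta)\psi(q)$. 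As everything is commutative and the sequence splits, $\dual G\cong\dual{\ul Q}\by_S(\dual\Delta)^{\ul Q}$, so it suffices to show both factors are finite \'etale over $S$.

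Finally I would handle the two factors. Writing $\ab Q\cong\prod_i\zmod{n_i}$, Cartier duality gives $\dual{\ul Q}=\ul\Hom_S(\ul{\ab Q},\G_m)\cong\prod_i\mu_{n_i,S}$; since each $n_i$ divides $\#Q$, it is invertible on $S$, so each $\mu_{n_i,S}$ --- and hence their product --- is finite \'etale. For the other factor, $\Delta$ being diagonalizable means $\dual\Delta$ is a \emph{finite constant} group scheme $\ul M_S$ (for a finite abelian group $M$), and since $Q$ is finite, the a priori only locally constant conjugation action of $\ul Q$ on $\ul M_S$ is, Zariski-locally on $S$, induced by an honest action of the abstract group $Q$ on $M$; hence $(\dual\Delta)^{\ul Q}$ is Zariski-locally the finite constant group scheme $\ul{M^Q}$, and therefore finite \'etale over $S$. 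A finite product of finite \'etale $S$-schemes being finite \'etale, $\dual G$ is finite \'etale --- in particular \'etale --- over $S$.

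The step requiring genuine care is the middle one: correctly pinning down $\dual G$ as $\dual{\ul Q}\by_S(\dual\Delta)^{\ul Q}$, and checking that passing to $\ul Q$-invariants of the finite constant scheme $\dual\Delta$ preserves finite \'etaleness. The latter hinges on the finiteness of $Q$, which lets one simultaneously trivialize all of the automorphisms $\alpha_q$ ($q\in Q$) of $\ul M_S$ on a Zariski neighborhood of any point of $S$, reducing the fixed-point computation to the classical one. Everything else is routine manipulation with Cartier duality and fpqc descent.
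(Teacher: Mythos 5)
Your proof is correct and follows essentially the same route as the paper's: reduce fpqc-locally to $G\cong\Delta\rtimes\ul Q$ via the classification of linearly reductive groups, apply $\ul\Hom_S(-,\G_m)$ to the resulting sequence, and observe that both the sub ($\dual{\ul Q}$, a product of $\mu_{n_i}$'s with $n_i$ invertible) and the quotient (sitting inside the constant scheme $\dual\Delta$) are \'etale. The only difference is that you pin down the cokernel precisely as the split factor $(\dual\Delta)^{\ul Q}$ and check its \'etaleness directly, where the paper simply notes that $\dual G$ injects modulo $\dual Q$ into the \'etale group $\dual\Delta$ and concludes as an extension of \'etale groups; both arguments are fine.
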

\begin{proof}
    This can be checked fpqc-locally on $S$, so WLOG, we may and do assume that there is an exact sequence
    \[0\too\Delta\too G\too Q\too0,\]
    where $\Delta$ is diagonalizable and $Q$ is tame \'etale (i.e. $\#Q\in\Gamma(S,\msO_S)^\by$). Applying $\ul\Hom(-,\G_m)$, we arrive at the exact sequence
    \[0\too\dual Q\too\dual G\too\dual\Delta.\]
    Above, $\dual Q$ is once again tame \'etale, because $\#\dual Q=\#\pdual{\ab Q}=\#\ab Q$ divides $\#Q$ and $\dual\Delta$ is also \'etale, because $\Delta$ is diagonalizable. Thus, every subgroup of $\dual\Delta$ is \'etale, and so $\dual G$, being an extension of \'etale groups, is \'etale too.
\end{proof}
\begin{ex}
    Let $R$ be a strictly local ring. It follows from \cref{prop:lin-red-classification} that, for any finite linearly reductive group $G/R$, its connected--\'etale sequence \cite[Section (3.7)]{tate-ffgs} exhibits $G$ as an extension of a tame constant group scheme $\ul Q$ by a diagonalizable group scheme.
\end{ex}
\begin{lemma}\label{lem:tame-stack-etale-loc}
    Let $\meX$ be a tame algebraic stack over a scheme $S$, with coarse moduli space $c\colon\meX\to X$. Fix a point $x\in X$. Then, there exists all of the following data.
    \begin{itemize}
        \item A finite separable field extension $k/\kappa(x)$ and a point $x'\in\meX(k)$ lifting $x\in X$.
        \item An \'etale neighborhood $(U,u)\to(X,x)$ of $x$, with $u\in U(k)$.
        \item A finite linearly reductive group $G/U$ such that $G_u\cong\ul\Aut(\meX,x')$ as $k$-group schemes.
        \item A finite and finitely presented $U$-scheme $V/U$ equipped with a $G$-action and an isomorphism
        \[\meX\by_XU\cong[V/G]\]
    \end{itemize}
\end{lemma}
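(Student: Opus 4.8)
The plan is to derive this from the local structure theorem for tame stacks (\cite[Section 3]{AOV}), since the lemma is essentially a repackaging of it: the only content beyond the bare assertion that $\meX$ is étale-locally on $X$ the quotient of a finite scheme by a finite linearly reductive group is (i) that the residue field extension can be arranged finite \emph{separable}, and (ii) the identification $G_u\cong\ul\Aut(\meX,x')$ of the group in the presentation with the automorphism group of a chosen lift of $x$. So I would first reduce to producing a finite separable $k/\kappa(x)$ together with a lift $x'\in\meX(k)$ of $x$; once such an $x'$ is chosen, setting $G_0\coloneqq\ul\Aut(\meX,x')$, the remaining data is produced by running the proof of the local structure theorem with $G_0$ as input.

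To produce $x'$: since $c$ is a coarse space map, $|\meX|\to|X|$ is a homeomorphism, so there is a unique $\tilde x\in|\meX|$ over $x$; let $\meG\hookrightarrow\meX$ be its residual gerbe (which exists since $\meX$, having finite inertia, is quasi-separated). Then $\meG$ is a tame gerbe over $\Spec\kappa(x)$, banded by a finite linearly reductive $\kappa(x)$-group $H$ whose center $Z(H)$ is a finite commutative linearly reductive group. Over a separable closure $\kappa(x)^{\mathrm{sep}}$ the gerbe $\meG$ and the neutral gerbe $BH$ share a band, so their difference is a class in $\fhom^2(\kappa(x)^{\mathrm{sep}},Z(H))$; but $Z(H)_{\kappa(x)^{\mathrm{sep}}}$ is, by \cref{prop:lin-red-classification}, an extension of a constant finite group of order prime to the characteristic by a product of groups $\mu_n$, and $\fhom^2(\kappa(x)^{\mathrm{sep}},-)$ vanishes on the former (trivial absolute Galois group) and on the latter (as $\Br\kappa(x)^{\mathrm{sep}}=0$). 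Hence $\meG$ is neutral over $\kappa(x)^{\mathrm{sep}}$, and since $\meG$ is locally of finite presentation a point of $\meG$ over $\kappa(x)^{\mathrm{sep}}$ descends, by a limit argument, to a point $x'\in\meG(k)\subseteq\meX(k)$ for some finite separable $k/\kappa(x)$. (This step may already be contained in \cite[Section 3]{AOV}, in which case one simply cites it.)

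With $x'$ in hand: as $k/\kappa(x)$ is finite separable, standard facts on étale morphisms produce an étale neighborhood $(U_0,u)\to(X,x)$ with $\kappa(u)\cong k$. I would then spread $G_0=\ul\Aut(\meX,x')$ out to a finite linearly reductive group $G$ over a (possibly smaller) étale neighborhood $U\ni u$ with $G_u\cong G_0$; this uses \cref{prop:lin-red-classification} to present $G_0$ over the strict henselization as an extension of a tame constant group by a diagonalizable group — both of which visibly spread out — together with the fact that linear reductivity can be checked on fibers and so persists after shrinking $U$. Feeding $G$ and the point $x'$ into the slice/deformation argument of \cite[Section 3]{AOV} then yields a finite, finitely presented $U$-scheme $V\to U$ equipped with a $G$-action, a $G$-fixed point $v\in V(k)$, and an isomorphism $\meX\times_XU\cong[V/G]$ under which $x'$ corresponds to $v$; in particular $\ul\Aut(\meX,x')=\Stab_G(v)=G_u$, which is the required identification. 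Shrinking $U$ once more if necessary so that $u$ still lies over $x$ finishes the argument.

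The main obstacle is really just invoking \cite[Section 3]{AOV} correctly: its slice argument — producing a quotient presentation in which $G$ acts on $V$ with a fixed point whose stabilizer is all of $G$ — is the substantive input, and the work on my side is bookkeeping, namely arranging the lift $x'$ over a \emph{separable} extension (the gerbe-neutrality argument above) and checking that AOV's construction can be primed with the specific group $G_0=\ul\Aut(\meX,x')$ so that the presentation outputs this automorphism group on the nose rather than merely an abstractly isomorphic one. Everything else (descent, spreading out, shrinking neighborhoods) is routine.
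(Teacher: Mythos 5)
Your proposal is correct and follows essentially the same route as the paper, which likewise assembles the lemma from \cite[Proposition 3.7]{AOV} (the lift $x'$ over a finite separable extension), a standard \'etale neighborhood, and Step 1 of the proof of \cite[Proposition 3.6]{AOV} together with \cite[Proposition 2.18]{AOV} (to extend $\ul\Aut(\meX,x')$ to a linearly reductive $G/U$ and produce the presentation $[V/G]$). The two steps you re-derive by hand --- the gerbe-neutrality argument over $\kappa(x)^{\mathrm{sep}}$ and the spreading out of $G_0$ --- are exactly the content of \cite[Lemma 3.13 and Proposition 2.18]{AOV} and can simply be cited.
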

\begin{proof}
    This is a consequence of the proof, though not the statement, of \cite[Theorem 3.2(d)]{AOV}. First, \cite[Proposition 3.7]{AOV} guarantees the existence of $k$ and $x'\in\meX(k)$ as in the first bullet point. Now, \cite[\href{https://stacks.math.columbia.edu/tag/02LF}{Tag 02LF}]{stacks-project} guarantees the existence of an \'etale neighborhood $(U,u)\to(X,x)$ as in the second bullet point. By replacing $X$ with $U$ (and $\meX$ with $\meX\by_XU$), we may and do assume that $x$ lifts to some $x'\in\meX(\kappa(x))$. At this point, the rest of the lemma follows from \cite[\textsc{Step} 1 of the proof of Proposition 3.6]{AOV}; in particular, the third bullet point is guaranteed by their use of \cite[Proposition 2.18]{AOV} to obtain the linearly reductive group $G$ appearing in their argument.
\end{proof}
\begin{cor}\label{cor:tame-stack-sh}
    Let $\meX$ be a tame algebraic stack over a scheme $S$, with coarse moduli space $c\colon\meX\to X$. Let $\spec\msO_{X,\bar x}\to X$ be the (strictly henselian) local ring at some geometric point $\bar x\to X$. Then,
    \[\meX\by_X\msO_{X,\bar x}\cong[\spec R/G]\]
    for some strictly henselian local ring $R$ which is finite and finitely presented over $\msO_{X,\bar x}$ and is acted upon by some finite linearly reductive group $G/\msO_{X,\bar x}$ such that $G_{\bar x}\cong\ul\Aut(\meX,\bar x)$. Furthermore, $G_{\bar x}$ acts trivially on the residue field of $R$.
\end{cor}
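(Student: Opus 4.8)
The plan is to deduce this from the \'etale-local structure \cref{lem:tame-stack-etale-loc} by passing to the limit defining $\msO_{X,\bar x}$ and then cutting the resulting presentation down to a single connected component. For \emph{Step 1}, let $x\in X$ be the image of $\bar x$. Applying \cref{lem:tame-stack-etale-loc} produces a finite separable extension $k/\kappa(x)$, a lift $x'\in\meX(k)$ of $x$, an \'etale neighborhood $(U,u)\to(X,x)$ with $u\in U(k)$, a finite linearly reductive group $G_0/U$ with $(G_0)_u\cong\ul\Aut(\meX,x')$, and a finite, finitely presented $U$-scheme $V_0$ carrying a $G_0$-action with $\meX\by_XU\cong[V_0/G_0]$. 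Examining the construction (\cite[Proposition 3.6]{AOV}), I would further record that $x'$ is the image of a point $v_0\in V_0(k)$ over $u$ which is fixed by \emph{all} of $(G_0)_u$ -- this is precisely how the isomorphism $(G_0)_u\cong\ul\Aut(\meX,x')$ is produced. Since $\kappa(\bar x)$ is a separable closure of $\kappa(x)$ it contains $k$, so I may take the lift $\bar x'\to\meX$ of $\bar x$ to be $x'\otimes_k\kappa(\bar x)$, giving $\ul\Aut(\meX,\bar x)\cong(G_0)_u\otimes_k\kappa(\bar x)$. Base changing the presentation along the canonical map $\spec\msO_{X,\bar x}\to U$ (using $\msO_{X,\bar x}\cong\msO_{U,\bar x}$) and writing $V\coloneqq V_0\by_U\spec\msO_{X,\bar x}$, $G\coloneqq G_0\by_U\spec\msO_{X,\bar x}$, I obtain $\meX\by_X\spec\msO_{X,\bar x}\cong[V/G]$ with $V$ finite and finitely presented over $\msO_{X,\bar x}$, with $G$ finite linearly reductive over $\msO_{X,\bar x}$ (linear reductivity being fibral, \cref{prop:lin-red-classification}), with $G_{\bar x}\cong\ul\Aut(\meX,\bar x)$, and with $G_{\bar x}$ fixing the point $\bar v\coloneqq v_0\otimes_k\kappa(\bar x)\in V(\kappa(\bar x))$.

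For \emph{Step 2}, since $\msO_{X,\bar x}$ is strictly henselian local and $V$ is finite over it, $V$ is a finite disjoint union of spectra of strictly henselian local rings, each finite and finitely presented over $\msO_{X,\bar x}$ (as in the proof of \cref{lem:bar-complex-triv}, via \cite[\href{https://stacks.math.columbia.edu/tag/03QJ}{Tag 03QJ}]{stacks-project}). Let $\spec R$ be the component containing $\bar v$. The $G$-action permutes these finitely many components, and since the coarse space of $[V/G]\cong\meX\by_X\spec\msO_{X,\bar x}$ is the scheme $\spec\msO_{X,\bar x}$, which is connected (being local), this permutation action is transitive. Hence, setting $H\coloneqq\Stab_G(\spec R)$ -- an open and closed, finite, finitely presented subgroup scheme of $G$ -- the standard description of an induced action gives $V\cong G\by^H\spec R$ and therefore $[V/G]\cong[\spec R/H]$. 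Relabelling $H$ as $G$, this is the presentation claimed in the statement.

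It remains, in \emph{Step 3}, to verify the extra properties. First, $H$ is linearly reductive: $G/H$ -- the $G$-orbit of $\spec R$ -- is finite \'etale over $\msO_{X,\bar x}$, so induction $\Ind_H^G\colon\QCoh^H\to\QCoh^G$ is exact, and since taking $G$-invariants is exact and $(\Ind_H^GF)^G\cong F^H$ by Frobenius reciprocity, taking $H$-invariants is exact as well; alternatively one checks this fibrally using \cref{prop:lin-red-classification}. Second, $G_{\bar x}$ fixes $\bar v\in\spec R$ and hence preserves the component $\spec R$, so $G_{\bar x}\subseteq H_{\bar x}\subseteq G_{\bar x}$ and thus $H_{\bar x}\cong\ul\Aut(\meX,\bar x)$. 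Finally, the $\kappa(\bar x)$-point $\bar v$ meets the unique closed point of $\spec R$, forcing the residue field of $R$ to equal $\kappa(\bar x)$; since $H_{\bar x}$ then acts $\kappa(\bar x)$-linearly on $R\otimes_{\msO_{X,\bar x}}\kappa(\bar x)$ and preserves its closed point, the induced action on the residue field $\kappa(\bar x)$ is trivial.

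The main obstacle I anticipate is Step 2 together with the linear reductivity claim in Step 3. One must choose the component $\spec R$ correctly -- which is why it is worth tracking the $G_0$-fixed point $v_0$ coming out of the proof of \cref{lem:tame-stack-etale-loc} (in $[\A^1/\mu_n]$-type examples a wrongly chosen component can have a strictly smaller stabilizer, so ``$H_{\bar x}\cong\ul\Aut(\meX,\bar x)$'' would fail) -- then make the reduction $[V/G]\cong[\spec R/H]$ precise for a group \emph{scheme} acting on a disjoint union rather than an abstract group, and finally check that the component stabilizer $H$ inherits linear reductivity from $G$, for which the Frobenius-reciprocity argument above (using that $G/H$ is finite \'etale) seems the cleanest route. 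The remaining points -- fibrality of finiteness and finite presentation, the residue field computation, and the triviality of the action on it -- are routine once this is in place.
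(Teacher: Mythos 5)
Your proposal is correct, and it rests on the same two key inputs as the paper's proof: the \'etale-local presentation from \cref{lem:tame-stack-etale-loc}, and the combination of (a) transitivity of the $G$-action on connected components (via connectedness of the coarse space) with (b) the existence of a lift of $\bar x$ whose full stabilizer is $G_{\bar x}$. The difference is organizational, and the paper's route is shorter: instead of passing to the component $\spec R\subseteq V\by_X\msO_{X,\bar x}$ containing the fixed point and to its stabilizer subgroup $H$, the paper observes that (a) and (b) together already force $V\by_X\msO_{X,\bar x}$ to be \emph{connected} --- a group acting transitively on a finite set while fixing one element acts on a singleton --- so that $\spec R$ is all of $V\by_X\msO_{X,\bar x}$ and one never leaves the group $G$. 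In your setup this manifests as $H_{\bar x}=G_{\bar x}$ forcing $H=G$ (open-and-closed subgroups of a finite group scheme over a henselian local ring are determined by their special fibers), so the extra machinery you flag as the main obstacle --- the induced-action decomposition $[V/G]\cong[\spec R/H]$ for group schemes and the Frobenius-reciprocity argument for linear reductivity of $H$ --- is all correct but turns out to be vacuous. One further stylistic point: rather than extracting the fixed point $v_0$ by ``examining the construction'' of \cite[Proposition 3.6]{AOV}, the paper derives it intrinsically: for \emph{any} geometric lift $\bar y$ of $\bar x$ to $V$, one has $\Stab_{G_{\bar x}}(\bar y)\cong\ul\Aut(\meX,\bar x)=G_{\bar x}$, so every lift is automatically fixed by all of $G_{\bar x}$; this avoids any dependence on the internals of the cited construction and is what you would want to cite if you wrote this up.
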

\begin{proof}
    By \cref{lem:tame-stack-etale-loc}, by passing to an \'etale neighborhood of $\bar x$, we may and do assume that $\meX=[V/G]$ for some finite and finitely presented $V/X$ and some finite linearly reductive group $G/X$ such that $G_{\bar x}\cong\ul\Aut(\meX,\bar x)$. Write $\spec R\coloneqq V\by_X\msO_{X,\bar x}$. The claim will follow as soon as we show that $R$ is local (equivalently, $\spec R$ is connected). 

    Note that $R$ is a finite product of local rings, so $\spec R$ is connected if its special fiber $Y\coloneqq(\spec R)_{\bar x}$ is connected. Furthermore, $\meX\by_X\bar x\simeq[Y/G_{\bar x}]$ is connected because \cite[Corollary 3.3(a)]{AOV} shows its coarse space is $\spec\kappa(\bar x)$, so $G_{\bar x}$ must act transitively on the connected components of $Y$. After possibly extending the field of definition $\kappa(\bar x)$ of $\bar x$, we may lift it to a geometric point $\bar y\to Y$, and then observe that
    \[G_{\bar x}=\ul\Aut(\meX,\bar x)\simeq\Stab_{G_{\bar x}}(\bar y)\le G_{\bar x}\]
    from which we conclude that $G_{\bar x}$ acts trivially on $\bar y$. Since we saw earlier that $G_{\bar x}$ acts transitively on $Y$'s connected components, we deduce that $Y$ (and so also $\spec R$) must be connected.
\end{proof}

\subsection{Brauer groups}

We now set our conventions concerning Brauer groups. In particular, we differentiate between the (Azumaya) Brauer group $\Br$, the cohomological Brauer group $\Br'$, and $\hom^2(-,\G_m)$.
\begin{defn}
    Fix an algebraic stack $\meX$. An \define{Azumaya algebra} over $\meX$ is a quasi-coherent $\msO_\meX$-algebra $\msA$ which is \'etale-locally isomorphism to $M_n(\msO_\meX)$, the sheaf of $n\by n$-matrices over $\msO_\meX$, for some $n\ge1$. Two Azumaya algebras $\msA,\msB$ are \define{Brauer equivalent} if there exists vector bundles $\msE,\msE'$ on $\meX$ such that
    \[\msA\otimes\sEnd(\msE)\simeq\msB\otimes\sEnd(\msE').\]
    The \define{(Azumaya) Brauer group} $\Br(\meX)$ is the group of Brauer equivalence classes of Azumaya algebras, under tensor products.
\end{defn}
\begin{defn}
    The \define{cohomological Brauer group} of an algebraic stack $\meX$ is $\Br'(\meX)\coloneqq\hom^2(\meX,\G_m)_{\t{tors}}$.
\end{defn}
\begin{rem}
    For any algebraic stack $\meX$, there always exists a natural injection
    \[\alpha_\meX\colon\Br\meX\into\Br'\meX;\]
    see \cite[(2.1)]{Gr1} and/or \cite[(2.2.1)]{shin}.
\end{rem}

For comparing the various groups $\Br\meX,\Br'\meX,\hom^2(\meX,\G_m)$, one has the following results.
\begin{lemma}\label{lem:alpha-finite-cover}
    Let $f\colon\meX\to\meY$ be a finite, flat, finitely presented surjective morphism of algebraic stacks. Then,
    \begin{enumerate}
        \item a class $\beta\in\hom^2(\meY,\G_m)$ is in the image of $\alpha_\meY$ if and only if $\pull f\beta\in\hom^2(\meX,\G_m)$ is in the image of $\alpha_\meX$.
        \item if $\alpha_\meX$ is an isomorphism, so is $\alpha_\meY$.
        \item if $\Br'(\meX)=\hom^2(\meX,\G_m)$, so too does $\Br'(\meY)=\hom^2(\meY,\G_m)$.
    \end{enumerate}
\end{lemma}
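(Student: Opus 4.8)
The plan is to prove part \bp1 directly, and then deduce parts \bp2 and \bp3 formally from it together with the existence of a corestriction map on $\hom^2(-,\G_m)$. Throughout I use that ``finite'' forces $f$ to be representable, and that $f$ being finite, flat, finitely presented and surjective makes $\push f\msO_\meX$ finite locally free of everywhere-positive rank $\deg f$.

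For part \bp1, the forward implication is immediate: if $\beta=\alpha_\meY([\msA])$ for an Azumaya algebra $\msA$ on $\meY$, then $\pull f\msA$ is again Azumaya on $\meX$ (being \'etale-locally a matrix algebra is stable under pullback), and naturality of the comparison maps $\alpha_\bullet$ (see \cite{shin}) gives $\alpha_\meX([\pull f\msA])=\pull f\alpha_\meY([\msA])=\pull f\beta$. For the converse I would use the standard description of $\im(\alpha)$ via twisted sheaves (cf.\ \cite{lieblich2011period}): for an algebraic stack $\msZ$ and a class $\gamma\in\hom^2(\msZ,\G_m)$ realized by a $\G_m$-gerbe $\pi\colon\mcG_\gamma\to\msZ$, one has $\gamma\in\im(\alpha_\msZ)$ if and only if $\mcG_\gamma$ admits a finite locally free $\msO_{\mcG_\gamma}$-module of rank $\ge1$ on each connected component on which the canonical $\G_m$ acts with weight $1$. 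Granting this, suppose $\pull f\beta\in\im(\alpha_\meX)$. Base-changing $f$ along $\pi\colon\mcG_\beta\to\meY$ produces a finite, flat, finitely presented, surjective morphism $g\colon\mcG_\beta\by_\meY\meX\to\mcG_\beta$, and $\mcG_\beta\by_\meY\meX$ is a $\G_m$-gerbe over $\meX$ realizing $\pull f\beta$. Let $\msE$ be a locally free weight-$1$ sheaf of positive rank on $\mcG_\beta\by_\meY\meX$; then $\push g\msE$ is again finite locally free (finite flat pushforward of a finite locally free sheaf is finite locally free), still of weight $1$ (because $g$ is compatible with the two bandings), and of rank $(\deg g)\cdot\rank\msE$, which is positive on each component. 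Hence $\mcG_\beta$ admits such a sheaf, so $\beta\in\im(\alpha_\meY)$.

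Part \bp2 is then formal: $\alpha_\meY$ is always injective, so it suffices to check surjectivity. Given $\beta\in\Br'\meY=\hom^2(\meY,\G_m)_{\t{tors}}$, the class $\pull f\beta$ is torsion, hence lies in $\Br'\meX=\hom^2(\meX,\G_m)_{\t{tors}}=\im(\alpha_\meX)$ (using that $\alpha_\meX$ is an isomorphism), and part \bp1 then gives $\beta\in\im(\alpha_\meY)$. For part \bp3, I would use corestriction: as $f$ is finite (hence representable) and flat, $\push f$ is exact on \'etale sheaves, so the norm $\Nm\colon\push f\G_m\to\G_m$ induces $\cores_f\colon\hom^2(\meX,\G_m)\to\hom^2(\meY,\G_m)$ with $\cores_f\circ\pull f$ equal to multiplication by $\deg f$. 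Working one connected component at a time, write $d\coloneqq\deg f\in\Z_{\ge1}$; for $\gamma\in\hom^2(\meY,\G_m)$ the class $\pull f\gamma$ lies in $\hom^2(\meX,\G_m)=\Br'\meX$ and is therefore torsion, say $n\pull f\gamma=0$ with $n\ge1$, whence $nd\cdot\gamma=\cores_f(\pull f(n\gamma))=0$. Thus every class in $\hom^2(\meY,\G_m)$ is torsion, i.e.\ $\Br'\meY=\hom^2(\meY,\G_m)$.

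I expect the only genuine work to be the converse of part \bp1 --- specifically, making sure the twisted-sheaf description of $\im(\alpha)$ and the elementary properties of finite flat pushforward of twisted sheaves (preservation of finite local freeness and of the $\G_m$-weight, together with the rank formula) hold over a general algebraic stack and not merely over schemes. Everything else is bookkeeping.
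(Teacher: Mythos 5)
Your proposal is correct and follows essentially the same route as the paper: for part \bp3 you use exactly the paper's corestriction argument (the norm map $\push f\G_m\to\G_m$ composed with the identification $\hom^2(\meX,\G_m)\simeq\hom^2(\meY,\push f\G_m)$, giving multiplication by $\deg f$), and parts \bp{1,2} are handled the same way in substance. The only difference is that the paper simply cites \cite[Proposition 2.5 and Corollary 2.6]{shin} for \bp{1,2}, whereas you supply the proof; the argument you give --- characterizing $\im(\alpha)$ by the existence of a positive-rank locally free $1$-twisted sheaf on the associated $\G_m$-gerbe and pushing such a sheaf forward along the finite locally free base change of $f$ --- is precisely the standard proof of the cited result, so nothing is missing.
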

\begin{proof}
    Parts \bp{1,2} are \cite[Proposition 2.5 + Corollary 2.6]{shin}. For part \bp3, the norm map $\push f\G_{m,X}\to\G_{m,Y}$ gives a morphism $\hom^2(X,\G_m)=\hom^2(Y,\push f\G_{m,X})\to\hom^2(Y,\G_m)$ such that the composition
    \[\hom^2(Y,\G_m)\xtoo{\pull f}\hom^2(X,\G_m)\too\hom^2(Y,\G_m)\]
    is multiplication by $\deg f$. Hence, if the middle group is torsion, the outer group must be as well.
\end{proof}
\begin{thm}[Gabber, {\cite[Theorem 4.2.1]{CT-S}}]\label{thm:gabber}
    Let $X$ be a (quasi-compact, separated) scheme which admits an ample line bundle. Then, $\alpha_X$ is an isomorphism.
\end{thm}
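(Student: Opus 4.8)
This is Gabber's theorem, and I would prove it by following de Jong's argument, as written up in \cite{CT-S}. The injectivity of $\alpha_X$ is formal and holds for any algebraic stack (as already remarked above): an Azumaya algebra whose class in $\hom^2(X,\G_m)$ vanishes is \'etale-locally, and then globally by descent, of the form $\sEnd(\mcE)$ for a vector bundle $\mcE$, hence Brauer-trivial. So the entire content is surjectivity of $\alpha_X$, and this is where the ample line bundle is used. The key reduction is the standard fact that a class $\alpha\in\Br'X\subseteq\hom^2(X,\G_m)$ lies in the image of $\alpha_X$ if and only if there exists a locally free $\alpha$-twisted sheaf of positive, finite, locally constant rank on $X$: given such a twisted sheaf $\mcV$ of rank $r$, the sheaf $\sEnd(\mcV)$ is naturally an honest $\msO_X$-algebra (the twists cancel), \'etale-locally isomorphic to $M_r(\msO_X)$, hence Azumaya with class $\alpha$; conversely an Azumaya algebra of degree $r$ representing $\alpha$ arises this way. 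So it suffices to produce a locally free $\alpha$-twisted sheaf of positive rank.

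To do this, let $n$ be the order of $\alpha$. Since $n\alpha=0$, the class $\alpha$ is represented by a $\mu_n$-gerbe $\pi\colon\mcG\to X$ (in the fppf sense), and $\alpha$-twisted sheaves are precisely the weight-one quasi-coherent sheaves on $\mcG$; note that $\mu_n$ is diagonalizable, so $\mcG$ is a tame stack with coarse space $X$, and coherence and cohomology on $\mcG$ behave well. One first produces \emph{some} coherent twisted sheaf $\mcF$ of positive generic rank: cover $X$ by finitely many opens over which $\alpha$ trivialises, take a twisted line bundle on each, extend each to a coherent twisted sheaf on all of $\mcG$, and form the direct sum; its rank at every generic point of $X$ is then positive. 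This costs essentially nothing.

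The real work is to modify $\mcF$ into a \emph{locally free} twisted sheaf while keeping the generic ranks positive; this is the heart of the matter and the step genuinely due to Gabber. First one reduces to $X$ Noetherian by absolute Noetherian approximation, since the ample line bundle, the gerbe $\mcG$, and the sheaf $\mcF$ all descend to a model of finite type over $\Z$, and local freeness is a constructible condition. Then, on a Noetherian $X$ with ample $\mcL$, one argues by induction on $\dim X$: the non-locally-free locus of $\mcF$ is closed of strictly smaller dimension, and by performing ``elementary transforms'' of $\mcF$ supported there (twisting by high powers of $\pi^*\mcL$ to obtain generation by global sections, replacing $\mcF$ by kernels and images of maps between sums of twisted line bundles) one strictly decreases a numerical invariant measuring the failure of local freeness, all while controlling the ranks so that they stay positive. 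De Jong's bookkeeping shows this process terminates, yielding a locally free $\alpha$-twisted sheaf of positive rank. This inductive d\'evissage, and the estimates forcing it to terminate, are where ampleness is indispensable -- without it, the conclusion can genuinely fail.

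Finally, feeding this locally free twisted sheaf into the reduction of the first paragraph produces an Azumaya $\msO_X$-algebra with class $\alpha$, so $\alpha\in\image\alpha_X$; together with the formal injectivity, $\alpha_X$ is an isomorphism. The expected main obstacle is thus the middle step -- converting a coherent twisted sheaf into a locally free one of positive rank -- everything else being formal.
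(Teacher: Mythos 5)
The paper does not prove this statement at all: it is imported as a black box from \cite[Theorem 4.2.1]{CT-S}, so there is no internal argument to compare against, and your sketch is a faithful outline of the de Jong--Gabber proof recorded there (reduce surjectivity of $\alpha_X$ to producing a locally free twisted sheaf of positive rank on a $\mu_n$-gerbe, then use ampleness and elementary transformations to rigidify a coherent twisted sheaf). The only imprecision worth flagging is in your construction of the initial coherent twisted sheaf: $\alpha$ need not trivialize on a Zariski cover of $X$, so one cannot literally ``cover $X$ by opens over which $\alpha$ trivialises''; instead one produces twisted line bundles fppf-locally on the gerbe and extends coherent twisted subsheaves of their pushforwards, which is what the cited argument actually does.
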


At times, we will implicitly use the following facts about Brauer groups.
\begin{lemma}\label{lem:Br=Br'-stacky-curves}
    Let $\meX$ be a regular stacky curve over a field $k$. Then, $\Br\meX\iso\hom^2(\meX,\G_m)$.
\end{lemma}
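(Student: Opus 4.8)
The natural map $\alpha_\meX\colon\Br\meX\into\hom^2(\meX,\G_m)$ is always injective and factors through $\Br'(\meX)\subseteq\hom^2(\meX,\G_m)$, so what must be shown is that $\hom^2(\meX,\G_m)$ is torsion and that $\alpha_\meX$ is surjective onto it. The plan is to bootstrap both facts from the scheme case using \cref{lem:alpha-finite-cover,thm:gabber}, after covering $\meX$ by an appropriate scheme. As a preliminary reduction, a regular stacky curve is the disjoint union of its finitely many integral components, and $\Br$ and $\hom^2(-,\G_m)$ are compatible with such decompositions, so I would reduce at once to the case $\meX$ integral.

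The crux is to produce a finite, flat, finitely presented, surjective morphism $\pi\colon Y\to\meX$ with $Y$ a regular $k$-curve. (Such a $Y$, being separated and of finite type of dimension $\le1$ over a field, is automatically quasi-projective, hence carries an ample line bundle.) I expect this to be the one nontrivial point. When $\meX$ is a tame Deligne--Mumford stacky curve this is essentially classical: by \cite[Propositions 1.5 and 2.19]{lopez2023picard}, $\meX$ is a gerbe over a multiply rooted stack over its coarse space $X$, and one assembles $\pi$ from the total space of the root stack (a finite flat cover over $X$, the degrees being invertible by tameness) together with a finite flat trivialization of the residual gerbe structure. For a general -- possibly wild, non-Deligne--Mumford -- regular stacky curve, one can first peel off the generic gerbe structure: writing $\meX\to\meX'$ for the map obtained by rigidifying along the generic stabilizer $A$ (a finite flat group scheme), $\meX\to\meX'$ is an $A$-gerbe and $\meX'$ has strictly smaller geometric automorphism groups, so after finitely many such steps one reaches the tame Deligne--Mumford case; each gerbe layer is then pulled back to the resulting scheme cover and split there, using that a gerbe banded by a finite flat group scheme over a regular quasi-projective $k$-scheme $Z$ admits a finite flat cover by a scheme -- its class in $\hom^2(Z,\G_m)$ being Azumaya by \cref{thm:gabber}, and index-$n$ Azumaya classes being split by some finite flat degree-$n$ cover. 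One could alternatively try to extract $\pi$ directly from the étale-local presentations $\meX\by_XU\cong[V/G]$ of \cref{cor:tame-stack-sh} (with $G$ finite flat, so the atlas $V\to[V/G]$ is finite flat) by gluing; either way, this globalization is the part I would need to pin down carefully.

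Granting $\pi$, the remainder is routine. Since $Y$ is a regular integral Noetherian scheme of dimension $\le1$, the group $\hom^2(Y,\G_m)$ is torsion -- it injects into $\hom^2(k(Y),\G_m)=\Br k(Y)$, e.g. via the divisor sequence on $Y$ -- so $\Br'(Y)=\hom^2(Y,\G_m)$; and because $Y$ admits an ample line bundle, \cref{thm:gabber} gives that $\alpha_Y$ is an isomorphism, i.e. $\Br Y=\hom^2(Y,\G_m)$. Now apply \cref{lem:alpha-finite-cover} to the finite flat finitely presented surjection $\pi$: part (3) yields $\Br'(\meX)=\hom^2(\meX,\G_m)$, and part (2) yields that $\alpha_\meX\colon\Br\meX\to\Br'(\meX)$ is an isomorphism. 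Together these give $\Br\meX\iso\hom^2(\meX,\G_m)$, as desired. Thus the entire difficulty is concentrated in the construction of $\pi$; everything after that is a formal application of \cref{lem:alpha-finite-cover,thm:gabber}.
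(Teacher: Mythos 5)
Your overall architecture matches the paper's: both arguments reduce everything to producing a finite, flat, finitely presented surjection onto $\meX$ from a regular $k$-curve carrying an ample line bundle, and then conclude formally via \cref{thm:gabber} and \cref{lem:alpha-finite-cover}(2),(3) together with the fact that $\hom^2(-,\G_m)$ of a regular integral curve is torsion. Your ``routine'' second half is essentially what the paper does. The difference --- and the problem --- is the construction of the cover, which you correctly identify as the crux but do not actually carry out. The paper's route is short and avoids all structure theory: by \cite[Theorem 2.7]{Br-quot} there is a finite surjection $Z\to\meX$ from a (possibly non-separated) scheme; after normalizing $Z$ and passing to a component dominating the coarse space $X$, the composite $Z\to\meX\to X$ is proper and quasi-finite, hence finite, so $Z$ is a regular $k$-curve (in particular separated with an ample line bundle); and flatness of $Z\to\meX$ then comes for free from miracle flatness \cite[Theorem 23.1]{matsumura}, since it is a finite morphism from a regular (hence Cohen--Macaulay) scheme to a regular stack of the same dimension. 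One never needs to build the cover by hand or to arrange flatness during the construction.

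Your proposed construction has genuine gaps. First, the lemma carries no tameness hypothesis --- a ``stacky curve'' here is only separated, noetherian, pure of dimension $1$, with finite inertia --- and your reduction to the tame DM case fails for wild curves: a generically schemey stacky curve with, say, $\mu_p$ or $\alpha_p$ stabilizers at closed points in characteristic $p$ has trivial generic stabilizer, so there is nothing to rigidify, yet it is neither tame nor DM, and the root-stack structure theorem you invoke from \cite{lopez2023picard} applies only to tame DM curves. Rigidification shrinks the \emph{generic} stabilizer, not the stabilizers at special points, so iterating it does not terminate in the tame DM case. Second, the gerbe-splitting step is unjustified as stated: a gerbe banded by a general finite flat group scheme $A$ does not define a class in $\hom^2(Z,\G_m)$ at all (that requires a homomorphism $A\to\G_m$, e.g.\ $A=\mu_n$, and fails already for noncommutative $A$), so the ``its class is Azumaya, hence split by a finite flat cover'' argument does not get off the ground; and even where it applies, splitting the induced $\G_m$-class is weaker than producing a finite flat scheme cover of the gerbe itself. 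The fix is to replace your explicit construction with the general covering theorem for stacks with finite (or quasi-finite) diagonal plus the miracle-flatness trick, exactly as the paper does.
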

\begin{proof}
    Let $c\colon\meX\to X$ be its coarse space map; note that $X$ is normal (and so $k$-smooth) because $\meX$ is. By \cite[Theorem 2.7]{Br-quot}, there exists a finite surjection $Z\to\meX$ from a (not necessarily separated) scheme $Z$. We may replace $Z$ with its normalization and then by any one of its (connected) components which dominates $X$ in order to assume that it is normal and irreducible. The composition $Z\to\meX\xto cX$, being both proper and quasi-finite, is therefore finite, so $Z$ is a $k$-curve. Since $Z$ was assumed normal, it is in fact regular. Now, $Z\to\meX$ is a finite morphism from a regular scheme to a regular algebraic stack of the same dimension. It follows from \cite[Theorem 23.1]{matsumura} (applied to $Z_U\to U$ for $U\to\meX$ a smooth cover by a scheme) that $Z\to\meX$ must be flat as well. Finally, $Z$ admits an ample line bundle \cite[\href{https://stacks.math.columbia.edu/tag/09NZ}{Tag 09NZ}]{stacks-project} and $\Br'(Z)=\hom^2(Z,\G_m)$ \cite[Proposition 6.6.7]{poonen-rat-pts}, so we conclude by \cref{thm:gabber} and \cref{lem:alpha-finite-cover}.
\end{proof}
\begin{prop}\label{prop:brauer-injects}
    Let $\meX$ be a regular, noetherian algebraic stack, and let $\meU\subset\meX$ be a dense open substack. Then, the restriction map $\hom^2(\meX,\G_m)\to\hom^2(\meU,\G_m)$ is injective.
\end{prop}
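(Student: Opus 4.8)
The plan is to factor the restriction map through $\hom^2(\meX,j_*\G_{m,\meU})$, where $j\colon\meU\into\meX$ is the open immersion, and thereby reduce the statement to the vanishing of an $\hom^1$ with ``divisorial'' coefficients. First I would reduce to the integral case: a regular noetherian stack is the finite disjoint union of its connected components, each of which is integral and regular, a dense open substack meets each component in a dense open substack, and $\hom^2(-,\G_m)$ together with the restriction map splits as the corresponding finite product; so one may assume $\meX$ integral. Write $\meZ\coloneqq\meX\sm\meU$ for the reduced complement; it is closed and nowhere dense.

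Next I would record three consequences of regularity, each of which can be checked after base change along a smooth presentation $P\to\meX$ by a regular scheme and then descended: (i) the unit $\G_{m,\meX}\to j_*\G_{m,\meU}$ is injective (a section of $\G_m$ restricting to $1$ on a dense open of a reduced scheme is $1$); (ii) $\homR^1 j_*\G_{m,\meU}=0$, since regularity makes the (strictly henselian) local rings factorial, so the relevant stalk -- the Picard group of a punctured spectrum of such a ring -- coincides with its divisor class group and vanishes; and (iii) the cokernel $\mcD\coloneqq\coker(\G_{m,\meX}\to j_*\G_{m,\meU})$ is the divisorial sheaf $\bigoplus_x(i_x)_*\ul\Z$, the sum over the finitely many $x\in\abs\meX_1\cap\abs\meZ$, with $i_x\colon\spec\kappa(x)\to\meX$; equivalently, $0\to\G_{m,\meX}\to j_*\G_{m,\meU}\to\mcD\to0$ is the usual divisor exact sequence of a regular scheme. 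For (ii) and (iii) the descent uses flat base change for the open immersion $j$ and faithfully flat descent of sheaves.

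Granting this, the rest is formal. From (ii) and the Leray spectral sequence $\hom^p(\meX,\homR^q j_*\G_m)\Rightarrow\hom^{p+q}(\meU,\G_m)$, the edge map $\hom^2(\meX,j_*\G_m)\to\hom^2(\meU,\G_m)$ is injective, because the only differential that can hit $E_2^{2,0}$ emanates from $E_2^{0,1}=\hom^0(\meX,\homR^1 j_*\G_m)=0$. As the restriction map is the composite $\hom^2(\meX,\G_m)\to\hom^2(\meX,j_*\G_m)\to\hom^2(\meU,\G_m)$ of the map induced by the unit in (i) with this edge map, its kernel equals $\ker\bigl(\hom^2(\meX,\G_m)\to\hom^2(\meX,j_*\G_m)\bigr)$, which by the long exact sequence of (iii) is the image of the connecting map $\partial\colon\hom^1(\meX,\mcD)\to\hom^2(\meX,\G_m)$. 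Finally $\hom^1(\meX,\mcD)=\bigoplus_x\hom^1(\meX,(i_x)_*\ul\Z)$, and for each $x$ the degree-one Leray edge map gives an injection $\hom^1(\meX,(i_x)_*\ul\Z)\hookrightarrow\hom^1(\spec\kappa(x),\ul\Z)=\ctsHom(\Gal(\kappa(x)),\Z)=0$. Hence $\partial=0$ and the restriction map is injective.

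The main obstacle I expect is concentrated in the second paragraph: setting up and proving (ii) and (iii) for an algebraic \emph{stack} rather than a scheme. The descent along a smooth atlas is routine in outline, but it relies on the compatibility of $\homR^1 j_*$ with smooth pullback and on a clean form of the divisor exact sequence for regular algebraic stacks; if either is already available in the literature (or earlier in the paper) this collapses to a short argument, and otherwise this is where the real work sits. Everything else -- the two Leray edge-map injectivities, and the fact that there is no nonzero continuous homomorphism from a profinite group to $\Z$ -- is standard.
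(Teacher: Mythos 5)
Your argument is essentially correct, but it takes a genuinely different route from the paper. The paper follows Lieblich's twisted-sheaf argument: represent a class in $\hom^2(\meX,\G_m)$ by a $\G_m$-gerbe $\meG\to\meX$, note that triviality over $\meU$ produces a $1$-twisted line bundle on $\meG_\meU$, extend it to a coherent $1$-twisted subsheaf of the pushforward, replace it by its double dual, and use regularity to conclude that a rank-one reflexive sheaf is invertible, so the gerbe is trivial. Your route is instead the classical Grothendieck-style argument via the divisor exact sequence $0\to\G_m\to j_*\G_{m,\meU}\to\mcD\to 0$ and the vanishing of $\homR^1 j_*\G_m$ (both consequences of factoriality of the strictly henselian local rings), reducing everything to $\hom^1(\meX,\mcD)=0$. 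Both proofs use regularity in the same essential place (UFD local rings versus reflexive rank one $\Rightarrow$ invertible), and your Leray/edge-map bookkeeping in the third paragraph is sound.

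The one place where your proof needs more care than you allow is exactly the point you flag as ``routine in outline'': on a stack, a codimension-$1$ point $x$ does not give a morphism $\spec\kappa(x)\to\meX$ but rather a residual gerbe $i_x\colon\meG_x\to\meX$, so the correct form of $\mcD$ is $\bigoplus_x i_{x,*}\ul\Z$ with $i_x$ the residual gerbe (compare the paper's \cref{prop:stacky-divisor-ses}), and your final vanishing must be $\hom^1(\meG_x,\ul\Z)=0$ rather than $\ctsHom(\Gal(\kappa(x)),\Z)=0$. That vanishing is still true --- the paper proves it in \cref{lem:H^1XZx=0} for residual gerbes with finite inertia, and in general it reduces to $\Hom(G,\ul\Z)=0$ for the stabilizer group $G$ at $x$, which holds because a quasi-compact group has no nontrivial homomorphism to $\ul\Z$ --- but it is an extra argument, and it presupposes that residual gerbes exist at these points (which requires mild hypotheses on $\meX$). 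The paper's twisted-sheaf proof buys you freedom from all of this: it applies to an arbitrary regular noetherian algebraic stack with no mention of residual gerbes or inertia. Your proof, once the gerbe point is patched, buys a more explicit description of the kernel mechanism (it identifies the obstruction with the image of $\hom^1(\meX,\mcD)$), which is the computation the paper later redoes in \cref{sect:res-exact} for stacky curves.
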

\begin{proof}
    One can argue complete analogously to \cite[Lemma 3.1.3.3]{lieblich-twisted}. For the sake of completeness, we include details below.
    
    Let $\meG\to\meX$ be a $\G_m$-gerbe, and suppose that $\meG_\meU$ is trivial. Then, there exists a $1$-twisted line bundle $\msL_\meU$ on $\meG_\meU$ \cite[12.3.10]{olsson}. Let $i\colon\meG_\meU\into\meG$ be the natural inclusion. Then, $\push i\msL_\meU$ is a quasi-coherent 1-twisted sheaf on $\meG$. Since any subsheaf of a 1-twisted sheaf is 1-twisted, \cite[Proposition 15.4]{champs-algebriques} shows that $\push i\msL_\meU$ is a colimit of its coherent 1-twisted subsheaves. Consequently, there exists some coherent 1-twisted sheaf $\msL$ on $\meG$ extending $\msL_\meU$; replacing $\msL$ by $\pdual{\dual\msL}$, if necessary, we may and do suppose that $\msL$ is reflexive. Now, any reflexive sheaf of rank 1 on a regular algebraic stack is invertible (follows from \cite[Proposition 1.9]{hart-stable-reflex}), so $\msL$ is a 1-twisted line bundle. Since $\meG$ supports a 1-twisted line bundle, it follows from \cite[Proposition 12.3.11]{olsson} that $[\meG]=0\in\hom^2(\meX,\G_m)$.
\end{proof}
\begin{cor}
    Let $\meX$ be a regular, noetherian algebraic stack with coarse moduli space $c\colon\meX\to X$. If $c$ is an isomorphism over an open $U\subset X$ such that $Z\coloneqq X\sm U$ has codimension $\ge2$ in $X$, then $\pull c\colon\Br X\to\Br\meX$ is an isomorphism.
\end{cor}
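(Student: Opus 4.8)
The plan is to compare $X$ and $\meX$ over the open locus $U$ on which $c$ is an isomorphism. Write $\meU\coloneqq\inv c(U)$, so that $c$ restricts to an isomorphism $\meU\iso U$; since $c$ is proper and quasi-finite (being a coarse moduli map), the closed substack $\inv c(Z)\subseteq\meX$ still has codimension $\ge2$, and $X$ is normal because it is the coarse space of the regular — hence normal — stack $\meX$. These fit into a commutative square
\[
    \begin{tikzcd}
        \Br X \ar[r, "\pull c"] \ar[d] & \Br\meX \ar[d, hook] \\
        \Br U \ar[r, "\sim"'] & \Br\meU
    \end{tikzcd}
\]
whose vertical arrows are restriction, whose bottom arrow is the isomorphism induced by $c|_\meU$, and whose right-hand arrow is injective by \cref{prop:brauer-injects} applied to the regular noetherian stack $\meX$ and its dense open $\meU$ (restricting the injection on $\hom^2(-,\G_m)$ to the subgroups of Azumaya classes). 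Because the bottom arrow is an isomorphism, the square reduces the corollary to two statements about the normal scheme $X$ and the codimension-$\ge2$ closed set $Z$: \textbf{(i)} the restriction $\Br X\to\Br U$ is injective; and \textbf{(ii)} every Azumaya algebra on $U$ extends to an Azumaya algebra on $X$ (equivalently, $\Br X\to\Br U$ surjects onto the image of $\Br\meX\to\Br\meU\cong\Br U$).

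For \textbf{(ii)} I would start from an Azumaya algebra $\msA$ on $\meX$, transport $\msA|_\meU$ to an Azumaya algebra $\msA_U$ on $U$, and take its reflexive hull $\msB\coloneqq\sHom_{\msO_X}(\dual{(j_*\msA_U)},\msO_X)$ along $j\colon U\into X$ — a reflexive $\msO_X$-algebra restricting to $\msA_U$ on $U$. The point to check is that $\msB$ is Azumaya on \emph{all} of $X$; this can be tested on the strictly henselian local rings of $X$, where (using that $j_*$ only matters up to the codimension-$\ge2$ set $Z$, and that Azumaya algebras over strictly henselian local rings are split) it reduces to a purely local statement about reflexive algebras over such rings. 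Granting this, $[\msB]\in\Br X$, and chasing the square — restricting $\pull c[\msB]$ to $\meU$ and using injectivity of $\Br\meX\to\Br\meU$ — gives $\pull c[\msB]=[\msA]$, so $\pull c$ is surjective.

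For \textbf{(i)}: if $\msA$ is Azumaya on $X$ with $\msA|_U$ Brauer-trivial, then the associated $\G_m$-gerbe on $X$ carries a $1$-twisted line bundle over $U$ whose reflexive extension across $Z$ must again be invertible, exactly as in the proof of \cref{prop:brauer-injects} — so $[\msA]=0$; a diagram chase then yields $\ker\pull c=0$. I expect the genuine obstacle to be precisely this codimension-$\ge2$ extension problem over a coarse space $X$ that need not be regular: to control it one leverages the regularity of $\meX$ — running the Leray spectral sequences of $c$ and of $j\colon U\into X$ in parallel, noting $\push c\G_m=\push j\G_m=\G_m$ and $\Pic\meX\iso\Pic\meU$, to identify $\ker(\pull c)$ with $\coker\!\big(\Pic\meX\to\hom^0(X,\homR^1\push c\G_m)\big)$ (a group supported along $Z$) and to see that the regular stack $\meX$ already accounts for the local Picard/Brauer data of $X$ along $Z$. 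Since the argument manipulates Azumaya algebras throughout, no separate passage from $\hom^2(-,\G_m)$ or $\Br'$ to $\Br$ is needed at the end.
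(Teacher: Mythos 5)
Your commutative square is exactly the one in the paper's proof, and the reduction to statements about the scheme $X$ and the codimension-$\ge2$ closed set $Z$ is the right first move. The divergence --- and the gap --- is in what you do next. The paper simply observes that the left vertical arrow $\Br X\to\Br U$ is an isomorphism because $\codim_XZ\ge2$, citing purity for the Brauer group (\cite[Theorem 1.1]{cesnavicius-brauer} and/or \cite[Theorem 3.7.6]{CT-S}), after which everything is a diagram chase. You instead attempt to reprove this purity statement from scratch, and the step you propose to ``grant'' --- that the reflexive hull $\msB$ of $\push j\msA_U$ is Azumaya on all of $X$ --- is precisely the hard content, not a routine local verification. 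A reflexive coherent algebra that is Azumaya away from a codimension-$2$ locus need not be locally free, let alone Azumaya, once $\dim X\ge3$: the reflexive-hull argument is essentially Grothendieck's and only works in dimension $\le2$, where reflexive modules over regular local rings are free. Purity for the Brauer group in the generality needed here is a deep theorem (Gabber; \v{C}esnavi\v{c}ius), and your sketch gives no route to it; it should be cited, not rederived. The same applies to your step \textbf{(i)}: injectivity of $\Br X\to\Br U$ is part of the same cited package, and your closing paragraph about running the Leray spectral sequences of $c$ and $j$ in parallel is too vague to substitute for it.

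A secondary issue: you establish only that $X$ is \emph{normal}, but every tool you then invoke --- reflexive hulls being invertible/Azumaya, the mechanism of \cref{prop:brauer-injects}, and indeed the purity theorems themselves --- requires \emph{regularity} of $X$ along $Z$. This does not follow from regularity of $\meX$: coarse spaces of regular stacks can acquire quotient singularities exactly on the locus where $c$ fails to be an isomorphism, which here is contained in $Z$. So even granting a purity input, your argument as written applies it to a scheme whose regularity at the relevant points has not been checked. None of this is repaired by working with Azumaya algebras rather than $\hom^2(-,\G_m)$; the codimension-$\ge2$ extension problem is the whole theorem, and the correct fix is to quote it.
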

\begin{proof}
    Let $U,Z$ be as in the proposition statement, and consider the commutative diagram
    \[\begin{tikzcd}
        \Br X\ar[r, "\pull c"]\ar[d, "\sim" sloped]&\Br\meX\ar[d, hook]\\
        \Br U\ar[r, "\sim"]&\Br\meX_U,
    \end{tikzcd}\]
    whose left arrow is an isomorphism because $\codim_XZ\ge2$ (see \cite[Theorem 1.1]{cesnavicius-brauer} and/or \cite[Theorem 3.7.6]{CT-S}) and whose bottom arrow is an isomorphism because $\meX_U\iso U$. The claim follows from commutativity of this diagram.
\end{proof}
\begin{prop}\label{prop:iso-away-from-aut}
    Let $\meX$ be either
    \begin{itemize}
        \item a tame algebraic stack; or
        \item a separated DM stack.
    \end{itemize} 
    In either case, let $c\colon\meX\to X$ denote its coarse space. Let $\l$ be a prime such that $\l\nmid\#\Aut(\meX,x)$ for every $x\in\abs\meX$. Then, for any $i\ge1$, the pullback map
    \[\pull c\colon\hom^i(X,\G_m)\otimes\zloc\l\too\hom^i(\meX,\G_m)\otimes\zloc\l\]
    is an isomorphism. In particular, $\Br'(X)\{\l\}\iso\Br'(\meX)\{\l\}$.
\end{prop}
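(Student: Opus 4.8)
The plan is to run the Leray spectral sequence for $c\colon\meX\to X$ and show that, after inverting every prime other than $\ell$, only the bottom row $q=0$ survives.

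I would first record two general facts. Since $c$ is a coarse moduli map, $\push c\G_m=\G_m$: étale-locally on $X$ one has $\meX=[V/G]$ with coarse space $V/G=X$, and a $G$-invariant unit on $V$ (together with its $G$-invariant inverse) descends to a unit on $X$. Hence the Leray spectral sequence reads
\[E_2^{p,q}=\hom^p(X,\homR^q\push c\G_m)\implies\hom^{p+q}(\meX,\G_m),\]
and its edge map $\hom^i(X,\G_m)=E_2^{i,0}\to\hom^i(\meX,\G_m)$ is exactly $\pull c$. Secondly, in both cases $\meX$ is, étale-locally on $X$, a finite quotient stack $[V/G]$ by a finite group scheme $G$ with $G_{\bar x}\cong\ul\Aut(\meX,\bar x)$ at each geometric point $\bar x$: for tame $\meX$ this is \cref{cor:tame-stack-sh}, and for separated DM stacks the analogous local structure statement is classical (and may be obtained much as in the proof of \cref{cor:tame-stack-sh}). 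Granting that $\homR^q\push c\G_m$ has vanishing $\zloc\l$-localization for every $q\ge1$, the functor $-\otimes_\Z\zloc\l$ — being exact, and compatible with the formation of stalks and (after reducing to the quasi-compact case if necessary) with cohomology — annihilates the rows $q\ge1$ of the spectral sequence, so the edge map becomes an isomorphism $\pull c\otimes\zloc\l\colon\hom^i(X,\G_m)\otimes\zloc\l\iso\hom^i(\meX,\G_m)\otimes\zloc\l$ for every $i\ge1$.

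To prove the stalk vanishing I would pass to a geometric point $\bar x\to X$ and compute $(\homR^q\push c\G_m)_{\bar x}=\hom^q(\meX\by_X\spec\msO_{X,\bar x},\G_m)=\hom^q([\spec R/G],\G_m)$, where $\msO_{X,\bar x}$ is strictly henselian local, $R$ is finite over it — hence a finite product of strictly henselian local rings — and $\#G=\#\ul\Aut(\meX,\bar x)$ is prime to $\ell$ by hypothesis. The tautological $G$-torsor $f\colon\spec R\to[\spec R/G]$ is finite locally free of rank $\#G$, so the composite of pullback and norm
\[\hom^q([\spec R/G],\G_m)\xto{\pull f}\hom^q(\spec R,\G_m)\xto{N_f}\hom^q([\spec R/G],\G_m)\]
is multiplication by $\#G$. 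Since $\hom^q(\spec R,\G_m)=0$ for $q\ge1$ (the same vanishing over strictly henselian local rings that already underlies \cref{lem:bar-complex-triv} and \cref{cor:local-coh}), the group $\hom^q([\spec R/G],\G_m)$ is annihilated by $\#G$, which is prime to $\ell$; hence its $\zloc\l$-localization vanishes, as needed.

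Finally, for the last assertion I would apply $(-)_{\t{tors}}$ to the case $i=2$ of the isomorphism above and use the natural identification $(M\otimes\zloc\l)_{\t{tors}}\simeq M\{\l\}$, valid for any abelian group $M$, to obtain $\Br'(X)\{\l\}=\hom^2(X,\G_m)\{\l\}\iso\hom^2(\meX,\G_m)\{\l\}=\Br'(\meX)\{\l\}$. The step I expect to be the main obstacle is the local analysis feeding the stalk computation — namely producing the finite-quotient model $[\spec R/G]$ in the separated DM case, where \cref{cor:tame-stack-sh} does not literally apply, together with the input $\hom^q(\spec R,\G_m)=0$ for strictly henselian local $R$; the transfer argument and the bookkeeping needed to commute $-\otimes\zloc\l$ through the spectral sequence are routine.
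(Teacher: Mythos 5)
Your proof is correct and follows essentially the same route as the paper: reduce via the Leray spectral sequence to showing $\homR^q\push c\G_m\otimes\zloc\l=0$ for $q\ge1$, check this on stalks using the local structure theorem (\cref{cor:tame-stack-sh} in the tame case, the classical statement of Laumon--Moret-Bailly/Olsson in the separated DM case) to write the stalk as $\hom^q([\spec R/G],\G_m)$ with $R$ a finite product of strictly henselian local rings, and then use the pullback--norm composite along the $G$-torsor $\spec R\to[\spec R/G]$ to see this group is $\#G$-torsion with $\#G$ prime to $\l$. Your treatment of the localization bookkeeping and of the passage to $\Br'\{\l\}$ is, if anything, slightly more explicit than the paper's.
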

\begin{proof}
    By considering the Leray spectral sequence, we see it suffices to show that $\homR^i\push c\G_m\otimes\zloc\l=0$ for all $i\ge1$. Since this can be verified on the level of stalks, appealing to the local structure theorem for tame stacks (see \cref{cor:tame-stack-sh}) or for separated DM stacks (see \cite[Th\'eor\`eme 6.2]{champs-algebriques} and/or \cite[Theorem 2.12]{olsson-hom-stacks}), we may assume that $X=\spec R$ is strictly local ant that $\meX=[\spec A/G]$ for some finite $R$-algebra $A$ and some finite $R$-group $G$ for which $\l\nmid\#G$. In this case, it suffices to compute that $\hom^i(\meX,\G_m)\otimes\zloc\l=0$ for all $i\ge1$. As in the beginning of the proof of \cref{prop:diag-quot-Br-vanish}, the existence of the $G$-torsor $\spec A\to\meX$ with $\hom^i(A,\G_m)=0$ for all $i\ge1$ (because $A$ is a product of strictly local rings) suffices to deduce that $\hom^i(\meX,\G_m)$ is $\#G$-torsion for all $i\ge1$. Since $\l\nmid\#G$, we conclude that $\hom^i(\meX,\G_m)\otimes\zloc\l=0$ for all $i\ge1$.
\end{proof}
\begin{rem}
    If $\meX$ in \cref{prop:iso-away-from-aut} is DM, then the conclusion
    \[\pull c\colon\hom^i(X,\push c\msF)\otimes\zloc\l\isoo\hom^i(\meX,\msF)\otimes\zloc\l\tforall i\ge1\]
    holds for any \'etale sheaf $\msF$. Indeed, once one reduces to the case of $\meX=[\spec A/G]$ with $G$ an abstract group (because $\meX$ is DM) of cardinality not divisible by $\l$, the Hochschild--Serre/descent spectral sequence
    \[E_2^{ij}=\ghom^i(G,\hom^j(A,\msF))\implies\hom^{i+j}(\meX,\msF)\]
    shows that $\hom^i(\meX,\msF)$ is $\#G$-torsion for all $i\ge1$ (so $\hom^i(\meX,\msF)\otimes\zloc\l=0$).

    To get this more general conclusion in the case of algebraic $\meX$ in \cref{prop:iso-away-from-aut}, it would suffice have a positive answer to \cref{qn:rand}, at least for linearly reductive $G$.
\end{rem}
\begin{qn}\label{qn:rand}
    Let $S$ be a base scheme, and let $G$ be a finite, flat, finitely presented $S$-group scheme. For a $G$-module $M$, is it necessarily the case that $\ghom^i(G,M)$ is $\#G$-torsion for all $i\ge1$?
\end{qn}

\section{\bf Brauer groups of classifying stacks for locally diagonalizable groups}\label{sect:Blocally-diagonalizable}
In this section, we compute the Brauer (and Picard) groups of classifying stacks for certain finite group schemes.
\begin{rec}
    A finite, flat, finitely presented commutative group scheme $G/S$ is \define{locally diagonalizable} if its Cartier dual $\dual G\coloneqq\ul\Hom(G,\G_m)$ is $S$-\'etale.
\end{rec}
\begin{defn}
    Let $G/S$ be an \'etale group scheme. Call $G$ \define{cyclic} if, \'etale-locally on $S$, it is isomorphic to $\ul{\zmod n}$ for some $n\in\Z$. Call a locally diagonalizable group $G/S$ \define{cyclic} if $\dual G$ is cyclic.
\end{defn}
\begin{defn}
    We say a finite group scheme $G/S$ is \define{relatively connected} if, for every geometric point $\bar s\to S$, the scheme $G_{\msO_{S,\bar s}}$ is connected.
\end{defn}
\begin{ex}
    $\mu_n$ (for any $n$) is a cyclic locally diagonalizable group. Any commutative group $G/S$ of order invertible on $S$ is locally diagonalizable.
\end{ex}
\begin{lemma}\label{lem:mun-extension-comm}
    Let $R$ be a strictly henselian local ring, fix some $n\ge1$, and let
    \[1\too\G_m\too E\too\mu_n\too1\]
    be an extension of abelian fppf sheaves on $R$. Then, $E$ is commutative and represented by a scheme.
\end{lemma}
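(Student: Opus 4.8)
The plan is to prove representability first, which is the real content, and then observe that commutativity is essentially free.

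\textbf{Representability.} The exactness of $1\to\G_m\to E\to\mu_n\to1$ in abelian fppf sheaves says precisely that $E\to\mu_n$ is a $\G_m$-torsor: exactness at $\G_m$ and at $E$ makes the map $\G_m\times E\to E\times_{\mu_n}E$, $(t,e)\mapsto(te,e)$, an isomorphism of sheaves, and exactness at $\mu_n$ makes $E\to\mu_n$ an epimorphism of fppf sheaves. By Hilbert 90 in the form $H^1_{\mathrm{fppf}}(-,\G_m)=H^1_{\mathrm{Zar}}(-,\G_m)=\Pic$, every $\G_m$-torsor is Zariski-locally trivial, so there is a Zariski open cover $\{U_i\}$ of $\mu_n$ with $E\times_{\mu_n}U_i\cong\G_m\times U_i$ over $U_i$; in particular each $E\times_{\mu_n}U_i$ is representable by an affine $U_i$-scheme. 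Since $E\times_{\mu_n}(U_i\cap U_j)$ is an open subscheme of each of $E\times_{\mu_n}U_i$ and $E\times_{\mu_n}U_j$, Zariski gluing (always effective for schemes) then produces a scheme representing $E$; its structure map $E\to\mu_n$ is affine, smooth, and finitely presented, since Zariski-locally it is $\G_m\times U_i\to U_i$. As $\mu_n$ is finite flat affine over $R$, it follows that $E$ is affine, flat, and of finite presentation over $R$, and transporting the group law of the sheaf $E$ through Yoneda makes $E$ an $R$-group scheme. (This part does not use that $R$ is strictly henselian.)

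\textbf{Commutativity.} Since the extension is taken in the category of \emph{abelian} fppf sheaves, $E$ — hence the group scheme just produced — is commutative. (If one only assumed $E$ to be a sheaf of groups in which $\G_m$ is central, e.g. the Hochschild extension attached to a $2$-cocycle for $\mu_n$ acting trivially on $\G_m$, one would instead argue: the commutator pairing $\mu_n\times\mu_n\to\G_m$ is biadditive and alternating, and over the strictly henselian $R$ every such pairing vanishes. Indeed, write $\mu_n\cong\mu_{p^a}\times\mu_{n'}$ with $p$ the residual characteristic, $\mu_{n'}$ constant cyclic and $\mu_{p^a}$ connected; an alternating biadditive pairing is trivial on each monogenic factor, and the cross pairings are killed because $\ul\Hom(\mu_{p^a},\dual{\mu_{n'}})=\ul\Hom(\mu_{n'},\dual{\mu_{p^a}})=0$.)

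The only non-formal ingredient is Hilbert 90; extracting the torsor structure from exactness, Zariski gluing, and transporting the group structure are all routine, so I do not anticipate a genuine obstacle. An alternative route would be to show the extension splits outright by computing $\Ext^1_{\mathrm{fppf}}(\mu_n,\G_m)=0$ over the strictly henselian $R$, but this needs more delicate input such as $\Ext^{\ge1}_{\mathrm{fppf}}(\G_m,\G_m)=0$, so I would not take it.
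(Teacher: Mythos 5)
Your proof is correct and, for the substantive half, follows essentially the same route as the paper: the paper likewise ignores the word ``abelian'' in the statement (the lemma is applied in \cref{lem:BG-Gm-stalk} to Hochschild, i.e.\ central, extensions), gets representability from the $\G_m$-torsor structure of $E\to\mu_n$ (via effective fppf descent for affine morphisms rather than your Hilbert~90 plus Zariski gluing --- both work), and kills the alternating commutator pairing by exploiting that $\ul\Hom(\mu_n,\G_m)$ is \'etale together with cyclicity of the \'etale part. One small imprecision in your parenthetical: in residue characteristic $p>0$ the factor $\mu_{p^a}$ is \emph{not} monogenic ($\mu_p(S)$ can fail to be cyclic for non-reduced $S$), so the diagonal pairing on $\mu_{p^a}\times\mu_{p^a}$ should instead be killed by the same mechanism as your cross terms, namely $\ul\Hom(\mu_{p^a},\dual{\mu_{p^a}})=\ul\Hom(\mu_{p^a},\ul{\zmod{p^a}})=0$ since the source is connected and the target is \'etale; with that one-word repair the argument is complete.
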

\begin{proof}
    It is clear that $E$ is a $\G_m$-torsor over $\mu_n$, so effective descent for affine schemes shows that $E$ is represented by a scheme. Hence, we only need to show that $E$ is commutative, which one can do by arguing as in \cite{73470}. The commutator pairing $\mu_n\by\mu_n\to\G_m$ gives rise to a map $\phi\colon\mu_n\to\ul\Hom(\mu_n,\G_m)$. Since the target is \'etale, $\phi$ uniquely factors through the maximal \'etale quotient $\mu_n^\tet$ of $\mu_n$. Note that $\#\mu_m$ is coprime to $\#\mu_n^\circ$, the order of the identity component of $\mu_n$. For any $\zeta\in\mu_n$, $\phi(\zeta)\in\ul\Hom(\mu_n,\G_m)$ is killed by $\#\mu_n^\tet$ (since $\phi$ factors through $\mu_n^\tet$), so $\phi(\zeta)$ must kill $\mu_n^\circ$; that is, $\phi$ factors as
    \[\phi\colon\mu_n\onto\mu_n^\tet\to\ul\Hom(\mu_n^\tet,\G_m)\into\ul\Hom(\mu_n,\G_m).\]
    In other words, we may assume that $\mu_n=\mu_n^\tet$ is \'etale. Now, $\phi\colon\mu_n\to\ul\Hom(\mu_n,\G_m)$ comes from the commutator pairing $\mu_n\by\mu_n\to\G_m$, so for any $R$-scheme $S$ and any $\zeta\in\mu_n(S)$, we have $\phi(\zeta)(\zeta)=1$. Since $\mu_n$ is \'etale, $\mu_n(S)$ is cyclic for any $S/R$, so by choosing $\zeta\in\mu_n(S)$ to be a generator, we conclude that $\phi=0$.
\end{proof}
\begin{lemma}\label{lem:Ext-G-Gm}
    Let $G/S$ be a finite locally free commutative group over a scheme $S$, and let $\meX/S$ be an algebraic stack. Then, there are canonical isomorphisms
    \[\fhom^n(\meX,\dual G)\iso\Ext^n_\meX(G,\G_m),\]
    where the latter group is $\Ext$ in the category of abelian sheaves on the big fppf site $\fppf\meX$.
\end{lemma}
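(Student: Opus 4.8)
The plan is to compute $\Ext^n_\meX(G,\G_m)$ via the local-to-global $\Ext$ spectral sequence and to show that only its ``$\ul\Hom$'' part survives. Work on the big fppf site $\fppf\meX$ and replace $G$ by its pullback $G\by_S\meX$, which is again finite locally free commutative with Cartier dual $(\dual G)\by_S\meX$; so we may assume $G$ is defined over $\meX$. Since $\Hom_\meX(G,-)=\Gamma(\meX,-)\circ\ul\Hom_\meX(G,-)$ as functors on abelian fppf sheaves, the Grothendieck spectral sequence of this composition reads
\[E_2^{p,q}=\fhom^p\p{\meX,\sExt^q_\meX(G,\G_m)}\implies\Ext^{p+q}_\meX(G,\G_m),\qquad\text{where}\quad\sExt^q_\meX(G,\G_m)\coloneqq\homR^q\ul\Hom_\meX(G,\G_m).\]
By construction $\sExt^0_\meX(G,\G_m)=\ul\Hom_\meX(G,\G_m)=\dual G$. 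Hence, once we know that $\sExt^q_\meX(G,\G_m)=0$ for every $q\ge1$, the $E_2$-page is concentrated in the row $q=0$, all differentials vanish, and the edge maps furnish canonical isomorphisms $\fhom^n(\meX,\dual G)=E_2^{n,0}\iso\Ext^n_\meX(G,\G_m)$ --- which is exactly the assertion.

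Everything therefore reduces to the vanishing $\sExt^q_\meX(G,\G_m)=0$ for $q\ge1$, and it is here that finiteness, flatness, and commutativity of $G$ enter. Vanishing of a sheaf is local, so I would restrict along a smooth surjection $U\to\meX$ from a scheme --- restriction to the big fppf site of $U$ is exact and preserves injectives, hence commutes with $\ul\Hom_\meX(G,-)$ and with all its derived functors --- reducing first to $\meX$ a scheme and then to an affine scheme. At that point the needed statement is the standard fact from flat-duality theory that $\sExt^q(G,\G_m)=0$ for all $q\ge1$ whenever $G$ is a finite locally free commutative group scheme (equivalently: $\G_m$ is relatively injective for such $G$). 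If one wants a self-contained argument, one uses that forming $\sExt^q(G,-)$ for finite locally free $G$ commutes with flat base change to reduce to $S=\spec R$ with $R$ strictly henselian local, and then the connected--\'etale sequence $0\to G^\circ\to G\to G^\tet\to0$ with its long exact $\sExt$-sequence reduces the problem to $G$ \'etale and $G$ connected. The \'etale case is formal: there $G\cong\ul Q$ for a finite abelian group $Q$, a finite free presentation $0\to\ul\Z^{\oplus a}\to\ul\Z^{\oplus b}\to\ul Q\to0$ of abelian fppf sheaves reduces us to $\sExt^q(\ul\Z,\G_m)=0$ for $q\ge1$ (which holds because $\ul\Hom(\ul\Z,-)$ is the identity functor, hence exact), and the only remaining boundary term in degree $1$ is a cokernel of a finite-index self-map of $\G_m^{\oplus a}$, which vanishes since $\G_m$ is fppf-divisible.

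The connected case --- $G$ a connected finite flat group scheme over a strictly henselian local ring, for instance $\mu_p$ or $\alpha_p$ in characteristic $p$ --- is the substantive one. Here I would embed $G$ into a smooth affine commutative group with smooth quotient (e.g.\ into the Weil restriction $\Res_{A/R}\G_m$ of $\G_m$ along the finite flat $R$-algebra $A\coloneqq\Gamma(G,\msO_G)$) and deduce the vanishing from the corresponding vanishing of $\sExt^{\ge1}(-,\G_m)$ on smooth affine commutative groups by the long exact sequence. I expect this connected-case vanishing --- especially in positive characteristic --- to be the only genuine obstacle; granting it (or simply citing the flat-duality vanishing directly), what remains is a purely formal degeneration of the local-to-global $\Ext$ spectral sequence.
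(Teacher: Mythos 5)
Your proposal is correct and is essentially the paper's own proof: both run the local-to-global $\Ext$ spectral sequence, restrict along a smooth cover by a scheme to reduce to the scheme case, and then invoke the vanishing of $\sExt^q_{}(G,\G_m)$ for $q\ge1$, which the paper simply cites from Shatz (and Waterhouse for $n=1$) rather than reproving. Your supplementary sketch of that vanishing is fine in the \'etale case, but the connected case you defer is precisely the hard content of the cited result, so in practice you would quote it exactly as the paper does.
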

\begin{proof}
    This was proven by Shatz \cite{Shatz-PHS} (and independently by Waterhouse \cite{waterhouse-PHS} in the case $n=1$) when $\meX$ is a scheme, and the general case follows from his argument. In brief, one has the local global spectral sequence $\fhom^i(\meX,\sExt^j_\meX(G,\G_m))\implies\Ext^{i+j}_\meX(G,\G_m)$. If $X\to\meX$ is a smooth cover by a scheme, then \cite{Shatz-PHS} shows that $\sExt^j_X(G,\G_m)=\sExt^j_\meX(G,\G_m)\vert_X$ vanishes for all $j>0$, so also $\sExt^j_\meX(G,\G_m)=0$. Thus, the sequence immediately collapses and gives isomorphisms
    \[\fhom^n(\meX,\dual G)=\fhom^n(\meX,\sHom_\meX(G,\G_m))\iso\Ext^n_\meX(G,\G_m),\]
    as claimed.
\end{proof}
\begin{lemma}\label{lem:BG-Gm-stalk}
    Let $R$ be a strictly henselian local ring, and let $G/R$ be a finite group. Then,
    \begin{align*}
        \hom^0(BG_R,\G_m) &= \G_m(R) \\
        \hom^1(BG_R,\G_m) &= \dual G(R). \\
        \intertext{If, furthermore, $G$ is locally diagonalizable and either cyclic or connected, then also}
        \hom^2(BG_R,\G_m) &= 0.
    \end{align*}
\end{lemma}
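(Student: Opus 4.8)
The plan is to apply \cref{cor:local-coh} with $X=\spec R$ carrying the trivial $G$-action, so that $[\spec R/G]=BG_R$ and hence $\hom^n(BG_R,\G_m)\cong\ghom^n(G,\G_m)$ for all $n\ge 0$, with $G$ acting trivially on $\G_m=\G_{m,\spec R/R}$. The first two formulas are then immediate from the low-degree descriptions of Hochschild cohomology recalled after \cref{defn:hochschild-cohomology}: $\ghom^0(G,\G_m)=\G_m(R)^G=\G_m(R)$, and, since the action is trivial, $\ghom^1(G,\G_m)$ is the group of group-scheme homomorphisms $G\to\G_m$ over $R$, i.e. $\ul\Hom_R(G,\G_m)(R)=\dual G(R)$.

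The real content is that $\ghom^2(G,\G_m)=0$ when $G$ is locally diagonalizable and either cyclic or connected, and I would prove this by showing that every Hochschild extension $1\to\G_m\to E\to G\to 1$ (of group presheaves on $\Sch_R$, admitting a set-theoretic section, with $\G_m$ central) is split. First, since $R$ is strictly henselian and $\dual G$ is \'etale, $G$ is diagonalizable; if $G$ is moreover cyclic then $G\cong\mu_n$ for some $n\ge 1$. Next, a set-theoretic section identifies the underlying presheaf of sets of $E$ with $\G_m\times G$, so $E$ is a sheaf --- indeed the trivial $\G_m$-torsor over $G$ --- hence representable by a scheme affine over $G$. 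The key step is to verify that $E$ is commutative: for $G=\mu_n$ this is exactly \cref{lem:mun-extension-comm}, while for $G$ connected the commutator pairing of $E$ factors through a pairing $G\times G\to\G_m$, hence through a morphism $\phi\colon G\to\ul\Hom(G,\G_m)=\dual G$; since $\dual G$ is \'etale (\cref{cor:G-dual-etale}) and $G$ is connected, $\phi=0$, so the commutator pairing vanishes.

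Thus our extension is one of abelian fppf sheaves on $\spec R$, so it defines a class in $\Ext^1_{\spec R}(G,\G_m)$ (Ext of abelian sheaves on the big fppf site). By \cref{lem:Ext-G-Gm} this group equals $\fhom^1(\spec R,\dual G)$, and since $\dual G$ is \'etale --- hence smooth --- over the strictly henselian local ring $R$, it is $\hom^1(\spec R,\dual G)=0$. Hence the extension splits as abelian fppf sheaves; the resulting homomorphic section $G\to E$ is automatically a morphism of schemes (both being representable), hence a morphism of group schemes, and a fortiori a set-theoretic section respecting the group structure. So the Hochschild class is trivial, whence $\ghom^2(G,\G_m)=0$ and the last formula follows.

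The only delicate point is establishing commutativity of $E$, and this is precisely where the hypothesis ``cyclic or connected'' enters: for a general diagonalizable $G$ (e.g. $G=\mu_n\times\mu_n$ with $n$ invertible on $R$) there are Heisenberg-type noncommutative central extensions of $G$ by $\G_m$, corresponding to nonzero alternating pairings on $\dual G$, so $\ghom^2(G,\G_m)$ need not vanish there. Everything else is a formal assembly of \cref{cor:local-coh}, \cref{lem:Ext-G-Gm}, and strict henselianness; the one bookkeeping point to respect is that the section produced from the vanishing of $\fhom^1(\spec R,\dual G)$, being a section as fppf sheaves, is in particular one as presheaves of sets, so it genuinely trivializes the Hochschild extension.
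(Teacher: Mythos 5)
Your proof is correct and follows essentially the same route as the paper: reduce to Hochschild cohomology via \cref{cor:local-coh}, show any central extension $1\to\G_m\to E\to G\to1$ is commutative, identify $\ghom^2(G,\G_m)$ with $\Ext^1(G,\G_m)\simeq\fhom^1(R,\dual G)=0$ via \cref{lem:Ext-G-Gm} and \'etaleness of $\dual G$. The only cosmetic difference is that for connected $G$ the paper cites Milne's Theorem 15.39 for commutativity of $E$, whereas you give the (valid, and essentially equivalent) direct argument that the commutator pairing yields a homomorphism from connected $G$ to the \'etale group $\dual G$ and hence vanishes.
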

\begin{proof}
    The first equality holds because $\spec R$ is $BG_R$'s coarse space. For the second, \cref{cor:local-coh} shows that $\hom^1(BG_R,\G_m)\simeq\ghom^1(G,\G_m)=\Hom_{\GrpSch_R}(G,\G_m)$. Now assume that $G$ is locally diagonalizable and either cyclic or connected. \cref{cor:local-coh} shows that $\hom^2(BG_R,\G_m)\simeq\ghom^2(G,\G_m)$ which classifies central extensions
    \[1\too\G_m\too E\too G\too1.\]
    For any such extension, $E$ is commutative either by \cref{lem:mun-extension-comm} if $G$ is cyclic or by \cite[Theorem 15.39]{milne-alg-grps} (see also \cite[Lemma 2.14]{AOV}) if $G$ is connected. Thus, $\ghom^2(G,\G_m)\simeq\Ext^1(G,\G_m)$ ($\Ext$ here is computed in the category of abelian fppf sheaves on $\spec R$). Finally, \cref{lem:Ext-G-Gm} shows that $\Ext^1(G,\G_m)\simeq\fhom^1(R,\dual G)$, but because $\dual G$ is \'etale, $\fhom^1(R,\dual G)\simeq\hom^1(R,\dual G)=0$.
\end{proof}
\begin{prop}\label{prop:BG-pushforwards}
    Let $G/S$ be a finite group scheme, and let $c\colon BG_S\to S$ be the structure map. Then,
    \[\push c\G_m\simeq\G_m\tand\homR^1\push c\G_m\simeq\dual G.\]
    If, furthermore, $G$ is locally diagonalizable and either cyclic or relatively connected, then also $\homR^2\push c\G_m=0$.
\end{prop}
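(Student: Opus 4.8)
The plan is to verify all three assertions on stalks at geometric points of $S$ and to reduce to \cref{lem:BG-Gm-stalk}. First, $\push c\G_m\simeq\G_m$ is immediate, since $S$ is the coarse moduli space of $BG_S$: the unit $\G_m\to\push c\pull c\G_m=\push c\G_m$ of the adjunction $(\pull c,\push c)$ is an isomorphism (this is the relative form of the first line of the proof of \cref{lem:BG-Gm-stalk}). For the higher direct images, note that $BG_S\by_SU=BG_U$ for any $U\to S$, so $\homR^i\push c\G_m$ is the sheafification on $\et S$ of $U\mapsto\hom^i(BG_U,\G_m)$; hence its stalk at a geometric point $\bar s\to S$ is $\varinjlim_{(U,u)}\hom^i(BG_U,\G_m)$, the colimit over étale neighborhoods of $\bar s$.

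\textbf{Key step.} The one point requiring care is the claim that this stalk equals $\hom^i(BG_R,\G_m)$, where $R\coloneqq\msO_{S,\bar s}$ is the (strictly henselian) local ring. To see this, run the descent spectral sequence (\cref{lem:descent-ss}, in its étale form) for the cover $U\to BG_U=[U/G_U]$: the groups $\hom^\bullet(BG_U,\G_m)$ are computed from a spectral sequence with $E_1$-terms $\hom^j(G_U^{\by i},\G_m)$, and each $G_U^{\by i}$ is affine and finitely presented over $U$ precisely because $G$ is finite and finitely presented. Since $\spec R=\varprojlim_U U$ with affine transition maps, the standard limit theorem for étale cohomology of schemes (compatibility with filtered limits along affine transition maps) gives $\varinjlim_U\hom^j(G_U^{\by i},\G_m)=\hom^j(G_R^{\by i},\G_m)$ termwise, so the whole $E_1$-page — hence the abutment — of the spectral sequence for $BG_R$ is the filtered colimit of those for the $BG_U$; this yields the claim.

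\textbf{Conclusion.} It remains to apply \cref{lem:BG-Gm-stalk} over the strictly henselian ring $R$. It gives $\hom^1(BG_R,\G_m)=\dual G(R)=(\dual G)_{\bar s}$, and one checks that this identification is the one induced by the natural transformation $\dual G\to\homR^1\push c\G_m$ sending a character $G_U\to\G_m$ to the class in $\Pic(BG_U)=\hom^1(BG_U,\G_m)$ of the associated line bundle; being an isomorphism on all stalks, this map is an isomorphism of sheaves, so $\homR^1\push c\G_m\simeq\dual G$. Finally, suppose $G$ is locally diagonalizable and cyclic, or relatively connected. In the cyclic case $\dual G_R$ is a constant cyclic group, so $\dual G_R\simeq\ul{\zmod n}$ and $G_R\simeq\mu_n$; in the relatively connected case $G_R$ is connected by definition. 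Either way the supplementary hypothesis of \cref{lem:BG-Gm-stalk} holds over $R$, so $\hom^2(BG_R,\G_m)=0$; thus $\homR^2\push c\G_m$ has vanishing stalk at every geometric point of $S$ and is therefore the zero sheaf.

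The main obstacle is the \textbf{Key step} — commuting $\homR^i\push c\G_m$ past the strict localization $\msO_{S,\bar s}\to S$ — and this is exactly where finiteness and finite presentation of $G$ enter, via the limit theorem applied to the (finitely presented, affine) terms $G_U^{\by i}$ of the descent spectral sequence. Everything else is either formal (sheafification, coarse spaces, naturality of the comparison maps $\G_m\to\push c\G_m$ and $\dual G\to\homR^1\push c\G_m$) or a direct invocation of \cref{lem:BG-Gm-stalk}.
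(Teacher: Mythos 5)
Your proof is correct and takes essentially the same approach as the paper: all three claims are verified on stalks and reduced to \cref{lem:BG-Gm-stalk} over the strictly henselian local ring $R=\msO_{S,\bar s}$, using the same comparison map $\homR^1\push c\G_m\to\dual G$ induced by sending an equivariant line bundle to its character. The only difference is that you make explicit the limit argument identifying the stalk with $\hom^i(BG_R,\G_m)$ via the descent spectral sequence, a step the paper leaves implicit.
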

\begin{proof}
    The first of these holds because $c$ is a coarse space map, the second holds because \cref{lem:BG-Gm-stalk} shows that the natural map $\homR^1\push c\G_m\to\dual G$ (induced from the maps $\hom^1(BG_T,\G_m)\to\dual G(T)$, for any scheme $T/S$, sending a $G$-equivariant line bundle on $T$ to the character corresponding to its $G$-action) is an isomorphism on stalks, and the last holds because \cref{lem:BG-Gm-stalk} shows that the stalks of $\homR^2\push c\G_m$ all vanish in this case.
\end{proof}
\begin{prop}\label{prop:BG-Gm-coh}
    Let $G/S$ be a finite locally diagonalizable group over some base scheme $S$, and consider the classifying stack $c\colon BG_S\to S$. Assume that $G$ is either cyclic or connected. Then, there are split exact sequences
    \begin{alignat}{10}
        0 \too& \centermathcell{\Pic(S)} &&\xtoo{\pull c}&& \centermathcell{\Pic(BG_S)} &&\too&& \centermathcell{\dual G(S)} &&\too&& 0\\
        0 \too& \hom^2(S,\G_m) &&\xtoo{\pull c}&& \hom^2(BG_S,\G_m) &&\too&& \hom^1(S,\dual G) &&\too\,&& 0 \label{ses:H2-BG}
        .
    \end{alignat}
    The latter sequence restricts to analogous split exact sequences when $\hom^2(-,\G_m)$ is replaced by $\Br'$ or $\Br$. 
\end{prop}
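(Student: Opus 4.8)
The plan is to run the Leray spectral sequence
\[E_2^{pq}=\hom^p(S,\homR^q\push c\G_m)\implies\hom^{p+q}(BG_S,\G_m)\]
for the structure morphism $c\colon BG_S\to S$, and to feed in the section of $c$ coming from the trivial torsor. By \cref{prop:BG-pushforwards} (whose hypotheses hold here, since a connected finite group scheme is in particular relatively connected) we have $\push c\G_m\simeq\G_m$, $\homR^1\push c\G_m\simeq\dual G$ and $\homR^2\push c\G_m=0$; thus $E_2^{0,2}=0$, and in total degree $\le2$ the only potentially nonzero terms sit in the rows $q=0,1$. The trivial $G$-torsor defines a section $s\colon S\to BG_S$ of $c$ (and in fact $S\to BG_S$ is a $G$-torsor, since $S\times_{BG_S}S\simeq G$, hence finite, flat, finitely presented and surjective --- a fact I reuse below for the Azumaya statement). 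Consequently $\pull s$ is a retraction of $\pull c$ on cohomology in every degree, so $\pull c\colon\hom^n(S,\G_m)\to\hom^n(BG_S,\G_m)$ is split injective for all $n$.

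Next I would use this injectivity to control the differentials. The edge homomorphism $E_2^{n,0}\twoheadrightarrow E_\infty^{n,0}\hookrightarrow\hom^n(BG_S,\G_m)$ coincides with $\pull c$, so its injectivity forces $E_2^{n,0}=E_\infty^{n,0}$ for every $n$; equivalently, no nonzero differential ever lands in the bottom row. In particular $d_2\colon E_2^{0,1}\to E_2^{2,0}$ and $d_2\colon E_2^{1,1}\to E_2^{3,0}$ both vanish. Combining this with $E_2^{0,2}=0$, the abutment filtrations in degrees $1$ and $2$ collapse to short exact sequences
\[0\too\hom^n(S,\G_m)\xtoo{\pull c}\hom^n(BG_S,\G_m)\too\hom^{n-1}(S,\dual G)\too0\qquad(n=1,2),\]
which are precisely the two displayed sequences (reading $\hom^1(S,\G_m)=\Pic S$, $\hom^0(S,\dual G)=\dual G(S)$, and so on), and each of which is split by $\pull s$.

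For the $\Br'$ refinement: $\dual G$ is finite \'etale, hence annihilated by $\#G$, so $\hom^1(S,\dual G)$ is torsion; taking torsion subgroups of the split $n=2$ sequence therefore yields the split sequence $0\to\Br'(S)\to\Br'(BG_S)\to\hom^1(S,\dual G)\to0$. For the $\Br$ refinement: by functoriality of the natural map $\alpha$, the identity $\pull s\pull c=\id$ also holds on $\Br$, so it remains to identify $\ker\!\big(\pull s\colon\Br(BG_S)\to\Br(S)\big)$. Applying \cref{lem:alpha-finite-cover}(1) to the finite flat cover $s\colon S\to BG_S$, any class $\beta\in\hom^2(BG_S,\G_m)$ with $\pull s\beta=0$ is automatically Azumaya (its pullback $0$ is), so $\ker(\pull s|_{\Br(BG_S)})=\ker(\pull s|_{\hom^2(BG_S,\G_m)})$; and the splitting of the $n=2$ sequence identifies the latter with $\hom^2(BG_S,\G_m)/\pull c\hom^2(S,\G_m)\simeq\hom^1(S,\dual G)$. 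Hence $\Br(BG_S)\simeq\pull c\Br(S)\oplus\hom^1(S,\dual G)$, which is the desired split sequence.

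I expect the main obstacle to be the middle step: turning the spectral sequence into honest \emph{short} exact sequences --- i.e.\ obtaining surjectivity onto $\hom^{n-1}(S,\dual G)$ --- which hinges on the vanishing of the $d_2$'s emanating from the $q=1$ row. The point that makes this work is that those differentials land in the bottom row, where the section forbids nonzero incoming differentials; one just has to arrange the bookkeeping so as not to argue in a circle. Everything else is formal, with the single exception of the $\Br$ statement, which genuinely needs the input that $\alpha$ behaves well under finite flat covers (\cref{lem:alpha-finite-cover}).
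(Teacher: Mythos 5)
Your proof is correct and follows the paper's argument essentially verbatim: the Leray spectral sequence for $c$ with the pushforwards computed by \cref{prop:BG-pushforwards}, the section coming from the trivial torsor splitting the edge maps and hence killing the $d_2$'s into the bottom row, passage to torsion for $\Br'$, and \cref{lem:alpha-finite-cover} applied to the finite flat cover $S\to BG_S$ for $\Br$. The only point worth flagging is your parenthetical claim that a connected finite group scheme is relatively connected, which is false in general (e.g.\ $\mu_p\times\mu_p$ over $\Z_p$ is connected but its generic geometric stalk is disconnected); this hypothesis mismatch between ``connected'' in the proposition and ``relatively connected'' in \cref{prop:BG-pushforwards} is already present in the paper's own proof, so it is an imprecision you inherited rather than introduced, but the implication should not be asserted as a fix for it.
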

\begin{proof}
    Consider the Leray spectral sequence $E_2^{ij}=\hom^i(S,\homR^j\push\pi\G_m)\implies\hom^{i+j}(BG_S,\G_m)$, pictured in \cref{fig:leray-BG} with terms computed using \cref{prop:BG-pushforwards}.
    \begin{figure}[h]
        \centering
        \[\begin{tikzcd}
            0\\
            \dual G(S)\ar[rrd] & \hom^1(S, \dual G)\ar[rrd] \\
            \G_m(S) & \Pic(S) & \hom^2(S,\G_m) & \hom^3(S,\G_m)
        \end{tikzcd}\]
        \caption{The $E_2$-page of the Leray spectral sequence for $BG_S\to S$.}
        \label{fig:leray-BG}
    \end{figure}
    Let $\pi\colon S\to BG_S$ be the section of $c$ given by the universal torsor. Then, $\pull\pi$ splits all the edge maps $\hom^i(S,\G_m)\to\hom^i(BG_S,\G_m)$, so the displayed differentials in \cref{fig:leray-BG} must both vanish. Thus, the spectral sequence gives rise to the claimed (split) exact sequences. Because \cref{ses:H2-BG} is split, it remains exact when passing to torsion subgroups (so remains split exact with $\hom^2(-,\G_m)$ replaced by $\Br'$), and by \cref{lem:alpha-finite-cover}, for any $\alpha\in\Br'(BG_S)$, we have $\pull\pi\alpha\in\Br(S)\iff\alpha\in\Br(BG_S)$, so \cref{ses:H2-BG} remains split exact with $\hom^2(-,\G_m)$ replaced by $\Br$.
\end{proof}
\begin{rem}
    In the case that $G=\mu_n$ and $S=\spec L$ for a field $L$ of characteristic not dividing $n$, $\hom^2(B\mu_{n,L},\G_m)$ was computed earlier by Lieblich \cite[Proposition 4.1.4]{lieblich2011period}.
\end{rem}
\begin{rem}
    The conclusion of \cref{prop:BG-Gm-coh} -- i.e. that $\hom^2(BG_S,\G_m)\cong\hom^2(S,\G_m)\oplus\fhom^1(S,\dual G)$ -- holds for more finite group schemes than its statement accounts for. For example, \cite[Proposition 3.2]{AM} proves the same statement when $G=\ul{\zmod n}$ is a constant, cyclic group scheme. Note that, in this case, $G$ is \important{not} locally diagonalizable if $n$ is not invertible on the base. As another example, if $k$ is a separably closed field of characteristic $p$, then
    \[\hom^2(k,\G_m)\oplus\fhom^1(k,\dual\alpha_p)\cong\fhom^1(k,\alpha_p)\cong k/k^p\overset*\cong\ghom^2(\alpha_p,\G_m)\overset{\cref{cor:local-coh}}\cong\hom^2(B\alpha_{p,k},\G_m),\]
    where the isomorphism labelled with $*$ holds by \cite[Propositions 15.35 and 15.36]{milne-alg-grps}. Finally, \cref{rem:BG-Gm-coh} generalizes \cref{prop:BG-Gm-coh} to certain additional finite linearly reductive groups $G$.

    However, if $G=\mu_\l\by\mu_\l$, for some prime $\l$, over a field $k$ with $\Char k\neq\l$, then this statement fails for $G$ \cite[Proposition 4.3.2]{lieblich2011period}. Additionally, if $G$ is an abelian variety over a field, then the statement generally fails \cite[Lemma 4.2]{Shin-Br=Br'}.
\end{rem}
Next, we construct a right-splitting of \cref{ses:H2-BG}, with an argument inspired by \cite[Proof of Theorem 1.4]{descent-open}.

\subsection{Constructing a right-splitting of \cref{ses:H2-BG}}\label{sect:split-BG}
\begin{set}
    Let $G/S$ be a finite locally diagonalizable group over some base scheme $S$, and assume that $G$ is cyclic or connected. Let $f\colon BG_S\to S$ denote its classifying stack, and let $\sigma\in\fhom^1(BG_S,G)$ denote the class of a universal $G$-torsor $\pi\colon S\to BG_S$.
\end{set}
\begin{rem}
    Above, we refer to $\sigma$ as the class of ``a'' universal $G$-torsor instead of ``the'' universal $G$-torsor because, in general, $BG_S$ can support multiple non-isomorphic universal $G$-torsors. For example, if $\sigma$ is a universal $G$-torsor, then so is $\sigma\cdot\pull f(\tau)$ for any $\tau\in\hom^1(S,G)$ as is $\push\lambda(\sigma)$ for any $\lambda\in\Aut_S(G)$.
\end{rem}
Our goal in this section is to show that the map
\[\mapdesc{}{\hom^1(S,\dual G)}{\hom^2(BG_S,\G_m)}\alpha{\sigma\smile\pull f\alpha}\]
is a right-splitting to \cref{ses:H2-BG}.

In the remainder of this section, we work in fppf topology instead of the \'etale topology since we will need to cohomology with coefficients in $G$. Let $\msC\coloneqq(\tau_{\le2}\fhomR\push f\G_m)[1]$ be the indicated shift of the indicated truncation of the derived pushforward of $\G_m$. Note that, by \cref{prop:BG-pushforwards}, $\msC\simeq(\tau_{\le1}\fhomR\push f\G_m)[1]$ and sits in a distinguished triangle
\begin{equation}\label{tri:BG}
    \G_m[1]\too \msC\too\dual G\xtoo{+1}
    .
\end{equation}
Furthermore, the above distinguished triangle \cref{tri:BG} is split; indeed, the map $\G_m[1]\to(\tau_{\le2}\fhomR\push f\G_m)[1]=\msC$ is split by (the shift of the derived pushforward) of the map $\G_{m,BG_S}\to\fhomR\push\pi\G_{m,S}$ coming from the section $\pi$ of $f$. Now, applying $\Hom_S(\dual G,-)$ to \cref{tri:BG}, we obtain a short exact sequence
\[0\too\Hom_S(\dual G,\G_m[1])\too\Hom_S(\dual G,\msC)\xtoo\chi\Hom_S(\dual G,\dual G)\too0.\]
Using that, in general, $\Hom(\msA,\msB[n])=\Ext^n(\msA,\msB)$ along with \cref{lem:Ext-G-Gm} allows us to see that $\Hom(\dual G,\G_m[1])\simeq\Ext^1(\dual G,\G_m)\simeq\fhom^1(S,G)$. Similarly,
\begin{align}\label{eqn:HomC=H1G}
    \Hom_S(\dual G,\msC)
    &= \Hom_S\p{\dual G,(\tau_{\le2}\fhomR\push f\G_m)[1]} \nonumber\\
    &\simeq \Ext^1_S(\dual G,\tau_{\le2}\fhomR\push f\G_m) \nonumber\\
    &\simeq \Ext^1_S(\dual G,\fhomR\push f\G_m) \nonumber\\
    &\simeq \Ext^1_{BG_S}(\dual G,\G_m) \nonumber\\
    &\simeq \fhom^1(BG_S, G)
    .
\end{align}
Thus, our earlier short exact sequence can be rewritten as
\begin{equation}\label{ses:H1-BG-G}
    0\too\fhom^1(S,G)\xtoo{\pull f}\fhom^1(BG_S,G)\xtoo{\chi'}\Hom_S(\dual G,\dual G)\too0
    .
\end{equation}
\begin{lemma}\label{lem:univ-id}
    Let $\tau\in\fhom^1(BG_S,G)$ be a class mapping to the identity $\id_{\dual G}\in\Hom_S(\dual G,\dual G)$ under the right map in \cref{ses:H1-BG-G}. Then, $\tau$ is (the class of) a universal $G$-torsor.
\end{lemma}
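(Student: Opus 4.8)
The plan is to show that $\tau$ differs from a \emph{known} universal $G$-torsor by an element of the image of $\pull f\colon\fhom^1(S,G)\to\fhom^1(BG_S,G)$, and then to invoke the remark preceding this lemma, which says that such translates of universal torsors are again universal.

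First I would record that, by exactness of \cref{ses:H1-BG-G}, the map $\chi'$ is a surjective homomorphism of abelian groups with kernel $\pull f\fhom^1(S,G)$; hence $\chi'^{-1}(\id_{\dual G})$ is a single nonempty coset of that kernel. So it is enough to produce \emph{one} universal $G$-torsor $\tau_0$ with $\chi'(\tau_0)=\id_{\dual G}$: any other $\tau$ with $\chi'(\tau)=\id_{\dual G}$ then satisfies $\tau\cdot\tau_0^{-1}\in\ker\chi'=\pull f\fhom^1(S,G)$, so $\tau=\tau_0\cdot\pull f(\rho)$ for some $\rho\in\fhom^1(S,G)$, and therefore $\tau$ is a universal $G$-torsor by the remark above.

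The natural candidate is $\tau_0=\sigma$, the class of the tautological universal torsor $\pi\colon S\to BG_S$ realizing $BG_S=[S/G]$ — the very same $\pi$ used above to split the triangle \cref{tri:BG}, and a universal torsor essentially by construction of $[S/G]$. So the crux is the identity $\chi'(\sigma)=\id_{\dual G}$. Unwinding the chain of isomorphisms \cref{eqn:HomC=H1G} defining $\chi'$: one transports $\sigma$ through Cartier duality and \cref{lem:Ext-G-Gm} to an extension class $\xi\in\Ext^1_{BG_S}(\dual G,\G_m)$, passes to the adjoint morphism $\dual G\to\fhomR\push f\G_m[1]$ over $f$, and takes $H^0$. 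By the standard identification of the $H^0$ of such an adjoint with a connecting homomorphism, this equals the boundary map $\delta_\xi\colon\dual G=\push f\pull f\dual G\to\homR^1\push f\G_m$ attached to the $f$-pushforward of $0\to\G_m\to E\to\pull f\dual G\to0$, followed by the isomorphism $\homR^1\push f\G_m\iso\dual G$ of \cref{prop:BG-pushforwards} (this isomorphism is the one induced on $H^0$ by the map $p$ of \cref{tri:BG}). Now Shatz's explicit description of \cref{lem:Ext-G-Gm}'s isomorphism identifies $E$ with the $\G_m$-torsor over $\dual G$ whose fibre over a section $\chi$ of $\dual G$ is $\sigma\times^{G,\chi}\G_m$; hence $\delta_\xi$ carries $\chi\in\dual G(T)$ to the class of $\sigma\times^{G,\chi}\G_m$ in $\Pic(BG_T)$. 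But the isomorphism $\homR^1\push f\G_m\iso\dual G$ of \cref{prop:BG-pushforwards} is, by its construction, inverse to $\chi\mapsto[\sigma\times^{G,\chi}\A^1]$. So the composite sends $\chi$ to $\chi$; that is, $\chi'(\sigma)=\id_{\dual G}$, which completes the argument.

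The step I expect to be the main obstacle is precisely this last computation: matching \cref{lem:Ext-G-Gm}'s isomorphism, which is produced abstractly (collapse of a local-to-global $\Ext$ spectral sequence, after Shatz), with the hands-on ``twist the universal torsor by a character'' description of the isomorphism $\homR^1\push f\G_m\cong\dual G$ from \cref{prop:BG-pushforwards}. Carrying it out cleanly amounts to chasing the adjunction of \cref{eqn:HomC=H1G} and the connecting-map formalism through each identification. (Alternatively one could first reduce to $S$ strictly henselian local — the formation of $\chi'$ and of universal torsors commutes with base change on $S$, and being an isomorphism of stacks over $S$ is fppf-local on $S$ — but this only makes $\dual G$ constant and $\hom^1(S,\dual G)$, $\Br'S$ vanish, and does not really sidestep the compatibility check.)
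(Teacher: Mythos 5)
Your strategy is genuinely different from the paper's, and I think it is sound, but it rests entirely on one computation that you only sketch. The paper's proof is engineered precisely to \emph{avoid} evaluating $\chi'(\sigma)$: it sets $\lambda\coloneqq\dual{\chi'(\sigma)}$, uses the compatibility $\chi'(\push\lambda(\tau))=\chi'(\tau)\circ\dual\lambda$ to arrange $\push\lambda(\tau)=\sigma$ after a twist by $\pull f\fhom^1(S,G)$, and then observes that the composite self-map $h=\lambda\circ g_\tau$ of $BG_S$ pulls $\sigma$ back to $\sigma$, hence is $2$-isomorphic to the identity, forcing $g_\tau$ to be an isomorphism and $\tau=\pull g_\tau(\sigma)$ to be universal by transfer of structure. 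You instead reduce (correctly --- $G$ is commutative here, so \cref{ses:H1-BG-G} is a sequence of abelian groups and $\chi'^{-1}(\id_{\dual G})$ is a single coset of $\pull f\fhom^1(S,G)$) to exhibiting \emph{one} universal torsor in that coset, and propose the tautological $\sigma$ itself, which requires proving $\chi'(\sigma)=\id_{\dual G}$. If that identity holds, your argument is shorter and yields a more explicit statement than the paper's; what the paper's detour buys is exactly not having to match Shatz's abstractly-constructed isomorphism of \cref{lem:Ext-G-Gm} against the ``character of the $G$-action on fibers'' description of $\homR^1\push f\G_m\iso\dual G$ from \cref{prop:BG-pushforwards}.

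The caveat: that matching is the load-bearing step of your proof, you flag it yourself as the main obstacle, and you assert rather than verify it. I believe the computation comes out as you say --- the adjoint-plus-connecting-map description sends $\chi$ to the class of $\sigma\times^{G,\chi}\A^1$, on which $G$ acts through $\chi$ --- but only up to a global sign depending on conventions for the contracted product and for Cartier duality, so a priori you only get $\chi'(\sigma)=\pm\id_{\dual G}$. This does not sink the argument: if the sign is $-1$, replace $\sigma$ by $\push{(-\id_G)}(\sigma)=\sigma^{-1}$, which is again universal by the remark preceding the lemma (inversion is an automorphism of $G$) and satisfies $\chi'(\sigma^{-1})=\id_{\dual G}$ since $\chi'$ is a homomorphism. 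You should either carry out the diagram chase with the signs tracked, or add this one-line escape hatch so that the proof does not depend on resolving the sign. As written, the proposal is a correct plan with its pivotal verification left open.
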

\begin{proof}
    Consider the commutative diagram
    \begin{equation}\label{diag:big-Hom-BG}
        \begin{tikzcd}
            \Hom_S(G,G)\by\fhom^1(BG_S,G)\ar[r, "\id\by\chi'"]\ar[d, "\sim" sloped]\ar[ddd, bend right = 90, "{(\lambda,\alpha)\mapsto\push\lambda(\alpha)}"']&\Hom_S(G,G)\by\Hom_S(\dual G,\dual G)\ar[d, equals]\\
            \Hom_S(G,G)\by\Hom_S(\dual G,\msC)\ar[r, "\id\by\chi"]\ar[d, "{(\lambda,\phi)\mapsto\phi\circ\dual\lambda}"']&\Hom(G,G)\by\Hom(\dual G,\dual G)\ar[d, "{(\lambda,\psi)\mapsto\psi\circ\dual\lambda}"]\\
            \Hom_S(\dual G,\msC)\ar[d, "\sim" sloped]\ar[r, "\chi"]&\Hom_S(\dual G,\dual G)\ar[d, equals]\\
            \fhom^1(BG_S,G)\ar[r, "\chi'"]&\Hom_S(\dual G,\dual G)
            .
        \end{tikzcd}
    \end{equation}
    Let $\lambda\coloneqq\dual{\chi'(\sigma)}\in\Hom_S(G,G)$. Then commutativity of \cref{diag:big-Hom-BG} implies that $\chi'(\push\lambda(\tau))=\id_G\circ\dual\lambda=\chi'(\sigma)$. By \cref{ses:H1-BG-G}, possibly after twisting $\tau$ by an element of $\pull f\fhom^1(S,G)$, we may assume wlog that $\push\lambda(\tau)=\sigma$. Note that $\lambda$ may also be regarded as an $S$-morphism $\lambda\colon BG_S\to BG_S$; indeed this is the morphism sending a $G$-torsor $T$ over an $S$-scheme $X$ to the $G$-torsor $\push\lambda(T)$ over $X$. Let $g_\tau\colon BG_S\to BG_S$ denote the unique (up to isomorphism) $S$-morphism such that $\pull g_\tau(\sigma)=\tau$, and define $h$ as the composition $h\colon BG_S\xto{g_\tau}BG_S\xto\lambda BG_S$.
    Note that, by definition of $\lambda$, we have 
    \[\pull h(\sigma)=\push\lambda\p{\pull g_\tau(\sigma)}=\push\lambda(\tau)=\sigma.\]
    It follows from this that $h\simeq\id_{BG_S}$, so $g_\tau$ is an isomorphism. Thus, by transfer of structure, $\tau=\pull g_\tau(\sigma)$ must be universal as well.
\end{proof}
\begin{prop}\label{prop:BG-Br-split}
    There exists a choice $\sigma\in\fhom^1(BG_S,G)$ of class of universal $G$-torsor such that the map
    \[\mapdesc s{\hom^1(S,\dual G)}{\hom^2(BG_S,\G_m)}\alpha{\pull f\alpha\smile\sigma}\]
    is a right-splitting to \cref{ses:H2-BG}.
\end{prop}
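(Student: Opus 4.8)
The plan is to choose the universal torsor $\sigma$ so that the cup-product map $s$ manifestly splits \cref{ses:H2-BG}, and to verify this by reading everything off the split triangle \cref{tri:BG}. Since the map $\chi'$ of \cref{ses:H1-BG-G} is surjective, I can pick $\sigma\in\fhom^1(BG_S,G)$ with $\chi'(\sigma)=\id_{\dual G}$; by \cref{lem:univ-id} such a $\sigma$ is (the class of) a universal $G$-torsor, so it is an admissible choice for the statement. Fixing this $\sigma$ and writing $q\colon\hom^2(BG_S,\G_m)\to\hom^1(S,\dual G)$ for the surjection in \cref{ses:H2-BG}, it remains to prove $q\circ s=\id$.

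First I would put the source, target, and map $q$ into derived-categorical form (all complexes below live in $D(\fppf S)$). One has $\mathbb H^1(S,\msC)=\hom^2(BG_S,\G_m)$, and applying $\fhomR\Gamma(S,-)$ to \cref{tri:BG} and taking $\mathbb H^1$ reproduces \cref{ses:H2-BG}, with $q$ identified as $\mathbb H^1$ of the map $\msC\to\dual G$ from \cref{tri:BG}: concretely, under $\hom^2(BG_S,\G_m)=\Hom(\Z_S,\msC[1])$ and $\hom^1(S,\dual G)=\Hom(\Z_S,\dual G[1])$, the map $q$ is post-composition with $(\msC\to\dual G)[1]$. Likewise, under the identification \cref{eqn:HomC=H1G}, $\sigma$ corresponds to a morphism $\tilde\sigma\colon\dual G\to\msC$; since $\chi$ is by construction post-composition with $\msC\to\dual G$, the equality $\chi'(\sigma)=\id_{\dual G}$ translates into $(\msC\to\dual G)\circ\tilde\sigma=\id_{\dual G}$, i.e. $\tilde\sigma$ is a section of the triangle \cref{tri:BG}.

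The crux is to show $s(\alpha)=\tilde\sigma[1]\circ\alpha$ for $\alpha\in\hom^1(S,\dual G)=\Hom(\Z_S,\dual G[1])$. Using \cref{lem:Ext-G-Gm} I would regard $\sigma$ as a class in $\Ext^1_{BG_S}(\dual G,\G_m)=\Hom_{D(BG_S)}(\dual G,\G_m[1])$, under which the cup product against the evaluation pairing $G\otimes\dual G\to\G_m$ becomes Yoneda composition; thus (up to the usual sign) $s(\alpha)=\pull f\alpha\smile\sigma$ is the composite $\Z_{BG_S}\xto{\pull f\alpha}\dual G[1]\xto{\sigma[1]}\G_m[2]$ in $D(BG_S)$, using $\Z_{BG_S}=\pull f\Z_S$ and $\dual G_{BG_S}=\pull f\dual G_S$. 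Passing through the adjunction $(\pull f,\fhomR\push f)$ and invoking naturality of the unit, this composite becomes $\Z_S\xto{\alpha}\dual G[1]\xto{\tilde\sigma[1]}\msC[1]$, because $\tilde\sigma$ is precisely the adjunct of $\sigma$ (this is exactly what the chain of isomorphisms \cref{eqn:HomC=H1G} computes). Granting the claim, $q(s(\alpha))=(\msC\to\dual G)[1]\circ\tilde\sigma[1]\circ\alpha=\bigl((\msC\to\dual G)\circ\tilde\sigma\bigr)[1]\circ\alpha=\id_{\dual G}[1]\circ\alpha=\alpha$, so $q\circ s=\id$ and $s$ is the desired right-splitting.

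I expect the main obstacle to be the crux step, namely the projection-formula-type compatibility of the cup product on $BG_S$ with the adjunction $(\pull f,\fhomR\push f)$ and with the $\Ext$-interpretation of $\fhom^\bullet(BG_S,G)$ supplied by \cref{lem:Ext-G-Gm}; once that is pinned down the rest is formal bookkeeping with \cref{tri:BG}. One should also track Koszul signs when passing between cup products and Yoneda composition, but any resulting global sign in $q\circ s$ is harmless: it can be absorbed by replacing $\sigma$ with $\push\lambda\sigma$ for an appropriate $\lambda\in\Aut_S(G)$, which is still a universal $G$-torsor and still falls under \cref{lem:univ-id} after the corresponding change of $\chi'(\sigma)$.
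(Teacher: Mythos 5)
Your proposal is correct and follows essentially the same route as the paper: choose $\sigma$ with $\chi'(\sigma)=\id_{\dual G}$ via \cref{lem:univ-id}, identify \cref{ses:H2-BG} with the sequence coming from the triangle \cref{tri:BG}, and reduce $r\circ s=\id$ to the compatibility $r(\pull f\alpha\smile\sigma)=\push\rho(\push\Sigma\alpha)$. The only difference is that the paper disposes of your ``crux step'' by citing \cite[Proposition V.1.20]{milne-et} for the relevant commutative diagram rather than rederiving it through the adjunction, so your sketch of that compatibility is exactly the content of the cited result.
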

\begin{proof}
    By \cref{lem:univ-id}, we may and do choose $\sigma$ so that $\chi'(\sigma)=\id_{\dual G}$. Let $\Sigma\in\Hom_S(\dual G,\msC)$ be the morphism corresponding to $\sigma$ under the identification \cref{eqn:HomC=H1G}. Note that the short exact sequence
    \[0\too\hom^2(S,\G_m)\too\hom^2(BG_S,\G_m)\xtoo r\hom^1(S,\dual G)\too0\]
    obtained from applying $\hom^1(S,-)$ to the split distinguished triangle \cref{tri:BG} (note $\hom^1(S,\msC)\simeq\hom^2(BG_S,\G_m)$ by definition of $\msC$) is the sequence \cref{ses:H2-BG} obtained from the Leray spectral sequence. Write $\rho\colon\msC\to\dual G$ for the map in \cref{tri:BG}, and consider the commutative diagram (commutativity follows from \cite[Proposition V.1.20]{milne-et})
    \[\begin{tikzcd}[column sep=large]
        \hom^1(S,\dual G)\by\Hom_S(\dual G,\msC)\ar[d, "\sim" sloped]\ar[r]&\fhom^1(S,\msC)\ar[r, "\push\rho"]\ar[d, "\sim" sloped]&\hom^1(S,\dual G)\ar[d, equals]\\
        \hom^1(S,\dual G)\by\fhom^1(BG_S,G)\ar[r, "\pull f(-)\smile(-)"]&\hom^2(BG_S,\G_m)\ar[r, "r"]&\hom^1(S,\dual G)
        .
    \end{tikzcd}\]
    Commutativity of this diagram shows that
    \[r(s(\alpha))=r\p{\pull f(\alpha)\smile\sigma}=\push\rho(\push\Sigma(\alpha))=\ppush{\rho\circ\Sigma}(\alpha)=\push{\chi(\Sigma)}(\alpha)=\ppush{\id_{\dual G}}(\alpha)=\alpha,\]
    where the third- and second-to-last equalities hold by definition of $\chi$ and our assumption on $\sigma$.
\end{proof}

\section{\bf A cohomological vanishing result}\label{sect:vanishing-result}

In this section, we aim to prove a version of \cite[Theorem 6]{meier-cs} which holds for tame algebraic stacks (see \cref{thm:R2Gm-vanishes}). As an immediate application, we then use it to compute the Brauer group of the moduli stack $\meX(1)$ of generalized elliptic curves over any noetherian strictly Henselian local ring (see \cref{ex:X(1)R}). 

To prove \cref{thm:R2Gm-vanishes}, one needs to guarantee the vanishing of groups of the form $\hom^2([\spec R/G],\G_m)$, where $R$ is a strictly henselian local ring and $G$ is a finite linearly reductive group. Since such $G$ are built from tame \'etale groups and connected locally diagonalizable groups, the first steps are to understand such cohomology groups when $G$ falls into one of these categories.
\begin{lemma}\label{lem:abs-grp-loc-Gm-coh}
    Let $R$ be a strictly henselian local ring with residue field $k$, and let $G$ be a finite abstract group acting on $R$. Assume that $p\coloneqq\Char k\nmid\#G$ and set $\meX\coloneqq[\spec R/G]$. Then, $\hom^n(\meX,\G_m)\simeq\ghom^n(G,\G_m(R))$ for all $n\ge1$ and $\ghom^n(G,\G_m(R))\simeq\ghom^n(G,\G_m(k))$ for all $n\ge2$. This latter isomorphism holds for $n=1$ as well if $R$ has no nontrivial $p$-power roots of unity.
\end{lemma}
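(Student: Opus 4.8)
The plan is to play off the descent/Hochschild spectral sequence against the structure of strictly henselian local rings. First I would invoke \cref{lem:descent-ss} (in its \'etale form, since $\G_m$ is smooth) for the quotient $\meX=[\spec R/G]$, giving
\[E_2^{ij}=\ghom^i(G,\ul\hom^j(\spec R/R,\G_m))\implies\hom^{i+j}(\meX,\G_m).\]
Here the base is $\spec R$, so $\ul\hom^j(\spec R/R,\G_m)(T)=\hom^j(T,\G_m)$ for $T/R$; but there is a subtlety compared to \cref{lem:bar-complex-triv}: the test objects $T=G^{\times i}$ are \'etale (hence finite) over $R$ only because $p\nmid\#G$, so each such $T$ is a finite disjoint union of spectra of strictly henselian local rings, whence $\hom^j(T,\G_m)=0$ for $j\ge1$ (Hilbert 90 plus vanishing of higher cohomology on strictly local rings). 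Thus the spectral sequence collapses onto the $j=0$ row and yields $\hom^n(\meX,\G_m)\simeq\ghom^n(G,\G_m(R))$ for all $n\ge0$; for $n\ge1$ this is the first claimed isomorphism (and for $n=0$ it just recovers $\G_m(R)^G$). Alternatively one can cite \cref{cor:local-coh} directly once one observes $\spec R\to\meX$ is a $G$-torsor with $\spec R$ strictly local --- but I would prefer to record the argument since $G$ is a constant (abstract) group here, so the group scheme $G_{/R}$ is \'etale exactly because $p\nmid\#G$, matching the hypotheses of \cref{cor:local-coh}.

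The second isomorphism $\ghom^n(G,\G_m(R))\simeq\ghom^n(G,\G_m(k))$ for $n\ge2$ is a statement purely about \emph{abstract} group cohomology, so here $\ghom$ denotes classical group cohomology of the finite group $G$ acting on the abelian groups $\units R$ and $\units k$. The reduction map $R\to k$ induces a $G$-equivariant surjection $\units R\onto\units k$ with kernel $U\coloneqq 1+\mfm$, the group of principal units, where $\mfm$ is the maximal ideal of $R$. The key point is that $U$ is $p$-divisible-ish: more precisely, since $p\nmid\#G$, one has $\ghom^n(G,U)=0$ for all $n\ge1$, and then the long exact sequence of $0\to U\to\units R\to\units k\to0$ forces $\ghom^n(G,\units R)\iso\ghom^n(G,\units k)$ for $n\ge2$ (and, with the stronger hypothesis, also for $n=1$). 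So everything comes down to proving $\ghom^n(G,U)=0$ for $n\ge1$.

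To kill $\ghom^n(G,U)$ I would use the standard fact that group cohomology of a finite group $G$ is annihilated by $\#G$, combined with the fact that multiplication by $\#G$ is bijective on $U$. The latter is the one genuinely content-bearing step and is where the hypotheses are used: $\#G$ is a unit in $R$ (as $p\nmid\#G$ and $R$ is local), so it suffices to show the map $u\mapsto u^{\#G}$ is bijective on $U=1+\mfm$. Surjectivity: $R$ is henselian, so for $a\in\mfm$ the polynomial $X^{\#G}-(1+a)$ has the approximate root $1$ (its value at $1$ lies in $\mfm$ and its derivative $\#G\cdot 1$ is a unit), hence lifts to an honest root in $1+\mfm$; for $R$ merely strictly henselian this is exactly Hensel's lemma. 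Injectivity: if $(1+a)^{\#G}=1$ with $a\in\mfm$, expanding gives $\#G\cdot a + (\text{higher order in }a)=0$, i.e. $a(\#G + (\text{element of }\mfm))=0$, and $\#G+\mfm$-element is a unit, so $a=0$ --- and this injectivity is precisely the place where the case $n=1$ needs the extra hypothesis ``$R$ has no nontrivial $p$-power roots of unity'', since for $n=1$ one needs $\ghom^1(G,U)=0$, i.e. one needs $U$ to have no $\#G$-torsion (which the above gives) \emph{and} no issue from $\units R\to\units k$ on $H^0$; the $p$-power-roots-of-unity hypothesis guarantees $\units R\onto\units k$ remains surjective on $G$-invariants after the relevant Bockstein, so the $n=1$ piece of the long exact sequence also gives an isomorphism rather than merely an injection. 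I expect the bijectivity of $[\#G]$ on $U$ (via Hensel) together with carefully tracking what exactly fails at $n=1$ without the roots-of-unity hypothesis to be the main --- really the only --- obstacle; the rest is the formalism of the two spectral-sequence/long-exact-sequence arguments.
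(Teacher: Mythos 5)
Your argument for the first isomorphism is the paper's: it is exactly \cref{cor:local-coh} (via \cref{lem:descent-ss,lem:bar-complex-triv}). One small correction there: the constant group scheme attached to a finite abstract group is finite \'etale over $R$ regardless of whether $p\mid\#G$, and \cref{cor:local-coh} in any case only requires a finite flat group acting on a finite $R$-scheme, so the hypothesis $p\nmid\#G$ plays no role in this step.

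For the second isomorphism your route differs from the paper's and is correct --- in fact it proves slightly more. The paper sets $M=\ker(\units R\to\units k)=1+\mfm$, uses Hensel to see that $M$ is a $\zloc p$-module so that $M[1/p]$ is a $\Q$-vector space (hence acyclic for $G$ in positive degrees), and must then compare $\ghom^n(G,N)$ with $\ghom^n(G,N[1/p])$; it is this last comparison that brings in the extra hypothesis at $n=1$. You instead use the same Hensel input to show directly that $u\mapsto u^{\#G}$ is a $G$-equivariant bijection of $U=1+\mfm$ (surjectivity by Hensel since $\#G\in\units R$; injectivity since $(1+c)^{\#G}-1=c\cdot(\text{unit})$, valid even when $R$ is not a domain), so that multiplication by $\#G$ on $\ghom^n(G,U)$ is simultaneously an isomorphism and, for $n\ge1$, the zero map; hence $\ghom^n(G,U)=0$ for all $n\ge1$. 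Feeding this into the long exact sequence of $1\to U\to\units R\to\units k\to1$ gives $\ghom^n(G,\units R)\iso\ghom^n(G,\units k)$ for \emph{every} $n\ge1$, with no hypothesis on $p$-power roots of unity. This is where your final paragraph goes astray: having killed $\ghom^1(G,U)$ and $\ghom^2(G,U)$, there is nothing left for the roots-of-unity hypothesis to do, and the story you tell about it is not correct as stated --- $\ghom^1(G,U)$ is not the $\#G$-torsion of $U$ for a nontrivial action, and the behaviour of $\units R\to\units k$ on $G$-invariants (or any ``Bockstein'') is irrelevant to whether $\ghom^1(G,\units R)\to\ghom^1(G,\units k)$ is an isomorphism. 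You should delete that discussion and simply record that your argument renders the hypothesis unnecessary; in the paper it is an artifact of passing through $N[1/p]$, where $\units R\{p\}=\mu_{p^\infty}(R)$ obstructs injectivity of $\units R\to\units R[1/p]$.
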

\begin{proof}
    This follows from the proof of \cite[Lemma 9]{meier-cs} (which assumed that $R$ was a domain, but did not use this assumption in its proof) coupled with the observation that $\hom^n(\meX,\G_m)=\ghom^n(G,\G_m(R))$ is $\#G$-torsion. To assure the reader that $R$ need not be a domain, we include a full proof below.

    First note that \cref{cor:local-coh} shows that $\hom^n(\meX,\G_m)\simeq\ghom^n(G,\G_m(R))$ for all $n\ge0$. Now, let $M\coloneqq\ker\p{\G_m(R)\to\G_m(k)}$. Because $R$ is strictly Henselian, Hensel's lemma shows that $M$ is a $\zloc p$-module. Thus, $M[1/p]$ (if $p=0$, by this, we simply mean $M$) is a $\Q$-vector space, so taking cohomology of the exact sequence $0\to M[1/p]\to\G_m(R)[1/p]\to\G_m(k)[1/p]\to0$ shows that
    \[\ghom^n(G,\G_m(R)[1/p])\iso\ghom^n(G,\G_m(k)[1/p])\tforall n\ge1.\]
    
    Finally, because $p\nmid\#G$, for any $G$-module $N$, we have $\ghom^n(G,N)\iso\ghom^n(G,N[1/p])$ for all $n\ge2$ (because the kernel and cokernel of $N\to N[1/p]$ are both $p$-power torsion) and furthermore $\ghom^1(G,N)\iso\ghom^1(G,N[1/p])$ if $N\{p\}=0$ (because then $N\to N[1/p]$ is injective).
\end{proof}

\subsection{Cohomology of quotients by locally diagonalizable groups}
\begin{lemma}\label{lem:tame-red-Gm}
    Let $\meX$ be a noetherian tame algebraic stack with coarse space morphism $c\colon\meX\to X$. Assume that $X$ is an affine scheme. Then, for every $i\ge1$, the natural map
    \[\hom^i(\meX,\G_m)\too\hom^i(\red\meX,\G_m)\]
    is an isomorphism.
\end{lemma}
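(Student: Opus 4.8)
The plan is to kill the difference between $\G_m$ on $\meX$ and on $\red\meX$ using the nilradical. Write $\iota\colon\red\meX\into\meX$ for the closed immersion and let $\mcN\subset\msO_\meX$ be the nilradical ideal sheaf, which is nilpotent since $\meX$ is noetherian. As units lift modulo a nilpotent ideal (smooth-locally on $\meX$), the sequence of lisse-\'etale sheaves
\[1\too 1+\mcN\too\G_{m,\meX}\too\iota_*\G_{m,\red\meX}\too1\]
is exact, and the lemma follows from its long exact cohomology sequence once I establish: (a) $\hom^i(\meX,1+\mcN)=0$ for all $i\ge1$; and (b) the edge map $\hom^i(\meX,\iota_*\G_{m,\red\meX})\to\hom^i(\red\meX,\G_{m,\red\meX})$ is an isomorphism, under which the composite $\hom^i(\meX,\G_m)\to\hom^i(\red\meX,\G_m)$ becomes the natural restriction map.

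For (a) I would filter $1+\mcN$ by the subsheaves $1+\mcN^k$ -- finitely many steps, as $\mcN$ is nilpotent -- whose successive quotients are $(1+\mcN^k)/(1+\mcN^{k+1})\cong\mcN^k/\mcN^{k+1}$ (via $1+x\mapsto x$, using $\mcN^{2k}\subseteq\mcN^{k+1}$ for $k\ge1$), quasi-coherent $\msO_\meX$-modules. So it is enough to see that quasi-coherent sheaves on $\meX$ have vanishing higher cohomology, and this is exactly where tameness enters: for quasi-coherent $\mcF$ one has $\hom^i(\meX,\mcF)=\hom^i(X,c_*\mcF)$ (the higher direct images $\homR^jc_*\mcF$ vanish for $j\ge1$ since $c$ is the coarse-space map of a tame stack -- one can also see this stalk-locally via \cref{cor:tame-stack-sh} and the vanishing of higher cohomology of linearly reductive groups, as in the arguments of \cref{sect:Blocally-diagonalizable}), and $\hom^i(X,c_*\mcF)=0$ for $i\ge1$ because $X$ is affine. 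A d\'evissage along the filtration then gives (a).

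For (b) I would use that $\iota$ is a closed immersion -- in fact a nilpotent thickening -- so that $\iota_*$ is exact on lisse-\'etale abelian sheaves, i.e. $\homR^j\iota_*=0$ for $j\ge1$; the Grothendieck spectral sequence for the composite $c\iota=\bar c$ followed by the closed immersion $\red X\into X$ then degenerates to the desired edge isomorphism.

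I expect step (b) to be the main point requiring care: the statement that closed immersions (or nilpotent thickenings) have exact pushforward on \'etale sheaves is classical for schemes, but transporting it to the lisse-\'etale topos of an algebraic stack takes a little work -- e.g.\ by fixing a smooth presentation $P\to\meX$ by a scheme and comparing the descent spectral sequences of $P^\bullet\to\meX$ and $P^\bullet\times_\meX\red\meX\to\red\meX$, reducing on each simplicial level to the scheme-theoretic fact. If one prefers to avoid (b) altogether, the whole argument can instead be run stalk-locally on $X$: letting $\bar c\colon\red\meX\to\red X$ be the coarse space of $\red\meX$ (again tame), \cref{cor:tame-stack-sh} and \cref{cor:local-coh} identify the stalks of $\homR^jc_*\G_m$ and of $\homR^j\bar c_*\G_m$ at a geometric point with $\ghom^j(G,\G_m(R))$ and $\ghom^j(G,\G_m(R/\mfa))$ respectively, for a strictly local $R$ carrying a $G$-stable nilpotent ideal $\mfa$; the analogue of (a) -- namely $\ghom^j(G,1+\mfa)=0$ for $j\ge1$, again by linear reductivity of $G$ -- shows these stalks agree for $j\ge1$, and a comparison of the two Leray spectral sequences (whose $j=0$ rows differ only by the cohomology of $1+\mcN_X$, which vanishes in positive degrees on the affine $X$) then forces $\hom^i(\meX,\G_m)\to\hom^i(\red\meX,\G_m)$ to be an isomorphism for $i\ge1$.
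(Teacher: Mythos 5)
Your proof is correct and follows essentially the same route as the paper's: both reduce the statement, via the unit sequence for a nilpotent ideal, to the vanishing of higher cohomology of quasi-coherent sheaves on $\meX$, which comes from tameness (exactness of $\push c$ on $\QCoh$) together with $X$ being affine. The only cosmetic difference is that the paper filters the thickening $\red\meX\into\meX$ into square-zero steps from the outset rather than filtering $1+\mcN$ afterwards, and it asserts without comment the exactness of pushforward along the relevant closed immersions, which you rightly flag as the one step needing care.
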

\begin{proof}
    We can factor $\red\meX\into\meX$ into a sequence
    \[\red\meX=\meX_0\into\meX_1\into\dots\into\meX_n=\meX\]
    of square-zero thickenings, so there are quasi-coherent ideal sheaves $\msI_i\subset\msO_{\meX_i}$, for $i=1,\dots,n$, such that $\msI_i^2=0$ and $\msO_{\meX_i}/\msI_i\simeq\msO_{\meX_{i-1}}$. For each $i$, the map $\msI_i\to\G_{m,\meX_i}$, $x\mapsto1+x$, sits in the following exact sequence
    \begin{equation}\label{ses:nilpotent-Gm}
        0\too\msI_i\too\G_{m,\meX_i}\too\push\iota\G_{m,\meX_{i-1}}\too1  
    \end{equation}
    of sheaves on $\liset{\p{\meX_i}}$, where $\iota\colon\meX_{i-1}\into\meX_i$ is the natural immersion. Now, by considering the long exact sequence in cohomology associated to \cref{ses:nilpotent-Gm} and inducting on $i$, one sees that, to prove the lemma, it suffices to show that $\hom^j(\meX_i,\msI_i)=0$ for all $j\ge1$ and all $i=1,\dots,n$. For fixed $i,j$, let $r\colon\meX_i\into\meX$ be the natural closed immersion. Because $r$ is a closed immersion $\push r$ is exact. At the same time, because $\meX$ is tame, $\push c$ is exact on qcoh sheaves (recall \cref{defn:tame}), so
    \[\hom^j(\meX_i,\msI_i)\simeq\hom^j(\meX,\push r\msI_i)\simeq\hom^j(X,\push c\push r\msI_i),\]
    but this latter group vanishes because $X$ is affine (and $j\ge1$).
\end{proof}
\begin{prop}\label{prop:diag-quot-Br-vanish}
    Let $R$ be a strictly henselian noetherian local ring with residue field $k$, let $\Delta/R$ be a connected diagonalizable group, suppose $\Delta$ acts on some finite, connected $R$-scheme $X$, and set $\meY\coloneqq[X/\Delta]$. Write $X=\spec A$ for a strictly henselian local ring $A$, say with residue field $k'$. Assume that $\Delta_k$ acts trivially on $k'$. Then, 
    \begin{itemize}
        \item $\hom^i(\meY,\G_m)$ is $\#\Delta$-torsion for all $i\ge1$;
        \item $\hom^i(\meY,\G_m)\iso\hom^i(B\Delta_{k'},\G_m)$ for all $i\ge2$; and
        \item $\Pic\meY\onto\Pic B\Delta_{k'}=\dual\Delta(k')$ is surjective.
    \end{itemize}
    It follows from the second bullet point and \cref{lem:BG-Gm-stalk} that $\hom^2(\meY,\G_m)=0$.
\end{prop}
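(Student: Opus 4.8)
The plan is to reduce everything to Hochschild cohomology of $\Delta$ via \cref{cor:local-coh} and then kill the contribution of the maximal ideal of $A$ using that $\Delta$ is linearly reductive. Throughout write $N\coloneqq\#\Delta$ and let $\pi\colon X\to\meY$ be the tautological atlas; since $\Delta$ is finite locally free, $\pi$ is a $\Delta$-torsor, in particular finite locally free of rank $N$. \textbf{First bullet.} Because $X=\spec A$ with $A$ strictly henselian local, $\hom^i(X,\G_m)=0$ for $i\ge1$, and the composite $\hom^i(\meY,\G_m)\xto{\pull\pi}\hom^i(X,\G_m)\xto{\mathrm{Nm}}\hom^i(\meY,\G_m)$ (norm of the finite locally free $\pi$) is multiplication by $N$; hence $\hom^i(\meY,\G_m)$ is $N$-torsion for all $i\ge1$. \textbf{Reformulation.} By \cref{cor:local-coh}, $\hom^n(\meY,\G_m)\cong\ghom^n(\Delta,\G_{m,X/R})$ for all $n\ge0$, where $\G_{m,X/R}(T)\coloneqq\G_m(X\times_RT)$. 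Since $X$ has a unique closed point, $\Delta$ fixes it and acts on the residue field $k'$; that action is trivial by hypothesis, so the residue map $A\onto k'$ is $\Delta$-equivariant and induces a map of $\Delta$-modules $\G_{m,X/R}\to\G_{m,\spec k'/R}$. A base-change computation on coordinate rings ($k'\otimes_RR[\Delta]^{\otimes n}=k'[\Delta_{k'}^n]$) identifies the Hochschild complex $C^\bullet(\Delta,\G_{m,\spec k'/R})$ with $C^\bullet(\Delta_{k'},\G_{m,k'})$, and \cref{cor:local-coh} applied over the (strictly henselian) field $k'$ then gives $\ghom^n(\Delta,\G_{m,\spec k'/R})\cong\hom^n([\spec k'/\Delta_{k'}],\G_m)=\hom^n(B\Delta_{k'},\G_m)$, the last equality since $\Delta_{k'}$ acts trivially on $\spec k'$. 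So the remaining two bullets amount to showing that $\ghom^n(\Delta,\G_{m,X/R})\to\ghom^n(\Delta,\G_{m,\spec k'/R})$ is an isomorphism for $n\ge2$ and surjective for $n=1$.

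\textbf{Killing the unipotent part.} Let $\msU\coloneqq\ker(\G_{m,X/R}\to\G_{m,\spec k'/R})$, the sheaf of units $\equiv1\pmod{\mfm_A}$, filtered by the subsheaves $\msU_j$ of units $\equiv1\pmod{\mfm_A^j}$; its graded pieces are the vector groups $\msU_j/\msU_{j+1}\cong\G_a\otimes_R(\mfm_A^j/\mfm_A^{j+1})$, with their induced $\Delta$-action. Because $\Delta$ is diagonalizable, hence linearly reductive, the invariants functor is exact on quasi-coherent $\Delta$-modules, so $\ghom^i(\Delta,\G_a\otimes_RV)=0$ for all $i\ge1$ and all $V$; in particular $\ghom^{\ge1}(\Delta,\msU_j/\msU_{j+1})=0$. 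Feeding this through the long exact sequences, one gets $\ghom^{\ge1}(\Delta,\msU)=0$, and then the long exact sequence of $1\to\msU\to\G_{m,X/R}\to\G_{m,\spec k'/R}\to1$ yields exactly the asserted isomorphisms in degrees $\ge2$ and surjectivity in degree $1$ (the latter being the third bullet, after identifying $\hom^1(B\Delta_{k'},\G_m)=\dual\Delta(k')$ via \cref{lem:BG-Gm-stalk}).

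\textbf{The main obstacle.} The genuinely delicate step is the passage from the graded pieces to $\msU$ itself: $A$ need not be $\mfm_A$-adically complete, so $\G_{m,X/R}$ is not the inverse limit of its truncations and one cannot naively invoke Mittag--Leffler. I would circumvent this by first reducing to the case that $A$ is Artinian (so $\mfm_A$ is nilpotent and the filtration of $\msU$ is finite, removing all convergence issues). Concretely: the coarse space $Y=\spec B$ of $\meY$ is strictly henselian local and $\meY\to Y$ is proper, so proper base change with $\mu_N$-coefficients identifies $\hom^i(\meY,\mu_N)$ with $\hom^i(\meY_0,\mu_N)$ for the closed fibre $\meY_0=[\spec(A\otimes_B\kappa(y))/\Delta_{\kappa(y)}]$, whose atlas is Artinian local with residue field $k'$; combining this with the Kummer sequence and the $N$-torsion bound of the first step transports the statement for $\meY$ to the statement for $\meY_0$, where the filtration argument above applies verbatim and where one checks $[\spec k'/\Delta_{\kappa(y)}]\cong B\Delta_{k'}$. (Alternatively one could attempt the $\varprojlim$ argument directly, but getting the $\lim^1$-term under control is precisely where the non-completeness of $A$ bites, so the reduction to the Artinian case seems cleaner.)

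\textbf{Conclusion.} Granting the three bullets, the displayed vanishing is immediate: $\hom^2(\meY,\G_m)\cong\hom^2(B\Delta_{k'},\G_m)$ by the second bullet, and $\Delta_{k'}$ is a connected locally diagonalizable group over the strictly henselian local ring $k'$, so $\hom^2(B\Delta_{k'},\G_m)=0$ by \cref{lem:BG-Gm-stalk}.
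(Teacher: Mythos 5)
Your overall architecture is sound and in fact tracks the paper's proof quite closely: the norm argument for the first bullet is identical; the reduction to an Artinian fibre via properness is the paper's second step (the paper takes the fibre of $\meY\to\spec R$ over the closed point rather than the fibre over the coarse space, but either works); and your unit-filtration argument for killing the nilpotents is a legitimate Hochschild-cohomology rendition of \cref{lem:tame-red-Gm}, which accomplishes the same thing via exactness of the coarse-space pushforward for tame stacks. You are also right that the non-completeness of $A$ is precisely what blocks running the filtration argument on $\meY$ itself, and that reducing to the Artinian case is the correct fix.

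The gap is concentrated in the single clause ``combining this with the Kummer sequence and the $N$-torsion bound \dots transports the statement.'' Since $\Delta$ is connected and finite flat over a strictly henselian ring whose residue field has characteristic $p$, one has $N=\#\Delta=p^n$ (the statement is vacuous if $\Char k=0$), and the \emph{\'etale} Kummer sequence $1\to\mu_{p^n}\to\G_m\xto{p^n}\G_m\to1$ is \emph{not} exact: the cokernel sheaf $\Gamma\coloneqq\G_m/p^n$ is nonzero on non-reduced and imperfect test objects. Consequently the usual bookkeeping
\[0\too\hom^{i-1}(\meY,\G_m)/N\too\hom^i(\meY,\mu_N)\too\hom^i(\meY,\G_m)[N]\too0,\]
which is what you would need in order to transport $\G_m$-cohomology from $\meY$ to $\meY_0$ using base change for $\mu_N$, is simply unavailable. (The fppf Kummer sequence is exact, but then you would need proper base change for fppf cohomology of $\mu_{p^n}$ in characteristic $p$, which is not something you can cite.) The repair is exactly the device the paper uses: replace $\mu_N$ by the two-term complex $\msC=[\G_m\xto{p^n}\G_m]$. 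This sits in a distinguished triangle $\mu_{p^n}\to\msC\to\Gamma[-1]\xto{+1}$ with both $\mu_{p^n}$ and $\Gamma$ torsion ($\Gamma$ is visibly killed by $p^n$), so proper base change applies to $\hom^i(-,\msC)$; the other triangle $\G_m[-1]\to\msC\to\G_m\xto{+1}$ then produces the short exact sequence displayed above with $\hom^i(-,\mu_N)$ replaced by $\hom^i(-,\msC)$, and a five-lemma argument together with an induction on $i$ (using the $N$-torsion statement on both $\meY$ and $\meY_0$) gives the isomorphisms in degrees $\ge2$ and the surjection in degree $1$. With that substitution your proof goes through.
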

\begin{proof}
    Let $p=\Char k$ and write $\#\Delta=p^n$. Fix some $i\ge1$. The existence of the $\Delta$-torsor $f\colon X\to\meY$ with $\hom^i(X,\G_m)=0$ is enough to deduce that $\hom^i(\meY,\G_m)=\hom^i(\meY,\G_m)[p^n]$; indeed, the norm map $\push f\G_{m,X}\to\G_{m,\meY}$ gives rise to a composition
    \[\hom^i(\meY,\G_m)\too\underbrace{\hom^i(\meY,\push f\G_{m,X})}_{\simeq\hom^i(X,\G_m)=0}\too\hom^i(\meY,\G_m)\]
    which equals multiplication by $p^n$ (see \cite[Section 3.8]{CT-S} for more details on this norm map). With this in mind, define $\Gamma$ via the follow exact sequence of sheaves on $\liset\meY$:
    \[1\too\mu_{p^n}\too\G_m\xtoo{p^n}\G_m\too\Gamma\too1.\]
    Let $\meY_0$ denote the special fiber of $\meY\to\spec R$, and consider the two term complexes $\msC\coloneqq[\G_{m,\meY}\xto{p^n}\G_{m,\meY}]$ and $\msC_0\coloneq[\G_{m,\meY_0}\xto{p^n}\G_{m,\meY_0}]$. The distinguished triangle $\mu_{p^n}\to\msC\to\Gamma[-1]\xto{+1}$ (along with its analogue over $\meY_0$) gives rise to the following homomorphism of exact sequences:
    \[\begin{tikzcd}
        \hom^{i-2}(\meY,\Gamma)\ar[r]\ar[d]&\hom^i(\meY,\mu_{p^n})\ar[r]\ar[d]&\hom^i(\meY,\G_m\xto{p^n}\G_m)\ar[r]\ar[d]&\hom^{i-1}(\meY,\Gamma)\ar[r]\ar[d]&\hom^{i+1}(\meY,\mu_{p^n})\ar[d]\\
        \hom^{i-2}(\meY_0,\Gamma)\ar[r]&\hom^i(\meY_0,\mu_{p^n})\ar[r]&\hom^i(\meY_0,\G_m\xto{p^n}\G_m)\ar[r]&\hom^{i-1}(\meY_0,\Gamma)\ar[r]&\hom^{i+1}(\meY_0,\mu_{p^n}).
    \end{tikzcd}\]
    Since $\mu_{p^n}$ and $\Gamma$ are both torsion sheaves and $\meY\to\spec R$ is proper, proper base change \cite[Theorem 1.3]{olsson-proper} (see also \cite[Corollary VI.2.7]{milne-et}) followed by an application of the five lemma implies that all vertical maps above are isomorphisms. Now, the distinguished triangle $\G_m[-1]\to\msC\to\G_m\xto{+1}$ (along with its analogue over $\meY_0$) gives rise to the following homomorphism of short exact sequences:
    \begin{equation}\label{homses:wild-arg}
        \begin{tikzcd}
            0\ar[r]&\hom^{i-1}(\meY,\G_m)/p^n\ar[r]\ar[d]&\hom^i(\meY,\G_m\xto{p^n}\G_m)\ar[d, "\sim" sloped]\ar[r]&\hom^i(\meY,\G_m)[p^n]\ar[r]\ar[d]&0\\
            0\ar[r]&\hom^{i-1}(\meY_0,\G_m)/p^n\ar[r]&\hom^i(\meY_0,\G_m\xto{p^n}\G_m)\ar[r]&\hom^i(\meY_0,\G_m)[p^n]\ar[r]&0.
        \end{tikzcd}
    \end{equation}
    Noting that $\red{\p{\meY_0}}\simeq B\Delta_{k'}$, we have a surjection
    \[\hom^i(\meY,\G_m)=\hom^i(\meY,\G_m)[p^n]\onto\hom^i(\meY_0,\G_m)[p^n]=\hom^i(\meY_0,\G_m)\underset{\cref{lem:tame-red-Gm}}\simeq\hom^i(B\Delta_{k'},\G_m).\]
    When $i=1$, this (combined with \cref{lem:BG-Gm-stalk}) proves the third bullet point of the claim. Assume now that $i\ge2$. \cref{lem:tame-red-Gm} allows us to rewrite \cref{homses:wild-arg} as
    \begin{equation}\label{seshom:Br-pn}
        \begin{tikzcd}
            0\ar[r]&\hom^{i-1}(\meY,\G_m)/p^n\ar[r]\ar[d]&\hom^2(\meY,\G_m\xto{p^n}\G_m)\ar[d, "\sim" sloped]\ar[r]&\hom^2(\meY,\G_m)[p^n]\ar[r]\ar[d]&0\\
            0\ar[r]&\hom^{i-1}(B\Delta_{k'},\G_m)/p^n\ar[r]&\hom^2(\meY_0,\G_m\xto{p^n}\G_m)\ar[r]&\hom^2(B\Delta_{k'},\G_m)[p^n]\ar[r]&0.
        \end{tikzcd}
    \end{equation}
    We claim the left vertical arrow in \cref{seshom:Br-pn} is surjective. Indeed, applying the argument so far to $i-1$ in place of $i$ shows that $\hom^{i-1}(\meY,\G_m)$ is $p^n$-torsion (as is $\hom^{i-1}(B\Delta_{k'},\G_m)$) and that
    \[\hom^{i-1}(\meY,\G_m)/p^n=\hom^{i-1}(\meY,\G_m)=\hom^{i-1}(\meY,\G_m)[p^n]\onto\hom^{i-1}(B\Delta_{k'},\G_m)[p^n]=\hom^{i-1}(B\Delta_{k'},\G_m)=\hom^{i-1}(B\Delta_{k'},\G_m)/p^n\]
    is surjective. Now, \cref{seshom:Br-pn} shows that $\hom^i(\meY,\G_m)[p^n]\to\hom^i(B\Delta_{k'},\G_m)[p^n]$ is injective and so an isomorphism. Both $\hom^i(\meY,\G_m)$ and $\hom^i(B\Delta_{k'},\G_m)$ are $p^n$-torsion, so we conclude the second bullet of the claim.
\end{proof}

\subsection{Vanishing of $\homR^2\push c\G_m$}
\begin{prop}\label{prop:tame-loc-Gm-coh}
    Let $R$ be a strictly henselian noetherian local ring with residue field $k$. Let $G/R$ be a finite linearly reductive group, suppose $G$ acts on some finite $R$-scheme $X$, and set $\meX\coloneqq[X/G]$. Suppose further that 
    \begin{itemize}
        \item $X$ is the spectrum of a strictly henselian local ring $A$ with residue field $k'$; and
        \item $G_k$ acts trivially on $\spec k'$.
    \end{itemize}
    Write
    \[0\too\Delta\too G\too\ul Q\too0\]
    for $G$'s connected-\'etale sequence, with $Q$ an abstract group.  Then, there is an exact sequence
    \[0\to\ghom^1(Q,\G_m(k'))\to\Pic\meX\to\ghom^1(\Delta,\G_{m,X/R})^Q\to\ghom^2(Q,\G_m(k'))\to\hom^2(\meX,\G_m)\to0.\]
    In particular, if $\ghom^2(Q,\G_m(k'))=0$, then $\hom^2(\meX,\G_m)=0$.
\end{prop}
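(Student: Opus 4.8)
The plan is to compute $\hom^\bullet(\meX,\G_m)$ by bootstrapping from the diagonalizable quotient $\meY\coloneqq[X/\Delta]$ to $\meX$ via the residual action of $Q=G/\Delta$. Since $R$ is strictly henselian local and $G$ (hence its identity component $\Delta$) is a finite group scheme acting on the finite $R$-scheme $X$, \cref{cor:local-coh} identifies $\hom^n(\meX,\G_m)\cong\ghom^n(G,\G_{m,X/R})$ and $\hom^n(\meY,\G_m)\cong\ghom^n(\Delta,\G_{m,X/R})$ for all $n\ge0$. Applying the descent spectral sequence of \cref{lem:descent-ss} to the $\ul Q$-torsor $\meY\to\meX$ (note $\meX=[\meY/\ul Q]$ because $\Delta\triangleleft G$; equivalently, the Lyndon--Hochschild--Serre spectral sequence in Hochschild cohomology for $0\to\Delta\to G\to\ul Q\to0$, see \cite[Chapter 15]{milne-alg-grps}) and using that $\ul Q$ is constant over the strictly henselian $R$, I obtain a spectral sequence
\[E_2^{ij}=\ghom^i\bigl(Q,\hom^j(\meY,\G_m)\bigr)\implies\hom^{i+j}(\meX,\G_m),\]
in which $Q=G/\Delta$ acts on $\hom^j(\meY,\G_m)$ through its action on $\meY$.

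Next I would invoke \cref{prop:diag-quot-Br-vanish}: its hypotheses hold here because $\Delta$ is connected and diagonalizable (by the classification \cref{prop:lin-red-classification}), $X=\spec A$ is finite and connected, and $\Delta_k\subseteq G_k$ acts trivially on $\spec k'$. It therefore gives $\hom^2(\meY,\G_m)=0$ and tells us $\hom^j(\meY,\G_m)$ is $\#\Delta$-torsion for every $j\ge1$. Since $\Delta$ is connected diagonalizable over a strictly henselian ring of residue characteristic $p$, $\#\Delta$ is a power of $p$ (and $\Delta$ is trivial if $p=0$), whereas $\#Q$ is prime to $p$ by linear reductivity; as $\ghom^i(Q,-)$ is killed by $\#Q$ in degrees $i\ge1$, it follows that $E_2^{i,j}=0$ whenever $i,j\ge1$, and $E_2^{0,2}=\hom^2(\meY,\G_m)^Q=0$. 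With $E_2^{1,1}$ and $E_2^{0,2}$ vanishing, the low-degree exact sequence of the spectral sequence extends to
\[0\to\ghom^1\bigl(Q,\hom^0(\meY,\G_m)\bigr)\to\Pic\meX\to\bigl(\hom^1(\meY,\G_m)\bigr)^Q\xrightarrow{\ d_2\ }\ghom^2\bigl(Q,\hom^0(\meY,\G_m)\bigr)\to\hom^2(\meX,\G_m)\to0,\]
and $\hom^1(\meY,\G_m)=\ghom^1(\Delta,\G_{m,X/R})$ by the first step.

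It remains to replace $\hom^0(\meY,\G_m)=\G_m(A^\Delta)$ by $\G_m(k')$ inside the outer terms. Here $\spec A^\Delta$ is the coarse space of $\meY=[X/\Delta]$; using that $\Delta$ is connected and acts trivially on $\spec k'$ — so that the $\dual\Delta$-grading on $A$ is concentrated in degree $0$ modulo $\mathfrak m_A$ — one checks that $A^\Delta$ is a strictly henselian local ring with residue field $k'$. The exact sequence $1\to(1+\mathfrak m_{A^\Delta})\to\G_m(A^\Delta)\to\G_m(k')\to1$, combined with Hensel's lemma (which makes $1+\mathfrak m_{A^\Delta}$ uniquely $\ell$-divisible for each prime $\ell\neq p$, hence a $\zloc p$-module on which $\#Q$ acts invertibly), gives $\ghom^i(Q,1+\mathfrak m_{A^\Delta})=0$ for all $i\ge1$, whence $\ghom^i(Q,\G_m(A^\Delta))\cong\ghom^i(Q,\G_m(k'))$ for $i\ge1$. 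Substituting into the six-term sequence yields the asserted exact sequence, and the final statement is then immediate from exactness. I expect the main obstacle to be this last local analysis — pinning down that the coarse space of $[X/\Delta]$ really is strictly henselian local with residue field exactly $k'$, which amounts to unwinding the hypothesis ``$\Delta_k$ acts trivially on $\spec k'$'' at the level of the grading on $A$ (in particular that $\mathfrak m_A$ is a graded ideal); the spectral-sequence bookkeeping in the middle paragraph is routine but should also be done with some care.
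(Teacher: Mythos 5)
Your proposal is correct and follows essentially the same route as the paper's own proof: the Hochschild--Serre/descent spectral sequence for the $Q$-torsor $[X/\Delta]\to[X/G]$, with \cref{prop:diag-quot-Br-vanish} killing $E_2^{ij}$ for $i,j\ge1$ as well as $E_2^{0,2}$, and the identification of $A^\Delta$ as a strictly henselian local ring with residue field $k'$. Your direct unit-filtration argument that $\ghom^i(Q,\G_m(A^\Delta))\cong\ghom^i(Q,\G_m(k'))$ for $i\ge1$ is a clean substitute for the paper's appeal to \cref{lem:abs-grp-loc-Gm-coh} --- and in fact it handles $i=1$ unconditionally, whereas the cited lemma's statement only guarantees degree one under the extra ``no nontrivial $p$-power roots of unity'' hypothesis.
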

\begin{proof}
    Set $\meY\coloneqq[X/\Delta]$, so $\meY\to\meX$ is a $Q$-torsor, and consider the Hochschild--Serre spectral sequence 
    \[E_2^{ij}=\ghom^i(Q,\hom^j(\meY,\G_m))\implies\hom^{i+j}(\meX,\G_m).\]
    Below, we calculate many of the terms of this spectral sequence. In every argument below, keep in mind that $Q$ is a tame abstract group -- so $p\coloneqq\Char k\ge0$ does \important{not} divide $\#Q$ -- and $\Delta$ is a finite connected diagonalizable $R$-group.
    \begin{enumerate}
        \item Claim: The ring $A^\Delta$ of $\Delta$-invariants is strictly henselian and has $k'$ as its residue field. Furthermore,
        \[E_2^{n0}=\ghom^n(Q,\G_m(\meY))\simeq\ghom^n(Q,\G_m(k'))\tforall n\ge1.\]
        
        Note that, since $R$ is noetherian, $A^\Delta\subset A$ is a finite $R$-algebra as well. It is therefore a product of strictly henselian local rings \cite[\href{https://stacks.math.columbia.edu/tag/03QJ}{Tag 03QJ}]{stacks-project}. At the same time, being a subset of $A$, it contains no nontrivial idempotents, so $A^\Delta$ must itself be strictly henselian local.
        Furthermore, because $\Delta$ is linear reductive, taking $\Delta$-invariants is an exact functor (on the category of $R$-modules with $\Delta$-action) and so commutes with quotients. It follows that $A^\Delta$ has $(k')^\Delta=k'$ as a quotient, so $k'$ must be its residue field. Now, $\G_m(\meY)=\G_m(A^\Delta)$ by construction, and \cref{lem:abs-grp-loc-Gm-coh} tells us that $\ghom^n(Q,\G_m(A^\Delta))\simeq\ghom^n(G,\G_m(k'))$ for all $n\ge1$, as claimed.
        
        \item Claim: $E_2^{01}=\ghom^0(Q,\Pic\meY)\simeq\ghom^1(\Delta,\G_{m,X/R})^Q$ and $E_2^{ij}=0$ for all $i,j\ge1$.

        By \cref{cor:local-coh}, $\Pic\meY\simeq\ghom^1(\Delta,\G_{m,X/R})$ so the claimed computation of $E_2^{01}$ holds. Fix now some $i,j\ge1$. Then, \cref{prop:diag-quot-Br-vanish} shows that $\hom^j(\meY,\G_m)$ is $\#\Delta$-torsion. Thus, $E_2^{ij}=\ghom^i(Q,\hom^j(\meY,\G_m))$ is both $\#\Delta$-torsion and $\#Q$-torsion. Since $\gcd(\#\Delta,\#Q)=1$, we conclude that $E_2^{ij}=0$.
        
        \item Claim: $\hom^2(\meY,\G_m)\simeq0$, so $E_2^{02}=\ghom^0(Q,\hom^2(\meY,\G_m))\simeq0$.

        This follows from \cref{prop:diag-quot-Br-vanish}.
    \end{enumerate}
    The claimed exact sequence now follows from considering the low degree exact sequence associated to our spectral sequence $E_2^{ij}$.
\end{proof}
\begin{defn}\label{defn:brauerless}
    Let $R$ be a strictly local ring, and let $G/R$ be a finite group scheme with connected-\'etale sequence
    \begin{equation}\label{ses:conn-etale}
        0\too \Delta\too G\too\ul Q\too0.
    \end{equation}
    We say that $G/R$ is \define{Brauerless} if $\Delta$ is diagonalizable and $\ghom^3(Q,\Z)=0$, when $Q$ acts trivially on $\Z$. More generally, if $S$ is a scheme, we will say a finite group scheme $G/S$ is \define{Brauerless} if $G_{\msO_{S,\bar s}}$ is Brauerless for every geometric point $\bar s\to S$.
\end{defn}
\begin{prop}\label{prop:brauerless-equiv/R}
    Let $R$ be a strictly local ring with residue field $k$, and let $G/R$ be a finite linearly reductive $R$-group scheme. Then, the following are equivalent
    \begin{alphabetize}
        \item $G/R$ is Brauerless.
        \item For every map local map $R\to A$ to a strictly local ring $A$, $G_A$ is Brauerless.
        \item $\ghom^3(G,\ul\Z)=0$.
        \item $G_k/k$ is Brauerless.
        \item $\hom^3(\pi_0(G),\Z)=0$, where $\pi_0(G)$ is the abstract group of connected components of $G$.
        \item The abstract group $\pi_0(G)$ of connected components of $G$ is Brauerless, when viewed as the constant group scheme $\ul{\pi_0(G)}_R$ over $R$.
    \end{alphabetize}
\end{prop}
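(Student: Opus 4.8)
The plan is to funnel all six conditions into the single equation $\hom^3(\pi_0(G),\Z)=0$ (classical group cohomology, trivial coefficients). The key structural input is that, since $G$ is linearly reductive over the strictly henselian ring $R$, its connected--\'etale sequence $0\to\Delta\to G\to\ul Q\to0$ automatically has $\Delta$ diagonalizable (this is the classification of finite linearly reductive groups, \cref{prop:lin-red-classification}; see also the example following \cref{cor:G-dual-etale}), and has $\ul Q$ equal to the constant group scheme $\underline{\pi_0(G)}_R$ on the abstract group $Q\coloneqq\pi_0(G)$, finite \'etale $R$-group schemes being constant. Hence the ``$\Delta$ diagonalizable'' clause of \cref{defn:brauerless} holds for free, so ``$G/R$ is Brauerless'' says exactly $\ghom^3(Q,\Z)=0$, which by the identification of the Hochschild cohomology of a constant group scheme with classical group cohomology equals $\hom^3(\pi_0(G),\Z)=0$. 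That is literally (e); it is also (f), since $\underline{\pi_0(G)}_R$ has trivial (hence diagonalizable) connected part and $\pi_0$ equal to $Q$. So (a)$\iff$(e)$\iff$(f) would follow immediately, and the remaining task is to bring (b), (c), (d) into this circle.

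For (a)$\iff$(b) and (a)$\iff$(d): the implication (b)$\Rightarrow$(a) is trivial (take $A=R$), and since the residue field $k$ is itself a strictly local ring and $R\to k$ is a local homomorphism, condition (d) is exactly the instance $A=k$ of (b). For the converse I would show that for any local homomorphism $R\to A$ with $A$ strictly local, $G_A$ is again linearly reductive (the fpqc--local structure in \cref{prop:lin-red-classification} base changes), and, because $\Delta$ is $R$-flat, the connected--\'etale sequence base changes to a short exact sequence $0\to\Delta_A\to G_A\to\ul Q_A\to0$ which is again a connected--\'etale sequence: $\ul Q_A$ is \'etale, and $\Delta_A$ is connected because its character group is the same $p$-group while $\Char\kappa(A)=\Char k=p$, so $\Delta_A$ has local special fibre over $A$ (hence is connected, being finite over the henselian $A$). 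Thus $\pi_0(G_A)\cong Q$ with diagonalizable connected part, so the first paragraph applied to $G_A$ gives that $G_A$ is Brauerless $\iff\hom^3(Q,\Z)=0\iff G$ is Brauerless. Letting $A$ range over all such base changes yields (a)$\Rightarrow$(b), and the case $A=k$ yields (d)$\iff$(a).

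For (c)$\iff$(e) I would compute $\ghom^\bullet(G,\ul\Z)$ (trivial action) directly from the Hochschild cochain complex of \cref{defn:hochschild-cohomology}. By Yoneda, $C^n(G,\ul\Z)=\Nat(G^n,\ul\Z)=\ul\Z(G^{\times_R n})$, and since $\ul\Z$ is a constant sheaf and $G^{\times_R n}$ is finite (so quasi-compact) over $R$, this is $\Z^{\,\#\pi_0(G^{\times_R n})}$. The crucial input is the connectedness fact that over a strictly henselian local ring one has $\pi_0(X\times_R Y)\cong\pi_0(X)\times\pi_0(Y)$ for finite $R$-schemes $X,Y$: reducing to $X,Y$ connected, the number of connected components of $X\times_R Y$ equals that of its special fibre $X_k\times_k Y_k$, and since $k$ is separably closed the residue fields occurring are purely inseparable over $k$, which forces $(X_k\times_k Y_k)\otimes_k\overline k$---and hence $X_k\times_k Y_k$---to be connected. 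Therefore $\pi_0(G^{\times_R n})=Q^n$, and because multiplication on $G$ induces the group law of $Q$ on $\pi_0$ while $G$ acts trivially on $\ul\Z$, the identification above carries $C^\bullet(G,\ul\Z)$ onto the bar cochain complex of the abstract group $Q$ with trivial $\Z$-coefficients. Hence $\ghom^n(G,\ul\Z)\cong\hom^n(Q,\Z)=\hom^n(\pi_0(G),\Z)$ for every $n$, and taking $n=3$ closes the circle: (c)$\iff$(e).

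The main obstacle is this last computation---concretely, the connectedness statement $\pi_0(G^{\times_R n})=\pi_0(G)^n$ over the strictly henselian base, which is precisely the point at which separable closedness of the residue field is essential (it makes the relevant residue-field extensions purely inseparable, so that tensor products of local Artinian algebras stay local even after passing to $\overline k$). Everything else is formal bookkeeping with the connected--\'etale sequence and its base changes. One could instead deduce (c)$\iff$(e) from a Lyndon--Hochschild--Serre spectral sequence for the normal subgroup scheme $\Delta\trianglelefteq G$ together with the vanishing of $\ghom^j(\Delta,\ul\Z)$ for $j\ge1$ (which again follows from $\Delta^{\times_R n}$ being connected, so that the Hochschild complex of $\Delta$ with $\ul\Z$-coefficients is quasi-isomorphic to $\Z$ in degree $0$); but the direct argument above is cleaner and avoids invoking that spectral sequence.
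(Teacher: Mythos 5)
Your proposal is correct and follows essentially the same route as the paper: since $G$ is linearly reductive over a strictly henselian base, the connected part of its connected--\'etale sequence is automatically diagonalizable, so every condition reduces to $\ghom^3(Q,\Z)=0$ for $Q=\pi_0(G)$, with the base-change statements handled by noting that the connected--\'etale sequence restricts, and the Hochschild computation handled by identifying $C^\bullet(G,\ul\Z)$ with the bar complex of $Q$. The only cosmetic difference is that you justify $\Nat(G^n,\ul\Z)\cong\Map(Q^n,\Z)$ by computing $\pi_0(G^{\times_R n})=Q^n$ directly via connectedness of products over the strictly henselian base, whereas the paper gets the same identification by factoring maps to the \'etale scheme $\ul\Z$ through $\ul Q^n$; both arguments are valid and amount to the same fact.
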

\begin{proof}
    \hfill\begin{itemize}[align=left]
        \item[(\bp a$\iff$\bp b)] Since the connected-\'etale sequence \cref{ses:conn-etale} on $R$ restricts to the one over $A$, this equivalence is clear.
        \item[($\bp a\iff\bp c$)] Use notation as in \cref{ses:conn-etale} for $G$'s connected-\'etale sequence. Since the group scheme $\ul\Z$ is \'etale, every map $G\to\ul\Z$ (of schemes) is constant on connected components of $G$ and so factors through $\ul Q$. A completely analogous statement holds with $G$ replaced by $G^n$; hence,
        \[C^n(\ul Q,\ul\Z)=\Nat(\ul Q^n,\ul\Z)\iso\Nat(G^n,\ul\Z)=C^n(G,\ul\Z)\tforall n,\]
        and so $\ghom^n(\ul Q,\ul\Z)\iso\ghom^n(G,\ul\Z)$ for all $n$. Thus, $G$ is Brauerless if and only if $\ghom^3(G,\ul\Z)=0$.
        \item[$(\bp a\iff\bp d)$] The connected-\'etale sequence \cref{ses:conn-etale} of $G$ restricts to the connected-\'etale sequence of $G_k$ from which one sees that $G_k$ is Brauerless if and only if $G$ is. 
        \item[($\bp a\iff\bp e$)] The group $\pi_0(G)$ of connected components is precisely the abstract group $Q$ appearing in \cref{ses:conn-etale}, so \bp e is a restatement of \bp a.
        \item[($\bp e\iff\bp f$] This is a matter of expanding definitions.
        \qedhere
    \end{itemize}
\end{proof}
\begin{cor}\label{cor:brauerless-equiv}
    Let $G/S$ be a finite linearly reductive $S$-group scheme. Then, the following are equivalent
    \begin{itemize}
        \item $G$ is Brauerless.
        \item For every map $\spec R\to S$ from a strictly local scheme, $G_R$ is Brauerless.
        \item For every geometric point $\bar s\to S$, $\ghom^3(G_{\bar s},\ul\Z)=0$.
        \item For every geometric point $\bar s\to S$, $\ghom^3(\pi_0(G_{\bar s}),\Z)=0$, where $\pi_0(G_{\bar s})$ is the abstract group of connected components of $G_{\bar s}$.
    \end{itemize}
\end{cor}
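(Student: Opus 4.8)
The plan is to bootstrap everything off of \cref{prop:brauerless-equiv/R}, which is the same statement over a strictly local base. The starting point is that, by definition (\cref{defn:brauerless}), the first bullet -- ``$G$ is Brauerless'' -- \emph{means} exactly that $G_{\msO_{S,\bar s}}$ is Brauerless for every geometric point $\bar s\to S$; in particular the first bullet is literally an instance of the second, so the real content is to see that the second implies the first and that both are captured by the pointwise cohomological conditions in the last two bullets.

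The implication ``second $\Rightarrow$ first'' is immediate, since each $\spec\msO_{S,\bar s}\to S$ is a morphism from a strictly local scheme. For ``first $\Rightarrow$ second'' I would use the standard fact that any morphism $\spec R\to S$ from the spectrum of a strictly henselian local ring $R$ factors through $\spec\msO_{S,\bar s}\to S$ for a suitable geometric point $\bar s$ over the image $s\in S$ of the closed point: the induced local homomorphism $\msO_{S,s}\to R$ extends through the henselization (as $R$ is henselian) and, after choosing a $\kappa(s)$-embedding of a separable closure of $\kappa(s)$ into the separably closed field $\kappa(R)$, through the strict henselization $\msO_{S,\bar s}$. Granting this, if $G$ is Brauerless then $G_{\msO_{S,\bar s}}$ is Brauerless, hence so is $G_R=(G_{\msO_{S,\bar s}})_R$ by \cref{prop:brauerless-equiv/R} (equivalence of its conditions \bp{a} and \bp{b}); this application is legitimate because $G_{\msO_{S,\bar s}}$ is itself finite linearly reductive, its geometric fibres being among those of $G/S$, by the fibral criterion in \cref{prop:lin-red-classification}.

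To bring in the last two bullets, I would fix a geometric point $\bar s\to S$ and apply \cref{prop:brauerless-equiv/R} twice: once over the strictly local ring $\msO_{S,\bar s}$, where the equivalence of conditions \bp{a} and \bp{d} turns ``$G_{\msO_{S,\bar s}}$ is Brauerless'' into ``$G_{\bar s}$ is Brauerless'' (here $\kappa(\bar s)$ is the residue field of $\msO_{S,\bar s}$); and once over the field $\kappa(\bar s)$, which is itself strictly local, where the equivalences of \bp{a} with \bp{c} and with \bp{e} rewrite ``$G_{\bar s}$ is Brauerless'' as $\ghom^3(G_{\bar s},\ul\Z)=0$ and as $\ghom^3(\pi_0(G_{\bar s}),\Z)=0$ respectively -- recall that over the separably closed field $\kappa(\bar s)$ the component group $\pi_0(G_{\bar s})$ is precisely the \'etale quotient appearing in the connected--\'etale sequence of $G_{\bar s}$, and that Hochschild cohomology of a constant group is ordinary group cohomology. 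Letting $\bar s$ range over all geometric points then identifies the first bullet with each of the last two.

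I do not expect any genuine obstacle here; the argument is entirely bookkeeping on top of \cref{prop:brauerless-equiv/R}. The only point that needs a moment's care is the factorization, used in ``first $\Rightarrow$ second'', of an arbitrary strictly local test scheme $\spec R$ through a strict henselization $\spec\msO_{S,\bar s}$ of $S$ -- a standard property of (strict) henselizations -- together with the routine observation, via \cref{prop:lin-red-classification}, that all the base changes of $G$ involved stay finite linearly reductive, so that \cref{prop:brauerless-equiv/R} really does apply at each step.
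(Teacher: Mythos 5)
Your argument is correct and is exactly the intended one: the paper gives no separate proof of this corollary, treating it as immediate bookkeeping on top of \cref{prop:brauerless-equiv/R}, which is precisely what you carry out (including the only non-obvious step, factoring an arbitrary strictly local test scheme through a strict henselization of $S$). The one hair you could split is that $\kappa(\bar s)$ in the paper's convention may be strictly larger than the residue field of $\msO_{S,\bar s}$, but Brauerlessness is insensitive to this extension of separably closed fields since the connected--\'etale sequence and $\pi_0$ are unchanged, so nothing is lost.
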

\begin{rem}
    In the case that $G$ is an abstract group, Meier \cite[Definition 3]{meier-cs} used the term `poor' for essentially the same notion, but I felt that `Brauerless' was more evocative of the utility of this definition in this context. 
\end{rem}
\begin{ex}\label{ex:mun-bruaerless}
    For every $n$, both $\zmod n$ and $\mu_n$ are Brauerless.
\end{ex}
\begin{rem}\label{rem:comm-Brauerless=cyclic}
    An abstract finite abelian group is Brauerless if and only if it is cyclic; this is a consequence of the K\"unneth formula \cite[Exercise 6.1.8]{weibel}. Consequently, a finite commutative group scheme $G$ over a strictly local ring $R$ is Brauerless if and only if its maximal \'etale quotient is cyclic.
\end{rem}
\begin{ex}
    By \cite[Example 4]{meier-cs}, any geometric automorphism group of the moduli space $\meY(1)$ of elliptic curves is Brauerless.
\end{ex}
\begin{lemma}\label{lem:brauerless-Gm-vanish}
    If a finite abstract group $Q$ is Brauerless, then for any separably closed field $k$ of characteristic $p\nmid\#Q$, $\ghom^2(Q,\G_m(k))=0$ when $Q$ acts trivially on $\G_m(k)$.
\end{lemma}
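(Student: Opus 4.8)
The plan is to reduce everything to the defining condition $\ghom^3(Q,\Z)=0$ of a Brauerless abstract group (see \cref{defn:brauerless}; for the abstract group $Q$ the connected part of the associated constant group scheme is trivial, so this is exactly what ``Brauerless'' means). Throughout I would use the elementary fact that for $i\ge1$ and any $Q$-module $M$ the group $\ghom^i(Q,M)$ is annihilated by $\#Q$; consequently $\ghom^i(Q,M)=0$ for $i\ge1$ whenever multiplication by $\#Q$ is invertible on $M$ (e.g.\ $M$ a $\Q$-vector space, a $\Z_p$-module, or a torsion-free $\zloc p$-module), since then it is also invertible on $\ghom^i(Q,M)$.

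First I would peel off the torsion-free part of $\G_m(k)=k^\times$. Write $\mu\subseteq k^\times$ for the torsion subgroup, i.e.\ the group of roots of unity in $k$, so that $k^\times/\mu$ is torsion-free. Since $k$ is separably closed and $p\coloneqq\Char k$ does not divide $\#Q$, the equation $x^n=a$ is solvable in $k$ for every $a\in k^\times$ and every $n$ prime to $p$; thus $k^\times/\mu$ is $n$-divisible for all such $n$, hence a torsion-free $\zloc p$-module. Therefore $\ghom^i(Q,k^\times/\mu)=0$ for all $i\ge1$, and the short exact sequence $0\to\mu\to k^\times\to k^\times/\mu\to0$ of trivial $Q$-modules yields $\ghom^2(Q,\G_m(k))\iso\ghom^2(Q,\mu)$.

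Next I would replace $\mu$ by $\Q/\Z$. In characteristic $0$ one has $\mu=\Q/\Z$ outright. In characteristic $p>0$ one has $\mu\cong\bigoplus_{\ell\neq p}\Q_\ell/\Z_\ell$, so $\Q/\Z\cong\mu\oplus\Q_p/\Z_p$ with $\Q_p/\Z_p$ a $\Z_p$-module on which $\#Q$ acts invertibly; hence $\ghom^i(Q,\Q_p/\Z_p)=0$ for $i\ge1$ and $\ghom^2(Q,\mu)\cong\ghom^2(Q,\Q/\Z)$ in either case.

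Finally, applying $\ghom^\bullet(Q,-)$ to $0\to\Z\to\Q\to\Q/\Z\to0$ and using $\ghom^i(Q,\Q)=0$ for $i\ge1$ (as $\Q$ is a $\Q$-vector space), the connecting homomorphism gives $\ghom^2(Q,\Q/\Z)\cong\ghom^3(Q,\Z)$, which vanishes because $Q$ is Brauerless. Stringing the three identifications together gives $\ghom^2(Q,\G_m(k))\cong\ghom^3(Q,\Z)=0$, as claimed. I do not anticipate a real obstacle; the only step needing a little care is the positive-characteristic bookkeeping, since $k^\times$ itself need not be $p$-divisible, which is why one routes through the torsion subgroup $\mu$ rather than working with $k^\times$ directly.
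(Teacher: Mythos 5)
Your proof is correct, and it reaches the same terminal identification $\ghom^2(Q,\Q/\Z)\cong\ghom^3(Q,\Z)$ as the paper, but the reduction to that point is genuinely different. The paper first inverts $p$, setting $M=\G_m(k)[1/p]$, observes that $M$ is divisible, and then invokes the structure theorem for divisible abelian groups to realize $M$ as a direct summand of a group of the form $(\Q/\Z)^{\oplus I}\oplus\Q^{\oplus J}$, thereby reducing to the two computations $\ghom^2(Q,\Q)=0$ and $\ghom^2(Q,\Q/\Z)\cong\ghom^3(Q,\Z)=0$. You instead use the canonical torsion/torsion-free decomposition of $k^\times$: the torsion-free quotient $k^\times/\mu$ is prime-to-$p$ divisible because $x^n-a$ is separable for $p\nmid n$, hence is a $\zloc p$-module on which $\#Q$ acts invertibly, and the torsion part $\mu$ is identified explicitly (all prime-to-$p$ roots of unity, no nontrivial $p$-power roots of unity) as a direct summand of $\Q/\Z$. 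Your route buys a more elementary and self-contained argument — it avoids the structure theorem for divisible groups entirely and never needs to know that $\G_m(k)[1/p]$ is divisible as a whole — at the cost of slightly more bookkeeping in positive characteristic, which you handle correctly. Both arguments are complete; the only shared nontrivial input is the dimension shift $\ghom^2(Q,\Q/\Z)\cong\ghom^3(Q,\Z)$ coming from $0\to\Z\to\Q\to\Q/\Z\to0$, which is where the Brauerless hypothesis enters.
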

\begin{proof}
    This was essentially proven in \cite[Lemma 5]{meier-cs}, except $k$ there was assumed algebraically closed. Below, we show how to modify Meier's argument.

    Let $M=\G_m(k)[1/p]$ (by which, we simply mean $\G_m(k)$ if $p=0$), and note that $\ghom^2(Q,M)\simeq\ghom^2(Q,\G_m(k))$ since $p\nmid\#Q$. Because $k$ is separably closed of characteristic $p$, $M$ is divisible. It follows from the structure theorem for divisible groups \cite[Theorem 23.1]{fuchs-i} that $M$ is a subgroup (and so direct summand) of some group of the form $(\Q/\Z)^{\oplus I}\oplus\Q^{\oplus J}$ for some sets $I,J$. Thus, it suffices to show $\ghom^2(Q,\Q/\Z)=0$ and $\ghom^2(Q,\Q)=0$. The latter of these holds simply because $Q$ is finite. For the former, the exact sequence $0\to\Z\to\Q\to\Q/\Z\to0$ identifies $\ghom^2(Q,\Q/\Z)\simeq\ghom^3(Q,\Z)$ which vanishes by assumption.
\end{proof}
\begin{defn}\label{defn:locally-Brauerless}
    We say a tame algebraic stack $\meX$ is \define{locally Brauerless} if all of its geometric automorphism groups are Brauerless (see \cref{defn:brauerless}).
\end{defn}
\begin{thm}\label{thm:R2Gm-vanishes}
    Let $\meX$ be a locally Brauerless tame algebraic stack, with coarse moduli map $c\colon\meX\to X$. Then, $\homR^2\push c\G_m=0$.
\end{thm}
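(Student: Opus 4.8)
The plan is to reduce the claim to a stalk-by-stalk statement and then feed the local structure of tame stacks into \cref{prop:tame-loc-Gm-coh}. Since a sheaf vanishes exactly when all its stalks vanish, and since the coarse moduli map $c$ is proper -- hence quasi-compact and quasi-separated -- for any geometric point $\bar x\to X$ the stalk of $\homR^2\push c\G_m$ at $\bar x$ is computed after passing to the strictly henselian local ring $\msO_{X,\bar x}$: writing $\meX_{\bar x}\coloneqq\meX\times_X\spec\msO_{X,\bar x}$, one has $(\homR^2\push c\G_m)_{\bar x}\cong\hom^2(\meX_{\bar x},\G_m)$. This is the same reduction already used in the proof of \cref{prop:iso-away-from-aut}; the only delicate point is that (lisse-)étale cohomology of the non-torsion sheaf $\G_m$ commutes with the cofiltered limit of affine étale neighbourhoods presenting $\msO_{X,\bar x}$, which holds because $c$ is qcqs. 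So it suffices to show $\hom^2(\meX_{\bar x},\G_m)=0$ for every geometric point $\bar x$.

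Next I would invoke the local structure theorem \cref{cor:tame-stack-sh}: it provides an isomorphism $\meX_{\bar x}\cong[\spec R/G]$ with $R$ a strictly henselian local ring finite and finitely presented over $\msO_{X,\bar x}$, a finite linearly reductive group $G/\msO_{X,\bar x}$ with $G_{\bar x}\cong\ul\Aut(\meX,\bar x)$, and such that $G_{\bar x}$ acts trivially on the (separably closed) residue field $k'$ of $R$. I would then apply \cref{prop:tame-loc-Gm-coh}, taking its base ring to be $\msO_{X,\bar x}$ (strictly henselian local, residue field $\kappa(\bar x)$ separably closed) and its finite scheme to be $\spec R$. Writing $0\to\Delta\to G\to\ul Q\to0$ for the connected--étale sequence of $G$ over $\msO_{X,\bar x}$, the exact sequence of \cref{prop:tame-loc-Gm-coh} exhibits $\hom^2(\meX_{\bar x},\G_m)$ as a quotient of $\ghom^2(Q,\G_m(k'))$, so it reduces everything to proving $\ghom^2(Q,\G_m(k'))=0$.

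Finally I would verify $\ghom^2(Q,\G_m(k'))=0$ using the locally Brauerless hypothesis. Since $G$ is linearly reductive over the strictly local base $\msO_{X,\bar x}$, its connected--étale quotient $\ul Q$ is a tame constant group scheme (see \cref{prop:lin-red-classification}), so $p\coloneqq\Char k'$ does not divide $\#Q$; and since the base is strictly henselian local, $Q$ is canonically the abstract component group $\pi_0(G_{\bar x})=\pi_0\p{\ul\Aut(\meX,\bar x)}$. By \cref{defn:locally-Brauerless}, $\ul\Aut(\meX,\bar x)$ is Brauerless, which by \cref{defn:brauerless} (equivalently, via the equivalences of \cref{prop:brauerless-equiv/R}) means exactly that $\ghom^3(Q,\Z)=0$ for the trivial action. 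As $k'$ is separably closed of characteristic $p\nmid\#Q$, \cref{lem:brauerless-Gm-vanish} then yields $\ghom^2(Q,\G_m(k'))=0$, hence $\hom^2(\meX_{\bar x},\G_m)=0$. Since $\bar x$ was an arbitrary geometric point of $X$, this proves $\homR^2\push c\G_m=0$.

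I expect essentially all of the substantive content to be already packaged inside \cref{cor:tame-stack-sh} and \cref{prop:tame-loc-Gm-coh} (the latter resting in turn on the proper base change argument of \cref{prop:diag-quot-Br-vanish}), so the theorem itself is largely an assembly job. The step I would treat most carefully is the first one: correctly identifying the stalk of $\homR^2\push c\G_m$ with $\hom^2(\meX_{\bar x},\G_m)$ for the non-torsion coefficient sheaf $\G_m$, together with the bookkeeping that the constant group $Q$ appearing in the connected--étale sequence of the local model is precisely the component group whose Brauerlessness \cref{defn:locally-Brauerless} asserts. If one prefers not to impose a standing noetherian hypothesis on $\meX$, a routine limit argument first reduces the whole problem to the case in which $\msO_{X,\bar x}$ is noetherian, which is all that \cref{prop:tame-loc-Gm-coh} requires.
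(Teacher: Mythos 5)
Your proposal is correct and follows essentially the same route as the paper's proof: reduce to stalks, apply \cref{cor:tame-stack-sh} to present the local model as $[\spec R/G]$, feed this into \cref{prop:tame-loc-Gm-coh}, and then use \cref{prop:brauerless-equiv/R} together with \cref{lem:brauerless-Gm-vanish} to kill $\ghom^2(Q,\G_m(k'))$. The extra care you flag about computing the stalk of $\homR^2\push c\G_m$ via the qcqs limit argument and about identifying $Q$ with $\pi_0\p{\ul\Aut(\meX,\bar x)}$ is sensible but is exactly what the paper treats as implicit.
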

\begin{proof}
    Let $\bar x\to X$ be a geometric point. By \cref{cor:tame-stack-sh}, $\meX\by_X\msO_{X,\bar x}\simeq[\spec R/G]$ for some strictly henselian local ring $R$ acted upon by some finite linearly reductive group $G/\msO_{X,\bar x}$ satisfying $G_{\bar x}\cong\ul\Aut(\meX,\bar x)$. Thus, $(\homR^2\push c\G_m)_{\bar x}\simeq\hom^2([\spec R/G],\G_m)$. Write $\ul Q_{\msO_{X,\bar x}}$ for the maximal \'etale quotient of $G$, and write $k$ for $R$'s residue field. By \cref{prop:tame-loc-Gm-coh}, $\hom^2([\spec R/G],\G_m)=0$ if $\ghom^2(Q,\G_m(k))$ vanishes. To see this, first note that since $G_{\bar x}$ is Brauerless (by assumption on $\meX$), \cref{prop:brauerless-equiv/R} (in particular, the equivalence between statements $\bp d$ and $\bp f$) shows that $Q$ is Brauerless as well and so $\ghom^2(Q,\G_m(k))=0$  by \cref{lem:brauerless-Gm-vanish}. We conclude that every stalk of $\homR^2\push c\G_m$ vanishes.
\end{proof}
\begin{rem}
    The DM case of \cref{thm:R2Gm-vanishes}, which was essentially proven in \cite[Theorem 6]{meier-cs}, only requires \cref{lem:abs-grp-loc-Gm-coh} instead of the more difficult \cref{prop:tame-loc-Gm-coh}.
\end{rem}
\begin{cor}\label{cor:R2Gm-vanishes-sequence}
    Let $\meX$ be as in \cref{thm:R2Gm-vanishes}. Then, there is an exact sequence
    \[\begin{tikzcd}
        0\ar[r]&\Pic X\ar[r, "\pull c"]\ar[d, phantom, ""{coordinate, name=Z}]&\Pic\meX\ar[r]&\hom^0(X,\homR^1\push c\G_m)
        \ar[dll, rounded corners, to path = { -- ([xshift=2ex]\tikztostart.east)
                                                  |- (Z) [near end]\tikztonodes
                                                  -| ([xshift=-2ex]\tikztotarget.west)
                                                  -- (\tikztotarget)}, "\d_2^{0,1}"']\\
        &\hom^2(X,\G_m)\ar[r, "\pull c"]&\hom^2(\meX,\G_m)\ar[r]&\hom^1(X,\homR^1\push c\G_m)\ar[r, "\d_2^{1,1}"]&\ker\p{\hom^3(X,\G_m)\xto{\pull c}\hom^3(\meX,\G_m)}
        .
    \end{tikzcd}\]
\end{cor}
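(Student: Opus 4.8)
The plan is to read the claimed sequence off the Leray spectral sequence for the coarse space map $c\colon\meX\to X$ with coefficients in $\G_m$,
\[E_2^{pq}=\hom^p(X,\homR^q\push c\G_m)\implies\hom^{p+q}(\meX,\G_m),\]
using the vanishing $\homR^2\push c\G_m=0$ from \cref{thm:R2Gm-vanishes}. First I would record the low-lying $E_2$-terms. Since $c$ is a coarse moduli map one has $\push c\G_m=\G_m$ (check on the small \'etale site of $X$: for $U\to X$ \'etale, $\G_m(\meX\by_XU)=\msO(\meX\by_XU)^\times=\msO(U)^\times$, because coarse spaces commute with flat base change), so $E_2^{p,0}=\hom^p(X,\G_m)$; moreover $E_2^{0,1}=\hom^0(X,\homR^1\push c\G_m)$, $E_2^{1,1}=\hom^1(X,\homR^1\push c\G_m)$, and \cref{thm:R2Gm-vanishes} kills the entire row $E_2^{\bullet,2}=0$.

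Next I would verify that in the range of interest the spectral sequence already degenerates at $E_3$: every potential $d_3$ (or higher) either emanates from, or lands in, a zero group — for instance $d_3$ into $E_3^{3,0}$ comes from $E_3^{0,2}$, which vanishes since $E_2^{0,2}=\hom^0(X,\homR^2\push c\G_m)=0$, while $d_3$ out of $E_3^{0,1}$ or $E_3^{1,1}$ lands in a group with negative second index. Hence $E_\infty^{pq}=E_3^{pq}$ for the relevant $(p,q)$, and I compute
\[E_\infty^{1,0}=\Pic X,\quad E_\infty^{0,1}=\ker\d_2^{0,1},\quad E_\infty^{2,0}=\coker\d_2^{0,1},\quad E_\infty^{1,1}=\ker\d_2^{1,1},\quad E_\infty^{0,2}=0,\quad E_\infty^{3,0}=\coker\d_2^{1,1},\]
where $\d_2^{0,1}\colon\hom^0(X,\homR^1\push c\G_m)\to\hom^2(X,\G_m)$ and $\d_2^{1,1}\colon\hom^1(X,\homR^1\push c\G_m)\to\hom^3(X,\G_m)$ are the transgressions.

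Then I would assemble the two short exact sequences coming from the filtrations on $\hom^1$ and $\hom^2$ (each of length two, the second because $E_\infty^{0,2}=0$):
\[0\to\Pic X\xtoo{\pull c}\Pic\meX\to\ker\d_2^{0,1}\to0,\qquad 0\to\coker\d_2^{0,1}\to\hom^2(\meX,\G_m)\to\ker\d_2^{1,1}\to0,\]
identify the two left-hand maps with the edge map $\pull c$ (standard), and splice along $\d_2^{0,1}\colon\hom^0(X,\homR^1\push c\G_m)\twoheadrightarrow\im\d_2^{0,1}\into\hom^2(X,\G_m)$ to obtain the displayed sequence out to $\hom^1(X,\homR^1\push c\G_m)$. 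For the final term, I would note that $E_\infty^{3,0}=\hom^3(X,\G_m)/\im\d_2^{1,1}$ is the bottom graded piece of the filtration on $\hom^3(\meX,\G_m)$, so $\pull c$ embeds it there; hence $\ker\bigl(\pull c\colon\hom^3(X,\G_m)\to\hom^3(\meX,\G_m)\bigr)=\im\d_2^{1,1}$, which both yields exactness at $\hom^1(X,\homR^1\push c\G_m)$ and lets me rewrite the target of the last arrow as that kernel (one may append $\to0$ if surjectivity onto it is desired).

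I do not expect a genuine obstacle here: the content of the corollary is exactly the seven-term exact sequence of low-degree terms of a first-quadrant spectral sequence, extended one step because the $q=2$ row vanishes, and the real work is already packaged in \cref{thm:R2Gm-vanishes}. The only points requiring (routine) care are the identification $\push c\G_m=\G_m$, the check that $d_3$ and higher differentials do not intervene, and matching the edge and transgression maps with $\pull c$ and the $\d_2$'s.
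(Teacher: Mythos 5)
Your argument is correct and is exactly the paper's approach: the corollary is proved there by citing the exact sequence of low-degree terms of the Leray spectral sequence $E_2^{ij}=\hom^i(X,\homR^j\push c\G_m)\implies\hom^{i+j}(\meX,\G_m)$, extended one step using $\homR^2\push c\G_m=0$ from \cref{thm:R2Gm-vanishes}. Your write-up merely makes explicit the routine checks (that $\push c\G_m=\G_m$, the degeneration in the relevant range, and the identification of the last target with $\ker\pull c$) that the paper leaves implicit.
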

\begin{proof}
    This is the exact sequence of low degree terms in the Leray spectral sequence $E_2^{ij}=\hom^i(X,\homR^j\push c\G_m)\implies\hom^{i+j}(\meX,\G_m)$.
\end{proof}
In order to ease difficulties which may arise in attempting to compute the cohomology groups $\hom^i(X,\homR^1\push c\G_m)$ appearing above, it may be most useful to apply \cref{cor:R2Gm-vanishes-sequence} only to stacks over separably closed fields (or, more generally, strictly Henselian local rings). Despite this, it can still serve as one ingredient in a larger computation of Brauer groups over more general bases; for example, below we will compute $\Br\meX(1)_R$ when $R$ is a strictly Henselian local ring, and then later extend this computation to other bases in \cref{sect:X(1)-example}.

\begin{ex}[$\Br\meX(1)_R$]\label{ex:X(1)R}
    Let $R$ be a noetherian strictly local ring, and let $\meX=\meX(1)/R$ be the moduli space of generalized elliptic curves, sometimes also denoted $\bar\meM_{1,1}$, and let $c\colon\meX\to\P^1$ be its coarse moduli space. Assume that $6\in\units R$, so $\meX/R$ is tame. We will apply \cref{cor:R2Gm-vanishes-sequence} in order to compute that $\Br\meX=\Br\P^1_R=0$. First, one produces the exact sequence
    \begin{equation}\label{ses:R1-X(1)}
        0\too \ul{\zmod3}_0\oplus\ul{\zmod2}_{1728} \too \homR^1\push c\G_m \too \ul{\zmod 2} \too 0
    \end{equation}
    of \'etale sheaves on $\P^1_R$, where $\ul{\zmod n}_x$ denotes the pushforward $i_{x,*}\ul{\zmod n}$ along the inclusion $i_x\colon\spec R\to\P^1_R$ of $x\in\P^1(R)$. To keep this example relatively tidy, we postpone a derivation of \cref{ses:R1-X(1)} for now; it will later be a direct consequence of \cref{lem:two-step-push-es} (as will be spelled out in the proof of \cref{lem:push-f-X(1)}).
    
    The long exact sequence in cohomology associated to \cref{ses:R1-X(1)} immediately shows that $\hom^1(\P^1_R,\homR^1\push c\G_m)=0$. Hence, the exact sequence in \cref{cor:R2Gm-vanishes-sequence} yields
    \[\hom^0(\P^1_R,\homR^1\push c\G_m)\xtoo{\d_2^{0,2}}\hom^2(\P^1_R,\G_m)\xtoo{\pull c}\hom^2(\meX(1)_R,\G_m)\too0.\]
    Above, note that $\im\d_2^{0,2}\subset\Br'(\P^1_R)=\hom^2(\P^1_R,\G_m)$, but $\Br'(\P^1_R)=\Br'(R)=0$ by \cite[Chapter II, Theorem 2]{gabber:thesis}, so $\d_2^{0,2}=0$, i.e. $\pull c$ above is injective.\footnote{Alternatively, at least when $R$ is regular, one can instead directly show that $\pull c\colon\hom^2(\P^1,\G_m)\to\hom^2(\meX(1),\G_m)$ is injective as follows. There exists an open $U\subset\P^1$ which admits a section $s\colon U\to\meX(1)_U$ of $c$ (one can take $U=\P^1\sm\{0,1728,\infty\}$; see e.g. \cite[The proof of Proposition III.1.4(c)]{silverman}). Hence, $\hom^2(U,\G_m)\xto{\pull c}\hom^2(\meX(1)_U,\G_m)$ is injective. By \cref{prop:brauer-injects}, $\hom^2(\P^1,\G_m)\into\hom^2(U,\G_m)$ and $\hom^2(\meX(1),\G_m)\into\hom^2(\meX(1)_U,\G_m)$ are both injective as well. It follows that $\pull c\colon\hom^2(\P^1,\G_m)\to\hom^2(\meX(1),\G_m)$ must be injective.} Thus, $\hom^2(\P^1_R,\G_m)\iso\hom^2(\meX(1)_R,\G_m)$. 
\end{ex}
\begin{ex}[$\Br\meY_0(2)_R$]\label{ex:Y0(2)}
    Let $R$ be a \important{regular} noetherian strictly local $\Z[1/2]$-algebra, and let $\meX=\meY_0(2)/R$ be the moduli space of elliptic curves equipped with a cyclic subgroup of order $2$. Let $c\colon\meX\to\A^1_R\sm\{0\}$ denote its coarse moduli space (see \cite[Corollary 3.15]{achenjang2024brauer}) and note that $\meX$ is tame (see \cite[Lemma 3.14]{achenjang2024brauer}). It was shown in \cite[Lemma 7.5]{achenjang2024brauer} that $\meX$ is a $\zmod2$-gerbe over $\A^1\sm\{0,-1/4\}$ while points above $-1/4$ have automorphism group $\mu_4$. Using this, one can produce an exact sequence
    \begin{equation}\label{ses:R1-Y0(2)}
        0\too\ul{\zmod2}_{-1/4}\too\homR^1\push c\G_m\too\ul{\zmod2}\too0
    \end{equation}
    of \'etale sheaves on $\A^1\sm\{0\}$, analogous to \cref{ses:R1-X(1)}. Again, we postpone giving the details of the construction of this sequence until later (see \cref{sect:Y0(2)-example}).
    From this exact sequence, one immediately sees that 
    \[\hom^1(\A^1\sm\{0\},\homR^1\push c\G_m)\simeq\hom^1(\A^1\sm\{0\},\zmod2)=\hom^1(\A^1\sm\{0\},\mu_2)\simeq\G_m(\A^1\sm\{0\})/2\simeq\zmod2,\] 
    and that $\#\hom^0(\A^1\sm\{0\},\homR^1\push c\G_m)=4$. One can check that the Hodge bundle on $\meY_0(2)$ generates $\hom^0(\A^1\sm\{0\},\homR^1\push c\G_m)$, so $\Pic\meX\to\hom^0(\A^1\sm\{0\},\homR^1\push c\G_m)$ is surjective and $\hom^0(\A^1\sm\{0\},\homR^1\push c\G_m)\simeq\zmod4$. Hence, the exact sequence in \cref{cor:R2Gm-vanishes-sequence} yields
    \[0\too\hom^2(\A^1_R\sm\{0\},\G_m)\xtoo{\pull c}\hom^2(\meY_0(2)_R,\G_m)\too\zmod2\xto{\d_2^{1,1}}\hom^3(\A^1_R\sm\{0\},\G_m).\]
    We claim that the differential $\d_2^{1,1}$ above vanishes, so that we have an exact sequence
    \begin{equation}\label{ses:BrY0(2)R}
        0\too\hom^2(\A^1_R\sm\{0\},\G_m)\too\hom^2(\meY_0(2)_R,\G_m)\too\zmod2\too0.
    \end{equation}
    Because $\im\d_2^{1,1}\subset\hom^3(\A^1\sm\{0\},\G_m)[2]$, the Kummer sequence $0\to\mu_2\to\G_m\to\G_m\to0$ implies that it suffices to show that $\hom^3(\A^1_R\sm\{0\},\mu_2)=0$. Gabber's absolute cohomological purity theorem \cite[Theorem 2.1.1]{fujiwara-gabber} applied to the closed immersion $Y\coloneqq\spec R\sqcup\spec R\xto{0\sqcup\infty}\P^1_R$ implies that
    \[\hom^r(R,\zmod2)\oplus\hom^r(R,\zmod2)=\hom^r(Y,\zmod 2(-1))\simeq\hom^{r+2}_Y(\P^1_R,\zmod 2)=\hom_Y^{r+2}(\P^1_R,\mu_2)\]
    for all $r$. In particular, since $R$ is strictly henselian, we see that $\hom^n_Y(\P^1_R,\mu_2)=0$ for all $n\ge3$. It then follows from the usual long exact sequence of cohomology with supports in $Y$ that $\hom^n(\P^1_R,\mu_2)\iso\hom^n(\A^1_R\sm\{0\},\mu_2)$ for all $n\ge3$. Now, by the proper base change theorem \cite[Corollary VI.2.7]{milne-et}, $\hom^3(\P^1_k,\mu_2)\simeq\hom^3(\P^1_R,\mu_2)\simeq\hom^3(\A^1_R\sm\{0\},\mu_2)$, where $k$ is the (separably closed) residue field of $R$. Finally,  $\hom^3(\P^1_k,\mu_2)=0$, e.g. by \cite[Theorem VI.1.1]{milne-et}. This completes the derivation of \cref{ses:BrY0(2)R}.
\end{ex}

\section{\bf Brauer groups of gerbes}\label{sect:gerbes}

In this section, we use \cref{thm:R2Gm-vanishes} to study the Brauer groups of (tame, locally Brauerless) gerbes.
\begin{rec}
    Recall from \cref{sect:conventions} that by `gerbe' we always mean an `fppf gerbe'. Consequently, in this section, we use the flat-fppf site over algebraic stacks (resp. big fppf site over schemes) instead of the lisse-\'etale site (resp. small \'etale). We signify this by making use of the subscript $\fppf{}$, where appropriate.
\end{rec}
\begin{lemma}\label{lem:H2(R G)}
    Let $R$ be a strictly local ring, let $G/R$ be a finite linearly reductive group, and let $c\colon\meH\to\spec R$ be a $G$-gerbe over $R$. Then, $\meH(R)\neq\emptyset$ (so $\meH\cong BG_R$).
\end{lemma}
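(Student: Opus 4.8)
The plan is to show that the $G$-gerbe $\meH/R$ has a section by exhibiting the vanishing of the obstruction class to such a section. Since $G$ is linearly reductive over the strictly local ring $R$, we may use its connected-\'etale sequence $0\to\Delta\to G\to\ul Q\to0$, with $\Delta$ diagonalizable and $Q$ a tame abstract group (so $p\coloneqq\Char k\nmid\#Q$, where $k$ is the residue field of $R$). The isomorphism class of $\meH$ as a $G$-gerbe over $\spec R$ lives in $\fhom^2(R,G)$ (using the flat-fppf topology since $G$ need not be smooth), and $\meH\cong BG_R$ precisely when this class vanishes. So it suffices to prove $\fhom^2(R,G)=0$.

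First I would reduce to the two extreme cases via the connected-\'etale sequence: the long exact sequence
\[\dots\too\fhom^2(R,\Delta)\too\fhom^2(R,G)\too\fhom^2(R,\ul Q)\too\dots\]
shows it is enough to prove $\fhom^2(R,\Delta)=0$ and $\fhom^2(R,\ul Q)=0$ (strictly speaking the sequence for nonabelian $G$ requires a little care, but since $\Delta$ is central in $G$ one has the usual exact sequence of pointed sets $\fhom^1(R,\ul Q)\to\fhom^2(R,\Delta)\to\fhom^2(R,G)\to\fhom^2(R,\ul Q)$, which suffices for the vanishing statement). For the \'etale quotient: since $p\nmid\#Q$ and $R$ is strictly henselian, $\ul Q$ is a tame \'etale sheaf and $\fhom^i(R,\ul Q)=\hom^i(R,\ul Q)=\ghom^i(\pi_1^{\tet}(R),Q)=0$ for all $i\ge1$ because $R$ strictly henselian has trivial \'etale fundamental group. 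For the diagonalizable part: $\Delta$ is a finite product of copies of $\mu_n$'s (\'etale-locally, hence globally since $R$ is strictly henselian and $\dual\Delta$ is constant), so it suffices to treat $\Delta=\mu_n$, and the Kummer sequence $1\to\mu_n\to\G_m\xto{n}\G_m\to1$ gives $\fhom^2(R,\mu_n)\hookrightarrow\fhom^2(R,\G_m)=\hom^2(R,\G_m)=0$, the last equality because $R$ is strictly henselian local (its Brauer group and indeed all higher $\G_m$-cohomology vanish). Hence $\fhom^2(R,\Delta)=0$.

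Combining, $\fhom^2(R,G)=0$, so $\meH$ is the trivial $G$-gerbe, i.e.\ $\meH(R)\ne\emptyset$ and $\meH\cong BG_R$. The main subtlety to get right is the nonabelian bookkeeping in the connected-\'etale dévissage --- ensuring that the pointed-set exact sequences genuinely force the class of $\meH$ to be trivial rather than merely lying in the image of something --- but since $\Delta$ is central this is the standard exact sequence of low-degree terms for a central extension of group sheaves, and the vanishing of the two outer terms does the job. An alternative, perhaps cleaner, route would be to cite \cref{prop:lin-red-classification} to write $G\cong\Delta\rtimes\ul Q$ already over $R$ itself (the fpqc cover in the classification can be taken trivial over a strictly henselian local ring once one fixes the connected-\'etale splitting), and then use that gerbes for a semidirect product are controlled by the pieces; either way the arithmetic input is exactly the two vanishing statements $\hom^{\ge1}(R,\ul Q)=0$ and $\fhom^2(R,\mu_n)=0$ recorded above.
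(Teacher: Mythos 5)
Your computational inputs are exactly the paper's (tame \'etale gerbes over a strictly henselian ring trivialize; $\fhom^2(R,\mu_n)=0$ via Kummer and $\Pic R=\Br R=0$), but there is a genuine gap in the d\'evissage: the claim that $\Delta$ is central in $G$ is false in general. By \cref{prop:lin-red-classification}, over a strictly local ring the connected--\'etale sequence exhibits $G$ as a semidirect product $\Delta\rtimes\ul Q$ in which $Q$ may act nontrivially on $\Delta$ --- e.g.\ $G=\mu_p\rtimes\ul{\zmod2}$ in characteristic $p$ with $\zmod2$ acting by inversion is finite, linearly reductive, and has non-central connected component $\mu_p$. So the ``usual exact sequence of pointed sets for a central extension'' is not available, and your proposed alternative (writing $G\cong\Delta\rtimes\ul Q$) does not repair this, since a semidirect product with nontrivial action still has non-central $\Delta$. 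There is also a more basic issue lurking underneath: for nonabelian $G$ the symbol $\fhom^2(R,G)$ is not a group, and classifying $G$-gerbes by it requires Giraud's nonabelian cohomology of bands, where the five-term sequence you want is considerably more delicate.

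The paper's proof performs the same reduction but implements it geometrically, which is exactly what sidesteps these issues: one forms the rigidification $\meM\coloneqq\meH\thickslash\Delta$, which is a $Q$-gerbe over $R$ and hence has a section because $Q$ is \'etale and $R$ is strictly local; pulling $\meH$ back along that section yields a gerbe over $\spec R$ banded by the \emph{commutative} (diagonalizable) group $\Delta$, at which point honest abelian fppf cohomology applies and your Kummer-sequence computation finishes the argument. If you want to keep a purely cohomological write-up, you would need to either invoke the rigidification (or some other geometric splitting of the gerbe) or work genuinely with Giraud's $\hom^2$ of liens; as written, the step from the two vanishing statements to $\fhom^2(R,G)=0$ does not go through.
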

\begin{proof}
    Let $0\to\Delta\to G\to\ul Q\to0$ be $G$'s connected-\'etale sequence, and let $\meM\coloneq\meH\thickslash\Delta$, the rigidification as in \cite[Appendix A]{AOV}. Then, $\meM$ is a $Q$-gerbe over $R$ and $\meH$ is a $\Delta$-gerbe over $\meM$. Note that $\meM(R)\neq\emptyset$ since $Q$ is \'etale and $R$ is strictly local; fix some section $s\colon\spec R\to\meM$. Set $\meH'\coloneqq\meH\by_{\meM,s}R$, so it suffices to show that $\meH'(R)\neq\emptyset$. This $\meH'$ is a $\Delta$-gerbe over $R$; noting that $\Delta$ is commutative (even diagonalizable), it suffices to show that $\fhom^2(R,\Delta)=0$. One argues as in \cite[Lemma 3.13]{AOV} (which assume that $R$ is a separably closed field); briefly, $\Delta$ is a product of groups of the form $\mu_n$, for various values of $n$, and the Kummer sequence $1\to\mu_n\to\G_m\to\G_m\to1$ shows that $\fhom^2(R,\mu_n)=0$. 
\end{proof}
\begin{lemma}\label{lem:gerbe-pushforwards}
    Let $S$ be a scheme, and let $G/S$ be a finite linearly reductive group scheme. Let $c\colon\meH\to S$ be a $G$-gerbe over $S$. Then,
    \[\push c\G_m\simeq\G_m\tand \homR^1\push c\G_m\simeq\dual G.\]
    If, furthermore, $G$ is Brauerless, then also $\homR^2\push c\G_m=0$.
    Note here that $\dual G\coloneqq\ul\Hom(G,\G_m)$ is commutative even if $G$ is not.
\end{lemma}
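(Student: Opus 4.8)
The statement is the relative (over a general base $S$) version of the stalk-wise computations already carried out. The natural strategy is to reduce everything to stalks over strictly henselian local rings of $S$, where Lemma \ref{lem:H2(R G)} tells us the gerbe is split, and then invoke Proposition \ref{prop:BG-pushforwards}. Concretely, I would first observe that all three claims can be checked on stalks at geometric points $\bar s\to S$: a morphism of \'etale (or fppf) sheaves on $S$ is an isomorphism (resp. a sheaf vanishes) if and only if it is so on all stalks, and $\homR^i\push c\G_m$ is the sheafification of $T\mapsto\fhom^i(\meH_T,\G_m)$, whose stalk at $\bar s$ is $\fhom^i(\meH\times_S\spec\msO_{S,\bar s},\G_m)$ (using that $G$ is finitely presented so the gerbe is locally of finite presentation and cohomology commutes with the relevant filtered colimit). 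So fix a strictly henselian local $R=\msO_{S,\bar s}$ and set $\meH_R\coloneqq\meH\times_S\spec R$.

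**The local computation.** By Lemma \ref{lem:H2(R G)}, $\meH_R\cong BG_R$ since $R$ is strictly local and $G$ is linearly reductive. Now I must be slightly careful: Proposition \ref{prop:BG-pushforwards} computes $\push{c'}\G_m$, $\homR^1\push{c'}\G_m$ (and, under cyclic-or-connected hypotheses, $\homR^2\push{c'}\G_m$) for $c'\colon BG_S\to S$, but a priori that is about the \emph{absolute} pushforward along $BG_R\to\spec R$, which computes the stalk we want. So the first two identities $\push c\G_m\simeq\G_m$ and $\homR^1\push c\G_m\simeq\dual G$ follow directly: $\push c\G_m\simeq\G_m$ because $c$ is a coarse space map (a $G$-gerbe has the same coarse space as $S$), and $\homR^1\push c\G_m\simeq\dual G$ because on stalks the natural map $\homR^1\push c\G_m\to\dual G$ — sending a line bundle on $\meH_T$ to the character by which $\ul\Aut$ acts on it — is identified via $\meH_R\cong BG_R$ with the isomorphism of Proposition \ref{prop:BG-pushforwards} (equivalently Lemma \ref{lem:BG-Gm-stalk}). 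I should remark, as the paper does elsewhere, that one must exhibit this natural comparison map globally on $S$ first and then check it is a stalkwise isomorphism, rather than just asserting abstract isomorphism of the two sheaves.

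**The vanishing of $\homR^2\push c\G_m$.** For the third claim, suppose $G/S$ is Brauerless. Checking on stalks, I reduce to showing $\fhom^2(BG_R,\G_m)=0$ for $R=\msO_{S,\bar s}$ strictly henselian local with $G_R$ Brauerless (by definition of Brauerless for group schemes over $S$). Here the input is exactly Proposition \ref{prop:tame-loc-Gm-coh} applied with $X=\spec R$ and the trivial action (so $\meX=BG_R$): it produces the exact sequence ending in $\ghom^2(Q,\G_m(k))\to\hom^2(BG_R,\G_m)\to0$, where $0\to\Delta\to G\to\ul Q\to 0$ is the connected-\'etale sequence and $k$ is the residue field of $R$. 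Since $G_R$ is Brauerless, $Q$ is Brauerless by Proposition \ref{prop:brauerless-equiv/R}, so Lemma \ref{lem:brauerless-Gm-vanish} gives $\ghom^2(Q,\G_m(k))=0$, whence $\hom^2(BG_R,\G_m)=0$. (One could alternatively phrase this as: $\homR^2\push c\G_m$ is \'etale-locally on $S$ the same as $\homR^2\push{c''}\G_m$ for $c''\colon BG_S\to S$, so this is really a mild upgrade of \cref{prop:BG-pushforwards} from the cyclic/connected case to the Brauerless case, using \cref{prop:tame-loc-Gm-coh} in place of \cref{lem:BG-Gm-stalk}.)

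**Expected obstacle.** There is no deep obstacle; the real content is entirely in the already-proven Lemmas \ref{lem:H2(R G)}, \ref{lem:brauerless-Gm-vanish} and Propositions \ref{prop:tame-loc-Gm-coh}, \ref{prop:brauerless-equiv/R}. The one point requiring a little care is the passage from the stalkwise statement $\fhom^i(\meH_R,\G_m)$ to the stalk $(\homR^i\push c\G_m)_{\bar s}$ — i.e. that higher pushforward commutes with passage to the strict henselization — which needs $\meH\to S$ to be suitably finitely presented (guaranteed by $G$ finite, flat, finitely presented, so the gerbe is locally of finite presentation) so that one may apply the colimit-compatibility of fppf cohomology; I would just cite the standard fact and move on. The only other thing to state cleanly is that the final sentence ("$\dual G$ is commutative even if $G$ is not") is immediate since $\dual G=\ul\Hom(G,\G_m)$ is a sheaf of abelian groups, being valued in the abelian group $\G_m$.
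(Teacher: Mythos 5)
Your proposal is correct and follows essentially the same route as the paper: $\push c\G_m\simeq\G_m$ since $c$ is a coarse space map, and for $\homR^1\push c\G_m\simeq\dual G$ one constructs the global comparison map (line bundle $\mapsto$ character of the inertia action) and checks it on stalks via \cref{lem:H2(R G)} and \cref{lem:BG-Gm-stalk}, exactly as the paper does. The only cosmetic difference is in the $\homR^2$ vanishing: the paper simply cites \cref{thm:R2Gm-vanishes} (the gerbe being a tame, locally Brauerless stack with coarse space $S$), whereas you re-derive the stalkwise vanishing by splitting the gerbe with \cref{lem:H2(R G)} and feeding $BG_R$ into \cref{prop:tame-loc-Gm-coh}, \cref{prop:brauerless-equiv/R}, and \cref{lem:brauerless-Gm-vanish} — which is precisely the proof of \cref{thm:R2Gm-vanishes} specialized to this case, so the two arguments coincide in substance.
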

\begin{proof}
    $\push c\G_m\simeq\G_m$ simply because $c$ is a coarse space map, and $\homR^2\push c\G_m=0$ (if $G$ is Brauerless) by \cref{thm:R2Gm-vanishes}. To compute $\homR^1\push c\G_m$, we first construct a map $\homR^1\push c\G_m\to\dual G$. For this, it suffices to construct functorial maps $\hom^1(\meH_T,\G_m)\to\dual G(T)$ for any scheme $T/S$. To ease notation in describing such maps, we may as well assume $T=S$ and so construct a map $\Pic\meH=\hom^1(\meH,\G_m)\to\dual G(S)$. Such a map is constructed in \cite[Beginning of Section 6]{lopez2023picard} (essentially sending a line bundle $\msL$ to the induced action of $G$ on fibers of $\msL$). Thus, we get a map $\homR^1\push c\G_m\to\dual G$, which we claim is an isomorphism on stalks. Indeed, if $S=\spec R$ is the spectrum of a strictly henselian local ring, then $\meH\simeq BG_R$ by \cref{lem:H2(R G)}. Thus, $\hom^1(BG_R,\G_m)\iso\dual G(R)$ by \cref{lem:BG-Gm-stalk}.
\end{proof}
\begin{rem}
    In the setting of \cref{lem:gerbe-pushforwards}, one also has $\fhomR^1\push c\G_m\simeq\dual G$ and $\fhomR^2\push c\G_m=0$ (if $G$ is Brauerless) since these are the fppf-sheafifications of the corresponding higher \'etale pushforwards.
\end{rem}
\begin{set}\label{set:gerbe}
    Fix some choice of $S$, $G/S$, and $c\colon\meH\to S$ as in \cref{lem:gerbe-pushforwards}. Furthermore, let $\ab G/S$ denote the abelianization of $G$.
\end{set}
\begin{rem}
    Recall that \cref{cor:G-dual-etale} shows that $\dual G$ is \'etale, so $\ast\hom(-,\dual G)=\ast\fhom(-,\dual G)$. On the other hand, $G$ is not necessarily \'etale, so $\ast\hom(-,G)\neq\ast\fhom(-,G)$ in general. In particular, while $\meH$ represents a class in $\fhom^2(S,G)$, it may not represent a class in $\hom^2(S,G)$ (i.e. it may not be \'etale-locally trivial). Because of this, we will need to carefully keep track of \'etale vs. fppf cohomology in this section.
\end{rem}
\begin{notn}
    Let $\ab\meH\to S$ denote the abelianization of $\meH/S$; this is the rigidification $\meH\thickslash\der G$, where $\der G=[G,G]$ is the derived subgroup of $G$, and it represents the image of $[\meH]$ under the natural map $\fhom^2(S,G)\to\fhom^2(S,\ab G)$; see \cite[Section 6]{lopez2023picard}.
\end{notn}
We study the fppf-Leray spectral sequence
\begin{equation}\label{ss:leray-gerbe-Gm}
    E_2^{ij}=\fhom^i(S,\fhomR^j\push c\G_m)\implies\hom^{i+j}(\meH,\G_m),
\end{equation}
pictured in \cref{fig:leray-gerbe} (the objects on the $E_2$-page are identified by using \cref{lem:gerbe-pushforwards}).
\begin{figure}[h]
    \centering
    \[\begin{tikzcd}
        0\\
        \dual G(S)\ar[rrd, "d_2^{0,1}"] & \hom^1(S, \dual G)\ar[rrd, "d_2^{1,1}"] \\
        \G_m(S) & \Pic(S) & \hom^2(S,\G_m) & \hom^3(S,\G_m)
    \end{tikzcd}\]
    \caption{The $E_2$-page of the Leray spectral sequence for a $G$-gerbe $\meH\to S$.}
    \label{fig:leray-gerbe}
\end{figure}
We are in particular interested in computing the differentials $\d_2^{i,1}$ for $i\ge0$. We claim that these come from cupping with the class $[\meH]\in\hom^2(S,G)$ of the gerbe $\meH\to S$ (even better put, cupping with $[\ab\meH]\in\hom^2(S,\ab G)$). To prove this, we exploit the multiplicative structure relating \cref{ss:leray-gerbe-Gm} to the Leray spectral sequences
\[F_2^{ij}=\fhom^i(S,\fhomR^j\push c\ab G)\implies\fhom^{i+j}(\meH,\ab G)\tand{}'F_2^{ij}=\fhom^i(S,\fhomR^j\push c\dual G)\implies\hom^{i+j}(\meH,\dual G)\]
for $\ab G$ and $\dual G$.
\begin{notn}
    We use $\d_2=\d_2^{ij}$ to denote the differentials on $E_2^{ij}$, but use $\delta_2=\delta_2^{ij}$ and ${}'\delta_2={}'\delta_2^{ij}$ to denote the differentials on $F_2^{ij},{}'F_2^{ij}$, respectively.
\end{notn}
The natural evaluation map $G\by\dual G\to \G_m$ (which factors through $\ab G\by\dual G\to\G_m$) induces a product $\smile\colon F_2^{ij}\by{}'F_2^{i'j'}\too E_2^{i+i',j+j'}$ compatible with differentials in the sense that they satisfy the Leibniz rule
\begin{equation}\label{eqn:leibniz}
    \d_2\p{\alpha\smile\beta}=\delta_2\alpha\smile\beta+(-1)^{i+j}\alpha\smile{}'\delta_2\beta
\end{equation}
for all $\alpha\in F_2^{ij}$ and $\beta\in{}'F_2^{i'j'}$; see \cite[Section IV.6.8]{bredon-sheaf-theory} for details on the construction of this product.
\begin{warn}\label{warn:cup}
    As explained in \cite[Section IV.6.8]{bredon-sheaf-theory}, the product $\smile\colon F_2^{ij}\by{}'F_2^{i'j'}\too E_2^{i+i',j+j'}$ alluded to above is \important{not} simply the cup product
    \[\fhom^i(S,\fhomR^j\push c\ab G)\by\fhom^{i'}(S,\fhomR^{j'}\push c\dual G)\xtoo\cup\fhom^{i+i'}(S,\fhomR^j\push c\ab G\otimes\fhomR^{j'}\push c\dual G)\too\fhom^{i+i'}(S,\fhomR^{j+j'}\push c\G_m)\]
    induced by $\ab G\by\dual G\to\G_m$, but is instead $(-1)^{ji'}$ times the above composition.
\end{warn}
\begin{lemma}\label{lem:G-easy-push}
    $\push c\dual G\simeq\dual G$, $\push c\ab G\simeq\ab G$, and $\fhomR^1\push c\ab G\simeq\ul\Hom(G,\ab G)\simeq\ul\End(\ab G)$.
\end{lemma}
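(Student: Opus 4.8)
The plan is to argue that all three identifications can be checked on stalks, where the $G$-gerbe $\meH$ becomes the trivial gerbe $BG_R$ by \cref{lem:H2(R G)}, and then to compute the relevant stalks directly using Hochschild cohomology via \cref{cor:local-coh}. For the first two isomorphisms, $\push c\dual G\simeq\dual G$ and $\push c\ab G\simeq\ab G$, the point is simply that $c\colon\meH\to S$ is a coarse space map, so $\push c$ of any sheaf pulled back from $S$ (in particular, $\dual G$, which is defined over $S$, and $\ab G$, likewise) is computed by $\hom^0$ of the stalk. Over a strictly local $R$, $\hom^0(BG_R,\dual G)=\dual G(R)^G=\dual G(R)$ since $G$ acts trivially on $\dual G$, and $\hom^0(BG_R,\ab G)=\ghom^0(G,\ab G(R))=\ab G(R)^G=\ab G(R)$ — here one should double-check that the $G$-action entering Hochschild cohomology is via conjugation on $\ab G$, hence trivial. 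This gives the natural comparison maps $\push c\dual G\to\dual G$ and $\push c\ab G\to\ab G$ (adjunction/unit) are isomorphisms on stalks.

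For the third isomorphism, $\fhomR^1\push c\ab G\simeq\ul\Hom(G,\ab G)\simeq\ul\End(\ab G)$, I would first observe that the second isomorphism $\ul\Hom(G,\ab G)\simeq\ul\End(\ab G)$ is formal: any homomorphism $G\to\ab G$ factors uniquely through the abelianization $G\onto\ab G$, so precomposition with $G\onto\ab G$ gives a bijection $\Hom(\ab G,\ab G)\iso\Hom(G,\ab G)$, and this is functorial enough to sheafify. For the first isomorphism, I would build the comparison map $\fhomR^1\push c\ab G\to\ul\Hom(G,\ab G)$: over a scheme $T/S$, an $\ab G$-torsor on $\meH_T$ restricts along any $G$-torsor (i.e. object of $\meH_T$) to give, functorially in the $G$-torsor, an $\ab G$-torsor together with compatibility data, and unwinding what a class in $\fhom^1(\meH_T,\ab G)$ is when $\meH_T=BG_T$ yields exactly a homomorphism $G_T\to\ab G_T$ (plus a class in $\fhom^1(T,\ab G)$, which is the part killed by passing to $\fhomR^1$ — i.e. $\fhomR^1\push c\ab G$ vs.\ $\push c\G_m$ in the Leray sequence). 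Concretely, over strictly local $R$, \cref{lem:H2(R G)} gives $\meH\simeq BG_R$, and \cref{cor:local-coh} gives $\fhom^1(BG_R,\ab G)\simeq\ghom^1(G,\ab G(R))$; since $G$ acts trivially on $\ab G$, $\ghom^1(G,\ab G(R))=\Hom_{\GrpSch_R}(G,\ab G)$, exactly as in \cref{lem:BG-Gm-stalk}'s computation of $\hom^1(BG_R,\G_m)=\dual G(R)$. This identifies the stalk of $\fhomR^1\push c\ab G$ with $\Hom(G,\ab G)(\spec R) = \ul\Hom(G,\ab G)_{\bar s}$, and one checks the comparison map realizes this identification.

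The main obstacle I anticipate is bookkeeping around the two competing contributions to $\fhom^1(BG_R,\ab G)$ — the "constant" classes coming from $\fhom^1(R,\ab G)$ versus the homomorphism classes $\Hom(G,\ab G)$ — and confirming that the natural map $\fhomR^1\push c\ab G\to\ul\Hom(G,\ab G)$ (as opposed to merely an abstract stalkwise isomorphism) is the right one, so that it glues. This is the same subtlety already navigated in the proof of \cref{prop:BG-pushforwards} for $\G_m$, where the section $\pi\colon S\to BG_S$ splits off the constant part; here $\meH$ need not have a section, but the statement is about $\fhomR^1\push c$, which is the sheafification killing precisely the part that a section would split, so stalkwise triviality of the gerbe (\cref{lem:H2(R G)}) suffices. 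The remaining verifications — that $G$ acts trivially on $\ab G$ in the relevant sense, and functoriality of the comparison maps — are routine. A secondary point to be careful about is that \cref{cor:local-coh} is stated for $\G_m$-coefficients; for $\ab G$-coefficients I would instead invoke the descent/Hochschild–Serre spectral sequence of \cref{lem:descent-ss} directly (with the cover $\spec R\to BG_R$, whose fibers are products of copies of $G$, hence finite over the strictly henselian $R$), using that $\fhom^j$ of a finite scheme over a strictly henselian local ring vanishes for $j\ge1$ when the coefficient sheaf is finite — as in \cref{lem:bar-complex-triv} — to collapse it onto the $j=0$ row.
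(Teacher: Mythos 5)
Your overall architecture matches the paper's: the first two identities follow from $c$ being a coarse space map (your stalkwise Hochschild computation is a more roundabout route to the same fact), the identification $\ul\Hom(G,\ab G)\simeq\ul\End(\ab G)$ is indeed formal, and your construction of the comparison map $\fhomR^1\push c\ab G\to\ul\Hom(G,\ab G)$ --- restrict an $\ab G$-torsor $P$ along a point $t\in\meH(T)$ and record the induced homomorphism $G(T)=\Aut_\meH(t)\to\Aut(\pull tP)=\ab G(T)$ --- is exactly the paper's.

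The gap is in the local verification of the third isomorphism. You propose to check it at strictly henselian stalks and to collapse the descent spectral sequence for $\spec R\to BG_R$ onto its $j=0$ row by asserting that $\fhom^j$ of a finite scheme over a strictly henselian local ring vanishes for $j\ge1$ with finite flat coefficients. That vanishing is what \cref{lem:bar-complex-triv} gives for \emph{\'etale} cohomology, but it fails for fppf cohomology when the coefficient group is not \'etale: $G$ is only assumed finite linearly reductive, so $\ab G$ can contain $\mu_p$ in characteristic $p$, and then $\fhom^1(R,\mu_p)\cong R^\times/(R^\times)^p$ is generally nonzero even for $R$ strictly henselian (e.g.\ the separable closure of $\F_p(t)$). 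Consequently $\fhom^1(BG_R,\ab G)$ is an extension of $\Hom_R(G,\ab G)$ by $\fhom^1(R,\ab G)$ rather than equal to $\Hom_R(G,\ab G)$ on the nose, and the extra classes only die after an fppf (not \'etale) cover of $R$; checking at \'etale stalks therefore does not suffice for this fppf sheaf. You do gesture at these ``constant classes,'' but the mechanism you give for disposing of them contradicts the vanishing you also assert. The fix is what the paper does: since $\fhomR^1\push c\ab G$ is an fppf sheaf, verify that the comparison map is an isomorphism fppf-locally on $S$, where one may assume both that $\meH\simeq BG_S$ and that $\fhom^1(S,\ab G)=0$; then an $\ab G$-torsor on $BG_S$ is a necessarily trivial $\ab G$-torsor on $S$ equipped with a $G$-equivariant structure, i.e.\ precisely a homomorphism $G\to\ab G$.
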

\begin{proof}
    The first two of these holds simply because $c$ is a coarse space map. For the last, we construct a map $\fhomR^1\push c\ab G\to\ul\Hom(G,\ab G)$ and verify that it is locally an isomorphism. As in the proof of \cref{lem:gerbe-pushforwards}, it suffices to construct functorial maps $\hom^1(\meH_T,\G_m)\to\ul\Hom(G,\ab G)(T)=\Hom_T(G_T,\ab G_T)$ for schemes $T/S$, and even to just construct a suitable map $\hom^1(\meH,\ab G)\to\Hom_S(G,\ab G)$. Fix a $\ab G$-torsor $P$ on $\meH$. For any pair $(T/S,t\in\meH(T))$, $\pull tP$ is a $\ab G$-torsor on $T$ and so one obtains a homomorphism
    \[G(T)=\Aut_\meH(t)\too\Aut(\pull tP)=\ab G(T).\]
    Ranging over such pairs, this defines a homomorphism $G\to\ab G$ (say initially defined over some cover of $S$, but which then descends to one defined over $S$). This defines the map $\homR^1\push cG\to\ul\Hom(G,\ab G)$. To see that this is an isomorphism locally, we may assume that $\fhom^1(S,\ab G)=0$ and that $\meH\simeq BG_S$ (since both of these hold fppf-locally on $S$). Then, a $\ab G$-torsor on $\meH=BG_S$ is the data of a (necessarily trivial) $\ab G$-torsor on $S$ equipped with an equivariant $G$-action, i.e. is the data of a homomorphism $G\to\ab G$. Thus, $\hom^1(BG_S,\ab G)\iso\Hom_S(G,\ab G)$, concluding the proof.
\end{proof}
\begin{lemma}\label{lem:actual-SS-diff-comp}
    Viewing $\id=\id_{\ab G}\in\End_S(\ab G)=F_2^{01}$, $\delta_2(\id)=[\ab\meH]\in\fhom^2(S,\ab G)=F_2^{20}$.
\end{lemma}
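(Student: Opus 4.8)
The plan is to identify $\delta_2(\id)$ as the obstruction to lifting $\id$ along the edge map of the spectral sequence $F_2^{ij}$, and then to compute that obstruction by an explicit fppf-\v{C}ech cocycle which one recognizes as a representative of $[\ab\meH]$. Recall from \cref{lem:G-easy-push} that $\push c\ab G\simeq\ab G$ and $\fhomR^1\push c\ab G\simeq\ul\End(\ab G)$, the latter isomorphism sending an $\ab G$-torsor $P$ on $\meH_T$ to the endomorphism of $\ab G_T$ determined by $G_T=\ul\Aut_\meH(t)\to\ul\Aut(\pull tP)=\ab G_T$ (reading $\ul\Hom(G,\ab G)=\ul\End(\ab G)$), under which $\id=\id_{\ab G}$ corresponds to the canonical projection $G\onto\ab G$. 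Since $\push c\ab G\simeq\ab G$, the low-degree exact sequence of $F_2^{ij}$ reads
\[0\too\fhom^1(S,\ab G)\xtoo{\pull c}\fhom^1(\meH,\ab G)\xtoo e\End_S(\ab G)\xtoo{\delta_2}\fhom^2(S,\ab G)\too\fhom^2(\meH,\ab G),\]
so $\delta_2(\id)$ is precisely the obstruction to $\id$ lying in the image of $e$. A class lifting $\id$ is an $\ab G$-torsor $P$ on $\meH$ inducing the canonical $G\onto\ab G$ on automorphisms; such a $P$ descends along the rigidification $\rho\colon\meH\to\ab\meH=\meH\thickslash\der G$ to a band-preserving map $\ab\meH\to B\ab G_S$, which exists exactly when $[\ab\meH]=0$. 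It thus remains to pin down the value of $\delta_2(\id)$, and by naturality of Leray along $\rho$ — which satisfies $\pull\rho\ab G=\ab G$, carries $\id$ to $\id$ on $\fhomR^1$, and induces the identity on $\fhom^2(S,\ab G)$ — it suffices to work with the $\ab G$-gerbe $\ab\meH\to S$, showing that the first differential $d_2^{0,1}$ of its own Leray spectral sequence with coefficients in $\ab G$ sends $\id_{\ab G}\in\End_S(\ab G)$ to $[\ab\meH]$.

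For that, I would pick an fppf cover $p\colon U\to S$ together with an object $\bar t\in\ab\meH(U)$ — possible after refining $U$ since $\ab\meH\to S$ is a gerbe (cf.\ \cref{lem:H2(R G)}) — giving a band-preserving equivalence $\phi\colon\ab\meH_U\iso B\ab G_U$. Over $U$, the section $\id_{\ab G}$ lifts to the class of the tautological $\ab G$-torsor on $B\ab G_U$ (the one carrying $\id_{\ab G}$ under $\fhom^1(B\ab G_U,\ab G)\simeq\End_U(\ab G)$), hence to a class $\tau$ on $\ab\meH_U$. On the double overlap $U\by_SU$ the two equivalences $\mrm{pr}_0^*\phi$ and $\mrm{pr}_1^*\phi$ differ by a band-preserving autoequivalence of $B\ab G_{U\by_SU}$; such autoequivalences form a Picard groupoid with $\pi_0=\fhom^1(U\by_SU,\ab G)$, so this one is ``translation by'' an $\ab G$-torsor $L$ on $U\by_SU$, and translation by $L$ twists torsor classes by the pullback of $L$ to $B\ab G_{U\by_SU}$. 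Hence the two restrictions of $\tau$ to $\ab\meH_{U\by_SU}$ differ exactly by the class pulled back from $L$. Feeding this into the \v{C}ech--Leray spectral sequence of $p$ (which, after the usual comparison, computes $\delta_2$), the failure of the local lifts to glue is measured by the $2$-cochain assembled from the torsors $L$ on double overlaps, whose cocycle obstruction over triple overlaps is — by the very construction of $L$ out of the two local objects $\mrm{pr}_i^*\bar t$ — the standard \v{C}ech $2$-cocycle representing the class of the $\ab G$-gerbe $\ab\meH$. This gives $\delta_2(\id)=[\ab\meH]$.

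The step I expect to be the main obstacle is the bookkeeping in that last paragraph: matching the \v{C}ech--Leray spectral sequence of $p$ with the Leray spectral sequence (a \v{C}ech-to-derived comparison in fppf cohomology of the stacks at hand, using a hypercover if necessary), and verifying that the extracted $2$-cocycle is literally the one defining $[\ab\meH]$ rather than its negative or a coboundary-adjusted variant. Everything else is formal: the identification $\fhomR^1\push c\ab G\simeq\ul\End(\ab G)$ is \cref{lem:G-easy-push}, the local triviality of $\ab\meH$ is \cref{lem:H2(R G)}, and the description of band-preserving autoequivalences of $B\ab G$ and their action on $\ab G$-torsors is a routine unwinding of definitions.
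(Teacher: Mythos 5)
Your strategy is sound and genuinely different from the paper's. You compute $\delta_2(\id)$ as a \v{C}ech $2$-cocycle: trivialize the gerbe on an fppf cover $U\to S$, lift $\id_{\ab G}$ to the tautological torsor class over $U$, and measure the failure to glue on overlaps. The paper instead invokes Giraud's cover-free description of the transgression (Proposition V.3.2.1 of \cite{giraud}): $\delta_2(\id)$ is represented by the gerbe $D$ whose objects over $T/S$ are the $\ab G$-torsors on $\ab\meH_T$ inducing the identity endomorphism of $\ab G_T$, and the whole proof then reduces to exhibiting one morphism of $\ab G$-gerbes $\ab\meH\to D$ --- sending $t\in\ab\meH(T)$ to $\ul\Isom(\id,\beta_t)$ with $\beta_t$ the constant map at $t$ --- since any morphism of gerbes with the same band is an equivalence. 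The payoff of that route is precisely that it bypasses the step you yourself flag as the main obstacle: the \v{C}ech-to-derived comparison and the on-the-nose (sign-correct, coboundary-free) identification of your assembled $2$-cochain with the standard cocycle of $[\ab\meH]$. As written, your argument asserts that identification but does not verify it, and since the entire content of the lemma is which class comes out, this is the one piece that cannot be waved at (the sign ambiguity is harmless for the exact sequence in \cref{prop:gerbe-Gm-es}, but the lemma claims equality). The fix is short either way: note that your torsors $L=\ul\Isom(\pr_1^*\bar t,\pr_0^*\bar t)$ on $U\by_SU$ are by definition the gluing data of the gerbe, so the obstruction cocycle they produce is a representative of $[\ab\meH]$ by construction; or replace the \v{C}ech computation with Giraud's description as above. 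Your preliminary reductions --- $\fhomR^1\push c\ab G\simeq\ul\End(\ab G)$ from \cref{lem:G-easy-push}, and passing from $\meH$ to $\ab\meH$ by naturality of Leray along the rigidification --- agree with what the paper does (implicitly, via its renaming convention) and are fine.
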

\begin{proof}
    To simplify notation in this proof, rename $\meH\coloneqq\ab\meH$ and $G\coloneqq\ab G$. By \cite[Proposition V.3.2.1]{giraud}, $\delta_2(\id)$ is represented by the gerbe $D/S$ whose fiber category $D(T)$ over an $S$-scheme $T/S$ is the groupoid of $G$-torsors over $\meH_T\coloneqq\meH\by_ST$ whose image under the map
    \[\fhom^1(\meH_T,G)\too\hom^0(T,\fhomR^1\push cG)\simeq\End_T(G_T)\]
    is the identity endomorphism.
    To show that $D\simeq\meH$, it suffices to construct a morphism $\meH\to D$ of $G$-gerbes over $S$. Consider an arbitrary $T/S$ and $t\in\meH(T)$. Define the two morphisms
    \[\alpha\coloneqq\id_{\meH_T}\colon\meH_T\to\meH_T\tand\beta\colon\meH_T=\meH\by_ST\xto{\pr_2}T\xto{(t,\id)}\meH\by_ST=\meH_T.\]
    Then, $P\coloneqq\ul\Isom(\alpha,\beta)\to\meH_T$ is a $G$-torsor over $\meH_T$. The $G$-torsor $\pull tP\to T$ is trivial (as $\alpha,\beta$ both pull back to the identity map on $T$) and, by construction, $G(T)=\Aut_\meH(t)$ acts on it via the identity map $G(T)\to G(T)$. In other words, the $\Aut_\meH(t)$-torsor action on $\ul\Isom(\pull t\alpha,\pull t\beta)$ is via composition on the right. Thus, $P\in D(T)$ and this defined our desired morphism $\meH\to D$.
\end{proof}
\begin{prop}\label{prop:gerbe-Gm-es}
    Continuing to use notation as in \cref{set:gerbe}, there is an exact sequence
    \begin{equation}\label{es:gerbe-Gm}
        0\to\Pic(S)\xto{\pull c}\Pic(\meH)\to\hom^0(S,\dual G)\xto{[\ab\meH]\cup}\hom^2(S,\G_m)\xto{\pull c}\hom^2(\meH,\G_m)\to\hom^1(S,\dual G)\xto{-[\ab\meH]\cup}\hom^3(S,\G_m).
    \end{equation}
    Above, the last term can be replaced with $\ker\p{\hom^3(S,\G_m)\xto{\pull c}\hom^3(\meH,\G_m)}$.
\end{prop}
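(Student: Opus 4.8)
The plan is to read off \cref{es:gerbe-Gm} from the fppf Leray spectral sequence \cref{ss:leray-gerbe-Gm} as a two-row exact sequence, and then separately identify the connecting differentials $\d_2^{0,1}$ and $\d_2^{1,1}$ with (signed) cup products against $[\ab\meH]$. Since $G$ is Brauerless, \cref{lem:gerbe-pushforwards} gives $\push c\G_m\simeq\G_m$, $\fhomR^1\push c\G_m\simeq\dual G$ and $\fhomR^2\push c\G_m=0$; because $\G_m$ is smooth and $\dual G$ is \'etale (\cref{cor:G-dual-etale}), fppf and \'etale cohomology agree for both, so the $E_2$-page reads $E_2^{i,0}=\hom^i(S,\G_m)$, $E_2^{i,1}=\hom^i(S,\dual G)$, and $E_2^{0,2}=E_2^{1,2}=0$. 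Thus every entry contributing to $\hom^{\le2}(\meH,\G_m)$ lies in rows $j=0,1$ (and $E_2^{0,2}=0$ handles exactness at $\hom^2$), so the spectral sequence yields the long exact sequence
\[
0\to E_2^{1,0}\to\hom^1(\meH,\G_m)\to E_2^{0,1}\xto{\d_2^{0,1}}E_2^{2,0}\to\hom^2(\meH,\G_m)\to E_2^{1,1}\xto{\d_2^{1,1}}E_2^{3,0},
\]
which is \cref{es:gerbe-Gm} once we recall that the edge maps $E_2^{i,0}=\hom^i(S,\G_m)\to\hom^i(\meH,\G_m)$ are $\pull c$. For the final sentence: the only differential hitting $(3,0)$ is $\d_2^{1,1}$ (the relevant $\d_3$ originates from $E_3^{0,2}$, a subquotient of $E_2^{0,2}=0$, and all higher $\d_r$ into $(3,0)$ have zero source), so $E_\infty^{3,0}=E_2^{3,0}/\im\d_2^{1,1}$; since $E_\infty^{3,0}\hookrightarrow\hom^3(\meH,\G_m)$ realizes the edge map $\pull c$, we get $\ker\bigl(\pull c\colon\hom^3(S,\G_m)\to\hom^3(\meH,\G_m)\bigr)=\im\d_2^{1,1}$, so the target may be replaced by this kernel without disturbing exactness.

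To identify the two differentials I would use the product $F_2^{ij}\times{}'F_2^{i'j'}\to E_2^{i+i',j+j'}$ attached to the evaluation pairing $\ab G\by\dual G\to\G_m$, together with the Leibniz rule \cref{eqn:leibniz}, \cref{lem:G-easy-push} (which gives $\push c\ab G\simeq\ab G$ and $\fhomR^1\push c\ab G\simeq\ul\End(\ab G)$, using $\dual G=\dual{(\ab G)}$), and \cref{lem:actual-SS-diff-comp} (which gives $\delta_2(\id_{\ab G})=[\ab\meH]$ in $F_2^{2,0}$). Given $\alpha\in\hom^0(S,\dual G)=E_2^{0,1}$, view it also in ${}'F_2^{0,0}=\hom^0(S,\push c\dual G)$; the product $\ul\End(\ab G)\otimes\dual G\to\dual G$ is composition, so $\id_{\ab G}\smile\alpha=\alpha$ in $E_2^{0,1}$, there being no sign correction in \cref{warn:cup} since the twist is $(-1)^{1\cdot0}=1$. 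As ${}'\delta_2$ vanishes on the bottom row ${}'F_2^{\bullet,0}$ (its target has negative row index), Leibniz gives $\d_2^{0,1}(\alpha)=\delta_2(\id_{\ab G})\smile\alpha=[\ab\meH]\smile\alpha=[\ab\meH]\cup\alpha$ (twist $(-1)^{0\cdot0}=1$). The computation of $\d_2^{1,1}$ is the same except one pairs $\id_{\ab G}\in F_2^{0,1}$ with $\alpha\in{}'F_2^{1,0}=\hom^1(S,\dual G)$, where \cref{warn:cup} now contributes $(-1)^{1\cdot1}=-1$, so $\id_{\ab G}\smile\alpha=-\alpha$ in $E_2^{1,1}$; Leibniz (again with ${}'\delta_2(\alpha)=0$) gives $-\d_2^{1,1}(\alpha)=[\ab\meH]\smile\alpha=[\ab\meH]\cup\alpha$, i.e. $\d_2^{1,1}(\alpha)=-[\ab\meH]\cup\alpha$.

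I do not expect a conceptual obstacle: all the substantive inputs (\cref{lem:gerbe-pushforwards}, \cref{lem:G-easy-push}, \cref{lem:actual-SS-diff-comp}, and the multiplicative formalism) are already established. The real work is sign bookkeeping — keeping straight the Koszul sign $(-1)^{i+j}$ in the Leibniz rule \cref{eqn:leibniz} and, more delicately, the $(-1)^{ji'}$ discrepancy of \cref{warn:cup} between the spectral-sequence product and the naive cup product — and making sure the identification $\fhomR^1\push c\G_m\simeq\dual G$ used on the output side is genuinely compatible with $\push c\dual G\simeq\dual G$ on the input side, so that $\id_{\ab G}\smile(-)$ really is the identity (up to the tracked sign). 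Once those are pinned down the proposition follows formally.
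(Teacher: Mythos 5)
Your proposal is correct and follows essentially the same route as the paper: read off the seven-term low-degree sequence from the fppf Leray spectral sequence (using \cref{lem:gerbe-pushforwards} to identify the rows and kill $E_2^{0,2}$), then compute $\d_2^{i,1}$ by pairing $\id_{\ab G}\in F_2^{0,1}\simeq\End_S(\ab G)$ against classes in ${}'F_2^{i,0}=\hom^i(S,\dual G)$ via the Leibniz rule \cref{eqn:leibniz}, the sign convention of \cref{warn:cup}, and $\delta_2(\id_{\ab G})=[\ab\meH]$ from \cref{lem:actual-SS-diff-comp}. Your sign bookkeeping reproduces the paper's computation exactly, including the $(-1)^i$ that produces the minus sign on $\d_2^{1,1}$.
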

\begin{proof}
    The exact sequence of low degree terms in the Leray spectral sequence \cref{ss:leray-gerbe-Gm}, pictured in \cref{fig:leray-gerbe}, is
    \[0\to\Pic S\xto{\pull c}\Pic\meH\to\dual G(S)\xto{\d_2^{0,1}}\hom^2(S,\G_m)\xto{\pull c}\hom^2(\meH,\G_m)\to\hom^1(S,\dual G)\xto{\d_2^{1,1}}\ker\p{\hom^3(S,\G_m)\xto{\pull c}\hom^3(\meH,\G_m)},\]
    so it suffices to compute the differentials $\d_2^{0,1},\d_2^{1,1}$. Fix any $i\ge0$. The multiplicative structure \cref{eqn:leibniz} shows that
    \[\d_2^{i,1}\p{\alpha\smile\beta}=\delta_2\alpha\smile\beta+(-1)^{i+1}\alpha\smile{}'\delta_2\beta=\delta_2\alpha\smile\beta\]
    for any $\alpha\in F_2^{01}=\hom^0(S,\fhomR^1\push c\ab G)\simeq\End_S(\ab G)$ (\ref{lem:G-easy-push}) and $\beta\in{}'F_2^{i0}=\hom^i(S,\dual G)$. In particular, taking $\alpha=\id_{\ab G}\in\End_S(\ab G)$ (and $\beta\in\hom^i(S,\dual G)\simeq E_2^{i1}$ arbitrary), and letting $\cup$ denote the usual cup product (see \cref{warn:cup}), we see that
    \[\d_2(\beta)=\d_2(\id_{\ab G}\cup\beta)=(-1)^i\d_2(\id_{\ab G}\smile\beta)=(-1)^i\delta_2(\id_{\ab G})\smile\beta=(-1)^i\delta_2(\id_{\ab G})\cup\beta,\]
    so it suffices to compute $\delta_2(\id_{\ab G})\in\hom^2(S,G)=F_2^{20}$. We conclude by \cref{lem:actual-SS-diff-comp}.
\end{proof}
\begin{rem}\label{rem:extend-lopez}
    Using notation as in \cref{prop:BG-Gm-coh}, by definition of $\dual G$ and of cup products, one has a commutative diagram
    \[\begin{tikzcd}[column sep = large]
        \hom^0(S,\dual G)\ar[r, "{[\ab\meH]\cup}"]\ar[d, equals]&\hom^2(S,\G_m)\ar[d, equals]\\
        \Hom_S(G,\G_m)\ar[r, "{(-)_*[\meH]}"]&\hom^2(S,\G_m)
        .
    \end{tikzcd}\]
    Hence, \cref{prop:gerbe-Gm-es} extends an exact sequence constructed by Lopez in \cite[Theorem 1.2]{lopez2023picard}. 
\end{rem}
\begin{rem}\label{rem:BG-Gm-coh}
    Say $G/S$ is as in \cref{prop:gerbe-Gm-es} and $\meH=BG_S$. Then, the section $S\to BG_S$ shows that the differentials in the spectral sequence \cref{fig:leray-gerbe} vanish and that one has
    \[\Pic(BG_S)\cong\Pic S\oplus\hom^0(S,\dual G)\tand\hom^2(BG_S,\G_m)\cong\hom^2(S,\G_m)\oplus\hom^1(S,\dual G),\]
    generalizing \cref{prop:BG-Gm-coh}.
\end{rem}
\begin{cor}[of \cref{prop:gerbe-Gm-es}]
    Assume that $S$ is regular and noetherian. If there exists a dense open $U\subset S$ over which $\meH$ trivializes (i.e. there exists a section $U\to\meH$), then the exact sequence of \cref{prop:gerbe-Gm-es} splits into two exact sequences:
    \begin{alignat*}{8}
        0 &\too&& \centermathcell{\Pic(S)} &&\xtoo{\pull c} &&\centermathcell{\Pic(\meH)} &&\too&& \hom^0(S,\dual G) &&\too &&\; 0 \\
        0 &\too && \hom^2(S,\G_m) &&\xtoo{\pull c}&& \hom^2(\meH,\G_m) &&\too&& \hom^1(S,\dual G) &&\xtoo{-[\ab\meH]\cup} && \ker\p{\hom^3(S,\G_m)\xto{\pull c}\hom^3(\meH,\G_m)}
    \end{alignat*}
\end{cor}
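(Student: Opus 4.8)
The plan is to deduce the two displayed exact sequences from the six-term exact sequence \cref{es:gerbe-Gm} of \cref{prop:gerbe-Gm-es} by showing that, under the stated hypotheses, the pullback $\pull c\colon\hom^2(S,\G_m)\to\hom^2(\meH,\G_m)$ is injective. Granting this, everything follows formally from exactness of \cref{es:gerbe-Gm}: injectivity of $\pull c$ forces the preceding arrow $\hom^0(S,\dual G)\xto{[\ab\meH]\cup}\hom^2(S,\G_m)$ to be zero, hence $\Pic(\meH)\to\hom^0(S,\dual G)$ is surjective, which (together with the already-known exactness at $\Pic S$ and $\Pic\meH$) gives the first short exact sequence; and $\pull c\,\hom^2(S,\G_m)\subseteq\hom^2(\meH,\G_m)$ is simultaneously the kernel of $\hom^2(\meH,\G_m)\to\hom^1(S,\dual G)$ and, by injectivity, a copy of $\hom^2(S,\G_m)$, which gives the second. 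Every remaining instance of exactness in the two sequences is inherited verbatim from \cref{es:gerbe-Gm}.

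So the task reduces to the injectivity of $\pull c$ on $\hom^2(-,\G_m)$, and here I would combine the section over $U$ with \cref{prop:brauer-injects}. First note that $\meH$ is regular and noetherian: $c\colon\meH\to S$ is a gerbe, hence smooth over $S$ (fppf-locally on $S$ it is of the form $BG$, and $BG$ is smooth over its base for $G$ finite linearly reductive), so regularity of $S$ passes to $\meH$; moreover $c$ induces a homeomorphism $\abs\meH\iso\abs S$, so $\meH_U\coloneqq\inv c(U)$ is a \emph{dense} open substack of $\meH$. By \cref{prop:brauer-injects}, both restriction maps $\hom^2(S,\G_m)\to\hom^2(U,\G_m)$ and $\hom^2(\meH,\G_m)\to\hom^2(\meH_U,\G_m)$ are injective. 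Finally, the hypothesised section $s\colon U\to\meH_U$ of the base change $c_U\colon\meH_U\to U$ retracts $\pull{c_U}$, so $\pull{c_U}\colon\hom^2(U,\G_m)\to\hom^2(\meH_U,\G_m)$ is (split) injective as well.

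Now chase the commutative square
\[\begin{tikzcd}
\hom^2(S,\G_m)\ar[r, "\pull c"]\ar[d]&\hom^2(\meH,\G_m)\ar[d]\\
\hom^2(U,\G_m)\ar[r, "\pull{c_U}"']&\hom^2(\meH_U,\G_m)
\end{tikzcd}\]
whose vertical arrows are the restriction maps above: the composite down-then-right is a composite of two injections, hence injective, so the composite right-then-down is injective, and therefore $\pull c\colon\hom^2(S,\G_m)\to\hom^2(\meH,\G_m)$ is itself injective, as required. I do not anticipate any genuine obstacle; the statement is a formal consequence of \cref{prop:gerbe-Gm-es} and \cref{prop:brauer-injects}, and the only points needing a little care are that $\meH$ is regular (so that \cref{prop:brauer-injects} applies to it) and that $\meH_U$ is genuinely dense in $\meH$, both of which rest on the fact that a gerbe is smooth and induces a homeomorphism on underlying topological spaces.
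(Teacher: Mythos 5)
Your proof is correct and rests on the same two inputs as the paper's: the triviality of $\meH$ over the dense open $U$ and the injectivity of restriction $\hom^2(S,\G_m)\to\hom^2(U,\G_m)$ from \cref{prop:brauer-injects}. The only real difference is the mechanism: the paper uses the triviality of $\meH_U$ to show directly that $\hom^0(S,\dual G)\xto{[\ab\meH]\cup}\hom^2(S,\G_m)$ vanishes (since it factors, after the injective restriction to $U$, through cupping with $[\ab\meH_U]=0$), whereas you use the section $s\colon U\to\meH_U$ to split $\pull{c_U}$ and deduce injectivity of $\pull c$ on $\hom^2$, which by exactness of \cref{es:gerbe-Gm} is equivalent to the vanishing of that cup-product map. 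Both are fine. One small economy you missed: in your diagram chase you only need injectivity of the composite down-then-right, i.e.\ of $\hom^2(S,\G_m)\to\hom^2(U,\G_m)$ and of $\pull{c_U}$; the injectivity of $\hom^2(\meH,\G_m)\to\hom^2(\meH_U,\G_m)$ is never used. So the entire discussion of $\meH$ being regular and noetherian and of $\meH_U$ being dense (which is the only place your argument needs smoothness of gerbes banded by finite linearly reductive groups) can be deleted; this brings your proof even closer to the paper's, which avoids saying anything about the geometry of $\meH$ itself.
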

\begin{proof}
    It suffices to show that $\hom^0(S,\dual G)\xto{[\ab\meH]\cup}\hom^2(S,\G_m)$ is the zero map. Note that we have a commutative square
    \[\commsquare{\hom^0(S,\dual G)}{[\ab\meH]\cup}{\hom^2(S,\G_m)}{}{}{\hom^0(U,\dual G)}{[\ab\meH_U]\cup}{\hom^2(U,\G_m).}\]
    The claim now follows from the facts that $[\meH_U]=0$ (so $[\ab\meH_U]=0$), by assumption, and that $\hom^2(S,\G_m)\to\hom^2(U,\G_m)$ is injective, by \cref{prop:brauer-injects}. 
\end{proof}
\begin{lemma}
    Assume that $G=\mu_{n,S}$ for some $n\ge1$. Then, $\pull c\colon\hom^2(S,\G_m)\to\hom^2(\meH,\G_m)$ is injective if and only if $[\meH]\in\im\p{\hom^1(S,\G_m)\to\fhom^2(S,\mu_n)}$, i.e. $\meH$ is the gerbe of $n$th roots of some line bundle on $S$.
\end{lemma}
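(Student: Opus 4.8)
The plan is to read the desired equivalence off from the exact sequence of \cref{prop:gerbe-Gm-es} combined with the fppf Kummer sequence. Since $G=\mu_{n,S}$ is commutative we have $\ab\meH=\meH$ and $\dual G=\ul\Hom(\mu_{n,S},\G_m)=\ul{\zmod n}_S$, so the three relevant terms of \cref{prop:gerbe-Gm-es} form the exact sequence
\[\hom^0(S,\ul{\zmod n})\xtoo{[\meH]\cup}\hom^2(S,\G_m)\xtoo{\pull c}\hom^2(\meH,\G_m).\]
Thus $\pull c$ is injective if and only if the map $[\meH]\cup(-)$ is identically zero, and the whole task is to reinterpret this vanishing.

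Next I would pin down the map $[\meH]\cup(-)$. By \cref{rem:extend-lopez}, under the identification $\hom^0(S,\dual G)=\Hom_S(\mu_n,\G_m)$ it sends a homomorphism $a\colon\mu_n\to\G_m$ to $a_*[\meH]$, the image of the gerbe class $[\meH]\in\fhom^2(S,\mu_n)$ under the induced map, landing in $\fhom^2(S,\G_m)=\hom^2(S,\G_m)$ (\cite[Theorem 11.7]{Gr3}). Let $\iota\colon\mu_{n,S}\into\G_{m,S}$ be the inclusion, corresponding to the constant global section $1\in\hom^0(S,\ul{\zmod n})$. On each connected component of $S$, an arbitrary $a\in\Hom_S(\mu_n,\G_m)$ factors as $\iota\circ[m]$ for an integer $m$, where $[m]$ is the $m$-th power endomorphism of $\mu_n$ (using that any homomorphism $\mu_n\to\G_m$ lands in $\mu_n$); since $[m]_*[\meH]=m[\meH]$, we get $a_*[\meH]=0$ whenever $\iota_*[\meH]=0$. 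Taking $a=\iota$ gives the converse. Hence $[\meH]\cup(-)\equiv 0$ if and only if $\iota_*[\meH]=0$ in $\hom^2(S,\G_m)$.

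Finally I would invoke the Kummer sequence $1\to\mu_n\to\G_m\xtoo{x\mapsto x^n}\G_m\to 1$, which is exact in the fppf topology for every $n\ge 1$. Its long exact sequence, using $\fhom^1(S,\G_m)=\Pic S$ and $\fhom^i(S,\G_m)=\hom^i(S,\G_m)$, contains
\[\Pic S\xtoo{n}\Pic S\xtoo{\delta}\fhom^2(S,\mu_n)\xtoo{\iota_*}\hom^2(S,\G_m),\]
so $\iota_*[\meH]=0$ exactly when $[\meH]\in\im\bigl(\delta\colon\Pic S\to\fhom^2(S,\mu_n)\bigr)=\im\bigl(\hom^1(S,\G_m)\to\fhom^2(S,\mu_n)\bigr)$. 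Since $\delta(\msL)$ is, by construction of the Kummer coboundary, the $\mu_n$-gerbe of $n$-th roots of the line bundle $\msL$, this is precisely the assertion that $\meH$ is the gerbe of $n$-th roots of some line bundle on $S$. Chaining the three equivalences gives the lemma.

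I do not expect a serious obstacle: the argument is essentially bookkeeping once \cref{prop:gerbe-Gm-es} and \cref{rem:extend-lopez} are available. The only points needing a little care are the componentwise factorization of a section of $\ul{\zmod n}$ through $\iota$ when $S$ is disconnected, the careful separation of fppf versus étale $\hom^2$ (harmless, since they agree for $\G_m$), and recalling that the Kummer coboundary of a line bundle is its stack of $n$-th roots.
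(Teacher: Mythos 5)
Your proposal is correct and follows essentially the same route as the paper's proof: reduce via \cref{prop:gerbe-Gm-es} and \cref{rem:extend-lopez} to the vanishing of $a\mapsto a_*[\meH]$, observe that $\Hom_S(\mu_n,\G_m)$ is generated by the inclusion $\iota\colon\mu_n\into\G_m$, and conclude with the Kummer sequence. Your extra care about disconnected $S$ and the fppf/\'etale distinction is welcome but does not change the argument.
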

\begin{proof}
    By \cref{prop:gerbe-Gm-es} (and \cref{rem:extend-lopez}), injectivity of $\hom^2(S,\G_m)\to\hom^2(\meH,\G_m)$ is equivalent to 
    \[\Hom(\mu_n,\G_m)\xto{(-)_*[\meH]}\hom^2(S,\G_m)\]
    being the zero map. Note that $\Hom(\mu_n,\G_m)\cong\zmod n$, generated by the natural inclusion $\mu_n\into\G_m$ so the displayed map is the zero map if and only if $[\meH]\mapsto0$ under $\hom^2(S,\mu_n)\to\hom^2(S,\G_m)$. The Kummer sequence $1\to\mu_n\to\G_m\to\G_m\to1$ shows that this is the case if and only if $[\meH]\in\im\p{\hom^1(S,\G_m)\to\fhom^2(S,\mu_n)}$.
\end{proof}

\section{\bf Brauer groups of root stacks}\label{sect:root-stacks}
In this section, we compute the Brauer groups of root stacks. To begin, we recall their construction, following \cite[Section 10.3]{olsson}. One can also see \cite{cadman} for more information on root stacks.
\begin{defn}
    Let $\meX$ be an algebraic stack. A \define{generalized effective Cartier divisor} on $\meX$ is a pair $(\msL,\rho)$ where $\msL$ is a line bundle on $\meX$ and $\rho\colon\msL\to\msO_\meX$ is a morphism of $\msO_\meX$-modules. An isomorphism between generalized effective Cartier divisors $(\msL',\rho')$ and $(\msL,\rho)$ is an isomorphism $\msL'\iso\msL$ of line bundles which identifies $\rho'$ with $\rho$.
\end{defn}
\begin{ex}
    Let $\meD\into\meX$ be an effective Cartier divisor, with ideal sheaf $\msO_\meX(-\meD)$. Then, the inclusion $\msO_\meX(-\meD)\into\msO_\meX$ is a generalized effective Cartier divisor.
\end{ex}
\begin{prop}
    Let $\G_m\actson\A^1$ with the usual scaling action $\lambda\cdot t=\lambda t$. Then, the quotient stack $[\A^1/\G_m]$ parameterizes generalized effective Cartier divisors; i.e., given a scheme $S$, the category $[\A^1/\G_m](S)$ is equivalent to the groupoid of generalized effective Cartier divisors on $S$.
\end{prop}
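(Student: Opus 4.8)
The plan is to unwind the definitions on both sides and exhibit a natural equivalence of groupoids, functorial in $S$. Given a scheme $S$, an object of $[\A^1/\G_m](S)$ is, by the description of quotient stacks as stacks of torsors with equivariant maps, a $\G_m$-torsor $P\to S$ together with a $\G_m$-equivariant morphism $P\to\A^1$. Since a $\G_m$-torsor $P$ on $S$ is the same data as a line bundle $\msL$ on $S$ (take $\msL$ to be the associated line bundle $P\times^{\G_m}\A^1$, or equivalently the sheaf of $\G_m$-equivariant functions $P\to\A^1$ that are linear), and a $\G_m$-equivariant map $P\to\A^1$ corresponds precisely to a section of the dual, i.e. a morphism $\rho\colon\msL\to\msO_S$ of $\msO_S$-modules, we obtain from such an object a generalized effective Cartier divisor $(\msL,\rho)$ on $S$. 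Conversely, given $(\msL,\rho)$, the complement of the zero section inside the total space of $\msL$ is a $\G_m$-torsor $P$, and $\rho$ supplies the equivariant map to $\A^1$; this produces an object of $[\A^1/\G_m](S)$.

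First I would set up the two functors carefully: define $F\colon[\A^1/\G_m](S)\to\{\text{gen. eff. Cartier divisors on }S\}$ on objects as above and on morphisms by noting that an isomorphism of pairs $(P,P\to\A^1)$ is a $\G_m$-equivariant isomorphism of torsors compatible with the maps to $\A^1$, which translates exactly into an isomorphism $\msL'\iso\msL$ identifying $\rho'$ with $\rho$. Define the inverse functor $G$ in the other direction. Then I would check $F\circ G\simeq\id$ and $G\circ F\simeq\id$ via the standard torsor/line-bundle dictionary, and verify naturality in $S$ (both constructions commute with pullback along $T\to S$ because forming associated line bundles, total spaces, and the zero-section complement all commute with base change). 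The cleanest way to organize this is probably to first record the equivalence between $\G_m$-torsors on $S$ and line bundles on $S$ (this is classical), and then observe that under this equivalence the extra datum of a $\G_m$-equivariant map to $\A^1$ matches the extra datum of a cosection $\rho\colon\msL\to\msO_S$, because $\Hom_{\G_m\text{-equiv}}(P,\A^1)\cong\Hom_{\msO_S}(\msL,\msO_S)$ (both compute global sections of $\msL^\vee$).

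I expect the only real subtlety — not quite an obstacle, but the point requiring care — is the bookkeeping of which line bundle corresponds to which torsor and in particular getting the variance right: depending on conventions, an equivariant map $P\to\A^1$ corresponds either to a section of $\msL$ or to a section of $\msL^\vee$, and one must fix the convention so that it matches the intended notion of generalized effective Cartier divisor $\rho\colon\msL\to\msO_\meX$ (so that, when $\rho$ is injective with invertible image locally, one recovers an honest effective Cartier divisor as in the preceding example). Concretely: take $\msL$ to be the line bundle whose sheaf of sections is $\{f\colon P\to\A^1 : f(\lambda p)=\lambda^{-1}f(p)\}$, i.e. $\msL = P\times^{\G_m,\,\mathrm{wt}\,-1}\A^1$; then a $\G_m$-equivariant map $P\to\A^1$ (weight $+1$) is the same as an $\msO_S$-linear map $\msL\to\msO_S$, giving $\rho$. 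With this convention fixed, the rest is a routine verification that the two functors are mutually quasi-inverse and natural in $S$, completing the proof.
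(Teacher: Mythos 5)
Your proof is correct: the paper itself simply cites \cite[Proposition 10.3.7]{olsson} for this statement, and the argument you give --- unwinding $[\A^1/\G_m](S)$ as $\G_m$-torsors equipped with equivariant maps to $\A^1$, invoking the torsor/line-bundle dictionary, and fixing the weight convention so that the equivariant map becomes a cosection $\rho\colon\msL\to\msO_S$ --- is exactly the standard proof found in that reference. The one genuinely delicate point is the variance/weight bookkeeping needed so that an honest effective Cartier divisor yields $(\msO_S(-D),\msO_S(-D)\into\msO_S)$ rather than its dual, and you identify and resolve it correctly.
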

\begin{proof}
    This is \cite[Proposition 10.3.7]{olsson}.
\end{proof}
\begin{defn}
    Let $\meX$ be an algebraic stack, and let $(\msL,\rho)$ be a generalized effective Cartier divisor on $\meX$. For an integer $n\ge1$, the \define{$n$th root stack} $\sqrt[n]{(\msL,\rho)/\meX}$ is the fibered category over $\Sch$ whose objects are tuples $\p{f\colon S\to\meX,(\msM,\lambda),\sigma}$ where $f$ is a morphism from a scheme $S$ to $\meX$, $(\msM,\lambda)$ is a generalized effective Cartier divisor on $S$, and $\sigma\colon(\msM^{\otimes n},\lambda^n)\iso(\pull f\msL,\pull f\rho)$ is an isomorphism of generalized effective Cartier divisors on $\meX$. Morphisms in $\sqrt[n]{(\msL,\rho)/\meX}$ are as expected; see \cite[10.3.9]{olsson}.
\end{defn}
\begin{notn}
    If $\meD\into\meX$ is an effective Cartier divisor with ideal sheaf $\msO_\meX(-\meD)$ and if $n\ge1$ is an integer, we'll write $\sqrt[n]{\meD/\meX}\coloneqq\sqrt[n]{\p{\msO_\meX(-\meD),\msO_\meX(-\meD)\into\msO_\meX}/\meX}$.
\end{notn}
\begin{prop}\label{prop:root-stack-summary}
    Let $\meX$ be an algebraic stack, let $(\msL,\rho)$ be a generalized effective Cartier divisor on $\meX$, fix some $n\ge1$, and set $\meX_n\coloneqq\sqrt[n]{(\msL,\rho)/\meX}$. Then,
    \begin{enumerate}
        \item $\meX_n$ is an algebraic stack, and is in fact the fiber product
        \[\commsquare{\meX_n}{}{[\A^1/\G_m]}{}{(-)^n}\meX{(\msL,\rho)}{[\A^1/\G_m].}\]
        \item If $\msL=\msO_\meX$ with $\rho$ corresponding to $f\in\Gamma(\meX,\msO_\meX)$, then
        \[\meX_n\simeq\sq{\frac{\Spec_\meX\p{\msO_\meX[T]/(T^n-f)}}{\mu_n}},\]
        where $\mu_n$ acts trivially on $\msO_\meX$ and acts on $T$ via $\zeta\cdot T=\zeta T$.
        \item The natural morphism $\pi\colon\meX_n\to\meX$ is an isomorphism over the open locus $\meU\subset\meX$ where $\rho$ is an isomorphism.
        \item If $n$ is invertible on $\meX$, then $\pi\colon\meX_n\to\meX$ is a DM morphism in the sense of \cite[\href{https://stacks.math.columbia.edu/tag/04YW}{Tag 04YW}]{stacks-project}.
    \end{enumerate}
\end{prop}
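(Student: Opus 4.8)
The plan is to deduce all four parts from the modular description of $\meX_n=\sqrt[n]{(\msL,\rho)/\meX}$ together with the identification of $[\A^1/\G_m]$ with the stack of generalized effective Cartier divisors recalled above; essentially everything is already contained in \cite[Section 10.3]{olsson} (see also \cite{cadman}). For part (1), I would observe that an object of the $2$-fiber product $\meX\times_{[\A^1/\G_m]}[\A^1/\G_m]$ (formed along $(\msL,\rho)$ and the $n$th power map) over a scheme $S$ is exactly a map $f\colon S\to\meX$, a generalized effective Cartier divisor $(\msM,\lambda)$ on $S$, and an isomorphism $(\msM^{\otimes n},\lambda^n)\iso\pull f(\msL,\rho)$ matching the images under the two morphisms — i.e. an object of $\meX_n(S)$ — and the comparison on morphisms is equally immediate. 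Hence the displayed square is $2$-cartesian, and $\meX_n$ is algebraic because $2$-fiber products of algebraic stacks are algebraic.

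For part (2), when $\msL=\msO_\meX$ and $\rho$ is multiplication by $f\in\Gamma(\meX,\msO_\meX)$, the classifying map $(\msL,\rho)\colon\meX\to[\A^1/\G_m]$ factors as $\meX\xto f\A^1\to[\A^1/\G_m]$ through the standard smooth cover, and the $n$th power map on $[\A^1/\G_m]$ pulls back along $\A^1\to[\A^1/\G_m]$ to the map $t\mapsto t^n$ on $\A^1$, which is equivariant for the scaling action of $\mu_n\subset\G_m$. Base-changing the $2$-cartesian square of part (1) along $\A^1\to[\A^1/\G_m]$ thus presents $\meX_n$ as the quotient by $\mu_n$ of $\meX\times_{\A^1,(-)^n}\A^1=\Spec_\meX\p{\msO_\meX[T]/(T^n-f)}$, and tracking the scaling action shows the residual $\mu_n$-action is $\zeta\cdot T=\zeta T$ with trivial action on $\msO_\meX$, as claimed.

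For part (3), over the open substack $\meU\subset\meX$ where $\rho$ is an isomorphism, any object $\p{f\colon S\to\meU,(\msM,\lambda),\sigma}$ of $\meX_n$ has $\lambda^n$ an isomorphism (being identified via $\sigma$ with $\pull f\rho$), hence $\lambda$ is an isomorphism, so $(\msM,\lambda)$ is canonically isomorphic to $(\msO_S,1)$; this exhibits an inverse to $\pi$ over $\meU$, so $\pi$ restricts to an isomorphism $\meX_n\times_\meX\meU\iso\meU$. For part (4), the property of being DM is local on the target, so by part (2) we may assume $\meX_n=[V/\mu_n]$ with $V=\Spec_\meX\p{\msO_\meX[T]/(T^n-f)}$ finite over $\meX$; when $n$ is invertible on $\meX$ the group scheme $\mu_n$ is finite étale over $\meX$, so $\pi\colon\meX_n\to\meX$ has unramified diagonal, i.e. is DM, as can be checked after the smooth surjection $V\to\meX_n$.

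None of this is genuinely hard — it is a recollection, and the substance lies in \cite{olsson}. The only points that require care are the $2$-categorical bookkeeping in part (1) (matching morphisms in the fiber product with morphisms of root-stack data) and keeping the $\G_m$-scaling and $\mu_n$-scaling conventions consistent through the base change in part (2); both are routine once one is careful about conventions.
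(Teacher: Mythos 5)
Your argument is correct, and it is essentially an unpacking of the proof that the paper simply delegates to \cite[Theorem 10.3.10]{olsson} (whose parts match (1)--(4) exactly), with your smooth-local reduction in part (4) playing the role of the paper's remark that the general case reduces to the scheme case. The only step worth phrasing more carefully is in part (2): the square with $t\mapsto t^n$ on $\A^1$ lying over $(-)^n$ on $[\A^1/\G_m]$ is not cartesian, and what one actually uses is the standard identification $\A^1\times_{[\A^1/\G_m],(-)^n}[\A^1/\G_m]\simeq[\A^1/\mu_n]$ (coming from the fppf-exact sequence $1\to\mu_n\to\G_m\xto{n}\G_m\to1$), after which composing cartesian squares yields your presentation of $\meX_n$ as $[\Spec_\meX(\msO_\meX[T]/(T^n-f))/\mu_n]$.
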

\begin{proof}
    See \cite[Theorem 10.3.10]{olsson} for the case the $\meX$ is a scheme, to which the general case reduces.
\end{proof}
\begin{set}\label{set:root-induction}
    For the reminder of the section, we fix the following notation.
    \begin{enumerate}
        \item $\meX$ is a tame algebraic stack over some scheme $S$. We write $c\colon\meX\to X$ for its coarse space map.

        We further assume that $\meX$ is locally Bruaerless, and that there exists an open locus $U\subset X$, flat over $S$, above which $c$ is an isomorphism.
        \item Fix an effective Cartier divisor $\iota\colon\meD\into\meX$ which is flat over $S$.

        We further assume that $\meD$ lives in $\inv c(U)$, so $c$ maps $\meD$ isomorphically onto its image in $X$.
        \item We fix an integer $n\ge1$, and write $\meX_n\coloneqq\sqrt[n]{\meD/\meX}$. We let $\pi\colon\meX_n\to\meX$ denote the natural morphism.
    \end{enumerate}
\end{set}
\begin{lemma}\label{lem:root-R2}
    $\homR^2\push\pi\G_m=0$.
\end{lemma}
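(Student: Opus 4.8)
The plan is to reduce this to \cref{thm:R2Gm-vanishes} by showing that $\meX_n$ is again a tame, locally Brauerless algebraic stack, and that $\homR^2\push\pi\G_m$ is a stalk-checkable quantity that vanishes because the composite $\meX_n\to\meX\to X$ is the coarse space map (or differs from it only over the locus $U$ where everything is schemey). More precisely, first I would observe that $\pi\colon\meX_n\to\meX$ is an isomorphism over $\inv c(U)\supset\meD$ by \cref{prop:root-stack-summary}\bp3, so $\meX_n$ and $\meX$ have the \emph{same} coarse space $X$: away from $\meD$ the map $\pi$ is an isomorphism, and over (a neighborhood of) $\meD$ the stack $\meX$ is already schemey (it lives in $\inv c(U)$), so applying \cref{prop:root-stack-summary}\bp2 locally exhibits $\meX_n$ as $[\spec(\msO[T]/(T^n-f))/\mu_n]$ over a scheme; in particular its coarse space is still $X$. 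Let me call $c'\colon\meX_n\to X$ this coarse space map, so $c' = c\circ\pi$.

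Next I would check the two hypotheses needed to invoke \cref{thm:R2Gm-vanishes} for $\meX_n$. Tameness: $\meX$ is tame, and over $\inv c(U)$ the root stack is a $\mu_n$-quotient of a scheme; $\mu_n$ is linearly reductive (it is diagonalizable), so $\meX_n$ has linearly reductive geometric automorphism groups everywhere — the new stabilizers introduced along $\meD$ are (extensions by) $\mu_n$, and away from $\meD$ the automorphism groups are unchanged. Locally Brauerless: by \cref{ex:mun-bruaerless}, $\mu_n$ is Brauerless; and by \cref{rem:comm-Brauerless=cyclic} an extension story works out because the new geometric automorphism groups over $\meD$ are of the form $A\by\mu_n$ or at worst an extension whose maximal étale quotient is unchanged (since $\meD\subset\inv c(U)$, where $\meX$ is schemey, the old automorphism group there is trivial, so the new one is exactly $\mu_n$, which is Brauerless). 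Thus every geometric automorphism group of $\meX_n$ is Brauerless, so $\meX_n$ is locally Brauerless and tame.

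Applying \cref{thm:R2Gm-vanishes} to $\meX_n$ with its coarse space map $c'\colon\meX_n\to X$ gives $\homR^2\push{c'}\G_m = 0$. It remains to pass from $\homR^2\push{c'}\G_m$ to $\homR^2\push\pi\G_m$. For this I would use the Leray/Grothendieck spectral sequence for the composite $c' = c\circ\pi$, namely $E_2^{ij} = \homR^i\push c(\homR^j\push\pi\G_m) \implies \homR^{i+j}\push{c'}\G_m$; since vanishing of $\homR^2\push\pi\G_m$ can be checked on stalks at geometric points $\bar x\to X$, and the stalk is $\hom^2(\meX_n\by_X\msO_{X,\bar x},\G_m)$ while the stalk of $\homR^2\push{c'}\G_m$ is the same group (as $\msO_{X,\bar x}$ is strictly local and $\pi$ is finite, or more directly because $c$ is a coarse space map so $\push c$ and its stalks are the obvious thing), one reads off $(\homR^2\push\pi\G_m)_{\bar x} = 0$. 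Concretely: localizing at $\bar x$, \cref{cor:tame-stack-sh} writes $\meX\by_X\msO_{X,\bar x} = [\spec R/G]$ and then $\meX_n\by_X\msO_{X,\bar x} = [\spec R'/G']$ with $R'$ strictly local and $G'$ finite linearly reductive and Brauerless, so $\hom^2 = 0$ there by \cref{prop:tame-loc-Gm-coh}; since the coarse space of this local model is $\spec\msO_{X,\bar x}$ itself, this group computes precisely the stalk of $\homR^2\push\pi\G_m$ at $\bar x$.

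The main obstacle I expect is the bookkeeping around the automorphism groups of $\meX_n$ along $\meD$ and confirming they are Brauerless — but since \cref{set:root-induction} arranges that $\meD$ lies in the schemey locus $\inv c(U)$, the new geometric stabilizers there are literally $\mu_n$ (no interaction with pre-existing stabilizers), so \cref{ex:mun-bruaerless} applies cleanly; the only mild subtlety is checking tameness of $\pi$ as a \emph{morphism} so that tameness of $X$-stacks is inherited, which follows from \cref{prop:root-stack-summary}\bp2 exhibiting $\pi$ étale-locally as a $\mu_n$-quotient.
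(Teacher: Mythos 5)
Your proposal is correct and takes essentially the same approach as the paper: both reduce to \cref{thm:R2Gm-vanishes} via the observation that $\homR^2\push\pi\G_m$ is supported along $\inv\pi(\meD)$, which lies over the schemey locus $\inv c(U)$, where the only geometric stabilizers of $\meX_n$ are $\mu_n$ and hence Brauerless; the paper simply applies the theorem to $\inv\pi(\meD)\to\meD\cong c(\meD)$ rather than to all of $\meX_n\to X$. One caveat: your Grothendieck spectral sequence remark does not on its own yield $\homR^2\push\pi\G_m=0$ from $\homR^2\push{c'}\G_m=0$ (the term $E_2^{0,2}=\push c\homR^2\push\pi\G_m$ could a priori be killed by outgoing differentials rather than vanish), so the argument should rest entirely on your direct stalk computation via \cref{cor:tame-stack-sh} and \cref{prop:tame-loc-Gm-coh}, which is valid because the support of the sheaf lies where $c$ is an isomorphism, so stalks over $\meX$ there are stalks over $X$.
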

\begin{proof}
    By \cref{prop:root-stack-summary}\bp3, $\pi$ is an isomorphism away from $\meD$, so $\homR^2\push\pi\G_m$ is supported along $\meD$, i.e. $\homR^2\push\pi\G_m=\push\iota \homR^2\ppush{\pi\vert_{\inv\pi(\meD)}}\G_m$. At the same time, by \cref{set:root-induction}, above $\meD$, $\pi$ is equivalent to the composition $\inv\pi(\meD)\xto\pi\meD\xto cX$. By construction, this composition is the coarse space map of $\inv\pi(\meD)$ and all geometric automorphism groups of $\inv\pi(\meD)$ are of the form $\mu_n$ for some $n$ (consequence of \cref{prop:root-stack-summary}\bp2) and so are all Brauerless (\cref{ex:mun-bruaerless}). Thus, we conclude by \cref{thm:R2Gm-vanishes}.
\end{proof}
\begin{lemma}\label{lem:root-R1}
    $\homR^1\push\pi\G_m\simeq\push\iota\ul{\zmod n}$.
\end{lemma}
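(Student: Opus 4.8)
The plan is to show that $\homR^1\push\pi\G_m$ is supported on $\meD$, to compute its stalks there using the local structure of a root stack, and then to exhibit a single globally‑defined generating section (the tautological root bundle $\msM$) so that the stalkwise computation upgrades to an isomorphism of sheaves.

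First, by \cref{prop:root-stack-summary}\bp3 the morphism $\pi$ is an isomorphism over $\meX\sm\meD$, so $\homR^1\push\pi\G_m$ is supported on $\meD$; in particular it is of the form $\push\iota\msF$ for a sheaf $\msF$ on $\meD$, and to identify $\msF$ with $\ul{\zmod n}$ it suffices to produce a map $\push\iota\ul{\zmod n}\to\homR^1\push\pi\G_m$ which is an isomorphism on all stalks. For the stalk at a geometric point $\bar x\to\meD$: by \cref{set:root-induction} we have $\meD\subset\inv c(U)\cong U$, so $\meX$ is a scheme near $\bar x$; put $R\coloneqq\msO^{\mathrm{sh}}_{\meX,\bar x}$ (strictly henselian local) and let $f\in\mathfrak m_R$ be a local equation for $\meD$ (a nonzerodivisor, since $\meD$ is an effective Cartier divisor). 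Root stacks commute with base change (\cref{prop:root-stack-summary}\bp1), and $\msL$ is trivial on the local scheme $\spec R$, so \cref{prop:root-stack-summary}\bp2 yields $\meX_n\times_\meX\spec R\cong[\spec B/\mu_n]$ with $B\coloneqq R[T]/(T^n-f)$, graded by $\deg T=1$ (this is the grading by which $\mu_n$ acts). Note $B$ is local, being finite over the henselian $R$ with $B/\mathfrak m_RB\cong k[T]/(T^n)$ local. Thus the stalk in question is $\hom^1([\spec B/\mu_n],\G_m)=\Pic([\spec B/\mu_n])$.

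Next I claim $\Pic([\spec B/\mu_n])\cong\zmod n$. Since $\mu_n$ is diagonalizable with Cartier dual $\dual{\mu_n}=\ul{\zmod n}$, the category $\QCoh([\spec B/\mu_n])=\QCoh^{\mu_n}(\spec B)$ is, monoidally, the category of $\zmod n$‑graded $B$‑modules, so $\Pic([\spec B/\mu_n])$ is the group of isomorphism classes of $\zmod n$‑graded invertible $B$‑modules. Forgetting the grading sends such a module to $\Pic(\spec B)=0$, so it is $B(j)$ for some $j\in\zmod n$; and $B(j)\cong B(j')$ as graded modules iff $j\equiv j'$, because $B$ has no homogeneous unit of nonzero degree (for $0<i<n$, $T^i\cdot T^{n-i}=f\in\mathfrak m_R$ is not a unit). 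Hence $\Pic([\spec B/\mu_n])=\{[B(j)]:j\in\zmod n\}\cong\zmod n$, generated by the degree‑one twist $[B(1)]$.

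Finally, to globalize: the tautological generalized Cartier divisor $(\msM,\lambda)$ on $\meX_n$ has $\msM^{\otimes n}\cong\pull\pi\msO_\meX(-\meD)$, and $\push\pi\G_m=\G_m$, so the Leray edge map $\hom^1(\meX_n,\G_m)\to\hom^0(\meX,\homR^1\push\pi\G_m)$ carries $[\msM]$ to a section $\mathfrak m$ of $\homR^1\push\pi\G_m$ which is supported on $\meD$ ($\msM$ is trivial, hence pulled back from the base, over $\meX_n\sm\inv\pi(\meD)=\meX\sm\meD$) and satisfies $n\mathfrak m=0$ (since $n[\msM]=\pull\pi[\msO_\meX(-\meD)]$ lies in the image of $\pull\pi$, which the edge map annihilates because $\push\pi\G_m=\G_m$). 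This $\mathfrak m$ defines a map $\Phi\colon\push\iota\ul{\zmod n}\to\homR^1\push\pi\G_m$ sending $1$ to $\mathfrak m$. On stalks along $\meD$, $\Phi$ carries $1$ to the class of $\msM|_{[\spec B/\mu_n]}$, which under \cref{prop:root-stack-summary}\bp2 is the graded ideal $TB\cong B(1)$ (reflecting $\deg T=1$) — a generator of $\Pic([\spec B/\mu_n])\cong\zmod n$ by the previous paragraph — so $\Phi$ is a stalkwise isomorphism (trivially so away from $\meD$), hence an isomorphism. The only delicate point is this last step: one must produce a single globally‑defined generator rather than merely know that each stalk is abstractly $\zmod n$; once $\msM$ is used for this, everything else is routine given \cref{prop:root-stack-summary}.
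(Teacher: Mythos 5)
Your proof is correct, and it reaches the statement by a genuinely different route than the paper. Both arguments begin the same way, using \cref{prop:root-stack-summary}\bp3 to see that $\homR^1\push\pi\G_m$ is supported on $\meD$, and both then combine a stalk computation with a globally defined comparison map. But the paper's stalk computation proceeds by observing that $\inv\pi(\meD)$ is an infinitesimal thickening of a $\mu_n$-gerbe over $c(\meD)$, invoking \cref{lem:tame-red-Gm} to discard the nilpotents, and then quoting \cref{lem:gerbe-pushforwards} (whose comparison map is the ``character of the $G$-action on fibers'' construction); whereas you compute $\Pic([\spec B/\mu_n])\cong\zmod n$ for $B=R[T]/(T^n-f)$ directly, via $\zmod n$-graded invertible $B$-modules, and you globalize using the tautological root bundle $\msM$ rather than the gerbe-theoretic character map. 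Your approach is more self-contained — it bypasses \cref{sect:gerbes} entirely, handles the non-reducedness of $\inv\pi(\meD)$ for free inside the graded-module argument instead of via \cref{lem:tame-red-Gm}, and produces an explicit generator of $\homR^1\push\pi\G_m$ — at the cost of redoing by hand a computation the paper gets from its general gerbe machinery. The one blemish is the parenthetical claim that $\msM$ is trivial over $\meX_n\sm\inv\pi(\meD)$: in general $\msM$ restricts there only to an $n$-torsion line bundle, not a trivial one (e.g.\ root an elliptic curve at its origin). This does not affect your argument, since the section $\mathfrak m$ automatically vanishes on $\meX\sm\meD$ for the reason you already gave in your first paragraph, namely that the stalks of $\homR^1\push\pi\G_m$ there are Picard groups of strictly local schemes and hence zero; but the justification as written should be replaced by that observation.
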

\begin{proof}
    As in the proof of \cref{lem:root-R2}, as a consequence of \cref{prop:root-stack-summary}\bp3, we need only compute the higher pushforward along the composition
    \[\pi'\colon\meD'\coloneqq\inv\pi(\meD)\xto\pi\meD\xiso cc(\meD)\eqqcolon D.\]
    Since $\pi$ is an $n$th root stack along $\meD$, it follows that $\meD'$ is an infinitesimal extension of a stack $\meH$ which is a $\mu_n$-gerbe over $D$, say via $\rho\colon\meH\to D$ (see e.g. \cite[Below Example 2.4.3]{cadman}). We claim that the natural map $\phi\colon\homR^1\push\pi'\G_m\to\homR^1\push\rho\G_m$ is an isomorphism. This can be checked at the level of stalks, where it follows from \cref{lem:tame-red-Gm}. Finally, the claim follows from \cref{lem:gerbe-pushforwards} which shows that $\homR^1\push\rho\G_m\simeq\ul{\zmod n}$. 
\end{proof}
\begin{rem}
    When $\meX$ is a curve, an alternative proof of \cref{lem:root-R1} is given by \cref{cor:alt-R1Gm-root}.
\end{rem}
\begin{prop}\label{prop:root-Gm-coh}
    There are exact sequences
    \begin{alignat}{9}
        0 &\too&& \centermathcell{\Pic(\meX)} &&\xtoo{\pull\pi} &&\centermathcell{\Pic(\meX_n)} &&\too&& \hom^0(\meD,\ul{\zmod n}) &&\too &&\; 0 \label{ses:Pic-root}\\
        0 &\too && \hom^2(\meX,\G_m) &&\xtoo{\pull\pi}&& \hom^2(\meX_n,\G_m) &&\too&& \hom^1(\meD,\ul{\zmod n}) &&\xtoo\delta && \ker\p{\hom^3(\meX,\G_m)\xto{\pull\pi}\hom^3(\meX_n,\G_m)} \label{es:Br-root}
    \end{alignat}
\end{prop}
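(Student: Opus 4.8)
The plan is to run the Leray spectral sequence for $\pi\colon\meX_n\to\meX$, feeding in the pushforward computations already established. First I would record that $\push\pi\G_m\simeq\G_m$: this is clear away from $\meD$, where $\pi$ is an isomorphism (\cref{prop:root-stack-summary}\bp3), and along $\meD$ it can be checked on stalks using the local model $\meX_n\cong[\Spec_\meX(\msO_\meX[T]/(T^n-f))/\mu_n]$ of \cref{prop:root-stack-summary}\bp2 together with \cref{cor:local-coh}, giving $(\push\pi\G_m)_{\bar x}\simeq\ghom^0(\mu_n,\G_m)$ over the relevant strictly local ring. Since $\iota\colon\meD\into\meX$ is a closed immersion, $\push\iota$ is exact and $\hom^i(\meX,\push\iota\ul{\zmod n})=\hom^i(\meD,\ul{\zmod n})$; combining this with $\homR^1\push\pi\G_m\simeq\push\iota\ul{\zmod n}$ (\cref{lem:root-R1}) and $\homR^2\push\pi\G_m=0$ (\cref{lem:root-R2}), the spectral sequence $E_2^{ij}=\hom^i(\meX,\homR^j\push\pi\G_m)\implies\hom^{i+j}(\meX_n,\G_m)$ is concentrated in the rows $j=0$ (with terms $\hom^i(\meX,\G_m)$) and $j=1$ (with terms $\hom^i(\meD,\ul{\zmod n})$). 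A spectral sequence supported in two adjacent rows collapses to a single long exact sequence
\[\cdots\too\hom^i(\meX,\G_m)\xtoo{\pull\pi}\hom^i(\meX_n,\G_m)\too\hom^{i-1}(\meD,\ul{\zmod n})\xtoo{\d_2}\hom^{i+1}(\meX,\G_m)\too\cdots.\]

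The two claimed sequences \cref{ses:Pic-root,es:Br-root} are obtained by cutting this long exact sequence into the pieces sitting in degrees $\le 1$ and in degrees $\ge 2$, which is possible precisely when the transgression $\d_2^{0,1}\colon\hom^0(\meD,\ul{\zmod n})\to\hom^2(\meX,\G_m)$ vanishes. Granting that, one reads off $0\to\Pic\meX\xtoo{\pull\pi}\Pic\meX_n\to\hom^0(\meD,\ul{\zmod n})\to0$, which is \cref{ses:Pic-root}, and the segment $0\to\hom^2(\meX,\G_m)\xtoo{\pull\pi}\hom^2(\meX_n,\G_m)\to\hom^1(\meD,\ul{\zmod n})\xto{\d_2^{1,1}}\hom^3(\meX,\G_m)\to\hom^3(\meX_n,\G_m)$, which becomes \cref{es:Br-root} once the target of $\d_2^{1,1}$ is cut down to $\ker\pull\pi$ in degree $3$.

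Thus everything reduces to showing $\d_2^{0,1}=0$; by exactness of the long exact sequence this is equivalent to surjectivity of the edge map $\Pic\meX_n\to\hom^0(\meX,\homR^1\push\pi\G_m)=\hom^0(\meD,\ul{\zmod n})$, and this I would prove directly using tautological line bundles. Since $c$ is an isomorphism over an open containing $\meD$, the stack $\meD$ is a scheme; given $\phi\in\hom^0(\meD,\ul{\zmod n})$, partition $\meD$ into the (at most $n$) clopen subschemes $Z_a\coloneqq\phi^{-1}(a)$, $a\in\zmod n$, each an effective Cartier divisor on $\meX$, with pairwise disjoint supports. By multiplicativity of root stacks along divisors with disjoint supports, $\meX_n\cong\sqrt[n]{Z_0/\meX}\by_\meX\dots\by_\meX\sqrt[n]{Z_{n-1}/\meX}$, which carries universal line bundles $\msM_a$ satisfying $\msM_a^{\otimes n}\cong\pull\pi\msO_\meX(Z_a)$. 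Tracing through the construction of the isomorphism $\homR^1\push\pi\G_m\simeq\push\iota\ul{\zmod n}$ of \cref{lem:root-R1} — which near each component of $\meD$ records the $\mu_n$-weight of the restriction of a line bundle to the associated $\mu_n$-gerbe — one finds that $\msM_a$ maps to the locally constant function taking the value $1$ on $Z_a$ and $0$ on the other components, so that $\bigotimes_a\msM_a^{\otimes\tilde a}$, for integer lifts $\tilde a$ of $a$, maps to $\phi$. Hence the edge map is surjective. I expect this last identification — pinning down the image of the tautological root bundle under \cref{lem:root-R1} — to be the only non-formal step; the remainder is routine manipulation of the two-row spectral sequence.
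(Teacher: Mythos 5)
Your argument is correct and follows the same skeleton as the paper's proof: run the Leray spectral sequence for $\pi$, feed in \cref{lem:root-R1,lem:root-R2} to reduce everything to the two-row long exact sequence, and observe that both displayed sequences follow once the transgression $\d_2^{0,1}$ vanishes, i.e.\ once $\Pic\meX_n\to\hom^0(\meD,\ul{\zmod n})$ is surjective. The only genuine divergence is how that surjectivity is established. The paper dispatches it with a cardinality count, citing Cadman's computation $\#\p{\Pic(\meX_n)/\Pic(\meX)}=n^{\#\pi_0(\meD)}=\#\hom^0(\meD,\ul{\zmod n})$, so that the injection $\Pic(\meX_n)/\Pic(\meX)\into\hom^0(\meD,\ul{\zmod n})$ is forced to be a bijection. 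You instead exhibit explicit preimages: decompose $\meD$ into clopen pieces, use multiplicativity of root stacks along divisors with disjoint supports, and check that each tautological root bundle maps to the corresponding indicator section under the identification of \cref{lem:root-R1}. This is essentially an inlining of the content of the cited result (Cadman's argument is the same tautological-bundle computation), so what you gain is self-containedness and an explicit description of generators of $\coker\pull\pi$ on Picard groups, at the cost of two checks you correctly flag as non-formal: the disjoint-support multiplicativity of root stacks, and the $\mu_n$-weight of the tautological bundle along the gerbe over $\meD$ (where only the fact that the weight generates $\zmod n$ matters, so sign conventions are harmless). Both routes are valid.
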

\begin{proof}
    Note that $\push\pi\G_m=\G_m$, either by a local computation using \cref{prop:root-stack-summary}\bp2 or by realizing that $\meX_n$ and $\meX$ share $X$ as a coarse space. Consider the Leray spectral sequence $E_2^{ij}=\hom^i(\meX,\homR^j\push\pi\G_m)\implies\hom^{i+j}(\meX_n,\G_m)$; by making use of \cref{lem:root-R1,lem:root-R2}, where appropriate, we see that its low degree terms fit in to the exact sequence
    \[0\to\Pic(\meX)\xto{\pull\pi}\Pic(\meX_n)\to\hom^0(\meD,\zmod n)\xto{d_2^{0,1}}\hom^2(\meX,\G_m)\xto{\pull\pi}\hom^2(\meX_n,\G_m)\to\hom^1(\meD,\ul{\zmod n})\to\hom^3(\meX,\G_m).\]
    It suffices to show that $\ker\d_2^{0,1}=0$, equivalently, to show that $\Pic(\meX_n)\to\hom^0(\meD,\zmod n)$ is surjective. Perhaps the quickest way to see this is to observe that \cite[Corollary 3.1.2]{cadman} shows that $\#\Pic(\meX_n)/\Pic(\meX)=n^{\#\pi_0(\meD)}=\#\hom^0(\meD,\zmod n)$.
\end{proof}
\begin{rem}
    The Picard groups of root stacks have previously been computed in both \cite[Section 3.1]{cadman} and \cite[Section 3]{lopez2023picard}.
\end{rem}
\begin{ex}[${\sqrt[n]{\infty/\P^1}}$]
    We construct an example showing that the morphism $\delta$ in \cref{es:Br-root} can be nonzero. Fix a field $k$ and an integer $n>1$. Let $\meX\coloneqq\sqrt[n]{\infty/\P^1_k}\xtoo c\P^1_k$. Then, \cref{prop:root-Gm-coh} yields an exact sequence
    \[0\too\hom^2(\P^1_k,\G_m)\xtoo{\pull c}\hom^2(\meX,\G_m)\too\hom^1(k,\zmod n)\xtoo\delta\hom^3(\P^1_k,\G_m).\]
    We will later show (see \cref{prop:stacky-Faddeev}) that $\pull c\colon\hom^2(\P^1_k,\G_m)\to\hom^2(\meX,\G_m)$ is an isomorphism. Thus, in this example, $\delta\colon\hom^1(k,\zmod n)\into\hom^3(\P^1_k,\G_m)$ must be injective.
\end{ex}

\section{\bf Brauer groups of stacky curves}\label{sect:stacky-curve}

In the present section, we combine the work from the previous two in order to study Brauer groups of stacky curves, especially over algebraically closed fields. Since stacky curves often arise as gerbes over root stacks, we begin with a general result above such spaces.
\begin{lemma}
    Let $S$ be a base scheme. Let $\meY/S$ be an algebraic $S$-stack with coarse moduli space $\rho\colon\meY\to X$. Let $G/S$ be a finite linearly reductive group, and let $\pi\colon\meX\to\meY$ be a $G$-gerbe, so we have a commutative diagram
    \[\compdiag\meX\pi\meY\rho{X,}c\]
    where $c$ is $\meX$'s coarse space map.
\end{lemma}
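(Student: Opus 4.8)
The plan is to reduce everything to the already-established computation of higher pushforwards along a gerbe (\cref{lem:gerbe-pushforwards}) by feeding it through the composition $c=\rho\circ\pi$. First I would check that $c$ really is the coarse moduli map of $\meX$: the $G$-gerbe $\pi$ induces a homeomorphism $\abs\meX\iso\abs\meY$ and satisfies $\push\pi\msO_\meX\simeq\msO_\meY$ (it is fppf-locally $BG\times\meY\to\meY$, and $\Gamma(BG,\msO)=\msO$), so $c$ is a homeomorphism onto $X$ with $\push c\msO_\meX\simeq\msO_X$; since $\meX$ has finite inertia over $S$ (inherited from $\meY$ because $G$ is finite), Conrad's characterization of coarse spaces \cite{conrad-km} shows $c$ is the coarse space. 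In particular $\push c\G_m\simeq\G_m$.

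Next I would run the Grothendieck spectral sequence $\homR^p\push\rho\homR^q\push\pi\G_m\implies\homR^{p+q}\push c\G_m$ of the composite. By \cref{lem:gerbe-pushforwards}, $\push\pi\G_m\simeq\G_m$, $\homR^1\push\pi\G_m\simeq\dual G$, and $\homR^2\push\pi\G_m=0$ when $G$ is Brauerless, so only the rows $q=0$ (with $\homR^p\push\rho\G_m$) and $q=1$ (with $\homR^p\push\rho\dual G$) survive. Reading off low-degree terms then gives the exact sequence of sheaves on $X$
\[0\too\homR^1\push\rho\G_m\too\homR^1\push c\G_m\too\push\rho\dual G\xtoo{d_2}\homR^2\push\rho\G_m\too\cdots,\]
so that $\homR^1\push c\G_m$ is built from $\homR^1\push\rho\G_m$ and (a subsheaf of) $\push\rho\dual G$; here $\dual G$ is $X$-\'etale by \cref{cor:G-dual-etale}, which keeps these terms computable in practice. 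If moreover $\meY$ is tame and locally Brauerless, \cref{thm:R2Gm-vanishes} kills $\homR^2\push\rho\G_m$; combined with $\homR^2\push\pi\G_m=0$ the spectral sequence degenerates enough to force $\homR^2\push c\G_m$ into $\homR^1\push\rho\dual G$ (and to vanish outright once that does), after which \cref{cor:R2Gm-vanishes-sequence} furnishes the Leray exact sequence computing $\hom^2(\meX,\G_m)$ from $\hom^2(X,\G_m)$ and the cohomology of $\homR^1\push c\G_m$.

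The step I expect to be the main obstacle is identifying the differential $d_2\colon\push\rho\dual G\to\homR^2\push\rho\G_m$: as in the purely gerbe-theoretic case of \cref{prop:gerbe-Gm-es}, it should be cup product with the class (band) of the gerbe $\pi$, and establishing this means rerunning the multiplicative-structure/Leibniz argument of that proposition relativized over $\rho$ rather than over a scheme base, keeping the fppf-vs-\'etale bookkeeping of \cref{sect:gerbes} straight (the gerbe $\pi$ lives in fppf cohomology, while $\dual G$ is \'etale). Everything else is routine spectral-sequence manipulation once the pushforwards are in hand.
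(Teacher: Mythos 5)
The statement you were handed is really just the setup for \cref{lem:two-step-push-es}: its only mathematical content is the assertion that $c=\rho\circ\pi$ is the coarse moduli map of $\meX$, and the paper offers no proof of this at all (the proof appearing immediately after it in the source belongs to the next lemma). Your second paragraph --- the Grothendieck spectral sequence $\homR^i\push\rho\p{\homR^j\push\pi\G_m}\implies\homR^{i+j}\push c\G_m$ fed by \cref{lem:gerbe-pushforwards}, with $\push\rho\dual G\simeq\dual G$ because $\dual G$ is \'etale and $\rho$ is a coarse space map --- is precisely the paper's proof of \cref{lem:two-step-push-es}, so on the substantive material you and the paper coincide.

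Two remarks on the parts where you go beyond the paper. First, for the coarse-space identification, ``homeomorphism on underlying spaces plus $\push c\msO_\meX\simeq\msO_X$ plus finite inertia'' is not literally a characterization of coarse moduli maps in \cite{conrad-km}, and making it one requires an extra argument that the induced map from the actual coarse space of $\meX$ to $X$ is an isomorphism. The cleaner route is the universal property of rigidification: any morphism from $\meX$ to an algebraic space kills all inertia, in particular the $G$-part, hence factors uniquely through $\meY=\meX\thickslash G$; so $\meX$ and $\meY$ have the same initial algebraic space under them, namely $X$. Second, the differential $\push\rho\dual G\to\homR^2\push\rho\G_m$ that you flag as the main obstacle is never identified in the paper: in every application either $\homR^2\push\rho\G_m$ vanishes outright (\cref{thm:R2Gm-vanishes}) or its image is simply carried along as an unidentified skyscraper sheaf $\msF$ (as in \cref{lem:kbar-stacky-curve-push}), so there is no need to rerun the Leibniz-rule computation of \cref{prop:gerbe-Gm-es} relative to $\rho$.
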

\begin{lemma}\label{lem:two-step-push-es}
    There is an exact sequence
    \[0\too\homR^1\push\rho\G_m\too\homR^1\push c\G_m\too\dual G\too\homR^2\push\rho\G_m,\]
    of \'etale sheaves on $X$.
\end{lemma}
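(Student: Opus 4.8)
The plan is to realize the asserted sequence as the exact sequence of terms of low degree (the five-term exact sequence) of a Grothendieck--Leray spectral sequence for a composition. With notation as in the preceding lemma, the coarse-space map factors as $c=\rho\circ\pi$ (a gerbe has the same coarse space as its target). First I would write down the Leray spectral sequence of \'etale sheaves on $X$ attached to the composition of pushforwards along $\meX\xto{\pi}\meY\xto{\rho}X$,
\[E_2^{p,q}=\homR^p\push\rho\p{\homR^q\push\pi\G_m}\implies\homR^{p+q}\push c\G_m,\]
whose five-term exact sequence reads
\[0\too E_2^{1,0}\too\homR^1\push c\G_m\too E_2^{0,1}\xtoo{\d_2^{0,1}}E_2^{2,0}\too\homR^2\push c\G_m.\]

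Next I would identify the entries $E_2^{p,0}$ and $E_2^{0,1}$. The computation in \cref{lem:gerbe-pushforwards} of $\push\pi\G_m$ and $\homR^1\push\pi\G_m$ is purely stalk-local (it reduces to $BG$ over a strictly henselian local base), so it applies with the base scheme there replaced by the algebraic stack $\meY$ -- here flatness of the gerbe $\pi$ supplies the needed base change -- yielding $\push\pi\G_m\simeq\G_m$ and $\homR^1\push\pi\G_m\simeq\dual G$ as sheaves on $\meY$. Hence $E_2^{p,0}=\homR^p\push\rho\G_m$ and $E_2^{0,1}=\push\rho(\dual G)$. It then remains to show that the adjunction map $\dual G\to\push\rho\rho^*\dual G=\push\rho(\dual G)$ is an isomorphism of sheaves on $X$. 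I would check this on stalks at a geometric point $\bar x\to X$: by the local structure theorem for tame stacks (\cref{cor:tame-stack-sh}) we may assume $\meY\by_X\msO_{X,\bar x}\simeq[\spec R/H]$ with $R$ a strictly henselian local ring and $H$ a finite linearly reductive group (with $H_{\bar x}\simeq\ul\Aut(\meY,\bar x)$), and since $\dual G$ is finite \'etale over $S$ (\cref{cor:G-dual-etale}) its restriction to $\spec R$ is constant with trivial $H$-action (as $H$ acts over $S$); thus $\p{\push\rho\dual G}_{\bar x}=\hom^0\p{[\spec R/H],\dual G}=\hom^0(\spec R,\dual G)^H=\dual{\p{G_{\bar x}}}=\p{\dual G}_{\bar x}$, and the adjunction map induces exactly this identification.

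Plugging these identifications into the five-term sequence gives
\[0\too\homR^1\push\rho\G_m\too\homR^1\push c\G_m\too\dual G\xtoo{\d_2^{0,1}}\homR^2\push\rho\G_m\too\homR^2\push c\G_m,\]
and truncating after $\homR^2\push\rho\G_m$ yields the claimed exact sequence. I expect the essential work to lie in the two local inputs -- checking that \cref{lem:gerbe-pushforwards} survives the passage to the stacky base $\meY$ (a smooth/flat base-change argument for the gerbe $\pi$) and the stalk computation $\push\rho\dual G\simeq\dual G$ -- while the rest is formal manipulation of the spectral sequence.
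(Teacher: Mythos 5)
Your proposal is correct and follows essentially the same route as the paper: the Grothendieck spectral sequence for $c=\rho\circ\pi$, the identification $\homR^1\push\pi\G_m\simeq\dual G$ from \cref{lem:gerbe-pushforwards}, and the five-term exact sequence. The only cosmetic difference is that the paper dispatches $\push\rho\dual G\simeq\dual G$ in one line (since $\dual G$ is a scheme and $\rho$ is a coarse space map, i.e.\ by the universal property of coarse spaces), whereas you verify it by a stalk computation; both are fine.
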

\begin{proof}
    We use the Grothendieck spectral sequence $E_2^{ij}=\homR^i\push\rho\p{\homR^j\push\pi\G_m}\implies\homR^{i+j}\push c\G_m$. Using \cref{lem:gerbe-pushforwards} to compute $\homR^1\push\pi\G_m\simeq\dual G$, we see that its exact sequence of low degree terms begins with
    \[0\too\homR^1\push\rho\G_m\too\homR^1\push c\G_m\too\push\rho\dual G\too\homR^2\push\rho\G_m.\]
    Finally, $\push\rho\dual G=\dual G$ (as \'etale sheaves on $X$) since $\dual G$ is a scheme and $\rho$ is a coarse space map.
\end{proof}
We now restrict to the curve setting.

\subsection{Over perfect fields}
\begin{set}
    Let $k$ be a field. Let $\meY/k$ be a regular stacky curve which is generically schemey, and let $\rho\colon\meY\to X$ be its coarse space map. Let $G/k$ be a finite linearly reductive group, and let $\pi\colon\meX\to\meY$ be a $G$-gerbe. Consider the commutative diagram
    \[\compdiag\meX\pi\meY\rho {X,}c\]
    where $c$ is $\meX$'s coarse space map.
\end{set}
\begin{ex}\label{ex:iterated-root-stack}
    One could take $\meY$ above to be a multiply rooted stack over $X$, i.e. one could fix distinct closed points $x_1,\dots,x_r\in X$ as well as integers $e_1,\dots,e_r>1$ and then set
    \begin{equation}\label{eqn:iterated-root-stack}
        \meY=\sqrt[e_1]{x_1/X}\by_X\dots\by_X\sqrt[e_r]{x_r/X}\xtoo\rho X.  
    \end{equation}
    Assuming that $X$ is a regular curve, this is always a tame regular generically schemey stacky curve.
\end{ex}
\begin{rem}
    It is well known that every tame DM stacky curve over a field can be written as a gerbe over a multiply rooted stack; see \cite[Proposition 1.5]{lopez2023picard} and/or \cite{GS}.
\end{rem}
\begin{ex}
    Over a field $k$ of characteristic not $2$ or $3$, one can take $\meX=\meY(1)$ with $G=\ul{\zmod2}$ and $\meY\simeq\sqrt[3]{0/\A^1}\by_{\A^1}\sqrt[2]{1728/\A^1}$.
\end{ex}
\begin{ex}
    Over a field $k$ of characteristic not $2$, one can take $\meX=\meY_0(2)$ with $G=\ul{\zmod2}$ and $\meY\simeq\sqrt[2]{\left.-\frac14\right/(\A^1\sm\{0\})}$. This follows from \cite[Section 7.1]{achenjang2024brauer}.
\end{ex}
\begin{lemma}\label{lem:kbar-stacky-curve-push}
    Assume that $k$ is algebraically closed. Then, there is a skyscraper sheaf $\msF$ and an exact sequence
    \[0\to\hom^0(X,\homR^1\push\rho\G_m)\to\hom^0(X,\homR^1\push c\G_m)\to\hom^0(X,\dual G)\to\hom^0(X,\msF)\to\hom^1(X,\homR^1\push c\G_m)\to\hom^1(X,\dual G)\to0.\]
    Furthermore, $\hom^i(X,\homR^1\push c\G_m)\simeq\hom^i(X,\dual G)$ for all $i\ge2$.
\end{lemma}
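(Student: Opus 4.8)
The plan is to feed the short exact sequence of \cref{lem:two-step-push-es} into the long exact sequence of \'etale cohomology over $X$, and then to identify which terms vanish because $k$ is algebraically closed and $X$ is a curve. First I would split the four-term exact sequence
\[0\too\homR^1\push\rho\G_m\too\homR^1\push c\G_m\too\dual G\too\homR^2\push\rho\G_m\]
into two short exact sequences: setting $\msF\coloneqq\im\p{\dual G\to\homR^2\push\rho\G_m}$, we get
\[0\too\homR^1\push\rho\G_m\too\homR^1\push c\G_m\too\msK\too0\tand 0\too\msK\too\dual G\too\msF\too0,\]
where $\msK$ is the common term. The key point is that $\msF$ is a skyscraper sheaf: since $\rho\colon\meY\to X$ is the coarse space of a \emph{generically schemey} stacky curve, $\rho$ is an isomorphism over a dense open of $X$, so $\homR^2\push\rho\G_m$ — and hence its subsheaf $\msF$ — is supported on the finitely many stacky points. (One can also note that $\meY$ is locally Brauerless, being a DM stack with cyclic — in fact $\mu_n$-type — stabilizers at its stacky points, so \cref{thm:R2Gm-vanishes} gives $\homR^2\push\rho\G_m=0$ outright and $\msF=0$; but it is cleaner to run the argument without needing this, letting $\msF$ be whatever skyscraper arises.)

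Next I would invoke the vanishing inputs. Because $k=\bar k$ and $X$ is a $k$-curve: (i) by Tsen's theorem $\Br X=\hom^2(X,\G_m)=0$, and more relevantly (ii) $\hom^i(X,\msF)=0$ for all $i\ge1$ since $\msF$ is a skyscraper sheaf on a (Noetherian, in fact one-dimensional) scheme — skyscrapers are flasque, hence acyclic. Also $\hom^i(X,\dual G)=0$ for $i\ge2$: indeed $\dual G$ is a finite \'etale (by \cref{cor:G-dual-etale}) constant — since $k=\bar k$ — abelian group scheme, so its cohomology is the \'etale cohomology of $X$ with finite constant coefficients, which vanishes above degree $2$ for a curve over an algebraically closed field, and in fact above degree $1$ if $X$ is affine but I only need $i\ge2$ here; similarly $\hom^i(X,\homR^1\push\rho\G_m)$ — where $\homR^1\push\rho\G_m$ is itself a skyscraper (the stacky curve $\meY$ is generically schemey, so $\homR^1\push\rho\G_m$ is supported at the stacky points) — vanishes for $i\ge1$.

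With these vanishings in hand, the long exact sequence of the second short exact sequence $0\to\msK\to\dual G\to\msF\to0$ reads
\[0\to\hom^0(X,\msK)\to\hom^0(X,\dual G)\to\hom^0(X,\msF)\to\hom^1(X,\msK)\to\hom^1(X,\dual G)\to\hom^1(X,\msF)=0,\]
and for $i\ge2$ gives $\hom^i(X,\msK)\iso\hom^i(X,\dual G)$ (using $\hom^{i-1}(X,\msF)=\hom^i(X,\msF)=0$). The long exact sequence of the first short exact sequence $0\to\homR^1\push\rho\G_m\to\homR^1\push c\G_m\to\msK\to0$, using $\hom^i(X,\homR^1\push\rho\G_m)=0$ for $i\ge1$, gives $\hom^0(X,\homR^1\push\rho\G_m)\hookrightarrow\hom^0(X,\homR^1\push c\G_m)$ with cokernel $\hom^0(X,\msK)$, and $\hom^i(X,\homR^1\push c\G_m)\iso\hom^i(X,\msK)$ for $i\ge1$. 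Splicing these two strings of maps together — i.e. replacing $\hom^0(X,\msK)$ in the first sequence by its description as a subgroup of $\hom^0(X,\dual G)$ from the second, and replacing $\hom^i(X,\msK)$ for $i\ge1$ throughout — produces exactly the claimed six-term exact sequence and the isomorphisms $\hom^i(X,\homR^1\push c\G_m)\simeq\hom^i(X,\dual G)$ for $i\ge2$. The only slightly delicate bookkeeping is checking that the connecting maps line up so that the spliced sequence is exact at $\hom^0(X,\dual G)$ and at $\hom^1(X,\homR^1\push c\G_m)$; this is a routine diagram chase once the two short exact sequences and their long exact sequences are written down, so I do not expect a genuine obstacle — the main conceptual content is simply recognizing that $\homR^1\push\rho\G_m$ and $\msF$ are skyscrapers and that $\dual G$ is finite constant with the stated cohomological dimension bound.
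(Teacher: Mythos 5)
Your proof is correct and follows essentially the same route as the paper's: split the four-term exact sequence of \cref{lem:two-step-push-es} at $\msF\coloneqq\im\p{\dual G\to\homR^2\push\rho\G_m}$ and use that $\homR^1\push\rho\G_m$ and $\msF$ are supported on finite subschemes of a curve over $\bar k$, hence acyclic. Two of your side remarks are inaccurate but harmless since they are never used: $\hom^2(X,\dual G)$ need not vanish when $X$ is proper (e.g.\ $\hom^2(X,\mu_n)\cong\zmod n$ for $X$ nice over $\bar k$), and the setup for this lemma only assumes $\meY$ is generically schemey (not a multiply rooted stack, nor even tame at this point), so the parenthetical asserting $\mu_n$-type stabilizers and local Brauerlessness of $\meY$ is unjustified in this generality.
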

\begin{proof}
    This follows from the exact sequence
    \[0\too\homR^1\push\rho\G_m\too\homR^1\push c\G_m\too\dual G\too\msF\too0\twhere\msF\coloneqq\im\p{\dual G\to\homR^2\push\rho\G_m}\]
    of \cref{lem:two-step-push-es} along with the fact that $\homR^i\push\rho\G_m$, for $i=1,2$, is supported on a finite subscheme of $X$ and hence they (and all their subsheaves) are acyclic.
\end{proof}
\begin{rem}
    If $\meY$ is as in \cref{ex:iterated-root-stack}, then $\homR^1\push\rho\G_m$ in \cref{lem:kbar-stacky-curve-push} is $\homR^1\push\rho\G_m\simeq\bigoplus_{i=1}^r\ul{\zmod{e_i}}_{x_i}$; this follows from repeated use of \cref{lem:root-R1} (or one use of \cref{cor:alt-R1Gm-root} per connected component of $X$). Furthermore, in this case, $\homR^2\push\rho\G_m=0$ by \cref{thm:R2Gm-vanishes}, so one has the exact sequence
    \[0\too\bigoplus_{i=1}^r\ul{\Zmod{e_i}}_{x_i}\too\homR^1\push c\G_m\too\dual G\too0.\]
    Compare this with the exact sequence of \cite[Proposition 6.9]{AMS} obtained for $\meX=\meY(1)$.
\end{rem}
\begin{assump}
    Assume from now on that $\meY$ (and so also $\meX$) is tame.
\end{assump}
\begin{prop}\label{prop:stacky-tsen}
    Assume that $k$ is algebraically closed and that $\meX$ is locally Brauerless. Then, there is a surjection
    \[\Br\meX=\hom^2(\meX,\G_m)\onto\hom^1(X,\dual G)\]
    which is an isomorphism if $\meY$ is locally Brauerless as well.
\end{prop}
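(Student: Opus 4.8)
The plan is to exploit the commutative diagram $\meX \xrightarrow{\pi} \meY \xrightarrow{\rho} X$ together with the fact that both $\meX$ and $\meY$ are tame (so admit coarse spaces) and that $k$ is algebraically closed. First I would invoke Tsen's theorem: since $X$ is a curve over the algebraically closed field $k$, we have $\Br X = \hom^2(X,\G_m) = 0$, and moreover $\hom^3(X,\G_m) = 0$ as well (e.g. $X$ is a smooth affine or projective curve over $\bar k$, so it has cohomological dimension $\le 2$ for torsion sheaves and $\hom^i(X,\G_m)$ vanishes for $i \ge 2$; for projective $X$ one uses $\hom^3(X,\mu_n)=0$ from $\operatorname{cd}(X) = 2$ plus the Kummer sequence together with $\Br X = 0$ and divisibility of $\operatorname{Pic}$). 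Also $\operatorname{Pic} X$ is divisible if $X$ is proper, but in any case the point is that $\hom^2(X,-)$ and $\hom^3(X,-)$ of $\G_m$ vanish.

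Next I would feed this into the Leray spectral sequence $E_2^{ij} = \hom^i(X, \homR^j \push c \G_m) \implies \hom^{i+j}(\meX,\G_m)$. Since $\meX$ is tame and locally Brauerless, \cref{thm:R2Gm-vanishes} gives $\homR^2 \push c \G_m = 0$, so the spectral sequence degenerates enough that the low-degree exact sequence of \cref{cor:R2Gm-vanishes-sequence} reads
\[
\hom^0(X,\homR^1\push c\G_m) \xrightarrow{\d_2^{0,2}} \hom^2(X,\G_m) \to \hom^2(\meX,\G_m) \to \hom^1(X,\homR^1\push c\G_m) \xrightarrow{\d_2^{1,1}} \hom^3(X,\G_m).
\]
Because $\hom^2(X,\G_m) = 0 = \hom^3(X,\G_m)$, this immediately collapses to an isomorphism $\hom^2(\meX,\G_m) \iso \hom^1(X,\homR^1\push c\G_m)$. (The identification $\Br\meX = \hom^2(\meX,\G_m)$ comes from \cref{lem:Br=Br'-stacky-curves}, using that $\meX$ is a regular — hence $\meY$ regular and $\pi$ a gerbe — stacky curve over $k$; if $\meX$ is not literally regular one argues instead that $\hom^2(\meX,\G_m)$ is torsion since it is built from torsion groups, but the cleanest route is to note $\meY$ regular generically schemey plus $G$ linearly reductive forces the needed regularity, or simply to quote that $\hom^2(\meX,\G_m)$ is torsion here.)

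Then I would use \cref{lem:two-step-push-es}: the exact sequence $0 \to \homR^1\push\rho\G_m \to \homR^1\push c\G_m \to \dual G \to \homR^2\push\rho\G_m$ of étale sheaves on $X$. Taking global sections and using that $\homR^1\push\rho\G_m$ (and hence also its image and the sheaf $\msF = \im(\dual G \to \homR^2\push\rho\G_m)$) is a skyscraper sheaf supported on the finitely many stacky points of $\meY$ — so is acyclic with $\hom^{\ge 1} = 0$ — one gets precisely the six-term sequence of \cref{lem:kbar-stacky-curve-push}, whose tail is $\hom^1(X,\homR^1\push c\G_m) \to \hom^1(X,\dual G) \to 0$. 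Composing, $\hom^2(\meX,\G_m) \iso \hom^1(X,\homR^1\push c\G_m) \onto \hom^1(X,\dual G)$ is the desired surjection. Finally, if $\meY$ is also locally Brauerless then \cref{thm:R2Gm-vanishes} applied to $\rho$ gives $\homR^2\push\rho\G_m = 0$, so $\msF = 0$, and the map $\homR^1\push c\G_m \to \dual G$ is surjective with skyscraper kernel; taking $\hom^1(X,-)$ and using acyclicity of the kernel shows $\hom^1(X,\homR^1\push c\G_m) \iso \hom^1(X,\dual G)$, upgrading the surjection to an isomorphism.

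The main obstacle I anticipate is bookkeeping around the identification $\Br\meX = \hom^2(\meX,\G_m)$ and making sure the vanishing $\hom^2(X,\G_m) = \hom^3(X,\G_m) = 0$ is invoked in the right generality — in particular whether $X$ is assumed proper or merely a smooth curve affects which Tsen-type statement one quotes (Tsen for $\hom^2$, and a cohomological-dimension argument for $\hom^3$). Everything else is a formal chase through the two spectral sequences already set up in the excerpt.
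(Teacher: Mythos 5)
Your proposal is correct and follows essentially the same route as the paper's own proof: the Leray spectral sequence for $c$ together with \cref{thm:R2Gm-vanishes} gives $\hom^2(\meX,\G_m)\iso\hom^1(X,\homR^1\push c\G_m)$, the vanishing $\hom^i(X,\G_m)=0$ for $i\ge2$ is exactly what the paper quotes (as Tsen's theorem for curves over algebraically closed fields, covering the non-proper case, so your worry about properness is moot), and the surjection onto $\hom^1(X,\dual G)$ with its upgrade to an isomorphism when $\meY$ is locally Brauerless is precisely the content of \cref{lem:kbar-stacky-curve-push}, which you re-derive from \cref{lem:two-step-push-es}. The identification $\Br\meX=\hom^2(\meX,\G_m)$ is handled exactly as you suggest, by \cref{lem:Br=Br'-stacky-curves}, since $\meY$ (hence $\meX$) is regular by hypothesis.
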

\begin{proof}
    We use the Leray spectral sequence $E_2^{ij}=\hom^i(X,\homR^j\push c\G_m)\implies\hom^{i+j}(\meX,\G_m)$. Note that $E_2^{i0}=\hom^i(X,\G_m)=0$ for $i\ge2$, by \cite[Theorem 7.2.7]{fu-etale}, and $E_2^{02}=0$ by \cref{thm:R2Gm-vanishes}. Thus, one concludes that $\hom^2(\meX,\G_m)\iso\hom^1(X,\homR^1\push c\G_m)$. Now, \cref{lem:kbar-stacky-curve-push} provides the surjection $\hom^2(\meX,\G_m)\iso\hom^1(X,\homR^1\push c\G_m)\onto\hom^1(X,\dual G)$ and shows that it is an isomorphism if $\hom^0(X,\homR^2\push\rho\G_m)=0$ (e.g. if $\meY$ is locally Brauerless). Finally, \cref{lem:Br=Br'-stacky-curves} shows that $\Br\meX=\hom^2(\meX,\G_m)$.
\end{proof}
\begin{cor}\label{cor:stacky-curve-Br-field}
    Assume that $k$ is a perfect field and that both $\meX$ and $\meY$ are locally Brauerless. Then, there is an exact sequence
    \[\begin{tikzcd}
        0\ar[r]&\hom^1(k,\G_m(\bar X))\ar[r]\ar[d, phantom, ""{coordinate, name=Z}]&\Pic\meX\ar[r]&\p{\Pic\bar\meX}^{G_k}
        \ar[dll, rounded corners, to path = { -- ([xshift=2ex]\tikztostart.east)
                                                  |- (Z) [near end]\tikztonodes
                                                  -| ([xshift=-2ex]\tikztotarget.west)
                                                  -- (\tikztotarget)}]\\
        &\hom^2(k,\G_m(\bar X))\ar[r]&\ker\p{\hom^2(\meX,\G_m)\to\hom^1(\bar X,\dual G)}\ar[r]&\hom^1(k,\Pic\bar\meX)\ar[r]&\hom^3(k,\G_m(\bar X))
        .
    \end{tikzcd}\]
    where $\bar\meX\coloneqq\meX_{\bar k}$, $\bar X\coloneqq X_{\bar k}$, and $G_k=\Gal(\bar k/k)$.
\end{cor}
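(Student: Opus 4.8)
The plan is to run the Hochschild--Serre spectral sequence for the (pro-)\'etale $G_k$-cover $\bar\meX\to\meX$ and to read off its exact sequence of terms of low degree; once \cref{prop:stacky-tsen} is in hand, everything else is bookkeeping (this is the usual mechanism relating ``arithmetic'' and ``geometric'' cohomology of an object over $k$). Write $\bar\meX\coloneqq\meX_{\bar k}$, $\bar\meY\coloneqq\meY_{\bar k}$, $\bar X\coloneqq X_{\bar k}$; since formation of the coarse space of a tame stack is compatible with the flat base change $\spec\bar k\to\spec k$, the scheme $\bar X$ is the coarse space of both $\bar\meX$ and $\bar\meY$. Taking the colimit over finite Galois subextensions $k'/k$ of the Hochschild--Serre spectral sequences of the finite \'etale Galois covers $\meX_{k'}\to\meX$ (each an instance of the spectral sequence of a covering, as used in the proof of \cref{lem:descent-ss}), one obtains a spectral sequence
\[
E_2^{ij}=\hom^i\bigl(k,\hom^j(\bar\meX,\G_m)\bigr)\implies\hom^{i+j}(\meX,\G_m),
\]
in which $\hom^j(\bar\meX,\G_m)\coloneqq\colim_{k'}\hom^j(\meX_{k'},\G_m)$ carries its natural structure of discrete $G_k$-module.

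Next I would identify the rows $j=0,1,2$ of the $E_2$-page. Global functions on a tame stack are pulled back from its coarse space, so $\hom^0(\bar\meX,\G_m)=\G_m(\bar X)$ and hence $E_2^{i,0}=\hom^i(k,\G_m(\bar X))$; by definition $E_2^{i,1}=\hom^i(k,\Pic\bar\meX)$; and, since $\bar\meX$ and $\bar\meY$ are still tame, regular, generically schemey and locally Brauerless (the last condition only constrains geometric automorphism groups, so it is stable under base field extension) and $\bar\meX\to\bar\meY$ is still a $G$-gerbe, \cref{prop:stacky-tsen} applied over the algebraically closed field $\bar k$ supplies an isomorphism $\hom^2(\bar\meX,\G_m)\iso\hom^1(\bar X,\dual G)$.

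Finally I would invoke the seven-term exact sequence of terms of low degree attached to $E_2^{ij}$,
\[
0\to E_2^{1,0}\to\hom^1(\meX,\G_m)\to E_2^{0,1}\xto{\d_2^{0,1}}E_2^{2,0}\to\ker\bigl(\hom^2(\meX,\G_m)\to E_\infty^{0,2}\bigr)\to E_2^{1,1}\xto{\d_2^{1,1}}E_2^{3,0},
\]
and substitute the identifications above, together with $\hom^1(\meX,\G_m)=\Pic\meX$ and $E_2^{0,1}=(\Pic\bar\meX)^{G_k}$. The single point needing a sentence of justification is the fifth term: $E_\infty^{0,2}$ is a subgroup of $E_2^{0,2}=\hom^0(k,\hom^2(\bar\meX,\G_m))$, which is itself a subgroup of $\hom^2(\bar\meX,\G_m)$, and the edge homomorphism $\hom^2(\meX,\G_m)\to E_\infty^{0,2}$ is the pullback to $\bar\meX$ followed by these inclusions; therefore its kernel equals $\ker\bigl(\hom^2(\meX,\G_m)\to\hom^2(\bar\meX,\G_m)\bigr)$, which under the isomorphism $\hom^2(\bar\meX,\G_m)\simeq\hom^1(\bar X,\dual G)$ is exactly $\ker\bigl(\hom^2(\meX,\G_m)\to\hom^1(\bar X,\dual G)\bigr)$. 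This reproduces the asserted exact sequence verbatim. I do not expect a genuine obstacle here: the substance is contained in \cref{prop:stacky-tsen} (hence ultimately Tsen's theorem), which is already established, and the remainder is the edge-map bookkeeping just described.
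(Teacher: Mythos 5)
Your proposal is correct and is exactly the paper's argument: the proof given there is precisely ``the exact sequence of low degree terms in the spectral sequence $E_2^{ij}=\hom^i(k,\hom^j(\bar\meX,\G_m))\implies\hom^{i+j}(\meX,\G_m)$,'' with the rows identified via the coarse space, the relative Picard group, and \cref{prop:stacky-tsen} just as you describe. Your additional remarks (stability of local Brauerlessness under base field extension, and the edge-map bookkeeping for the fifth term) are correct elaborations of details the paper leaves implicit.
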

\begin{proof}
    This is the exact sequence of low degree terms in the spectral sequence $E_2^{ij}=\hom^i(k,\hom^j(\bar\meX,\G_m))\implies\hom^{i+j}(\meX,\G_m)$.
\end{proof}
\begin{cor}\label{cor:curve-num-field}
    Use tha notation and assumptions of \cref{cor:stacky-curve-Br-field}. Furthermore, assume that $k$ is a number field and $X$ is proper, geometrically reduced, and geometrically connected. Then, there is an exact sequence
    \[0\too\Pic\meX\too(\Pic\bar\meX)^{G_k}\too\Br k\too\ker\p{\hom^2(\meX,\G_m)\to\hom^1(\bar X,\dual G)}\too\hom^1(k,\Pic\bar\meX)\too0.\]
\end{cor}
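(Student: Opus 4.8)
The plan is to specialize the exact sequence of \cref{cor:stacky-curve-Br-field} to the number field setting by exploiting the extra cohomological vanishing that comes from properness and from the arithmetic of number fields. First I would observe that since $X$ is proper, geometrically reduced, and geometrically connected over $k$, we have $\G_m(\bar X)=\bar k^\times$; this is a standard fact (global sections of $\msO_X^\times$ over a proper reduced connected scheme over an algebraically closed field are the constants). Consequently every occurrence of $\G_m(\bar X)$ in the six-term sequence of \cref{cor:stacky-curve-Br-field} can be replaced by $\bar k^\times$, and the groups $\hom^i(k,\G_m(\bar X))$ become the usual Galois cohomology groups $\hom^i(k,\bar k^\times)=\hom^i(G_k,\bar k^\times)$.

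Next I would plug in the known values of these Galois cohomology groups for a number field $k$: by Hilbert 90 we have $\hom^1(k,\bar k^\times)=0$, by definition $\hom^2(k,\bar k^\times)=\Br k$, and the key input is that $\hom^3(k,\bar k^\times)=0$ for $k$ a number field. The vanishing of $\hom^3(k,\G_m)$ for number fields is classical (it follows from the fundamental exact sequence of global class field theory, $0\to\Br k\to\bigoplus_v\Br k_v\to\Q/\Z\to0$, together with the fact that $\hom^3$ of all local fields vanishes — or see \cite[Ch.\ VIII]{milne-et} / standard references on the cohomological dimension of number fields after inverting nothing, noting $\operatorname{cd}(G_k)\le 2$ for the relevant coefficients). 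With $\hom^1(k,\G_m(\bar X))=0$, $\hom^3(k,\G_m(\bar X))=0$, and $\hom^2(k,\G_m(\bar X))=\Br k$ substituted in, the seven-term sequence of \cref{cor:stacky-curve-Br-field} collapses: the leftmost term $\hom^1(k,\G_m(\bar X))$ vanishes so $\Pic\meX\into(\Pic\bar\meX)^{G_k}$, and the rightmost term $\hom^3(k,\G_m(\bar X))$ vanishes so the map $\hom^1(k,\Pic\bar\meX)\to\hom^3(k,\G_m(\bar X))$ is zero, forcing surjectivity of $\ker(\hom^2(\meX,\G_m)\to\hom^1(\bar X,\dual G))\to\hom^1(k,\Pic\bar\meX)$. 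This yields exactly the claimed five-term exact sequence
\[0\too\Pic\meX\too(\Pic\bar\meX)^{G_k}\too\Br k\too\ker\p{\hom^2(\meX,\G_m)\to\hom^1(\bar X,\dual G)}\too\hom^1(k,\Pic\bar\meX)\too0.\]

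The routine part is bookkeeping with the spectral sequence edge maps, which is already packaged in \cref{cor:stacky-curve-Br-field}, so the only genuine content is (a) the identification $\G_m(\bar X)=\bar k^\times$ using properness/geometric reducedness/geometric connectedness, and (b) the vanishing $\hom^3(k,\bar k^\times)=0$ for number fields. I expect (b) to be the point most deserving of a careful citation rather than a real obstacle; the subtlety to watch is that this is $\hom^3$ with $\G_m$-coefficients specifically (not a general torsion sheaf), for which the number-field statement is exactly what class field theory provides, whereas for function fields or higher-dimensional bases the analogous vanishing can fail. One minor caveat to address: \cref{cor:stacky-curve-Br-field} requires $k$ perfect and $\meX,\meY$ locally Brauerless, and these hypotheses are inherited here since a number field is perfect and the assumptions are carried over verbatim from \cref{cor:stacky-curve-Br-field} in the statement of \cref{cor:curve-num-field}.
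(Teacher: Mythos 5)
Your proof is correct and is essentially the paper's own argument: identify $\G_m(\bar X)=\bar k^\times$ via properness/geometric reducedness/connectedness, kill $\hom^1(k,\bar k^\times)$ by Hilbert 90 and $\hom^3(k,\bar k^\times)$ by the number-field vanishing (the paper cites \cite[Remark 6.7.10]{poonen-rat-pts} for the latter), and then read off the collapsed sequence from \cref{cor:stacky-curve-Br-field}. No gaps.
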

\begin{proof}
    For such $X$, $\G_m(\bar X)=\G_m(\bar k)$. Furthermore, it is known that $\hom^3(k,\G_m)=0$ if $k$ is a number field; see \cite[Remark 6.7.10]{poonen-rat-pts}. Apply \cref{cor:stacky-curve-Br-field}.
\end{proof}
\begin{rem}
    In \cite{AM,achenjang2024brauer}, it was shown that, for $k$ of characteristic not $2$,
    \[\Br\meY(1)_{\bar k}=0\twhile\Br\meY_0(2)_{\bar k}=\zmod2.\]
    \cref{prop:stacky-tsen} helps explain why one of these groups vanishes while the other does not; note that\footnote{Technically, \cref{prop:stacky-tsen} only proves $\Br\meY(1)_k\simeq\hom^1(\A^1_k,\mu_2)$ when $\Char k\nmid6$. However, if you replaces its use of \cref{thm:R2Gm-vanishes} with \cref{cor:char3-R2Gm=0}, then one can deduce that this equality holds even in characteristic $3$.}
    \[\hom^1(\A^1_{\bar k},\mu_2)=0\twhile\hom^1(\A^1_{\bar k}\sm\{0\},\mu_2)=\zmod2.\qedhere\]
\end{rem}
\begin{rem}
    To justify the tameness assumption in \cref{prop:stacky-tsen}, we remark that $\Br\meY(1)_{\bar\F_2}=\zmod2$, as was shown by Shin \cite{shin}, but $\fhom^1(\A^1_{\bar\F_2},\mu_2)=0$. Note that $\meY(1)_{\bar\F_2}$ is nowhere tame.
\end{rem}
\begin{rem}
    Furthermore, even for tame $\meX$ over $k=\bar k$, some additional condition is necessary for one to have $\hom^2(\meX,\G_m)\simeq\hom^1(X,\dual G)$. For example, if $\meY\coloneqq\sqrt[2]{0/\A^1_k}$, where $k$ is an algebraically closed field of characteristic not $2$, and $\meX\coloneqq B\mu_{2,\meY}$, then
    \[\hom^2(\meX,\G_m)\simeq\hom^1(\meY,\mu_2)\simeq\zmod2\not\simeq0\simeq\hom^1(\A^1_k,\mu_2).\]
    Notice that, in this example, $\ul\Aut(\meX,0)\simeq\mu_{2,k}\by\mu_{2,k}$ is \important{not} Brauerless (see \cref{rem:comm-Brauerless=cyclic}).
\end{rem}
\begin{qn}
    Use the notation and assumptions of \cref{cor:stacky-curve-Br-field}. When is the map $\hom^2(\meX,\G_m)\to\hom^2(\bar\meX,\G_m)=\hom^1(\bar X,\dual G)$ surjective (onto the $\Gal(\bar k/k)$-invariant classes)?
\end{qn}
\begin{qn}
    Use the notation and assumptions of \cref{prop:stacky-tsen} and assume that $\meY$ is locally Brauerless. Can one make the isomorphism $\hom^1(X,\dual G)\iso\hom^2(\meX,\G_m)$ explicit, e.g. by writing down an Azumaya algebra over $\meX$ representing a given $\alpha\in\hom^1(X,\dual G)$?
\end{qn}

\subsection{Partial results on proper stacky curves over general bases}\label{sect:stacky-curve-prop}
In the previous section, we had to restrict attention to stacky curves over \important{perfect} fields. This was ultimately because of our use of Tsen's theorem that schemey curves over \important{algebraically closed} fields have trivial Brauer groups. However, for proper curves, Grothendieck \cite[Corollaire 5.8]{Gr3} has shown that their Brauer groups vanish once their base field is only \important{separably closed}. In the present section, we push this observation to allow us study Brauer groups of proper stacky curves over fairly general bases; informally, we go from curves over separably closed fields to those over strictly local rings to those over general base schemes. As will quickly become apparent from the argument, many of the specifics on this computation depend on subtly on the particular curve being analyzed; hence, to be able to give a reasonable statement in the end, we will eventually need to place various assumptions on our curve.
\begin{set}
    Let $S$ be a noetherian base scheme. Let $X/S$ be a smooth, projective $S$-curve with geometrically connected fibers, let $\meY/S$ be a tame algebraic $S$-stack with coarse space morphism $\rho\colon\meY\to X$. Let $G/S$ be a finite linearly reductive group, and let $\pi\colon\meX\to\meY$ be a $G$-gerbe. Consider the commutative diagram
    \[\compdiag\meX\pi\meY\rho {X,}c\]
    where $c$ is $\meX$'s coarse space map.
\end{set}
\begin{assump}
    Throughout this section, we further assume that
    \begin{itemize}
        \item $\rho\colon\meY\to X$ is an isomorphism over some dense open $U\subset X$ which surjects onto $S$; and
        \item both $\meX$ and $\meY$ are locally Brauerless.
        \qedhere
    \end{itemize}
\end{assump}
\begin{rem}
    For any $s\in S$, the fiber $U_s\subset X_s$ is nonempty and open, so dense. It follows from \cite[Proposition 11.10.10]{egaiv.3} that, for any $T/S$, $U_T$ is dense in $X_T$. We will use this implicitly in many places.
\end{rem}
\begin{rem}
    The next few results all make use of some ideas from the proof of \cref{prop:diag-quot-Br-vanish}; in particular, its reliance on proper base change: \cite[Theorem 1.3]{olsson-proper} and \cite[Corollary VI.2.7]{milne-et}.
\end{rem}
\begin{lemma}\label{lem:Br'-prop-red-i}
    Assume that $S=\spec R$ is strictly local, say with residue field $k$. If $\Pic\meX\to\Pic\meX_k$ is surjective, then $\Br'(\meX)\iso\Br'(\meX_k)$.
\end{lemma}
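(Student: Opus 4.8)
The plan is to compare the Brauer groups of $\meX$ and of its special fiber $\meX_k$ by running the same $p^n$-torsion argument used in the proof of \cref{prop:diag-quot-Br-vanish}, but now keeping track of the prime-to-$p$ part as well. Write $k$ for the residue field of $R$ (which is separably closed, since $R$ is strictly henselian). First I would observe that, since $\meX\to S$ is proper (being a gerbe over an algebraic stack which is proper over $S$, as $\rho$ is the coarse space of a proper-over-$S$ stack and $X/S$ is projective), proper base change for algebraic stacks \cite[Theorem 1.3]{olsson-proper} applies to torsion coefficients. Concretely, for any integer $m\ge1$ and any $i\ge0$, the restriction map $\hom^i(\meX,\mu_m)\to\hom^i(\meX_k,\mu_m)$ is an isomorphism.

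Next I would fix an integer $m\ge1$ and consider, over both $\meX$ and $\meX_k$, the Kummer-type diagram. Using the two-term complex $\msC\coloneqq[\G_m\xto m\G_m]$ and the distinguished triangles $\mu_m\to\msC\to\G_m[-1]\xto{+1}$ and $\G_m[-1]\to\msC\to\G_m\xto{+1}$ exactly as in the proof of \cref{prop:diag-quot-Br-vanish}, one gets a commutative ladder
\[\begin{tikzcd}[column sep=small]
    \Pic(\meX)/m\ar[r]\ar[d]&\hom^2(\meX,\mu_m)\ar[r]\ar[d, "\sim" sloped]&\hom^2(\meX,\G_m)[m]\ar[r]\ar[d]&0\\
    \Pic(\meX_k)/m\ar[r]&\hom^2(\meX_k,\mu_m)\ar[r]&\hom^2(\meX_k,\G_m)[m]\ar[r]&0,
\end{tikzcd}\]
where the middle vertical map is an isomorphism by the previous paragraph and the left vertical map is surjective by the hypothesis that $\Pic\meX\to\Pic\meX_k$ is surjective. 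A diagram chase (the four lemma / snake) then shows that $\hom^2(\meX,\G_m)[m]\to\hom^2(\meX_k,\G_m)[m]$ is surjective. For injectivity, I would use the ladder one degree lower together with \cref{prop:brauer-injects}: since $\meX$ is regular (it is a gerbe over the regular stack $\meY$ — this should be stated or is immediate from the setup, as $\meY$ is a regular stacky curve-like object and root/gerbe constructions preserve regularity) and $\meX_k\subset\meX$ is... actually $\meX_k$ need not be open, so instead I would argue injectivity directly: taking $\beta\in\Br'(\meX)$ with $\beta_k=0$, lift $\beta$ to a class in $\hom^2(\meX,\mu_m)$ for suitable $m$ (possible since $\beta$ is torsion), whose restriction to $\meX_k$ then lies in the image of $\Pic(\meX_k)/m$; by surjectivity of $\Pic\meX\to\Pic\meX_k$ and of $\Pic\meX\to\Pic\meX/m$ one adjusts the lift so that its restriction to $\meX_k$ is zero, and then proper base change forces the lift itself to be zero, hence $\beta=0$. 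Ranging over all $m$ and using $\Br'=\bigcup_m\hom^2(-,\G_m)[m]$-image... more precisely, $\Br'(\meX)=\hom^2(\meX,\G_m)_{\mathrm{tors}}=\colim_m\hom^2(\meX,\G_m)[m]$, one concludes $\Br'(\meX)\iso\Br'(\meX_k)$.

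The main obstacle I anticipate is the injectivity half: unlike in \cref{prop:diag-quot-Br-vanish}, where one compares $\meY$ with its reduction (a closed immersion with nilpotent ideal, so one has the exact sequence \cref{ses:nilpotent-Gm} and \cref{lem:tame-red-Gm} at hand), here $\meX_k\hookrightarrow\meX$ is a genuine fiber, not a nilpotent thickening, so the comparison must be mediated entirely through torsion coefficients and proper base change, and the argument hinges on being able to kill the restriction to $\meX_k$ of a chosen lift by modifying the lift by a line bundle pulled back from $\meX$ — which is exactly where the surjectivity hypothesis on Picard groups enters and must be used carefully. A secondary point to get right is confirming that $\meX/S$ is proper (equivalently, that $c\colon\meX\to X$ is proper, so that $\meX\to X\to S$ is proper since $X/S$ is projective); this follows because $\meX$ has finite inertia and $c$ is its coarse space map, hence $c$ is proper.
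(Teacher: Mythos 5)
Your proposal is correct and matches the paper's proof: fix $m$, apply proper base change to the torsion complex $[\G_m\xto m\G_m]$, and compare the resulting short exact sequences $0\to\Pic(-)/m\to\hom^2(-,\G_m\xto m\G_m)\to\hom^2(-,\G_m)[m]\to0$ over $\meX$ and $\meX_k$ via the snake lemma, with the hypothesis that $\Pic\meX\to\Pic\meX_k$ is surjective supplying exactly the injectivity half. The one slip is that the middle column of your displayed ladder should read $\hom^2(-,\G_m\xto m\G_m)$ rather than $\hom^2(-,\mu_m)$ (the Kummer sequence need not be exact \'etale-locally when $\Char k$ divides $m$), but since you explicitly set up the two distinguished triangles for the complex, your hand-unwound injectivity argument is just the snake lemma and goes through verbatim.
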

\begin{proof}
    Fix any $n\ge1$. As a consequence of the proper base change theorem \cite[Theorem 1.3]{olsson-proper}, we have
    \[\hom^i\p{\meX,\G_m\xto n\G_m}\iso\hom^i\p{\meX_k,\G_m\xto n\G_m}\tforall i\ge0.\]
    Taking $i=2$, this leads us to the following homomorphism of short exact sequences:
    \[\begin{tikzcd}
        0\ar[r]&\Pic(\meX)/p^n\ar[r]\ar[d, hook]&\hom^2(\meX,\G_m\xto n\G_m)\ar[d, "\sim" sloped]\ar[r]&\hom^2(\meX,\G_m)[n]\ar[r]\ar[d, two heads]&0\\
        0\ar[r]&\Pic(\meX_k)/p^n\ar[r]&\hom^2(\meX_k,\G_m\xto n\G_m)\ar[r]&\hom^2(\meX_k,\G_m)[n]\ar[r]&0.
    \end{tikzcd}\]
    The claim now follows from the snake lemma, recalling that $\Br'(\meX)=\bigcup_n\hom^2(\meX,\G_m)[n]$.
\end{proof}
\begin{lemma}\label{lem:Br'-prop-red-ii}
    Assume that $S=\spec R$ is strictly local, say with residue field $k$. Then, $\Pic\meX\to\Pic\meX_k$ is surjective. From \cref{lem:Br'-prop-red-i}, we deduce that $\Br'(\meX)\iso\Br'(\meX_k)$.
\end{lemma}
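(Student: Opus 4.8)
The plan is to establish the surjectivity $\Pic\meX\to\Pic\meX_k$ by reducing to the generic (schemey) locus and using the structure of the Picard group of a proper curve. First I would invoke the open substack $\meX_U\cong\meY_U\cong X_U = \inv\rho(U)$, over which all three stacks agree; here $X_U$ is a smooth affine-over-$S$ (well, open in a projective) curve. Since $\meX\to\meX_U$ is an open immersion with complement supported over the ``stacky'' locus $\meD = X\sm U$ (a closed subscheme finite over $S$, living inside $\inv\rho(U)$... wait --- actually $\meD$ lies over the complement of $U$), the strategy is to compare $\Pic\meX$ with the Weil-divisor-theoretic description. More precisely, $\meX$ is regular (being a tame stack with regular coarse space $X$ and only root/gerbe structure), so $\Pic\meX$ is generated by $\Pic\meX_U$ together with the finitely many irreducible components of the boundary divisor $\inv c(\meD)$. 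By \cref{thma:Pic-Cl} (the sequence $0\to\Cl\meX\to\Pic\meX\to\Pic\meG\to0$) or directly by the excision sequence $\Div_{\partial}\to\Pic\meX\to\Pic\meX_U\to0$, surjectivity of $\Pic\meX\to\Pic\meX_k$ follows once one knows (i) $\Pic\meX_U\to\Pic(\meX_U)_k$ is surjective and (ii) the boundary divisor classes specialize onto their counterparts on the special fiber.

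For (ii), the key observation is that $\meD$ is finite over $S=\spec R$ with $R$ strictly local, hence $\meD$ is a finite disjoint union of spectra of strictly local rings; thus the irreducible components of $\meD$ (equivalently, of its preimage in $\meX$, which is a root stack / gerbe over a component of $\meD$) are in bijection with those of $\meD_k$, and a Weil divisor supported on the boundary of $\meX$ restricts to the ``same'' divisor on $\meX_k$. So the boundary part of the Picard group surjects onto its special-fiber analogue essentially by inspection. For (i), since $X$ is proper smooth over $S$ with geometrically connected fibers, $\Pic_{X/S}$ is represented by a smooth (separated) group scheme over $S$ --- the Picard scheme --- whose identity component is proper (the Jacobian) and whose component group is $\Z$ (degree). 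Its $R$-points surject onto its $k$-points because $R$ is strictly henselian and $\Pic_{X/S}\to S$ is smooth (smooth morphisms over strictly henselian local rings are surjective on points, by the infinitesimal lifting criterion plus completeness, or cite \cite[Tag 02LF]{stacks-project}-type statements / \cite[Theorem 2.7]{Br-quot}-adjacent facts). Actually the cleanest route: $\hom^1(X,\G_m)\to\hom^1(X_k,\G_m)$ --- one can instead argue via proper base change for $\mu_n$ and the Kummer sequence, exactly as in the proof of \cref{lem:Br'-prop-red-i}, to get surjectivity on $\Pic/n$, and then bootstrap using that on a proper smooth curve over a strictly local base the relative $\Pic$ is an extension of $\Z$ by a divisible-modulo-torsion group whose $R$- and $k$-points... hmm, this requires a touch of care.

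Here is the approach I would actually commit to. Run the same proper-base-change argument as in \cref{lem:Br'-prop-red-i} but one cohomological degree lower: from $\hom^1(\meX,\G_m\xrightarrow{n}\G_m)\iso\hom^1(\meX_k,\G_m\xrightarrow{n}\G_m)$ (proper base change, \cite[Theorem 1.3]{olsson-proper}) we get a homomorphism of short exact sequences
\[
\begin{tikzcd}
0\ar[r]&\hom^0(\meX,\G_m)/n\ar[r]\ar[d]&\hom^1(\meX,\G_m\xrightarrow n\G_m)\ar[r]\ar[d,"\sim" sloped]&\Pic(\meX)[n]\ar[r]\ar[d]&0\\
0\ar[r]&\hom^0(\meX_k,\G_m)/n\ar[r]&\hom^1(\meX_k,\G_m\xrightarrow n\G_m)\ar[r]&\Pic(\meX_k)[n]\ar[r]&0.
\end{tikzcd}
\]
The middle map is an isomorphism; the left map is surjective (it is $R^*/n\to k^*/n$, surjective by Hensel since $R$ is strictly henselian --- or handle the $p$-part separately as in \cref{lem:abs-grp-loc-Gm-coh}); hence the right map $\Pic(\meX)[n]\to\Pic(\meX_k)[n]$ is surjective for all $n$. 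That gives surjectivity on torsion. To upgrade to all of $\Pic$: the degree map (pushing forward along $\meX\to X\to S$ and using that $X$ has geometrically connected fibers) gives compatible surjections $\Pic\meX\to \tfrac1N\Z$ and $\Pic\meX_k\to\tfrac1N\Z$ for the appropriate index $N$ (lcm of root orders), so it remains to surject $\Pic^0\meX\to\Pic^0\meX_k$; but $\Pic^0\meX_k$ is a quotient of $\Pic^0 X_k$ by finitely many boundary classes, $\Pic^0 X_k$ is the group of $k$-points of an abelian variety which is divisible, and a divisible group has no nonzero quotient by the image of a torsion subgroup unless... --- cleaner: $\Pic^0\meX_k$ is itself divisible (quotient of divisible $\Pic^0 X_k$ modulo the finitely-generated subgroup generated by boundary divisor classes --- actually one should check $\Pic^0$ modulo finitely generated is still divisible: not automatic, so instead) --- use that $\Pic^0\meX_k$ is divisible by the Kummer-sequence fact $\Pic(\meX_k)=n\Pic(\meX_k)+\Pic(\meX_k)[n]$-type statement combined with $\hom^2(\meX_k,\mu_n)$ being all ``degree'' (since $X_k$ over separably closed $k$ has $\hom^2(X_k,\mu_n)=\Z/n$ and a root/gerbe alters this by known amounts), and then surjectivity on torsion plus surjectivity on degrees plus divisibility of the kernel forces surjectivity everywhere. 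I expect the main obstacle to be precisely this last bootstrapping step --- cleanly arguing that surjectivity on $n$-torsion and on the degree-component-group implies surjectivity on all of $\Pic$ --- which hinges on the divisibility of $\Pic^0$ of the special fiber; everything else (proper base change, the finite-over-strictly-local structure of $\meD$, the Kummer diagram) is routine and parallels \cref{lem:Br'-prop-red-i} and \cref{prop:diag-quot-Br-vanish}.
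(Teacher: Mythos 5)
There is a genuine gap in the approach you commit to, and it is exactly where you suspect it is. The proper-base-change argument one degree lower does give surjectivity of $\Pic(\meX)[n]\to\Pic(\meX_k)[n]$ for every $n$, but surjectivity on all torsion subgroups together with surjectivity onto the degree component group does \emph{not} imply surjectivity on all of $\Pic$. The missing piece is $\Pic^0$, and divisibility of $\Pic^0(\meX_k)$ is of no help: divisibility of the target is irrelevant to surjectivity of a map into it. Concretely, $\Pic^0(X_k)=\Jac(X_k)(k)$ is a divisible group with plenty of non-torsion elements whenever $\Jac(X_k)$ has positive dimension and $k$ is not an algebraic closure of a finite field (e.g.\ $R$ the strict henselization of $\C[t]_{(t)}$, $X$ an elliptic curve); no amount of information about torsion classes will lift such a line bundle from $X_k$ to $X$. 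Lifting degree-zero line bundles across a henselian thickening is genuinely a deformation-theoretic statement (obstructions in $\hom^2(X_k,\msO_{X_k})=0$ for a curve, plus algebraization), not a consequence of proper base change with torsion coefficients.

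The paper sidesteps this entirely. It compares the Leray sequences for $\meX\to X$ and $\meX_k\to X_k$ and applies the five lemma, so that the surjectivity of $\Pic\meX\to\Pic\meX_k$ is reduced to two inputs: (a) $\Pic X\to\Pic X_k$ is surjective, which is quoted from \cite[Corollaire 21.9.12]{egaiv.4} --- this is precisely the deformation-theoretic input your torsion argument cannot supply --- and (b) $\hom^0(X,\homR^1\push c\G_m)\to\hom^0(X_k,\homR^1\push c\G_m)$ is surjective, which follows because $\homR^1\push c\G_m$ is an extension of the constant \'etale sheaf $\dual G$ by a skyscraper sheaf supported on a finite $R$-scheme, each piece being handled by hand (constancy of $\dual G$, resp.\ \cref{cor:tame-stack-sh} and \cref{lem:tame-red-Gm} on the strictly local stalks). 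Your first, abandoned sketch --- splitting $\Pic\meX$ into the coarse part and the boundary/stacky part, handling the coarse part via smoothness of $\Pic_{X/S}$ over the strictly henselian base and the boundary part via the finite-over-$R$ structure --- is much closer to a correct proof and could be completed along those lines; but as written the proposal does not close the gap in the argument it actually commits to.
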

\begin{proof}
    Throughout the proof, we use that formation of $\homR^1\push c\G_m$ and of $\homR^1\push\rho\G_m$ commute with arbitrary base change, by the proper base change theorem \cite[Theorem 1.3]{olsson-proper}. By comparing the Leray spectral sequence for $\meX\to X$ with that for $\meX_k\to X_k$, we obtain the following commutative diagram
    \begin{equation}\label{cd:Pic-surj}
        \begin{tikzcd}
            0\ar[r]&\Pic X\ar[r]\ar[d, two heads]&\Pic\meX\ar[d]\ar[r]&\hom^0(X,\homR^1\push c\G_m)\ar[d]\ar[r, "\d_2^{0,1}"]&\hom^2(X,\G_m)\ar[d]\\
            0\ar[r]&\Pic X_k\ar[r]&\Pic\meX_k\ar[r]&\hom^0(X_k,\homR^1\push c\G_m)\ar[r]&\hom^2(X_k,\G_m)
        \end{tikzcd}
    \end{equation}
    with exact rows. Above, the map $\Pic X\to\Pic X_0$ is surjective by \cite[Corollaire 21.9.12]{egaiv.4}. Furthermore, every element of $\hom^0(X,\homR^1\push c\G_m)$ is torsion and so, by \cref{lem:Br'-prop-red-i} applied to $X$, the rightmost vertical map in \cref{cd:Pic-surj} -- i.e. $\hom^2(X,\G_m)\to\hom^2(X_k,\G_m)$ -- is injective when restricted to $\im\d_2^{0,1}\subset\hom^2(X,\G_m)_{\mrm{tors}}$. Thus, by the five lemma, in order to conclude that $\Pic\meX\to\Pic\meX_k$ is surjective, it suffices to show that $\hom^0(X,\homR^1\push c\G_m)\to\hom^0(X_k,\homR^1\push c\G_m)$ is surjective.

    By \cref{lem:two-step-push-es} applied to $c=\rho\circ\pi$ along with \cref{thm:R2Gm-vanishes} applied to $\rho$, we have an exact sequence
    \[0\too\homR^1\push\rho\G_m\too\homR^1\push c\G_m\too\dual G\too0.\]
    Note that $\homR^1\push\rho\G_m$ is supported on some finite $R$-scheme $Z\subset X$, so $\hom^1(X,\homR^1\push\rho\G_m)=0$. Thus, by the snake lemma, in order to show that $\hom^0(X,\homR^1\push c\G_m)\to\hom^0(X_k,\homR^1\push c\G_m)$ is surjective, it suffices to show that $\dual G(X)\onto\dual G(X_k)$ and that $\hom^0(X,\homR^1\push\rho\G_m)\onto\hom^0(X_k,\homR^1\push\rho\G_m)$. The first of these holds simply because, by \cref{cor:G-dual-etale}, $\dual G$ is \'etale (and so constant). Therefore, to finish the proof, it suffices to show that $\hom^0(X,\homR^1\push\rho\G_m)\to\hom^0(X_k,\homR^1\push\rho\G_m)$ is surjective.

    Note that $Z$, the support of $\homR^1\push\rho\G_m$, is a finite $R$-scheme and so is a disjoint union of (spectra of) strictly henselian local rings. Thus, it suffices to show that, for any strictly local $\spec A\to X$, the natural map
    \[\Pic(\meY\by_XA)=\hom^0(\spec A,\homR^1\push\rho\G_m)\to\hom^0(\spec A\otimes_Rk,\homR^1\push\rho\G_m)=\Pic\p{(\meY\by_XA)_k}\]
    is surjective. As a consequence of \cref{cor:tame-stack-sh}, we have that $\meY\by_XA\simeq[\spec B/H]$ for some strictly local $B$ and finite linearly reductive $H/A$ whose action is trivial on $B$'s residue field $k_B$. Hence, by \cref{lem:tame-red-Gm},
    \[\Pic\p{(\meY\by_XA)_k}\simeq\Pic\p{(\meY\by_XA)_{k,\mrm{red}}}=\Pic(BH_{k_B})\simeq\dual H(k_B).\]
    Now, the maps $(\meY\by_XA)_k\to\meY\by_XA\simeq[\spec B/H]\to[\spec A/H]=BH_A$ induce
    \[\dual H(A)=\Pic(BH_A)\too\Pic(\meY\by_XA)\too\Pic\p{(\meY\by_XA)_k}\simeq\dual H(k_B)\]
    on Picard groups. \cref{cor:G-dual-etale} shows that $\dual H/A$ is constant, so the above composition is surjective, finishing the proof.
\end{proof}
\begin{prop}\label{prop:Br-prop-local}
    Assume that $S=\spec R$ is strictly local. Then, there is an exact sequence
    \[0\too\hom^2(X,\G_m)\too\hom^2(\meX,\G_m)\too\hom^1(X,\dual G)\too0,\]
    and $\hom^2(X,\G_m)$ is torsion-free (i.e. $\Br'(X)=0$).
\end{prop}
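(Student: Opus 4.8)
The plan is to feed the Leray spectral sequence for $c\colon\meX\to X$ through the structural results already established. Since $S=\spec R$ is strictly local, $X$ is a smooth projective curve over $S$, so first I would record the two facts about $X$ we need: (i) $\Br'(X)=0$, i.e. $\hom^2(X,\G_m)$ is torsion-free, and (ii) the needed vanishing/finiteness of higher pushforwards. For (i), the quickest route is \cref{lem:Br'-prop-red-ii} applied with $\meX$ replaced by $X$ itself (a smooth projective curve is vacuously a locally Brauerless tame stack, a $G$-gerbe with $G$ trivial): it gives $\Br'(X)\iso\Br'(X_k)$, and $\Br'(X_k)=0$ by Grothendieck's theorem \cite[Corollaire 5.8]{Gr3} since $k$ is separably closed and $X_k$ is a proper curve over $k$. (Alternatively one cites \cite[Corollaire 5.8]{Gr3} together with \cref{lem:Br'-prop-red-i}, or invokes that $X$ admits an ample line bundle so $\Br'=\Br$ and then a Tsen-type argument fiberwise.) In any case $\hom^2(X,\G_m)=\Br'(X)=0$ is torsion-free — in fact zero.

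Next I would run the Leray spectral sequence $E_2^{ij}=\hom^i(X,\homR^j\push c\G_m)\implies\hom^{i+j}(\meX,\G_m)$, exactly as in \cref{cor:R2Gm-vanishes-sequence}. Since $\meX$ is locally Brauerless, \cref{thm:R2Gm-vanishes} gives $\homR^2\push c\G_m=0$, so $E_2^{02}=0$. Also $E_2^{20}=\hom^2(X,\G_m)=0$ by the previous paragraph, and more generally I would note $E_2^{i0}=\hom^i(X,\G_m)$ — but actually all I need is that the differential $\d_2^{0,1}\colon\hom^0(X,\homR^1\push c\G_m)\to\hom^2(X,\G_m)$ lands in a group we understand. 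The exact sequence of low-degree terms then reads
\[
0\too\hom^2(X,\G_m)\xtoo{\pull c}\hom^2(\meX,\G_m)\too\hom^1(X,\homR^1\push c\G_m)\xtoo{\d_2^{1,1}}\hom^3(X,\G_m),
\]
using $E_2^{02}=0$ to get injectivity of $\pull c$ on the next page and $\homR^2\push c\G_m=0$ to identify the cokernel term. It remains to (a) replace $\hom^1(X,\homR^1\push c\G_m)$ by $\hom^1(X,\dual G)$ and (b) show the outgoing differential $\d_2^{1,1}$ vanishes.

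For (a): \cref{lem:two-step-push-es} applied to $c=\rho\circ\pi$, together with \cref{thm:R2Gm-vanishes} applied to $\rho$ (which is legitimate since $\meY$ is locally Brauerless), yields a short exact sequence $0\to\homR^1\push\rho\G_m\to\homR^1\push c\G_m\to\dual G\to0$ of \'etale sheaves on $X$. The sheaf $\homR^1\push\rho\G_m$ is supported on a finite closed $S$-subscheme $Z\subset X$ (the branch locus), hence is acyclic: $\hom^i(X,\homR^1\push\rho\G_m)=0$ for all $i\ge1$. Taking cohomology of this short exact sequence therefore gives $\hom^i(X,\homR^1\push c\G_m)\iso\hom^i(X,\dual G)$ for all $i\ge1$; in particular $\hom^1(X,\homR^1\push c\G_m)\simeq\hom^1(X,\dual G)$. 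For (b): the cleanest argument is that $\hom^3(X,\G_m)$ has no $\dual G$-torsion to receive the image, or more robustly, that $\meX$ trivializes — more precisely $\meX\to X$ admits a section — over the dense open $U\subset X$ where $\rho$ is an isomorphism and $\meH$ is trivial, wait, that is not quite automatic. The honest way: since $\dual G$ is $\#G$-torsion (it is \'etale, killed by $\#G$), $\hom^1(X,\dual G)$ is $\#G$-torsion, so the image of $\d_2^{1,1}$ lies in $\hom^3(X,\G_m)[\#G]$; but I claim this image is zero because over the dense open $U$ where $c$ is an isomorphism onto its image — here one uses \cref{prop:brauer-injects}, that $\hom^2(\meX,\G_m)\to\hom^2(\meX_U,\G_m)$ is injective when $\meX$ is regular — the sequence degenerates. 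Hmm, regularity of $\meX$ is not in the hypotheses. So instead I would argue directly: the differential $\d_2^{1,1}$ is (up to sign) cup product with a class measuring the failure of $c$ to admit a section globally, and by \cref{prop:gerbe-Gm-es} / \cref{prop:root-Gm-coh}-style reasoning, when $\hom^3(X,\G_m)$ has no relevant torsion the map is forced to vanish; over a strictly local base one can further note $\hom^3(X,\G_m)$ is handled by proper base change reducing to $\hom^3(X_k,\G_m)$, a curve over a separably closed field, where $\hom^3(X_k,\G_m)$ is torsion and its low-order part vanishes by \cite[Theorem VI.1.1]{milne-et}-type dimension bounds. I expect the main obstacle to be pinning down exactly why $\d_2^{1,1}=0$ cleanly — this is the one place where one must use either properness of $X$ (to invoke Grothendieck's vanishing and proper base change on $\hom^3$) or a splitting of the gerbe over a dense open, rather than anything formal; everything else is a routine assembly of \cref{thm:R2Gm-vanishes}, \cref{lem:two-step-push-es}, acyclicity of skyscrapers, and \cref{lem:Br'-prop-red-ii}.
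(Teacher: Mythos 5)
Your overall strategy is the paper's: run the Leray spectral sequence for $c$, kill $E_2^{02}$ with \cref{thm:R2Gm-vanishes}, identify $\hom^1(X,\homR^1\push c\G_m)$ with $\hom^1(X,\dual G)$ via \cref{lem:two-step-push-es} and acyclicity of the skyscraper $\homR^1\push\rho\G_m$, and kill the two differentials. Your step (a) matches the paper exactly. But there are two concrete problems. First, you assert that $\hom^2(X,\G_m)=\Br'(X)=0$, i.e.\ that the group is ``in fact zero.'' That conflates torsion-freeness with vanishing: $\Br'(X)\coloneqq\hom^2(X,\G_m)_{\mrm{tors}}$, and since $R$ is only assumed strictly local (not regular), $X$ need not be regular and $\hom^2(X,\G_m)$ can have non-torsion classes. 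The proposition deliberately claims only torsion-freeness, and the paper reserves the stronger conclusion $\hom^2(X,\G_m)=0$ for the regular case (\cref{cor:Br-prop-reg-local}). If your claim were right, the displayed exact sequence would collapse to an isomorphism $\hom^2(\meX,\G_m)\simeq\hom^1(X,\dual G)$ with no regularity hypothesis, which is not what is being proved. The correct argument for $\d_2^{0,1}=0$ is that its source $\hom^0(X,\homR^1\push c\G_m)$ is torsion, so its image lands in $\hom^2(X,\G_m)_{\mrm{tors}}=\Br'(X)$, which is $0$ by \cref{lem:Br'-prop-red-i} applied to $X$ together with \cite[Corollaire 5.8]{Gr3} on the special fiber.

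Second, you explicitly leave $\d_2^{1,1}=0$ unresolved and flag it as ``the main obstacle,'' after cycling through several non-arguments (a section over a dense open, \cref{prop:brauer-injects} without regularity, a cup-product heuristic). The paper's resolution is short and you should carry it out: it suffices to show $\hom^3(X,\G_m)_{\mrm{tors}}=0$, since the source $\hom^1(X,\dual G)$ is torsion. For each $n$ one has a surjection $\hom^3(X,\G_m\xto n\G_m)\onto\hom^3(X,\G_m)[n]$, and the two-term complex $\sq{\G_m\xto n\G_m}$ has torsion cohomology sheaves, so proper base change gives $\hom^3(X,\G_m\xto n\G_m)\simeq\hom^3(X_k,\G_m\xto n\G_m)$; the latter is squeezed between $\hom^3(X_k,\mu_n)$ and $\hom^2(X_k,\G_m/n)$, both of which vanish by the cohomological dimension bounds for a curve over a separably closed field. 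Note that you cannot apply proper base change to $\G_m$ itself — the passage through $\sq{\G_m\xto n\G_m}$ is the point — so the phrase ``proper base change reducing to $\hom^3(X_k,\G_m)$'' is not a proof as written. With these two repairs your argument becomes the paper's.
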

\begin{proof}
    We use the Leray spectral sequence $E_2^{ij}=\hom^i(X,\homR^j\push c\G_m)\implies\hom^{i+j}(\meX,\G_m)$. Since $\homR^2\push c\G_m=0$ by \cref{thm:R2Gm-vanishes}, its low degree terms fit into the following exact sequence:
    \begin{equation}\label{es:prop-stacky-curve-local-i}
        \hom^0(X,\homR^1\push c\G_m)\xtoo{\d_2^{0,1}}\hom^2(X,\G_m)\too\hom^2(\meX,\G_m)\too\hom^1(X,\homR^1\push c\G_m)\xtoo{\d_2^{1,1}}\hom^3(X,\G_m).
    \end{equation}
    We claim that $\d_2^{0,1}=0$ and that $\d_2^{1,1}=0$. For the former, \cref{lem:Br'-prop-red-ii} (or, more accurately, \cref{lem:Br'-prop-red-i} + \cite[Corollaire 21.9.12]{egaiv.4}) applied to $X$ shows that $\Br'(X)\simeq\Br'(X_k)$. At the same time, since $X_k$ is a proper curve over a separably closed field, \cite[Corollaire 5.8]{Gr3} shows that $\Br'(X_k)=0$. Hence, $\im\d_2^{0,1}\subset\hom^2(X,\G_m)_{\mrm{tors}}=\Br'(X)=0$.

    We next show that $\d_2^{1,1}=0$ as well. It suffices to show that $\hom^3(X,\G_m)_{\mrm{tors}}=0$. Pick an arbitrary $n\ge1$ and note that $\hom^3(X,\G_m\xto n\G_m)\onto\hom^3(X,\G_m)[n]$, so it suffices to show that $\hom^3(X,\G_m\xto n\G_m)=0$. Again, as a consequence of proper base change \cite[Theorem 1.3]{olsson-proper}, $\hom^3(X,\G_m\xto n\G_m)\simeq\hom^3(X_k,\G_m\xto n\G_m)$, so we only need to show the latter vanishes. This group sits in the exact sequence
    \[\hom^3(X_k,\mu_n)\too\hom^3(X_k,\G_m\xto n\G_m)\too\hom^2(X_k,\Gamma),\twhere\Gamma\coloneqq\G_m/n.\]
    Note that $\hom^3(X_k,\mu_n)=0$ by \cite[Theorem VI.1.1]{milne-et} since $2\dim X_k=2<3$. Finally, letting $p\coloneqq\Char k$, $\Gamma$ is $p^{v_p(n)}$-torsion, so $\hom^2(X_k,\Gamma)=0$ by \cite[Remark VI.1.5(b)]{milne-et} since $k$ is separably closed and $\dim X_k=1<2$.

    To finish the proof, we show that $\hom^1(X,\homR^1\push c\G_m)\simeq\hom^1(X,\dual G)$. By \cref{lem:two-step-push-es} applied to $c=\rho\circ\pi$ along with \cref{thm:R2Gm-vanishes} applied to $\rho$, we have an exact sequence $0\to\homR^1\push\rho\G_m\too\homR^1\push c\G_m\too\dual G\too0$. Furthermore, $\homR^1\push\rho\G_m$ is supported on a finite $R$-scheme and so is acyclic. Thus, taking cohomology, we see that $\hom^1(X,\homR^1\push c\G_m)\iso\hom^1(X,\dual G)$, finished the proof.
\end{proof}
\begin{rem}
    Note that \cref{ex:X(1)R} is a special case of \cref{prop:Br-prop-local}.
\end{rem}
\begin{cor}
    The exact sequence of \cref{prop:Br-prop-local} always restricts to an isomorphism $\Br'(\meX)\simeq\hom^1(X,\dual G)$.
\end{cor}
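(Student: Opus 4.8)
The plan is to reduce everything to the closed fibre of $S$, where the coarse space becomes an honest curve over a (separably closed) field and the extension visibly splits off its torsion‑free part. Write $k$ for the residue field of $R$; since $R$ is strictly henselian, $k$ is separably closed, so $\spec k$ is again a strictly local scheme, and all the standing hypotheses of this section are stable under the base change $\spec k\to\spec R$: tameness is a fibrewise condition (so $\meY_k$ and $\meX_k$ stay tame and $\rho_k,c_k$ remain coarse‑space maps, using that coarse spaces of tame stacks commute with arbitrary base change), linear reductivity is fibrewise, $U_k$ is still dense in $X_k$, and `locally Brauerless' is by definition a condition on geometric automorphism groups and hence survives base change. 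So \cref{prop:Br-prop-local} applies equally with base $\spec k$.

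Because the exact sequence of \cref{prop:Br-prop-local} is extracted from the Leray spectral sequence of $c\colon\meX\to X$ -- whose formation, along with that of $\homR^1\push c\G_m$, commutes with base change -- and because the differentials $\d_2^{0,1},\d_2^{1,1}$ vanish over both $X$ and $X_k$, I would first record the commutative ladder of short exact sequences
\[\begin{tikzcd}
	0\ar[r]&\hom^2(X,\G_m)\ar[r]\ar[d]&\hom^2(\meX,\G_m)\ar[r]\ar[d]&\hom^1(X,\dual G)\ar[r]\ar[d]&0\\
	0\ar[r]&\hom^2(X_k,\G_m)\ar[r]&\hom^2(\meX_k,\G_m)\ar[r]&\hom^1(X_k,\dual G)\ar[r]&0,
\end{tikzcd}\]
the vertical maps being the base‑change maps. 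The sheaf $\dual G$ is \'etale over $X$ (\cref{cor:G-dual-etale}) and killed by $\#G$, so $\hom^1(X,\dual G)$ is torsion; moreover, proper base change (\cite[Theorem 1.3]{olsson-proper}) for the torsion sheaf $\dual G$ on the proper $R$‑scheme $X$, combined with the vanishing of the positive‑degree cohomology of the strictly henselian $\spec R$ with torsion coefficients, shows that the right‑hand vertical map $\hom^1(X,\dual G)\to\hom^1(X_k,\dual G)$ is an isomorphism.

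Next I would identify the bottom row: $X_k$ is a smooth projective (hence regular and integral) curve over the separably closed field $k$, so by \cref{prop:brauer-injects} (with $\meU$ running over the dense affine opens of $X_k$) the group $\hom^2(X_k,\G_m)$ embeds into the Brauer group of the function field of $X_k$ and is therefore torsion; since it is also torsion‑free -- this is precisely the assertion $\Br'(X_k)=0$ of \cref{prop:Br-prop-local} over $\spec k$, or \cite[Corollaire 5.8]{Gr3} -- it vanishes. Hence the bottom row degenerates to an isomorphism $\hom^2(\meX_k,\G_m)\iso\hom^1(X_k,\dual G)$, and, the target being torsion, $\hom^2(\meX_k,\G_m)=\Br'(\meX_k)$.

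Finally I would chain the identifications: \cref{lem:Br'-prop-red-ii} gives $\Br'(\meX)\iso\Br'(\meX_k)$, the previous paragraph gives $\Br'(\meX_k)=\hom^2(\meX_k,\G_m)\iso\hom^1(X_k,\dual G)$, and proper base change gives $\hom^1(X_k,\dual G)\iso\hom^1(X,\dual G)$ (the inverse of the map above); commutativity of the ladder identifies the resulting composite with the map $\hom^2(\meX,\G_m)\to\hom^1(X,\dual G)$ of \cref{prop:Br-prop-local} restricted to $\Br'(\meX)$, which is therefore an isomorphism. I expect no serious obstacle: the argument is a formal descent to the closed point, and the only thing requiring care is the (routine) verification that the hypotheses and the Leray‑derived exact sequence are genuinely compatible with the base change $\spec k\to\spec R$, so that \cref{prop:Br-prop-local} may legitimately be invoked over $\spec k$.
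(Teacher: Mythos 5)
Your argument is correct and follows essentially the same route as the paper: reduce to the closed fibre via \cref{lem:Br'-prop-red-ii}, use proper base change for the \'etale torsion sheaf $\dual G$ to identify $\hom^1(X,\dual G)$ with $\hom^1(X_k,\dual G)$, and invoke the vanishing of $\hom^2(X_k,\G_m)=\Br'(X_k)$ for a regular proper curve over a separably closed field to collapse the sequence over $k$. The paper's proof is terser (it only checks surjectivity, injectivity on torsion being immediate from the torsion-freeness of $\hom^2(X,\G_m)$ asserted in \cref{prop:Br-prop-local}), but the substance is identical.
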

\begin{proof}
    One needs to show that the map $\Br'(\meX)\subset\hom^2(\meX,\G_m)\to\hom^1(X,\dual G)$ is surjective. Consider the exact sequence
    \[0\too\hom^2(X_k,\G_m)\too\hom^2(\meX_k,\G_m)\too\hom^1(X_k,\dual G)\too0\]
    over $k$. Since $X_k$ is regular, $\hom^2(X_k,\G_m)=\Br'(X_k)$ (see \cite[Proposition 6.6.7]{poonen-rat-pts}) and so vanishes. Thus, $\hom^2(\meX_k,\G_m)\iso\hom^1(X_k,\dual G)$. Combining this with \cref{lem:Br'-prop-red-ii} applied to $\meX$ and proper base change \cite[Corollary VI.2.7]{milne-et} applied to $\dual G$ on $X$ shows
    \[\Br'(\meX)\iso\Br'(\meX_k)\xinto\sim\hom^2(\meX_k,\G_m)\iso\hom^1(X_k,\dual G)\fiso\hom^1(X,\dual G),\]
    as desired.
\end{proof}
\begin{cor}\label{cor:Br-prop-reg-local}
    With notation as in \cref{prop:Br-prop-local}, if $S$ is furthermore regular, then $\hom^2(\meX,\G_m)=\Br'(\meX)\simeq\hom^1(X,\dual G)$.
\end{cor}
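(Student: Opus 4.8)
The plan is to deduce the statement from \cref{prop:Br-prop-local} together with the vanishing of $\hom^2(X,\G_m)$, which is the only place the extra hypothesis on $S$ enters.

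First I would observe that $X$ is a regular, integral, noetherian scheme: $R$ is a regular local ring, hence a domain, so $\spec R$ is integral; $X\to\spec R$ is smooth (whence $X$ is regular), proper, and surjective, with geometrically connected fibres over the connected base $\spec R$, so $X$ is connected; and a connected regular scheme is integral. Consequently $\hom^2(X,\G_m)=\Br'(X)$ by \cite[Proposition 6.6.7]{poonen-rat-pts} --- equivalently, $\hom^2(X,\G_m)$ is torsion because it embeds into $\Br k(X)$, via \cref{prop:brauer-injects} applied to the dense opens of $X$ and passage to the generic point $\spec k(X)=\varprojlim_U U$. Since \cref{prop:Br-prop-local} already gives $\Br'(X)=0$, this yields $\hom^2(X,\G_m)=0$.

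Feeding $\hom^2(X,\G_m)=0$ into the exact sequence of \cref{prop:Br-prop-local} then produces an isomorphism $\hom^2(\meX,\G_m)\iso\hom^1(X,\dual G)$. To finish, I would note that $\dual G$ is finite \'etale over $S$ by \cref{cor:G-dual-etale}, so as an \'etale sheaf on $X$ it is killed by $\#G$; hence $\hom^1(X,\dual G)$, and therefore $\hom^2(\meX,\G_m)$, is torsion. Thus $\hom^2(\meX,\G_m)=\hom^2(\meX,\G_m)_{\mrm{tors}}=\Br'(\meX)$, and the preceding corollary identifies this group with $\hom^1(X,\dual G)$, giving the chain $\hom^2(\meX,\G_m)=\Br'(\meX)\simeq\hom^1(X,\dual G)$.

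I do not anticipate a genuine obstacle here: all the content sits in the regularity step, and even there the one slightly delicate point --- that regularity of $X$ forces $\hom^2(X,\G_m)$ to be torsion --- is standard and can be handled either by citing \cite[Proposition 6.6.7]{poonen-rat-pts} or by the limit argument indicated above.
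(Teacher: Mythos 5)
Your proof is correct and follows essentially the same route as the paper: regularity of $S$ gives regularity of $X$, whence $\hom^2(X,\G_m)=\Br'(X)=0$ by \cite[Proposition 6.6.7]{poonen-rat-pts} together with \cref{prop:Br-prop-local}, and the exact sequence there yields the isomorphism. You simply spell out two points the paper leaves implicit (integrality of $X$, and that $\hom^1(X,\dual G)$ is torsion so $\hom^2(\meX,\G_m)=\Br'(\meX)$), both of which are handled correctly.
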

\begin{proof}
    If $S$ is regular, then so is $X$ and then $\hom^2(X,\G_m)=\Br'(X)=0$, using \cite[Proposition 6.6.7]{poonen-rat-pts} for the first equality. Consequently, $\hom^2(\meX,\G_m)\iso\hom^1(X,\dual G)$.
\end{proof}
When $S$ is not strictly local, we consider the commutative diagram
\[\mapover\meX cXfg{S,}\]
and proceed by comparing the Leray spectral sequence for $f$ to the one for $g$, i.e., we wish to compare
\begin{equation}\label{SS:leray-prop-comp}
    E_2^{ij}=\hom^i(S,\homR^j\push f\G_m)\implies\hom^{i+j}(\meX,\G_m)\twith F_2^{ij}=\hom^i(S,\homR^j\push g\G_m)\implies\hom^{i+j}(X,\G_m).
\end{equation}
\begin{lemma}
    $\G_m\iso\push g\G_m\iso\push f\G_m$
\end{lemma}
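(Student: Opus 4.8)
The plan is to factor $f$ as the composite $g\circ c$ and treat the two pieces separately. Since pushforward composes, $\push f\G_m=\push g\,\push c\G_m$; and because $c\colon\meX\to X$ is a coarse moduli space map, the standard fact $\push c\G_m\simeq\G_m$ (already invoked in, e.g., \cref{lem:gerbe-pushforwards,lem:two-step-push-es}) gives $\push f\G_m\simeq\push g\G_m$. So it remains to establish the isomorphism $\G_m\iso\push g\G_m$ of \'etale sheaves on $S$.

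For this I would check that the unit map $\G_m\to\push g\G_m$ (induced by $\pull g$) is an isomorphism on stalks at geometric points. The sheaf $\push g\G_m$ is the \'etale sheafification of $T\mapsto\Gamma(X_T,\msO_{X_T})^\times$ on $\Sch_S$, so its stalks are filtered colimits of such groups, and it suffices to know that $\Gamma(X_R,\msO_{X_R})=R$ whenever $R$ is a strictly henselian local ring equipped with a map $\spec R\to S$. But $g\colon X\to S$ is proper, flat, and of finite presentation, with geometrically connected fibers by hypothesis and geometrically reduced fibers since $X/S$ is smooth; hence its fibers are geometrically integral, and by the theory of cohomology and base change the formation of $\push g\msO_X$ commutes with arbitrary base change and $\push g\msO_X\simeq\msO_S$ (see, e.g., \cite{stacks-project}). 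In particular $\Gamma(X_R,\msO_{X_R})=R$, so $\Gamma(X_R,\G_m)=R^\times=\G_m(R)$, and the unit map $\G_m\to\push g\G_m$ is an isomorphism, as desired.

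The only genuine input here is the identity $\push g\msO_X\simeq\msO_S$ together with its compatibility with base change; once this standard fact is in hand the rest is formal. (Note that the analogous statement for $c$ is exactly the input $\push c\G_m\simeq\G_m$ used repeatedly above, so the stacky direction contributes no new difficulty.)
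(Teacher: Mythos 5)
Your proof is correct and takes essentially the same route as the paper: the paper likewise deduces $\G_m\iso\push g\G_m$ from $g$ being proper with geometrically reduced and connected (hence integral) fibers, and $\push g\G_m\iso\push f\G_m$ from $X$ being $\meX$'s coarse space. You have simply spelled out the details (the factorization $\push f=\push g\push c$, and the stalkwise check via $\push g\msO_X\simeq\msO_S$ compatibly with base change) that the paper's two-sentence proof leaves implicit.
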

\begin{proof}
    The first isomorphism follows from the fact that $g$ is proper with geometrically reduced and irreducible fibers. The second is a consequence of the fact that $X$ if $\meX$'s coarse space.
\end{proof}
\begin{lemma}\label{lem:prop-R-comp}
    There are short exact sequences
    \begin{alignat}{9}
        0 &\too\;&& \centermathcell{\homR^1\push g\G_m} &&\too &&\centermathcell{\homR^1\push f\G_m} &&\too&&\; \push g\homR^1\push c\G_m &&\too &&\; 0 \nonumber\\
        0 &\too && \homR^2\push g\G_m &&\too&&\; \homR^2\push f\G_m &&\too&& \centermathcell{\homR^1\push g\dual G} &&\too &&\; 0. \label{es:prop-R2}
    \end{alignat}
    Furthermore, $\homR^2\push g\G_m=0$ (and so $\homR^2\push f\G_m\iso\homR^1\push g\dual G$) if $S$ is regular.
\end{lemma}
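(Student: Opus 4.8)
The plan is to exploit the Grothendieck spectral sequence $E_2^{ij}=\homR^i\push g\p{\homR^j\push c\G_m}\implies\homR^{i+j}\push f\G_m$ attached to the factorization $f=g\circ c$. Since $c$ is a coarse space map we have $\push c\G_m\simeq\G_m$, and since $\meX$ is tame and locally Brauerless, \cref{thm:R2Gm-vanishes} gives $\homR^2\push c\G_m=0$; in particular $E_2^{0,2}=\push g\homR^2\push c\G_m=0$. Feeding this into the seven-term exact sequence of low-degree terms (the standard extension of the five-term one, valid because $E_2^{0,2}=0$) yields an exact sequence of \'etale sheaves on $S$
\[0\to\homR^1\push g\G_m\to\homR^1\push f\G_m\to\push g\homR^1\push c\G_m\xto{d_2^{0,1}}\homR^2\push g\G_m\to\homR^2\push f\G_m\to\homR^1\push g\homR^1\push c\G_m\xto{d_2^{1,1}}\homR^3\push g\G_m.\]
So the two asserted short exact sequences will follow once we establish: (i) $d_2^{0,1}=0$; (ii) $d_2^{1,1}=0$; and (iii) $\homR^1\push g\homR^1\push c\G_m\simeq\homR^1\push g\dual G$.

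For (iii), I would apply \cref{lem:two-step-push-es} to $c=\rho\circ\pi$ together with \cref{thm:R2Gm-vanishes} applied to the locally Brauerless $\meY$ (to kill $\homR^2\push\rho\G_m$), obtaining a short exact sequence $0\to\homR^1\push\rho\G_m\to\homR^1\push c\G_m\to\dual G\to0$ of sheaves on $X$. Now $\homR^1\push\rho\G_m$ is supported on the stacky locus of $\rho$, which is contained in $X\sm U$; and $X\sm U$ is closed in the proper $S$-curve $X$ and has finite fibres over $S$ (it is the complement of the dense open $U_s$ in each curve $X_s$), hence is finite over $S$. Consequently $\homR^i\push g\homR^1\push\rho\G_m=0$ for all $i\ge1$, and pushing the above sequence forward along $g$ gives $\homR^1\push g\homR^1\push c\G_m\iso\homR^1\push g\dual G$, as desired.

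For (i) and (ii), vanishing of a morphism of \'etale sheaves on $S$ may be checked on stalks, and base-changing along $\spec\msO_{S,\bar s}\to S$ is harmless by continuity of \'etale cohomology (all of $f,g,c$ being finitely presented and proper), which identifies the stalks of $E_\bullet$ with the analogous spectral sequence for $\meX_R\to X_R\to\spec R$, $R=\msO_{S,\bar s}$ strictly Henselian. Thus it suffices to treat $S=\spec R$ strictly local. There, the source $\push g\homR^1\push c\G_m=\hom^0(X_R,\homR^1\push c\G_m)$ of $d_2^{0,1}$, and likewise the source $\homR^1\push g\homR^1\push c\G_m=\hom^1(X_R,\homR^1\push c\G_m)$ of $d_2^{1,1}$, are torsion, because $\homR^1\push c\G_m$ is (an extension of the torsion sheaves $\dual G$ and $\homR^1\push\rho\G_m$). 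On the other hand $\homR^2\push g\G_m=\hom^2(X_R,\G_m)$ has torsion subgroup $\Br'(X_R)=0$ by \cref{prop:Br-prop-local}, and $\homR^3\push g\G_m=\hom^3(X_R,\G_m)$ is torsion-free by the argument inside the proof of \cref{prop:Br-prop-local} (proper base change gives $\hom^3(X_R,\G_m\xto n\G_m)\simeq\hom^3(X_k,\G_m\xto n\G_m)$, squeezed between $\hom^3(X_k,\mu_n)=0$ and $\hom^2(X_k,\G_m/n)=0$). Hence $d_2^{0,1}$ and $d_2^{1,1}$ carry torsion sheaves into torsion-free ones and therefore vanish, completing both short exact sequences. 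Finally, when $S$ is regular, $X$ and each $X_R$ are regular, so $\hom^2(X_R,\G_m)$ is torsion \cite[Proposition 6.6.7]{poonen-rat-pts}; combined with $\Br'(X_R)=0$ this gives $\hom^2(X_R,\G_m)=0$, i.e.\ $\homR^2\push g\G_m=0$, whence $\homR^2\push f\G_m\iso\homR^1\push g\dual G$.

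The main obstacle I anticipate is the stalkwise reduction used for (i) and (ii): since proper base change is unavailable for $\G_m$ itself, one must argue carefully that only the $d_2$-differentials (not the full $E_2$-page) need to be understood after passing to $\spec\msO_{S,\bar s}$, and then re-use, in the right order, the three separate torsion-versus-torsion-free inputs already assembled in the proof of \cref{prop:Br-prop-local}.
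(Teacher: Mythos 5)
Your argument is correct and follows essentially the same route as the paper: the paper likewise takes the Grothendieck spectral sequence for $f=g\circ c$, kills $\homR^2\push c\G_m$ via \cref{thm:R2Gm-vanishes}, and then passes to stalks over strictly local rings, where the needed vanishing of the two $d_2$-differentials and the identification $\homR^1\push g\homR^1\push c\G_m\simeq\homR^1\push g\dual G$ are exactly the torsion-versus-torsion-free and acyclicity-of-finitely-supported-sheaves inputs already assembled in the proof of \cref{prop:Br-prop-local}, with the regular case handled by \cref{cor:Br-prop-reg-local}. The only cosmetic difference is that you phrase the conclusion as vanishing of $d_2^{0,1}$ and $d_2^{1,1}$ in the seven-term sequence, whereas the paper states it as injectivity of $\homR^2\push g\G_m\to\homR^2\push f\G_m$ and surjectivity of $\homR^2\push f\G_m\to\homR^1\push g\homR^1\push c\G_m$.
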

\begin{proof}
    We begin with the exact sequence of low degree terms in the Grothendieck spectral sequence $\homR^i\push g\p{\homR^j\push c\G_m}\implies\homR^{i+j}\push f\G_m$. Using that $\push c\G_m=\G_m$ and that $\homR^2\push c\G_m=0$ (by \cref{thm:R2Gm-vanishes}), this takes the form
    \[0\to\homR^1\push g\G_m\to\homR^1\push f\G_m\to\push g\homR^1\push c\G_m\to\homR^2\push g\G_m\to\homR^2\push f\G_m\to\homR^1\push g\homR^1\push c\G_m.\]
    Passing to the level of stalks, (the proof of) \cref{prop:Br-prop-local} shows that $\homR^2\push g\G_m\to\homR^2\push f\G_m$ is injective, that $\homR^1\push g\homR^1\push c\G_m\iso\homR^1\push g\dual G$, and that $\homR^2\push f\G_m\to\homR^1\push g\homR^1\push c\G_m\simeq\homR^1\push g\dual G$ is surjective. This gives the two claimed short exact sequences. Finally, if $S$ is regular, then \cref{cor:Br-prop-reg-local} shows that $\homR^2\push g\G_m=0$.
\end{proof}
\begin{notn}
    We denote the relative Picard sheaves:
    \[\Pic_{\meX/S}\coloneqq\homR^1\push f\G_m\tcomma\Pic_{X/S}\coloneqq\homR^1\push g\G_m,\tand\Pic_{\meX/X}\coloneqq\homR^1\push c\G_m.\]
    Consequently, the first short exact sequence of \cref{lem:prop-R-comp} can be rewritten as
    \begin{equation}\label{es:prop-R1}
        0\too\Pic_{X/S}\too\Pic_{\meX/S}\too\push g\Pic_{\meX/X}\too0.
    \end{equation}
\end{notn}
As previously alluded, we want to consider the morphism $F_2^{ij}\to E_2^{ij}$ between the two spectral sequences in \cref{SS:leray-prop-comp}. Consider the low degree terms of each sequence, this gives rise to the following commutative diagram with exact columns:
\begin{equation}\label{es:ladder}
    \begin{tikzcd}
        0\ar[d]&0\ar[d]\\
        \Pic S\ar[r, equals]\ar[d]&\Pic S\ar[d]\\
        \Pic X\ar[r, hook]\ar[d]&\Pic\meX\ar[d]\\
        \Pic_{X/S}(S)\ar[r, hook]\ar[d]&\Pic_{\meX/S}(S)\ar[d]\\
        \hom^2(S,\G_m)\ar[r, equals]\ar[d]&\hom^2(S,\G_m)\ar[d]\\
        \ker\p{\hom^2(X,\G_m)\to\hom^0(S,\homR^2\push g\G_m)}\ar[r]\ar[d]&\ker\p{\hom^2(\meX,\G_m)\to\hom^0(S,\homR^2\push f\G_m)}\ar[d]\\
        \hom^1(S,\Pic_{X/S})\ar[r]\ar[d]&\hom^1(S,\Pic_{\meX/S})\ar[d]\\
        \ker\p{\hom^3(S,\G_m)\to\hom^3(X,\G_m)}\ar[r, hook]&\ker\p{\hom^3(S,\G_m)\to\hom^3(\meX,\G_m)}
    \end{tikzcd}
\end{equation}
Above, the map $\Pic_{X/S}(S)\to\Pic_{\meX/S}(S)$ (and hence also $\Pic X\to\Pic\meX$) is injective by \cref{es:prop-R1}.
\begin{assump}
    Assume from now on that $\meX(S)\neq\emptyset$.
\end{assump}
This assumption guarantees that $\hom^i(S,\G_m)\to\hom^i(\meX,\G_m)$ is injective for all $i$ and so lets us split \cref{es:ladder} into the two homomorphisms of (split) short exact sequences
\begin{equation}\label{homses:prop-R1}
    \begin{tikzcd}
        0\ar[r]&\Pic S\ar[d, equals]\ar[r, "\pull g"]&\Pic X\ar[d, "\pull c"]\ar[r]&\Pic_{X/S}(S)\ar[r]\ar[d]&0\\
        0\ar[r]&\Pic S\ar[r, "\pull f"]&\Pic\meX\ar[r]&\Pic_{\meX/S}(S)\ar[r]&0
    \end{tikzcd}
\end{equation}
and
\begin{equation}\label{homses:prop-R2}
    \begin{tikzcd}
        0\ar[r]&\hom^2(S,\G_m)\ar[d, equals]\ar[r, "\pull g"]&\ker\p{\hom^2(X,\G_m)\to\hom^0(S,\homR^2\push g\G_m)}\ar[d, "\pull c"]\ar[r]&\hom^1(S,\Pic_{X/S})\ar[r]\ar[d]&0\\
        0\ar[r]&\hom^2(S,\G_m)\ar[r, "\pull f"]&\ker\p{\hom^2(\meX,\G_m)\to\hom^0(S,\homR^2\push f\G_m)}\ar[r]&\hom^1(S,\Pic_{\meX/S})\ar[r]&0.
    \end{tikzcd}
\end{equation}
\begin{prop}
    There is an exact sequence
    \[0\too\Pic X\xtoo{\pull c}\Pic\meX\too\Pic_{\meX/X}(X)\too\hom^1(S,\Pic_{X/S})\too\hom^1(S,\Pic_{\meX/S}).\]
\end{prop}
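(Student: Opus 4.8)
The plan is to obtain this sequence formally, by feeding the short exact sequence \cref{es:prop-R1} of relative Picard sheaves into the functor $\hom^\bullet(S,-)$ and splicing the outcome with the cokernel comparison already packaged in \cref{homses:prop-R1}. No new geometric input is needed: the hard work — computing $\push c\G_m$, $\homR^1\push c\G_m$, $\homR^2\push\rho\G_m$ and hence \cref{es:prop-R1}, \cref{homses:prop-R1} — has already been done, and what remains is a diagram chase.

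First I would take the long exact cohomology sequence on $S$ of
\[0\too\Pic_{X/S}\too\Pic_{\meX/S}\too\push g\Pic_{\meX/X}\too0.\]
Since $\hom^0(S,\Pic_{X/S})=\Pic_{X/S}(S)$, $\hom^0(S,\Pic_{\meX/S})=\Pic_{\meX/S}(S)$, and $\hom^0(S,\push g\Pic_{\meX/X})=\hom^0(X,\Pic_{\meX/X})=\Pic_{\meX/X}(X)$, the beginning of this long exact sequence reads
\[0\to\Pic_{X/S}(S)\to\Pic_{\meX/S}(S)\to\Pic_{\meX/X}(X)\xto{\partial}\hom^1(S,\Pic_{X/S})\to\hom^1(S,\Pic_{\meX/S}).\]
Next I would apply the snake lemma to \cref{homses:prop-R1}: since the vertical arrow there on $\Pic S$ is the identity, the snake lemma shows simultaneously that $\pull c\colon\Pic X\to\Pic\meX$ is injective (consistent with the remark after \cref{es:ladder}) and that $\pull c$ induces an isomorphism $\coker\p{\pull c}\iso\coker\p{\Pic_{X/S}(S)\to\Pic_{\meX/S}(S)}$. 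Finally I would splice: by exactness at $\Pic_{\meX/X}(X)$ in the displayed sequence, $\coker\p{\Pic_{X/S}(S)\to\Pic_{\meX/S}(S)}$ is identified with $\ker\partial\subseteq\Pic_{\meX/X}(X)$, so the composite $\Pic\meX\onto\Pic_{\meX/S}(S)\to\Pic_{\meX/X}(X)$ (the surjection from \cref{homses:prop-R1} followed by the map $\hom^0(S,-)$ of $\Pic_{\meX/S}\to\push g\Pic_{\meX/X}$) has kernel $\pull c\,\Pic X$ and image $\ker\partial$; glued to the tail $\Pic_{\meX/X}(X)\xto{\partial}\hom^1(S,\Pic_{X/S})\to\hom^1(S,\Pic_{\meX/S})$ this is exactly the asserted exact sequence.

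The one step I expect to need the most care — though it is hardly an obstacle — is checking that the middle map $\Pic\meX\to\Pic_{\meX/X}(X)$ produced this way agrees with the natural one, namely the edge morphism $\hom^1(\meX,\G_m)\to\hom^0(X,\homR^1\push c\G_m)$ of the Leray spectral sequence for $c$. This is the same compatibility between the Leray spectral sequences for $f$ and $g$ and the Grothendieck spectral sequence for $f=g\circ c$ that already underlies \cref{lem:prop-R-comp} and the ladder \cref{es:ladder}, so it requires no new argument. All told, the proposition contains no content beyond the earlier vanishing result $\homR^2\push c\G_m=0$ together with \cref{es:prop-R1} and \cref{homses:prop-R1}, and the proof is purely a diagram chase.
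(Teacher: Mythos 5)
Your proposal matches the paper's own proof: the paper likewise applies the snake lemma to \cref{homses:prop-R1} to identify $\coker\pull c$ with $\coker\p{\Pic_{X/S}(S)\to\Pic_{\meX/S}(S)}$, then uses the long exact cohomology sequence of \cref{es:prop-R1} to identify that cokernel with the kernel of $\Pic_{\meX/X}(X)\to\hom^1(S,\Pic_{X/S})$ and splices. The extra compatibility check you flag for the middle map is a reasonable point of care but, as you say, follows from the same spectral-sequence comparisons already used for \cref{es:ladder}.
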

\begin{proof}
    The snake lemma applied to \cref{homses:prop-R1} shows that $\coker\pull c\simeq\coker\p{\Pic_{X/S}(S)\to\Pic_{\meX/S}(S)}$. From \cref{es:prop-R1}, we see that this latter cokernel is the kernel of $\Pic_{\meX/X}(X)=\hom^0(S,\push g\Pic_{\meX/X})\to\hom^1(S,\Pic_{X/S})$ and that we can continue the just-established sequence $0\to\Pic X\to\Pic\meX\to\Pic_{X/X}(X)\to\hom^1(S,\Pic_{X/S})$ as indicated in the proposition statement.
\end{proof}
\begin{thm}\label{thm:Br-prop-best statement I could muster}
    Assume that $S$ is regular, that $\meX(S)\neq\emptyset$, and that $\hom^1(S,\Pic_{X/S})\to\hom^1(S,\Pic_{\meX/S})$ is injective.\footnote{Equivalently, $\Pic\meX\to\Pic_{\meX/X}(X)$ is surjective.} Then, $\hom^2(\meX,\G_m)$ admits a filtration whose graded pieces are
    \begin{itemize}
        \item $\hom^2(X,\G_m)$;
        \item $\coker\p{\hom^1(S,\Pic_{X/S})\to\hom^1(S,\Pic_{\meX/S})}$; and
        \item a subgroup of $\hom^0(S,\homR^1\push g\dual G)$.
    \end{itemize}
\end{thm}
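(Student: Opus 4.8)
The plan is to read the filtration straight off the comparison of Leray spectral sequences already assembled in \cref{es:ladder} and \cref{homses:prop-R2}, specialized to regular $S$. First I would invoke \cref{lem:prop-R-comp}: regularity of $S$ gives $\homR^2\push g\G_m=0$ and $\homR^2\push f\G_m\simeq\homR^1\push g\dual G$. Feeding these into \cref{homses:prop-R2} turns its top row into the short exact sequence $0\to\hom^2(S,\G_m)\to\hom^2(X,\G_m)\to\hom^1(S,\Pic_{X/S})\to0$ (the term $\ker(\hom^2(X,\G_m)\to\hom^0(S,\homR^2\push g\G_m))$ collapses onto $\hom^2(X,\G_m)$ since $\homR^2\push g\G_m=0$), and it identifies the middle term of the bottom row with $K\coloneqq\ker\bigl(\hom^2(\meX,\G_m)\to\hom^0(S,\homR^1\push g\dual G)\bigr)$, where the displayed arrow is the edge map of the Leray spectral sequence for $f$ composed with the isomorphism $\homR^2\push f\G_m\simeq\homR^1\push g\dual G$. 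In this ladder the left vertical map is the identity on $\hom^2(S,\G_m)$, the middle is $\pull c$, and the right is $\iota\colon\hom^1(S,\Pic_{X/S})\to\hom^1(S,\Pic_{\meX/S})$, which is injective by hypothesis.

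The second step is the snake lemma applied to this homomorphism of short exact sequences. Since the left arrow is an isomorphism, one obtains $\ker\pull c\cong\ker\iota=0$ and $\coker\pull c\cong\coker\iota=\coker\bigl(\hom^1(S,\Pic_{X/S})\to\hom^1(S,\Pic_{\meX/S})\bigr)$. Hence $\pull c$ embeds $\hom^2(X,\G_m)$ as a subgroup of $K$ with quotient $\coker\iota$, giving a two-step filtration of $K$ whose graded pieces are $\hom^2(X,\G_m)$ and $\coker\bigl(\hom^1(S,\Pic_{X/S})\to\hom^1(S,\Pic_{\meX/S})\bigr)$. Finally, by the very definition of $K$ the quotient $\hom^2(\meX,\G_m)/K$ injects into $\hom^0(S,\homR^1\push g\dual G)$, so appending $K\subseteq\hom^2(\meX,\G_m)$ to the previous filtration yields the claimed three-step filtration of $\hom^2(\meX,\G_m)$, with the three graded pieces in the stated order.

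The remaining points are bookkeeping that is already in place in the text preceding the theorem: that \cref{homses:prop-R2} is a genuine homomorphism of short exact sequences (this is where $\meX(S)\neq\emptyset$ is used, to split off the $\hom^\bullet(S,\G_m)$ edge terms), that its right-hand vertical arrow really is $\iota$, and that the edge-map target becomes $\hom^0(S,\homR^1\push g\dual G)$ after the substitution from \cref{lem:prop-R-comp}. I do not expect a genuine obstacle; the part needing the most care is simply tracking where the hypotheses are used — the injectivity of $\iota$ enters exactly once, to force $\ker\pull c=0$ via the snake lemma, and regularity of $S$ enters exactly twice, once for each of the two vanishings supplied by \cref{lem:prop-R-comp}.
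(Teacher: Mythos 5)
Your proposal is correct and is essentially the paper's own argument: apply \cref{lem:prop-R-comp} for the two vanishings supplied by regularity, run the snake lemma on the resulting homomorphism of short exact sequences from \cref{homses:prop-R2}, and read off the three-step filtration $0\subset\hom^2(X,\G_m)\subset\ker\p{\hom^2(\meX,\G_m)\to\hom^0(S,\homR^1\push g\dual G)}\subset\hom^2(\meX,\G_m)$. (The published proof cites \cref{homses:prop-R1} for this snake-lemma step, which is evidently a typo for \cref{homses:prop-R2}; you have correctly identified the intended diagram.)
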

\begin{proof}
    Because $S$ is regular, $\homR^2\push g\G_m=0$ and $\homR^2\push f\G_m\simeq\homR^1\push g\dual G$; see \cref{lem:prop-R-comp}. Therefore, the snake lemma applied to \cref{homses:prop-R1} shows that $\hom^2(X,\G_m)\xto{\pull c}\hom^2(\meX,\G_m)$ is injective and that
    \[\coker\p{\hom^1(S,\Pic_{X/S})\to\hom^1(S,\Pic_{\meX/S})}\simeq\frac{\ker\p{\hom^2(\meX,\G_m)\to\hom^0(S,\homR^1\push g\dual G)}}{\hom^2(X,\G_m)}.\]
    This finishes the proof. Namely, the claimed filtration is
    \[0\subset\hom^2(X,\G_m)\subset\ker\p{\hom^2(\meX,\G_m)\to\hom^0(S,\homR^1\push g\dual G)}\subset\hom^2(\meX,\G_m).\qedhere\]
\end{proof}
\begin{ex}
    Say $S$ is a regular $\Z[1/6]$-scheme and $\meX=\meX(1)$ is the moduli space of generalized elliptic curves. In this case, $\Pic_{\meX/S}=\ul\Z$ by \cite{FO}, so $\hom^1(S,\Pic_{\meX/S})=\hom^1(S,\Z)=0$ since $S$ is regular. Furthermore, $G=\ul{\zmod2}$ and the stalks of $\homR^1\push g\dual G$ are of the form $\hom^1(\P^1_R,\mu_2)=0$ (for some strictly local $R$), so $\homR^1\push g\dual G=0$ as well. Thus, \cref{thm:Br-prop-best statement I could muster} shows that
    \[\hom^2(\meX(1)_S,\G_m)\simeq\hom^2(\P^1_S,\G_m)\]
    in this case. This computation will be generalized in \cref{sect:examples}, where we prove \cref{thma:mod-curve}.
\end{ex}
\begin{rem}
    The conditions in \cref{thm:Br-prop-best statement I could muster} are made out of convenience instead of necessity. If one is interested in computing the Brauer group of a specific (possibly non-proper) stacky curve $\meX/S$ not satisfying all these assumptions, then this can likely still be done by running through the strategy employed in this section with some extra work. In \cref{sect:examples}, we do this for the modular curves appearing in \cref{thma:mod-curve}.
\end{rem}

\section{\bf Residue exact sequence}\label{sect:res-exact}
One highly useful tool for understanding Brauer groups of schemes is the residue exact sequence; see, for example, \cite[Theorem 6.8.3]{poonen-rat-pts} or \cite[Proposition 2.1]{Gr3}. Our goal in the current section is to explore the extension of this sequence to the stacky setting. One such extension has been obtained previously in \cite[Proposition 2]{hassett2015stable}, at least for 2-dimensional DM stacks. In this section, we first use a stacky version of Artin's relative cohomological purity theorem to construct residue maps for stacks which are smooth over a scheme. We then explore the case of 1-dimensional stacks, where residue maps can more easily be constructed using the usual divisor exact sequence as in \cite[Example III.2.22]{milne-et} or \cite[(2) in Section 1]{Gr2}.
\begin{prop}
    Let $S$ be an affine regular, noetherian scheme of dimension $\le1$, and fix a prime $\l$ which is invertible on $S$. Let $i\colon\meD\into\meX$ be a closed immersion of smooth algebraic $S$-stacks which is pure of codimension $1$. Let $j\colon\meU\into\meX$ be the open complement of $\meD$. Then, there is an exact sequence
    \[0\too\Br'(\meX)\{\l\}\too\Br'(\meU)\{\l\}\too\hom^1(\meD,\Q_\l/\Z_\l)\too\hom^3(\meX,\G_m)\{\l\}\too\hom^3(\meU,\G_m)\{\l\}.\]
\end{prop}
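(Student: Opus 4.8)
The plan is to deduce this from Gabber's relative purity theorem (or rather its stacky analogue) applied to the closed immersion $i\colon\meD\into\meX$ of smooth algebraic $S$-stacks. The key input is a Gysin/purity triangle: since $i$ is pure of codimension $1$ and everything is smooth over the regular $1$-dimensional base $S$ with $\l$ invertible, absolute cohomological purity (in the form proven by Fujiwara--Gabber, extended to smooth stacks by taking a smooth presentation and descending) gives $\homR^qi^!\mu_{\l^n}=0$ for $q\neq2$ and $\homR^2i^!\mu_{\l^n}\simeq\mu_{\l^n}^{\otimes(-1)}$ on $\meD$, i.e. $i^!\mu_{\l^n}\simeq\mu_{\l^n}^{\otimes(-1)}[-2]$ in $D(\meD)$. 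The associated localization long exact sequence for the triple $(\meX,\meU,\meD)$ with coefficients in $\mu_{\l^n}$ then reads
\[\cdots\to\hom^p(\meX,\mu_{\l^n})\to\hom^p(\meU,\mu_{\l^n})\to\hom^{p-1}(\meD,\mu_{\l^n}^{\otimes(-1)})\to\hom^{p+1}(\meX,\mu_{\l^n})\to\cdots.\]

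Next I would take the colimit over $n$ (using $\Q_\l/\Z_\l(m)=\varinjlim\mu_{\l^n}^{\otimes m}$), and pass to the relevant Brauer-theoretic pieces via the Kummer sequence $1\to\mu_{\l^n}\to\G_m\xrightarrow{\l^n}\G_m\to1$. This identifies $\varinjlim_n\hom^p(\meY,\mu_{\l^n})$ with an extension built out of $\Pic(\meY)\otimes\Q_\l/\Z_\l$ (or its higher analogue) and $\hom^{p}(\meY,\G_m)\{\l\}$; in the range $p=2,3$ the relevant terms are $\Br'(\meY)\{\l\}=\hom^2(\meY,\G_m)\{\l\}$ and $\hom^3(\meY,\G_m)\{\l\}$, together with $\Pic$-torsion contributions that I will need to check vanish or cancel appropriately. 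Concretely: $\hom^2(\meY,\Q_\l/\Z_\l(1))$ sits in an exact sequence $0\to\Pic(\meY)\otimes\Q_\l/\Z_\l\to\hom^2(\meY,\Q_\l/\Z_\l(1))\to\Br'(\meY)\{\l\}\to0$, and similarly one degree up. On $\meD$, since $\l$ is invertible and (étale-locally, after a finite extension) $\mu_{\l^n}\simeq\zmod{\l^n}$, I can Tate-twist freely to rewrite $\varinjlim_n\hom^1(\meD,\mu_{\l^n}^{\otimes(-1)})$ as $\hom^1(\meD,\Q_\l/\Z_\l)$ — this is where the statement's appearance of $\hom^1(\meD,\Q_\l/\Z_\l)$ (untwisted) rather than a twisted version comes from, and I should note that the identification is canonical only up to the chosen twist, matching the convention used for residue maps; alternatively one keeps the twist throughout and only untwists in the final display.

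The main obstacle will be \emph{diagram-chasing the Kummer-sequence contributions to produce exactly the clean five-term sequence claimed}, in particular verifying that the $\Pic$-torsion terms do not intrude. The cleanest route: use the five-term exact sequence
\[0\to\Pic(\meX)\otimes\Q_\l/\Z_\l\to\Pic(\meU)\otimes\Q_\l/\Z_\l\to\hom^1(\meD,\Q_\l/\Z_\l)\to(\text{next})\]
coming from purity in degrees $0,1$ (note $\homR^qi^!\G_m$ vanishes for $q\le1$ so $\hom^1(\meX,\G_m)\to\hom^1(\meU,\G_m)$ has cokernel inside $\hom^0(\meD,\cdots)$), \emph{concatenated} with the degree $2,3$ piece; one must check the connecting map $\hom^1(\meD,\Q_\l/\Z_\l)\to\Br'(\meX)\{\l\}$ has image precisely the kernel of $\Br'(\meX)\{\l\}\to\Br'(\meU)\{\l\}$ — but actually the sequence as stated starts with the injection $\Br'(\meX)\{\l\}\into\Br'(\meU)\{\l\}$, which follows directly from \cref{prop:brauer-injects} (Brauer groups of regular noetherian stacks inject into those of dense opens), so the residue map $\Br'(\meU)\{\l\}\to\hom^1(\meD,\Q_\l/\Z_\l)$ has kernel $\Br'(\meX)\{\l\}$ by exactness of the purity sequence, and then exactness continues into $\hom^3(\meX,\G_m)\{\l\}\to\hom^3(\meU,\G_m)\{\l\}$. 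The one genuinely technical point is justifying the stacky purity isomorphism $i^!\G_m\{\l\}\simeq\mu_{\l^\infty}^{\otimes(-1)}[-2]$: I would reduce to the scheme case along a smooth presentation $V\to\meX$ (so $V\times_\meX\meD\into V$ is a codimension-$1$ closed immersion of smooth $S$-schemes), invoke \cite[Theorem 2.1.1]{fujiwara-gabber} there, and descend using that purity is compatible with smooth base change and that the lisse-étale topos computes the same cohomology; smoothness of $\meX/S$ and regularity of $S$ are exactly what make $\meX$, $\meD$, $\meU$ regular so that the smooth-local scheme-theoretic statement applies and glues.
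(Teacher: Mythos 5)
Your overall route is the same as the paper's: the paper's proof is a one-sentence deferral to the argument of \cite[Proof of Proposition 2.14]{AM} (localization with $\mu_{\l^n}$-coefficients, purity to identify the terms supported on $\meD$, colimit over $n$, Kummer comparison with $\G_m$), so your sketch and the paper differ essentially only in where the purity input comes from. The paper cites Laszlo--Olsson's stacky version of Artin's \emph{relative} cohomological purity, \cite[Proposition 4.9.1]{LO-six}, which is proven precisely for smooth $S$-stacks over an affine regular noetherian $S$ of dimension $\le1$ --- this is where the hypothesis on $S$ enters the statement. You instead propose to descend Fujiwara--Gabber's absolute purity \cite[Theorem 2.1.1]{fujiwara-gabber} along a smooth presentation $V\to\meX$; this is reasonable ($V$ and $V\by_\meX\meD$ are regular because $S$ is regular and everything is smooth over it, and $i^!$ commutes with smooth pullback), and would in principle dispense with the $\dim S\le1$ hypothesis, but it amounts to re-proving a case of the Laszlo--Olsson statement, and the descent step on the lisse-\'etale site is exactly the sort of functoriality issue their paper exists to handle carefully; citing them is cleaner. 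Your observation that the two Tate twists cancel on $\meD$, so that the coefficient group is the canonically untwisted $\hom^1(\meD,\Q_\l/\Z_\l)$, is correct. The one substantive thing you leave open is the step you yourself flag as ``the main obstacle'': the Kummer bookkeeping is genuinely where the content of the proof lies and is not a formal diagram chase. For instance, exactness at $\hom^1(\meD,\Q_\l/\Z_\l)$ requires showing that the image of the Gysin map in $\varinjlim_n\hom^3(\meX,\mu_{\l^n})$ meets the subgroup $\hom^2(\meX,\G_m)\otimes\Q_\l/\Z_\l$ trivially, and neither $-\otimes\Q_\l/\Z_\l$ nor $(-)\{\l\}$ applied to an exact sequence preserves exactness, so the $\Pic$- and $\hom^2(-,\G_m)$-divisible contributions do not cancel for free; one needs the specific d\'evissage of \cite[Proof of Proposition 2.14]{AM}, which compares the $\mu_{\l^n}$- and $\G_m$-localization sequences and computes $\hom^q_\meD(\meX,\G_m)$ in low degrees via the divisor sequence. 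Follow that reference for this step and your argument is complete.
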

\begin{proof}
    One can argue as in \cite[Proof of Proposition 2.14]{AM}, using Laszlo and Olsson's stacky version of Artin's relative cohomological purity theorem \cite[Proposition 4.9.1]{LO-six} (which was proven only for smooth $S$-stacks with $S$ as in the proposition statement) in place of \cite{AM}'s use of Gabber's aboslute purity theorem.
\end{proof}
When $\meX$ is a `stacky curve', we can and will say more. Residues of Brauer classes on regular curves can be more readily understood using the usual divisor exact sequence \cref{ses:divisor-stack} described below.
\begin{set}
    Let $\meX$ be a regular, integral noetherian algebraic stack. Note that every point $x\in\abs\meX$ in $\meX$'s underlying topological space has a residual gerbe which is in particular a reduced algebraic stack \cite[\href{https://stacks.math.columbia.edu/tag/0H22}{Tag 0H22}]{stacks-project}.
\end{set}
\begin{notn}
    Write $j\colon\meG\into\meX$ for the residual gerbe of the generic point of $\meX$. Write $\abs\meX_1$ for the set of codimension one points in $\meX$; for each $x\in\abs\meX_1$, write $i_x\colon\meG_x\into\meX$ for the inclusion of its residual gerbe.
\end{notn}
\begin{rec}
    Recall that, in this paper, we work in the lisse-\'etale (resp. small \'etale) site over a given stack $\meX$ (resp. given scheme $X$). Most of what we do is largely agnostic to the difference between this and the big \'etale site, but the exact sequence in \cref{prop:stacky-divisor-ses} does not hold on the big sites. Since this section is particularly sensative to our choice of the lisse-\'etale site, we will often be extra explicit by adding the subscript $\liset{}$ to some of our notation.
\end{rec}
\begin{prop}\label{prop:stacky-divisor-ses}
    There is an exact sequence
    \begin{equation}\label{ses:divisor-stack}
        0\too\G_{m,\meX}\too\push j\G_{m,\meG}\too\bigoplus_{x\in\abs\meX_1}i_{x,*}\ul\Z\too0
    \end{equation}
    of sheaves on $\liset\meX$.
\end{prop}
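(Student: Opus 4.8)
The plan is to reduce the statement to the well-known divisor exact sequence on a suitable \emph{scheme} atlas and then descend back to $\meX$. Since exactness of a sequence of sheaves on $\liset\meX$ can be checked on stalks (equivalently, after pulling back along any smooth presentation $p\colon V\to\meX$ with $V$ a scheme), the core task is to identify the pullback of \cref{ses:divisor-stack} to $V$ with the classical sequence
\[0\too\G_{m,V}\too\push{j_V}\G_{m,\eta(V)}\too\bigoplus_{v\in V_1}i_{v,*}\ul\Z\too0\]
of \cite[Example III.2.22]{milne-et}, where $\eta(V)$ denotes the generic fibre of $V\to\meX$ over $\meG$ (i.e. the points of $V$ lying over $\meX$'s generic point). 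Here one uses that $\meX$ regular integral noetherian implies $V$ is regular (since $p$ is smooth) and that $p$ is flat, so $p$ maps codimension-$1$ points to codimension-$\le1$ points and the fibre of $p$ over the generic point of $\meX$ is exactly the residual-gerbe pullback $\meG\by_\meX V$.

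First I would set up the three maps carefully. The inclusion $\G_{m,\meX}\into\push j\G_{m,\meG}$ is the unit of adjunction; it is injective because $\meX$ is integral (so $\G_{m,\meX}$ injects into the constant sheaf of rational functions, through which $\push j\G_{m,\meG}$ is computed on the generic residual gerbe). The valuation map $\push j\G_{m,\meG}\to\bigoplus_x i_{x,*}\ul\Z$ is defined, on an \'etale $T\to\meX$, by sending a rational function to the tuple of its orders of vanishing along the codimension-$1$ points of $T$ lying over each $x\in\abs\meX_1$; one must check this is well-defined (only finitely many nonzero components, by noetherianness) and compatible with \'etale restriction. Then I would verify exactness in the middle and surjectivity on the right by passing to a smooth scheme presentation: the pullback functor $\pull p$ is exact and faithful on $\liset\meX$-sheaves, the formation of $\push j\G_{m,\meG}$ and of $i_{x,*}\ul\Z$ commutes with the flat base change $p$ (this is where smoothness/flatness of $p$ and the description of residual gerbes via fibre products, as in \cite[\href{https://stacks.math.columbia.edu/tag/06MU}{Tag 06MU}]{stacks-project}, are used), and on $V$ the resulting sequence is exactly the classical divisor sequence for the regular noetherian scheme $V$ with its generic points inverted, which is exact by \cite[Example III.2.22]{milne-et} (or \cite[(2) in Section 1]{Gr2}).

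The main obstacle will be the bookkeeping around residual gerbes and base change: one must confirm that for a smooth morphism $p\colon V\to\meX$ from a scheme, the preimage of the generic point of $\abs\meX$ is the set of generic points of $V$, that $\pull p(i_{x,*}\ul\Z)\simeq\bigoplus_{v\mapsto x}i_{v,*}\ul\Z$ (summing over the codimension-$1$ points $v\in V$ above $x$, using that $p$ is flat of relative dimension $0$ at such points after restricting to fibres, hence flat and finite type so preserves the relevant codimensions by \cite[\href{https://stacks.math.columbia.edu/tag/02JX}{Tag 02JX}]{stacks-project}), and that $\pull p\push j\G_{m,\meG}$ agrees with $\push{j_V}$ of $\G_m$ on the generic fibre (flat base change for pushforward along the quasi-compact quasi-separated immersion-like map $j$, or more simply the fact that both sides are the constant sheaf of rational functions on the integral regular scheme $V$). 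Once these identifications are in hand, exactness on stalks is immediate from the scheme case, and the warning in the \cref{rec} above --- that this fails on big sites --- is respected because we only ever pull back to schemes and use the small-\'etale divisor sequence there. I expect no difficulty with the maps themselves; the subtlety is purely in justifying that all three sheaf constructions are stable under the smooth base change used to reduce to schemes.
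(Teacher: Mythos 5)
Your strategy is the same as the paper's own one-line proof: define the maps by requiring that the sequence restrict, on each smooth presentation $p\colon V\to\meX$ by a scheme, to a divisor-type sequence on $V$, and check exactness there. Your bookkeeping for the outer terms is essentially right; in particular, for $x\in\abs\meX_1$ the fibre $V_x=V\by_\meX\meG_x$ is smooth over a gerbe over a field, hence normal, so its connected components biject with its generic points, which by flatness are exactly the codimension-one points of $V$ over $x$; this is what justifies $\pull p(i_{x,*}\ul\Z)\simeq\bigoplus_{v\mapsto x}i_{v,*}\ul\Z$ (your stated reason, ``flat of relative dimension $0$ \dots after restricting to fibres,'' is not quite the right one).

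The step that fails as written is the final identification: the pulled-back sequence is \emph{not} ``exactly the classical divisor sequence for the regular noetherian scheme $V$,'' and $\pull p\push j\G_{m,\meG}$ is \emph{not} the sheaf of invertible rational functions on $V$, as soon as $p$ has positive relative dimension. That case cannot be avoided: the paper insists its stacky curves need not be DM (e.g.\ $\mu_p$-stabilizers in characteristic $p$ are allowed), and such stacks admit no \'etale presentation by a scheme. Concretely, for $\meX=\A^1_k\by B\mu_{p,k}$ in characteristic $p$, with smooth presentation $V=\A^1_k\by\G_{m,k}$ coming from the Kummer presentation $B\mu_p\simeq[\G_m/\G_m]$, the generic fibre is $V_\eta=\G_{m,k(t)}$, so the restriction of $\push j\G_{m,\meG}$ to $\et V$ has global sections $k(t)^\times\cdot x^{\Z}$ rather than $k(t,x)^\times$, and the right-hand term sees only the vertical divisors $\{t=a\}$, not all of $V^{(1)}$. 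What the restriction to $\et V$ actually is is the \emph{relative} divisor sequence
\[0\too\G_{m,V}\too j_{V,*}\G_{m,V_\eta}\too\bigoplus_{v\in V^{(1)},\ p(v)\in\abs\meX_1}i_{v,*}\ul\Z\too0,\]
with $V_\eta=V\by_\meX\meG$ the (possibly positive-dimensional) generic fibre. This is still exact, for the reasons you would expect: a unit on $V_\eta$ is a rational function on $V$ whose divisor is supported on the vertical primes, so vanishing of those valuations forces it into $\G_m(V)$ by normality of $V$; and each vertical prime is Zariski-locally principal (as $V$ is regular, hence locally factorial) by an equation restricting to a unit on $V_\eta$, which gives surjectivity on the right. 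So your argument needs this extra comparison rather than a direct appeal to \cite[Example III.2.22]{milne-et}. To be fair, the paper's own proof asserts the same literal identification without comment, so the gap is inherited rather than introduced; but since you commit explicitly to the rational-function description of the middle term, it is worth flagging.
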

\begin{proof}
    The sequence \cref{ses:divisor-stack} is characterized by the property that for any smooth cover $U\to\meX$ by a connected scheme $U$, the pullback of \cref{ses:divisor-stack} along this cover is the usual such exact sequence over a regular locally noetherian scheme, as in \cite[Section 1]{Gr2}. This simultaneously defines all the maps in the sequence and shows that it is exact, since it is known to be exact when restricted to the small \'etale site $\et U$ for any smooth cover by a scheme.
\end{proof}
The exact sequence \cref{ses:divisor-stack} is most useful whenever $\meX$ is a curve, so that the inclusions $i_x\colon\meG_x\into\meX$ are closed immersions, as follows from \cite[\href{https://stacks.math.columbia.edu/tag/0H27}{Tag 0H27}]{stacks-project}.

\subsection{The stacky curve case}
\begin{assump}
    From now on, assume $\meX$ is a `{stacky curve}' (see \cref{sect:conventions}) in the sense that it is a one-dimensional algebraic stack with finite inertia, on top of still being regular, integral, and noetherian. It follows from Keel--Mori \cite{keel-mori,conrad-km} that $\meX$ has a coarse moduli space $c\colon\meX\to X$.
\end{assump}
\begin{rem}
    This is a somewhat more general usage of the phrase `stacky curve' than is commonly used. For example,
    \begin{itemize}
        \item Lopez \cite{lopez2023picard} requires `stacky curves' to be DM.
        \item Ellenberg--Satriano--Zureick-Brown \cite[Appendix B]{ESZB} do not require their `stacky curves' to be DM, but do require them to have birational coarse space, i.e. to be `generically scheme-y'.
        \qedhere
    \end{itemize}
\end{rem}
Our goal in the remainder of this section is to establish some basic facts about the long exact sequence in cohomology associated to \cref{ses:divisor-stack} and to compare this sequence on $\meX$ to the analogous sequence on its coarse space $X$.
\begin{notn}
    To ease notation, for $x\in\abs\meX_1$, set $\ul\Z_{\meG_x}\coloneqq i_{x,*}\ul\Z$.
\end{notn}
\begin{lemma}\label{lem:H^1XZx=0}
    For all $x\in\abs\meX_1$,
    \[\hom^1(\meX,\ul\Z_{\meG_x})\simeq\hom^1(\meG_x,\ul\Z)=0.\]
\end{lemma}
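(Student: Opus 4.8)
The plan is to establish the two parts separately: the isomorphism $\hom^1(\meX,\ul\Z_{\meG_x})\simeq\hom^1(\meG_x,\ul\Z)$, and then the vanishing $\hom^1(\meG_x,\ul\Z)=0$.

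For the isomorphism, recall (as noted just above) that for the stacky curve $\meX$ the inclusion $i_x\colon\meG_x\into\meX$ of the residual gerbe of a codimension-one point is a closed immersion. The key step is to check that $\homR^q i_{x,*}\ul\Z=0$ for all $q\ge1$. Since $i_x$ is representable, this may be verified on a smooth presentation $U\to\meX$ by a scheme, along which $i_x$ pulls back to a closed immersion $U\times_\meX\meG_x\into U$ of locally Noetherian schemes; the higher direct images of such a map vanish because the stalk of $\homR^q i_{x,*}(-)$ at a geometric point $\bar u$ is the $q$th \'etale cohomology of $\spec\msO^{\mrm{sh}}_{U,\bar u}\times_U(U\times_\meX\meG_x)$, which is either empty or the spectrum of a strictly henselian local ring, hence cohomologically trivial in positive degrees. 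Granting this, the Leray spectral sequence $\hom^p(\meX,\homR^q i_{x,*}\ul\Z)\implies\hom^{p+q}(\meG_x,\ul\Z)$ collapses onto its $q=0$ row, giving $\hom^p(\meX,\ul\Z_{\meG_x})=\hom^p(\meX,i_{x,*}\ul\Z)\simeq\hom^p(\meG_x,\ul\Z)$ for every $p$, in particular for $p=1$.

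For the vanishing, I would use that $\meG_x$, being a reduced algebraic stack with a single point, is connected, quasi-compact, and quasi-separated, so that $\pi_1^{\tet}(\meG_x)$ is profinite; since $\ul\Z$ is the constant sheaf and $\Z$ is torsion-free, $\hom^1(\meG_x,\ul\Z)\simeq\Hom_{\mrm{cts}}(\pi_1^{\tet}(\meG_x),\Z)=0$, as a continuous homomorphism from a profinite group to the discrete group $\Z$ has finite, hence trivial, image. A more hands-on alternative, if one prefers to avoid $\pi_1$, is to use that $\meG_x$ is a gerbe over $\spec\kappa(x)$ with structure map $f$: then $f_*\ul\Z=\ul\Z$ and $\hom^1(\kappa(x),\ul\Z)=\Hom_{\mrm{cts}}(\Gal(\overline{\kappa(x)}/\kappa(x)),\Z)=0$, so Leray for $f$ reduces the claim to showing $\homR^1 f_*\ul\Z=0$; its stalks are $\hom^1$ of the geometric fibres of $f$, which are neutral gerbes $BG$ over separably closed fields, and $\hom^1(BG,\ul\Z)\simeq\ghom^1(G,\ul\Z)=\Hom(G,\ul\Z)=\Hom(\pi_0G,\Z)=0$ since $\pi_0G$ is a finite group.

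The only inputs that are not entirely formal are the acyclicity of $i_{x,*}$ on a closed substack (which, as indicated, reduces to the classical scheme-theoretic statement along a presentation) and the profiniteness of $\pi_1^{\tet}$ of a quasi-compact quasi-separated algebraic stack (or, on the alternative route, the neutrality of gerbes over separably closed fields); both are standard. So I expect the main obstacle to be purely a matter of assembling these facts, not any genuine mathematical difficulty.
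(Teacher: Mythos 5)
Your first step (the identification $\hom^1(\meX,\ul\Z_{\meG_x})\simeq\hom^1(\meG_x,\ul\Z)$ via acyclicity of pushforward along the closed immersion $i_x$) is correct and is exactly what the paper does, just spelled out in more detail. The problems are both in the vanishing step. Your primary route rests on the identification $\hom^1(\meG_x,\ul\Z)\simeq\Hom_{\mrm{cts}}(\pi_1^{\tet}(\meG_x),\Z)$, and this is not a formal consequence of $\ul\Z$ being constant and torsion-free: for a general connected qcqs scheme or stack, $\ul\Z$-torsors are \emph{not} classified by the profinite \'etale fundamental group, since a $\Z$-torsor need not be a finite cover. The standard counterexample is a nodal cubic $X$, for which $\pi_1^{\tet}(X)\simeq\wh\Z$ is profinite and yet $\hom^1(X,\ul\Z)\simeq\Z\neq0$ (the infinite chain of $\P^1$'s is a nontrivial \'etale-locally trivial $\Z$-torsor). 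The identification you want holds for normal, or more generally geometrically unibranch, schemes, and something like it is plausible for a reduced gerbe over a field --- but that is precisely the content one would have to prove; profiniteness of $\pi_1^{\tet}$ alone does not give it.

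Your alternative route is essentially the paper's argument, with one gap at the point where you assert that the geometric fibres of $f\colon\meG_x\to\spec\kappa(x)$ are \emph{neutral} gerbes $BG$ over separably closed fields. The lemma does not assume $\meX$ is DM or tame, so the band of $\meG_x$ can be a non-smooth finite group scheme, and a gerbe banded by such a group over a separably closed field need not be \'etale-locally trivial; neutrality is not automatic there. The paper sidesteps this by first noting $\hom^1(\meG_x,\ul\Z)\simeq\fhom^1(\meG_x,\ul\Z)$ (legitimate because $\ul\Z$ is a smooth group scheme) and then running the Leray spectral sequence for $f$ in the \emph{fppf} topology: the stalks of $\fhomR^1\push f\ul\Z$ are computed over fppf-local rings $A$, over which every fppf gerbe admits a section by definition of fppf-locality, so the reduction to $BG_A$ --- and thence to $\Hom(G,\ul\Z)=\Hom(\pi_0G,\Z)=0$ --- goes through. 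With that one switch of topology your second argument becomes the paper's proof; the remaining ingredients you list ($\push f\ul\Z\simeq\ul\Z$ via the universal homeomorphism $f$, the vanishing $\hom^1(\kappa(x),\ul\Z)=\Hom_{\mrm{cts}}(G_{\kappa(x)},\Z)=0$, and the description of $\Z$-torsors on $BG$) all match the paper's.
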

\begin{proof}
    Because $x\in\abs\meX_1$ is a closed point, $i_x\colon\meG_x\into\meX$ is a closed immersion from which we deduce that $\hom^1(\meX,\ul\Z_{\meG_x})\simeq\hom^1(\meG_x,\ul\Z)$. Furthermore, because $\ul\Z$ is a smooth group scheme, we have $\hom^1(\meG_x,\ul\Z)\simeq\fhom^1(\meG_x,\ul\Z)$. To compute this latter group, write $f\colon\meG_x\to\spec\kappa(x)$ for the morphism realizing $\meG_x$ as a gerbe over its residue field $\kappa(x)$. Note that, by considering the fppf-Leray spectral sequence associated to $f$, we have an exact sequence
    \begin{equation}\label{es:leray-H1GZ}
        \fhom^1(\kappa(x),\push f\ul\Z)\too\fhom^1(\meG_x,\ul\Z)\too\fhom^0(\kappa(x),\fhomR^1\push f\ul\Z).
    \end{equation}
    It suffices to show that both of the outer terms in \cref{es:leray-H1GZ} vanish. First, $\push f\ul\Z\simeq\ul\Z$ since $f$ is a universal homeomorphism \cite[\href{https://stacks.math.columbia.edu/tag/06R9}{Tag 06R9}]{stacks-project}, so the leftmost term in \cref{es:leray-H1GZ} is 
    \[\fhom^1(\kappa(x),\push f\ul\Z)\simeq\fhom^1(\kappa(x),\ul\Z)\simeq\hom^1(\kappa(x),\ul\Z)\simeq\ctsHom(G_{\kappa(x)},\Z)=0.\]
    For the rightmost term in \cref{es:leray-H1GZ}, we claim that $\fhomR^1\push f\ul\Z=0$, which suffices to finish the proof. It suffices to check this at the level of fppf stalks, so it suffices to prove that $\fhom^1(\meG,\ul\Z)=0$ when $\meG=\meG_x\by_{\kappa(x)}A$ for some affine $(\Sch_{\kappa(x)}$,fppf$)$-local scheme $\spec A$ \cite[Remark 1.8 and Theorem 2.3]{AG-points}. Since $\spec A$ is fppf-local \cite[Definition 0.1]{AG-points}, it follows that $\meG\to A$ admits a section, so $\meG$ is a neutral gerbe over $\spec A$. Since $\meX$ has finite inertia by assumption, we conclude that $\meG\simeq BG_A$ for some finite $A$-group scheme $G$. Thus, we are reduced to proving that $\fhom^1(BG_A,\ul\Z)=0$. For this, we remark that a $\Z$-torsor on $BG$ is determined by the data of a $\Z$-torsor on $\spec A$ equipped with an action of $G$, i.e. with a homomorphism $G\to\ul\Z$. Since $A$ is fppf-local, every $\Z$-torsor on it is trivial and since $G$ is finite, every homomorphism $G\to\ul\Z$ is trivial. Therefore, $\fhom^1(BG_A,\ul\Z)=0$, completing the proof.
\end{proof}
\begin{cor}
    $\hom^1(\meX,\push j\G_{m,\meG})\simeq\coker\p{\bigoplus_{x\in\abs\meX_1}\Z\too\Pic\meX}$
\end{cor}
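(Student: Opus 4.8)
The plan is to take the long exact cohomology sequence associated to \cref{ses:divisor-stack}, namely
\[
\cdots\too\hom^0(\meX,\tbigoplus_{x\in\abs\meX_1}\ul\Z_{\meG_x})\too\hom^1(\meX,\G_{m,\meX})\too\hom^1(\meX,\push j\G_{m,\meG})\too\hom^1(\meX,\tbigoplus_{x\in\abs\meX_1}\ul\Z_{\meG_x})\too\cdots,
\]
and then simplify the two flanking terms. First I would identify $\hom^0(\meX,\bigoplus_{x}\ul\Z_{\meG_x})$: since $\meX$ is integral noetherian, its set $\abs\meX_1$ of codimension one points is discrete (any quasi-compact open meets only finitely many), so each $\ul\Z_{\meG_x}=i_{x,*}\ul\Z$ is a skyscraper and $\hom^0(\meX,\bigoplus_x\ul\Z_{\meG_x})\simeq\bigoplus_x\hom^0(\meG_x,\ul\Z)\simeq\bigoplus_x\Z$, where the last isomorphism uses that $\meG_x$ is connected (it is a gerbe over a point). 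Second, and this is the only nontrivial input, I would invoke \cref{lem:H^1XZx=0} to conclude $\hom^1(\meX,\ul\Z_{\meG_x})=0$ for every $x\in\abs\meX_1$; taking the direct sum (cohomology commutes with the relevant coproduct here, again by discreteness of $\abs\meX_1$ and the skyscraper nature of the summands) gives $\hom^1(\meX,\bigoplus_x\ul\Z_{\meG_x})=0$. Finally, recalling $\hom^1(\meX,\G_{m,\meX})=\Pic\meX$, the relevant stretch of the long exact sequence reads
\[
\tbigoplus_{x\in\abs\meX_1}\Z\too\Pic\meX\too\hom^1(\meX,\push j\G_{m,\meG})\too0,
\]
which is exactly the assertion that $\hom^1(\meX,\push j\G_{m,\meG})\simeq\coker\bigl(\bigoplus_{x\in\abs\meX_1}\Z\to\Pic\meX\bigr)$.

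I do not anticipate any serious obstacle: the one substantive ingredient, vanishing of $\hom^1(\meX,\ul\Z_{\meG_x})$, is already established as \cref{lem:H^1XZx=0}, and everything else is bookkeeping with the long exact sequence plus the observation that $\abs\meX_1$ is discrete so that cohomology passes through the direct sum. If I wanted to be careful about the map $\bigoplus_x\Z\to\Pic\meX$ appearing in the sequence, I would note it is the composite $\bigoplus_x\Z=\hom^0(\meX,\bigoplus_x\ul\Z_{\meG_x})\xto{\partial}\hom^1(\meX,\G_{m,\meX})=\Pic\meX$, i.e. the usual "divisor class" map sending a codimension-one point $x$ (with multiplicity) to the class of the associated divisor on $\meX$; but for the purposes of this corollary only its cokernel matters, so I would leave that identification implicit (it is taken up in \cref{thma:Pic-Cl}).
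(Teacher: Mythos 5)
Your proposal is correct and is exactly the paper's argument: the paper's proof consists of the single line ``this follows from the cohomology exact sequence associated to \cref{ses:divisor-stack}'', with the vanishing $\hom^1(\meX,\ul\Z_{\meG_x})=0$ supplied by \cref{lem:H^1XZx=0} just as you use it. The extra bookkeeping you spell out (identifying $\hom^0$ of the sum of skyscrapers with $\bigoplus_x\Z$ and commuting cohomology past the direct sum) is implicit in the paper and handled correctly.
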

\begin{proof}
    This follows from the cohomology exact sequence associated to \cref{ses:divisor-stack}.
\end{proof}
In words, the above corollary says that every line bundle on $\meX$ comes from a divisor if and only if $\hom^1(\meX,\push j\G_m)=0$. 
\begin{ex}\label{ex:CL=Pic-gen-schemey}
    If $\meX$ is `generically scheme-y' in the sense that its generic residual gerbe $\meG$ is (the spectrum of) a field $K$, then $\hom^1(\meX,\push j\G_m)=0$; indeed, $\hom^1(\meX,\push j\G_m)\into\hom^1(\meG,\G_m)=\Pic K=0$. This shows that, for such $\meX$, $\Pic\meX$ is the Weil divisor class group, reproving a result of Voight and Zureick-Brown \cite[Lemma 5.4.5]{dzb-voight}.
\end{ex}
We next show that \cref{ex:CL=Pic-gen-schemey} is, in some sense, almost the only case in which the Picard group of a stacky curve is equal to its divisor class group. More specifically, we show that this holds if and only if $\Pic\meG=0$.
\begin{lemma}\label{lem:H1-push-j}
    $\lisethomR^1\push j\G_m=0$. Consequently, $\hom^1(\meX,\push j\G_{m,\meG})=\hom^1(\meG,\G_m)=\Pic\meG$.
\end{lemma}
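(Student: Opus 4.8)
The plan is to show $\lisethomR^1\push j\G_m=0$ by checking it after restriction to a smooth atlas, which reduces the claim to the vanishing of the Picard group of (a localization of) a strictly henselian regular local ring. Since $\lisethomR^1\push j\G_m$ is a sheaf on $\liset\meX$, it is enough to prove that its restriction to $\et U$ vanishes for every smooth morphism $U\to\meX$ from a scheme. By compatibility of the lisse-\'etale derived pushforward with smooth base change \cite{olsson}, this restriction is $\homR^1\push{(j_U)}\G_m$, where $j_U\colon\meG_U\coloneqq\meG\by_\meX U\to U$ is the base change of $j$. The morphism $j_U$ is quasi-compact and quasi-separated (as $j$ is a monomorphism, $\meG$ is quasi-compact, and $U$ is noetherian), so the stalk of $\homR^1\push{(j_U)}\G_m$ at a geometric point $\bar u\to U$ is $\hom^1\p{\meG\by_\meX\spec\msO_{U,\bar u},\G_m}=\Pic\p{\meG\by_\meX\spec\msO_{U,\bar u}}$, where $\msO_{U,\bar u}$ denotes the strictly henselian local ring at $\bar u$. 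Thus it suffices to prove $\Pic\p{\meG\by_\meX\spec R}=0$ for every such $R$.

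Because $U$ is smooth over the regular stack $\meX$ it is regular, so $R$ is a strictly henselian \emph{regular} local ring; by Auslander--Buchsbaum it is a domain and a UFD, and so is every localization of $R$. Let $\xi\in\abs\meX$ denote the generic point. If $\bar u$ lies over $\xi$, then every point of $\spec R$ lies over $\xi$, and since $\spec R$ is reduced the map $\spec R\to\meX$ factors through the monomorphism $j\colon\meG\into\meX$; hence $\meG\by_\meX\spec R\cong\spec R$. If instead $\bar u$ lies over a closed point $x\in\abs\meX$, then $\meG\by_\meX\spec R$ is the generic fibre of the flat morphism $\spec R\to\meX$, and since $\meX$ is regular of dimension one this generic fibre is the open subscheme of $\spec R$ obtained by inverting the pullback $t\in R$ of a local parameter of $\meX$ at $x$, i.e.\ $\meG\by_\meX\spec R\cong\spec R[1/t]$. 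In either case $\meG\by_\meX\spec R$ is the spectrum of a localization of the UFD $R$, so its Picard group vanishes; this gives $\lisethomR^1\push j\G_m=0$. For the final assertion, feeding this into the Leray spectral sequence $E_2^{pq}=\hom^p(\meX,\lisethomR^q\push j\G_m)\implies\hom^{p+q}(\meG,\G_m)$, the five-term exact sequence begins $0\to\hom^1(\meX,\push j\G_{m,\meG})\to\hom^1(\meG,\G_m)\to\hom^0(\meX,\lisethomR^1\push j\G_m)$, and the right-hand group is now $0$, so $\hom^1(\meX,\push j\G_{m,\meG})\iso\hom^1(\meG,\G_m)=\Pic\meG$.

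The main obstacle is the base-change reduction. One must carefully justify that restricting $\lisethomR^1\push j\G_m$ to a smooth atlas computes the honest higher pushforward $\homR^1\push{(j_U)}\G_m$ -- a point that needs care since $\G_m$ is not torsion -- and, relatedly, one must verify that $\meG\by_\meX\spec R$ genuinely is a \emph{scheme}: the content here is that strict localization along a smooth cover "resolves" the generic residual gerbe into the spectrum of a localization of $R$. It is precisely this schematicity that forces the Picard group to be that of a UFD and hence to vanish; were $\meG\by_\meX\spec R$ a nontrivial gerbe, its Picard group would acquire character contributions of the band and the lemma would be false. A secondary point to pin down is that the generic fibre of $\spec R\to\meX$ over a closed point is open (which uses that $\meX$ is one-dimensional, so the relevant local ring of $\meX$ is a DVR).
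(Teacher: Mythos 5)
Your proof is correct, but it establishes the vanishing of $\lisethomR^1\push j\G_m$ by a genuinely different mechanism than the paper. The paper fixes a smooth cover $S\to\meX$ by a (necessarily regular) scheme and a line bundle $\msL$ on the generic fibre $\meG\by_\meX S$, pushes $\msL$ forward along the qcqs monomorphism $\meG\by_\meX S\into S$, uses regularity (local factoriality) of $S$ to find a line bundle $\msL'$ on all of $S$ restricting to $\msL$, and then trivializes $\msL'$ on an \'etale cover of $S$; no identification of $\meG\by_\meX S$ as an explicit scheme is ever needed, and in particular the argument works for regular integral noetherian stacks of any dimension. You instead compute the stalks of the higher pushforward over strictly henselian local rings $R$ of the atlas and show directly that $\meG\by_\meX\spec R$ is the spectrum of a localization of the UFD $R$, whence its Picard group vanishes. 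Both arguments ultimately rest on regularity of the atlas; yours buys a concrete description of the generic fibre after strict localization, at the cost of invoking one-dimensionality of $\meX$ (to see that the generic fibre of $\spec R\to\meX$ is open). One step in your closed-point case deserves the same care you gave the generic-point case: knowing that $\meG\by_\meX\spec R\to\spec R$ is a monomorphism with topological image $\spec R[1/t]$ only yields a surjective monomorphism onto $\spec R[1/t]$, and surjective monomorphisms need not be isomorphisms (compare $\red X\into X$). To conclude you must again use the universal property of residual gerbes: $\spec R[1/t]$ is reduced and maps set-theoretically to the generic point of $\meX$, hence factors through $\meG$, producing a section of the monomorphism and forcing it to be an isomorphism. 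Since you deployed exactly this universal property in the generic-point case and explicitly flagged the schematicity of $\meG\by_\meX\spec R$ as the main obstacle, this is a matter of writing rather than a genuine gap.
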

\begin{proof}
    To show that $\lehomR^1\push j\G_m=0$, it suffices to show that for any smooth cover $S\to\meX$ by a (necessarily regular) scheme $S$ and any line bundle $\msL$ on the generic fiber $S_\meG$, there exists an \'etale cover $S'\onto S$ such that the pullback $\msL\vert_{S'_\meG}$ is trivial. With that said, let $S\to\meX$ and $\msL$ be as just indicated. Note that $\meG\into\meX$, being a monomorphism into a noetherian stack, is both quasi-compact and quasi-separated (qcqs). Consequently, $\iota\colon S_\meG\into S$ is also qcqs, so $\push\iota\msL$ is a quasi-coherent sheaf on $S$ \cite[\href{https://stacks.math.columbia.edu/tag/03M9}{Tag 03M9}]{stacks-project}. Since $S$ is regular, it follows that there exists a line bundle $\msL'\subset\push\iota\msL$ such that $\msL'\vert_{S_\meG}=\msL$. Thus, we may take $S'\onto S$ to be any \'etale cover trivializing $\msL'$. This proves $\lehomR^1\push j\G_m=0$.

    Now, the exact sequence of low-degree terms for the Leray spectral sequence associated to $j\colon\meG\into\meX$ begins
    \[0\too\hom^1(\meX,\push j\G_{m,\meG})\too\hom^1(\meG,\G_m)\too\hom^0(\meX,\lehomR^1\push j\G_m)=0,\]
    from which the rest of the claim follows.
\end{proof}
\begin{cor}\label{cor:Pic-Cl}
    Let 
    \[\Cl\meX\coloneqq\coker\p{\G_m(\meG)\too\bigoplus_{x\in\abs\meX_1}\Z}\]
    be the divisor class group of $\meX$. Then, there is a short exact sequence
    \[0\too\Cl\meX\too\Pic\meX\too\Pic\meG\too0.\]
\end{cor}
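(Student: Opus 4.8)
The plan is to read off the statement from the long exact cohomology sequence associated to the divisor exact sequence \cref{ses:divisor-stack} on $\liset\meX$. Applying the global sections functor $\homR\Gamma(\meX,-)$ to
\[0\too\G_{m,\meX}\too\push j\G_{m,\meG}\too\bigoplus_{x\in\abs\meX_1}i_{x,*}\ul\Z\too0\]
and using that $\hom^0(\meX,\push j\G_{m,\meG})=\G_m(\meG)$ and $\hom^0(\meX,i_{x,*}\ul\Z)=\hom^0(\meG_x,\ul\Z)=\Z$ (each residual gerbe $\meG_x$ being connected), one gets an exact sequence
\[0\to\G_m(\meX)\to\G_m(\meG)\to\bigoplus_{x\in\abs\meX_1}\Z\xto{\partial}\Pic\meX\to\hom^1(\meX,\push j\G_{m,\meG})\to\bigoplus_{x\in\abs\meX_1}\hom^1(\meX,i_{x,*}\ul\Z).\]
Here I use that sheaf cohomology on the noetherian (hence quasi-compact, quasi-separated) stack $\meX$ commutes with the direct sum $\bigoplus_x i_{x,*}\ul\Z$, since that sum is the filtered colimit of its finite partial sums and the summands have pairwise disjoint (closed) support.

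Next I would feed in the two computational inputs already in hand. By \cref{lem:H^1XZx=0}, $\hom^1(\meX,i_{x,*}\ul\Z)\simeq\hom^1(\meG_x,\ul\Z)=0$ for every $x\in\abs\meX_1$, so the rightmost term vanishes and $\Pic\meX\to\hom^1(\meX,\push j\G_{m,\meG})$ is surjective. By \cref{lem:H1-push-j}, the edge map of the Leray spectral sequence for $j$ is an isomorphism $\hom^1(\meX,\push j\G_{m,\meG})\iso\hom^1(\meG,\G_m)=\Pic\meG$, and unwinding that spectral sequence identifies the composite $\Pic\meX\to\hom^1(\meX,\push j\G_{m,\meG})\simeq\Pic\meG$ with the pullback $\pull j$. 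Hence $\pull j\colon\Pic\meX\onto\Pic\meG$ is surjective.

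Finally, exactness at $\Pic\meX$ gives $\ker\pull j=\im\partial$, and exactness one step to the left gives a canonical isomorphism $\im\partial\simeq\p{\bigoplus_{x\in\abs\meX_1}\Z}\big/\im\p{\G_m(\meG)\to\bigoplus_x\Z}=\Cl\meX$, which is exactly the definition of $\Cl\meX$ in \cref{cor:Pic-Cl}. Splicing these identifications together produces the short exact sequence $0\to\Cl\meX\to\Pic\meX\xto{\pull j}\Pic\meG\to0$. The argument is essentially bookkeeping once \cref{lem:H^1XZx=0} and \cref{lem:H1-push-j} are available; I expect the only slightly delicate point to be verifying that the Leray edge homomorphism for $j$ really is $\pull j$, which follows from the fact that \cref{ses:divisor-stack} is defined via compatible pullback along smooth covers, so pullback of divisorial data is functorial in the way needed.
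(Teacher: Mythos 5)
Your proposal is correct and is exactly the paper's argument: take the long exact cohomology sequence of \cref{ses:divisor-stack}, kill the $\hom^1(\meX,i_{x,*}\ul\Z)$ terms via \cref{lem:H^1XZx=0}, and identify $\hom^1(\meX,\push j\G_{m,\meG})$ with $\Pic\meG$ via \cref{lem:H1-push-j}. The extra bookkeeping you supply (commuting cohomology with the direct sum of skyscrapers, identifying the edge map with $\pull j$) is consistent with what the paper leaves implicit.
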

\begin{proof}
    This follows from considering the cohomology exact sequence associated to \cref{ses:divisor-stack} and appealing to \cref{lem:H^1XZx=0,lem:H1-push-j}.
\end{proof}
\begin{rem}
    There always exists some generically scheme-y stacky curve $\meY$ under $\meX$ such that $\meX\to\meY$ is a gerbe; see \cite[Proposition 1.5]{lopez2023picard} and \cite[Appendix A]{AOV}. It is possible to identify $\Cl\meX$ in \cref{cor:Pic-Cl} with $\Pic\meY$ and so obtain a short exact sequence
    \[0\too\Pic\meY\too\Pic\meX\too\Pic\meG\too0.\]
    This is similar to the exact sequence appearing in \cite[Corollary 1.7]{lopez2023picard}. A short exact sequence exactly of the form appearing above was first suggested to me by David Zureick-Brown.
\end{rem}
Focusing back in on Brauer groups, taking the long exact sequence in cohomology associated to \cref{ses:divisor-stack} and appealing to \cref{lem:H^1XZx=0} produces the exact sequence
\[0\too\hom^2(\meX,\G_m)\too\hom^2(\meX,\push j\G_{m,\meG})\too\bigoplus_{x\in\abs\meX_1}\hom^2(\meG_x,\Z)\too\hom^3(\meX,\G_m)\too\dots.\]
Above, one can further compute $\hom^2(\meG_x,\Z)$.
\begin{lemma}\label{lem:Q-coh-van}
    $\hom^i(\meG_x,\Q)=0$ for $i=1,2$ and $x\in\abs\meX_1$. Consequently, $\hom^2(\meG_x,\Z)\simeq\hom^1(\meG_x,\Q/\Z)$.
\end{lemma}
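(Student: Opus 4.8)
The plan is to compute the cohomology of the residual gerbe $\meG_x$ with coefficients in $\Q$ and then deduce the statement about $\Z$-coefficients via the long exact sequence coming from $0\to\Z\to\Q\to\Q/\Z\to0$. First I would recall that $\meG_x$ is a gerbe over the field $\kappa(x)$; write $f\colon\meG_x\to\spec\kappa(x)$ for the structure morphism, which is a universal homeomorphism, so $\push f\Q\simeq\Q$ (as in the proof of \cref{lem:H^1XZx=0} for $\ul\Z$). Since $\meX$ has finite inertia by assumption, $\meG_x$ is a gerbe banded by a finite group scheme, and one can use the (fppf-)Leray spectral sequence $\fhom^i(\kappa(x),\fhomR^j\push f\Q)\implies\fhom^{i+j}(\meG_x,\Q)$, noting that $\Q$ is smooth so fppf and \'etale cohomology agree.

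The key point is that $\fhomR^j\push f\Q=0$ for all $j\ge1$. This can be checked on fppf stalks, reducing (exactly as in \cref{lem:H^1XZx=0}) to showing $\fhom^j(BG_A,\Q)=0$ for $j\ge1$ when $A$ is fppf-local and $G/A$ is a finite group scheme. A $\Q$-torsor (or more generally a higher cohomology class) on $BG_A$ is governed by the descent/Hochschild--Serre spectral sequence $\ghom^i(G,\hom^j(A,\Q))\implies\hom^{i+j}(BG_A,\Q)$; since $A$ is fppf-local, $\hom^j(A,\Q)=0$ for $j\ge1$, so it remains to see that $\ghom^i(G,\Q)=0$ for $i\ge1$. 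This holds because $\Q$ is uniquely divisible while $\ghom^i(G,\Q)$ is $\#G$-torsion for $i\ge1$ (either by a transfer/norm argument using the $G$-torsor $\spec A\to BG_A$, or more simply since the bar complex computing Hochschild cohomology with $\Q$-coefficients is a complex of $\Q$-vector spaces on which multiplication by $\#G$ is both invertible and, up to homotopy, zero in positive degrees). Hence $\fhom^i(\meG_x,\Q)\simeq\fhom^i(\kappa(x),\Q)$, and the latter vanishes for $i=1,2$ since $\Q$ is a uniquely divisible (hence acyclic for profinite group cohomology) Galois module over the field $\kappa(x)$; in fact $\hom^i(\kappa(x),\Q)=\hom^i_{\mrm{cts}}(G_{\kappa(x)},\Q)=0$ for all $i\ge1$.

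With $\hom^i(\meG_x,\Q)=0$ for $i=1,2$ in hand, the long exact sequence associated to $0\to\ul\Z\to\ul\Q\to\ul{\Q/\Z}\to0$ on $\meG_x$ gives an isomorphism $\hom^1(\meG_x,\Q/\Z)\iso\hom^2(\meG_x,\Z)$ (the boundary map), which is exactly the asserted consequence.

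I do not anticipate a serious obstacle here; the main technical content is the vanishing $\fhomR^j\push f\Q=0$, which is a direct adaptation of the argument already given for $\ul\Z$ in \cref{lem:H^1XZx=0}, combined with the elementary observation that Hochschild cohomology of a finite group scheme with uniquely divisible coefficients vanishes in positive degrees. The only point requiring a little care is making sure the reduction to the fppf-local/neutral-gerbe case is legitimate in degree $2$ as well as degree $1$, but this is handled uniformly by the same spectral sequence argument, so no new ideas are needed.
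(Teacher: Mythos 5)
Your overall architecture matches the paper's: run the fppf-Leray spectral sequence for $f\colon\meG_x\to\spec\kappa(x)$ as in \cref{lem:H^1XZx=0}, reduce the computation of the higher pushforwards of $\ul\Q$ to the case of $BG$ for a finite group scheme $G$, finish with the descent spectral sequence, and then deduce $\hom^2(\meG_x,\Z)\simeq\hom^1(\meG_x,\Q/\Z)$ from $0\to\Z\to\Q\to\Q/\Z\to0$. The gap is in the one step that carries all the weight: the vanishing of $\ghom^i(G,\Q)$ for $i\ge1$ when $G$ is a finite group \emph{scheme} rather than an abstract group. You justify this by asserting that $\ghom^i(G,\Q)$ is $\#G$-torsion, but neither of your two arguments establishes that for non-\'etale $G$ -- and since $\meX$ is only assumed to have finite inertia (not to be DM), $G$ can be, say, $\mu_p$ in characteristic $p$. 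The classical null-homotopy of multiplication by $\#G$ on the bar complex is an averaging sum over the elements of $G$, which has no meaning for an infinitesimal group scheme; indeed, whether Hochschild cohomology of a finite flat group scheme is $\#G$-torsion in positive degrees is recorded as an open problem in this very paper (\cref{qn:rand}). Likewise, your transfer argument along the torsor $\spec A\to BG_A$ would require a trace map for the constant sheaf $\ul\Q$ along a finite locally free but non-\'etale cover, which you have not constructed.

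The fact you need is nevertheless true, and the paper's fix is short: because $\ul\Q$ is an \'etale scheme, every morphism $G^n\to\ul\Q$ factors through $(G^{\tet})^n$, where $G^{\tet}$ is the maximal \'etale quotient. Hence, after a finite extension making $G^{\tet}$ constant, the Hochschild complex $C^\bullet(G,\ul\Q)=\Nat(G^\bullet,\ul\Q)$ is identified with the classical bar complex of the finite \emph{abstract} group $\pi_0(G)$, where the usual averaging homotopy applies and $\ghom^i(G,\Q)=\ghom^i(G^{\tet},\Q)=0$ for $i\ge1$. (This is the same device used for $\ul\Z$ in the proof of \cref{prop:brauerless-equiv/R}, and it is exactly how the paper kills the term $E_2^{20}=\ghom^2(G,\Q)$.) With that substitution your proof goes through. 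Two smaller points: in your reduction the relevant coefficients in the descent spectral sequence are $\hom^j(G^i_A,\Q)$ rather than just $\hom^j(A,\Q)$, and these vanish for $j\ge1$ by \cref{lem:bar-complex-triv}; and you aim to prove the full stalkwise vanishing $\fhomR^j\push f\ul\Q=0$ for all $j\ge1$, which is slightly stronger than what the paper establishes (it only needs $\hom^0(\kappa(x),\fhomR^2\push f\ul\Q)=0$, obtained by neutralizing the gerbe over a finite field extension), but this difference is harmless.
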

\begin{proof}
    Once the first part of the lemma is established, the second follows from the exact sequence $0\to\Z\to\Q\to\Q/\Z\to0$. For the first part, write $f\colon\meG_x\to\spec k$ for the morphism realizing $f$ as a gerbe over (the spectrum of) a field $k$. One can argue as in \cref{lem:H^1XZx=0} (i.e. use the fppf-Leray spectral sequence for $f$) to show that $\hom^1(\meG_x,\Q)=0$ and that $\fhomR^1\push f\ul\Q=0$. Since $\fhom^2(k,\push f\ul\Q)=\hom^2(k,\Q)=0$, to prove that $\hom^2(\meG_x,\Q)=0$, it now suffices to show that $\hom^0(k,\fhomR^2\push f\ul\Q)=0$. Let $k'/k$ be a field extension over which $\meG_x$ admits a point, so $\meG_{x,k'}\simeq BG$ for some finite $k'$-group scheme $G$. By replacing $k'$ with a further finite extension if necessary, we may and do assume that the maximal \'etale quotient $G^\tet$ of $G$ is a constant group scheme. To show $\hom^0(k,\fhomR^2\push f\ul\Q)=0$, it suffices to show that $\fhom^2(\meG_{x,k'},\Q)=\hom^2(BG,\Q)$ vanishes. For this, one uses the descent spectral sequence
    \[E_2^{ij}=\ghom^i(G,\ul\hom^j(k'/k',\Q))\implies\hom^{i+j}(BG,\Q).\]
    Note that $E_2^{02}=0$ since $\hom^2(k',\Q)=0$, and that $E_2^{11}=\Hom(G,\ul\hom^1(k'/k',\Q))=0$ since there are no non-trivial homomorphisms from a finite group to a $\Q$-vector space. Finally, because $\ul\Q$ is \'etale, every morphism $G^n\to\ul\Q$ factors through $(G^\tet)^n$, so $E_2^{20}=\ghom^2(G,\Q)=\ghom^2(G^\tet,\Q)$ which vanishes since $G^\tet$ is a finite (abstract) group.
\end{proof}
\begin{prop}\label{prop:residue-stacky-curve}
    There is a long exact sequence
    \begin{equation}\label{les:residue-stacky-curve}
        0\too\hom^2(\meX,\G_m)\too\hom^2(\meX,\push j\G_{m,\meG})\too\bigoplus_{x\in\abs\meX_1}\hom^1(\meG_x,\Q/\Z)\too\hom^3(\meX,\G_m)\too\dots.
    \end{equation}
    Furthermore, $\hom^2(\meX,\push j\G_{m,\meG})\simeq\ker\p{\hom^2(\meG,\G_m)\too\hom^0(\meX,\lehomR^2\push j\G_m)}$.
\end{prop}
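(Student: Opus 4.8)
The plan is to obtain both assertions by feeding the three lemmas \cref{lem:H^1XZx=0}, \cref{lem:Q-coh-van}, and \cref{lem:H1-push-j} into two cohomological devices: the long exact sequence attached to \cref{ses:divisor-stack} and the lisse-\'etale Leray spectral sequence of $j\colon\meG\into\meX$.

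For the long exact sequence I would take the cohomology long exact sequence associated to the short exact sequence \cref{ses:divisor-stack} of sheaves on $\liset\meX$ and then identify its terms. Since $\meX$ is a stacky curve, each $i_x\colon\meG_x\into\meX$ is a closed immersion, so $i_{x,*}$ is exact and $\hom^p(\meX,i_{x,*}\mathcal F)\cong\hom^p(\meG_x,\mathcal F)$; and since $\meX$ is noetherian, lisse-\'etale cohomology commutes with the filtered colimit presenting $\bigoplus_{x\in\abs\meX_1}i_{x,*}\ul\Z$, so $\hom^p(\meX,\bigoplus_x i_{x,*}\ul\Z)\cong\bigoplus_x\hom^p(\meG_x,\ul\Z)$. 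Putting $p=1$ and using $\hom^1(\meG_x,\ul\Z)=0$ from \cref{lem:H^1XZx=0} makes the connecting map into $\hom^2(\meX,\G_m)$ vanish, so $\hom^2(\meX,\G_m)\into\hom^2(\meX,\push j\G_{m,\meG})$; putting $p=2$ and invoking the isomorphism $\hom^2(\meG_x,\ul\Z)\cong\hom^1(\meG_x,\Q/\Z)$ from \cref{lem:Q-coh-van} rewrites the next term as $\bigoplus_x\hom^1(\meG_x,\Q/\Z)$, which is exactly \cref{les:residue-stacky-curve}.

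For the ``furthermore'' clause I would run the Leray spectral sequence $E_2^{pq}=\hom^p(\meX,\lehomR^q\push j\G_{m,\meG})\implies\hom^{p+q}(\meG,\G_m)$. By \cref{lem:H1-push-j} the row $q=1$ is identically zero, so $E_\infty^{1,1}=0$ and, in the bottom row, no nonzero differential enters or leaves $E_r^{2,0}$ for any $r\ge2$ (the incoming $d_2$ has source $E_2^{0,1}=0$, and every other potential differential touching $E^{2,0}$ has source or target in a negative column or in the vanishing $q=1$ row). Hence $E_\infty^{2,0}=E_2^{2,0}=\hom^2(\meX,\push j\G_{m,\meG})$, and the filtration on $\hom^2(\meG,\G_m)$ collapses to a short exact sequence $0\to\hom^2(\meX,\push j\G_{m,\meG})\to\hom^2(\meG,\G_m)\to E_\infty^{0,2}\to0$ in which $E_\infty^{0,2}\subseteq E_2^{0,2}=\hom^0(\meX,\lehomR^2\push j\G_m)$ and the composite $\hom^2(\meG,\G_m)\twoheadrightarrow E_\infty^{0,2}\hookrightarrow\hom^0(\meX,\lehomR^2\push j\G_m)$ is the edge homomorphism; comparing kernels gives $\hom^2(\meX,\push j\G_{m,\meG})=\ker\p{\hom^2(\meG,\G_m)\to\hom^0(\meX,\lehomR^2\push j\G_m)}$.

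The argument is essentially bookkeeping layered on top of the cited lemmas, so I do not anticipate a serious obstacle; the two points that need care are (i) interchanging cohomology with the possibly infinite direct sum over $\abs\meX_1$, which I would justify by noetherianness of $\meX$, and (ii) the spectral-sequence degeneration, where one must verify that $E_\infty^{2,0}$ really equals $E_2^{2,0}$ rather than a proper subquotient. Step (ii) is the place where \cref{lem:H1-push-j} (the vanishing of the $q=1$ row) is genuinely used, and it is the one step I would write out in full detail.
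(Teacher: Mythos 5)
Your proposal is correct and follows essentially the same route as the paper: the long exact sequence is obtained from \cref{ses:divisor-stack} together with \cref{lem:H^1XZx=0,lem:Q-coh-van}, and the ``furthermore'' clause is the low-degree exact sequence of the Leray spectral sequence for $j$ combined with the vanishing $\lehomR^1\push j\G_m=0$ from \cref{lem:H1-push-j}. Your explicit degeneration analysis of $E_\infty^{2,0}$ and $E_\infty^{0,2}$ is just an unwound version of the five-term exact sequence the paper quotes, and the commutation of cohomology with the direct sum over $\abs\meX_1$ is the standard filtered-colimit fact for a noetherian site, as you note.
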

\begin{proof}
    Taking the long exact sequence associated to \cref{ses:divisor-stack}, and appealing to \cref{lem:H^1XZx=0,lem:Q-coh-van}, we obtain the exact sequence
    \cref{les:residue-stacky-curve}. The last part of the claim follows from the exact sequence 
    \[\hom^0(\meX,\lehomR^1\push j\G_m)\to\hom^2(\meX,\push j\G_m)\to\ker\p{\hom^2(\meG,\G_m)\to\hom^0(\meX,\lehomR^2\push j\G_m)}\to\hom^1(\meX,\lehomR^1\push j\G_m)\]
    of low degree terms in the lisse-\'etale Leray spectral sequence. Above, $\lehomR^1\push j\G_m=0$ by \cref{lem:H1-push-j}.
\end{proof}
This shows that stacky curves enjoy a residue exact sequence \cref{les:residue-stacky-curve} reminiscent of the one attached to schemey curves; see \cite[Theorem 3.6.1]{CT-S}. In the following section, after adding the additional assumption that $\meX$ is generically schemey, we compare this residue sequence to the analogous one on its coarse space.

\subsection{The tame generically schemey case}\label{sect:res-gen-schemey}
\begin{assump}
    Assume now that there exists distinct closed points $x_1,\dots,x_r\in X$ and positive integers $e_1,\dots,e_r>1$ such that
    \begin{equation}\label{iteratated-root-form}
        \meX\simeq\sqrt[e_1]{x_1/X}\by_X\dots\by_X\sqrt[e_r]{x_r/X}.
    \end{equation}
    In particular, $\meX$ is tame and `generically schemey' in the sense that there exists a dense open $U\subset X$ (e.g. $U=X\sm\{x_1,\dots,x_r\}$) over which the coarse space map $c\colon\meX\to X$ is an isomorphism, but is not necessarily DM.

    It is known that $\meX$ will be of the form \cref{iteratated-root-form} if it is a tame, DM, and generically schemey; see \cite[Theorem 1]{GS} and/or \cite[2.24]{lopez2023picard}.
\end{assump}
\begin{rem}
    Recall that \cref{lem:Br=Br'-stacky-curves} shows that $\Br\meX\simeq\hom^2(\meX,\G_m)$.
\end{rem}
\begin{notn}
    The generic residual gerbe $j\colon\meG\into\meX$ is now the spectrum of a field, so we write $\meG=\spec K$. We write $\eta\colon\spec K\to X$ for the corresponding map to $X$, so we have a commutative diagram
    \[\compdiag{\spec K}j\meX c{X.}\eta\]
    For a closed point $x\in\abs X_1=\abs\meX_1$, we write $\iota_x\colon\spec\kappa(x)\into X$ for the corresponding closed immersion, we have a commutative (but not necessarily Cartesian) diagram
    \begin{equation}\label{cd:closed-pt-comp}
        \commsquare{\meG_x}{i_x}\meX{}c{\spec\kappa(x)}{\iota_x}{X.}
    \end{equation}
    Furthermore, write $\ul\Z_x\coloneqq\iota_{x,*}\ul\Z$.
\end{notn}
We wish to compare residue maps on $\meX$ to those on $X$. A priori the exact sequences \cref{ses:divisor-stack} for $\meX$ and $X$ live in different categories. In order to remedy this, we first apply $\push c$ to the exact sequence \cref{ses:divisor-stack} over $\meX$.
\begin{rec}
    While we use the lisse-\'etale site over the stack $\meX$, we use the small \'etale site over the scheme $X$.
\end{rec}
\begin{lemma}\label{lem:R1c(jGm)}
    $\lehomR^1\push c\p{\push j\G_{m,K}}=0$
\end{lemma}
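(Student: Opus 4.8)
The plan is to deduce this from a computation on the coarse space $X$ by means of the Grothendieck spectral sequence attached to the factorization $\eta = c\circ j$ of the generic--point inclusion. On the level of sheaves one has $\push\eta = \push c\circ\push j$ (both send an \'etale $U\to X$ to $\G_m(\spec K\times_X U)$), and $\push j$ preserves injectives since its left adjoint $\pull j$ is exact; hence there is a spectral sequence $E_2^{pq} = \lehomR^p\push c\p{\lehomR^q\push j\G_{m,K}}\implies\homR^{p+q}\push\eta\G_{m,K}$. First I would invoke \cref{lem:H1-push-j}, which gives $\lehomR^1\push j\G_{m,K}=0$; then in the five-term exact sequence one has $E_2^{0,1}=\push c(0)=0$, so it collapses to an isomorphism $\lehomR^1\push c\p{\push j\G_{m,K}}\iso\homR^1\push\eta\G_{m,K}$. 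It therefore suffices to prove $\homR^1\push\eta\G_{m,K}=0$, which is a statement purely about the scheme $X$.

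For this I would use that $X$, being the coarse space of the regular stacky curve $\meX$, is a regular integral noetherian scheme of dimension one (it is normal because $\meX$ is, as in the proof of \cref{lem:Br=Br'-stacky-curves}, and a one-dimensional normal noetherian integral scheme is regular). Since $\eta\colon\spec K\into X$ is a quasi-compact separated monomorphism and $X$ is noetherian, the stalk of $\homR^1\push\eta\G_{m,K}$ at a geometric point $\bar x\to X$ is $\hom^1\p{\spec\p{K\otimes_{\msO_{X,x}}\msO_{X,\bar x}},\G_m}$. As $\msO_{X,\bar x}$ is a strictly henselian discrete valuation ring (or a field, over the generic point of $X$), the ring $K\otimes_{\msO_{X,x}}\msO_{X,\bar x}$ is its fraction field, in particular a field, so the stalk is the Picard group of a field and vanishes. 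Hence every stalk of $\homR^1\push\eta\G_{m,K}$ is zero, and the lemma follows.

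This argument is essentially formal; the only points needing a word of justification are the identification $\push\eta=\push c\circ\push j$ and the regularity of $X$ just noted, neither of which is a real obstacle. An alternative route --- closer in spirit to the local-structure arguments used elsewhere in the paper --- would be to observe directly that $\lehomR^1\push c\p{\push j\G_{m,K}}$ is supported on the finitely many stacky points $x_i$ (since $c$ is an isomorphism away from them by \cref{prop:root-stack-summary}\bp3), to base-change to $\spec\msO_{X,\bar x_i}$ so that $\meX$ becomes $[\spec B/\mu_{e_i}]$ with $B=\msO_{X,\bar x_i}[T]/(T^{e_i}-\pi)$ by \cref{prop:root-stack-summary}\bp2, and then to apply \cref{lem:H1-push-j} to that (regular, generically schemey) stacky curve so as to identify the stalk with the Picard group of its generic residual gerbe --- again a field, hence $0$. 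That route additionally requires flat base change for $\push j$, but is otherwise just as short.
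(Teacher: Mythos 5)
Your proof is correct, but it is packaged differently from the paper's. The paper argues section-by-section on the small \'etale site of $X$: for each \'etale $U\to X$ it notes that $\hom^1\p{\meX_U,(\push j\G_{m,K})\vert_{\meX_U}}=\hom^1(\meU,\push{j'}\G_{m,U_K})$ injects, via the edge map of the Leray sequence for $j'\colon U_K\into\meU$, into $\Pic U_K$, which vanishes because $U_K$ is \'etale over the field $K$; since the presheaf of sections is already zero, so is its sheafification $\lehomR^1\push c\p{\push j\G_{m,K}}$. You instead run the Grothendieck spectral sequence for $\eta=c\circ j$, feed in \cref{lem:H1-push-j} (in fact only the edge-map injection $\lehomR^1\push c\p{\push j\G_{m,K}}\into\homR^1\push\eta\G_{m,K}$ from the five-term sequence is needed, so that appeal is optional), and then prove the purely schemey vanishing $\homR^1\push\eta\G_{m,K}=0$ by computing stalks. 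What your route buys is a reduction to a classical fact about regular integral schemes --- the same vanishing that underlies the usual divisor/residue sequence --- at the cost of invoking the local structure of $X$ (that it is normal, hence a regular noetherian integral curve, so its strict henselizations are DVRs whose generic fibers over $\spec K$ are fields) together with the stalk formula for higher direct images along the qcqs monomorphism $\eta$. The paper's route needs none of this: it uses only that $U_K$ is \'etale over a field, so it says nothing about the local rings of $X$. Both arguments bottom out at the same fact, namely the vanishing of Picard groups of (rings \'etale over) fields, and your justification of the auxiliary points (regularity of $X$ as in the proof of \cref{lem:Br=Br'-stacky-curves}, $\push\eta=\push c\circ\push j$, preservation of injectives by $\push j$) is adequate.
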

\begin{proof}
    Let $U\to X$ be an \'etale $X$-scheme. Consider the following commutative diagram, all of whose squares are Cartesian:
    \[\begin{tikzcd}
        U_K\ar[d, "\tet"]\ar[r, "j'"]&\meU\ar[d]\ar[r]&U\ar[d, "\tet"]\\
        \spec K\ar[r, "j"]&\meX\ar[r, "c"]&X.
    \end{tikzcd}\]
    In order to prove the theorem, it suffices to show the second equality in
    \[\hom^1(U\by_X\meX,(\push j\G_{m,K})\vert_U)=\hom^1(\meU,\push j'\G_{m,U_K})=0.\]
    For this, we remark that $\hom^1(\meU,\push j'\G_{m,U_K})\into\hom^1(U_K,\G_m)=\Pic U_K$ and that $\Pic U_K=0$ since $U_K$ is \'etale over a field.
\end{proof}
\begin{cor}
    The sequence \cref{ses:divisor-stack} pushes forward to the exact sequence
    \begin{equation}\label{es:push-divisor-ses}
        0\too\G_{m,X}\too\push\eta\G_{m,K}\too\bigoplus_{x\in\abs X_1}\ul\Z_x\too\lehomR^1\push c\G_{m,\meX}\too0
    \end{equation}
    of sheaves on $X$.
\end{cor}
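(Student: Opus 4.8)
The plan is to apply the lisse-étale pushforward $\push c$ together with its first right derived functor $\lehomR^1\push c$ to the short exact sequence \cref{ses:divisor-stack} and to read off \cref{es:push-divisor-ses} from the resulting long exact sequence. Explicitly, from $0\to\G_{m,\meX}\to\push j\G_{m,K}\to\bigoplus_{x\in\abs\meX_1}i_{x,*}\ul\Z\to0$ one gets
\[0\to\push c\G_{m,\meX}\to\push c\push j\G_{m,K}\to\push c\Bigl(\bigoplus_{x}i_{x,*}\ul\Z\Bigr)\to\lehomR^1\push c\G_{m,\meX}\to\lehomR^1\push c\push j\G_{m,K}\to\cdots,\]
and \cref{lem:R1c(jGm)} states precisely that the rightmost displayed term vanishes, so this long exact sequence truncates: it suffices to identify the first three pushforward sheaves and then take the four-term exact sequence that remains.

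The identifications are routine. First, $\push c\G_{m,\meX}\simeq\G_{m,X}$ since $c$ is a coarse-space map (as used repeatedly above). Second, $\push c\push j\G_{m,K}=\push{(c\circ j)}\G_{m,K}=\push\eta\G_{m,K}$ by functoriality of pushforward and the definition $\eta=c\circ j$. Third, for each $x\in\abs\meX_1=\abs X_1$ the commutative triangle \cref{cd:closed-pt-comp} factors $c\circ i_x$ as $\meG_x\xto{f_x}\spec\kappa(x)\xto{\iota_x}X$, where $f_x$ exhibits $\meG_x$ as a gerbe over $\kappa(x)$; since $f_x$ is a universal homeomorphism --- exactly as used in the proof of \cref{lem:H^1XZx=0}, citing \cite[\href{https://stacks.math.columbia.edu/tag/06R9}{Tag 06R9}]{stacks-project} --- we have $\push{f_x}\ul\Z\simeq\ul\Z$, hence $\push c(i_{x,*}\ul\Z)\simeq\iota_{x,*}\ul\Z=\ul\Z_x$.

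The only step requiring a little care is that $\push c$ sends the direct sum $\bigoplus_x i_{x,*}\ul\Z$ to $\bigoplus_x\ul\Z_x$, and not to some larger sheaf; this is the place where the locally-finite-support nature of the divisor sheaf enters. I would argue this via the Weil-divisor interpretation: evaluated on an étale $V\to X$, the sheaf $\push c(\bigoplus_x i_{x,*}\ul\Z)$ computes the group of Weil divisors on $\meX_V$, the sheaf $\bigoplus_x\ul\Z_x$ computes the group of Weil divisors on $V$, and $c$ induces a bijection $\abs{\meX_V}\to\abs V$ carrying codimension-one points to codimension-one points and matching up the residual gerbes via the isomorphisms $\push{f_x}\ul\Z\simeq\ul\Z$ of the previous paragraph; alternatively one may check exactness of \cref{es:push-divisor-ses} stalkwise, where after restricting to the strictly Henselian local ring at a closed point of $X$ the base becomes a DVR with a single codimension-one point upstairs and the direct sum degenerates to one term. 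Granting this, the truncated long exact sequence together with the three identifications is \cref{es:push-divisor-ses}, and I expect this mild bookkeeping about $\push c$ and the divisor sheaf to be the only real obstacle --- the rest is formal manipulation of the long exact sequence together with standard coarse-space facts.
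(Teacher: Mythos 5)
Your proposal is correct and follows essentially the same route as the paper: push $c_*$ through the divisor sequence, kill $\lehomR^1\push c\push j\G_{m,K}$ via \cref{lem:R1c(jGm)}, and reduce to showing $\push{f_x}\ul\Z\simeq\ul\Z$ for $f_x\colon\meG_x\to\spec\kappa(x)$. The one place the paper works harder is exactly your universal-homeomorphism step: the Stacks project tag you cite only gives that $\meG_x$ is a universal homeomorphism onto the spectrum of \emph{its own} residue field, and the paper's entire proof is devoted to identifying that field with $\kappa(x)$ as a point of the coarse space $X$ (by applying \cite[Proposition 5.2.9(3)]{Alper-adequate} to $\kappa(x)\by_X\meX$ and passing to its reduction) --- without this, $\push{f_x}\ul\Z$ could a priori be a nontrivial \'etale sheaf --- while your extra care about $\push c$ commuting with the direct sum is sound and is treated as immediate in the paper.
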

\begin{proof}
    Given \cref{lem:R1c(jGm)}, the only part of this claim which is not immediate is the computation $\ppush{c\circ i_x}\ul\Z=\iota_{x,*}\ul\Z\eqqcolon\ul\Z_x$ for $x\in\abs X_1=\abs\meX_1$. Write $f\colon\meG_x\to\spec\kappa(x)$ for the natural morphism. By \cref{cd:closed-pt-comp}, to prove $\ppush{c\circ i_x}\ul\Z=\ul\Z_x$, it suffices to show that $\push f\ul\Z=\ul\Z$.

    For this, set $\meZ\coloneqq\kappa(x)\by_X\meX$ and note that $\meZ\to\spec\kappa(x)$ is a universal homeomorphism which follows, e.g., from \cite[Proposition 5.2.9(3)]{Alper-adequate}. In particular, its underlying topological space $\abs\meZ$ is a singleton, and so \cite[\href{https://stacks.math.columbia.edu/tag/06MT}{Tag 06MT}]{stacks-project} implies that $\red\meZ$ is the residual gerbe $\meG_x$ of $\meX$ at $x$. In particular, $f$ factors as $\meG_x\simeq\red\meZ\into\meZ\to\spec\kappa(x)$, making it a universal homeomorphism. This implies that $\push f\ul\Z=\ul\Z$, as desired.
\end{proof}
Note that \cref{es:push-divisor-ses} sits in the following homomorphism of exact sequences:
\begin{equation}\label{hom-es:div}
    \begin{tikzcd}
        0\ar[r]&\G_{m,X}\ar[r]\ar[d, equals]&\push\eta\G_{m,K}\ar[r, "\Delta_X"]\ar[d, equals]&\bigoplus_{x\in\abs X_1}\ul\Z_x\ar[d, "\phi"]\ar[r]&0\ar[d]\ar[r]&0\\
        0\ar[r]&\G_{m,X}\ar[r]&\push\eta\G_{m,K}\ar[r, "\Delta_\meX"]&\bigoplus_{x\in\abs\meX_1}\ul\Z_x\ar[r]&\lehomR^1\push c\G_{m,\meX}\ar[r]&0,
    \end{tikzcd}
\end{equation}
whose top row is the usual divisor exact sequence on the scheme $X$ and whose bottom row is \cref{es:push-divisor-ses}.
\begin{notn}\label{notn:ex}
    Recall the inertial degree $\Ideg$ of \cite[Section 2.2]{stacky-CW}. For a point $x\in\abs X_1=\abs\meX_1$, set
    \[e_x\coloneqq\Ideg(\meG_x\to\spec\kappa(x))=\twocases{e_i}{i\in\{1,\dots,r\}\tand x=x_i}1\]
\end{notn}
\begin{rem}
    The results in this section have so far only used that $\meX$ is generically schemey. We use that $\meX$ is specifically a multiply rooted stack in order to compute $\phi$ in the next lemma.
\end{rem}
\begin{lemma}
    The morphism $\phi$ in \cref{hom-es:div} is $\phi=\bigoplus\phi_x$, where $\phi_x\colon\ul\Z_x\to\ul\Z_x$ sends $1\mapsto e_x$.
\end{lemma}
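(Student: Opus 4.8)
The claim is local on $X$, so fix a closed point $x \in |X|_1 = |\meX|_1$ and work in a neighborhood of $x$. Since $\phi$ is the map induced on the third terms of the diagram \cref{hom-es:div}, and the first two columns are equalities, $\phi_x$ is by definition the map $\ul\Z_x = \ppush{(\iota_x)}\ul\Z \to \ppush{(c \circ i_x)}\ul\Z = \ul\Z_x$ obtained by pushing forward the valuation/order map along $c$; concretely, $\phi_x$ records how the order of vanishing of a rational function changes when one measures it on $\meX$ rather than on $X$, i.e. along the residual gerbe $\meG_x$ rather than along $\spec\kappa(x)$. The plan is to identify this multiplier with the inertial degree $e_x$ from \cref{notn:ex}. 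When $x \notin \{x_1,\dots,x_r\}$, the coarse space map $c$ is an isomorphism over a neighborhood of $x$ (by the assumption \cref{iteratated-root-form} that $\meX$ is a root stack along the $x_i$), so $\meG_x \iso \spec\kappa(x)$, the diagram \cref{hom-es:div} is an isomorphism of complexes near $x$, and $\phi_x = \id$, matching $e_x = 1$.

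For $x = x_i$ with $e_x = e_i$, I would argue via the local description of the root stack. Shrinking $X$ so that $x = x_i$ is the only stacky point and so that $\msO_X(-x)$ is trivial, \cref{prop:root-stack-summary}\bp2 identifies $\meX$ locally with $[\Spec_X(\msO_X[T]/(T^{e_i} - t))/\mu_{e_i}]$ for a uniformizer $t$ at $x$. The generic residual gerbe is $\spec K$ with $K = \Frac(\msO_{X,x})[T]/(T^{e_i}-t) = \kappa(\eta)(t^{1/e_i})$, so $t$ becomes an $e_i$-th power $T^{e_i}$ on $\meG$. Now track $t$ through the two divisor sequences: in the top row, $t \in \push\eta\G_{m,K}(X_x)$ has image $\Delta_X(t) = 1 \cdot [x] \in \ul\Z_x$ (its order of vanishing at $x$ is $1$); in the bottom row, $t = T^{e_i}$ has order of vanishing $e_i$ along the codimension-one point of $\meX$ above $x$, since $T$ is a uniformizer there. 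Because the middle vertical map in \cref{hom-es:div} is the identity, commutativity forces $\phi_x(1 \cdot [x]) = \Delta_\meX(t) = e_i \cdot [x]$, i.e. $\phi_x(1) = e_i = e_x$.

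To make the second step rigorous I would either (i) invoke the known behavior of orders of vanishing under root stacks --- e.g. the computation underlying \cite[Section 3.1]{cadman} that the pullback of the divisor $x$ to $\meX$ is $e_i$ times the reduced stacky divisor over $x$ --- or (ii) identify the order-of-vanishing map on $\meX$ at the point above $x$ directly with the inertial degree $\Ideg$ via \cite[Section 2.2]{stacky-CW} and the definition \cref{notn:ex}. The only mild subtlety, and the step I expect to require the most care, is checking that one really may compute $\phi_x$ after passing to the strictly henselian (or just local) ring at $x$ and trivializing $\msO_X(-x)$ there --- that is, that the formation of the exact sequence \cref{es:push-divisor-ses} and of $\phi$ commutes with this localization --- but this is immediate since both divisor sequences \cref{ses:divisor-stack} are characterized by pullback to smooth covers (\cref{prop:stacky-divisor-ses}) and $\push c$ of a skyscraper-type sheaf is computed stalkwise. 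Assembling: $\phi_x$ sends $1 \mapsto e_x$ for every $x \in |X|_1$, and hence $\phi = \bigoplus_x \phi_x$ with $\phi_x(1) = e_x$, as claimed. \qedhere
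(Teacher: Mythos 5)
Your overall strategy is the same as the paper's: reduce to the local ring at $x$, use the presentation $\meX\simeq[\spec R[T]/(T^{e_i}-t)/\mu_{e_i}]$ from \cref{prop:root-stack-summary}\bp2, track the uniformizer $t$ through both divisor sequences, and read off $\phi_x(1)=e_i$ from commutativity of \cref{hom-es:div}. The paper does exactly this, except that it computes $\Delta_\meX(t)$ on the smooth $\G_m$-torsor cover coming from the defining Cartesian square of the root stack (which works without invoking tameness), whereas you propose to use the finite chart or to cite Cadman / the inertial degree; those are acceptable substitutes here since tameness makes the $\mu_{e_i}$-chart a smooth cover.

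There is, however, one genuine error in your write-up: you assert that the generic residual gerbe of $\meX$ is $\spec K$ with $K=\Frac(\msO_{X,x})[T]/(T^{e_i}-t)=\kappa(\eta)(t^{1/e_i})$, ``so $t$ becomes an $e_i$-th power on $\meG$.'' This is false: $c\colon\meX\to X$ is an isomorphism away from the $x_i$, so $\meX$ is generically schemey with $\meG=\spec\kappa(\eta)$, and $t$ is \emph{not} an $e_i$-th power in $K^\times=\kappa(\eta)^\times$. (Indeed, if $K$ were the larger field, the middle vertical arrow of \cref{hom-es:div} could not be the equality $\push\eta\G_{m,K}=\push\eta\G_{m,K}$ that your argument relies on.) What you have computed is the generic fiber of the \emph{chart} $\spec R[T]/(T^{e_i}-t)$, not of $\meX$; over the generic point that chart is a $\mu_{e_i}$-torsor over $\spec K$, which is why the quotient is just $\spec K$. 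The correct statement is that the element $t\in K^\times$, unchanged, pulls back to $T^{e_i}$ on the smooth cover used to compute $\Delta_\meX$, and since $T$ is a uniformizer there one gets $\Delta_\meX(t)=e_i$ while $\Delta_X(t)=1$. Your fallback justifications (i) and (ii) would repair this, and the final conclusion is correct, but as literally written the key step rests on a misidentification of $\meG$.
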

\begin{proof}
    By construction, one can compute $\phi$ separately near each point, so it suffices to assume that $X=\spec R$ is the spectrum of a dvr with closed point $x$ (think: $R=\msO_{X,x}$). Since $\meX$ is a root stack over $X$, it follows from \cref{prop:root-stack-summary}\bp2 that
    \[\meX=\sqrt[n]{\pi/R}\simeq\sq{\frac{\spec R[T]/(T^n-\pi)}{\mu_n}},\]
    where $n\coloneqq e_x$ and $\pi$ is a uniformizer for $R$. Let $K=\Frac R$, so we are attempting to compute the map $\phi=\phi_x$ in the commutative diagram
    \[\commsquare{\units K}{\Delta_X}\Z={\phi_x}{\units K}{\Delta_\meX}\Z\]
    with horizontal maps as in \cref{prop:stacky-divisor-ses}. Note that $\Delta_X(\pi)=1$, so it suffices to show that $\Delta_\meX(\pi)=n$. From the proof of \cref{prop:stacky-divisor-ses}, we see that $\Delta_\meX$ can be computed on a smooth cover of $\meX$, so consider the Cartesian square\footnote{this is the top face of the Cartesian cube in \cite[Proof of Proposition 2.3.5]{cadman} obtained from pulling back the following Cartesian square (which is the bottom face of the aforementioned cube)
    \[\commsquare{\A^1}{(-)^n}{\A^1}{}{}{[\A^1/\G_m]}{(-)^n}{[\A^1/\G_m]}\]
    along $\spec R\xto{(\msO_R,\pi)}[\A^1/\G_m]$.}
    \[\commsquare{\G_{m,R}}{}\meX{(-)^n}{}{\G_{m,R}}{}{\spec R}\]
    whose horizontal arrows are (smooth) $\G_m$-torsors. With this square in mind, it becomes clear that $\Delta_\meX(\pi)=n\Delta_X(\pi)=n$ since the rational function $\pi\in K=\Frac R$ pulls back to in the top $\G_{m,R}$ is the $n$th power of the function it pulls back to in the bottom $\G_{m,R}$.
\end{proof}
\begin{cor}\label{cor:alt-R1Gm-root}
    $\lehomR^1\push c\G_{m,\meX}\simeq\coker\phi\simeq\bigoplus_{x\in\abs X_1}\ul{\zmod{e_x}}_x$.
\end{cor}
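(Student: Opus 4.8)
The plan is to read off both isomorphisms directly from the homomorphism of exact sequences \cref{hom-es:div}, together with the computation of $\phi$ carried out just above. First I would record that, since the middle vertical arrow in \cref{hom-es:div} is the identity, commutativity of the right-hand square says exactly that $\Delta_\meX=\phi\circ\Delta_X$. As the top row of \cref{hom-es:div} is the usual divisor exact sequence on $X$, the map $\Delta_X\colon\push\eta\G_{m,K}\to\bigoplus_{x\in\abs X_1}\ul\Z_x$ is an epimorphism, so
\[\im\Delta_\meX=\phi\Bigl(\bigoplus_{x\in\abs X_1}\ul\Z_x\Bigr)=\im\phi.\]
Then, from exactness of the bottom row of \cref{hom-es:div} — equivalently, from \cref{es:push-divisor-ses} — we get
\[\lehomR^1\push c\G_{m,\meX}=\coker\Delta_\meX=\Bigl(\bigoplus_{x\in\abs\meX_1}\ul\Z_x\Bigr)\big/\im\Delta_\meX=\coker\phi,\]
which is the first claimed isomorphism. (Alternatively, since $\phi$ is a monomorphism of sheaves, one may split the bottom row into two short exact sequences and apply the snake lemma to \cref{hom-es:div}; because the first two vertical maps are identities, the induced map from the top cokernel $\bigoplus_{\abs X_1}\ul\Z_x$ onto $\im\Delta_\meX$ is an isomorphism, and the residual cokernel is again $\coker\phi$.)

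For the second isomorphism I would compute $\coker\phi$ termwise. By the preceding lemma, $\phi=\bigoplus_x\phi_x$ with $\phi_x\colon\ul\Z_x\to\ul\Z_x$ equal to multiplication by $e_x$, where $\ul\Z_x=\iota_{x,*}\ul\Z$. Since pushforward along the closed immersion $\iota_x$ is exact on abelian sheaves, $\coker\phi_x=\iota_{x,*}\coker(\ul\Z\xto{e_x}\ul\Z)=\iota_{x,*}\ul{\zmod{e_x}}=\ul{\zmod{e_x}}_x$; for $x\notin\{x_1,\dots,x_r\}$ one has $e_x=1$, so this term vanishes. As cokernels commute with direct sums, summing over $x\in\abs X_1$ gives $\coker\phi\simeq\bigoplus_{x\in\abs X_1}\ul{\zmod{e_x}}_x$, as desired.

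This argument is essentially bookkeeping once \cref{hom-es:div} and the computation of $\phi$ are in hand; the only point needing a moment's care is the identification $\im\Delta_\meX=\im\phi$, which relies on the surjectivity of $\Delta_X$ (exactness of the divisor sequence on the scheme $X$) and on the middle vertical map of \cref{hom-es:div} being the identity. Both of these are already established, so I do not anticipate any genuine obstacle.
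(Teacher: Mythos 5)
Your argument is correct and is exactly the (omitted) argument the paper intends: the corollary is stated without proof as an immediate consequence of \cref{hom-es:div} and the computation of $\phi$, and your bookkeeping — $\Delta_\meX=\phi\circ\Delta_X$ with $\Delta_X$ epi, hence $\coker\Delta_\meX=\coker\phi$, then the termwise computation of $\coker\phi_x$ using exactness of pushforward along the closed immersions $\iota_x$ — fills in precisely the right details.
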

In order to better understand \cref{es:push-divisor-ses}, we introduce the two-term complexes
\[\msC_\meX \coloneqq\sq{\push\eta\G_{m,K}\xto{\Delta_\meX}\bigoplus_{x\in\abs\meX_1}\ul\Z_x}\tand\msC_X \coloneqq\sq{\push\eta\G_{m,K}\xto{\Delta_X}\bigoplus_{x\in\abs X_1}\ul\Z_x}\simeq\G_{m,X},\]
both concentrated in degrees $0$ and $1$.
Above, $\msC_X$ is quasi-isomorphic to $\G_{m,X}$ because of the top row of \cref{hom-es:div}. For $\msC_\meX$, we have the following result.
\begin{prop}\label{prop:C-trunc 1}
    $\msC_\meX\simeq\tau_{\le2}\lehomR\push c\G_m$. In particular,
    \[\hom^n(X,\msC_\meX)\simeq\hom^n(\meX,\G_m)\tfor n\le2.\]
\end{prop}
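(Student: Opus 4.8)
The plan is to deduce the proposition from the last several results, which together give the relevant derived pushforwards of $\G_m$ along $c$. Recall we want to identify the two-term complex $\msC_\meX = [\push\eta\G_{m,K}\xto{\Delta_\meX}\bigoplus_{x\in\abs\meX_1}\ul\Z_x]$ (in degrees $0,1$) with the truncation $\tau_{\le2}\lehomR\push c\G_m$, and then read off the low-degree hypercohomology identification $\hom^n(X,\msC_\meX)\simeq\hom^n(\meX,\G_m)$ for $n\le2$.

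First I would apply $\lehomR\push c$ to the divisor exact sequence \cref{ses:divisor-stack} on $\meX$, viewing it as a resolution $[\G_{m,\meX}\to\push j\G_{m,\meG}\to\bigoplus_x\ul\Z_{\meG_x}]$ concentrated in degrees $0,1,2$ (after shifting: more precisely, \cref{ses:divisor-stack} expresses $\G_{m,\meX}$ as quasi-isomorphic to the two-term complex $[\push j\G_{m,\meG}\to\bigoplus_x\ul\Z_{\meG_x}]$ in degrees $0,1$). Hence $\lehomR\push c\G_{m,\meX}\simeq\lehomR\push c[\push j\G_{m,\meG}\to\bigoplus_x\ul\Z_{\meG_x}]$, and there is a spectral sequence (or simply the hyper-pushforward of a two-term complex) computing this in terms of $\lehomR^q\push c$ applied to the two terms. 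The key inputs are: (i) $\push c(\push j\G_{m,\meG}) = \push\eta\G_{m,K}$ and $\lehomR^1\push c(\push j\G_{m,\meG})=0$ by \cref{lem:R1c(jGm)}; (ii) $\push c(\ul\Z_{\meG_x}) = \ul\Z_x$ and $\lehomR^1\push c(\ul\Z_{\meG_x}) = 0$ — the latter because the stalk of $\lehomR^1\push c(\ul\Z_{\meG_x})$ at a geometric point reduces, via \cref{cor:tame-stack-sh} and the fact that $\ul\Z$ is smooth, to $\fhom^1(BG_A,\ul\Z)$ for a strictly local $A$ and finite $G$, which vanishes by the argument already used in the proof of \cref{lem:H^1XZx=0}; similarly $\lehomR^2\push c(\ul\Z_{\meG_x})$ will need to be controlled, but as $\meG_x\into\meX$ is a closed immersion one has $\lehomR^q\push c(\ul\Z_{\meG_x}) = \ppush{(c\circ i_x)}\big(\text{higher pushforward of }\ul\Z\text{ on }\meG_x\big)$, which is a skyscraper at $x$ computed by $\fhom^q(\meG_x,\ul\Z)$; one checks $\fhom^2(\meG_x,\ul\Z)\simeq\fhom^1(\meG_x,\Q/\Z)$ (cf. \cref{lem:Q-coh-van}) does \emph{not} interfere in the range $n\le 2$ because it only contributes to $\lehomR^2\push c\G_m$ at the $E_1$-position coming from the degree-$1$ term of the resolution, which after truncation lands in degree $\ge 2$ in a way that still agrees with $\msC_\meX$ — this bookkeeping is the delicate point.

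More efficiently, I would proceed as follows. Since $[\push j\G_{m,\meG}\to\bigoplus_x\ul\Z_{\meG_x}]$ has $c_*$-acyclic terms in degree $0$ and, in degree $1$, a term whose only higher pushforward relevant below degree $2$ is $\lehomR^1\push c(\ul\Z_{\meG_x})=0$, the hyper-derived pushforward in degrees $\le 2$ is simply computed by the complex of pushforwards $[\push\eta\G_{m,K}\xto{\push c\Delta_\meX}\bigoplus_x\ul\Z_x]$, and $\push c\Delta_\meX$ is by definition $\Delta_\meX$ as it appears in \cref{es:push-divisor-ses}/\cref{hom-es:div}. That is exactly $\msC_\meX$. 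Concretely: $\lehomR^0\push c\G_m = \ker\Delta_\meX = \G_{m,X} = \mathcal H^0(\msC_\meX)$, $\lehomR^1\push c\G_m = \coker\Delta_\meX = \mathcal H^1(\msC_\meX)$ (this matches \cref{cor:alt-R1Gm-root}), and $\lehomR^2\push c\G_m = 0$ by \cref{thm:R2Gm-vanishes} (applicable since $\meX$ is a multiply rooted stack hence tame and locally Brauerless by \cref{ex:mun-bruaerless}), which matches $\mathcal H^2(\msC_\meX) = 0$ since $\msC_\meX$ is concentrated in degrees $0,1$. Thus the natural map $\msC_\meX \to \tau_{\le 2}\lehomR\push c\G_m$ is an isomorphism on all cohomology sheaves and hence a quasi-isomorphism.

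Finally, for the hypercohomology statement: there is a canonical triangle $\tau_{\le 2}\lehomR\push c\G_m \to \lehomR\push c\G_m \to \tau_{\ge 3}\lehomR\push c\G_m$, and since $\tau_{\ge 3}$ contributes to $\hom^n(X,-)$ only for $n\ge 3$, taking $\hom^n(X,-)$ for $n\le 2$ gives $\hom^n(X,\tau_{\le 2}\lehomR\push c\G_m)\iso\hom^n(X,\lehomR\push c\G_m) = \hom^n(\meX,\G_m)$, where the last equality is the definition of hypercohomology of the derived pushforward. Combining with the quasi-isomorphism $\msC_\meX\simeq\tau_{\le 2}\lehomR\push c\G_m$ yields $\hom^n(X,\msC_\meX)\simeq\hom^n(\meX,\G_m)$ for $n\le 2$. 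The main obstacle is the careful truncation bookkeeping in the middle paragraph — ensuring that the degree-$1$ term $\bigoplus_x\ul\Z_{\meG_x}$ of the resolution, whose pushforward $\lehomR^2\push c$ is a nonzero skyscraper $\bigoplus_x\fhom^1(\meG_x,\Q/\Z)_x$, does not spoil the identification of $\lehomR^{\le 2}\push c\G_m$ with the cohomology of $\msC_\meX$; this works precisely because that term sits in cohomological degree $1$ of the resolution, so its $\lehomR^2\push c$ would only affect $\lehomR^3\push c\G_m$, outside our range.
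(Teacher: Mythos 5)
Your proposal is correct and follows essentially the same route as the paper: identify $\G_{m,\meX}$ with the two-term complex from \cref{ses:divisor-stack}, compare the underived pushforward complex $\msC_\meX$ with $\lehomR\push c\G_m$ using \cref{es:push-divisor-ses} to match $\pi_0$ and $\pi_1$, and invoke \cref{thm:R2Gm-vanishes} for $\lehomR^2\push c\G_m=0$. The spectral-sequence bookkeeping in your first paragraph is ultimately unnecessary (and slightly off, since the $(0,2)$-entry $\lehomR^2\push c(\push j\G_{m,\meG})$ also feeds into total degree $2$); your ``more efficient'' second paragraph, which sidesteps this by citing \cref{thm:R2Gm-vanishes} directly, is exactly the paper's argument.
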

\begin{proof}
    The exact sequence \cref{ses:divisor-stack} shows that $\G_{m,\meX}\simeq[\push j\G_m\to\bigoplus\ul\Z_{\meG_x}]$ so applying $\push c$ gives a morphism
    \[\msC_\meX=\sq{\push c\push j\G_m\too\push c\p{\bigoplus_{x\in\abs\meX_1}\ul\Z_{\meG_x}}}\to\lehomR\push c\sq{\push j\G_m\to\bigoplus_{x\in\abs\meX_1}\ul\Z_{\meG_x}}\simeq\lehomR\push c\G_m.\]
    Furthermore, \cref{es:push-divisor-ses} shows that this morphism induces an isomorphism on $\pi_0$ and $\pi_1$ (while $\pi_i(\msC)=0$ for $i\ge2$) and so induces a (quasi-)isomorphism $\msC\iso\tau_{\le1}\lehomR\push c\G_m$. Finally,
    \[\pi_2\p{\lehomR\push c\G_m}=\lehomR^2\push c\G_m=0,\]
    with last equality by \cref{thm:R2Gm-vanishes}, so $\tau_{\le1}\lehomR\push c\G_m\simeq\tau_{\le2}\lehomR\push c\G_m$.
\end{proof}
Now, we remark that \cref{hom-es:div} induces the following morphism of distinguished triangles:
\[\begin{tikzcd}
    &\G_{m,X}\ar[d, "\sim" sloped]\\
    \p{\bigoplus_{x\in\abs X_1}\ul\Z_x}[-1]\ar[d, "\phi"]\ar[r]&\msC_X\ar[d]\ar[r]&\push\eta\G_{m,K}\ar[r, "+1"]\ar[d, equals]&\phantom.\\
    \p{\bigoplus_{x\in\abs\meX_1}\ul\Z_x}[-1]\ar[r]&\msC_\meX\ar[r]\ar[d, "\sim" sloped]&\push\eta\G_{m,K}\ar[r, "+1"]&\phantom.\\
    &\tau_{\le2}\lehomR\push c\G_{m,\meX}
\end{tikzcd}\]
Therefore, upon taking cohomology, we obtain:
\begin{equation}\label{cd:root-res-comp-gen}
    \begin{tikzcd}
        0\ar[r]&\hom^2(X,\G_m)\ar[d, "\pull c"]\ar[r]&\hom^2(X,\push\eta\G_m)\ar[d, equals]\ar[r, "\bigoplus_x\res_{x/X}"]&\bigoplus_{x\in\abs X_1}\hom^1(\kappa(x),\Q/\Z)\ar[d, "\push\phi"]\\
        0\ar[r]&\hom^2(\meX,\G_m)\ar[r]&\hom^2(X,\push\eta\G_m)\ar[r, "\bigoplus_x\res_{x/\meX}"]&\bigoplus_{x\in\abs\meX_1}\hom^1(\kappa(x),\Q/\Z)
    \end{tikzcd}
\end{equation}
whose top row is the usual residue exact sequence associated to $X$.
\begin{cor}
    For any $x\in\abs X_1=\abs\meX_1$, $\res_{x/\meX}=e_x\cdot\res_{x/X}$ as maps $\hom^2(X,\push\eta\G_m)\to\hom^1(\kappa(x),\Q/\Z)$.
\end{cor}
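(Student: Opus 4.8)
The plan is to extract the corollary directly from the right-hand square of the commutative diagram \cref{cd:root-res-comp-gen}. In that diagram the middle vertical arrow is the identity on $\hom^2(X,\push\eta\G_m)$, the top horizontal arrow is $\bigoplus_x\res_{x/X}$, the bottom horizontal arrow is $\bigoplus_x\res_{x/\meX}$, and the right vertical arrow is $\push\phi$. Commutativity of the right square then reads $\bigoplus_x\res_{x/\meX}=\push\phi\circ\bigl(\bigoplus_x\res_{x/X}\bigr)$, so the entire content of the corollary is the identification of $\push\phi$ on each summand. Projecting onto the $x$-th factor of the right-hand direct sums reduces the statement to: $\res_{x/\meX}=\phi_{x,*}\circ\res_{x/X}$, where $\phi_{x,*}$ is the endomorphism of $\hom^1(\kappa(x),\Q/\Z)$ induced by $\phi_x\colon\ul\Z_x\to\ul\Z_x$.

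Next I would identify $\phi_{x,*}$ as multiplication by $e_x$. By the lemma computing $\phi$ (just after \cref{hom-es:div}), $\phi=\bigoplus_x\phi_x$ with $\phi_x\colon\ul\Z_x\to\ul\Z_x$ the map sending $1\mapsto e_x$; since $\ul\Z_x=\iota_{x,*}\ul\Z$ is the pushforward of the constant sheaf on $\spec\kappa(x)$, $\phi_x$ is literally the endomorphism ``$\times e_x$'' of an abelian sheaf. Because $\hom^2(X,-)$ is additive, the induced map $\hom^2(X,\ul\Z_x)\to\hom^2(X,\ul\Z_x)$ is multiplication by $e_x$, and the (group-theoretic, hence $\Z$-linear) identification $\hom^2(X,\ul\Z_x)=\hom^2(\kappa(x),\Z)\iso\hom^1(\kappa(x),\Q/\Z)$ obtained from the Bockstein sequence $0\to\Z\to\Q\to\Q/\Z\to0$ (exactly as in \cref{lem:Q-coh-van}) carries this ``$\times e_x$'' map to ``$\times e_x$'' on $\hom^1(\kappa(x),\Q/\Z)$. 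Combined with the previous paragraph this yields $\res_{x/\meX}=e_x\cdot\res_{x/X}$.

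I do not anticipate a genuine obstacle: the substance of the argument was already done in building \cref{cd:root-res-comp-gen}, which rested on \cref{es:push-divisor-ses}, on \cref{thm:R2Gm-vanishes} (to kill $\lehomR^2\push c\G_m$ and hence quasi-identify $\msC_\meX$ with $\tau_{\le2}\lehomR\push c\G_m$), and on the smooth-cover computation $\Delta_\meX(\pi)=n\,\Delta_X(\pi)$. The only point meriting any care is bookkeeping: confirming that the two copies of $\hom^2(X,\push\eta\G_m)$ in \cref{cd:root-res-comp-gen} are matched by the identity map, that the horizontal maps there really are the classical residue map for $X$ and its analogue for $\meX$, and that $e_x$ as used here agrees with the inertial degree $\Ideg(\meG_x\to\spec\kappa(x))$, so that the corollary compares the stacky residue with the genuine (schemey) one on $X$.
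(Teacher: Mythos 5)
Your argument is correct and is exactly the paper's: the corollary is read off from the commutativity of the right-hand square of \cref{cd:root-res-comp-gen}, with the identification of $\push\phi$ as multiplication by $e_x$ on each summand supplied by the lemma computing $\phi$ (the paper simply cites commutativity of the diagram and leaves the bookkeeping implicit). Your extra checks — additivity of $\hom^2(X,-)$ and $\Z$-linearity of the Bockstein identification $\hom^2(\kappa(x),\Z)\simeq\hom^1(\kappa(x),\Q/\Z)$ — are the right points to verify and pose no difficulty.
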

\begin{proof}
    This follows from commutativity of \cref{cd:root-res-comp-gen}.
\end{proof}
Finally, we make $\hom^2(X,\push\eta\G_m)$ more explicit in certain cases. The below proposition is likely well-known, but I failed to find a statement of it in the literature.
\begin{prop}\label{prop:H2-push-eta}
    Assume that $X$ is a nice curve over a field $k$. Then, $\hom^2(X,\push\eta\G_m)\simeq\ker\p{\Br k(X)\to\Br k^s(X)}$.
\end{prop}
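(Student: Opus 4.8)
The plan is to compute $\hom^2(X, \push\eta\G_m)$ by feeding the Leray spectral sequence for $\eta\colon \spec K \to X$ into the known description of the cohomology of $\spec K$. First I would recall that $\hom^n(\spec K, \G_m) = \hom^n(K, \G_m)$ is Galois cohomology of the function field $K = k(X)$, so in particular $\hom^2(K, \G_m) = \Br K = \Br k(X)$. The Leray spectral sequence reads $E_2^{ij} = \hom^i(X, \homR^j\push\eta\G_m) \implies \hom^{i+j}(K, \G_m)$. The low-degree exact sequence gives
\[
0 \too \hom^2(X, \push\eta\G_m) \too \hom^2(K, \G_m) \too \hom^0(X, \homR^2\push\eta\G_m) \too \hom^3(X,\push\eta\G_m),
\]
and also identifies $\hom^2(X,\push\eta\G_m)$ with the kernel of the edge map $\Br k(X) \to \hom^0(X,\homR^2\push\eta\G_m)$. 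So the crux is to identify $\homR^2\push\eta\G_m$ and to see that the composite $\Br k(X) \to \hom^0(X, \homR^2\push\eta\G_m)$ is, up to a natural identification, the base-change map $\Br k(X) \to \Br k^s(X)$.

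The key step is a stalk computation. For a geometric point $\bar x \to X$, one has $(\homR^j\push\eta\G_m)_{\bar x} = \hom^j(\spec K \times_X \spec\msO_{X,\bar x}^{\mathrm{sh}}, \G_m)$, and $K \otimes_{\msO_X} \msO_{X,\bar x}^{\mathrm{sh}}$ is a (finite, since $X$ is a curve, or at least a filtered colimit of) product of fields, each a henselization-type extension of $K$. The point is that at the generic point $\bar\eta \to X$, the strictly local ring is a separable closure of $K$, so $K \otimes_{\msO_X} \msO_{X,\bar\eta} = K^s$ and the stalk of $\homR^j\push\eta\G_m$ at the generic point is $\hom^j(K^s, \G_m)$, which vanishes for $j \geq 1$ since $K^s$ is separably closed (Tsen-type: $\Br K^s = 0$ as well). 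For $\bar x$ a closed geometric point, $\msO_{X,\bar x}^{\mathrm{sh}}$ is a strictly henselian DVR with fraction field $K_{\bar x}^{\mathrm{sh}}$, and $\hom^2(K_{\bar x}^{\mathrm{sh}}, \G_m) = \Br K_{\bar x}^{\mathrm{sh}} = 0$ by Tsen's theorem (the residue field is separably closed, so $K_{\bar x}^{\mathrm{sh}}$ is $C_1$). Hence $\homR^2\push\eta\G_m$ is a skyscraper-type sheaf supported at the generic point with stalk $0$ there too — wait, that forces $\homR^2\push\eta\G_m = 0$, which is too strong. I need to be more careful: the relevant fact is rather that the stalk of $\homR^2\push\eta\G_m$ at the \emph{generic point} is $\Br K^s = 0$, while its stalks at closed points also vanish by Tsen, but the issue is that $\homR^2\push\eta\G_m$ need not be determined by stalks alone if $X$ is not nice — but $X$ \emph{is} assumed nice, hence a noetherian normal (regular) integral curve, so a sheaf with all stalks zero is zero. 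This would give $\hom^2(X,\push\eta\G_m) \iso \Br k(X)$, not a proper kernel, which contradicts the statement.

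So the main obstacle — and where I would focus the real work — is that the correct statement involves $\Br k^s(X)$, meaning I should \emph{not} strictly localize on $X$ but rather understand $\homR^j\push\eta\G_m$ as a sheaf whose sections over $X$ (not stalks) are governed by $k^s(X)$. The right move is to use the compatibility of the Leray spectral sequence for $\eta \colon \spec K \to X$ with the Leray spectral sequence for $X \to \spec k$: one writes $\spec K \to X \to \spec k$ and $\spec K \to \spec k(X)$... more precisely, I would use the Hochschild--Serre spectral sequence relating $\hom^\bullet(K,\G_m)$, $\hom^\bullet(X, \push\eta\G_m)$, and Galois descent from $k^s$ to $k$. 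The cleanest route: base-change to $k^s$, where $X^s = X_{k^s}$ is a nice curve over a separably closed field, and there the analogous $\hom^2(X^s, \push{\eta^s}\G_m) = 0$ since $\Br k^s(X^s) = 0$ (Tsen) — in fact for $X^s$ over $\bar k$ all higher $\homR^j\push{\eta^s}\G_m$ stalks vanish by Tsen, giving $\hom^2(X^s, \push{\eta^s}\G_m) = \Br k^s(X^s) = 0$ — no, that gives it is zero, but one still needs to descend. Running the Hochschild--Serre spectral sequence $\hom^p(G_k, \hom^q(X^s, \push{\eta^s}\G_m)) \implies \hom^{p+q}(X, \push\eta\G_m)$ with $\hom^0(X^s,\push{\eta^s}\G_m) = k^s(X)^\times$ and $\hom^1(X^s,\push{\eta^s}\G_m) = \Pic$-type term and $\hom^2(X^s,\push{\eta^s}\G_m) = 0$, the low-degree exact sequence expresses $\hom^2(X,\push\eta\G_m)$ through $\hom^2(G_k, k^s(X)^\times) = \Br(k^s(X)/k(X)) = \ker(\Br k(X) \to \Br k^s(X))$ plus a contribution from $\hom^1(G_k, \hom^1(X^s,\push{\eta^s}\G_m))$, and one shows the latter contribution is killed. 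I would carry this out by: (1) establishing $\hom^2(X^s, \push{\eta^s}\G_m) = 0$ via Tsen applied stalk-wise on $X^s$; (2) identifying $\hom^1(X^s, \push{\eta^s}\G_m)$ and showing the relevant differential or $\hom^1(G_k, -)$ term vanishes or maps isomorphically, using that $X^s$ is a nice curve (so this $\hom^1$ is an extension of $\Pic X^s$ by units, and standard arguments control $\hom^1(G_k, -)$); (3) reading off $\hom^2(X, \push\eta\G_m) \iso \hom^2(G_k, k^s(X)^\times) = \ker(\Br k(X) \to \Br k^s(X))$. The main obstacle is controlling that $\hom^1(G_k, \hom^1(X^s,\push{\eta^s}\G_m))$ term — this is really the heart of why $X$ being \emph{nice} (smooth, projective, geometrically connected) is needed, and I expect it requires a short argument via the exact sequence $0 \to k^s(X)^\times \to \homR\push{\eta^s}\G_m$-type truncation together with vanishing of $\hom^1$ of the residue sheaves.
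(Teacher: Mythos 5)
Your overall architecture in the second half --- base-change to $k^s$, run Hochschild--Serre for $\bar X\to X$, and identify $\hom^2(k,\units{k^s(X)})$ with $\ker\p{\Br k(X)\to\Br k^s(X)}$ via inflation--restriction --- is exactly the paper's route, and your step (3) is correct as stated. But there is a genuine gap in the load-bearing step, and it is the same error that produced the ``contradiction'' you noticed and never diagnosed in the first half: you invoke Tsen/$C_1$ to claim that the Brauer group of the fraction field of a strictly henselian DVR vanishes. That is Lang's theorem, and it requires the residue field to be \emph{algebraically} closed; for a merely separably closed residue field of characteristic $p$, the $p$-torsion of $\Br\p{\Frac\msO^{\mathrm{sh}}_{X,\bar x}}$ can be nonzero. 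This is precisely why $\hom^2(X,\push\eta\G_m)$ is in general the proper subgroup $\ker\p{\Br k(X)\to\Br k^s(X)}$ rather than all of $\Br k(X)$ (for perfect $k$ the two coincide, since then $\Br k^s(X)=\Br\bar k(X)=0$ by Tsen). So your first computation fails at the closed-point stalks when $k$ is imperfect, and step (1) of your revised plan (``Tsen applied stalk-wise on $X^s$'') inherits the identical flaw: the stalks of $\homR^2\push{\eta^s}\G_m$ on $\bar X$ at closed points are Brauer groups of fraction fields of strictly henselian DVRs whose residue fields are separably but not necessarily algebraically closed.

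The correct argument for the key vanishing $\hom^2(\bar X,\push\eta\G_m)=0$, as in the paper, avoids stalks entirely: take cohomology of the divisor sequence $0\to\G_m\to\push\eta\G_m\to\bigoplus_x\ul\Z_x\to0$ on $\bar X$ to get $0\to\hom^2(\bar X,\G_m)\to\hom^2(\bar X,\push\eta\G_m)\to\bigoplus_x\hom^1(\kappa(x),\Q/\Z)$; the first term vanishes by Grothendieck's theorem for \emph{proper} curves over separably closed fields (this is where the projectivity in ``nice'' actually enters --- your argument never uses it, which should have been a warning sign), and the last term vanishes because each $\kappa(x)$ has trivial absolute Galois group. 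Two smaller points: the term $\hom^1(G_k,\hom^1(\bar X,\push{\eta^s}\G_m))$ that you flag as ``the heart'' of the difficulty is not an issue at all, since $\hom^1(\bar X,\push\eta\G_m)$ embeds into $\hom^1(k^s(X),\G_m)=0$ by Hilbert 90 (in particular it is not ``an extension of $\Pic\bar X$ by units''); and the injectivity of $\hom^2(X,\push\eta\G_m)\to\hom^2(K,\G_m)$ in your first display requires $\homR^1\push\eta\G_m=0$, which is true (Hilbert 90 on stalks) but should be stated.
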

\begin{proof}
    We begin with the Leray spectral sequence for the morphism $X\to\spec k$:
    \begin{equation}\label{ss:Xk}
        E_2^{ij}=\hom^i(k,\hom^j(\bar X,\push\eta\G_m))\implies\hom^{i+j}(X,\push\eta\G_m),
    \end{equation}
    where $\bar X\coloneqq X_{k^s}$. Note that $\hom^1(\bar X,\push\eta\G_m)=0$ because it embeds by $\hom^1(k^s(X),\G_m)=0$. Furthermore, on $\bar X$, one has the exact sequence
    \[0\too\hom^2(\bar X,\G_m)\too\hom^2(\bar X,\push\eta\G_m)\too\bigoplus_{x\in\abs{\bar X}_1}\hom^1(\kappa(x),\qz).\]
    Above, $\hom^2(\bar X,\G_m)=0$ by \cite[Corollaire 5.8]{Gr3} and $\hom^1(\kappa(x),\Q/\Z)=0$ for all $x\in\abs{\bar X}_1$ since each $\kappa(x)$ is separably closed. Thus, $\hom^2(\bar X,\push\eta\G_m)=0$ as well. Thus, \cref{ss:Xk} gives an isomorphism $\hom^2(k,\units{k^s(X)})\iso\hom^2(X,\push\eta\G_m)$, so we need only compute this former group. For this, we use the spectral sequence
    \[F_2^{ij}=\hom^i(k,\hom^j(k^s(X),\G_m))\implies\hom^{i+j}(k(X),\G_m)\]
    whose exact sequence of low degree terms quickly yields $\hom^2(k,k^s(X)^\by)\iso\ker\p{\Br k(X)\to\Br k^s(X)}$.
\end{proof}

\subsection{A stacky Faddeev exact sequence}\label{sect:faddeev}
\begin{rem}
    In \cite[Proposition 5.5]{santens2023brauermanin}, Santens gives one description of the Brauer group of a generically scheme-y tame DM stacky $\P^1$. In the present section, we give a different description of such Brauer groups (see \cref{prop:stacky-P1-Brauer,prop:stacky-Faddeev}), and then show how it can be used to aid in the computations of Brauer groups of (possibly non-proper) stacky rational curves.
\end{rem}
\begin{set}
    Fix a field $k$, set $X=\P^1_k$, and use the notation of \cref{sect:res-gen-schemey}. In particular, there are distinct closed points $x_1,\dots,x_r\in\P^1_k$ and integers $e_1,\dots,e_r>1$ such that
    \[\meX=\sqrt[e_1]{x_1/\P^1_k}\by_{\P^1_k}\dots\by_{\P^1_k}\sqrt[e_r]{x_r/\P^1_k}\xtoo c\P^1_k.\]
    Furthermore, for each $x\in\abs{\P^1_k}_1=\abs{\meX}_1$, we defined its ``stacky degree'' $e_x$ in \cref{notn:ex}. We also have the morphisms $\phi_x\colon\ul\Z_x\to\ul\Z_x$, of sheaves supported on $\spec\kappa(x)\into X$, given by $1\mapsto e_x$.
\end{set}
Note that, by \cref{prop:H2-push-eta} and \cite[Theorem 5.1]{berg2023brauermaninobstructionsrequiringarbitrarily}, the top row of \cref{cd:root-res-comp-gen} extends to the Faddeev exact sequence and so \cref{cd:root-res-comp-gen} yields the following commutative diagram with exact rows (note also that $\Br\P^1_k\simeq\Br k$ either as a biproduct of Faddeev or by \cite[Propostion 6.9.9]{poonen-rat-pts}):
\begin{equation}\label{cd:faddeev-comp}
    \begin{tikzcd}
        0\ar[r]&\Br\P^1_k\ar[d, "\pull c"]\ar[r]&\ker(\Br k(\P^1)\to\Br k^s(\P^1))\ar[d, equals]\ar[r, "\bigoplus_x\res_{x/\P^1_k}"]&\bigoplus_{x\in\abs{\P^1_k}_1}\hom^1(\kappa(x),\Q/\Z)\ar[d, "\push\phi=\bigoplus_xe_x"]\ar[r, "\Cor"]&\hom^1(k,\Q/\Z)\ar[r]&0\\
        0\ar[r]&\Br\meX\ar[r]&\ker\p{\Br k(\P^1)\to\Br k^s(\P^1)}\ar[r, "\bigoplus_x\res_{x/\meX}"]&\bigoplus_{x\in\abs\meX_1}\hom^1(\kappa(x),\Q/\Z),
    \end{tikzcd}
\end{equation}
where $\Cor=\sum_x\Cor_{\kappa(x)/k}$.
\begin{prop}\label{prop:stacky-P1-Brauer}
    There is an exact sequence
    \[0\too\underbrace{\Br\P^1_k}_{\simeq\Br k}\xtoo{\pull c}\Br\meX\too\bigoplus_{i=1}^r\hom^1\p{\kappa(x_i),\left.\frac1{e_i}\Z\right/\Z}\xtoo\Cor\hom^1(k,\Q/\Z)\]
\end{prop}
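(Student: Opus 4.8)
The plan is to read off the sequence from the commutative diagram \cref{cd:faddeev-comp} by a diagram chase, using that its top row is the exact Faddeev sequence for $\P^1_k$ (so the residue map there has kernel $\Br\P^1_k$ and image $\ker\Cor$, and $\Cor$ is surjective) and that its left vertical map is $\pull c$ while the middle one is the identity. First I would set up two small pieces of bookkeeping. Since multiplication by $e_x$ is surjective on $\hom^0(\kappa(x),\Q/\Z)=\Q/\Z$, the sequence $0\to\frac1{e_x}\Z/\Z\to\Q/\Z\xto{e_x}\Q/\Z\to0$ of constant \'etale sheaves on $\spec\kappa(x)$ yields an identification $\hom^1\p{\kappa(x),\frac1{e_x}\Z/\Z}\simeq\hom^1(\kappa(x),\Q/\Z)[e_x]$, which is functorial in the coefficients and in particular compatible with corestriction to $k$. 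Next, using the bottom row of \cref{cd:faddeev-comp} together with the formula $\res_{x/\meX}=e_x\cdot\res_{x/\P^1_k}$ recorded just above it, a class $\beta\in\ker\p{\Br k(\P^1)\to\Br k^s(\P^1)}$ lies in $\Br\meX$ precisely when $e_x\res_{x/\P^1_k}(\beta)=0$ for every closed point $x$, i.e.\ when $\res_{x/\P^1_k}(\beta)=0$ for $x\notin\{x_1,\dots,x_r\}$ and $\res_{x_i/\P^1_k}(\beta)\in\hom^1\p{\kappa(x_i),\frac1{e_i}\Z/\Z}$ otherwise. Hence $\bigoplus_i\res_{x_i/\P^1_k}$ restricts to a well-defined map $r\colon\Br\meX\to\bigoplus_{i=1}^r\hom^1\p{\kappa(x_i),\frac1{e_i}\Z/\Z}$, which will be the second arrow of the claimed sequence.

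With this in place the exactness verifications should be routine. For injectivity of $\pull c$ and exactness at $\Br\meX$: by the left square of \cref{cd:faddeev-comp} the composite $\Br\P^1_k\xto{\pull c}\Br\meX\into\ker\p{\Br k(\P^1)\to\Br k^s(\P^1)}$ is the Faddeev inclusion, so $\pull c$ is injective with image $\ker\bigl(\bigoplus_x\res_{x/\P^1_k}\bigr)=\Br\P^1_k$; since $\ker r=\Br\meX\cap\ker\bigl(\bigoplus_x\res_{x/\P^1_k}\bigr)$ and that kernel is already contained in $\ker\bigl(\bigoplus_x e_x\res_{x/\P^1_k}\bigr)=\Br\meX$, we get $\ker r=\im\pull c$. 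For exactness at $\bigoplus_i\hom^1\p{\kappa(x_i),\frac1{e_i}\Z/\Z}$: the inclusion $\im r\subseteq\ker\Cor$ is immediate, because the coordinates of $r(\beta)$ outside $\{x_1,\dots,x_r\}$ vanish, so $\Cor(r(\beta))$ equals $\Cor$ of the full residue vector of $\beta$, which is zero by exactness of the Faddeev row. Conversely, given $\gamma$ with $\Cor(\gamma)=0$, I would extend $\gamma$ by zero at the remaining closed points to a vector $\widetilde\gamma\in\bigoplus_x\hom^1(\kappa(x),\Q/\Z)$ with $\Cor(\widetilde\gamma)=0$, lift it through the Faddeev row to some $\beta\in\ker\p{\Br k(\P^1)\to\Br k^s(\P^1)}$, and observe that $\beta$ satisfies the membership criterion for $\Br\meX$ (its residues away from the $x_i$ vanish, and its residue at $x_i$ lies in $\hom^1(\kappa(x_i),\Q/\Z)[e_i]$), with $r(\beta)=\gamma$.

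I do not anticipate a real obstacle: the content is essentially all packaged into \cref{cd:faddeev-comp}, which has already been constructed. The one place to be slightly careful is the corestriction bookkeeping — one must make sure that the map $\Cor$ in the statement, namely $\sum_i\Cor_{\kappa(x_i)/k}$ read along the inclusions $\hom^1\p{\kappa(x_i),\frac1{e_i}\Z/\Z}\hookrightarrow\hom^1(\kappa(x_i),\Q/\Z)$, is literally the restriction of the corestriction appearing in the Faddeev row of \cref{cd:faddeev-comp}, which is exactly the functoriality of corestriction in the coefficient module noted in the first step. Alternatively, one can avoid these inclusions entirely by running the whole chase inside $\bigoplus_x\hom^1(\kappa(x),\Q/\Z)$ and only at the end observing that $\im r$ — and the part of $\ker\Cor$ that gets hit — lands in the sub-sum over $x_1,\dots,x_r$ of $e_i$-torsion.
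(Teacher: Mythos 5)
Your proposal is correct and follows essentially the same route as the paper: a diagram chase in \cref{cd:faddeev-comp} using exactness of the Faddeev row, the identity $\res_{x/\meX}=e_x\cdot\res_{x/\P^1_k}$, and the identification $\hom^1\p{\kappa(x_i),\frac1{e_i}\Z/\Z}\simeq\hom^1(\kappa(x_i),\Q/\Z)[e_i]$ coming from $0\to\frac1{e_i}\Z/\Z\to\Q/\Z\xto{e_i}\Q/\Z\to0$. The paper compresses the chase into a single appeal to the snake lemma identifying $\coker\pull c$ with $\ker\Cor$ on the $e_x$-torsion, but the content, including your care about compatibility of $\Cor$ with the coefficient inclusion, is the same.
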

\begin{proof}
    From a diagram chase or appropriate application of the snake lemma to \cref{cd:faddeev-comp}, one sees that
    \[\coker\pull c\simeq\ker\p{\bigoplus_{x\in\abs{\P^1_k}_1}\hom^1(\kappa(x),\Q/\Z)[e_x]\xtoo\Cor\hom^1(k,\Q/\Z)}.\]
    Since $e_x=1$ unless $x=x_i$ for some $i$ (in which case $e_x=e_i$), it now suffices to show that $\hom^1(\kappa(x_i),\Q/\Z)[e_i]\simeq\hom^1(\kappa(x_i),\frac1{e_i}\Z/\Z)$. This follows from taking cohomology of the exact sequence $0\to\frac1{e_i}\Z/\Z\too\Q/\Z\xtoo{e_i}\Q/\Z\too0$.
\end{proof}
\begin{ex}
    Fix integers $a,b,c>0$ and consider the generalized Fermat equation $x^a+y^b=z^c$. To slightly simplify this example, assume that $a$ or $b$ is odd. Primitive integral solutions to this equation correspond to integral points on $S=\spec\Z[x,y,z]/(x^a+y^b-z^c)\sm\{(0,0,0)\}\subset\A^3_\Z$. Note that $\G_m^3\actson\A^3$ and let $H\subset\G_m^3$ be the subgroup preserving $S$; this is the subgroup generated by the image of $\G_m\into\G_m^3,\lambda\mapsto(\lambda^{bc},\lambda^{ac},\lambda^{ab})$ and by $\mu_a\by\mu_b\by\mu_c\subset\G_m^3$. Set $\meX\coloneqq[S/H]$. One often studies $S(\Z)$ by first studying $\meX(\Z)$; see \cite{faltings-plus-epsilon,faltings-plus-epsilon:correction} and/or \cite{PSS:X(7)}. One can check that $\meX$ is isomorphic to $\P^1$ rooted at $0,\infty,-1$ to degrees $a,b,c$, respectively; the coarse space map $\meX\to\P^1$ is descended from the map $S\to\P^1,(x,y,z)\mapsto[x^a:y^b]$. With this description of $\meX$, \cref{prop:stacky-P1-Brauer} shows that the Brauer group of its generic fiber sits in an exact sequence
    \[0\too\Br\Q\too\Br\meX_\Q\too\hom^1\p{\Q,\frac1a\Z/\Z}\oplus\hom^1\p{\Q,\frac1b\Z/\Z}\oplus\hom^1\p{\Q,\frac1c\Z/\Z}\xtoo\Sigma\hom^1(\Q,\Q/\Z).\qedhere\]
\end{ex}

In the simplest case, where $\P^1$ is rooted at $k$-points to pairwise coprime degrees, \cref{cd:faddeev-comp} can be reduced into a single stacky Faddeev exact sequence.
\begin{prop}[Stacky Faddeev]\label{prop:stacky-Faddeev}
    Assume that each $x_1,\dots,x_r$ is a $k$-point and that the numbers $e_1,\dots,e_r$ are pairwise coprime. Let $N\coloneq\prod_{i=1}^re_i=\lcm_i(e_i)$. Then, $\Br\meX\simeq\Br\P^1_k\simeq\Br k$ and there is an exact sequence
    \[0\to\Br k\to\ker\p{\Br k(\P^1)\to\Br k^s(\P^1)}\xtoo{\bigoplus_x e_x\cdot\res_{x/\P^1_k}}\bigoplus_{x\in\abs\meX_1}\hom^1(\kappa(x),\Q/\Z)\xtoo{\Cor_\meX}\hom^1(k,\Q/\Z)\to0,\]
    where $\Cor_\meX\coloneqq\sum_{x\in\abs\meX_1}\frac N{e_x}\cdot\Cor_{\kappa(x)/k}$.
\end{prop}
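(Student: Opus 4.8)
The plan is to read off the asserted sequence from the commutative diagram \cref{cd:faddeev-comp}, whose rows are exact, together with \cref{prop:stacky-P1-Brauer} and the comparison $\res_{x/\meX}=e_x\cdot\res_{x/\P^1_k}$ coming from \cref{cd:root-res-comp-gen}. Throughout I would abbreviate $P\coloneqq\ker\p{\Br k(\P^1)\to\Br k^s(\P^1)}$, $M\coloneqq\bigoplus_{x\in\abs\meX_1}\hom^1(\kappa(x),\Q/\Z)$, $L\coloneqq\hom^1(k,\Q/\Z)$, $\res\coloneqq\bigoplus_x\res_{x/\P^1_k}\colon P\to M$, and $\Cor\coloneqq\sum_x\Cor_{\kappa(x)/k}\colon M\to L$, so the top row of \cref{cd:faddeev-comp} is the classical exact Faddeev sequence $0\to\Br k\to P\xrightarrow{\res}M\xrightarrow{\Cor}L\to0$ (using $\Br\P^1_k\cong\Br k$, e.g. by \cite[Proposition 6.9.9]{poonen-rat-pts}), while the bottom row identifies $\Br\meX$ with $\ker(\phi\circ\res)\subseteq P$, where $\phi$ and $\mu$ denote the endomorphisms of $M$ acting on the $x$-summand by multiplication by $e_x$ and by $N/e_x$ respectively. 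Note the formal identities $\phi\circ\mu=\mu\circ\phi=N\cdot\id_M$, $\bigoplus_x e_x\res_x=\phi\circ\res$, and $\Cor_\meX=\Cor\circ\mu$.

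First I would identify $\Br\meX$. By \cref{prop:stacky-P1-Brauer} (with $\kappa(x_i)=k$, so $\Cor_{\kappa(x_i)/k}=\id$ and $\hom^1(k,\tfrac1{e_i}\Z/\Z)\cong L[e_i]$), the cokernel of $\pull c\colon\Br\P^1_k\to\Br\meX$ is the kernel of the summation map $\bigoplus_{i=1}^r L[e_i]\to L$. Pairwise coprimality of the $e_i$ makes this map injective: if $\sum_i\alpha_i=0$ with $\alpha_i\in L[e_i]$, multiplying by $\prod_{j\ne i}e_j$ kills every $\alpha_j$ with $j\ne i$ and gives $\bigl(\prod_{j\ne i}e_j\bigr)\alpha_i=0$, so $\alpha_i=0$ since $\gcd\bigl(\prod_{j\ne i}e_j,\,e_i\bigr)=1$ and $e_i\alpha_i=0$. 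Hence $\pull c$ is surjective, and it is injective because $\meX(k)\ne\emptyset$ (a $k$-point of $\P^1$ outside $\{x_1,\dots,x_r\}$ lifts to $\meX$) yields a section of $\meX\to\spec k$. So $\pull c\colon\Br\P^1_k\iso\Br\meX$, which is the first claim.

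Next I would prove exactness of $0\to\Br k\to P\xrightarrow{\phi\circ\res}M\xrightarrow{\Cor_\meX}L\to0$. At $P$: the bottom row of \cref{cd:faddeev-comp} gives $\ker(\phi\circ\res)=\im(\Br\meX\to P)$, which by the previous paragraph and commutativity of the left square equals $\im(\Br\P^1_k\to P)=\ker\res=\im(\Br k\to P)$. Surjectivity of $\Cor_\meX$: its $x_i$-component is multiplication by $N/e_i$ on $L$, so $\im\Cor_\meX\supseteq\sum_{i=1}^r(N/e_i)L$, and since $\gcd(N/e_1,\dots,N/e_r)=1$ (again by pairwise coprimality) a Bézout relation $\sum_i c_i(N/e_i)=1$ shows $\sum_i(N/e_i)L=L$; the case $r=0$ is just Faddeev. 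Exactness at $M$: the inclusion $\im(\phi\circ\res)\subseteq\ker\Cor_\meX$ is formal, since $\Cor_\meX\circ\phi\circ\res=\Cor\circ(N\cdot\id_M)\circ\res=N\cdot(\Cor\circ\res)=0$. For the reverse inclusion, take $\gamma=(\gamma_x)\in\ker\Cor_\meX$; I would first show $\gamma_{x_i}\in e_iL$. Splitting $\sum_x(N/e_x)\Cor_{\kappa(x)/k}(\gamma_x)=0$ (using $\kappa(x_i)=k$) gives $(N/e_i)\gamma_{x_i}=-\sum_{x\ne x_i}(N/e_x)\Cor_{\kappa(x)/k}(\gamma_x)$, and for $x\ne x_i$ the integer $N/e_x$ is a multiple of $e_i$ — it equals $N$ if $x\notin\{x_1,\dots,x_r\}$, or $N/e_j$ with $j\ne i$ otherwise — so $(N/e_i)\gamma_{x_i}\in e_iL$, whence $\gamma_{x_i}\in e_iL$ since $\gcd(N/e_i,e_i)=1$. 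Pick $\delta^0_{x_i}\in L$ with $e_i\delta^0_{x_i}=\gamma_{x_i}$ and put $\delta_x=\gamma_x$ for $x\notin\{x_1,\dots,x_r\}$. Comparing corestrictions, $N\sum_i\delta^0_{x_i}=\sum_i(N/e_i)\gamma_{x_i}=-N\sum_{x\notin\{x_1,\dots,x_r\}}\Cor_{\kappa(x)/k}(\gamma_x)$, so $t\coloneqq\sum_i\delta^0_{x_i}+\sum_{x\notin\{x_1,\dots,x_r\}}\Cor_{\kappa(x)/k}(\gamma_x)$ lies in $L[N]=\bigoplus_iL[e_i]$; writing $t=\sum_i\epsilon_i$ with $\epsilon_i\in L[e_i]$ and replacing $\delta^0_{x_i}$ by $\delta_{x_i}\coloneqq\delta^0_{x_i}-\epsilon_i$ yields $e_i\delta_{x_i}=\gamma_{x_i}$ and $\sum_x\Cor_{\kappa(x)/k}(\delta_x)=0$. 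Faddeev then produces $\beta\in P$ with $\res_x(\beta)=\delta_x$ for all $x$, so $(\phi\circ\res)(\beta)=(e_x\delta_x)_x=\gamma$.

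The formal parts (exactness at the first three spots and surjectivity of $\Cor_\meX$) are routine given \cref{cd:faddeev-comp} and \cref{prop:stacky-P1-Brauer}. I expect the genuine obstacle to be the containment $\ker\Cor_\meX\subseteq\im(\phi\circ\res)$ at $M$: one must extract $e_i$-th roots of the components $\gamma_{x_i}$ and then correct them by $e_i$-torsion classes so that the resulting tuple is killed by $\Cor$ and hence realizable as a residue vector — and it is precisely here that pairwise coprimality of the $e_i$ is used, three times over (through $e_i\mid N/e_x$ for $x\ne x_i$, through $\gcd(N/e_i,e_i)=1$, and through the Chinese Remainder decomposition $L[N]=\bigoplus_iL[e_i]$).
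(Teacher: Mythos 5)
Your proof is correct, and its skeleton --- read everything off the comparison diagram \cref{cd:faddeev-comp} with the classical Faddeev sequence as the top row, then verify exactness spot by spot using pairwise coprimality --- is the same as the paper's; the differences are in how the two nontrivial spots are executed. For the identification $\Br\meX\cong\Br k$ and exactness at $\ker\p{\Br k(\P^1)\to\Br k^s(\P^1)}$, the paper runs the argument in the opposite order: it proves exactness there directly (an element of $\ker\Cor$ killed componentwise by the $e_x$ must vanish, by the same torsion/coprimality juggling you use to show $\bigoplus_i\hom^1(k,\Q/\Z)[e_i]\to\hom^1(k,\Q/\Z)$ is injective) and then observes that $\Br\meX\cong\Br k$ falls out because both groups are the kernel of $\bigoplus_xe_x\cdot\res_{x/\P^1_k}$; you instead quote \cref{prop:stacky-P1-Brauer} and treat injectivity of $\pull c$ separately. (Minor caveat there: your injectivity argument via a $k$-point of $\P^1\sm\{x_1,\dots,x_r\}$ can fail for a small finite field in which every point of $\P^1(k)$ is stacky, but then $\Br k=0$ anyway --- or one can simply read injectivity off the left-hand square of \cref{cd:faddeev-comp}.) At the genuinely hard spot, the containment $\ker\Cor_\meX\subseteq\im\p{\bigoplus_xe_x\cdot\res_{x/\P^1_k}}$, the paper writes down an explicit preimage $\beta_{x_j}=b_j\alpha_{x_j}+a_j\sum_{x\neq x_j}\frac{N/e_x}{e_j}\Cor_{\kappa(x)/k}\alpha_x$ using Bezout coefficients with $a_j(N/e_j)+1=b_je_j$ and then verifies $\Cor(\beta)=0$ by a lengthy direct computation; you instead first show abstractly that $\gamma_{x_i}\in e_i\hom^1(k,\Q/\Z)$, choose arbitrary $e_i$-th roots, and correct them by an $N$-torsion discrepancy decomposed via the Chinese Remainder isomorphism $\hom^1(k,\Q/\Z)[N]=\bigoplus_i\hom^1(k,\Q/\Z)[e_i]$. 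Your version is cleaner and isolates exactly the three places coprimality enters; the paper's version buys an explicit closed-form preimage. Both are complete proofs.
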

\begin{proof}
    Exactness at $\Br k$ is clear. To ease notation below, for any $i\in\{1,\dots,r\}$, we set $N_i\coloneqq N/e_i\in\N$ and for any $x\in\abs\meX_1$, we set $N_x\coloneqq N/e_x\in\N$.
    \begin{itemize}
        \item Exactness at $\ker\p{\Br k(\P^1)\to\Br k^s(\P^1)}$.

        By exactness of the top row in \cref{cd:faddeev-comp}, it suffices to show that, given some $\alpha=(\alpha_x)_{x\in\abs\meX_1}\in\ker\Cor$ such that $e_x\cdot\alpha_x=0$ for all $x$, one must in fact have that $\alpha_x=0$ for all $x$. Suppose we have such an $\alpha$. We first observe that $\alpha_x=0$ if $x\not\in\{x_1,\dots,x_r\}$ (since then $e_x=1$), so
        \[\sum_{i=1}^r\alpha_{x_i}=-\sum_{x\not\in\{x_1,\dots,x_r\}}\Cor_{\kappa(x)/k}\alpha_x=0,\]
        using that each $x_i$ is a $k$-point. For any fixed $j\in\{1,\dots,r\}$, this implies that
        \[\alpha_{x_j}=-\sum_{\substack{i=1\\i\neq j}}^r\alpha_{x_i}\in\hom^1(k,\Q/\Z)[N_j],\]
        using that the $e_i$'s are pairwise coprime. Since $\alpha_{x_j}$ is $e_j$-torsion by assumption and $\gcd(e_j,N_j)=1$, we conclude that $\alpha_j=0$, proving exactness.

        We remark that exactness here proves that $\Br\meX\simeq\Br k$ since both groups are the kernel of $\bigoplus_xe_x\cdot\res_{x/\P^1_k}=\bigoplus_x\res_{x/\meX}$.

        \item Exactness at $\bigoplus_{x\in\abs\meX_1}\hom^1(\kappa(x),\Q/\Z)$.

        Suppose we are given some $\alpha=(\alpha_x)_{x\in\abs\meX_1}\in\ker\Cor_\meX$. We need to show there exists some $\beta=(\beta_x)_{x\in\abs\meX_1}\in\ker\Cor$ such that $\alpha_x=e_x\cdot\beta_x$ for all $x$. For any $x\not\in\{x_1,\dots,x_r\}$ we (necessarily) set $\beta_x=\alpha_x$. For any $j\in\{1,\dots,r\}$, choose integers $a_j,b_j$ such that $a_jN_j+1=b_je_j$ and set
        \[\beta_{x_j}\coloneqq b_j\alpha_{x_j}+a_j\sum_{x\neq x_j}\frac {N_x}{e_j}\Cor_{\kappa(x)/k}\alpha_x\in\hom^1(k,\Q/\Z).\]
        Then,
        \[e_j\cdot\beta_{x_j}=(a_jN_j+1)\alpha_{x_j}+a_j\sum_{x\neq x_j}N_x\alpha_x=\alpha_{x_j}+a_j\Cor_\meX(\alpha)=\alpha_{x_j}.\]
        Hence, we only need verify that $\beta=(\beta_x)_{x\in\abs\meX_1}$ satisfies $\sum_x\Cor_{\kappa(x)/k}\beta_x=0$. At this point we note that, by construction, $\sum_{i=1}^ra_iN_i$ is congruent to $-1\pmod{e_j}$ for all $j$ and so must be congruent to $-1\pmod N$ as well. Write $1+\sum_{i=1}^ra_iN_i=mN$ for some $m\in\Z$. Finally,
        \begin{align*}
            \sum_x\Cor_{\kappa(x)/k}\beta_x
            &=\sum_{i=1}^r\p{b_i\alpha_{x_i}+a_i\sum_{x\neq x_i}\frac{N_x}{e_i}\Cor_{\kappa(x)/k}\alpha_x}+\sum_{x\not\in\{x_1,\dots,x_r\}}\Cor_{\kappa(x)/k}\alpha_x \\
            &= \sum_{i=1}^r\p{b_i+\sum_{\substack{j=1\\j\neq i}}^r\frac{a_jN_i}{e_j}}\alpha_{x_i}+\sum_{x\not\in\{x_1,\dots,x_r\}}\p{1+\sum_{i=1}^ra_iN_i}\Cor_{\kappa(x)/k}\alpha_x\\
            &= \sum_{i=1}^r\p{\frac{a_iN_i+1}{e_i}+\sum_{\substack{j=1\\j\neq i}}^r\frac{a_jN_j}{e_i}}\alpha_{x_i}+\sum_{x\not\in\{x_1,\dots,x_r\}}\p{1+\sum_{i=1}^ra_iN_i}\Cor_{\kappa(x)/k}\alpha_x\\
            &=\sum_{i=1}^r\frac{1+\sum_{j=1}^ra_jN_j}{e_i}\alpha_{x_i}++\sum_{x\not\in\{x_1,\dots,x_r\}}\p{1+\sum_{i=1}^ra_iN_i}\Cor_{\kappa(x)/k}\alpha_x\\
            &=m\sum_{i=1}^rN_i\cdot\alpha_{x_i}+m\sum_{x\not\in\{x_1,\dots,x_r\}}N\cdot\Cor_{\kappa(x)/k}\alpha_x\\
            &=m\Cor_\meX(\alpha)\\
            &=0.
        \end{align*}
        \item Exactness at $\hom^1(k,\Q/\Z)$.

        Fix some $\sigma\in\hom^1(k,\Q/\Z)$. Because $\Cor$ is surjective, there exists some $\alpha=(\alpha_x)_x\in\bigoplus_x\hom^1(\kappa(x),\Q/\Z)$ such that $\sum_x\Cor_{\kappa(x)/k}\alpha_x=\sigma$. If $r=0$, then $\Cor_\meX(\alpha)=\Cor(\alpha)=\sigma$ and we win, so assume $r\ge1$. Then, $(N_1,N_2,\dots,N_r)=(1)$ so there exists integers $a_1,\dots,a_r$ such that $a_1N_1+\dots+a_rN_r=1-N$. Set $\beta=(\beta_x)_x$ where
        \[\beta_x=\Twocases{\alpha_x}{x\not\in\{x_1,\dots,x_r\}}{e_i\alpha_{x_i}+a_1\sigma}{x=x_i}\in\hom^1(\kappa(x),\Q/\Z).\]
        Then, by construction, $\Cor_\meX(\beta)=N\Cor(\alpha)+\p{a_1N_1+\dots+a_rN_r}\sigma=\sigma$.
        \qedhere
    \end{itemize}
\end{proof}
\begin{ex}[$\Br(\meY(1)\thickslash\mu_2)$]
    Let $k$ be a perfect field with $\Char k\nmid6$, and let $\meY(1)$ denote the moduli stack of elliptic curves. Consider the rigidification $\meY\coloneq\meY(1)_k\thickslash\mu_2$ (defined as in \cite[Theorem A.1]{AOV}), so $\meY\simeq\sqrt[3]{0/\A^1}\by_{\A^1}\sqrt[2]{1728/\A^1}$. Let $\meX\coloneqq\sqrt[3]{0/\P^1}\by_{\P^1}\sqrt[2]{1728/\P^1}$, a natural compactification of $\meY$. Then, the bottom row of \cref{cd:closed-pt-comp} (using that $\hom^2(\A^1_k,\push\eta\G_m)\simeq\Br k(\A^1)$ when $k$ is perfect, e.g. by \cite[Proof of Theorem 3.6.1]{CT-S}) shows that $\Br\meY\subset\Br k(\A^1)=\Br k(\P^1)$ consists of the Brauer classes for which the residue maps $\res_{x/\meX}$ vanish for all $x\neq\infty\in\abs\meX_1$. With this in mind, \cref{prop:stacky-Faddeev} gives rise to the exact sequence
    \[0\too\Br\meY\too\Br k(\P^1)\xtoo{\res_{\infty/\P^1_k}}\hom^1(k,\Q/\Z)\xtoo6\hom^1(k,\Q/\Z)\too0\]
    from which one deduces that $\Br\meY$ is an extension
    \[0\too\Br k\too\Br\meY\xtoo{\res_{\infty/\P^1_k}}\hom^1(k,\zmod6)\too0.\]
    This extension is furthermore split since $\meY(k)\neq\emptyset$.
\end{ex}
\begin{rem}\label{rem:Faddeev-Y(1)}
    Given the discussion in this section, one can naturally ask about similarly behaving residue maps and Faddeev-type sequences for stacky $\P^1$'s which are \important{not} generically scheme-y. For example, let $\meX=\meX(1)$ and $\meY=\meY(1)$, both over a perfect field $k$ of characteristic not $2$ or $3$. \cref{thma:mod-curve}\bp{1,2} show that $\Br\meX=\Br k$ and that there is an exact sequence
    \[0\too\Br\meX\too\Br\meY\too\hom^1(k,\zmod{12})\too0.\]
    This suggests that, $\meY(1)$ should have a residue map valued in $\hom^1(k,\zmod{12})$. It was explained to me by Tim Santens that such a thing can be provided by his and Loughran's notion of a `residue along a sector'; see \cite[Definition 5.25]{loughran2025mallesconjecturebrauergroups}. This specific application is not spelled out in their paper because it has different goals/focus, but their notion allows one to answer a question asked in an earlier version of this paper.
\end{rem}

\numberwithin{thm}{subsection}
\section{\bf Examples}\label{sect:examples}
\subsection{The Brauer group of $\meX(1)$}\label{sect:X(1)-example}
We compute the Brauer group of $\meX(1)$, the moduli stack of generalized elliptic curves. We first do this over $\Z[1/6]$-schemes since $\meX(1)$ is tame away from characteristics $2$ and $3$. Afterwards, we show that the techniques of this paper can also be applied to compute $\Br\meX(1)_{\Z[1/2]}$ despite the fact that $\meX(1)$ is (very mildly) wild in characteristic $3$.

\subsubsection{\bf over $\Z[1/6]$}
Fix any noetherian scheme $S/\Z[1/6]$, and let $\meX=\meX(1)_S$. Let $f\colon\meX\to S$ and $g\colon\P^1_S\to S$ denote their structure maps. We will compute $\Br\meX$ using the Leray spectral sequence
\begin{equation}\label{eqn:leray-f-X(1)}
    E_2^{ij}=\hom^i(S,\homR^j\push f\G_m)\implies\hom^{i+j}(\meX,\G_m).
\end{equation}
\begin{lemma}\label{lem:push-f-X(1)}
    $\push f\G_m\simeq\G_m$, $\homR^1\push f\G_m\simeq\ul\Z$ and $\homR^2\push f\G_m\simeq\homR^2\push g\G_m$.
\end{lemma}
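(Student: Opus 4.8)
The plan is to use the factorization $\meX = \meX(1)_S \to \P^1_S$, i.e. the coarse space map $c\colon\meX\to X:=\P^1_S$, together with the Grothendieck spectral sequence relating $f = g\circ c$ to $c$ and $g$. First I would invoke the fact that $c$ is a coarse space map to get $\push c\G_m\simeq\G_m$, hence $\push f\G_m\simeq\push g\push c\G_m\simeq\push g\G_m$. But $\P^1_S\to S$ is proper with geometrically integral fibers, so $\push g\G_m\simeq\G_m$; this gives the first claim. (Alternatively one can do this by hand using \cref{prop:root-stack-summary}\bp2, since $\meX(1)$ is a root stack over $\P^1_S$ away from characteristics $2,3$, but the coarse-space argument is cleaner.)

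For the higher pushforwards, I would apply \cref{lem:two-step-push-es} to the composition $c = \rho\circ\pi$ where $\meX(1)_S$ is presented as a gerbe over a multiply rooted stack (over $\Z[1/6]$, $\meY(1)\simeq\sqrt[3]{0/\A^1}\by_{\A^1}\sqrt[2]{1728/\A^1}$, and $\meX(1)$ is the analogous compactified version over $\P^1$, with $G=\ul{\zmod2}$ and the root stack over $\P^1$ giving $\rho$). Since $\meX(1)$ is tame and locally Brauerless over $\Z[1/6]$ (its geometric automorphism groups are $\mu_2,\mu_3,\zmod2,\dots$, all Brauerless — compare \cite[Example 4]{meier-cs} and \cref{ex:mun-bruaerless}), \cref{thm:R2Gm-vanishes} gives $\homR^2\push c\G_m=0$; combined with $\homR^2\push\rho\G_m=0$ (again \cref{thm:R2Gm-vanishes}, since the root stack $\meY\to X$ is locally Brauerless) the sequence of \cref{lem:two-step-push-es} reads
\[0\to\homR^1\push\rho\G_m\to\homR^1\push c\G_m\to\dual G\to 0.\]
Here $\dual G = \ul{\zmod2}$ (constant), while $\homR^1\push\rho\G_m\simeq\ul{\zmod3}_0\oplus\ul{\zmod2}_{1728}$ is a skyscraper supported on the stacky locus (by repeated use of \cref{lem:root-R1}, as in \cref{ex:X(1)R}); this recovers \cref{ses:R1-X(1)}.

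Now for the claim $\homR^1\push f\G_m\simeq\ul\Z$: I would feed the above into the Grothendieck spectral sequence $\homR^i\push g(\homR^j\push c\G_m)\implies\homR^{i+j}\push f\G_m$. Since $\homR^2\push c\G_m=0$, the low-degree terms give an exact sequence
\[0\to\homR^1\push g\G_m\to\homR^1\push f\G_m\to\push g(\homR^1\push c\G_m)\to\homR^2\push g\G_m.\]
Now $\homR^1\push g\G_m = \Pic_{\P^1_S/S}\simeq\ul\Z$ (standard, e.g. the relative Picard of $\P^1$). The skyscraper part of $\homR^1\push c\G_m$ has no global sections over the fibers of $g$ in the relevant sense — more precisely, $\push g$ of the exact sequence for $\homR^1\push c\G_m$ shows $\push g(\homR^1\push c\G_m)$ is an extension of $\push g\ul{\zmod2} = \ul{\zmod2}$ by $\push g(\homR^1\push\rho\G_m)$, and one checks this latter is again $\ul{\zmod3}_0\oplus\ul{\zmod2}_{1728}$ pushed to $S$... so I need to be a little careful. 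The cleanest route is: the map $\homR^1\push f\G_m\to\ul\Z$ (sending a line bundle to its fiberwise degree) is visibly well-defined, and I would check it is an isomorphism on stalks using \cref{cor:tame-stack-sh} and \cref{lem:Br'-prop-red-ii}-style arguments, reducing to the strictly-local case where $\meX(1)_R$ has coarse space $\P^1_R$ and $\Pic$ is generated by $\msO(1)$ and the Hodge bundle, the Hodge bundle being $12$-divisible in a precise sense — in fact by \cite{FO}, $\Pic_{\meX(1)/S}\simeq\ul\Z$. So I would simply cite \cite{FO} (as the excerpt does later, in the example following \cref{thm:Br-prop-best statement I could muster}): $\Pic_{\meX(1)_S/S}=\homR^1\push f\G_m\simeq\ul\Z$. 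The main obstacle is bookkeeping: making sure the skyscraper contributions to $\homR^1\push c\G_m$ and to the Grothendieck spectral sequence genuinely vanish upon pushing to $S$, rather than leaving torsion; citing \cite{FO} for $\Pic_{\meX(1)/S}\simeq\ul\Z$ sidesteps this. Finally, $\homR^2\push f\G_m\simeq\homR^2\push g\G_m$ follows from the same Grothendieck spectral sequence: once $\homR^2\push c\G_m=0$ and once one knows (from \cref{prop:Br-prop-local}/\cref{lem:prop-R-comp}, passing to stalks) that $\homR^1\push g(\homR^1\push c\G_m)\simeq\homR^1\push g\dual G = \homR^1\push g\ul{\zmod2}$ has stalks $\hom^1(\P^1_R,\zmod2)=\hom^1(\P^1_R,\mu_2)=0$, the five-term exact sequence degenerates to give $\homR^2\push g\G_m\iso\homR^2\push f\G_m$.
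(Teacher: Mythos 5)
Your overall route matches the paper's: the first claim via the coarse-space factorization; $\homR^1\push f\G_m\simeq\ul\Z$ by citing \cite{FO}; and the derivation of \cref{ses:R1-X(1)} from \cref{lem:two-step-push-es} together with \cref{lem:root-R1,lem:root-R2} applied to the gerbe-over-multiply-rooted-stack presentation of $\meX(1)$ over $\Z[1/6]$ --- which is exactly how the paper supplies the ingredient missing from \cref{ex:X(1)R}. Your instinct to abandon the Grothendieck spectral sequence bookkeeping for $\homR^1\push f\G_m$ and just cite \cite{FO} is also what the paper does.

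The one real gap is in the last claim. In the low-degree exact sequence of the Grothendieck spectral sequence $\homR^i\push g\p{\homR^j\push c\G_m}\implies\homR^{i+j}\push f\G_m$, killing $E_2^{0,2}=\push g\homR^2\push c\G_m$ and $E_2^{1,1}=\homR^1\push g\p{\homR^1\push c\G_m}$ only gives that $\homR^2\push g\G_m\to\homR^2\push f\G_m$ is \emph{surjective}, with kernel equal to the image of the differential
\[
d_2^{0,1}\colon\push g\homR^1\push c\G_m\too\homR^2\push g\G_m;
\]
the sequence does not ``degenerate'' until you show this differential vanishes. It does vanish, and the paper's stalkwise computation in \cref{ex:X(1)R} shows how: the stalks of the source are $\hom^0(\P^1_R,\homR^1\push c\G_m)$, which are torsion (an extension of $\zmod2$ by skyscrapers of orders $2$ and $3$), while the stalks of the target are $\hom^2(\P^1_R,\G_m)$, whose torsion subgroup is $\Br'(\P^1_R)\simeq\Br'(R)=0$ for $R$ strictly henselian by \cite[Chapter II, Theorem 2]{gabber:thesis}. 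Adding that sentence closes the argument; everything else in your proposal is sound and agrees with the paper's proof.
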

\begin{proof}
    The fact that $\push f\G_m\simeq\G_m$ follows from the fact that $\meX$'s coarse moduli space is $\P^1_S$ (for this, one can either argue as in \cite[Lemma 4.4]{FO} or use the fact that formation of coarse spaces commutes with arbitrary base change in the tame setting \cite[Coroarlly 3.3(a)]{AOV}). The fact that $\homR^1\push f\G_m\simeq\ul\Z$ is a consequence of \cite[Theorem 1.3]{FO} which shows that $\Pic\meX_T\simeq\hom^0(T,\Z)\by\Pic(T)$ for any $T/S$. 
    
    Finally, $\homR^2\push f\G_m=\homR^2\push g\G_m$ because \cref{ex:X(1)R} showed that, at the level of stalks, the natural map $\homR^2\push g\G_m\to\homR^2\push f\G_m$, is an isomorphism \important{except} this argument did not establish the existence of the exact sequence \cref{ses:R1-X(1)}. To remedy this, set $\meY=\meX\thickslash\mu_2$ and then appeal to \cref{lem:two-step-push-es}. Noting that $\meY\simeq\sqrt[3]{0/\P^1}\by_{\P^1}\sqrt[2]{1728/\P^1}$ (because $6$ is invertible on the base), one can use \cref{lem:root-R1} twice to compute that $\homR^1\push\rho\G_m\simeq\ul{\zmod3}_0\oplus\ul{\zmod2}_{1728}$ ($\rho$ as in \cref{lem:two-step-push-es}) and can use \cref{lem:root-R2} to compute that $\homR^2\push\rho\G_m=0$. Thus, in this case, the exact sequence of \cref{lem:two-step-push-es} exactly produces the exact sequence \cref{ses:R1-X(1)}.
\end{proof}
\begin{cor}\label{cor:BrX(1)-es}
    There is an exact sequence
    \[0\too\hom^2(S,\G_m)\xtoo{\pull f}\ker\p{\hom^2(\meX,\G_m)\to\hom^0(S,\homR^2\push g\G_m)}\too\hom^1(S,\Z)\too0\]
\end{cor}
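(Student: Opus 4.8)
The plan is to extract the corollary directly from the exact sequence of low-degree terms in the Leray spectral sequence \cref{eqn:leray-f-X(1)}, using \cref{lem:push-f-X(1)} to identify the relevant entries on the $E_2$-page. Concretely, \cref{lem:push-f-X(1)} gives $E_2^{i,0}=\hom^i(S,\G_m)$, $E_2^{i,1}=\hom^i(S,\ul\Z)$, and $E_2^{i,2}=\hom^i(S,\homR^2\push g\G_m)$, so the seven-term exact sequence associated to \cref{eqn:leray-f-X(1)} reads
\[0\to\hom^1(S,\G_m)\to\hom^1(\meX,\G_m)\to\hom^0(S,\ul\Z)\xto{d_2^{0,1}}\hom^2(S,\G_m)\to\ker\p{\hom^2(\meX,\G_m)\to\hom^0(S,\homR^2\push g\G_m)}\to\hom^1(S,\ul\Z)\xto{d_2^{1,1}}\hom^3(S,\G_m),\]
where I have already rewritten the edge term $\hom^0(S,\homR^2\push f\G_m)$ as $\hom^0(S,\homR^2\push g\G_m)$ using the last isomorphism of \cref{lem:push-f-X(1)}. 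Thus the corollary will follow as soon as I show that the two differentials $d_2^{0,1}$ and $d_2^{1,1}$ vanish.

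For this, the key point is that the structure morphism $f\colon\meX(1)_S\to S$ admits a section: the standard generalized elliptic curve (the Néron $1$-gon, i.e. the cusp of $\meX(1)$) is defined over $\Z$ and so base-changes to a point of $\meX(1)_S(S)$. Pulling back cohomology classes along this section splits $\pull f\colon\hom^i(S,\G_m)\to\hom^i(\meX,\G_m)$ in every degree, which forces $E_\infty^{i,0}=E_2^{i,0}$ for all $i$; equivalently, every differential of \cref{eqn:leray-f-X(1)} landing in the bottom row vanishes. In particular $d_2^{0,1}$ and $d_2^{1,1}$ are zero. (As a cross-check, $d_2^{0,1}=0$ can also be read off from \cite[Theorem 1.3]{FO}: since $\Pic\meX_T\simeq\hom^0(T,\Z)\times\Pic T$, the edge map $\Pic\meX\to\hom^0(S,\ul\Z)$ is surjective, the class of the Hodge bundle hitting a generator.)

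Feeding the vanishing of $d_2^{0,1}$ and $d_2^{1,1}$ back into the seven-term sequence collapses it into the desired short exact sequence
\[0\to\hom^2(S,\G_m)\xto{\pull f}\ker\p{\hom^2(\meX,\G_m)\to\hom^0(S,\homR^2\push g\G_m)}\to\hom^1(S,\ul\Z)\to0.\]
I do not expect a genuine obstacle here, since this is essentially a bookkeeping consequence of \cref{lem:push-f-X(1)} and the section. The one point to verify carefully is that the middle term is precisely the first step $F^1$ of the Leray filtration on $\hom^2(\meX,\G_m)$ — which is what makes the sequence exact at that spot — and this holds because the kernel of the edge map $\hom^2(\meX,\G_m)\to\hom^0(S,\homR^2\push f\G_m)=E_2^{0,2}$ is $F^1\hom^2(\meX,\G_m)$, together with the identification $\hom^0(S,\homR^2\push f\G_m)\simeq\hom^0(S,\homR^2\push g\G_m)$ supplied by \cref{lem:push-f-X(1)}.
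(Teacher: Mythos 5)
Your proposal is correct and follows essentially the same route as the paper: the paper likewise extracts the sequence from the low-degree terms of the Leray spectral sequence \cref{eqn:leray-f-X(1)}, identifies the $E_2$-entries via \cref{lem:push-f-X(1)}, and kills the differentials $d_2^{0,1}$ and $d_2^{1,1}$ by noting that the cusp (the nodal cubic, defined over $\Z$) gives a section of $f$, making $\pull f\colon\hom^i(S,\G_m)\to\hom^i(\meX,\G_m)$ (split) injective for all $i$. Your extra cross-check of $d_2^{0,1}=0$ via the surjectivity of $\Pic\meX\to\hom^0(S,\ul\Z)$ from \cite[Theorem 1.3]{FO} is sound but not needed.
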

\begin{proof}
    This comes from the exact sequence of low degree terms in the Leray spectral sequence \cref{eqn:leray-f-X(1)}, using that $\pull f\colon\hom^i(S,\G_m)\to\hom^i(\meX,\G_m)$ is injective for all $i$ (in particular, for $i=2,3$) since $\meX(S)\neq\emptyset$ (the cuspidal point, corresponding to a nodal cubic, is defined over $\Z$ and so over $S$).
\end{proof}
\begin{cor}\label{cor:BrX(1)-Z[1/6]}
    For any noetherian scheme $S/\Z[1/6]$, 
    \[\Br'\meX(1)_S\simeq\Br'S.\]
\end{cor}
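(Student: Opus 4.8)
The plan is to extract \cref{cor:BrX(1)-Z[1/6]} from the exact sequence of \cref{cor:BrX(1)-es} by passing to torsion subgroups, the point being that both the group $\hom^1(S,\Z)$ at the right end of that sequence and the group $\hom^0(S,\homR^2\push g\G_m)$ receiving the edge map are torsion-free. Write $f\colon\meX(1)_S\to S$ for the structure map, let $g\colon\P^1_S\to S$ be as in \cref{lem:push-f-X(1)}, and set $K\coloneqq\ker\p{\hom^2(\meX(1)_S,\G_m)\to\hom^0(S,\homR^2\push g\G_m)}$, so that \cref{cor:BrX(1)-es} reads $0\to\hom^2(S,\G_m)\xto{\pull f}K\to\hom^1(S,\Z)\to0$.

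First I would record the two torsion-freeness statements. For $\hom^1(S,\Z)$: since $S$ is noetherian its connected components are clopen, so in the exact sequence $0\to\Z\to\Q\to\Q/\Z\to0$ of constant sheaves the map $\hom^0(S,\Q)\to\hom^0(S,\Q/\Z)$ is surjective, whence $\hom^1(S,\Z)$ injects into $\hom^1(S,\Q)$, and the latter is torsion-free because it is a module over the $\Q$-algebra $\hom^{\bullet}(S,\Q)$. For $\hom^0(S,\homR^2\push g\G_m)$: a section of an \'etale sheaf vanishing in every stalk is zero, so it suffices to see that the stalks of $\homR^2\push g\G_m$ are torsion-free; using properness of $g$ and noetherianity of $S$, the stalk at a geometric point $\bar s\to S$ is $\hom^2(\P^1_R,\G_m)$ with $R=\msO_{S,\bar s}$ a noetherian strictly henselian local ring, and by definition $\hom^2(\P^1_R,\G_m)_{\mrm{tors}}=\Br'(\P^1_R)\simeq\Br'(R)$ by \cite[Chapter~II, Theorem~2]{gabber:thesis}, while $\Br'(R)=\hom^2(R,\G_m)_{\mrm{tors}}=0$ since the Kummer sequence together with $\Pic R=0$ and $\hom^2(R,\mu_n)=0$ forces $\hom^2(R,\G_m)[n]=0$ for all $n$.

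Granting these, passing to torsion in the displayed short exact sequence and using that $\hom^1(S,\Z)$ is torsion-free yields $K_{\mrm{tors}}=\hom^2(S,\G_m)_{\mrm{tors}}=\Br'S$, sitting inside $\hom^2(\meX(1)_S,\G_m)$ via $\pull f$. On the other hand, torsion-freeness of $\hom^0(S,\homR^2\push g\G_m)$ means the edge map kills $\hom^2(\meX(1)_S,\G_m)_{\mrm{tors}}=\Br'\meX(1)_S$, so $\Br'\meX(1)_S\subseteq K$, and since $\Br'\meX(1)_S$ is torsion this improves to $\Br'\meX(1)_S\subseteq K_{\mrm{tors}}=\Br'S$; the reverse inclusion $\Br'S=K_{\mrm{tors}}\subseteq\hom^2(\meX(1)_S,\G_m)_{\mrm{tors}}=\Br'\meX(1)_S$ is clear since $K\subseteq\hom^2(\meX(1)_S,\G_m)$. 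As $\pull f$ is injective (the cuspidal point provides a section of $f$ over $S$), this gives the desired isomorphism $\pull f\colon\Br'S\xto{\sim}\Br'\meX(1)_S$. I expect no real difficulty; the one point requiring a little care is the torsion-freeness of the stalks of $\homR^2\push g\G_m$, which is exactly where Gabber's computation of the cohomological Brauer group of the projective line enters — note in particular that the hypothesis $6\in\units S$ (hence tameness of $\meX(1)_S$, which underlies \cref{cor:BrX(1)-es}) is not by itself enough to conclude.
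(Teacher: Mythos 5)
Your argument is correct and is essentially the paper's proof: both take torsion in the exact sequence of \cref{cor:BrX(1)-es}, using that $\hom^1(S,\Z)$ is torsion-free (the paper cites \cite[Lemma A.3]{shin-Gm} where you give a short direct argument) and that the stalks, hence global sections, of $\homR^2\push g\G_m$ are torsion-free via Gabber's $\Br'\P^1_R\simeq\Br' R$. One small quibble: for the vanishing of $\Br'(R)$ with $R$ strictly henselian you invoke the Kummer sequence for arbitrary $n$, which is not exact on the small \'etale site when the residue characteristic divides $n$; it is cleaner to note that $\hom^i(R,\msF)=0$ for all $i\ge1$ and every \'etale sheaf $\msF$, since global sections on a strictly local scheme is the (exact) stalk functor.
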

\begin{proof}
    By \cite[Lemma A.3]{shin-Gm}, $\hom^1(S,\Z)$ is torsion-free. As a consequence of \cite[Chapter II, Theorem 2]{gabber:thesis}, which states that $\Br'S\iso\Br'\P^1_S$ for any scheme $S$, one also sees that stalks (and so global sections) of $\homR^2\push g\G_m$ are torsion-free. Take torsion in the exact sequence in \cref{cor:BrX(1)-es}.
\end{proof}

\subsubsection{\bf over $\Z[1/2]$}\label{sect:BrX(1)/Z[1/2]}
Now let $S$ be any noetherian $\Z[1/2]$-scheme. Still let $\meX=\meX(1)_S\xto fS$. We will compute $\Br\meX$ by again first computing it in the case that $S$ is strictly local and then leveraging this to compute it for general sense. In the first step (with $S$ strictly local), it will no longer suffice to just apply \cref{cor:R2Gm-vanishes-sequence} because $\meX$ is no longer tame (if $S_{\F_3}\neq\emptyset$).
\begin{ex}
    Consider the elliptic curve $E_0\colon y^2=x^3-x$ over $S_{\F_3}$. One can check (similarly as in \cite[Appendix A]{silverman}) that $\ul\Aut(E_0)\simeq\ul{\zmod 3}\rtimes\mu_4$, where $1\in\zmod3$ acts via $(x,y)\mapsto(x+1,y)$ and $\zeta\in\mu_4$ acts via $(x,y)\mapsto(\zeta^2x,\zeta y)$. In particular, $\ul\Aut(E_0)$ is a finite \'etale group of order divisible by $3$, so $\meX$ is not tame in characteristic $3$.
\end{ex}
On the other hand, \cite[Proposition A.1.2]{silverman} shows that the above example captures essentially all of the non-tameness of $\meX$.\footnote{The cusp of $\meX(1)$ represents a N\'eron 1-gon $C$ (i.e. a nodal cubic viewed as a generalized elliptic curve), and one can check that its automorphism group is $\ul{\zmod2}$, with generator acting by inversion on the smooth locus $\G_m\subset C$.} Let $\pi\colon\meX\to\P^1_S$ be the coarse space of $\meX=\meX(1)_S$, given by the $j$-invariant. \cref{thm:R2Gm-vanishes}, applied to the open tame locus of $\meX$, shows that $\homR^2\push\pi\G_m$ is supported on the closed subscheme $i_0\colon S_{\F_3}\into\P^1_S$ of $\P^1_S$ defined by $j=0=3$. We claim that, in fact, $\homR^2\push\pi\G_m=0$; because it is supported along $i_0$, to prove this it will suffice to show that its stalks over $i_0$ vanish.

\begin{lemma}\label{lem:S3-coh-12mod4}
    Let $M$ be an abelian group with $S_3$-action. Then, the natural map
    \[\ghom^n(\zmod2,M^{\zmod3})\too\ghom^n(S_3,M)\]
    is an isomorphism for all $n\equiv1,2\pmod4$. In particular, $\ghom^n(S_3,M)$ is $2$-torsion for such $n$.
\end{lemma}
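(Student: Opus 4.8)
The plan is to compute both sides using the Lyndon--Hochschild--Serre spectral sequence for the extension $1 \to \zmod3 \to S_3 \to \zmod2 \to 1$ and to show that the relevant $E_2$ terms vanish. Write $G = S_3$, $N = \zmod3$, $Q = \zmod2$, and consider the spectral sequence
\[
E_2^{pq} = \ghom^p(Q, \ghom^q(N, M)) \implies \ghom^{p+q}(G, M).
\]
Since $\#N = 3$ is invertible in the relevant sense, $\ghom^q(N, M)$ is $3$-torsion for $q \ge 1$; but each $E_2^{pq}$ with $p \ge 1$ is also annihilated by $\#Q = 2$ (standard restriction--corestriction), hence $E_2^{pq} = 0$ whenever $p \ge 1$ and $q \ge 1$. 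Thus the only nonzero columns are $p = 0$ (the terms $\ghom^0(Q, \ghom^q(N, M)) = \ghom^q(N, M)^Q$) and the row $q = 0$ (the terms $E_2^{p0} = \ghom^p(Q, M^N)$, which is exactly the source of the map in the statement).

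First I would identify the edge map: the inflation map $\ghom^n(Q, M^N) = E_2^{n0} \to E_\infty^{n0} \hookrightarrow \ghom^n(G, M)$ is precisely the natural map in the lemma (inflation along $G \twoheadrightarrow Q$ composed with restriction of coefficients to $M^N$; note $M^{\zmod3} = M^N$). So to prove it is an isomorphism for $n \equiv 1, 2 \pmod 4$ it suffices to show:
\begin{enumerate}
    \item the differentials entering and leaving $E_r^{n0}$ vanish, so $E_2^{n0} = E_\infty^{n0}$; and
    \item $E_\infty^{pq} = 0$ for all $(p,q)$ with $p + q = n$, $q \ge 1$, so that $E_\infty^{n0}$ is all of $\ghom^n(G, M)$.
\end{enumerate}
By the vanishing established above, the only potentially-nonzero term on the antidiagonal $p + q = n$ other than $E_2^{n0}$ is $E_2^{0n} = \ghom^n(N, M)^Q$. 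So both (1) and (2) reduce to controlling this single term $\ghom^n(N, M)^Q$ and the differentials into and out of $E_r^{0n}$ and $E_r^{n0}$. The key computational input is the well-known periodicity of the cohomology of the cyclic group $N = \zmod3$: there is a cup-product periodicity isomorphism $\ghom^q(N, M) \cong \ghom^{q+2}(N, M)$ for $q \ge 1$, and crucially the generator of $\ghom^2(N, \Z) \cong \zmod3$ on which $Q = \zmod2$ acts is acted on by $-1$ (since the outer action of $Q$ on $N$ is inversion, which acts by $-1$ on $\ghom^1(N,\Z) \cong \Hom(N, \qz)$ and hence by $(-1)^k$ on $\ghom^{2k}$). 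Therefore $Q$ acts on $\ghom^q(N, M)$ in a way that, after accounting for periodicity, alternates sign with period $4$ in $q$; a short diagram chase then shows $\ghom^q(N, M)^Q = 0$ precisely when $q \equiv 1, 2 \pmod 4$ (this is where the congruence condition enters). Since $3$ is invertible on the $3$-torsion group $\ghom^q(N,M)$ for $q\geq 1$, taking $Q$-invariants is exact, so $\ghom^q(N,M)^Q$ being zero is the same as $E_2^{0q}=0$, and then also all $E_r^{0q}$ and $E_r^{q0}$ differentials are forced to vanish.

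The main obstacle I expect is pinning down the sign of the $Q$-action on $\ghom^*(N, M)$ precisely enough to get the exact residues $1, 2 \pmod 4$ rather than, say, $1, 3$ or $2, 3$: one must carefully combine (a) the action of $Q$ on the coefficient module $M$, (b) the action of $Q$ on $N$ by conjugation (inversion), and (c) the periodicity generator's transformation law, keeping track of how $(-1)$'s from conjugation interact with cup products. Concretely, for $M = \Z$ with trivial action one has $\ghom^q(N, \Z) = \zmod3$ for $q$ even positive and $0$ for $q$ odd, and $Q$ acts by $(-1)^{q/2}$, so $\ghom^q(N,\Z)^Q$ vanishes iff $q/2$ is odd, i.e. $q \equiv 2 \pmod 4$; together with the vanishing for odd $q$ this gives vanishing for $q \equiv 1, 2, 3 \pmod 4$ but \emph{not} $q \equiv 0$. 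For general $M$ the answer can be read off by reducing to this case via the universal coefficient / cup-product structure over $\ghom^*(N, \Z)$, and the upshot is vanishing exactly for $q \equiv 1, 2 \pmod 4$, which is what the lemma needs. Once the vanishing of $\ghom^q(N,M)^Q$ for $q \equiv 1, 2 \pmod 4$ is in hand, the spectral sequence collapses onto the relevant antidiagonals and the isomorphism follows; the final ``$2$-torsion'' assertion is then immediate since $\ghom^n(G, M) \cong \ghom^n(Q, M^N)$ and $\ghom^n(Q, -)$ is killed by $\#Q = 2$ for $n \ge 1$.
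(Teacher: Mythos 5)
Your spectral sequence setup is the same as the paper's, and the mechanics are fine: the vanishing of $E_2^{pq}$ for $p,q\ge1$ by the coprime-torsion argument, the identification of the edge map $E_2^{n0}\to\ghom^n(S_3,M)$ with inflation, and the observation that the remaining differentials $E_r^{0,n-1}\to E_r^{n,0}$ die because they map a $3$-torsion group to a $2$-torsion group. The gap is exactly where you flagged it: the claim that $\ghom^q(\zmod3,M)^{\zmod2}=0$ for $q\equiv1,2\pmod4$ and \emph{arbitrary} $M$. Your periodicity argument (cup with a generator of $\ghom^2(\zmod3,\Z)$, on which conjugation acts by $-1$) correctly shows the eigenspace pattern repeats with period $4$, but it says nothing about the base cases $q=1,2$: those depend on how the order-two element acts on the coefficients $M$, and the proposed ``reduction to $M=\Z$ via universal coefficients'' only controls the sign coming from conjugation on $\zmod3$, not the sign coming from $M$. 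Concretely, take $M=\Z[S_3/(\zmod3)]$. Here $\zmod3$ acts trivially, so $M^{\zmod3}=M\cong\Z[\zmod2]$ as a $\zmod2$-module and $\ghom^2(\zmod2,M^{\zmod3})=0$, while Shapiro's lemma gives $\ghom^2(S_3,M)\cong\ghom^2(\zmod3,\Z)\cong\zmod3$. For $n=2$ the map in the statement is therefore $0\to\zmod3$: not an isomorphism, and the target is not $2$-torsion. In spectral sequence terms, $\ghom^2(\zmod3,M)^{\zmod2}\cong\zmod3\neq0$, because the $-1$ from conjugation is cancelled by the swap action of $\zmod2$ on the two coordinates of $M$.

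So the statement is false for general $M$, and the gap cannot be closed without an extra hypothesis. You should be aware that the paper's own proof contains the identical flaw: it cites Weibel's Example 6.7.10, which computes the $\zmod2$-action on $\ghom^*(\zmod3,\Z)$ for \emph{trivial} coefficients, and silently applies the conclusion ``the action is multiplication by $-1$'' to an arbitrary $M$. The lemma (and both proofs) are correct under the additional assumption that $S_3$ acts trivially on $M$, or more generally whenever the induced $\zmod2$-action on $\ghom^q(\zmod3,M)$ really is multiplication by $-1$ in degrees $1$ and $2$; that covers most, but not all, of the places the paper invokes it (the application to the nontrivially-acted-on module $R^{\oplus5}$ in the proof of \cref{prop:char-3-j0-stalk} would need a separate check).
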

\begin{proof}
    This will follow from the Hochschild--Serre spectral sequence 
    \[E_2^{ij}=\ghom^i(\zmod2,\ghom^j(\zmod3,M))\implies\ghom^{i+j}(S_3,M)\] 
    once we know that $E_2^{ij}=0$ if $i,j\ge1$ or if $i=0$ and $j\equiv1,2\pmod4$. When $i,j\ge1$, $E_2^{ij}$ must be both $2$-torsion and $3$-torsion, so it must vanish. Consider $E_2^{0j}=\ghom^j(\zmod3,M)^{\zmod2}$ when $j\equiv1,2\pmod4$. By \cite[Example 6.7.10]{weibel}, $\zmod2$ acts on $\ghom^j(\zmod3,M)$ via multiplication by $-1$ (if $j\equiv1,2\pmod4$). Thus, $E_2^{0j}=\ghom^j(\zmod3,M)[2]=0$.
\end{proof}
\begin{prop}\label{prop:char-3-j0-stalk}
    Let $R$ be a strictly local $\F_3$-algebra, and define $\meY$ via the following pullback square:
    \[\begin{tikzcd}
        \meY\ar[r]\ar[d]&\meY(1)\ar[r, symbol=\subset]\ar[d]&\meX(1)\ar[d, "\pi"]\\
        \spec R\ar[r, "0"]&\A^1\ar[r, symbol=\subset]&\P^1
        .
    \end{tikzcd}\]
    Then, $\hom^2(\meY,\G_m)=0$.
\end{prop}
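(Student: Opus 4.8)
The plan is to present $\meY$ as a quotient stack, reduce the computation to group cohomology, and then exploit the structure of $\ul\Aut(E_0)\cong\ul{\zmod3}\rtimes\mu_4$ to kill the relevant cohomology in two halves (one prime to $3$, one $2$-primary). Throughout, write $G\coloneqq\ul\Aut(E_0)$ for the dicyclic group of order $12$, presented as $\ul{\zmod3}\rtimes\mu_4$ where the normal $\ul{\zmod3}$ is generated by a translation, $\mu_4$ acts on it through $\mu_4\onto\mu_2$ by inversion, and the central $\mu_2\subset\mu_4$ is the elliptic involution $[-1]$.

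\textbf{Step 1 (quotient presentation).} First I would observe that $\meX(1)_{\F_3}\to\P^1_{\F_3}$ is finite (proper and quasi-finite) and flat (miracle flatness, since $\meX(1)_{\F_3}$ is regular hence Cohen--Macaulay, $\P^1_{\F_3}$ is regular, and the fibres are $0$-dimensional), so its formation commutes with the base change $\spec R\to\P^1_{\F_3}$ given by $j=0$; thus $\meY$ is a separated DM stack, finite over $\spec R$, with coarse space $\spec R$. By the local structure theorem for separated DM stacks (\cite[Th\'eor\`eme 6.2]{champs-algebriques} and/or \cite[Theorem 2.12]{olsson-hom-stacks}) together with the connectedness argument from \cref{cor:tame-stack-sh} (the distinguished closed point has stabilizer all of $G$, so the chart is connected and $G$ acts trivially on its residue field), we get $\meY\cong[\spec A/G]$ for a strictly henselian local $\F_3$-algebra $A$, finite over $R$, with $G$ acting trivially on the residue field $k'$ of $A$. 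Since $\spec A$ is strictly henselian local, its small \'etale topos is punctual, so $\hom^j(\spec A,\msF)=0$ for all $j\ge1$ and all abelian sheaves $\msF$; the descent spectral sequence (\cref{lem:descent-ss} and the ensuing remark, applied to the $G$-torsor $\spec A\to\meY$) therefore collapses onto the $i$-axis and yields $\hom^n(\meY,\msF)\cong\ghom^n(G,\msF(\spec A))$ for all $n$, and likewise for every quotient group of $G$. Writing $M\coloneqq\G_m(A)$, this gives $\hom^2(\meY,\G_m)=\ghom^2(G,M)$.

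\textbf{Step 2 (killed by $2$, via a $\zmod2$-gerbe).} Since $[-1]$ acts trivially on deformations, the central $\mu_2=\langle[-1]\rangle$ acts trivially on $\spec A$, so $\meY\to\meY'\coloneqq[\spec A/(G/\mu_2)]=[\spec A/S_3]$ is a $\mu_{2,\F_3}=\ul{\zmod2}$-gerbe. I would then argue exactly as in \cref{prop:gerbe-Gm-es} (the base there is a scheme, but the Leray argument applies verbatim over the algebraic stack base $\meY'$; and the needed vanishing $\homR^2$ of the gerbe map follows here, not from \cref{thm:R2Gm-vanishes}, but from \cref{lem:abs-grp-loc-Gm-coh}, since $2$ is a unit so $\ghom^2(\zmod2,\G_m(A))\cong\ghom^2(\zmod2,\G_m(k'))=\G_m(k')/2=0$, while $\ghom^1(\zmod2,\G_m(A))\cong\mu_2(k')\cong\zmod2$) to produce an exact sequence
\[0\too\hom^2(\meY',\G_m)\too\hom^2(\meY,\G_m)\too\hom^1(\meY',\zmod2)\too\hom^3(\meY',\G_m).\]
Now $\hom^2(\meY',\G_m)=\ghom^2(S_3,M)$ by Step 1, and \cref{lem:S3-coh-12mod4} (case $n=2$) identifies this with $\ghom^2(\zmod2,M^{\zmod3})=\ghom^2(\zmod2,\G_m(A^{\zmod3}))$; here $A^{\zmod3}$ is again strictly henselian local of characteristic $3$ (a subring of $A$ with no nontrivial idempotents, finite over $R$), so \cref{lem:abs-grp-loc-Gm-coh} and $2$-divisibility of the units of its separably closed residue field give $\hom^2(\meY',\G_m)=0$. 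Hence $\hom^2(\meY,\G_m)$ injects into $\hom^1(\meY',\zmod2)\cong\ghom^1(S_3,\zmod2)\cong\zmod2$, so it is killed by $2$.

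\textbf{Step 3 (no $2$-torsion, via mod-$2$ Kummer) and conclusion.} Since $2$ is invertible in $\F_3$, $\mu_{2,\F_3}=\ul{\zmod2}$ and the Kummer sequence gives an exact sequence $0\to\Pic(\meY)/2\to\hom^2(\meY,\mu_2)\to\hom^2(\meY,\G_m)[2]\to0$. By Step 1, $\hom^2(\meY,\mu_2)=\ghom^2(G,\zmod2)$, which (as $\ul{\zmod3}\triangleleft G$ has odd order and $G/\ul{\zmod3}\cong\zmod4$) equals $\ghom^2(\zmod4,\zmod2)\cong\zmod2$. On the other side, the Hodge bundle $\lambda$ on $\meY$ (pulled back from $\meY(1)_{\F_3}$) is a line bundle on which $G$ acts, at the closed point, through the weight character $G\onto\zmod4\cong\mu_4(k')\hookrightarrow\G_m$ of order $4$; since every square in $\Hom(G,\G_m(k'))=\Hom(\zmod4,\mu_4(k'))$ has order at most $2$, $\lambda$ is not a $2$nd power in $\Pic(\meY)$, so its class is nonzero in $\Pic(\meY)/2$ and hence generates $\hom^2(\meY,\mu_2)\cong\zmod2$. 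Thus $\Pic(\meY)/2\to\hom^2(\meY,\mu_2)$ is surjective, so $\hom^2(\meY,\G_m)[2]=0$. Combining with Step 2, $\hom^2(\meY,\G_m)=\hom^2(\meY,\G_m)[2]=0$.

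\textbf{Expected main obstacle.} The real difficulty is that $\meY$ is genuinely wild ($\#\ul\Aut(E_0)=12$ is divisible by the residue characteristic $3$), so none of the tame machinery of \cref{sect:vanishing-result} applies directly. The crux is Step 2: realizing $\meY$ as a $\zmod2$-gerbe over the still-wild stack $[\spec A/S_3]$ and then using \cref{lem:S3-coh-12mod4} to collapse the wild $S_3$-cohomology $\ghom^2(S_3,M)$ onto its prime-to-$3$ part, which is computable by the tame input \cref{lem:abs-grp-loc-Gm-coh}. The two claims that will need the most care to justify rigorously are the assertions in Step 1 that the chart $\spec A$ is connected with $G$ acting trivially on its residue field, and the weight computation for the Hodge bundle in Step 3.
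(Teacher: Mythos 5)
Your proof is correct, and it diverges from the paper's at the decisive step. Both arguments ultimately organize the computation around the same central extension $1\to\zmod2\to\ul\Aut(E_0)\to S_3\to1$ of the dicyclic group of order $12$: the paper runs Hochschild--Serre for the $S_3$-cover $B\ul{\zmod2}_{Y'}\to\meY$ coming from the Legendre family (with $Y'=\spec R[\eps]/(\eps^6)$), while your $\zmod2$-gerbe $\meY\to[\spec A/S_3]$ is the same extension read the other way (and your chart $\spec A$ can simply be taken to be $Y'$, which makes the presentation and the triviality of the central action explicit). Both routes then use $\ghom^2(S_3,\G_m(A))=0$ -- you via \cref{lem:S3-coh-12mod4} plus \cref{lem:abs-grp-loc-Gm-coh}, the paper via the filtration $0\to R^{\oplus5}\to\G_m(Y')\to\units R\to1$ -- to reduce everything to a single potential $\zmod2$ in position $(1,1)$. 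The real difference is how that $\zmod2$ is killed. The paper shows the differential $d_2^{1,1}$ is injective by lifting the entire presentation to $\Z_3$ and comparing with $\Q_3$, where vanishing is already known by tameness. You stay in characteristic $3$ and instead use the Kummer sequence: $\hom^2(\meY,\mu_2)\cong\ghom^2(\ul\Aut(E_0),\zmod2)\cong\ghom^2(\zmod4,\zmod2)\cong\zmod2$ is exhausted by $\Pic(\meY)/2$ because the Hodge bundle reduces, at the closed point, to a character of exact order $4$ in $\Hom(\ul\Aut(E_0),\G_m(k'))\cong\zmod4$ and so cannot be a square. This is a genuine and attractive simplification: it removes the dependence on the characteristic-zero input and makes the mechanism of the vanishing visible (the $4$-torsion Hodge class saturates $\hom^2(\meY,\mu_2)$), and it is consistent with $\Pic\meY\cong\zmod4$ generated by the Hodge bundle. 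Two points need more care than you give them. First, the assertion that the central $[-1]$ acts trivially on the \emph{scheme} $\spec A$ (not merely on residue fields or stabilizers of points) is exactly what makes $\meY\to[\spec A/S_3]$ a gerbe; ``acts trivially on deformations'' is the right idea, but the clean justification is to take $\spec A=Y'$ from the Legendre presentation, where the elliptic involution acts trivially on the chart by construction. Second, \cref{prop:gerbe-Gm-es} is stated over scheme bases; either remark that the Leray argument extends verbatim to the stacky base, or -- more economically -- bypass the gerbe formalism and run the Lyndon--Hochschild--Serre spectral sequence for $1\to\zmod2\to\ul\Aut(E_0)\to S_3\to1$ directly on $\ghom^{\ast}(\ul\Aut(E_0),\G_m(A))$, which is pure group cohomology and needs no new site-theoretic input.
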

\begin{proof}
    Note that $\meY$ is `the moduli stack of elliptic curves with constant $j$-invariant $0$, in characteristic $3$.' Following the strategy of \cite{AM}, we compute $\Br\meY$ using a presentation/smooth cover for $\meY$ derived from the Legendre family $\A^1\sm\{0,1\}\to\meY(1)$. That is, we consider the following commutative diagram (all of whose squares are Cartesian):
    \[\begin{tikzcd}
        &Y'\ar[r]\ar[d]&\A_R^1\sm\{0,1\}\ar[d, "y^2=x(x-1)(x-\lambda)"]&Y(2)\ar[l, symbol=\coloneq]\ar[ddd, bend left=55, "2^8\frac{(\lambda^2-\lambda+1)^3}{\lambda^2(\lambda-1)^2}"]\\
        B\ul{\zmod2}_{Y'}\ar[r, equals]&\meY'\ar[r]\ar[d]&\meY(2)_R\ar[d, "S_3\t{-torsor}"]\ar[r, equals]&B\ul{\zmod2}_{Y(2)}\\
        &\meY\ar[r]\ar[d]&\meY(1)_R\ar[d, "\pi"]\\
        &\spec R\ar[r, "0"]&\A^1_R&Y(1)\ar[l, symbol=\coloneq]
        .
    \end{tikzcd}\]
    Above, one can directly compute that 
    \[Y'\simeq\spec R[\lambda]/(\lambda^2-\lambda+1)^3=\spec R[\eps]/(\eps^6)\twhere\eps=\lambda+1.\]
    Furthermore, by \cite[Below Corollary 4.6]{AM}, $S_3\actson Y(2)=\spec R[\lambda][1/\lambda(\lambda-1)]$ via
    \begin{equation}\label{S3-action-lambda}
        \sigma=(132)\colon\lambda\mapsto\frac{\lambda-1}\lambda\tand\tau=(23)\colon\lambda\mapsto\inv\lambda.
    \end{equation}
    With all of this set up, consider the Hochschild--Serre spectral sequence
    \begin{equation}\label{SS:HS-S3-Y}
        E_2^{ij}=\ghom^i(S_3,\hom^j(\meY',\G_m))\implies\hom^{i+j}(\meY,\G_m).
    \end{equation}
    Its $E_2$-page is pictured in \cref{fig:j-0-mod-SS}, whose contents are justified in claims \bp1--\bp3 below. 
    \begin{figure}
        \centering
        \[\begin{tikzcd}
            0\\
            \mu_2(R) & \mu_2(R)\ar[drr, "d_2^{1,1}"]\\
            \G_m(Y')^{S_3} & \mu_2(R) & 0 & \ghom^3(S_3,\G_m(Y'))
        \end{tikzcd}\]
        \caption{The $E_2$-page of the Hochschild--Serre spectral sequence computing the $\G_m$-cohomology of $\meY=[\meY'/S_3]$.}
        \label{fig:j-0-mod-SS}
    \end{figure}
    To begin computing this spectral sequence, we first remark that $\G_m(\meY')=\G_m(Y')$ sits in a short exact sequence
    \begin{equation}\label{ses:Gm(Y')}
        0\too\underbrace{R\eps\oplus R\eps^2\oplus R\eps^3\oplus R\eps^4\oplus R\eps^5}_{R^{\oplus 5}}\too\G_m(Y')\xtoo{\eps=0}\units R\too1,
    \end{equation}
    which becomes $S_3$-equivariant when $S_3\actson\units R$ trivially.\footnote{The particular $S_3$-action on $R^{\oplus5}=R\eps\oplus R\eps^2\oplus R\eps^3\oplus R\eps^4\oplus R\eps^5$ is not directly important for the present argument, but for the sake of completeness, $\sigma=(132)$ and $\tau=(23)$ act on $R^{\oplus5}$ via the matrices
    \[\sigma\colon\Matrix1{}{}{}{}11{}{}{}1{-1}1{}{}1001{}11011\tand\tau\colon\Matrix{-1}{}{}{}{}{-1}1{}{}{}{-1}{-1}{-1}{}{}{-1}001{}{-1}101{-1}.\]}
    \begin{enumerate}
        \item Claim: $\hom^2(\meY',\G_m)=0$ so $E_2^{i2}=0$ for all $i\ge0$.

        Applying \cite[Proposition 3.2]{AM} (or \cref{prop:BG-Gm-coh} since $\mu_2=\ul{\zmod2}$ over $Y'$) to $\meY'=B\ul{\zmod2}_{Y'}$ shows that
        \begin{equation}\label{eqn:H2Y'}
            \hom^2(\meY',\G_m)\simeq\hom^2(Y',\G_m)\oplus\hom^1(Y',\mu_2).
        \end{equation}
        Because $Y'\simeq\spec R[\eps]/(\eps^6)$ is affine, \cite[proof of Proposition 3.2.5]{CT-S} (or \cref{lem:tame-red-Gm}) shows that $\hom^2(Y',\G_m)\simeq\hom^2(R,\G_m)=0$ and that $\hom^1(Y',\G_m)\simeq\hom^1(R,\G_m)=0$. The Kummer sequence then shows that $\hom^1(Y',\mu_2)\simeq\G_m(Y')/2$. Applying the snake lemma to
        \[\homses{R^{\oplus 5}}{}{\G_m(Y')}{\eps=0}{\units R}222{R^{\oplus 5}}{}{\G_m(Y')}{\eps=0}{\units R}\]
        shows that $\G_m(Y')/2=0$ because $R/2=0$ (since $2\in\units R$) and $\units R/2=0$ (since $R$ is strictly henselian). Thus, $\hom^1(Y',\mu_2)=0$ as well, so \cref{eqn:H2Y'} shows that $\hom^2(\meY',\G_m)=0$.

        \item Claim: $\hom^1(\meY',\G_m)\simeq\mu_2(R)$, so $S_3\actson\hom^1(\meY',\G_m)$ trivially. Consequently, $E_2^{01}=\mu_2(R)=E_2^{11}$.

        As before, \cite[Proposition 3.2]{AM} (or \cref{prop:BG-Gm-coh}) shows that $\hom^1(\meY',\G_m)\simeq\hom^1(Y',\G_m)\oplus\hom^0(Y',\mu_2)$. While proving \bp1, we showed that $\hom^1(Y',\G_m)=0$, so $\hom^1(\meY',\G_m)\simeq\hom^0(Y',\mu_2)=\mu_2(R[\eps]/(\eps^6))=\mu_2(R)$. Furthermore, the $S_3$-action is trivial because $\mu_2(R)=\{\pm1\}$ has no non-trivial automorphisms. Finally, $E_2^{01}=\mu_2(R)^{S_3}=\mu_2(R)$ and $E_2^{11}=\ghom^1(S_3,\mu_2(R))=\Hom(S_3,\mu_2(R))=\Hom(\zmod2,\mu_2(R))\simeq\mu_2(R)$.

        \item Claim: $E_2^{10}=\mu_2(R)$ and $E_2^{20}=0$.

        We first claim that $\ghom^1(S_3,\G_m(Y'))\iso\ghom^1(S_3,\units R)$ and that $\ghom^2(S_3,\G_m(Y'))\into\ghom^2(S_3,\units R)$. Taking $S_3$-cohomology of the exact sequence \cref{ses:Gm(Y')} shows that these both follow from knowing that $\ghom^i(S_3,R^{\oplus 5})=0$ for $i=1,2$. \cref{lem:S3-coh-12mod4} implies that $\ghom^i(S_3, R^{\oplus5})$ is $2$-torsion (for $i=1,2$), but $R^{\oplus 5}$ itself is $3$-torsion, so these groups must be killed by both $2$ and $3$; thus, they must vanish. Therefore,
        \[E_2^{10}=\ghom^1(S_3,\G_m(Y'))\iso\ghom^1(S_3,\units R)=\Hom(S_3,\units R)=\Hom(\zmod2,\units R)=\mu_2(R)\]
        and
        \[E_2^{20}=\ghom^2(S_2,\G_m(Y'))\into\ghom^2(S_3,\units R)\overset{\cref{lem:S3-coh-12mod4}}=\ghom^2(\zmod2,\units R)=\units R/2=0.\]
    \end{enumerate}
    By \cref{fig:j-0-mod-SS}, in order to conclude that $\hom^2(\meY,\G_m)=0$, it suffices to show that the differential
    \[d_2^{1,1}=\d_2^{1,1}(R)\colon\mu_2(R)\too\ghom^3(S_3,\G_m(Y'))\]
    is injective (equivalently, nonzero). Of course, it is enough to know that the composition $\mu_2(R)\xto{\d_2^{1,1}}\ghom^3(S_3,\G_m(Y'))\to\ghom^3(S_3,\units R)$ is nonzero. Since $S_3$ acts trivially on $\units R$, one knows (see e.g. \cite[Lemma 5.2]{AM}) that $\ghom^3(S_3,\units R)=\mu_6(R)$. Since $\mu_2(R)$ is $2$-torsion, this composition factors through a map
    \[\phi=\phi_R\colon\mu_2(R)\to\mu_6(R)[2]=\mu_2(R)\subset\ghom^3(S_3,\units R).\]
    Now, the spectral sequence \cref{SS:HS-S3-Y} (and so also the above map $\phi_R$) is functorial in $R$ by construction. With this in mind, consider the sequence of maps $\Q_3\from\Z_3\to\F_3\to R$. Functoriality of $\phi$ gives rise to a commutative diagram
    \[\begin{tikzcd}
        \mu_2(R)\ar[r, "\phi_R"]&\mu_2(R)\\
        \mu_2(\F_3)\ar[r, "\phi_{\F_3}"]\ar[u, "=" sloped]&\mu_2(\F_3)\ar[u, "=" sloped]\\
        \mu_2(\Z_3)\ar[u, "=" sloped]\ar[d, "="' sloped]\ar[r, "\phi_{\Z_3}"]&\mu_2(\Z_3)\ar[u, "=" sloped]\ar[d, "="', sloped]\\
        \mu_2(\Q_3)\ar[r, "\phi_{\Q_3}"]&\mu_2(\Q_3)
    \end{tikzcd}\]
    Thus, it suffices to show that $\phi_{\Q_3}$ is nonzero. Let $K=\Q_3$. We analyze the analogous spectral sequence $\ghom^i(S_2,\hom^j(\meY'_K,\G_m))\implies\hom^{i+j}(\meY_K,\G_m)$ over $K$. Since $\Char K=0$, we know that $\hom^2(\meY_K,\G_m)=0$ (e.g. because it is a stalk of the sheaf $\homR^2\push\pi\G_m$ of \cref{thm:R2Gm-vanishes} applied to the tame stack $\meX(1)_K$). This vanishing implies that the differential 
    \[\d_2^{1,1}(K)\colon\ghom^1(S_3,\Pic\meY'_K)\too\ghom^3(S_3,\G_m(Y'_K))\]
    must be injective. At the same time, because $K$ is a $\Q$-algebra, the exact sequence $0\to K^{\oplus5}\to\G_m(Y'_K)\to\units K\to1$, analogous to \cref{ses:Gm(Y')}, shows that $\ghom^1(S_3,\G_m(Y'_K))\iso\ghom^1(S_3,\units K)=\mu_2(K)$ and $\ghom^3(S_3,\G_m(Y'_K))\iso\ghom^3(S_3,\units K)=\mu_6(K)$. Furthermore, one can argue as in the earlier claim \bp2 to see that $\ghom^1(S_3,\Pic\meY'_K)\simeq\mu_2(K)$. Thus, the composition
    \[\mu_2(K)\simeq\ghom^1(S_3,\Pic\meY'_K)\xtoo{\d_2^{1,1}(K)}\ghom^3(S_3,\G_m(Y'_K))\iso\ghom^3(S_3,\units K)\]
    must be injective, and so $\phi_K$ is injective as well (recall the above composition factors through $\phi_K$). This completes the proof.
\end{proof}
\begin{cor}\label{cor:char3-R2Gm=0}
    $\homR^2\push\pi\G_m=0$.
\end{cor}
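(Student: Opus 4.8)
The plan is to check the vanishing of $\homR^2\push\pi\G_m$ on stalks. By the discussion preceding the statement — an application of \cref{thm:R2Gm-vanishes} to the tame open locus of $\meX(1)$ — this sheaf is supported along $i_0(S_{\F_3})\subset\P^1_S$, so I would fix a geometric point $\bar x\to\P^1_S$ lying over this locus and show that its stalk there vanishes. Writing $R=\msO_{\P^1_S,\bar x}$ for the corresponding noetherian strictly henselian local ring, whose residue field $k$ is separably closed of characteristic $3$, and setting $\meZ\coloneqq\meX(1)_S\by_{\P^1_S}\spec R$, the stalk in question is $\hom^2(\meZ,\G_m)$, which I claim is $0$. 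The morphism $\meZ\to\spec R$ is proper (a base change of $\pi$), and its closed fibre $\meZ_k\simeq\meX(1)_k\by_{\P^1_k,\,j=0}\spec k$ is exactly the stack $\meY$ of \cref{prop:char-3-j0-stalk} for the strictly local $\F_3$-algebra $k$; in particular $\hom^2(\meZ_k,\G_m)=0$, and the job is to propagate this vanishing from the closed fibre to all of $\meZ$.

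First I would record that $\hom^2(\meZ,\G_m)$ is torsion, so that it equals $\Br'(\meZ)$ and it suffices to kill $\hom^2(\meZ,\G_m)[n]$ for each $n\ge1$. Over $\Z[1/2]$ the moduli stack $\meX(2)$ of generalized elliptic curves with full level-$2$ structure is $B\ul{\zmod2}_{\P^1_\lambda}$, and the forgetful map $\meX(2)\to\meX(1)$ is a finite \'etale $S_3$-torsor (because $\SL_2(\F_2)=\GL_2(\F_2)=S_3$ and $\pm1\in\Aut(E)$ acts trivially on $E[2]$). Pulling this back along $\spec R\to\P^1_S$ presents $\meZ\simeq[\meZ^{(2)}/S_3]$ with $\meZ^{(2)}\simeq B\ul{\zmod2}_{\spec S}$, where $S$ is a finite $R$-algebra, hence a finite product of strictly henselian local rings with residue field $k$ (\cite[\href{https://stacks.math.columbia.edu/tag/03QJ}{Tag 03QJ}]{stacks-project}). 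Then $\spec S\to\meZ$ is a finite \'etale surjection from an affine scheme, so \cref{thm:gabber} together with \cref{lem:alpha-finite-cover}\bp3 gives $\hom^2(\meZ,\G_m)=\Br'(\meZ)$.

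Next I would run the proper-base-change argument of \cref{lem:Br'-prop-red-i,lem:Br'-prop-red-ii}. For each $n$, proper base change \cite[Theorem 1.3]{olsson-proper} applied to the torsion complex $[\G_m\xto n\G_m]$ gives $\hom^i(\meZ,\G_m\xto n\G_m)\iso\hom^i(\meZ_k,\G_m\xto n\G_m)$ for all $i$. Comparing, via the snake lemma, the short exact sequences
\[0\too\Pic(-)/n\too\hom^2\p{-,\G_m\xto n\G_m}\too\hom^2(-,\G_m)[n]\too0\]
for $\meZ$ and for $\meZ_k$, and using $\hom^2(\meZ_k,\G_m)=0$, identifies $\hom^2(\meZ,\G_m)[n]$ with $\coker\p{\Pic\meZ/n\to\Pic\meZ_k/n}$. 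Hence everything reduces to showing that $\Pic\meZ\to\Pic\meZ_k$ is surjective.

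This surjectivity is where I expect the real work to lie: $\meZ$ fails to be tame at the characteristic-$3$, $j=0$ point, so \cref{lem:Br'-prop-red-ii} does not apply directly. Instead I would read off from the proof of \cref{prop:char-3-j0-stalk} that $\Pic\meZ_k$ has order $4$, generated by the class of the Hodge bundle $\omega$ (which surjects onto the ``$E_2^{01}$'' graded piece, detected by the character through which the generic $\mu_2$ of automorphisms acts on line-bundle fibres) and by the line bundle $\mathcal L_{\mathrm{sgn}}$ attached to the $S_3$-torsor $\meZ^{(2)}_k\to\meZ_k$ via the sign character $S_3\to\{\pm1\}$ (which generates the ``$E_2^{10}$'' piece $\ghom^1\p{S_3,\G_m(\meZ^{(2)}_k)}\simeq\mu_2(k)$). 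Both generators visibly lift to $\meZ$: the Hodge bundle is defined on all of $\meX(1)$, and $\mathcal L_{\mathrm{sgn}}$ extends to the sign-twist of the $S_3$-torsor $\meZ^{(2)}\to\meZ$ from the previous step, whose restriction to the closed fibre is $\meZ^{(2)}_k\to\meZ_k$. Therefore $\Pic\meZ\to\Pic\meZ_k$ is surjective, so $\hom^2(\meZ,\G_m)[n]=0$ for all $n$, giving $\hom^2(\meZ,\G_m)=\Br'(\meZ)=0$ and hence $\homR^2\push\pi\G_m=0$.
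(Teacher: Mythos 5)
Your proof is correct, and it is in fact more complete than the two\nobreakdash-sentence proof the paper gives. The paper's argument has the same skeleton as yours: \cref{thm:R2Gm-vanishes} localizes the support of $\homR^2\push\pi\G_m$ to $i_0(S_{\F_3})$, and \cref{prop:char-3-j0-stalk} is then invoked to kill the stalks there. But the paper passes directly from that proposition --- an equicharacteristic statement about $\meY(1)\by_{\A^1,0}\spec R$ for $R$ a strictly local \emph{$\F_3$-algebra}, pulled back along the \emph{zero section} --- to the vanishing of the actual stalks, which are the groups $\hom^2(\meX(1)\by_{\P^1_S}\spec\msO_{\P^1_S,\bar x},\G_m)$ over the full strictly henselian local ring at $\bar x$; that ring need not be an $\F_3$-algebra (it has mixed characteristic for, say, $S=\spec\Z[1/2]$), and even when it is, it does not map to $\A^1$ through the zero section. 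Your second half supplies exactly the missing bridge: properness of $\meZ\to\spec R$, proper base change for $[\G_m\xto{n}\G_m]$, and the snake-lemma reduction to surjectivity of $\Pic\meZ\to\Pic\meZ_k$. This is the mechanism of \cref{lem:Br'-prop-red-i,lem:Br'-prop-red-ii}, which do not apply verbatim because $\meZ$ is wild over the point in question, so you rightly replace the general tameness argument there by an explicit identification of generators of the order-$4$ group $\Pic\meZ_k$ read off from the spectral sequence in \cref{prop:char-3-j0-stalk}; I checked these and they are right --- the Hodge bundle hits the generator of $E_\infty^{0,1}\simeq\mu_2(k)$ because $[-1]^*$ acts by $-1$ on invariant differentials, and the constant cocycle $\mathrm{sgn}$ generates $E_\infty^{1,0}=\ghom^1(S_3,\G_m(\meZ^{(2)}_k))\simeq\Hom(S_3,\units k)$ --- and both generators manifestly extend over $R$. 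Two cosmetic points. First, the assertion that the compactified $\meX(2)\to\meX(1)$ is a finite \'etale $S_3$-torsor with $\meX(2)\simeq B\ul{\zmod2}_{\P^1_\lambda}$ is dubious at the cusps, but harmless: no generization of a point with $j=0$ lies on the cuspidal divisor $\{j=\infty\}$, so $\spec R\to\P^1_S$ factors through $\A^1_S$ and you only ever use the genuine $S_3$-torsor $\meY(2)\to\meY(1)$. Second, \cref{thm:gabber} is not needed for the torsion claim; the norm argument in the proof of \cref{lem:alpha-finite-cover}\bp3, applied to the degree-$12$ finite flat cover $\spec S\to\meZ$ with $\hom^2(\spec S,\G_m)=0$, already shows $\hom^2(\meZ,\G_m)$ is $12$-torsion.
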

\begin{proof}
    As remarked above \cref{lem:S3-coh-12mod4}, \cref{thm:R2Gm-vanishes} shows that $\homR^2\push\pi\G_m$ is supported on the closed subscheme $i_0\colon S_{\F_3}\into\P^1_S$ defined by $j=0=3$. On top of this, \cref{prop:char-3-j0-stalk} shows that all stalks of $\homR^2\push\pi\G_m$ along this subscheme vanish as well, so $\homR^2\push\pi\G_m=0$.
\end{proof}
With this in place, we can now extend \cref{cor:BrX(1)-Z[1/6]} to $\Z[1/2]$-schemes. Recall that $f\colon\meX=\meX(1)_S\to S$ and $g\colon\P^1_S\to S$ are their structure morphisms. As before,
\[\push f\G_m\simeq\G_m\tand\homR^1\push f\G_m\simeq\ul\Z\]
because $\meX$'s coarse space is $\P^1_S$ and as a consequence of \cite[Theorem 1.3]{FO}.
\begin{lemma}
    $\homR^2\push f\G_m\simeq\homR^2\push g\G_m$.
\end{lemma}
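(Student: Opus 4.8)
The plan is to compare the Leray spectral sequences for $f\colon\meX(1)_S\to S$ and $g\colon\P^1_S\to S$ via the coarse space map $\pi\colon\meX(1)_S\to\P^1_S$, exactly as was done in the $\Z[1/6]$ case (see \cref{lem:push-f-X(1)}), but now using the stronger input \cref{cor:char3-R2Gm=0} which is available without any tameness hypothesis on all of $\meX$. Concretely, I would use the Grothendieck spectral sequence $\homR^i\push g\p{\homR^j\push\pi\G_m}\implies\homR^{i+j}\push f\G_m$. Its low-degree exact sequence, after substituting $\push\pi\G_m\simeq\G_m$ (because $\P^1_S$ is the coarse space of $\meX(1)_S$) and $\homR^2\push\pi\G_m=0$ (by \cref{cor:char3-R2Gm=0}), reads
\[0\too\homR^1\push g\G_m\too\homR^1\push f\G_m\too\push g\homR^1\push\pi\G_m\too\homR^2\push g\G_m\too\homR^2\push f\G_m\too\homR^1\push g\homR^1\push\pi\G_m.\]
So the claim will follow once I identify the first, third, and sixth terms appropriately and check the relevant maps behave as needed.

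The key steps, in order: (1) Recall that $\homR^1\push\pi\G_m$ is the Picard sheaf of the coarse space map; since $\meX(1)_S$ and its coarse space $\P^1_S$ differ only over the stacky locus (the points $j=0$ and $j=1728$ in characteristics where these are tame, plus the mild wildness at $j=0$ in characteristic $3$), this sheaf is a skyscraper supported on a finite $S$-scheme inside $\P^1_S$. In particular $\push g\homR^1\push\pi\G_m$ and $\homR^1\push g\homR^1\push\pi\G_m$ are concerned only with this finite scheme, and I want to show $\homR^1\push g\homR^1\push\pi\G_m=0$ and that $\push g\homR^1\push\pi\G_m\to\homR^2\push g\G_m$ is the zero map. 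The vanishing of $\homR^1\push g\homR^1\push\pi\G_m$ is immediate because a skyscraper sheaf on a finite scheme over $S$ is acyclic for the pushforward to $S$ (its higher direct images vanish, as the structure map to $S$ is finite). (2) For the map $\push g\homR^1\push\pi\G_m\to\homR^2\push g\G_m$: I would check this on stalks at geometric points $\bar s\to S$, where $\msO_{S,\bar s}$ is strictly local; there $\push g\homR^1\push\pi\G_m$ has torsion stalks (being a pushforward of a torsion skyscraper), while $\homR^2\push g\G_m\simeq\homR^2\push g\G_m$ has torsion-free stalks by Gabber's theorem \cite[Chapter II, Theorem 2]{gabber:thesis} stating $\Br'(\P^1_R)\simeq\Br'(R)=0$ for strictly local $R$. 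Hence the map is zero on stalks, so zero. (3) Combining (1) and (2), the six-term sequence breaks into $0\to\homR^1\push g\G_m\to\homR^1\push f\G_m\to\push g\homR^1\push\pi\G_m\to 0$ (which is consistent with the already-known $\homR^1\push f\G_m\simeq\ul\Z$ via \cite[Theorem 1.3]{FO}, though I do not even need this) and an isomorphism $\homR^2\push g\G_m\iso\homR^2\push f\G_m$, which is exactly the assertion.

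I expect the main obstacle to be ensuring that \cref{cor:char3-R2Gm=0} genuinely applies over an arbitrary noetherian $\Z[1/2]$-scheme $S$ and not merely over a strictly local base: the vanishing $\homR^2\push\pi\G_m=0$ is a statement about the sheaf on $\P^1_S$, and \cref{cor:char3-R2Gm=0} is stated and proved precisely in that generality (its proof combines \cref{thm:R2Gm-vanishes} over the tame locus with the stalk computation \cref{prop:char-3-j0-stalk} over $j=0$ in characteristic $3$), so this should go through cleanly; I just need to invoke it correctly. A secondary, more bookkeeping-level point is confirming that the natural comparison map $\homR^2\push g\G_m\to\homR^2\push f\G_m$ appearing in the Grothendieck spectral sequence is the one induced by $\pull\pi$, so that the isomorphism is canonical — this is standard functoriality of the Leray spectral sequence and requires no real work. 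Everything else (acyclicity of skyscrapers, torsion vs.\ torsion-free stalks) is routine.
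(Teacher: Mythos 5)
There is a genuine gap in your step (1). You assert that $\meX(1)_S$ and its coarse space $\P^1_S$ ``differ only over the stacky locus (the points $j=0$ and $j=1728$)'' and hence that $\homR^1\push\pi\G_m$ is a skyscraper supported on a finite $S$-scheme. This is false: every elliptic curve carries the automorphism $[-1]$, so $\meX(1)$ has a generic $\mu_2$-stabilizer and $\pi\colon\meX(1)_S\to\P^1_S$ is \emph{nowhere} an isomorphism (it is generically a $\mu_2$-gerbe; $j=0,1728$ are merely where the stabilizer jumps). Correspondingly, $\homR^1\push\pi\G_m$ is an extension of the constant sheaf $\ul{\zmod2}$ on \emph{all} of $\P^1_S$ by a skyscraper (this is exactly \cref{ses:R1-X(1)}, obtained from \cref{lem:two-step-push-es} applied to the rigidification $\meX(1)\thickslash\mu_2$), not a skyscraper itself. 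Your acyclicity argument for $\homR^1\push g\homR^1\push\pi\G_m=0$ therefore does not apply as stated. The vanishing is nevertheless true, but it requires the additional input that $\hom^1(\P^1_R,\zmod2)=0$ for $R$ strictly local (by proper base change and simple connectedness of $\P^1$ over a separably closed field); supplying this via the two-step factorization through $\meX(1)\thickslash\mu_2$ is precisely the extra work the paper does.

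Your step (2) is fine even after correcting the description of $\homR^1\push\pi\G_m$: its global sections over $\P^1_R$ are still torsion (extension of a subgroup of $\hom^0(\P^1_R,\zmod2)$ by a torsion group), while $\hom^2(\P^1_R,\G_m)\simeq\hom^2(R,\G_m)$ is torsion-free for strictly local $R$ by Gabber, so the differential vanishes. Once step (1) is repaired as above, your argument is essentially the paper's: the paper first reduces to stalks (strictly local $R$) and then runs the Leray sequence for $\pi$ over $\P^1_R$, whereas you run the relative (Grothendieck) spectral sequence on $S$ and then check the two needed vanishings stalkwise; the substantive inputs ($\homR^2\push\pi\G_m=0$ from \cref{cor:char3-R2Gm=0}, the structure of $\homR^1\push\pi\G_m$, and torsion-freeness of $\hom^2(\P^1_R,\G_m)$) are identical.
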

\begin{proof}
    It suffices to show that $\hom^2(\meX(1)_R,\G_m)=\hom^2(\P^1_R,\G_m)$ for any noetherian strictly local $\Z[1/2]$-algebra $R$. Let $R$ be such a ring and let $\pi\colon\meX(1)_R\to\P^1_R$ be the coarse space map. Using that $\homR^2\push\pi\G_m=0$ by \cref{cor:char3-R2Gm=0}, the argument of \cref{ex:X(1)R} goes through for this case as well with the following modification. Let $\meY\coloneqq\meX(1)_R\thickslash\mu_2\xtoo\rho\P^1_R$. The exact sequence \cref{ses:R1-X(1)} in \cref{ex:X(1)R} should be replaced with the exact sequence
    \[0\too\homR^1\push\rho\G_m\too\homR^1\push\pi\G_m\too\zmod2\too0\]
    coming from \cref{lem:two-step-push-es}. Above, $\homR^1\push\rho\G_m$ is supported on the closed subscheme $\spec R\xinto0\P^1_R$ (since $\rho$ is an isomorphism away from this subscheme) and so is acyclic. Hence, one still has $\hom^1(\P^1_R,\homR^1\push\pi\G_m)\iso\hom^1(\P^1_R,\zmod2)=0$ as is necessary for the rest of the argument of \cref{ex:X(1)R} to apply here.
\end{proof}
\begin{cor}\label{cor:BrX(1)-general}
    For any noetherian scheme $S/\Z[1/2]$,
    \[\Br'\meX(1)_S\simeq\Br'S.\]
\end{cor}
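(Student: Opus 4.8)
The plan is to run the proof of \cref{cor:BrX(1)-Z[1/6]} essentially verbatim, now that the inputs it relies on have been re-established over $\Z[1/2]$. Write $f\colon\meX=\meX(1)_S\to S$ and $g\colon\P^1_S\to S$ for the structure maps and consider the Leray spectral sequence $E_2^{ij}=\hom^i(S,\homR^j\push f\G_m)\implies\hom^{i+j}(\meX,\G_m)$. The bottom two rows of its $E_2$-page are controlled by the identifications $\push f\G_m\simeq\G_m$ and $\homR^1\push f\G_m\simeq\ul\Z$ recalled above (the latter via \cite[Theorem 1.3]{FO}), together with $\homR^2\push f\G_m\simeq\homR^2\push g\G_m$, which is the preceding lemma and whose proof is precisely where \cref{cor:char3-R2Gm=0} (hence \cref{prop:char-3-j0-stalk}) enters.

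First I would note that $\meX(S)\neq\emptyset$: the cuspidal point, a N\'eron $1$-gon, is defined over $\Z$, hence over $S$. Therefore $\pull f\colon\hom^i(S,\G_m)\to\hom^i(\meX,\G_m)$ is split injective for all $i$; this forces $E_\infty^{i,0}=E_2^{i,0}$, so the relevant differentials out of the bottom row vanish and the Leray filtration on $\hom^2(\meX,\G_m)$ yields, exactly as in \cref{cor:BrX(1)-es}, the short exact sequence
\[0\too\hom^2(S,\G_m)\xtoo{\pull f}\ker\p{\hom^2(\meX,\G_m)\too\hom^0(S,\homR^2\push g\G_m)}\too\hom^1(S,\Z)\too0.\]

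Finally I would pass to torsion subgroups. By \cite[Lemma A.3]{shin-Gm}, $\hom^1(S,\Z)$ is torsion-free. Moreover, since the Brauer group of a strictly henselian local ring vanishes, Gabber's theorem $\Br'R\iso\Br'\P^1_R$ \cite[Chapter II, Theorem 2]{gabber:thesis} shows the stalks -- and hence, by left-exactness of global sections, all sections -- of $\homR^2\push g\G_m$ are torsion-free; in particular $\Br'\meX=\hom^2(\meX,\G_m)_{\mrm{tors}}$ lies in the displayed kernel. Taking torsion in the sequence above then gives $\Br'\meX(1)_S\iso\hom^2(S,\G_m)_{\mrm{tors}}=\Br' S$.

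The genuine difficulty is not in this corollary but already behind us: it is the vanishing of the $j=0$ stalk of $\homR^2\push\pi\G_m$ in residue characteristic $3$ (\cref{prop:char-3-j0-stalk}), which propagates through \cref{cor:char3-R2Gm=0} to the lemma $\homR^2\push f\G_m\simeq\homR^2\push g\G_m$. Granting that, the present statement is a purely formal spectral-sequence argument identical to the tame case, so I expect no obstacle at this stage.
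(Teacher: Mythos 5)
Your proof is correct and matches the paper's own argument essentially verbatim: the paper likewise reduces everything to the computations of $\homR^i\push f\G_m$ for $i=0,1,2$ over $\Z[1/2]$ (with the genuine characteristic-$3$ work concentrated in \cref{prop:char-3-j0-stalk} and \cref{cor:char3-R2Gm=0}) and then repeats the formal spectral-sequence and torsion-taking steps of \cref{cor:BrX(1)-es} and \cref{cor:BrX(1)-Z[1/6]}. The only nitpick is that the split injectivity of $\pull f$ kills the differentials mapping \emph{into} the $j=0$ row (those out of it vanish automatically), but this does not affect the argument.
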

\begin{proof}
    Using our computations of $\homR^i\push f\G_m$ for $i=0,1,2$, we argue exactly as in \cref{cor:BrX(1)-es,cor:BrX(1)-Z[1/6]}.
\end{proof}

\subsection{The Brauer group of $\meY(1)$}\label{sect:ex:Y(1)}

Let $\meY(1)/\Z$ denote the moduli stack of elliptic curves.
Let $S/\Z[1/2]$ be a regular noetherian scheme. As our next example, we will compute $\Br\meY(1)_S$, partially generalizing results of \cite{AM,meier-cs,dilorenzo2022cohomological}.

\begin{set}
    Set $\meX\coloneqq\meY(1)_S$ and let $f\colon\meX\to S$ be its structure map. Note that, by \cite[Lemma 4.4]{FO}, the coarse moduli space of $\meX$ is given by the $j$-invariant $c\colon\meX\to\A^1_S$. Let $g\colon\A^1_S\to S$ be its structure map, so we have a commutative triangle
    \[\mapover\meX c{\A^1_S}fg{S.}\]
\end{set}
We will compute $\hom^2(\meX,\G_m)$ by leveraging $f$'s Leray spectral sequence
\begin{equation}\label{SS:Leray-Y(1)}
    E_2^{ij}=\hom^i(S,\homR^j\push f\G_m)\implies\hom^{i+j}(\meX,\G_m).
\end{equation}
\begin{lemma}\label{lem:Y(1)-R0}
    $\push f\G_m=\push g\G_m=\G_m$.
\end{lemma}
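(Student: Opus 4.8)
The plan is to factor the computation through the coarse space. Since $f = g\circ c$, functoriality of pushforward gives $\push f\G_m \simeq \push g\,(\push c\G_m)$, so it suffices to treat the two maps $c$ and $g$ separately.

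For $c$: by \cite[Lemma 4.4]{FO} the stack $\meX = \meY(1)_S$ has coarse moduli space $\A^1_S$ (via the $j$-invariant), and for any coarse space morphism the natural map $\G_{m,\A^1_S}\to\push c\G_{m,\meX}$ is an isomorphism. This is the same fact already used repeatedly above (e.g. in \cref{prop:BG-pushforwards,lem:push-f-X(1)}): it follows from the defining property $\msO_{\A^1_S}\iso\push c\msO_\meX$ of the coarse space together with the observation that invertibility of a section of the structure sheaf is a local condition, so that $\G_m$-sections and invertible $\msO$-sections coincide. Thus $\push c\G_m\simeq\G_m$, hence $\push f\G_m\simeq\push g\G_m$, and it remains only to identify the latter.

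For $g\colon\A^1_S\to S$: I would check the claim on stalks. At a geometric point $\bar s\to S$ one has $(\push g\G_m)_{\bar s}\simeq\G_m(\A^1_{\msO_{S,\bar s}}) = \msO_{S,\bar s}[t]^\times$. Since $S$ is regular, $\msO_{S,\bar s}$ is a regular local ring, hence an integral domain, and the unit group of a polynomial ring over a domain equals the unit group of the coefficient ring; therefore $(\push g\G_m)_{\bar s} = \msO_{S,\bar s}^\times = (\G_{m,S})_{\bar s}$. So the unit of adjunction $\G_{m,S}\to\push g\G_m$ is an isomorphism on every stalk, hence an isomorphism of \'etale sheaves, giving $\push g\G_m\simeq\G_m$ and with it $\push f\G_m\simeq\G_m$.

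There is no serious obstacle here; the only point requiring a little care is the identification $\push g\G_m\simeq\G_m$, which is exactly where the regularity hypothesis on $S$ is used (it would fail over a non-reduced base). Alternatively, one could observe $\A^1_S\into\P^1_S$ and invoke that $\P^1_S\to S$ is proper with geometrically integral fibres, but the stalk computation above is cleaner and self-contained.
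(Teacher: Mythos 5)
Your proposal is correct and follows essentially the same route as the paper: the first equality comes from the coarse-space property of $c$ (the paper phrases this as stability under smooth base change rather than via the factorization $\push f = \push g \circ \push c$, but it is the same point), and the second is the same stalkwise computation $\units{\msO_{S,\bar s}}\iso\units{\msO_{S,\bar s}[T]}$, which the paper justifies by reducedness of $S$ and you justify by regularity (hence the local rings are domains) — both suffice.
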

\begin{proof}
    The first equality $\push f\G_m=\push g\G_m$ holds simply because $\A^1_S$ is $\meX$'s coarse moduli space and this remains true after smooth base change over $S$. The second equality $\push g\G_m=\G_m$ holds because $S$ is reduced and so, at the level of stalks over a geometric point $\bar s\to S$, $\G_m(\msO_{S,\bar s})=\units\msO_{S,\bar s}\iso\units{\msO_{S,\bar s}[T]}=\G_m(\A^1_{\msO_{S,\bar s}})$.
\end{proof}
\begin{lemma}\label{lem:Y(1)-R1}
    $\homR^1\push f\G_m\simeq\ul{\zmod{12}}$ while $\homR^1\push g\G_m=0$. To fix a particular identification, we insist that the isomorphism $\homR^1\push f\G_m\iso\ul{\zmod{12}}$ maps the class of the Hodge bundle on $\meX$ to $1\in\zmod{12}$.
\end{lemma}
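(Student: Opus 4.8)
The plan is to compute both higher direct images by passing to stalks (equivalently, by sheafifying the obvious presheaves) over $S$, thereby reducing $\homR^1\push g\G_m$ to the Picard group of a polynomial ring over a regular local ring and $\homR^1\push f\G_m$ to the Fulton--Olsson description of the Picard group of $\meY(1)$ over a $\Z[1/2]$-base.

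First I would dispatch $\homR^1\push g\G_m$. Since $g$ is finitely presented, the usual limit argument for higher direct images identifies its stalk at a geometric point $\bar s\to S$ with $\hom^1(\A^1_R,\G_m)=\Pic(\A^1_R)$, where $R\coloneqq\msO_{S,\bar s}^{\mrm{sh}}$. Because $S$ is regular, $R$ is a regular local ring, hence a UFD, so $R[t]$ is again a UFD and $\Pic(\A^1_R)=\Pic(R[t])=0$. As every stalk vanishes, $\homR^1\push g\G_m=0$.

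Next, for $\homR^1\push f\G_m$, the essential input is the Fulton--Olsson computation of the Picard group of the (uncompactified) moduli stack of elliptic curves: over any $\Z[1/2]$-scheme $T$ there is a natural isomorphism $\Pic(\meY(1)_T)\simeq\hom^0(T,\zmod{12})\oplus\Pic(T)$, with the $\zmod{12}$-summand generated by the restriction of the Hodge bundle $\omega$ (this is the open-curve counterpart of the input \cite[Theorem 1.3]{FO} already used above for the compactified stack $\meX(1)$). Now $\homR^1\push f\G_m$ is, by definition, the sheaf on $\et S$ associated to the presheaf $U\mapsto\hom^1(\meX_U,\G_m)=\Pic(\meX_U)$. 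Every $U$ étale over the regular scheme $S$ is itself regular, so the presheaf $U\mapsto\Pic(U)$ sheafifies to $0$ (its stalks are Picard groups of strictly local rings), while $U\mapsto\hom^0(U,\zmod{12})$ is already the constant sheaf $\ul{\zmod{12}}$; hence $\homR^1\push f\G_m\simeq\ul{\zmod{12}}$. The Hodge bundle $\omega$ on $\meX$, whose formation commutes with base change, restricts over each such $U$ to a compatible system of classes and so defines a global section of $\homR^1\push f\G_m$, which under the isomorphism just produced is the generator $1\in\ul{\zmod{12}}$; fixing the isomorphism accordingly gives the normalization in the statement.

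I do not expect a genuine obstacle: essentially all the mathematical content is packaged in \cite{FO}, and what remains is routine sheafification bookkeeping plus the elementary fact that $\A^1$ over a regular local ring has trivial Picard group. The one point that merits care is checking that the cited form of the Fulton--Olsson theorem is available with $2$ (rather than $6$) inverted and with the Hodge bundle identified as the $\zmod{12}$-generator — which is exactly why the argument is confined to the $\Z[1/2]$, regular-noetherian setting fixed for this subsection.
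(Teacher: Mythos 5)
Your proof is correct and follows essentially the same route as the paper: the first claim is quoted from Fulton--Olsson's computation of $\Pic(\meY(1)_T)$ over $\Z[1/2]$-bases (the paper cites \cite[Theorem 1.1]{FO}), and the second is checked on stalks using that $\Pic(\A^1_R)=0$ for $R$ the strictly local ring of a point of the regular (hence normal) scheme $S$. The only cosmetic difference is that you invoke the UFD property of regular local rings where the paper just uses normality, and you spell out the sheafification bookkeeping that the paper leaves implicit.
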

\begin{proof}
    The first claim is a consequence of \cite[Theorem 1.1]{FO} while the second is easily verified on stalks; $S$ is normal so $\Pic\A^1_{\msO_{S,\bar s}}=0$ for any geometric point $\bar s\to S$.
\end{proof}
\begin{lemma}\label{lem:R2g=R2f}
    $\homR^2\push g\G_m\iso\homR^2\push f\G_m$
\end{lemma}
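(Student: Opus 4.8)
The map $\homR^2\push g\G_m\to\homR^2\push f\G_m$ is the edge map induced by $\pull c$ in the Grothendieck spectral sequence for $f=g\circ c$, so it suffices to check it is an isomorphism on stalks. At a geometric point $\bar s\to S$ both stalks are computed over $R\coloneqq\msO_{S,\bar s}$, a strictly henselian \emph{regular} local ring (regular since $S$ is), and the stalk of the map is $\pull c\colon\hom^2(\A^1_R,\G_m)\to\hom^2(\meY(1)_R,\G_m)$; so the plan is to prove this is an isomorphism for every such $R$. I will run the Leray spectral sequence $E_2^{ij}=\hom^i(\A^1_R,\homR^j\push c\G_m)\implies\hom^{i+j}(\meY(1)_R,\G_m)$. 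Here $\push c\G_m=\G_m$ (coarse space), and $\homR^2\push c\G_m=0$: since $\meY(1)_R$ is the restriction of $\meX(1)_R$ over the open $\A^1_R\subset\P^1_R$, the sheaf $\homR^2\push c\G_m$ is the restriction to $\A^1_R$ of $\homR^2\push\pi\G_m$, which vanishes by \cref{cor:char3-R2Gm=0}. Hence $E_2^{ij}=0$ for all $j\ge2$, and its low-degree exact sequence
\[\hom^0(\A^1_R,\homR^1\push c\G_m)\xtoo{d_2^{0,1}}\hom^2(\A^1_R,\G_m)\xtoo{\pull c}\hom^2(\meY(1)_R,\G_m)\too\hom^1(\A^1_R,\homR^1\push c\G_m)\xtoo{d_2^{1,1}}\hom^3(\A^1_R,\G_m)\]
reduces the lemma to two facts: (i) $d_2^{0,1}=0$, and (ii) $\hom^1(\A^1_R,\homR^1\push c\G_m)=0$ (which makes $d_2^{1,1}$ vacuously injective, hence $\pull c$ onto). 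For (i), over the function field of $\A^1_R$ there is an elliptic curve with $j$-invariant the coordinate function, so $c$ admits a section over some dense open $U\subset\A^1_R$; then $\pull{c_U}$ is split injective, and since $\A^1_R$ is regular noetherian, \cref{prop:brauer-injects} gives $\hom^2(\A^1_R,\G_m)\into\hom^2(U,\G_m)$, whence a short diagram chase (as in the footnote to \cref{ex:X(1)R}) shows $\pull c$ is injective, i.e. $d_2^{0,1}=0$.

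For (ii), I will use the $\mu_2$-gerbe $\meY(1)_R\to\meY\coloneqq\meY(1)_R\thickslash\mu_2\xtoo\rho\A^1_R$ (the generic automorphism group of $\meY(1)$ is $\mu_2$) together with \cref{lem:two-step-push-es}, giving an exact sequence of \'etale sheaves on $\A^1_R$
\[0\too\homR^1\push\rho\G_m\too\homR^1\push c\G_m\too\dual{\mu_2}\too\homR^2\push\rho\G_m,\]
with $\dual{\mu_2}\simeq\ul{\zmod2}$. The sheaf $\homR^1\push\rho\G_m$ is supported on the finite $R$-subscheme $\{0,1728\}\subset\A^1_R$, hence acyclic; so once $\homR^2\push\rho\G_m=0$ is known, this collapses to a short exact sequence $0\to\homR^1\push\rho\G_m\to\homR^1\push c\G_m\to\ul{\zmod2}\to0$ and $\hom^1(\A^1_R,\homR^1\push c\G_m)\iso\hom^1(\A^1_R,\ul{\zmod2})$. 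Since $2\in\units R$, Kummer theory identifies this with $(\units{R[T]}/2)\oplus\Pic(\A^1_R)[2]=(\units R/2)\oplus 0=0$, using that $R$ is a strictly henselian regular local domain.

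The only real work is the input $\homR^2\push\rho\G_m=0$, and this is where I expect the main obstacle. Away from the point $j=0$ in characteristic $3$, the stack $\meY$ is tame with geometric stabilizers among $1,\mu_2,\mu_3$, all Brauerless, so \cref{thm:R2Gm-vanishes} applies there; thus $\homR^2\push\rho\G_m$ is supported at $j=0$ in characteristic $3$, where $\meY$ is wild (stabilizer $S_3$) and the stalk must be computed by hand, following \cref{prop:char-3-j0-stalk}. The point is that rigidifying $\meY(1)$ by $\mu_2$ removes exactly the residual $B\ul{\zmod2}$ in that argument, so the relevant stalk is $\hom^2([Y'/S_3],\G_m)$ with $Y'=\spec R[\eps]/(\eps^6)$; since $\hom^1(Y',\G_m)=\hom^2(Y',\G_m)=0$ (from the proof of \cref{prop:char-3-j0-stalk}\bp1), the Hochschild--Serre spectral sequence degenerates and leaves $\hom^2([Y'/S_3],\G_m)\simeq\ghom^2(S_3,\G_m(Y'))$, which embeds into $\ghom^2(S_3,\units R)=0$ by \cref{prop:char-3-j0-stalk}\bp3. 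This gives $\homR^2\push\rho\G_m=0$, hence (ii), hence the lemma.
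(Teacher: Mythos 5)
Your proof is correct and follows the paper's argument essentially step for step: stalkwise reduction to a regular strictly henselian $R$, the Leray spectral sequence for $c$ with $\homR^2\push c\G_m=0$ supplied by \cref{cor:char3-R2Gm=0}, injectivity of $\pull c$ via a section over a dense open together with \cref{prop:brauer-injects}, and the vanishing of $\hom^1(\A^1_R,\homR^1\push c\G_m)$ via the two-step pushforward sequence for $\meX\thickslash\mu_2$ plus Kummer theory. The one place you go beyond the paper is in justifying that this two-step sequence is short exact onto $\ul{\zmod2}$: you prove $\homR^2\push\rho\G_m=0$ outright, including the by-hand Hochschild--Serre computation for $[Y'/S_3]$ at the wild point $j=0$ in characteristic $3$, whereas the paper simply asserts the short exact sequence (the needed surjectivity can also be seen more cheaply by noting that the Hodge bundle maps to the generator of $\dual{\mu_2}$ on stalks); your extra care is valid and, if anything, makes this step more airtight.
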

\begin{proof}
    We show that the natural map $\homR^2\push g\G_m\to\homR^2\push f\G_m$ is an isomorphism by checking on stalks, so assume $S=\spec R$ is a noetherian, regular strictly local $\Z[1/2]$-scheme. In this case, we need to show that
    \[\hom^2(\A^1_R,\G_m)\xtoo{\pull c}\hom^2(\meX,\G_m)\]
    is an isomorphism. One can now argue as in \cref{ex:X(1)R}, as we briefly explain. \cref{cor:char3-R2Gm=0} implies that $\homR^2\push c\G_m=0$ (because $\meY(1)\openset\meX(1)$). Setting $\meY\coloneqq\meX\thickslash\mu_2\xtoo\rho\A^1_R$, \cref{lem:two-step-push-es} produces an exact sequence
    \[0\too\homR^1\push\rho\G_m\too\homR^1\push c\G_m\too\ul{\zmod2}\too0\]
    of \'etale sheaves on $\A^1_R$. Above, $\homR^1\push\rho\G_m$ is supported on an $R$-finite subscheme of $\A^1_R$ and so is acyclic; hence,
    \[\hom^1(\A^1_R,\homR^1\push c\G_m)\simeq\hom^1(\A^1_R,\zmod2)\simeq\hom^1(\A^1_R,\mu_2)\simeq\G_m(\A^1_R)/2\simeq\units R/2=0.\]
    Now, the Leray spectral sequence for $c$ yields
    \[0\dashtoo\hom^2(\A^1_R,\G_m)\xtoo{\pull c}\hom^2(\meX,\G_m)\too\hom^1(\A^1_R,\homR^1\push c\G_m)=0.\]
    Above, $\pull c$ is injective e.g. as a consequence of \cref{prop:brauer-injects} and the fact that there exists a dense open subscheme $U\openset\A^1_R$ above which $c$ admits a section. Thus, this exact sequence shows that $\pull c$ is an isomorphism, as desired.
\end{proof}
\begin{rem}
    Showing that $\pull c\colon\hom^2(\A^1_R,\G_m)\to\hom^2(\meX,\G_m)$ is injective in \cref{lem:R2g=R2f} is the only place in this argument (i.e. in the proof of \cref{thma:mod-curve}\bp2) that we use that $S$ is regular (say as opposed to only being normal). If $6\in\Gamma(S,\msO_S)^\by$, then one can instead show injectivity of $\pull c$ by using that, with notation as in \cref{lem:R2g=R2f}, $\homR^1\push\rho\G_m\simeq\ul{\zmod3}_0\oplus\ul{\zmod2}_{1728}$ to show that $\Pic\meX$ surjects onto $\hom^0(\A^1_R,\homR^1\push c\G_m)$. Hence, one can show that \cref{thma:mod-curve}\bp2 holds for \important{normal} noetherian $\Z[1/6]$-schemes as well.
\end{rem}
\begin{thm}\label{thm:BrY(1)}
    For any regular noetherian scheme $S/\Z[1/2]$, there is a short exact sequence
    \begin{equation}\label{ses:BrY(1)}
        0\too\hom^2(\A^1_S,\G_m)\xtoo{\pull c}\hom^2(\meY(1)_S,\G_m)\xtoo f\hom^1(S,\zmod{12})\too0.
    \end{equation}
\end{thm}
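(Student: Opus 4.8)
The plan is to run the Leray spectral sequence \cref{SS:Leray-Y(1)} for $f\colon\meX=\meY(1)_S\to S$ and compare it, via the coarse space map $c$, with the Leray spectral sequence $F_2^{ij}=\hom^i(S,\homR^j\push g\G_m)\implies\hom^{i+j}(\A^1_S,\G_m)$ for $g\colon\A^1_S\to S$. By \cref{lem:Y(1)-R0,lem:Y(1)-R1,lem:R2g=R2f} the morphism of spectral sequences induced by $c$ is an isomorphism on row $0$ (both pages give $\hom^i(S,\G_m)$, compatibly with $\pull c$) and on row $2$ (via $\homR^2\push g\G_m\iso\homR^2\push f\G_m$), while on row $1$ we have $F_2^{i1}=\hom^i(S,\homR^1\push g\G_m)=0$ but $E_2^{i1}=\hom^i(S,\zmod{12})$.

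First I would note that $\meX(S)\neq\emptyset$: the elliptic curve $y^2=x^3+x$ over $\Z[1/2]$ has discriminant $-2^6$, a unit, and base-changes to an object of $\meY(1)(S)$; fix the resulting section $\sigma\colon S\to\meX$. Then $\pull f\colon\hom^i(S,\G_m)\to\hom^i(\meX,\G_m)$ is split injective (by $\pull\sigma$) for all $i$, and $\pull g$ is split injective by the zero section of $\A^1_S$. In either spectral sequence this forces the bottom-row edge maps to be injective, hence $E_2^{i,0}=E_\infty^{i,0}$, and in particular the differentials $d_2^{0,1}$ and $d_2^{1,1}$ vanish (trivially for $g$, since the relevant rows are $0$); this is the same argument used in the proof of \cref{prop:BG-Gm-coh}. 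Feeding this into the morphism of spectral sequences: $d_2^{0,2}$ vanishes on $F$ (its target $F_2^{2,1}$ is $0$), so by naturality and the fact that the comparison map is an isomorphism on $E_2^{0,2}=F_2^{0,2}$, also $d_2^{0,2}$ vanishes on $E$; and since the comparison map is the identity on $E_3^{3,0}=\hom^3(S,\G_m)$ (using $d_2^{1,1}=0$ on both sides) while being an isomorphism on $E_3^{0,2}=E_2^{0,2}$, naturality of the transgression $d_3^{0,2}$ shows that $c$ restricts to an isomorphism $F_\infty^{0,2}\iso E_\infty^{0,2}$.

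Granting this, the Leray filtrations are transparent: $\hom^2(\A^1_S,\G_m)$ has graded pieces $\hom^2(S,\G_m)$, $0$, $F_\infty^{0,2}$, and $\hom^2(\meX,\G_m)$ has graded pieces $\hom^2(S,\G_m)$, $\hom^1(S,\zmod{12})$, $E_\infty^{0,2}$. The filtered map $\pull c$ induces on associated gradeds the identity on the top piece, the zero map on the middle piece, and the isomorphism $F_\infty^{0,2}\iso E_\infty^{0,2}$ on the bottom piece. A stepwise five-lemma argument up the filtration then shows $\pull c$ is injective with cokernel $\hom^1(S,\zmod{12})$; concretely, writing $F^1\coloneqq\ker\p{\hom^2(\meX,\G_m)\to\hom^0(S,\homR^2\push f\G_m)}$, one has $\im(\pull c)\supseteq\pull f\hom^2(S,\G_m)$, that $\im(\pull c)$ surjects onto $E_\infty^{0,2}=\im\p{\hom^2(\meX,\G_m)\to\hom^0(S,\homR^2\push f\G_m)}$, and that $\im(\pull c)\cap F^1=\pull f\hom^2(S,\G_m)$, whence $\hom^2(\meX,\G_m)/\im(\pull c)\cong F^1/\pull f\hom^2(S,\G_m)\cong\hom^1(S,\zmod{12})$; the third map of \cref{ses:BrY(1)} is the resulting quotient map, equivalently the natural map supplied by the Leray filtration for $f$. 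The main obstacle is the second paragraph: verifying that $\pull c$ induces an isomorphism on the $E_\infty^{0,2}$ pieces — i.e. that the $\homR^2$-parts of the two abutments match up under $\pull c$ — which is exactly where \cref{lem:R2g=R2f} and the compatibility of the transgressions $d_3^{0,2}$ are used; everything else is formal manipulation of the spectral sequences.
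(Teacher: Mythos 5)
Your proposal is correct and follows essentially the same route as the paper's proof: compare the Leray spectral sequences for $f$ and $g$ via $\pull c$, use $\meX(S)\neq\emptyset$ and $\A^1_S(S)\neq\emptyset$ to make the bottom-row edge maps split injective (killing all differentials into row $0$), use \cref{lem:R2g=R2f} to force $\d_2^{0,2}=0$ and to identify the $E_\infty^{0,2}$ pieces, and then extract the cokernel of $\pull c$ from the filtration. The paper packages the final step as a snake lemma applied to a homomorphism of short exact sequences rather than your filtration-level five-lemma, but these are the same computation.
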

\begin{proof}
    We compare the Leray spectral sequences for $f\colon\meX=\meY(1)_S\to S$ and $g\colon\A^1_S\to S$. Their exact sequences of low degree terms sit in the following commutative diagram
    \[\begin{tikzcd}
        &&\ker\p{\hom^2(\A^1_S,\G_m)\too\hom^0(S,\homR^2\push g\G_m)}\\
        0\ar[r, dashed]&\hom^2(S,\G_m)\ar[d, equals]\ar[r, "\pull g"]&K_1\ar[u, symbol=\coloneqq]\ar[d, "\pull c"]\ar[r]&0\\
        0\ar[r, dashed]&\hom^2(S,\G_m)\ar[r, "\pull f"]&K_2\ar[d, symbol=\coloneqq]\ar[r]&\hom^1(S,\zmod{12})\ar[r, "0"]&K\\
        &&\ker\p{\hom^2(\meX,\G_m)\too\hom^0(S,\homR^2\push f\G_m)},
    \end{tikzcd}\]
    where $K\coloneqq\ker\p{\hom^3(S,\G_m)\xto{\pull f}\hom^3(\meX,\G_m)}$. Above, note that $\A^1_S(S)\neq\emptyset$ and also $\meX(S)\neq\emptyset$ (e.g. it contains the elliptic curve $y^2=x^3-x$), so $\pull g,\pull f$ are injective and $K=0$. Hence, the above diagram yields
    \begin{equation}\label{eqn:coker K1 K2}
        \coker\p{K_1\xinto{\pull c}K_2}\simeq\hom^1(S,\zmod{12}).
    \end{equation}
    We would like to upgrade this to a computation of the cokernel of $\hom^2(\A^1_S,\G_m)\to\hom^2(\meX,\G_m)$. Note that, because $\meX(S)\neq\emptyset$, there are no nonzero differentials in the Leray spectral sequence \cref{SS:Leray-Y(1)} which map to the $j=0$ row (and the analogous statement holds for $\A^1_S$). Thus, $E_\infty^{0,2}=\ker\p{\d_2^{0,2}\colon\hom^0(S,\homR^2\push f\G_m)\to\hom^2(S,\homR^1\push f\G_m)}$ and the analogous statement holds for $\A^1_S$. Hence, we have the following homomorphism of exact sequences:
    \begin{equation}\label{cd:Y(1)-SS-comp}
        \begin{tikzcd}
            &K_1&&&\hom^2(S,\homR^1\push g\G_m)\\
            0\ar[r]&\hom^2(S,\G_m)\ar[d]\ar[r, "\pull g"]\ar[u, "\sim" sloped]&\hom^2(\A^1_S,\G_m)\ar[d, "\pull c"]\ar[r]&\hom^0(S,\homR^2\push g\G_m)\ar[r]\ar[d]&0\ar[u, equals]\\
            0\ar[r]&K_2\ar[r]&\hom^2(\meX,\G_m)\ar[r]&\hom^0(S,\homR^2\push f\G_m)\ar[r, "\d_2^{0,2}"]&\hom^2(S,\homR^1\push f\G_m).
        \end{tikzcd}
    \end{equation}
    Now, \cref{lem:R2g=R2f} shows that $\homR^2\push g\G_m\iso\homR^2\push f\G_m$, so commutativity shows that $\d_2^{0,2}=0$ and the above is really a homomorphism of \important{short} exact sequences. The snake lemma then yields that $\pull c\colon\hom^2(\A^1_S,\G_m)\to\hom^2(\meX,\G_m)$ is injective with cokernel isomorphic to $\coker\p{K_1\to K_2}$ which is isomorphic to $\hom^1(S,\zmod{12})$ by \cref{eqn:coker K1 K2}, finishing the proof.
\end{proof}
To finish this section, we will show that the exact sequence in \cref{thm:BrY(1)} is actually split. Recall that we have fixed a regular, noetherian $\Z[1/2]$-scheme $S$ and that we set $\meX=\meY(1)_S\xto fS$. Let $\msL$ denote the Hodge bundle on $\meX$. Then, the discriminant (of elliptic curves) gives a trivialization $\Delta\colon\msO_\meX\iso\msL^{12}$ of its twelfth power, and we consider the $\mu_{12}$-torsor $\meT\coloneqq\meX\p{\sqrt[12]{\Delta}}\too\meX$ of 12th roots of this trivialization. We let $\sigma=[\meT]\in\fhom^1(\meX,\mu_{12})$ denote its corresponding cohomology class, and we consider the map
\begin{equation}\label{map:s-Y(1)}
    \mapdesc s{\hom^1(S,\zmod{12})}{\hom^2(\meX,\G_m)}\alpha{\pull f\alpha\cup\sigma.}
\end{equation}
We claim that $s$ is a section of \cref{ses:BrY(1)}. As in \cref{sect:split-BG}, to prove this, we will need to wander into derived categories. Define the objects
\[\msC\coloneqq\p{\tau_{\le2}\fhomR\push f\G_m}[1]\tand\msD\coloneqq\p{\tau_{\le2}\fhomR\push g\G_m}[1]\]
in the (bounded below) derived category of fppf sheaves on $S$. Note that \cref{lem:Y(1)-R0,lem:Y(1)-R1,lem:R2g=R2f} show that these sit in a distinguished triangle (note: $\ul{\zmod{12}}\simeq\fhomR^1\push f\G_m$ below)
\begin{equation}\label{tri:Y(1)}
    \msD\too\msC\too{\ul{\zmod{12}}}\xtoo{+1}.
\end{equation}
Applying $\Hom(\ul{\zmod{12}},-)$ to this triangle produces the exact sequence
\[0\too\Hom_S(\zmod{12},\msD)\too\Hom_S(\zmod{12},\msC)\xtoo\chi\Hom_S(\zmod{12},\zmod{12}).\]
Furthermore, the identifications
\begin{align}
    \Hom_S(\zmod{12},\msC)
    &= \Hom_S(\zmod{12},(\tau_{\le2}\fhomR\push f\G_m)[1]) \nonumber\\
    &\simeq \Ext^1_S(\zmod{12},\tau_{\le2}\fhomR\push f\G_m) \nonumber\\
    &\simeq \Ext^1_S(\zmod{12}, \fhomR\push f\G_m) \nonumber\\
    &\simeq \Ext^1_\meX(\zmod{12}, \G_m) \nonumber\\
    &\simeq \fhom^1(\meX,\mu_{12}) &&\t{by \cref{lem:Ext-G-Gm}}
\end{align}
and the similarly obtained $\Hom_S(\zmod{12},\msD)\simeq\fhom^1(\A^1_S,\mu_{12})$ allow us to rewrite this exact sequence as
\begin{equation}\label{es:Y(1)-mu12}
    0\too\fhom^1(\A^1_S,\mu_{12})\xtoo{\pull c}\fhom^1(\meX,\mu_{12})\xtoo{\chi'}\Hom(\ul{\zmod{12}},\ul{\zmod{12}})\dashtoo0.
\end{equation}
\begin{rem}\label{rem:chi'-comp-Y(1)}
    Recall that the second $\ul{\zmod{12}}$ in \cref{es:Y(1)-mu12} is $\homR^1\push f\G_m$, the relative Picard scheme of $\meX/S$. Identifying the first $\ul{\zmod{12}}$ with $\dual\mu_{12}$ -- by sending $1\in\zmod{12}$ to the natural inclusion $\mu_{12}\into\G_m$ -- one can compute the map
    \[\chi'\colon\fhom^1(\meY(1),\mu_{12})\too\Hom(\zmod{12},\zmod{12})\]
    of \cref{es:Y(1)-mu12} as follows: for $\beta\in\fhom^1(\meY(1),\mu_{12})$, $\chi'(\beta)(1)\in\zmod{12}$ is the image of $\beta$ under the composition
    \[\fhom^1(\meX,\mu_{12})\to\fhom^1(\meX,\G_m)=\Pic\meX\onto\Pic\meX/\Pic\A^1_S\iso\zmod{12},\]
    where, as in \cref{lem:Y(1)-R1}, $\Pic\meX/\Pic\A^1_S\iso\zmod{12}$ sends the Hodge bundle $[\msL]$ to $1$.
\end{rem}
Recalling the earlier defined $\sigma=[\meX(\sqrt[12]{\Delta})\to\meX]\in\fhom^1(\meX,\mu_{12})$, one consequence of \cref{rem:chi'-comp-Y(1)} is that $\chi'(\sigma)=\id_{\zmod{12}}$ so \cref{es:Y(1)-mu12} is short exact.
\begin{prop}\label{prop:Y(1)-splitting}
    The map $s$ of \cref{map:s-Y(1)} gives a right-splitting of \cref{ses:BrY(1)}. In particular, we have an isomorphism
    \[\pull c\oplus s\colon\hom^2(\A^1_S,\G_m)\oplus\hom^1(S,\zmod{12})\iso\hom^2(\meY(1)_S,\G_m)\]
    for any regular noetherian $\Z[1/2]$-scheme $S$.
\end{prop}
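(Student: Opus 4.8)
The plan is to imitate the proof of \cref{prop:BG-Br-split} essentially verbatim, with the distinguished triangle \cref{tri:Y(1)} playing the role of \cref{tri:BG}, the sheaf $\ul{\zmod{12}}$ playing the role of $\dual G$, and the class $\sigma\in\fhom^1(\meX,\mu_{12})$ fixed above (the class of the $\mu_{12}$-torsor of $12$th roots of $\Delta$) playing the role of the universal torsor. As in \cref{sect:split-BG}, everything takes place in the fppf topology, which is harmless since $\hom^2(-,\G_m)$ agrees whether computed on the étale or the fppf site.

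First I would set up the identifications. Applying $\hom^1(S,-)$ to \cref{tri:Y(1)} yields an exact sequence
\[\hom^1(S,\msD)\xtoo{\pull c}\hom^1(S,\msC)\xtoo{\push\rho}\hom^1(S,\ul{\zmod{12}})\too\hom^2(S,\msD),\]
where $\rho\colon\msC\to\ul{\zmod{12}}$ is the third arrow of \cref{tri:Y(1)}; by construction of $\msC$ and $\msD$ together with \cref{lem:Y(1)-R0,lem:Y(1)-R1,lem:R2g=R2f}, this is precisely the sequence \cref{ses:BrY(1)} of \cref{thm:BrY(1)} (both are extracted from the same comparison of Leray spectral sequences, using $\hom^1(S,\msC)\simeq\hom^2(\meX,\G_m)$ and $\hom^1(S,\msD)\simeq\hom^2(\A^1_S,\G_m)$), so in particular the quotient map $r\colon\hom^2(\meY(1)_S,\G_m)\onto\hom^1(S,\zmod{12})$ in \cref{ses:BrY(1)} is identified with $\push\rho$. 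Next, let $\Sigma\in\Hom_S(\ul{\zmod{12}},\msC)$ correspond to $\sigma$ under the chain of isomorphisms preceding \cref{es:Y(1)-mu12}; then $\chi(\Sigma)=\chi'(\sigma)=\id_{\ul{\zmod{12}}}$, the last equality being \cref{rem:chi'-comp-Y(1)}.

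Now I would reproduce the commutative diagram appearing in the proof of \cref{prop:BG-Br-split} (whose commutativity comes from \cite[Proposition V.1.20]{milne-et}): it relates, on the one hand, the pairing $\hom^1(S,\ul{\zmod{12}})\by\Hom_S(\ul{\zmod{12}},\msC)\to\fhom^1(S,\msC)\simeq\hom^2(\meX,\G_m)$ followed by $\push\rho$, and, on the other, the cup product $\pull f(-)\smile\sigma$ followed by $r$. Evaluating this diagram on an arbitrary $\alpha\in\hom^1(S,\zmod{12})$ gives
\[r(s(\alpha))=r\p{\pull f\alpha\smile\sigma}=\push\rho\p{\push\Sigma(\alpha)}=\ppush{\rho\circ\Sigma}(\alpha)=\ppush{\chi(\Sigma)}(\alpha)=\alpha,\]
so $s$ is a right-splitting of \cref{ses:BrY(1)}. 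Since \cref{ses:BrY(1)} is short exact with left term $\hom^2(\A^1_S,\G_m)$ included via $\pull c$, the existence of a right-splitting immediately yields that $\pull c\oplus s$ is an isomorphism, proving the proposition.

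The step I expect to be the main obstacle is the bookkeeping in the first of the paragraphs above: one must check carefully that the long exact sequence produced by applying $\hom^1(S,-)$ to \cref{tri:Y(1)} coincides with \cref{ses:BrY(1)} (in particular that $r=\push\rho$ under $\hom^1(S,\msC)\simeq\hom^2(\meX,\G_m)$), and that the map $\chi$ obtained by applying $\Hom_S(\ul{\zmod{12}},-)$ to \cref{tri:Y(1)} matches the map $\chi'$ of \cref{es:Y(1)-mu12} under the identifications $\Hom_S(\ul{\zmod{12}},\msC)\simeq\fhom^1(\meX,\mu_{12})$ and $\Hom_S(\ul{\zmod{12}},\msD)\simeq\fhom^1(\A^1_S,\mu_{12})$. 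These compatibilities are formal but involve chaining together the several adjunctions used in the derivation of \cref{es:Y(1)-mu12}; once they are in hand, the remainder is a line-by-line transcription of \cref{sect:split-BG}.
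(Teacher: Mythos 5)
Your proposal is correct and follows essentially the same route as the paper's own proof: the paper likewise identifies \cref{ses:BrY(1)} with the sequence obtained by applying $\fhom^1(S,-)$ to \cref{tri:Y(1)}, invokes the same commutative diagram from \cite[Proposition V.1.20]{milne-et} as in \cref{prop:BG-Br-split}, and concludes via $r(s(\alpha))=\push{\chi'(\sigma)}(\alpha)=\alpha$ using $\chi'(\sigma)=\id_{\ul{\zmod{12}}}$. The bookkeeping compatibilities you flag as the main obstacle are exactly the ones the paper treats as formal consequences of the construction of $\msC$ and of \cref{es:Y(1)-mu12}, so nothing further is needed.
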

\begin{proof}
    Note that the short exact sequence \cref{ses:BrY(1)} is the exact sequence obtained by applying $\fhom^1(S,-)$ to the distinguished triangle \cref{tri:Y(1)}. Write $\rho\colon\msC\to\ul{\zmod{12}}$ for the map appearing in \cref{tri:Y(1)}. Consider the commutative diagram (commutativity follows from \cite[Proposition V.1.20]{milne-et})
    \[\begin{tikzcd}
        \hom^1(S,\zmod{12})\by\Hom_S(\zmod{12},\msC)\ar[d, "\sim" sloped]\ar[r]&\fhom^1(S,\msC)\ar[d, "\sim" sloped]\ar[r, "\push\rho"]&\hom^1(S,\zmod{12})\ar[d, equals]\\
        \hom^1(S,\zmod{12})\by\fhom^1(\meX,\mu_{12})\ar[r, "\pull f(-)\cup(-)"]&\hom^2(\meX,\G_m)\ar[r, "r"]&\hom^1(S,\zmod{12}).
    \end{tikzcd}\]
    Let $\Sigma\colon\zmod{12}\to\msC$ be the map corresponding to $\sigma\in\fhom^1(\meX,\mu_{12})\simeq\Hom_S(\zmod{12},\msC)$. Recalling that $\chi'(\sigma)=\id_{\zmod{12}}$, commutativity shows that, for $\alpha\in\hom^1(S,\zmod{12})$
    \[r(s(\alpha))=r(\pull f\alpha\cup\sigma)=\push\rho(\push\Sigma(\alpha))=\ppush{\rho\circ\sigma}(\alpha)=\push{\chi'(\sigma)}(\alpha)=\id_{\zmod{12},*}(\alpha)=\alpha.\qedhere\]
\end{proof}

\subsection{The Brauer group of $\meY_0(2)$}\label{sect:Y0(2)-example}
Let $\meY_0(2)/\Z[1/2]$ denote the moduli stack of elliptic curves equipped with an (\'etale) subgroup of order $2$. Let $S/\Z[1/2]$ be any regular noetherian $\Z[1/2]$-scheme. We will compute $\Br\meY_0(2)_S$, generalizing the main theorem of \cite{achenjang2024brauer}. 
\begin{set}
    Set $\meX\coloneqq\meY_0(2)_S\xtoo fS$. As shown in \cite[Section 3]{achenjang2024brauer}, $\meX$ is a tame stack with coarse moduli space $c\colon\meX\to\A^1_S\sm\{0\}=\Spec_S\msO_S[s,\inv s]\eqqcolon X$ given by the ``$s$-invariant'' of \cite[Section 3]{achenjang2024brauer}. Let $g\colon X\to S$ be the structure map, so we have a commutative triangle
    \[\mapover\meX cXfg{S.}\]
    Note that we use `$s$' to denote the coordinate on $X=\A^1_S\sm\{0\}$.
\end{set}
\begin{rem}
    It was shown in \cite[Lemma 7.5]{achenjang2024brauer} that $\meX$ is a $\mu_2$-gerbe over $\A^1\sm\{0,-1/4\}$ while points above $-1/4$ has automorphism group $\mu_4$. In particular, $\meX$ is locally Brauerless.
\end{rem}
As is maybe routine by now, we will compute $\hom^2(\meX,\G_m)$ by leveraging the Leray spectral sequence
\begin{equation}
    E_2^{ij}=\hom^i(S,\homR^j\push f\G_m)\implies\hom^{i+j}(\meX,\G_m).
\end{equation}
\begin{lemma}
    $\push f\G_m=\push g\G_m\fiso\G_m\oplus\ul\Z$.
\end{lemma}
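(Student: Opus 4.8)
The claimed isomorphism splits into two independent pieces, and I would handle them in turn. First, identify $\push f\G_m$ with $\push g\G_m$ using that $X = \A^1_S\sm\{0\}$ is the coarse space of $\meX$; then compute $\push g\G_m$ directly on stalks over $S$, where it becomes the elementary computation of units in a Laurent polynomial ring.

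For the first piece: since $f = g\circ c$ we have $\push f\G_{m,\meX} = \push g\p{\push c\G_{m,\meX}}$, so it suffices to show $\push c\G_{m,\meX}\simeq\G_{m,X}$ as \'etale sheaves on $X$. For any \'etale $U\to X$, the base change $\meX\by_XU\to U$ is again a coarse moduli map (formation of coarse spaces commutes with flat, hence \'etale, base change in the tame setting; cf. \cite[Corollary 3.3(a)]{AOV}), so $\Gamma(\meX\by_XU,\msO)=\Gamma(U,\msO)$ as rings and therefore $\Gamma(\meX\by_XU,\msO^\times)=\Gamma(U,\msO^\times)=\G_m(U)$. This gives $\push c\G_{m,\meX}=\G_{m,X}$ and hence $\push f\G_m=\push g\G_m$.

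For the second piece: write $X=\A^1_S\sm\{0\}=\Spec_S\msO_S[s,s^{-1}]$, so that $s$ is a global unit on $X$ and defines a morphism $\ul\Z\to\push g\G_m$ sending $1\mapsto s$. Combined with the pullback $\G_m\into\push g\G_m$ along the structure map, this yields a map $\G_m\oplus\ul\Z\to\push g\G_m$, which I claim is an isomorphism; since the issue is local on $S$, it suffices to check this on stalks. As $X$ is affine over $S$, the stalk of $\push g\G_m$ at a geometric point $\bar s\to S$ is $\G_m\p{\msO_{S,\bar s}[s,s^{-1}]}=\msO_{S,\bar s}[s,s^{-1}]^\times$. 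Because $S$ is regular, $\msO_{S,\bar s}$ is a reduced local ring, and for any such ring $R$ one has $R[s,s^{-1}]^\times = R^\times\times s^{\Z}$; thus the stalk is $\msO_{S,\bar s}^\times\oplus\Z$, matching $(\G_m\oplus\ul\Z)_{\bar s}$ under the map just constructed.

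I do not anticipate a genuine obstacle here: the two inputs — commutation of coarse spaces with \'etale base change (already used repeatedly in this section) and the description of units in a Laurent polynomial ring over a reduced ring — are both standard, and the only care needed is to keep the two summands straight (the $\ul\Z$ being generated by the fixed coordinate $s$ on $X$, which also pins down the identification compatibly with later uses of it).
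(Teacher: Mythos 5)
Your proof is correct and follows essentially the same route as the paper's: the first identification comes from $X$ being the coarse space of $\meX$, and the second is the stalkwise computation $\units{\msO_{S,\bar s}[s,\inv s]}\cong\units{\msO_{S,\bar s}}\oplus s^{\Z}$, valid because the (strictly henselian) local rings of the regular scheme $S$ are reduced with connected spectrum. The extra detail you supply — explicitly invoking \cite[Corollary 3.3(a)]{AOV} for base change of coarse spaces and constructing the map $\G_m\oplus\ul\Z\to\push g\G_m$ via the coordinate $s$ before checking on stalks — is consistent with what the paper leaves implicit.
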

\begin{proof}
    The equality $\push f\G_m=\push g\G_m$ holds simply because $X$ is $\meX$'s coarse space. The isomorphism $\push g\G_m\simeq\G_m\oplus\ul\Z$ holds because $S$ is regular and so, at the level of stalks over a geometric point $\bar x\to S$, one has
    \[\G_m(\msO_{S,\bar x})\oplus s^\Z\iso\units{\msO_{S,\bar x}[s,\inv s]}=\G_m(\A^1_{\msO_{S,\bar x}}\sm\{0\}).\qedhere\]
\end{proof}
\begin{lemma}\label{lem:Y0(2)-R1}
    $\homR^1\push f\G_m\simeq\ul{\zmod4}$ (sending the class of the Hodge bundle on $\meX$ to $1\in\zmod4$) while $\homR^1\push g\G_m=0$.
\end{lemma}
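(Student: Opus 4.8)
The plan is to check both claims stalk-by-stalk over $S$. Fix a geometric point $\bar s\to S$ and set $R\coloneqq\msO_{S,\bar s}$; since $S$ is a regular noetherian $\Z[1/2]$-scheme, $R$ is a strictly henselian, regular, local $\Z[1/2]$-algebra. Write $X_R\coloneqq\A^1_R\sm\{0\}$, $\meX_R\coloneqq\meY_0(2)_R$, and $c_R\colon\meX_R\to X_R$ for the coarse space map. As higher direct images commute with the base change to a strict henselization, $(\homR^1\push g\G_m)_{\bar s}\simeq\Pic(X_R)$ and $(\homR^1\push f\G_m)_{\bar s}\simeq\Pic(\meX_R)$, so it suffices to compute these Picard groups together with a compatible generator of the latter.

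Since $R$ is regular local it is a UFD, hence so is $R[s,\inv s]$, so $\Pic(X_R)=0$; as every stalk vanishes, $\homR^1\push g\G_m=0$. For $\homR^1\push f\G_m$ I would first bound $\Pic(\meX_R)$ from above. By \cite[Section 7.1]{achenjang2024brauer} the rigidification $\meY\coloneqq\meX_R\thickslash\mu_2$ is the root stack $\sqrt[2]{-1/4/X_R}$ and the natural map $\pi\colon\meX_R\to\meY$ is a $\ul{\zmod2}$-gerbe. Applying \cref{lem:two-step-push-es} to $c_R=\rho\circ\pi$ (with $\rho\colon\meY\to X_R$ the coarse space map of $\meY$), using \cref{lem:root-R1} to identify $\homR^1\push\rho\G_m\simeq\ul{\zmod2}_{-1/4}$ and \cref{lem:root-R2} to see $\homR^2\push\rho\G_m=0$, and identifying $\mu_2\simeq\ul{\zmod2}$ over the $\Z[1/2]$-algebra $R$, one obtains the exact sequence
\[0\too\ul{\zmod2}_{-1/4}\too\homR^1\push{c_R}\G_m\too\ul{\zmod2}\too0\]
of \'etale sheaves on $X_R$; this is the sequence \cref{ses:R1-Y0(2)} whose construction was deferred in \cref{ex:Y0(2)}. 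Taking global sections over the connected scheme $X_R$ and using that $R$ is strictly henselian shows $\hom^0(X_R,\homR^1\push{c_R}\G_m)$ is an extension of $\zmod2$ by $\zmod2$, hence has order $4$; and the Leray spectral sequence for $c_R$ together with $\Pic(X_R)=0$ gives an injection $\Pic(\meX_R)\into\hom^0(X_R,\homR^1\push{c_R}\G_m)$, so $\Pic(\meX_R)$ has order at most $4$.

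To pin down the order exactly, I would use the point $s=-1/4$, whose automorphism group in $\meX_R$ is $\mu_4$ \cite[Lemma 7.5]{achenjang2024brauer}: its residual gerbe in $\meX_R$ is $B\mu_{4,R}$ by \cref{lem:H2(R G)}, and restricting the Hodge bundle $\msL$ along $B\mu_{4,R}\into\meX_R$ produces the weight-one character $\mu_4\into\G_m$, which by \cref{lem:BG-Gm-stalk} is a generator of $\Pic(B\mu_{4,R})\simeq\dual{\mu_4}(R)\simeq\zmod4$. Thus $[\msL]$ has order divisible by $4$, and being an element of a group of order at most $4$ it has order exactly $4$ and generates $\Pic(\meX_R)\simeq\zmod4$. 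Globalizing, $4[\msL]=0$ as a section of $\homR^1\push f\G_m$ (it vanishes in every stalk), so $[\msL]$ defines a morphism $\ul{\zmod4}\to\homR^1\push f\G_m$, $1\mapsto[\msL]$; the stalk computation just made shows it is an isomorphism on each stalk, hence an isomorphism of sheaves identifying the Hodge bundle with $1$. The one nonformal input---which I expect to be the main point to verify carefully---is that $\msL$ restricts to an order-$4$ character on the residual $\mu_4$-gerbe; this is a short explicit computation with the automorphisms of the curve $y^2=x^3-x$ carrying an order-$2$ subgroup, in the spirit of \cite[Section 7]{achenjang2024brauer}.
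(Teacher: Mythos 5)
Your argument is correct, but it takes a genuinely different route from the paper. The paper's proof of the first claim is a one-line citation of \cite[Theorem A.3]{achenjang2024brauer}, which asserts directly that $\Pic\meY_0(2)_T\simeq\Pic X_T\oplus\zmod4$ for \emph{every} $T/S$, with the $\zmod4$ generated by the Hodge bundle; the sheaf statement is then immediate. You instead re-derive the stalk computation internally to the paper's machinery: the upper bound $\#\Pic(\meX_R)\le4$ via \cref{lem:two-step-push-es} together with \cref{lem:root-R1,lem:root-R2} and the Leray injection $\Pic(\meX_R)\into\hom^0(X_R,\homR^1\push{c_R}\G_m)$, and the lower bound by restricting the Hodge bundle to the $\mu_4$-point at $s=-1/4$ and checking it gives a faithful character. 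This is exactly the computation the paper sketches in \cref{ex:Y0(2)} (``One can check that the Hodge bundle \dots generates \dots'') but does not spell out, so your proof fills in a gap the paper outsources to a citation; the trade-off is that the cited theorem works for arbitrary $T/S$, whereas your stalkwise argument uses regularity of $S$ (to get $\Pic(X_R)=0$), which is harmless here since the lemma already assumes it. Two small points to tighten: (i) the class you restrict to is more precisely the \emph{reduced} fiber of $\meX_R$ over the section $s=-1/4$, which is a gerbe banded by the extension $\mu_4$ of $\mu_2$ by $\mu_2$ and is trivialized by \cref{lem:H2(R G)}; passing from the non-reduced fiber to its reduction costs nothing for Picard groups by \cref{lem:tame-red-Gm}, but this should be said. (ii) The ``nonformal input'' you flag does need to be carried out: for $E\colon y^2=x^3-x$ with $C=\langle(0,0)\rangle$, the automorphism $(x,y)\mapsto(-x,iy)$ pulls back $dx/y$ to $i\cdot dx/y$, so the Hodge bundle restricts to a faithful, hence order-$4$, character of $\mu_4$, completing your lower bound. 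Your treatment of $\homR^1\push g\G_m=0$ coincides with the paper's.
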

\begin{proof}
    The first is a consequence of \cite[Theorem A.3]{achenjang2024brauer} which shows that $\Pic\meY_0(2)_T\simeq\Pic X_T\oplus\zmod4$ (with the second factor generated by the image of the Hodge bundle) for any $T/S$. The second claim can be verified on stalks; since $S$ is regular, $\Pic X_{\msO_{S,\bar x}}=0$ for any geometric point $\bar x\to S$.
\end{proof}
\begin{lemma}\label{lem:Y0(2)-R2}
    There is an exact sequence
    \[0\too\homR^2\push g\G_m\too\homR^2\push f\G_m\too\ul{\zmod2}\too0.\]
\end{lemma}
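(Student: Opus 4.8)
The plan is to run the Grothendieck (Leray) spectral sequence for the factorization $f = g\circ c$, namely $E_2^{ij} = \homR^i\push g(\homR^j\push c\G_m)\implies\homR^{i+j}\push f\G_m$. Since $X$ is the coarse space of $\meX$ we have $\push c\G_m\simeq\G_m$, and since $\meX = \meY_0(2)_S$ is tame with all geometric automorphism groups equal to $\mu_2$ or $\mu_4$ — hence Brauerless by \cref{ex:mun-bruaerless} — \cref{thm:R2Gm-vanishes} gives $\homR^2\push c\G_m = 0$, so $E_2^{0,2} = 0$. The resulting five-term sequence and the filtration on $H^2$ then read
\[0\to\homR^1\push g\G_m\to\homR^1\push f\G_m\to\push g\homR^1\push c\G_m\xto{d_2^{0,1}}\homR^2\push g\G_m\]
and
\[0\to\coker\p{d_2^{0,1}}\to\homR^2\push f\G_m\to\ker\!\big(d_2^{1,1}\colon\homR^1\push g\homR^1\push c\G_m\to\homR^3\push g\G_m\big)\to0.\]
So the lemma follows once I show (a) $d_2^{0,1} = 0$, so that $\coker(d_2^{0,1}) = \homR^2\push g\G_m$, and (b) $\homR^1\push g\homR^1\push c\G_m\simeq\ul{\zmod2}$ with $d_2^{1,1} = 0$.

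First I would pin down $\homR^1\push c\G_m$, $\push g\homR^1\push c\G_m$, and $\homR^1\push g\homR^1\push c\G_m$. Writing $\meY\coloneqq\meX\thickslash\mu_2$ — which is the root stack $\sqrt[2]{-1/4\,/X}$ by \cite[Section 7.1]{achenjang2024brauer} — with $\rho\colon\meY\to X$ and $\pi\colon\meX\to\meY$ the residual $\mu_2$-gerbe, \cref{lem:two-step-push-es} together with \cref{lem:root-R1} (giving $\homR^1\push\rho\G_m\simeq i_{-1/4,*}\ul{\zmod2}$) and \cref{lem:root-R2} (giving $\homR^2\push\rho\G_m = 0$) yields the exact sequence
\[0\too\ul{\zmod2}_{-1/4}\too\homR^1\push c\G_m\too\ul{\zmod2}\too0\]
of \'etale sheaves on $X$; this is \cref{ses:R1-Y0(2)}, whose deduction was deferred from \cref{ex:Y0(2)}. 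Applying $\homR\push g$ and using that $g\circ i_{-1/4} = \id_S$ (so $i_{-1/4,*}\ul{\zmod2}$ is $g$-acyclic with $\push g\p{i_{-1/4,*}\ul{\zmod2}} = \ul{\zmod2}$) and that $\push g\ul{\zmod2}_X = \ul{\zmod2}$ (the fibres of $g$ are geometrically connected), I get both $0\to\ul{\zmod2}\to\push g\homR^1\push c\G_m\to\ul{\zmod2}\to0$ and $\homR^1\push g\homR^1\push c\G_m\simeq\homR^1\push g\ul{\zmod2}$. The first of these, combined with the injection $\homR^1\push f\G_m\into\push g\homR^1\push c\G_m$ coming from $\homR^1\push g\G_m = 0$ and the identification $\homR^1\push f\G_m\simeq\ul{\zmod4}$ (both in \cref{lem:Y0(2)-R1}), forces $\ul{\zmod4}\iso\push g\homR^1\push c\G_m$ by a stalkwise order count, whence $d_2^{0,1} = 0$. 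For $\homR^1\push g\ul{\zmod2}$, since $1/2\in\msO_S$ I may replace $\ul{\zmod2}$ by $\mu_2$, and the Kummer sequence $1\to\mu_2\to\G_m\xto{2}\G_m\to1$ together with $\homR^1\push g\G_m = 0$ identifies $\homR^1\push g\mu_2$ with $\coker\!\big([2]\colon\push g\G_m\to\push g\G_m\big)$; plugging in $\push g\G_m\simeq\G_m\oplus\ul\Z$ and $\G_m/2 = 0$ (as \'etale sheaves) gives $\ul\Z/2\ul\Z = \ul{\zmod2}$. So $\homR^1\push g\homR^1\push c\G_m\simeq\ul{\zmod2}$.

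It remains to show $d_2^{1,1} = 0$, which I expect to be the one genuinely delicate point. Its image lies in $(\homR^3\push g\G_m)[2]$, and by the Kummer sequence this $2$-torsion is a quotient of $\homR^3\push g\mu_2$; so it suffices to see that the stalks $\hom^3(\A^1_R\sm\{0\},\mu_2)$ vanish for every strictly henselian local $\Z[1/2]$-algebra $R$ (which is automatically regular here, $S$ being regular). But this vanishing is exactly what was already established inside the proof of \cref{ex:Y0(2)} — via Gabber's absolute purity applied to $\{0,\infty\}\into\P^1_R$, the long exact sequence of cohomology with supports, proper base change, and $\hom^3(\P^1_k,\mu_2) = 0$ — so I can simply invoke it. Hence $\homR^3\push g\G_m$ has no $2$-torsion, $d_2^{1,1} = 0$, $\ker(d_2^{1,1}) = \homR^1\push g\homR^1\push c\G_m\simeq\ul{\zmod2}$, and the second displayed sequence becomes the asserted
\[0\too\homR^2\push g\G_m\too\homR^2\push f\G_m\too\ul{\zmod2}\too0.\]
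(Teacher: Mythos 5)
Your proposal is correct and takes essentially the same route as the paper: the paper reduces to stalks over a strictly local base and invokes \cref{ex:Y0(2)} (whose ingredients — the sequence \cref{ses:R1-Y0(2)} obtained from \cref{lem:two-step-push-es}, the $\zmod4$ coming from the Picard group of $\meY_0(2)$, and the vanishing of $\hom^3(\A^1_R\sm\{0\},\mu_2)$ via purity — are exactly the ones you use), whereas you package the identical argument as the sheaf-level Grothendieck spectral sequence for $f=g\circ c$, whose stalk at a geometric point of $S$ is the paper's spectral sequence. The only cosmetic difference is that you kill $d_2^{0,1}$ by a stalkwise order count against the injection $\ul{\zmod4}\simeq\homR^1\push f\G_m\into\push g\homR^1\push c\G_m$ rather than by observing that the Hodge bundle generates $\hom^0(X,\homR^1\push c\G_m)$; both rest on the same input from \cite{achenjang2024brauer}.
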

\begin{proof}
    As usual, we verify this on the level of stalks, so assume $S=\spec R$ is a noetherian, regular strictly local $\Z[1/2]$-scheme. Then, \cref{ex:Y0(2)} already computed that we have an exact sequence
    \[0\too\hom^2(\A^1_R\sm\{0\},\G_m)\too\hom^2(\meY_0(2)_R,\G_m)\too\zmod2\too0,\]
    \important{except} it did not establish the existence of the exact sequence \cref{ses:R1-Y0(2)}. This exact sequence is obtained by applying \cref{lem:two-step-push-es} to $\meX$ with $\meY=\meX\thickslash\mu_2$. With \cref{ses:R1-Y0(2)} established, we complete the argument of \cref{ex:Y0(2)} and so of the present lemma.
\end{proof}
\begin{prop}\label{prop:this-effing-Zmod2}
    The exact sequence of \cref{lem:Y0(2)-R2} is split, so $\homR^2\push f\G_m\simeq\homR^2\push g\G_m\oplus\ul{\zmod2}$.
\end{prop}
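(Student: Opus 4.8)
The plan is to produce an explicit $2$-torsion class on $\meX=\meY_0(2)_S$ realizing the extra $\ul{\zmod2}$, in the spirit of the right-splittings constructed in \cref{sect:split-BG} and \cref{prop:Y(1)-splitting}. First I would reduce the statement to the construction of a single global class. It suffices to exhibit $\beta\in\hom^2(\meX,\G_m)$ with $2\beta=0$ whose image in the stalk $(\homR^2\push f\G_m)_{\bar s}\cong\hom^2(\meY_0(2)_{\msO_{S,\bar s}},\G_m)$, for every geometric point $\bar s\to S$, is not pulled back along $c$ from $\hom^2(\A^1_{\msO_{S,\bar s}}\sm\{0\},\G_m)$ — equivalently, maps to the nonzero element of the group $\zmod2$ of \cref{ex:Y0(2)}. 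Given such a $\beta$, the assignment $1\mapsto[\beta]$ is a morphism $\ul{\zmod2}\to\homR^2\push f\G_m$ of \'etale sheaves on $S$ splitting the surjection of \cref{lem:Y0(2)-R2}. Because the formation of $\homR^2\push f\G_m$, of $\homR^2\push g\G_m$, and of the exact sequence relating them commutes with base change on $S$ (this is what the stalk-wise proof of \cref{lem:Y0(2)-R2} really establishes), it is enough to carry out the construction once over $S=\spec\Z[1/2]$ and pull back.

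For the construction I would restrict to the dense open substack $\meX^\circ\coloneqq\inv c(X^\circ)$, where $X^\circ\coloneqq X\sm\{s=-1/4\}$. Over $X^\circ$ the map $c^\circ\colon\meX^\circ\to X^\circ$ is a $\mu_2$-gerbe (\cite[Lemma 7.5]{achenjang2024brauer}), and it is neutral — a trivializing section is given by an explicit Weierstrass family of elliptic curves with $2$-subgroup over $X^\circ$ — so $\meX^\circ\cong B\mu_{2,X^\circ}$. By \cref{prop:BG-Br-split} there is a universal $\mu_2$-torsor $\sigma\in\hom^1(\meX^\circ,\mu_2)$ for which $\alpha\mapsto\pull{(c^\circ)}\alpha\smile\sigma$ right-splits $0\to\hom^2(X^\circ,\G_m)\to\hom^2(\meX^\circ,\G_m)\to\hom^1(X^\circ,\dual{\mu_2})\to0$. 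Set $\beta^\circ\coloneqq\pull{(c^\circ)}\langle s\rangle\smile\sigma$, where $\langle s\rangle\in\hom^1(X^\circ,\mu_2)=\hom^1(X^\circ,\dual{\mu_2})$ is the class of the $\mu_2$-torsor $\{t^2=s\}$ (note $s$ is a unit on all of $X=\A^1_S\sm\{0\}$). Then $\beta^\circ$ is $2$-torsion, and by construction its image under $\hom^2(\meX^\circ,\G_m)\onto\hom^1(X^\circ,\dual{\mu_2})$ is $\langle s\rangle\neq0$; comparing with the Leray spectral sequence for $c$ as in \cref{ex:Y0(2)} (and using that $\homR^1\push c\G_m$ restricts to $\dual{\mu_2}$ on $X^\circ$), this shows $\beta^\circ$ is not pulled back from $X^\circ$ and in fact maps to the generator of the relevant $\zmod2$ at every geometric point of $S$.

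The remaining point — and the step I expect to be the main obstacle — is to show that $\beta^\circ$ lies in the image of the injection $\hom^2(\meX,\G_m)\into\hom^2(\meX^\circ,\G_m)$ of \cref{prop:brauer-injects}, i.e. that $\beta^\circ$ is unramified at the $\mu_4$-point $s=-1/4$, so that it is the restriction of a (then automatically $2$-torsion, and stalk-wise correct) class $\beta\in\hom^2(\meX,\G_m)$. One cannot extend $\sigma$ itself to a $\mu_2$-torsor on $\meX$, since $\meX\to X$ is not a gerbe near $-1/4$; instead I would rewrite $\beta^\circ$ in terms of data that does extend. Concretely, let $\msL$ be the Hodge bundle, so $[\msL]$ generates $\Pic\meX_T/\Pic X_T\simeq\zmod4$ (\cref{lem:Y0(2)-R1}); fixing a trivialization $\theta$ of $\msL^{\otimes4}$ gives a $\mu_4$-torsor $\langle\sqrt[4]{\theta}\rangle\in\hom^1(\meX,\mu_4)$, and I would take $\beta\coloneqq\langle s\rangle\smile\langle\sqrt[4]{\theta}\rangle$, the image in $\hom^2(\meX,\G_m)$ of a cup product into $\hom^2(\meX,\mu_4\otimes\mu_4)$; this is globally defined and killed by $2$ (since $\langle s\rangle$ is $2$-torsion), and the work is to check that $\beta\vert_{\meX^\circ}$ and $\beta^\circ$ have the same image in $\hom^1(X^\circ,\dual{\mu_2})$ — which would follow once one sees that $\langle\sqrt[4]{\theta}\rangle\vert_{\meX^\circ}$ and $\sigma$ induce the same character $\mu_2\to\G_m$ on the fibre of $\msL$. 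Alternatively, one can avoid the extension question altogether by repeating the derived-category argument of \cref{prop:Y(1)-splitting} verbatim: work with $\msC\coloneqq(\tau_{\le2}\fhomR\push f\G_m)[1]$ and the analogous object for the intermediate root stack $\meY=\meX\thickslash\mu_2$, extract the distinguished triangle whose third term accounts for the $\ul{\zmod2}$, apply $\Hom(\ul{\zmod2},-)$, and exhibit $\langle s\rangle\smile\langle\sqrt[4]{\theta}\rangle$ as the class mapping to $\id$ under the resulting map $\chi'$.
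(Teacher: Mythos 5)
Your proposal follows the same architecture as the paper's proof: reduce the splitting to exhibiting a single global $2$-torsion class whose image in the quotient $\ul{\zmod2}$ is the generator at every geometric point, take that class to be (essentially) $[s]\cup\langle\sqrt[4]\theta\rangle$ with $\langle\sqrt[4]\theta\rangle$ the $\mu_4$-torsor of fourth roots of a trivialization of $\msL^{\otimes4}$, and verify nontriviality by a computation on a trivialized $\mu_2$-gerbe over a scheme. The one genuine difference of route is where you verify: you restrict to the open gerbe locus $\meX^\circ=\inv c(X^\circ)$ and invoke \cref{prop:BG-Br-split} there, whereas the paper pulls back along the degree-$3$ \'etale cover $\meY(2)_R\to\meY_0(2)_R$ and computes on $B\ul{\zmod2}_{Y(2)}$. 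Your reduction and your construction of $\beta^\circ=\pull{(c^\circ)}\langle s\rangle\smile\sigma$, with image $\langle s\rangle\neq0$ in $\hom^1(X^\circ,\dual{\mu_2})$, are both correct, and you have correctly isolated the crux: matching a globally defined class against $\beta^\circ$.

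That matching step is where the gap is, and as stated it fails rather than merely being left open. Any bilinear pairing of the $2$-torsion class $\langle s\rangle$ (valued in $\mu_2$, equivalently in $2\zmod4\subset\zmod4=\dual{\mu_4}$) against a $\mu_4$-torsor factors through the pushforward of that torsor along $\mu_4\xrightarrow{\zeta\mapsto\zeta^2}\mu_2$, since $\zmod2\otimes\mu_4\cong\mu_4/\mu_2$; and over $\Z[1/2]$ the only nonzero pairing $\mu_4\otimes\mu_4\to\G_m$ is likewise $2$-torsion in each variable. For $\langle\sqrt[4]\theta\rangle$ this pushforward is the torsor of square roots of $\theta$ regarded as a trivialization of $(\msL^{\otimes2})^{\otimes2}$, and the generic stabilizer $\mu_2\subset\ul\Aut$ acts on $\msL^{\otimes2}$ -- hence on that torsor -- trivially, because $[-1]$ scales an invariant differential by $-1$ and so acts by $(-1)^2=1$ on $\msL^{\otimes2}$. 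Consequently $\beta\vert_{\meX^\circ}$ is pulled back from $X^\circ$ and has image $0$, not $\langle s\rangle$, in $\hom^1(X^\circ,\dual{\mu_2})$: your proposed sufficient condition conflates the character by which $\mu_2$ acts on the $\mu_4$-torsor (the inclusion $\mu_2\into\mu_4$, which is nontrivial) with the character that actually survives the cup product (its composite with squaring, which is trivial). Worse, the obstruction is structural rather than an artifact of this particular candidate: since $\G_m(\meX)=\G_m(X)$ and $\Pic\meX/\Pic X$ is cyclic of order $4$ generated by $[\msL]$, every class in $\hom^1(\meX,\mu_2)$ has associated line bundle lying in $\Pic X+\zmod2\cdot[\msL^{\otimes2}]$ and therefore restricts on $\meX^\circ$ to a torsor with trivial $\Hom(\mu_2,\mu_2)$-component, so no cup product of $\langle s\rangle$ against a globally defined $\mu_2$- or $\mu_4$-torsor can realize the splitting class; the same computation defeats your derived-category fallback, where the candidate would map to $0$ rather than $\id$ under $\chi'$. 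You should be aware that the paper's own verification hinges on exactly the identity your comparison requires (the step $[t]\cup\phi(\sigma')=\kappa_2(t)\cup\sigma'$ in its proof), and with the standard pairing $\zmod4\otimes\mu_4\to\G_m$ the left-hand side of that identity vanishes; so this crux genuinely requires a new idea (for instance, a class built from the full $\mu_4$-Kummer class of $s$ rather than its $2$-torsion reduction), not a one-line reduction.
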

\begin{proof}
    Let $\msL$ denote the Hodge bundle on $\meX=\meY_0(2)_S$. Fix any trivialization $D\colon\msO_\meX\to\msL^4$ of the fourth power of $\msL$, and consider the $\mu_4$-torsor $\meX\p{\sqrt[4]D}\to\meX$ with class $\sigma\in\hom^1(\meX,\mu_4)$. Let $[s]\in\hom^1(\meX,\zmod4)$ denote the image of $s\in\G_m(\meX)=\G_m(X)$ under the composition
    \begin{equation}\label{comp:s-defn}
        \G_m(\meX)\to\G_m(\meX)/2\xto{\kappa_2}\hom^1(\meX,\mu_2)=\hom^1(\meX,\zmod2)\to\hom^1(\meX,\zmod4),
    \end{equation}
    where $\kappa_2$ is the Kummer map and the last map is induced from the usual inclusion $\zmod2\into\zmod4$. Define
    \begin{equation}\label{eqn:alpha-class-defn}
        \alpha\coloneqq[s]\cup\sigma\in\im\p{\hom^2(\meX,\mu_4)\to\hom^2(\meX,\G_m)}.
    \end{equation}
    This is the class of a cyclic algebra over $\meX$; see \cite[Section 2]{AM} for more details on relating $\alpha$ to cyclic algebras. Note that $\alpha$ is $2$-torsion because $[s]$ is. We claim that $\ul{\zmod2}\to\homR^2\push f\G_m$ sending $1$ to the class of $\alpha$ (i.e. its image under $\hom^2(\meX,\G_m)\to\hom^0(S,\homR^2\push f\G_m)$) is our desired splitting. To prove this, it suffices to assume that $S=\spec R$ is a regular strictly local $\Z[1/2]$-scheme and then to show that $\alpha\not\in\hom^2(X,\G_m)\subset\hom^2(\meX,\G_m)$, so this is what we do.

    Let $\meY\coloneqq\meY(2)_R$ denote the moduli stack of elliptic curves equipped with a basis for their $2$-torsion. Then, $\meY$ is a trivial $\mu_2$-gerbe over $Y\coloneqq\A^1_R\sm\{0,1\}=\spec R[t,\inv t,1/(t-1)]$, i.e. $\meY\simeq B\mu_{2,Y}$; see \cite[Proposition 4.5]{AM}. Furthermore, the coarse space map $\meY\to Y$ sends the elliptic curve $y^2=(x-e_1)(x-e_2)(x-e_3)$ -- with chosen basis $(e_1,0),(e_2,0)$ for its $2$-torsion -- to the point $t=\frac{e_3-e_1}{e_2-e_1}\in Y$ \cite[Corollary 4.6]{AM}. In addition, the natural map $\pi\colon\meY\to\meX$, remembering the subgroup generated by the first basis element, induces the map
    \[\pi_c=\frac t{(t-1)^2}\colon Y\too X\]
    on coarse spaces \cite[Lemma 3.10]{achenjang2024brauer}. As our last piece of setup, the isomorphism $\meY\simeq B\mu_{2,Y}$ allows us to compute, via \cite[Proposition 3.2]{AM} or \cref{prop:BG-Gm-coh}, that
    \[\hom^2(\meY,\G_m)\simeq\hom^2(Y,\G_m)\oplus\hom^1(Y,\mu_2)\tand\Pic\meY\simeq\Pic Y\oplus\mu_2(Y)\simeq\Pic Y\oplus\zmod2,\]
    with the above $\zmod2$ generated by the class of the Hodge bundle $\pull\pi\msL$. Consider the commutative square
    \[\commsquare{\hom^2(X,\G_m)}{\pull c}{\hom^2(\meX,\G_m)}{\pull\pi_c}{\pull\pi}{\hom^2(Y,\G_m)}{}{\hom^2(\meY,\G_m).}\]
    To finish the proof, it suffices to show that $\pull\pi\alpha\not\in\hom^2(Y,\G_m)\subset\hom^2(\meY,\G_m)$.
    
    To begin, note that $\pull\pi\msL^2\simeq\msO_\meY$ is already trivial and fix some trivialization $D'\colon\msO_\meY\to\msL^2$. Consider the $\mu_2$-torsor $\meY\p{\sqrt[2]{D'}}\to\meY$ with class $\sigma'\in\hom^1(\meY,\mu_2)$. Write $\phi\colon\hom^1(\meY,\mu_2)\to\hom^1(\meY,\mu_4)$ for the map induced by the natural inclusion $\mu_2\into\mu_4$. Then, $\phi(\sigma')$ and $\pull\pi\sigma$ has the same class when projected onto $\Pic(\meY)[4]=\Pic\meY$ (namely, the class of $\pull\pi\msL$), so, letting $\kappa_4$ denote the relevant Kummer map,
    \[\phi(\sigma')=\pull\pi\sigma+\kappa_4(\lambda)\tforsome\lambda\in\G_m(Y).\]
    At the same time, since $\pull\pi s=t/(t-1)^2\in\G_m(\meY)$ differs from $t$ by a square, the construction of $[s]$ via \cref{comp:s-defn} shows that $\pull\pi[s]=[t]$, where $[t]$ is similarly defined as the image of $t$ under the composition $\G_m(\meY)\xto{\kappa_2}\hom^1(\meY,\mu_2)=\hom^1(\meY,\zmod2)\to\hom^1(\meY,\zmod4)$. From this, we conclude that
    \[\pull\pi\alpha=\pull\pi[s]\cup\pull\pi\sigma=[t]\cup(\phi(\sigma')-\kappa_4(\lambda))=[t]\cup\phi(\sigma')-[t]\cup\kappa_4(\lambda)=\kappa_2(t)\cup\sigma'-[t]\cup\kappa_4(\lambda)\in\hom^2(\meY,\G_m).\]
    Since $[t]\cup\kappa_4(\lambda)$ is visibly in the subgroup $\hom^2(Y,\G_m)\subset\hom^2(\meY,\G_m)$, to show that $\pull\pi\alpha$ is \important{not} in this subgroup (and so to conclude the proof), it suffices to show that $\kappa_2(t)\cup\sigma'$ is \important{not} in this subgroup. For this, we first observe that $\sigma'$ is the class of a universal torsor on $\meY=B\mu_{2,Y}$; this follows from \cite[Proof of Corollary 4.6]{AM} where it was shown that one obtains a universal torsor by adjoining the square root of a trivialization of $\pull\pi\msL^2$. Given this, it now follows from \cite[Lemma 3.3]{AM} (or even \cref{prop:BG-Br-split}) that $\kappa_2(t)\cup\sigma'\not\in\hom^2(Y,\G_m)\subset\hom^2(\meY,\G_m)$ since it will have nonzero image under the projection map $\hom^2(\meY,\G_m)\onto\hom^1(Y,\mu_2)$. This completes the proof.
\end{proof}
\begin{rem}\label{rem:this-effing-Zmod2}
    The proof of \cref{prop:this-effing-Zmod2} shows that the summand $\ul{\zmod2}\subset\homR^2\push f\G_m$ is generated by the class
    \[\alpha=[s]\cup\sigma\in\hom^2(\meX,\G_m)\]
    (see \cref{eqn:alpha-class-defn}) which exists (and is nonzero) over every base $S$. We abuse notation by writing $\alpha\colon\hom^0(S,\zmod2)\to\hom^2(\meX,\G_m)$ also for the map sending $1$ to this class (separately over each connected component of $S$).
\end{rem}
\begin{thm}\label{thm:BrY0(2)-gen-1}
    There is an exact sequence
    \begin{equation}\label{ses:BrY0(2)-take-1}
        0\too\hom^2(X,\G_m)\oplus\hom^0(S,\zmod2)\xtoo{\pull c\oplus\alpha}\hom^2(\meX,\G_m)\too\hom^1(S,\zmod4)\too0.
    \end{equation}
\end{thm}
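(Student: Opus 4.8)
The plan is to compare the Leray spectral sequences of $f\colon\meX\to S$ and $g\colon X\to S$, in close analogy with the proof of \cref{thm:BrY(1)} (compare \cref{cd:Y(1)-SS-comp}). The inputs are the pushforward computations already in hand: $\push f\G_m=\push g\G_m$, $\homR^1\push f\G_m\simeq\ul{\zmod4}$, $\homR^1\push g\G_m=0$, and $\homR^2\push f\G_m\simeq\homR^2\push g\G_m\oplus\ul{\zmod2}$ (\cref{lem:Y0(2)-R1,lem:Y0(2)-R2,prop:this-effing-Zmod2}), with the last summand generated by the class $\alpha=[s]\cup\sigma$ of \cref{rem:this-effing-Zmod2}. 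We also record that $\meX(S)\neq\emptyset$ — the elliptic curve $y^2=x(x-1)(x-2)$ over $\Z[1/2]$, together with the order-two subgroup generated by $(0,0)$, gives a point — and that $X(S)\neq\emptyset$.

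First I would check that every differential of $E_2^{ij}=\hom^i(S,\homR^j\push f\G_m)\implies\hom^{i+j}(\meX,\G_m)$ with target in the rows $j\le1$ vanishes. The differential $\d_2^{0,1}\colon\hom^0(S,\zmod4)\to\hom^2(S,\push f\G_m)$ vanishes because, under the identification of \cref{lem:Y0(2)-R1}, the generator of $\hom^0(S,\homR^1\push f\G_m)$ is the class of the Hodge bundle on $\meX$, which lifts to $\Pic\meX=\hom^1(\meX,\G_m)$ and hence survives to $E_\infty^{0,1}$. The remaining relevant differentials — $\d_2^{1,1}$ and $\d_3^{0,2}$, together with their analogues for $g$ — vanish once the edge maps $\hom^i(S,\push f\G_m)\to\hom^i(\meX,\G_m)$ and $\hom^i(S,\push g\G_m)\to\hom^i(X,\G_m)$ are seen to be injective for $i=2,3$; writing $\push f\G_m\cong\G_{m,S}\oplus\ul\Z$, the $\G_{m,S}$-summand is split off by pullback along the section just described, and the $\ul\Z$-summand (generated by the coarse coordinate $s$) is split off by the valuation/residue at the puncture $s=0$, e.g. via the Gysin sequence for $X\hookrightarrow\A^1_S$. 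Granting this, one gets for $g$ the short exact sequence $0\to\hom^2(S,\push g\G_m)\to\hom^2(X,\G_m)\to\hom^0(S,\homR^2\push g\G_m)\to0$ and for $f$ a three-step filtration on $\hom^2(\meX,\G_m)$ with graded pieces $\hom^2(S,\push f\G_m)$, $\hom^1(S,\zmod4)$, and $\hom^0(S,\homR^2\push f\G_m)$; in particular the outgoing edge map $e\colon\hom^2(\meX,\G_m)\to\hom^0(S,\homR^2\push f\G_m)$ is surjective, since its image contains $\hom^0(S,\homR^2\push g\G_m)$ by functoriality (lift through $\hom^2(X,\G_m)$) and contains $\hom^0(S,\zmod2)$ because $\alpha$ furnishes a global lift of the generator.

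Next I would assemble the comparison of the two spectral sequences into the analogue of \cref{cd:Y(1)-SS-comp}: a commutative diagram with two short exact rows whose left vertical is the inclusion $\hom^2(S,\push g\G_m)\hookrightarrow K_2\coloneqq\ker e$ (cokernel $\hom^1(S,\zmod4)$), whose middle vertical is $\pull c$, and whose right vertical $\hom^0(S,\homR^2\push g\G_m)\to\hom^0(S,\homR^2\push f\G_m)$ is — this is the one place where the argument departs from the $\meY(1)$ case — the split inclusion of \cref{prop:this-effing-Zmod2}, with cokernel $\hom^0(S,\zmod2)$. The snake lemma then gives that $\pull c$ is injective and that $\coker\pull c$ sits in a short exact sequence $0\to\hom^1(S,\zmod4)\to\coker\pull c\to\hom^0(S,\zmod2)\to0$. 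Finally, by \cref{rem:this-effing-Zmod2} (and the description of $e$ above) the map $\alpha$ carries $\hom^0(S,\zmod2)$ isomorphically onto the $\ul{\zmod2}$-part detected by $e$, whereas $\im\pull c$ is carried by $e$ into the complementary summand $\hom^0(S,\homR^2\push g\G_m)$; hence $\alpha$ is injective, $\im\alpha\cap\im\pull c=0$, and $\alpha$ splits the displayed extension. Therefore $\pull c\oplus\alpha$ is injective with cokernel $\hom^1(S,\zmod4)$, as claimed.

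The step I expect to be the main obstacle is the bookkeeping in the middle: proving that all the spectral-sequence differentials into the bottom two rows vanish — which forces one to deal with the coarse space being $X=\A^1_S\setminus\{0\}$ rather than $\A^1_S$, so that $\push g\G_m=\G_{m,S}\oplus\ul\Z$ and the edge-map splitting must separately handle the $\ul\Z$ factor — and then chasing the snake-lemma diagram carefully enough to see that it is precisely (and only) the class $\alpha$ that accounts for the gap between $\homR^2\push g\G_m$ and $\homR^2\push f\G_m$.
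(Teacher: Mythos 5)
Your proposal is correct and follows essentially the same route as the paper: compare the Leray spectral sequences for $f$ and $g$, use the sections of $f$ and $g$ together with the Hodge bundle to kill the relevant differentials, feed the split inclusion $\homR^2\push g\G_m\into\homR^2\push f\G_m$ of \cref{prop:this-effing-Zmod2} into the snake lemma, and use the globally defined class $\alpha=[s]\cup\sigma$ to split off the $\hom^0(S,\zmod2)$ factor. The only substantive difference is that you are more explicit than the paper about why the degree-$2$ and degree-$3$ edge maps out of $\hom^i(S,\push g\G_m)\cong\hom^i(S,\G_m)\oplus\hom^i(S,\ul\Z)$ are injective (splitting the $\ul\Z$-summand via the residue at $s=0$), a point the paper subsumes under the bare assertion that $X(S)\neq\emptyset\neq\meX(S)$.
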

\begin{proof}
    We compare the Leray spectral sequences
    \begin{equation}\label{SS:leray-Y0(2)}
        E_2^{ij}=\hom^i(S,\homR^j\push f\G_m)\implies\hom^{i+j}(S,\G_m)\tand F_2^{ij}=\hom^i(S,\homR^j\push g\G_m)\implies\hom^{i+j}(S,\G_m)
    \end{equation}
    for $f\colon\meX=\meY_0(2)_S\to S$ and $g\colon X=\A^1_S\sm\{0\}\to S$, as in the proof of \cref{thm:BrY(1)}. Noting that $X(S)\neq\emptyset\neq\meX(S)$ (e.g. because of the elliptic curve $y^2=x(x-1)(x+1)$ over $\Z[1/2]$), we see that the low degree terms of these spectral sequences sit in the commutative diagram
    \begin{equation}\label{cd:Bry0(2)-1}
        \begin{tikzcd}
            &&\ker\p{\hom^2(X,\G_m)\to\hom^0(S,\homR^2\push g\G_m)}\\
            0\ar[r]&\hom^2(S,\G_m)\ar[r, "\pull g"]\ar[d, equals]&K_1\ar[u, symbol=\coloneqq]\ar[d, "\pull c"]\ar[r]&0\\
            0\ar[r]&\hom^2(S,\G_m)\ar[r, "\pull f"]&K_2\ar[d, symbol=\coloneqq]\ar[r]&\hom^1(S,\zmod4)\ar[r]&0\\
            &&\ker\p{\hom^2(\meX,\G_m)\to\hom^0(S,\homR^2\push f\G_m)}
        \end{tikzcd}
    \end{equation}
    with exact rows. Note that \cref{cd:Bry0(2)-1} shows that $\coker\p{K_1\into K_2}\simeq\hom^1(S,\zmod4)$. Now, because $X(S),\meX(S)\neq\emptyset$, there are no nonzero differentials in either of the spectral sequences \cref{SS:leray-Y0(2)} which map to the $j=0$ row, so
    \[E_\infty^{0,2}=\ker\p{\hom^0(S,\homR^2\push f\G_m)\to\hom^2(S,\homR^1\push f\G_m)}\tand F_\infty^{0,2}=\ker\p{\hom^0(S,\homR^2\push g\G_m)\to\hom^2(S,\homR^1\push g\G_m)}.\]
    Hence, we obtain the following homomorphism of exact sequences:
    \begin{equation}\label{cd:Bry0(2)-2}
        \begin{tikzcd}
            &&&&\hom^2(S,\homR^1\push g\G_m)\ar[d, equals, "\cref{lem:Y0(2)-R1}"]\\
            0\ar[r]&K_1\ar[d]\ar[r]&\hom^2(X,\G_m)\ar[d, "\pull c"]\ar[r]&\hom^0(S,\homR^2\push g\G_m)\ar[r]\ar[d]&0\\
            0\ar[r]&K_2\ar[r]&\hom^2(\meX,\G_m)\ar[r]&\hom^0(S,\homR^2\push f\G_m)\ar[d, equals, "\cref{prop:this-effing-Zmod2}"]\ar[r, "\d_2^{0,2}"]&\hom^2(S,\homR^1\push f\G_m)\\
            &&&\hom^0(S,\homR^2\push g\G_m)\oplus\hom^0(S,\zmod2)
        \end{tikzcd}
    \end{equation}
    Note that commutativity of \cref{cd:Bry0(2)-2} shows that the differential $\d_2^{0,2}$ vanishes when restricted to the summand $\hom^0(S,\homR^2\push g\G_m)$ of $\hom^0(S,\homR^2\push f\G_m)$. At the same time, \cref{prop:this-effing-Zmod2} (see also \cref{rem:this-effing-Zmod2}) shows that the $\hom^0(S,\zmod2)$ summand must survive to the end of the spectral sequence, so the $\d_2^{0,2}$ differential must vanish when restricted to it as well, so $\d_2^{0,2}=0$. Hence, \cref{cd:Bry0(2)-2} is really a homomorphism of \important{short} exact sequences, so the snake lemma produces a short exact sequence
    \[0\too\coker\p{K_1\to K_2}\too\coker\p{\pull c\colon\hom^2(X,\G_m)\to\hom^2(\meX,\G_m)}\too\hom^0(S,\zmod2)\too0\]
    and shows that $\pull c$ is injective. Furthermore, \cref{rem:this-effing-Zmod2} shows that the composition $\hom^2(\meX,\G_m)\to\hom^2(\meX,\G_m)/\hom^2(X,\G_m)\to\hom^0(S,\zmod2)$ is split, so we have an exact sequence
    \[0\too\hom^2(X,\G_m)\oplus\hom^0(S,\zmod2)\xtoo{\pull c\oplus\alpha}\hom^2(\meX,\G_m)\too\coker\p{K_1\to K_2}\too0.\]
    Finally, \cref{cd:Bry0(2)-1} shows that $\coker\p{K_1\to K_2}\simeq\hom^1(S,\zmod4)$, completing the proof.
\end{proof}
As in the case of $\meY(1)$, the exact sequence of \cref{thm:BrY0(2)-gen-1} is split. Let $\pi\colon\meX=\meY_0(2)_S\to\meY(1)_S\eqqcolon\meY$ denote the natural projection. Let $\msM$ denote the Hodge bundle on $\meY$, and let $\msL\coloneqq\pull\pi\msM$ be the Hodge bundle on $\meX$. Fix trivializations $\Delta\colon\msO_\meY\iso\msM^{12}$ and $D\colon\msO_\meX\iso\msL^4$. Consider the cohomology classes
\[\sigma\coloneqq\sq{\meY\p{\sqrt[12]\Delta}\to\meY}\in\hom^1(\meY,\mu_{12})\tand\tau\coloneqq\sq{\meX\p{\sqrt[4]D}\to\meX}\in\hom^1(\meX,\mu_4).\]
Recall that $f\colon\meX=\meY_0(2)_S\to S$ denotes $\meX$'s structure map and letting $h\colon\meY=\meY(1)_S\to S$ be $\meY$'s, define the maps
\begin{equation}\label{maps:s-t}
    \mapdesc s{\hom^1(S,\zmod{12})}{\hom^2(\meY,\G_m)}\beta{\pull h\beta\cup\sigma}\tand\,\mapdesc t{\hom^1(S,\zmod4)}{\hom^2(\meX,\G_m)}\beta{\pull f\beta\cup\tau.}
\end{equation}
In \cref{prop:Y(1)-splitting}, we showed that $s$ above is a right-splitting of \cref{ses:BrY(1)}.
\begin{prop}\label{prop:Y0(2)-splitting}
    The map $t$ of \cref{maps:s-t} is a right-splitting of \cref{ses:BrY0(2)-take-1}. In particular, we have an isomorphism
    \[\pull c\oplus t\oplus\alpha\colon\hom^2(\A^1_S\sm\{0\},\G_m)\oplus\hom^1(S,\zmod4)\oplus\hom^0(S,\zmod2)\isoo\hom^2(\meY_0(2)_S,\G_m)\]
    for any regular noetherian $\Z[1/2]$-scheme $S$.
\end{prop}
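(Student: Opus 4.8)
The plan is to mimic the proof of \cref{prop:Y(1)-splitting}, working in the derived category of fppf sheaves on $S$ and using cup products with a $\mu_4$-torsor class. First I would set up the relevant truncations: let $\msC\coloneqq(\tau_{\le2}\fhomR\push f\G_m)[1]$ and $\msD\coloneqq(\tau_{\le2}\fhomR\push g\G_m)[1]$, where $g\colon X=\A^1_S\sm\{0\}\to S$. By the computations of this section (in particular \cref{lem:Y0(2)-R1} and \cref{prop:this-effing-Zmod2}), these sit in a distinguished triangle
\[\msD\too\msC\too\ul{\zmod4}\xtoo{+1},\]
where $\ul{\zmod4}\simeq\fhomR^1\push f\G_m$ (the relative Picard scheme). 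Applying $\Hom_S(\ul{\zmod4},-)$ to this triangle and using \cref{lem:Ext-G-Gm} exactly as in the $\meY(1)$ case (replacing $\mu_{12}$ by $\mu_4$ and $X$ by $\A^1_S\sm\{0\}$) yields an exact sequence
\[0\too\fhom^1(\A^1_S\sm\{0\},\mu_4)\xtoo{\pull c}\fhom^1(\meX,\mu_4)\xtoo{\chi'}\Hom_S(\ul{\zmod4},\ul{\zmod4}).\]

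Next I would verify that $\chi'(\tau)=\id_{\zmod4}$, so that the sequence above is short exact and so that \cref{lem:univ-id}-style reasoning identifies $\tau$ (up to a twist by $\pull f\fhom^1(S,\mu_4)$, which does not affect the cup-product splitting) as a universal $\mu_4$-torsor class. The key input here is the analogue of \cref{rem:chi'-comp-Y(1)}: the map $\chi'$ sends $\beta\in\fhom^1(\meX,\mu_4)$ to the image of $\beta$ under $\fhom^1(\meX,\mu_4)\to\Pic\meX\onto\Pic\meX/\Pic X\iso\zmod4$, and by \cref{lem:Y0(2)-R1} the Hodge bundle $\msL$ generates this $\zmod4$; since $\tau=[\meX(\sqrt[4]D)\to\meX]$ adjoins a $4$th root of a trivialization of $\msL^4$, its image is the generator $1$, so $\chi'(\tau)=\id_{\zmod4}$.

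Then I would run the diagram-chase portion of the argument from \cref{prop:Y(1)-splitting} verbatim. Writing $\rho\colon\msC\to\ul{\zmod4}$ for the third map of the triangle, and $T\colon\ul{\zmod4}\to\msC$ for the morphism corresponding to $\tau$ under $\fhom^1(\meX,\mu_4)\simeq\Hom_S(\ul{\zmod4},\msC)$, the commutative diagram relating the pairing $\hom^1(S,\zmod4)\by\Hom_S(\ul{\zmod4},\msC)\to\fhom^1(S,\msC)$ (via \cite[Proposition V.1.20]{milne-et}) to the cup product $\pull f(-)\cup(-)\colon\hom^1(S,\zmod4)\by\fhom^1(\meX,\mu_4)\to\hom^2(\meX,\G_m)$ shows that for $\beta\in\hom^1(S,\zmod4)$ one has $r(t(\beta))=r(\pull f\beta\cup\tau)=\push\rho(\push T(\beta))=\ppush{\rho\circ T}(\beta)=\push{\chi'(\tau)}(\beta)=\beta$, where $r\colon\hom^2(\meX,\G_m)\to\hom^1(S,\zmod4)$ is the surjection in \cref{ses:BrY0(2)-take-1} (which is indeed the map obtained by applying $\fhom^1(S,-)$ to the triangle, since \cref{ses:BrY0(2)-take-1} was extracted from exactly that Leray spectral sequence, after splitting off the $\hom^0(S,\zmod2)$ summand). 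This establishes that $t$ is a right-splitting; combining it with the splitting $\pull c\oplus\alpha$ of the remaining part of \cref{ses:BrY0(2)-take-1} gives the claimed isomorphism $\pull c\oplus t\oplus\alpha$.

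The main obstacle I anticipate is bookkeeping rather than a genuinely new difficulty: one must be careful that the three splittings are compatible, i.e. that the images of $\pull c$, $t$, and $\alpha$ really are independent and jointly span. The point $\alpha=[s]\cup\tau$ of \cref{rem:this-effing-Zmod2} lands in (a direct complement of $\hom^2(X,\G_m)$ inside) $\ker(r)$ because $r$ only sees the $\zmod4$-Picard part and $[s]$ is pulled back from $X$; meanwhile $t(\beta)=\pull f\beta\cup\tau$ maps isomorphically onto $\hom^1(S,\zmod4)$ under $r$ by the computation above. So the decomposition $\hom^2(\meX,\G_m)=\hom^2(X,\G_m)\oplus\alpha(\hom^0(S,\zmod2))\oplus t(\hom^1(S,\zmod4))$ follows formally from \cref{thm:BrY0(2)-gen-1} together with the fact that $r\circ t=\id$. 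A secondary subtlety, already present in \cref{prop:Y(1)-splitting}, is the sign/normalization issue flagged in \cref{warn:cup} distinguishing the spectral-sequence product from the naive cup product; I would handle it exactly as there, absorbing the sign and noting it does not affect whether $r\circ t$ is an isomorphism.
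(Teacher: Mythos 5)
Your proposed distinguished triangle $\msD\to\msC\to\ul{\zmod4}\xto{+1}$ is false, and this is a genuine gap rather than bookkeeping. In the $\meY(1)$ case the triangle \cref{tri:Y(1)} exists because \cref{lem:R2g=R2f} gives $\homR^2\push g\G_m\iso\homR^2\push f\G_m$, so the cone of $\msD\to\msC$ is concentrated in a single degree. For $\meY_0(2)$ the analogous statement fails: by \cref{lem:Y0(2)-R2} and \cref{prop:this-effing-Zmod2} (which you cite as an input, but which says the opposite of what you need) one has $\homR^2\push f\G_m\simeq\homR^2\push g\G_m\oplus\ul{\zmod2}$, so the cone $Q$ of $\msD\to\msC$ has $\pi_0(Q)\simeq\ul{\zmod4}$ \emph{and} $\pi_1(Q)\simeq\ul{\zmod2}$. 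This extra $\ul{\zmod2}$ is precisely the source of the $\hom^0(S,\zmod2)$ summand in \cref{ses:BrY0(2)-take-1}. Parts of your argument survive by accident: since $\Ext^{<0}(\ul{\zmod4},\ul{\zmod2})=0$, one still gets $\Hom_S(\ul{\zmod4},Q)\simeq\End_S(\ul{\zmod4})$ and hence the exact sequence $0\to\fhom^1(X,\mu_4)\to\fhom^1(\meX,\mu_4)\xto{\chi'}\End(\ul{\zmod4})$, and your computation $\chi'(\tau)=\id$ via the Hodge bundle is fine. But the final diagram chase breaks down: there is no map $\rho\colon\msC\to\ul{\zmod4}$ (only $\msC\to Q$, and the truncation triangle $\ul{\zmod4}\to Q\to\ul{\zmod2}[-1]\xto{+1}$ has no a priori retraction onto $\ul{\zmod4}$), so the identity $r(t(\beta))=\push\rho(\push T(\beta))$ is not defined as written. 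To repair it you would need to rerun the cup-product compatibility with $Q$ in place of $\ul{\zmod4}$ and check that the map $r$ of \cref{ses:BrY0(2)-take-1} — which was only defined after splitting off the $\hom^0(S,\zmod2)$ piece using $\alpha$ — agrees with the composite $\fhom^1(S,\msC)\to\fhom^1(S,Q)$ followed by the projection coming from that splitting. This is doable but is exactly the content you are eliding.

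The paper avoids this entirely by a different route: it pulls back along $\pi\colon\meY_0(2)_S\to\meY(1)_S$, compares \cref{ses:BrY0(2)-take-1} with \cref{ses:BrY(1)} via the homomorphism of exact sequences \cref{cd:BrY(1)-Y0(2)}, observes that the induced map $\hom^1(S,\zmod{12})\to\hom^1(S,\zmod4)$ is the one induced by the projection $\zmod{12}\onto\zmod4$ (both groups being generated by Hodge bundles, and $\msL=\pull\pi\msM$), splits it by $1\mapsto9$, and then checks that $\pull\pi\circ s\circ\psi=t$. This reduces everything to the already-established \cref{prop:Y(1)-splitting} and never touches the problematic $\ul{\zmod2}$. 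I would recommend either adopting that reduction or carefully carrying out the derived-category argument with the correct two-step cone $Q$.
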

\begin{proof}
    Write $\pi_c\colon X=\A^1_S\sm\{0\}\to\A^1_S$ for the map $\pi\colon\meX\to\meY$ induces on coarse space. Then, comparing \cref{ses:BrY0(2)-take-1} to \cref{ses:BrY(1)} shows that $\pi$ induces the following homomorphism of exact sequences:
    \begin{equation}\label{cd:BrY(1)-Y0(2)}
        \homses{\hom^2(\A^1_S,\G_m)}{}{\hom^2(\meY,\G_m)}{}{\hom^1(S,\zmod{12})}{\pull\pi_c}{\pull\pi}\phi{\hom^2(X,\G_m)}{\pull c}{\hom^2(\meX,\G_m)}{}{\hom^1(S,\zmod4)\oplus\hom^0(S,\zmod2)}
    \end{equation}
    Furthermore, \cref{lem:Y(1)-R1,lem:Y0(2)-R1} show that both the $\zmod{12}$ and the $\zmod4$ appearing in \cref{cd:BrY(1)-Y0(2)} are generated by the class of the Hodge bundle, respectively on $\meY$ and $\meX$. Thus, the map 
    \[\hom^1(S,\zmod{12})\xtoo\phi\hom^1(S,\zmod4)\oplus\hom^0(S,\zmod2)\onto\hom^1(S,\zmod4)\]
    is the map induces by the projection $\zmod{12}\onto\zmod4$ sending $1\mapsto1$. In particular, it is split by the map $\psi\colon\hom^1(S,\zmod4)\to\hom^1(S,\zmod{12})$ induced by the map $\zmod4\to\zmod{12}$ sending $1\mapsto9$. Since the map $s$ of \cref{maps:s-t} is a right splitting of the top row of \cref{cd:BrY(1)-Y0(2)} (by \cref{prop:Y(1)-splitting}), commutativity of \cref{cd:BrY(1)-Y0(2)} shows that the composition
    \begin{equation}\label{map:split-comp}
        \hom^1(S,\zmod4)\xto\psi\hom^1(S,\zmod{12})\xto s\hom^2(\meY,\G_m)\xto{\pull\pi}\hom^2(\meX,\G_m)
    \end{equation}
    is a splitting map to the surjection $\hom^2(\meX,\G_m)\onto\hom^1(S,\zmod4)$ in the bottom row of \cref{cd:BrY(1)-Y0(2)}. Finally, one can check that, by construction, the composition \cref{map:split-comp} is the map $t$ of \cref{maps:s-t}.
\end{proof}

\printbibliography

\end{document}